\DeclareFontFamily{U}{stix2bb}{\skewchar\font127 }
\DeclareFontShape{U}{stix2bb}{m}{n} {<-> stix2-mathbb}{}
\DeclareMathAlphabet{\mathbb}{U}{stix2bb}{m}{n}
\newcommand{\eqmathbox}[2][N]{\eqmakebox[#1]{$\displaystyle#2$}}
\setlist[enumerate]{wide=16pt, label=\arabic*)}
\setlist[itemize]{wide=16pt}
\tikzset{
    marrow/.style={decoration={markings,mark=at position 0.5 with {\arrow{#1}}}, postaction=decorate}
}
\theoremstyle{plain}
\newtheorem{theorem}{Theorem}[section]
\newtheorem{lemma}[theorem]{Lemma}
\newtheorem{claim}[theorem]{Claim}
\newtheorem{proposition}[theorem]{Proposition}
\newtheorem{corollary}[theorem]{Corollary}
\theoremstyle{definition}
\newtheorem{definition}[theorem]{Definition}
\newtheorem{example}[theorem]{Example}
\newtheorem{remark}[theorem]{Remark}
\newtheorem{importantremark}[theorem]{Important remark!}
\newtheorem{prop/def}[theorem]{Proposition/Definition}
\newtheorem{assumption}[theorem]{Assumption}
\numberwithin{figure}{section}
\numberwithin{table}{section}
\numberwithin{equation}{section}
\newcommand*{\transpose}{%
  {\mathpalette\@transpose{}}%
}
\newcommand*{\@transpose}[2]{%
  \raisebox{\depth}{$\m@th#1\intercal$}%
}
\newbox\qbox
\def\usecolor#1{\csname\string\color@#1\endcsname\space}
\newcommand\bordercolor[1]{\colsplit{1}{#1}}
\newcommand\fillcolor[1]{\colsplit{0}{#1}}
\newcommand\colsplit[2]{\colorlet{tmpcolor}{#2}\edef\tmp{\usecolor{tmpcolor}}%
  \def\tmpB{}\expandafter\colsplithelp\tmp\relax%
  \ifnum0=#1\relax\edef\fillcol{\tmpB}\else\edef\bordercol{\tmpC}\fi}
\def\colsplithelp#1#2 #3\relax{%
  \edef\tmpB{\tmpB#1#2 }%
  \ifnum `#1>`9\relax\def\tmpC{#3}\else\colsplithelp#3\relax\fi
}
\newcommand\outline[1]{\leavevmode%
  \def\maltext{#1}%
  \setbox\qbox=\hbox{\maltext}%
  \boxgs{Q q 2 Tr \thickness\space w \fillcol\space \bordercol\space}{}%
  \copy\qbox%
}
\newcommand\mathcalbb[2][1]{\outline{$\mathcal{#2}$}}
\def\thickness{.1}
\newcommand*\dg{\textnormal{dg}}
\newcommand*{\gr}{\textnormal{gr}}
\newcommand*\al{\alpha}
\newcommand*\be{\beta}
\newcommand*\ga{\gamma}
\newcommand*\bph{\overline{\phi}}
\newcommand*\lam{\lambda}
\newcommand*\per{\textnormal{per}}
\newcommand*\BM{\textnormal{BM}}
\newcommand*\red{\textnormal{red}}
\newcommand*\lpe{\textnormal{lper}}
\newcommand*\sig{\sigma}
\newcommand*\At{\textnormal{At}}
\newcommand*\cs{\textnormal{cs}}
\newcommand*\Fib{\textnormal{Fib}}
\newcommand\Fun{\textnormal{Fun}}
\newcommand\Diag{\textnormal{Diag}}
\newcommand\cyc{\textnormal{cyc}}
\newcommand\op{\textnormal{op}}
\newcommand*\rig{\textnormal{rig}}
\newcommand*{\rk}{\textnormal{rk}}
\newcommand*{\Rk}{\textnormal{Rk}}
\newcommand*{\Ho}{\textnormal{Ho}}
\newcommand*\Ext{\textnormal{Ext}}
\newcommand*\Edg{\textnormal{Edg}}
\newcommand*\bEdg{\overline{\textnormal{E}}_{\dg}}
\newcommand*\wEdg{\wt{\textnormal{E}}_{\dg}}
\newcommand*\mExt{\mathcal{E}\textnormal{xt}}
\newcommand*\bM{\boldsymbol{\mathcal{M}}}
\newcommand*\bN{\boldsymbol{N}}
\newcommand*\FFna{\wt{\mathbb{F}}}
\newcommand*\Rep{\textnormal{Rep}}
\newcommand*\Sh{\textnormal{Sh}}
\newcommand*\Ver{\textnormal{Ver}}
\newcommand*\Tot{\textnormal{Tot}}
\newcommand*\Art{\mathscr{A}\!\textit{rt}}
\newcommand{\coker}{\textnormal{coker}}
\newcommand*\Perf{\textnormal{Perf}}
\newcommand*\PPerf{\mathscr{P}\!\textit{erf}}
\newcommand{\ddr}{\textnormal{d}_{\textnormal{dR}}}
\newcommand*\Flag{\textnormal{Fl}}
\newcommand*\iFlag{\!\textit{Fl}}
\newcommand*\MS{\textnormal{MS}}
\newcommand*\Hilb{\textnormal{Hilb}}
\newcommand*\SSym{\textnormal{SSym}}
\newcommand*\Vect{\textnormal{Vec}}
\newcommand*\Spec{\textnormal{Spec}}
\newcommand*\bSpec{\textbf{Spec}}
\newcommand*\ch{\textnormal{ch}}
\newcommand*\GL{\textnormal{GL}}
\newcommand*\vdim{\textnormal{vdim}}
\newcommand\Hom{\textnormal{Hom}}
\newcommand\mHom{\mathcal{H}\textnormal{om}}
\newcommand{\msE}{{\mathscr E}}
\newcommand{\msN}{{\mathscr N}}
\newcommand{\Ff}{{\mathfrak{f}}}
\newcommand{\Fl}{{\mathfrak{l}}}
\newcommand{\Fm}{{\mathfrak{m}}}
\newcommand{\Ft}{{\mathfrak{t}}}
\newcommand{\Fv}{{\mathfrak{v}}}
\newcommand*\chart{\textnormal{char}}
\newcommand*\bTot{\textbf{Tot}}
\newcommand*\bmM{\boldsymbol{\mathcal{M}}}
\newcommand*\bmN{\boldsymbol{\mathcal{N}}}
\newcommand*\Map{\textnormal{Map}}
\newcommand*\End{\textnormal{End}}
\newcommand*\pt{\textnormal{pt}}
\newcommand*\BB{\mathbb{B}}
\newcommand*\CC{\mathbb{C}}
\newcommand*\EE{\mathbb{E}}
\newcommand*\FF{\mathbb{F}}
\newcommand*\GG{\mathbb{G}}
\newcommand*\HH{\mathbb{H}}
\newcommand*\kk{\mathbb{k}}
\newcommand*\LL{\mathbb{L}}
\newcommand*\MM{\mathbb{M}}
\newcommand*\NN{\mathbb{N}}
\newcommand*\PP{\mathbb{P}}
\newcommand*\QQ{\mathbb{Q}}
\newcommand*\RR{\mathbb{R}}
\newcommand*\TT{\mathbb{T}}
\newcommand*\VV{\mathbb{V}}
\newcommand*\WW{\mathbb{W}}
\newcommand*\ZZ{\mathbb{Z}}
\newcommand{\mA}{{\mathcal A}}
\newcommand{\mB}{{\mathcal B}}
\newcommand{\mC}{{\mathcal C}}
\newcommand{\mD}{{\mathcal D}}
\newcommand{\mE}{{\mathcal E}}
\newcommand{\mF}{{\mathcal F}}
\newcommand{\mH}{{\mathcal H}}
\newcommand{\mI}{{\mathcal I}}
\newcommand{\mK}{{\mathcal K}}
\newcommand{\mL}{{\mathcal L}}
\newcommand{\mM}{{\mathcal M}}
\newcommand{\mN}{{\mathcal N}}
\newcommand{\mO}{{\mathcal O}}
\newcommand{\mP}{{\mathcal P}}
\newcommand{\mT}{{\mathcal T}}
\newcommand{\mV}{{\mathcal V}}
\newcommand{\mW}{{\mathcal W}}
\newcommand{\mX}{{\mathcal X}}
\newcommand{\mY}{{\mathcal Y}}
\newcommand{\mZ}{{\mathcal Z}}
\newcommand{\mbB}{\mathcalbb{B}}
\newcommand{\mbN}{\mathcalbb{N}}
\newcommand{\T}{\mathsf{T}}
\newcommand{\V}{\mathsf{V}}
\renewcommand{\S}{\mathsf{S}}
\newcommand*\Coh{\textnormal{Coh}}
\newcommand*\blangle{\big\langle}
\newcommand*\brangle{\big\rangle}
\newcommand*\ar{\textnormal{arg}}
\newcommand*{\pe}{\textnormal{per}}
\newcommand*{\Ker}{\textnormal{Ker}}
\newcommand*{\loc}{\textnormal{loc}}
\newcommand*{\vir}{\textnormal{vir}}
\newcommand*{\svir}{\textnormal{svir}}
\newcommand*{\id}{\textnormal{id}}
\newcommand{\OT}{\textnormal{OT}}
\newcommand{\QCoh}{\textnormal{QCoh}}
\newcommand{\RHom}{\textnormal{RHom}}
\newcommand*{\ttop}{\textnormal{top}}
\newcommand{\Ksst}{K^0_{\textnormal{sst}}}
\newcommand{\Masi}{\mathcal{M}^{\sigma}_{\alpha}}
\newcommand{\Masio}{\mathcal{M}^{\sigma}_{\alpha_1}}
\newcommand{\Masit}{\mathcal{M}^{\sigma}_{\alpha_2}}
\newcommand{\Masvir}{\big[M^{\sigma}_{\alpha}\big]^{\vir}}
\newcommand*{\Masig}{\big\langle \mathcal{M}^{\sigma}_{\alpha}\big\rangle}
\newcommand*{\JS}{\textnormal{JS}}
 \newcommand*{\nv}{\textsf{v}}
 \DeclareMathOperator{\tr}{tr}
  \DeclareMathOperator{\Tr}{Tr}
\newcommand\numberthis{\addtocounter{equation}{1}\tag{\theequation}}
\newcommand\Item[1][]{%
  \ifx\relax#1\relax  \item \else \item[#1] \fi
  \abovedisplayskip=0pt\abovedisplayshortskip=0pt~\vspace*{-\baselineskip}}
  \renewcommand{\text}[1]{\textnormal{#1}}
\newcommand{\un}[1]{\ensuremath{\underline{#1}}}
\newcommand{\wt}[1]{\ensuremath{\widetilde{#1}}}
\newcommand{\bs}[1]{\ensuremath{\boldsymbol{#1}}}
\newcommand{\ov}[1]{\overline{#1}}
\newcommand{\B}[1]{{\color{Blue} #1}}
\newcommand{\RX}[1]{{\color{Red} #1}}
\newcommand{\BuOr}[1]{{\color{BurntOrange} #1}}
\newcommand{\Cya}[1]{{\color{Cyan} #1}}
\newcommand{\FG}[1]{{\color{ForestGreen} #1}}
\newcommand{\Gre}[1]{{\color{Green} #1}}
\newcommand{\Ma}[1]{{\color{Maroon} #1}}
\newcommand{\Pur}[1]{{\color{Purple} #1}}
\newcommand{\nocontentsline}[3]{}
\newcommand{\tocless}[1]{\addtocontents{toc}{\protect\setcounter{tocdepth}{1}}
  #1
  \addtocontents{toc}{\protect\setcounter{tocdepth}{2}}
}
\title{Wall-crossing for Calabi--Yau fourfolds: framework, tools, and applications}
\author{Arkadij Bojko\thanks{Shanghai Institute for Mathematics
and Interdisciplinary Sciences, Fudan University;
E-mail: abojko@simis.cn
}}
\date{}
\begin{document}
\maketitle
\begin{abstract}
This work develops new ideas and tools to establish wall-crossing in Calabi--Yau four categories as originally conjectured by Gross--Joyce--Tanaka. In the process, I set up some necessary new language, including a natural refinement of Joyce's vertex algebras to equivariant homology. The proof is then given for Calabi--Yau four dg-quivers and local CY fourfolds. A crucial part of the problem is showing that the generalized invariants counting stable objects are well-defined. Using a conceptual argument akin to the quantum Lefschetz principle, I show that for torsion-free sheaves, this is already implied by the wall-crossing formula for Joyce--Song stable pairs. Lastly, I introduce an important framework in the form of a stable $\infty$-categorical formulation of Park's virtual pullback diagrams in the appendix. This implies their functoriality, which is used repeatedly throughout this work.

\end{abstract}
\tableofcontents
\section{Introduction}
Invariants associated to moduli commonly depend on a choice of a parameter that I will henceforth call the \textit{stability condition}. When stability conditions form manifolds, the moduli may change when entering or crossing real codimension one submanifolds called \textit{walls}. The resulting phenomenon called \textit{wall-crossing} can be studied on two levels: the change of the invariants and the change of the moduli themselves. Here, I will focus on the former.

 The focal point of this work is the counting of objects in Calabi--Yau four (CY4) categories. An example of such a category is given the coherent sheaves $\Coh(X)$ on a Calabi--Yau fourfold - a smooth quasi-projective variety of dimension $4$ with a fixed trivialization of its canonical bundle $K_X\cong \mO_X$. Wall-crossing for moduli of sheaves is a deeply researched subject in lower dimensions   with \cite{Tha96, mochizuki, JoyceSong, KS, GJT, JoyceWC} among the pivotal contributions.
 
Counting sheaves on Calabi--Yau fourfolds has its roots in \cite{DT}, while \cite{BJ,OT} laid the formal foundations of the subject\footnote{Some examples were already studied in \cite{CL}.}. Wall-crossing of these invariants has been formulated only conjecturally in \cite[§4.4]{GJT} (see also \cite[§2.5]{bojko2}). Even so, it has already seen its application in \cite{bojko2, bojko3}. This work is the first contribution that proves and refines Joyce's wall-crossing conjecture for CY4 theories. Specifically, I establish wall-crossing for CY4 dg-quivers and local CY fourfolds. At the Simons Collaboration on Special Holonomy Meeting in Janaury 2023, I announced a proof that I believed to also work for sheaves on CY fourfolds $X$ with $H^1(\mO_X)=0$. I have since then found a gap in my argument explained in §\ref{sec:introproofWC} and §\ref{sec:sheaves}. An approach to work around this issue, relying on the Jouanolou trick, about which I learnt from Nikolas Kuhn, will appear in a follow-up joint work \cite{BKLT}. 

 Some applications of the results proved in the current work appear in §\ref{sec:introquivloc}, and multiple ongoing projects will use CY4-quiver wall-crossing. Once the full program is completed, the theory will be used to prove many existing conjectures about curve and surface counting invariants in \cite{CK3, CT1, CT2, CT2-1, CT3, BKP2}. Additionally, all results in \cite{bojko2, bojko3} will be established unconditionally as theorems.  A more detailed discussion of future applications is given in §\ref{sec:futurework}. 

\tocless{\subsection{Counting of objects in CY4 categories}}

Let $\mA$ be a CY4 abelian category in the sense explained in §\ref{sec:dgsetup}. When defining invariants from moduli problems, one fixes two pieces of data first:
\begin{enumerate}
\item For a quotient of $K^0(\mA)\twoheadrightarrow \ov{K}(\mA)$ of the Grothendieck group of $\mA$, choose a class $\al\in \ov{K}(\mA)$.
\item Determine a (weak) stability condition $\sig$ in the sense of §\ref{sec:assstability}.
\end{enumerate}
Let $M^{\sig}_{\al}$ be the moduli space parametrizing $\sig$-stable objects in the class $\al$. Relying on the presence of $-2$-shifted symplectic structures, it was described in \cite{BJ} as a real derived manifold. This introduced the question of \textit{orientability}. Denote by $T^{\vir}_{M^{\sig}_{\al}}$ the natural virtual tangent bundle with Serre duality
$$
T^{\vir}_{M^{\sig}_{\al}}\cong \big(T^{\vir}_{M^{\sig}_{\al}}\big)^\vee[-2]
$$
chiinduced by the $-2$-shifted symplectic structure. A choice of orientation for the associated derived manifold is equivalent to a choice of a trivialization 
$$
\det\big(T^{\vir}_{M^{\sig}_{\al}}\big)\cong \un{\CC}
$$
compatible with Serre duality as made precise in Definition \ref{def:orientation}. 
\begin{example}
\leavevmode
\vspace{-4pt}
\begin{enumerate}
\item When $\mA$ is a heart in $D^b(X)$ for a CY fourfold $X$, the existence of orientations was reduced to a gauge-theoretic problem in \cite{CGJ, bojko}. The original proof of gauge-theoretic orientations in \cite{CGJ} was, however, flawed. A correction with a condition on $H^3(X,\ZZ_2)$ appeared in \cite{JU}.
\item For representations of CY4 quivers, the existence of orientations is shown in full generality in Corollary \ref{cor:quiveror}.
\end{enumerate}
\end{example}

Once orientations are determined, the machinery of \cite{BJ} produces a virtual fundamental class 
\begin{equation}
\label{eq:MasigvirBJ}
\big[M^{\sig}_{\al}\big]^{\vir}\in H_*\big(M^{\sig}_{\al},\ZZ\big)
\end{equation}
assuming properness of the moduli space. The contribution of each connected component of $M^{\sig}_{\al}$ to $\big[M^{\sig}_{\al}\big]^{\vir}$ changes its sign upon switching to the opposite orientation. The more algebro-geometric approach of \cite{OT} produces virtual fundamental cycles 
\begin{equation}
\label{eq:MasigvirOT}
\big[M^{\sig}_{\al}\big]^{\vir}\in A_*\big(M^{\sig}_{\al},\ZZ[2^{-1}]\big)
\end{equation}
assuming that the \textit{(real) virtual dimension} given by the rank of $T^{\vir}_{M^{\sig}_{\al}}$ is even. The signs of \eqref{eq:MasigvirOT} are affected by orientations in the same way. When $M^{\sig}_{\al}$ is proper, the main result in \cite{OT2} shows that the cycles-class map takes \eqref{eq:MasigvirOT} to \eqref{eq:MasigvirBJ} in $H_*\big(M^{\sig}_{\al},\ZZ[2^{-1}]\big)$ . In the present work, I will always assume that the virtual dimension is even, and I will work with the classes in homology over $\ZZ[2^{-1}]$ or $\QQ$.

Park's virtual pullback \cite{Park} is a crucial tool for proving wall-crossing for the above virtual fundamental classes. It is based on Manolache's \cite{Manolache} and is used to relate virtual fundamental classes along quasi-smooth morphisms, as recalled in §\ref{sec:basics}. Briefly, it states that if $f:N\to M$ is a morphism between two CY4 moduli spaces that admits a perfect obstruction theory fitting into the diagram \eqref{eq:introdiagsym}, then
$$[N]^{\vir}=f^![M]^{\vir}\,.$$
Constructing Park's virtual pullback diagrams (Pvp diagrams for short) is one of the main technical hurdles in the present work. They can be recovered from a derived Lagrangian correspondence for $-2$-shifted derived stacks as explained in \cite[Lemma 4.1]{LagCor}, see also Remark \ref{rem:dagJScomp}. This description is not needed in the current work, so I only remark throughout whenever there ought to be such an underlying structure.
\tocless{\subsection{Deformations of vertex algebras}}

Consider the moduli stack $\mM_{\mA}$ of all objects in $\mA$. To formulate wall-crossing, Joyce \cite{Joycehall} has introduced vertex algebras on the homology $V_* = H_{*+\vdim}(\mM_{\mA},\QQ)$ shifted by the virtual dimension of each connected component of $\mM_{\mA}$. These, in turn, induce Lie brackets on the quotients
\begin{equation}
\label{eq:Lieintro}
L_* = V_{*+2}/TV_*
\end{equation}
where $T$ is the translation operator of $V_*$.

In the present work, I will discuss equivariant refinements of his construction using two different approaches - \textit{local} and \textit{global}. I focus on the situation when there is a torus  $\T$ acting on $\mA$ and an induced $\T$-action on $\mM_{\mA}$. One could, however, allow any algebraic group $G$ for this purpose. The benefit of working with both approaches is explained in Example \ref{ex:toric}. Additionally, it is clear that the local one is more explicit, and the equivariant structure is more straightforward.
\begin{enumerate}[wide, align=left]
\item[\textbf{Local approach}] Instead of working with the full stack, one replaces it by the $\T$-fixed point stack $\mM^{\T}_{\mA}$.\footnote{Defined, e.g., in \cite{Romagny, Romagny2}.} For any ring $R$ and $\mathbb{k}$ its ring of fractions, let $R\llbracket \Ft\rrbracket$ be the ring of power series on $\Ft = \textnormal{Lie}(\T)$. I will also denote by $ \mathbb{k}\llparenthesis\Ft\rrparenthesis$ the result of inverting all homogeneous polynomials in $R\llbracket \Ft\rrbracket$. One then takes the localized $\T$-equivariant homology of $\mM^{\T}_{\mA}$ which is just\footnote{This is not quite precise, because of gradings as explained in ....}
\begin{equation}
\label{eq:localequivarianthom}
V_{\loc,*-\vdim}:=H_*\big(\mM^{\T}_{\mA},R\big)\llparenthesis\Ft\rrparenthesis^{\gr}\,.
\end{equation}
As explained to me by Anton Mellit (see Example \ref{ex:equivariantpushforw}), without allowing power-series, equivariant pushforward would not be defined. Joyce's construction applied to this setting produces \textit{$\T$-deformations of vertex algebras} that will be defined in \cite{Bo26} (see also \cite[pp. 51-57]{BoHeidelberg} where I called them additive deformations). These explicitly contain negative powers of $(z+u)$ for $e^u$ an irreducible character of $\T$. To obtain wall-crossing formulae, one expand such expressions in $|z|>|u|$ using \eqref{eq:explicitexpand}. This produces deformations of vertex algebras in the sense of \cite[§5]{HaiLi} (see also \cite[p. 5]{BoMseminartalk} for a more down-to-earth formulation), which in turn give rise to deformations of the non-equivariant Lie algebras. 
\item[\textbf{Global approach}] For this setup, one requires a definition of equivariant homology for Artin stacks. Due to Example \ref{ex:equivariantpushforw}, a Chow version of \cite{Liu} is not suitable for this purpose. To remedy this, we introduce in \cite{BB1} a construction of equivariant homology theories from bivariant theories of \cite{FMP} for stacks. Using the 6-functor formalism as in \cite{KhanEH} produces such a bivariant theory. This is summarized in Appendix \ref{app:B}, where I reproduce all the necessary operations on $H^{\T}_*$ from the bivariance properties described in \cite[§2.2]{FMP}. This approach leads to a more general version of $\T$-deformed vertex algebras which will be explained in \cite{BB1}. To be suitable for wall-crossing, one still needs to localize the homology as before.
\end{enumerate}
In both situations, one obtains deformations of Lie algebras which are Lie algebras themselves. As they are associated to localized equivariant homologies, I will denote them by $L_{\loc,*}$. All statements made in the rest of the introduction apply to both approaches equivalently. 
\begin{remark}
\leavevmode
\vspace{-4pt}
    \begin{enumerate}
        \item Previous formulations of equivariant wall-crossing in \cite{JoyceWC, Liu} contain inaccuracies that will be briefly summarized in \cite{BB1}.
        \item The axioms of $\T$-deformations of vertex algebras, together with our current construction, will be used in future work with Emile Bouaziz, aiming towards equivariant Virasoro constraints.
        \item The expansion $|u|>|z|$ of $\T$-deformations of vertex algebras also has a geometric interpretation as will be explained in \cite{Bo26}. It appears when twisting the enumerative invariants one wall-crosses with by insertions in the form of Chern classes.
    \end{enumerate}
\end{remark}

\tocless{\subsection{Joyce's equivariant wall-crossing}\label{sec:introWCeqhom}}
Whenever $M^{\sig}_{\al}$ is an open substack of the rigidification $\mM^{\rig}_{\mA}$ and its $\T$-fixed point locus is proper, its equivariantly localized virtual fundamental class $[M^{\sig}_{\al}]^{\vir}_{\T,\loc}$  induces an element in $L_{\loc, 0}$ of the same name. Note that one needs to work with rings $R$ that are $\ZZ[2^{-1}]$-algebras to apply equivariant localization in \cite{OT}. I will, from now on, omit both $\T$ and $\loc$ from the subscript of the above classes to avoid cluttering the introduction with notation. 

 Let $\sig, \sig'$ be two stability conditions connected by a continuous path.  Then Joyce's wall-crossing formula, first proposed in \cite{GJT}, predicts that the classes $\big[M^{\sig'}_{\al}\big]^{\vir}$ can be expressed as linear combinations of iterated Lie brackets acting on $\big[M^{\sig}_{\al_i}\big]^{\vir}$ where $\un{\al} = (\al_1,\al_2,\cdots,\al_n)$ are partitions of $\al$. For this to make sense, one needs to include the cases when there are strictly $\sig$-semistable objects, as without them the fixed point loci are not proper. Let us, for now, assume that there exists a predescribed procedure for defining these classes that I will denote by 
 $$
 \big\langle \mM ^{\sig}_{\al} \big\rangle\in L_{\loc,*}\,.
 $$
They have to satisfy $\big\langle \mM ^{\sig}_{\al} \big\rangle = \big[M^{\sig}_{\al}\big]^{\vir}$ whenever the right-hand side is defined. Proving wall-crossing can then be split into two separate problems.
\begin{enumerate}[wide, align=left]
\item[\textbf{Problem (I)}] Prove the wall-crossing formula
\begin{equation}
\label{eq:introWC}
\big\langle \mM ^{\sig'}_{\al} \big\rangle =  \sum_{\un{\al}\vdash_{\mA} \al}\widetilde{U}(\underline{\alpha};\sigma,\sigma') \Big[\Big[\cdots \Big[\big\langle\mM^{\sigma}_{\alpha_1}\big\rangle,\big\langle \mathcal{M}^{\sigma}_{\alpha_2}\big\rangle\Big],\ldots\Big],\big\langle\mM^{\sigma}_{\alpha_n}\big\rangle\Big]
\end{equation}
for $\sig,\sig'$ in a small enough neighbourhood $\mathscr{N}$ such that all $\big\langle\mM^{\sigma}_{\beta}\big\rangle$ and $\big\langle\mM^{\sigma'}_{\beta}\big\rangle$ are defined by the same procedure. Here, $\widetilde{U}(\un{\al};\sig,\sig')$ are the coefficients explained in \cite[§3.2]{JoyceWC}. Additionally, wall-crossing into Joyce--Song pairs stated in \eqref{eq:JSWC} should hold for all $\sig\in \mathscr{N}$. Together, the two statements form the local version of \cite[§4.4]{GJT}, \cite[Conjecture 2.9]{bojko2}.
\item[\textbf{Problem (II)}] If $K$ is a countable set of ways $k$ to define $\big\langle \mM ^{\sig}_{\al} \big\rangle$ as $\big\langle \mM ^{\sig}_{\al} \big\rangle^k$ for $\sig$ in an associated neighborhood $\msN_k$, show that 
$$
\big\langle \mM ^{\sig}_{\al} \big\rangle^{k_1} = \big\langle \mM ^{\sig}_{\al} \big\rangle^{k_2}\qquad \text{for}\quad k_1,k_2\in K\,, \sig\in \msN_{k_1}\cap \msN_{k_2}\,.
$$
The neighbourhoods $\mathscr{N}_k$ can then be patched together by taking a union ranging over $k\in K$.  
\end{enumerate}
\begin{remark}
\label{rem:welldefinednotneeded}
Sometimes, one only cares about relating actual virtual fundamental classes for two different stability conditions (e.g., DT/PT wall-crossing). In such cases, one can pick an appropriate $k\in K$ for which Problem (I) can be solved. Since we are not interested here in the classes $\big\langle \mM ^{\widetilde{\sig}}_{\al} \big\rangle^{k}$ for $\wt{\sig}$ on the intermediate walls, this is entirely sufficient. 
\end{remark}
In the next subsection, I will first address Problem (II) from a different, more conceptual perspective compared to what exists in the literature. The proof also works in the setting of \cite{JoyceWC} which, for example, includes torsion-free sheaves on surfaces. Using an observation akin to the quantum Lefschetz-principle for CY4 moduli spaces, I will show that Problem (II) reduces to Problem (I). 

To address Problem (I) and related questions in the future, I develop a $\infty$-categorical framework for Pvp diagrams and prove their functoriality. This allows me to formulate a set of ideal assumptions needed for the proof of (I). Unfortunately, these assumptions turn out to be too ideal to be fully general as explained in detail in §\ref{sec:sheaves}. Nevertheless, I show that they hold for in following two cases.
\begin{claim}\leavevmode
\vspace{-4pt}
\begin{enumerate}[wide, align=left]
\item[Problem (I)] can be proved for compactly supported sheaves on local CY fourfolds and representations of CY4 quivers. The former case can extended to include stable pairs.
\item[Problem (II)] can be resolved for any Calabi--Yau fourfold and semistable torsion-free sheaves on it by reducing to Problem (I).
\end{enumerate}
\end{claim}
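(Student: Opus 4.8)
I would separate the two problems and glue the resulting neighbourhoods at the end. For Problem (I) the plan is to run the Joyce-type wall-crossing argument inside the equivariantly localized Lie algebra $L_{\loc,*}$ of \eqref{eq:Lieintro}: introduce auxiliary ``configuration'' and ``master'' moduli interpolating between $\sig$ and $\sig'$, equip the structural morphisms (``forget the filtration'', ``take the associated graded'') with perfect obstruction theories assembled into equivariant Park virtual pullback diagrams, transport virtual fundamental classes along them using the functoriality of Pvp diagrams proved in the appendix, and read off \eqref{eq:introWC} with the coefficients $\widetilde U(\un\al;\sig,\sig')$ of \cite[§3.2]{JoyceWC}. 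For Problem (II) the plan is to reduce well-definedness to the Joyce--Song pair wall-crossing formula \eqref{eq:JSWC}: pair-stability admits no strictly semistable objects, so the pair virtual classes are canonical and independent of any scheme $k$, while \eqref{eq:JSWC} expresses them as a \emph{universal, invertible} function of the $\langle\mM^\sig_{\al_i}\rangle^k$; inverting this system forces the $\langle\mM^\sig_\al\rangle^k$ to coincide with the $k$-free solution.

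\textbf{Problem (I) for quivers and local CY fourfolds.} For representations of a CY4 dg-quiver $Q$ all relevant stacks are global quotients $[\Rep/\GL]$, the $\sig$-semistable loci are GIT quotients of the critical locus of the potential, and the given $\T$-action has proper fixed loci; the configuration stacks $\mM^\sig_{(\al_1,\dots,\al_n)}$ of objects with a filtration whose subquotients are semistable of classes $\al_i$ are then explicit finite-type stacks with proper $\T$-fixed loci. I would build the needed $(-2)$-shifted Lagrangian correspondences, equivalently the Pvp diagrams, between $\mM^\sig_\al$ and $\prod_i\mM^\sig_{\al_i}$ directly from the quiver $\Ext$-complexes, verify that the ``ideal assumptions'' hold in this case, and apply equivariant virtual pullback together with the bracket on $L_{\loc,*}$. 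For compactly supported sheaves on a local CY fourfold, a tilting equivalence presents $\Coh_{\cs}(X)$ as modules over a CY4 quiver with potential and reduces the statement to the quiver case (alternatively one argues directly: compact support makes the relevant moduli finite type with the same formal features). Including Joyce--Song pairs means enlarging the moduli by a section $\mO(-n)\to E$; since the resulting pair moduli is a projectivized-bundle-type space over a twist of the sheaf moduli, the same virtual-pullback argument carried out relative to the sheaf moduli produces the pair wall-crossing and in particular \eqref{eq:JSWC}.

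\textbf{Problem (II) for torsion-free sheaves on any CY fourfold.} Fix $X$, a torsion-free class $\al$, a stability condition $\sig$, and two schemes $k_1,k_2\in K$ defining $\langle\mM^\sig_\al\rangle$ near $\sig$. For $n\gg 0$ form the moduli $\PT_n$ of Joyce--Song pairs of class $(\al,n)$; by Serre vanishing and the rigidity of pair-stability it has no strictly semistable points, so $[\PT_n]^\vir$ is defined without any choice. The quantum-Lefschetz step: the forgetful map $\PT_n\to\mM^\sig_\al$ exhibits $\PT_n$ as a projectivized-bundle (fibre $\PP(H^0(E(n)))$) over the semistable sheaf stack, so $[\PT_n]^\vir$ is obtained by capping the tautological data against the square-root Euler class of the corresponding obstruction bundle, twisted by the equivariant parameters of the tautological pair line bundle; expanding the semistable class over partitions $\un\al\vdash_{\mA}\al$ turns this into a fixed universal expression in the $\langle\mM^\sig_{\al_i}\rangle^k$ and $k$-free characteristic classes, exactly of the shape of \eqref{eq:JSWC}, with $k$-free left-hand side. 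Running this for all $n\gg 0$ and noting that the system is triangular for the partial order on $\ov K(\mA)$, hence invertible as in \cite[§13]{JoyceSong}, \cite{JoyceWC}, I solve for $\langle\mM^\sig_\al\rangle^k$ in terms of $k$-free data; thus $\langle\mM^\sig_\al\rangle^{k_1}=\langle\mM^\sig_\al\rangle^{k_2}$ on $\msN_{k_1}\cap\msN_{k_2}$, and the neighbourhoods patch over $k\in K$.

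\textbf{Main obstacle.} The hard part is constructing the Pvp diagrams in the generality Problem (I) requires and establishing their functoriality: one must produce compatible perfect obstruction theories on the configuration and master moduli out of the $(-2)$-shifted symplectic/Lagrangian data, propagate orientations coherently through every diagram, and manage the square-root Euler classes so that the combinatorics close up to \eqref{eq:introWC}. This is precisely where the general compact-sheaf case breaks --- the assumptions on the existence of such diagrams are genuinely too strong there --- which is why Problem (I) is confined to the quiver and local cases, where explicit presentations make the diagrams available; the analogous difficulty in Problem (II) is making the equivariant master-space/quantum-Lefschetz comparison rigorous for the $(-2)$-shifted pair moduli on an arbitrary CY fourfold, in particular guaranteeing properness of the $\T$-fixed loci so that Oh--Thomas equivariant localization over a $\ZZ[2^{-1}]$-algebra applies.
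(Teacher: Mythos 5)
Your Problem (II) argument has a genuine gap at its central step. You assert that, because Joyce--Song stability has no strictly semistable objects, the pair classes are ``canonical and independent of any scheme $k$'', and you then invert the triangular system \eqref{eq:JSWC} against a ``$k$-free left-hand side''. But that premise is exactly what must be proved: the pair moduli $N^{\JS}_{k,\al}$, the relative tangent bundle whose Euler class enters \eqref{eq:Masigdef}, the rank $\chi(\al(k))$, and the ambient vertex algebra $W^k_{\loc,*}$ (through $\Theta_k$) all depend on $k$, and the classes for different $k$ live in different homology groups, so absence of strictly semistables only gives well-definedness for each fixed $k$, not comparability across $k$. The paper's proof of Theorem \ref{thm:independence!} supplies precisely the missing comparison: for $L_2=L_1\otimes\mO(-D_2)$ it embeds $N^{\JS}_{D_1,\al}\hookrightarrow N^{\JS}_{D,\al}$ ($D=D_1+D_2$) as the zero locus of a section of the bundle $\VV_1=Rp_*\big(\mF(D)|_{D_2}\big)$, constructs the Pvp diagram \eqref{eq:JScomparisondiag} to get $\iota_*\big[N^{\JS}_{D_1,\al}\big]^{\vir}=\big[N^{\JS}_{D,\al}\big]^{\vir}\cap e(\VV_1)$, and upgrades $-\cap e(\VV_1)$ to a morphism of vertex algebras via $\Theta_{\iota_1(D_1)}=\Theta_D-\WW_1^*-\sig^*\WW_1$; only after transporting both instances of \eqref{eq:JSWC} into one Lie algebra and using injectivity of $\big[-,e^{(1,0)}\big]$ (Lemma \ref{lem:pushJSandinj}) with induction on rank does $k$-independence follow. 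Note also that the quantum-Lefschetz comparison in the paper is between two \emph{pair} moduli spaces, not between a pair moduli and the sheaf stack capped with a square-root Euler class, and the forgetful map is only a projective bundle in the absence of strictly semistables, which is not the interesting case here.

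On Problem (I), your quiver plan agrees in substance with the paper: the flag/master quivers are completed to CY4 dg-quivers containing $\wt{Q}^{\bullet}$, the obstruction theories come from Lemma \ref{lem:cotangentofQ}, and compatibility comes from the roof (Lagrangian-correspondence) diagrams. For local CY fourfolds, however, your reduction via a tilting equivalence to a quiver with potential is not available for an arbitrary $X=\Tot(K_Y)$, and your fallback (``compact support makes the moduli finite type with the same formal features'') does not address the real difficulty: the existence of CY4 obstruction theories on the enhanced master spaces fails for general sheaf theories, by the non-vanishing of \eqref{eq:compvanishwrong} exhibited in Example \ref{ex:counterexamples}. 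What makes the local case work in the paper is the spectral correspondence: the $X$-side obstruction theory is a $-2$-shifted cotangent of the $Y$-side pair theory \eqref{eq:spectralobs}, so the framing and flag contributions can be split off independently (Proposition \ref{prop:localCY}), which is exactly the structure your ``re-run the same virtual-pullback argument for pairs'' step silently assumes and which fails in general.
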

A future joint work \cite{BKLT} will address the general case by using Jouanolou devices.  In a sequel \cite{Bo26}, I will further refine Joyce's vertex algebras to their formal deformations to generalize \eqref{eq:introWC} by including insertions.
 
\tocless{\subsection{Reducing Problem (II) to Problem (I)}}

Fix $\mA=\Coh(X)$ and a line bundle $L$ on $X$. For a fixed stability condition $\sig$, assume that all $\sig$-semistable sheaves $E$ of class $\llbracket E\rrbracket = \al\in \ov{K}(\mA)$ satisfy 
$$
H^i(E\otimes L^*) = 0 \qquad \text{whenever}\quad i>0\,.
$$
The homology of the moduli stack of ($\T$-equivariant) triples $V\otimes L\xrightarrow{s}E$, where $V$ a vector space, $E$ a sheaf on $X$, and $s$ a morphism, can also be endowed with a vertex algebra structure $W^L_{\loc, *}$ inducing a Lie algebra. For the Joyce--Song stability condition $\sig_{\JS}$ associated to $\sigma$, one considers the moduli space $N^{\JS}_{L,\al}$ of $\sig_{\JS}$-stable triples with $V=\CC$ and $\llbracket E\rrbracket=\al$. Its virtual fundamental class is expressed by wall-crossing into Joyce--Song stable pairs mentioned in Problem (I) as
\begin{equation}
\label{eq:introJSWC}
\big[N^{\JS}_{L,\al}\big]^{\vir} = \sum_{\begin{subarray}{c}
        \un{\alpha}\vdash_{\mA} \alpha\,, \\
          \phi(\alpha_i) =\phi(\alpha) \end{subarray}}\frac{1}{n!}\Big[\big\langle \mM^\sigma_{\alpha_n}\big\rangle, \cdots \Big[\big\langle \mM^\sigma_{\alpha_1}\big\rangle,  e^{(1,0)}_L\Big]\cdots\Big]
\end{equation}
where the Lie bracket comes from $W^L_{\loc, *}$. Here $\phi(\al)$ denotes the ``phase" of $\al$ with respect to $\sig$, and $e^{(1,0)}_L$ is the point class of the triple $L\to 0$. 

If \eqref{eq:introJSWC} is proved, one could define the classes $\big\langle\mM^\sigma_{\alpha}\big\rangle$ depending on a choice of $L$ inductively by noting that $\Big[-,  e^{(1,0)}_L\Big]$ is injective (see Lemma \ref{lem:pushJSandinj}). Instead, one uses \eqref{eq:Masigdef} which is a projection of the above along the map to sheaves. In other words, formula \eqref{eq:introJSWC} is a lift of the equation defining $\big\langle\mM^\sigma_{\alpha}\big\rangle$ for a fixed $L$.

With this in mind, the main result addressing Problem (II) is as follows. 
\begin{theorem}[Theorem \ref{thm:independence!}]
Let $\al$ be of positive rank and $\sig$ be a stability condition such that the $\sig$-semistable sheaves of positive rank are torsion-free. The preceding definition of $\big\langle \mM^\sigma_{\alpha}\big\rangle$ is independent of $L$ assuming that \eqref{eq:introJSWC} holds for $\sig$ as a part of Problem (I).
\end{theorem}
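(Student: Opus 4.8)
The plan is to argue by induction on the class $\al$ for the partial order on $\ov{K}(\mA)$ in which $\be<\al$ whenever $\be$ appears as a proper summand $\al_i$ of some partition $\un{\al}\vdash_{\mA}\al$ with all $\phi(\al_i)=\phi(\al)$. Since $\al$ has positive rank, additivity forces every such $\al_i$ to have positive rank, hence to be torsion-free under the hypothesis on $\sig$, so the theorem at class $\al_i$ is a legitimate instance of the inductive hypothesis. Boundedness of the family of $\sig$-semistable sheaves of class $\al$ and of their sub- and quotient objects ensures that only finitely many classes $\be$ arise this way, so the order satisfies the descending chain condition and only finitely many partitions $\un\al$ contribute at each stage. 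In the base case $\al$ has no proper same-phase decomposition, $M^\sig_\al$ carries no strictly $\sig$-semistable objects, and $\big\langle\mM^\sig_\al\big\rangle=\big[M^\sig_\al\big]^\vir$ is manifestly independent of $L$. For the inductive step I fix two line bundles $L_1,L_2$ satisfying the vanishing hypothesis for $\sig$-semistable sheaves of class $\al$, and I assume the theorem for all $\be<\al$, so that each $\big\langle\mM^\sig_{\al_i}\big\rangle$ attached to a proper summand is already an unambiguous element of $L_{\loc,*}$.

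The next step is to extract from the defining equation \eqref{eq:Masigdef}, which is the projection of \eqref{eq:introJSWC} along the forgetful map to sheaves, an identity of the shape
\begin{equation*}
\Lambda_\al(L)\cdot\big\langle\mM^\sig_\al\big\rangle \;=\; \Pi_*\big[N^{\JS}_{L,\al}\big]^\vir \;-\; \sum_{\begin{subarray}{c}\un{\al}\vdash_{\mA}\al,\ n\ge 2\\ \phi(\al_i)=\phi(\al)\end{subarray}} T_{L}(\un{\al})\,,
\end{equation*}
where $\Pi$ is the forgetful map, $\Lambda_\al(L)$ is a nonzero element of $R(\Ft)$ — its non-vanishing being exactly (the sheaf-side image of) the injectivity of $\big[-,e^{(1,0)}_L\big]$ in Lemma \ref{lem:pushJSandinj} — and each $T_L(\un\al)$ is the projection to sheaves of the iterated bracket in $W^L_{\loc,*}$ built from the $\big\langle\mM^\sig_{\al_i}\big\rangle$ and $e^{(1,0)}_L$. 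By the inductive hypothesis the classes entering $T_{L_1}(\un\al)$ and $T_{L_2}(\un\al)$ are the same, so the whole problem reduces to the single identity
\begin{equation*}
\Lambda_\al(L_2)\Big(\Pi_*\big[N^{\JS}_{L_1,\al}\big]^\vir-\!\!\sum_{\un{\al},\,n\ge2}\!\! T_{L_1}(\un{\al})\Big)\;=\;\Lambda_\al(L_1)\Big(\Pi_*\big[N^{\JS}_{L_2,\al}\big]^\vir-\!\!\sum_{\un{\al},\,n\ge2}\!\! T_{L_2}(\un{\al})\Big)\,.
\end{equation*}
This is where the assumption that \eqref{eq:introJSWC} holds (Problem (I)) enters: it presents $\big[N^{\JS}_{L,\al}\big]^\vir$ as a bracket-polynomial in the $\big\langle\mM^\sig_{\al_i}\big\rangle$ and $e^{(1,0)}_L$, and after projecting by $\Pi$ the operators $\big[-,e^{(1,0)}_L\big]$ become explicit morphisms whose sole dependence on $L$ runs through the section spaces $H^0(E\otimes L^*)$. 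Because the vanishing hypothesis kills $H^{>0}(E\otimes L^*)$ over the bounded family of $\sig$-semistable $E$, these section spaces assemble into vector bundles, and Riemann--Roch expresses their ranks $\chi(\al_i\otimes L^*)$ as affine-linear functions of the classes determined by $\ch(L)$. Both sides of the displayed identity thus become explicit combinations of the common classes $\big\langle\mM^\sig_{\al_i}\big\rangle$ with coefficients depending on $L$ only through these Euler characteristics, and the claim becomes a quantum Lefschetz-type cancellation: the normalizing factor $\Lambda_\al(L)$ is precisely the leading Euler-characteristic contribution of $\Pi$ over the $\sig$-stable locus — where $\Pi$ is an honest projective bundle, namely the projectivization of the tautological section bundle — so dividing by it removes exactly the $\ch(L)$-dependence introduced by the pair construction.

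I expect the main obstacle to be the contribution of the strictly $\sig$-semistable locus to $\Pi_*\big[N^{\JS}_{L,\al}\big]^\vir$: over the stable locus the comparison of $L_1$ and $L_2$ is a routine Segre class computation, and it is the stacky corrections over the semistable locus that defeat a naive push-pull and force the argument through the full wall-crossing formula \eqref{eq:introJSWC}. Concretely, the residual task is to verify that the $\ch(L)$-dependent coefficients produced by the brackets $T_L(\un\al)$ with $n\ge2$ combine with $\Lambda_\al(L)$ and with the stable-locus term so as to cancel identically — an identity among polynomials in the Euler characteristics $\chi(\al_i\otimes L^*)$ that one checks monomial by monomial, using the additivity $\sum_i\chi(\al_i\otimes L^*)=\chi(\al\otimes L^*)$ together with the explicit shape of the coefficients $\widetilde{U}(\un\al;\sig,\sig_{\JS})$ of \cite[§3.2]{JoyceWC}. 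A possibly cleaner alternative is to build a master moduli space of ``bilinear'' pairs $(\CC\otimes L_1)\oplus(\CC\otimes L_2)\to E$ carrying a Joyce--Song stability with an interpolating weight $t\in(0,\infty)$, whose two extreme chambers recover $N^{\JS}_{L_1,\al}$ and $N^{\JS}_{L_2,\al}$ up to known $\PP^\bullet$-bundle factors; wall-crossing in $t$ — again an instance of Problem (I), in which only proper subclasses $\al'<\al$ occur because every destabilizer splits off a sub-pair with strictly smaller sheaf part — would then relate the two virtual classes directly, with the positive-rank and torsion-free hypotheses guaranteeing that this path of stability conditions is well behaved.
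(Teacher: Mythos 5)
Your inductive setup and the use of the injectivity of $\big[-,e^{(1,0)}_L\big]$ are consistent with the paper, but the central step is missing: after projecting to sheaves, the assumed formula \eqref{eq:introJSWC} for $L_1$ only reproduces the defining relation \eqref{eq:Masigdef} of $\big\langle\mM^\sig_\al\big\rangle^{L_1}$, and the formula for $L_2$ only reproduces that of $\big\langle\mM^\sig_\al\big\rangle^{L_2}$. These two identities live in two different Lie algebras $W^{L_1}_{\loc,*}$ and $W^{L_2}_{\loc,*}$, and you supply no morphism comparing them, nor any relation between the virtual classes $\big[N^{\JS}_{L_1,\al}\big]^{\vir}$ and $\big[N^{\JS}_{L_2,\al}\big]^{\vir}$, which live on different moduli spaces. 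Consequently the ``identity among polynomials in the Euler characteristics $\chi(\al_i\otimes L^*)$'' that you propose to check monomial by monomial cannot even be written down: the $L$-dependence of $\Omega^{\sig,k}_\al$ is not a priori a function of those integers alone, and asserting the cancellation is equivalent to asserting the theorem. The strictly semistable locus is not a residual nuisance here; the cross-$L$ comparison is the entire content of the statement, and it requires a genuine geometric input that the proposal does not provide.

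The paper supplies exactly this input. One reduces to $L_2=L_1\otimes\mO_X(-D_2)$ for a smooth divisor $D_2$, constructs a closed embedding $\iota$ of $N^{\JS}_{L_1,\al}$ into the larger pair space (for $D=D_1+D_2$) realizing it as the zero locus of a section of the bundle $\VV_1=Rp_*\big(\mF(D)|_{D_2}\big)$, and then proves $\iota_*\big[N^{\JS}_{L_1,\al}\big]^{\vir}=\big[N^{\JS}_{D,\al}\big]^{\vir}\cap e(\VV_1)$. Because the obstruction theories are the self-dual $\RHom(I,I)_0$, this Euler-class relation is not a routine Segre-class or push-pull computation: it requires building the full Park virtual-pullback diagram \eqref{eq:introJSdiag}, which occupies most of the paper's proof. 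Only then does the vertex-algebra formalism take over: $(\iota_1)_*$ together with $-\cap e(\VV_1)$ is shown to be a morphism of vertex algebras (using $\Theta_{\iota_1(D_1)}=\Theta_D-\WW_1^*-\sigma^*\WW_1$), so applying the assumed wall-crossing for both line bundles, the injectivity of $\big[-,e^{(1,0)}\big]$, and induction on rank yields the claim. Your fallback suggestion of a master space of bilinear pairs $(\CC\otimes L_1)\oplus(\CC\otimes L_2)\to E$ is essentially Mochizuki's and Joyce's route, which the paper deliberately replaces; in the CY4 setting that auxiliary space would need its own CY4 obstruction theory and localization argument, which is not an instance of the assumed Problem~(I) and is not constructed in your proposal.
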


To explain the idea of the proof given in §\ref{sec:proof}, it is easier to present it for torsion-free sheaves on a surface $S$. In this case Problem (II) is already solved in \cite[§9.2, §9.3]{JoyceWC} by applying the method in \cite[Proposition 7.19]{mochizuki}.  Mochizuki's idea was to construct a larger moduli space $\MM^{\JS}_{L_1,L_2,\al}$ that contains both $N^{\JS}_{L_1,\al}$ and $N^{\JS}_{L_2,\al}$ for two different line bundles $L_1,L_2\to S$. A localization with respect to a natural $\GG_m$-action on $\MM^{\JS}_{L_1,L_2,\al}$ produced a formula relating the invariants $\big[N^{\JS}_{L_i,\al}\big]^{\vir}$  but not the moduli spaces themselves. After a computation in \cite[§9.3]{JoyceWC}, this eventually implies
\begin{equation}
\label{eq:introindependence}
\big\langle \mM^\sigma_{\alpha}\big\rangle^{L_1} = \big\langle \mM^\sigma_{\alpha}\big\rangle^{L_2}\,.
\end{equation}

I sought an alternative argument of this result because i) I wanted to give a more conceptual geometric proof which begins be relating the moduli spaces themselves ii) 
$\MM^{\JS}_{L_1,L_2,\al}$ seemed like an unnatural moduli space without a deeper interpretation.

My proof  reduces to the situation when $L_2 = L_1\otimes \mO_S(-D_2)$ for a smooth divisor $D_2$. Using the section $L_2\to L_1$ that vanishes on $D_2$, there is a closed embedding 
$$
\begin{tikzcd}
N^{\JS}_{L_1,\al}\arrow[r,hookrightarrow,"\iota"]&N^{\JS}_{L_2,\al}
\end{tikzcd}
$$
that identifies $\iota\big(N^{\JS}_{L_1,\al}\big)$ with the vanishing locus of a section of a vector bundle $\VV_1$. Denoting the universal sheaf on $S\times N^{\JS}_{L_2,\al}$ by $\mF$, the projection from the product to the second factor by $p$, and the pullback of a line bundle $L$ along the projection to $S$ by $L_S$, the vector bundle is given by 
$$
\VV_1 = p_*\Big(\mE\otimes (L^*_2)_S|_{D_2\times N^{\JS}_{L_2,\al}}\Big)\,.
$$
There is then the virtual pullback diagram given by the black part in
\begin{equation}
    \label{eq:introJSdiag}
    \begin{tikzcd}[column sep=large] \arrow[d,violet,"\ov{\kappa}"]\Pur{\widetilde{\FF}_1^\vee[2]}\arrow[r,violet,"{\ov{\mu}}"]&\Pur{\FF_1}\arrow[d,violet, "\mu"]\arrow[r,violet]&\Pur{\VV_1^\vee[1]}\arrow[d,violet,equal]\arrow[r, violet]&\Pur{\widetilde{\FF}_1^\vee[3]}\arrow[d,violet]\\
   \arrow[d] \iota^*\big(\FF_{2}\big)\arrow[r,"\kappa"]&\Pur{\widetilde{{\color{Black}\FF}}}_1\arrow[r]\arrow[d]&\VV_1^\vee[1]\arrow[d,equal]\arrow[r]&\iota^*\big(\FF_{2}\big)[1]\arrow[d]\\   \iota^*\big(\mathbb{L}_{N^{\JS}_{L_2, \al}}\big)\arrow[r]&\mathbb{L}_{N^{\JS}_{L_1, \al}}\arrow[r]&\LL_{\iota}\arrow[r]&\iota^*\big(\mathbb{L}_{N^{\JS}_{L_2, \al}}\big)[1]
    \end{tikzcd}\,.
\end{equation}
The first two vertical arrows are the standard pair obstruction theories. Together with the equality they form a morphism of distinguished triangles. By \cite{KKP, Manolache}, such a diagram implies
\begin{equation}
\label{eq:introJScomparison}
\iota_*\Big[N^{\JS}_{L_1,\al}\Big]^{\vir} = \Big[N^{\JS}_{L_2,\al}\Big]^{\vir}\cap e(\VV_1)
\end{equation}
where $e(-)$ denotes the Euler class of a vector bundle. This observation replaces Mochizuki's argument in my proof as it compares the pair moduli spaces and their invariants. 

The rest of the argument is simple as it combines \eqref{eq:introJSWC} with an argument along the lines of what I used in \cite[§3.1 and §3.2]{bojko2}. Some details that are specified in \ref{sec:proof} are omitted here. 
I first extend $-\cap e(\VV_1)$ to a morphism of vertex algebras and their associated Lie algebras. By \eqref{eq:NJSD1Dvir}, one can equate 
$$
(\iota)_*\Big(\textnormal{RHS of }\eqref{eq:introJSWC}\textnormal{ for }L_1\Big) =\Big(\textnormal{RHS of }\eqref{eq:introJSWC}\textnormal{ for }L_2\Big)\cap e\big(\VV_1\big)
$$
where $\iota_*$ is an appropriate extension of the pushforward in \eqref{eq:introJScomparison}. This implies \eqref{eq:introindependence}.

Let us now return to working with a CY fourfold X. In this case, the obstruction theories $\FF_i$ of $N^{JS}_{L_i,\al}$ are given by $\RHom(I,I)_0$ at $I\cong\{L_i\to F\}$ in $D^b(X)$. To conclude \eqref{eq:introJScomparison}, one now needs the full Pvp diagram \eqref{eq:introJSdiag} where $\ov{\theta} = \theta^\vee[2]$ for a map of complexes. The proof that the natural morphisms fit into this diagram forms the bulk of §\ref{sec:proof}. The rest of the argument remains the same.

One can apply the above reasoning to yet another scenario.
\begin{example}
\label{ex:introspectral}
Let $Y$ be a smooth quasi-projective three-fold and $X=\Tot(K_Y)\xrightarrow{r}Y$ for the canonical bundle $K_Y$. Suppose that there is an action by a torus $\T$ on $X$ non-trivial along the $K_Y$ direction and preserving the natural CY4 volume form. Let $\al$ have 3-dimensional support, $\sig$ be Gieseker or slope stability, and $L=r^*L_Y$ for a line bundle $L_Y$ on $Y$. Everything discussed in this subsection applies to $\big[N^{\JS}_{L,\al}\big]^{\vir}$ and $\big\langle \mM^\sig_{\al}\big\rangle^{L}$ defined using equivariant localization.
\end{example}
Using spectral correspondence in §\ref{sec:spectralWC}, it is shown that wall-crossing \eqref{eq:introJSWC} holds in this situation. The two statements together imply the next theorem.
\begin{theorem}[Corollary \ref{cor:spectralindep}]
\label{thm:introlocalindep}
In the situation of Example \ref{ex:introspectral}, the invariants $\big\langle \mM^\sig_{\al}\big\rangle^{L}$ are independent of $L$. 
\end{theorem}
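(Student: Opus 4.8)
The strategy is to obtain this as a corollary of the mechanism of §\ref{sec:proof}, once the Joyce--Song wall-crossing formula \eqref{eq:introJSWC} has been secured for $\sig$ in this geometry. Thus I would split the argument into two stages: first establish that \eqref{eq:introJSWC} holds for $\sig$ on $X=\Tot(K_Y)$ in the $\T$-equivariantly localized theory, using the spectral correspondence; second, run the argument of §\ref{sec:proof} with $\T$-equivariant localization inserted throughout, checking that each ingredient survives and that the torsion nature of $\al$ causes no trouble. Since Example \ref{ex:introspectral} asserts that the discussion of §\ref{sec:proof} applies to $\big[N^{\JS}_{L,\al}\big]^{\vir}$ and $\big\langle\mM^\sig_\al\big\rangle^{L}$ defined via equivariant localization, the second stage is largely a transcription, and the real work is in the first.

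For the first stage I would use that $r_{*}$ identifies compactly supported coherent sheaves on $X$ with $K_Y$-twisted Higgs sheaves $(E,\phi\colon E\to E\otimes K_Y)$ on $Y$, and that this equivalence is compatible with the relevant $\RHom$-complexes, hence with the $-2$-shifted symplectic pairing, with Joyce's vertex algebra from §\ref{sec:introWCeqhom}, and with Park's virtual pullback diagrams. Because $\T$ acts non-trivially along the fibre of $r$ and preserves the volume form, the $\T$-fixed loci of $M^{\sig}_{\al}$ and of the Joyce--Song pair moduli are scheme-theoretically supported on finite thickenings of the zero section $Y\subset X$, hence proper, so $\big[M^\sig_\al\big]^{\vir}_{\T,\loc}$, $\big\langle\mM^\sig_\al\big\rangle^{L}$ and $\big[N^{\JS}_{L,\al}\big]^{\vir}_{\T,\loc}$ are all defined. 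Wall-crossing \eqref{eq:introJSWC} on $X$ then follows from §\ref{sec:spectralWC} by transporting the corresponding statement through the spectral correspondence (alternatively, one may present the CY4 category locally by a CY4 dg-quiver and invoke Problem (I) for quivers).

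For the second stage I would first verify the standing hypothesis of §\ref{sec:proof}, namely $H^i(E\otimes L^{*})=0$ for $i>0$ and all $\sig$-semistable $E$ of class $\al$: writing $L=r^{*}L_Y$ one has $H^i(X,E\otimes L^{*})=H^i(Y,r_{*}E\otimes L_Y^{*})$, and since Gieseker- (or slope-) semistable sheaves of a fixed class on $X$ form a bounded family, $L_Y$ may be chosen with $L_Y^{*}$ negative enough for Serre vanishing to apply. The reduction to $L_2=L_1\otimes r^{*}\mO_Y(-D_Y)$ with $D_Y\subset Y$ a smooth divisor then gives $L_2=L_1\otimes\mO_X\big(-r^{-1}(D_Y)\big)$ with $r^{-1}(D_Y)$ a $\T$-invariant divisor, so the closed embedding $\iota\colon N^{\JS}_{L_1,\al}\hookrightarrow N^{\JS}_{L_2,\al}$, the vector bundle $\VV_1=p_{*}\big(\mF\otimes(L_2^{*})_X|_{r^{-1}(D_Y)\times N^{\JS}_{L_2,\al}}\big)$, and the entire Pvp diagram \eqref{eq:introJSdiag}---built from $\T$-equivariant complexes on $X$---are $\T$-equivariant. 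Applying \cite{KKP, Manolache} equivariantly yields the $\T$-localized form of \eqref{eq:introJScomparison}, $\iota_{*}\big[N^{\JS}_{L_1,\al}\big]^{\vir}=\big[N^{\JS}_{L_2,\al}\big]^{\vir}\cap e(\VV_1)$; extending $-\cap e(\VV_1)$ to a morphism of the vertex algebras $W^{L_1}_{\loc,*}\to W^{L_2}_{\loc,*}$ and their Lie algebras as in §\ref{sec:proof}, combining with \eqref{eq:introJSWC} for $L_1$ and $L_2$, and using that $\big[-,e^{(1,0)}_L\big]$ is injective (Lemma \ref{lem:pushJSandinj}) together with the defining projection \eqref{eq:Masigdef}, gives $\big\langle\mM^\sig_\al\big\rangle^{L_1}=\big\langle\mM^\sig_\al\big\rangle^{L_2}$. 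A chaining argument over $\Pic(Y)$, together with the freedom in the choice of $L_Y$ above, removes the auxiliary restriction on $L_2/L_1$ and yields independence for all admissible $L$.

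I expect the principal obstacle to be the first stage: checking that the spectral correspondence is compatible with all the structures in play---in particular that it carries the self-dual three-term obstruction theory $\RHom(I,I)_0$ on $X$, with its $-2$-shifted symplectic pairing, to the Higgs-side datum in a way compatible with Joyce's vertex algebra operations and with the functoriality of Park's virtual pullback established in the appendix---and that the induced $\T$-fixed loci are genuinely proper so that \eqref{eq:introJSWC} is even meaningful. Once that bookkeeping is in place, the remaining steps are a routine adaptation of §\ref{sec:proof}, the only genuinely new input being the torsion (rather than torsion-free) nature of $\al$, which is absorbed by the pushforward $r_{*}$ to $Y$ as above.
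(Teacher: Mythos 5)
Your proposal follows essentially the same route as the paper: your stage one is exactly the content of §\ref{sec:localCY4} (Proposition \ref{prop:localCY} and Corollary \ref{cor:JSWC}, where the spectral description \eqref{eq:spectralobs} supplies the CY4 obstruction theories needed for pair wall-crossing, rather than any ``transport'' of a wall-crossing statement from the Higgs side), and your stage two is the equivariant run of §\ref{sec:proof} with divisors pulled back from $Y$, exactly as in Corollary \ref{cor:spectralindep} — the paper merely notes the shortcut that, since $\VV_1$ modifies $\MM_O[-1]$ independently of $\MM_O^\vee[3]$ in \eqref{eq:spectralobs}, the Pvp diagram \eqref{eq:JScomparisondiag} comes essentially for free instead of by the full diagram chase you propose to redo equivariantly. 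Two small corrections: properness of the $\T$-fixed loci does not follow from their being supported on thickenings of the (possibly non-proper) zero section and is instead part of Assumption \ref{ass:pairWC}.b), checked by citation to \cite{KLT}; and in your Serre-vanishing step it is $L_Y^{*}$ that must be taken sufficiently ample (i.e.\ $L_Y$ sufficiently negative), not ``$L_Y^{*}$ negative enough''.
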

\tocless{\subsection{Stable $\infty$-Pvp diagrams and their functoriality}}

After diagram-chasing like the one in §\ref{sec:proof}, it becomes clear that stable $\infty$-categories provide a more suitable framework for constructing diagrams of the form \eqref{eq:introJSWC}. Therefore, most of this work is written in this language with the necessary background recalled in Appendix \ref{appendix}. When mentioning
   \begin{equation}
   \label{eq:introdiagsym}
    \begin{tikzcd}[column sep=large]   \Pur{\MM[-1]}\arrow[d,equal, violet]\arrow[r,violet]&\arrow[d,"\ov{\kappa}",violet]\Pur{\wt{\FF}^\vee[2]}\arrow[r,"{\bar{\mu}}", violet]&\Pur{\FF}\arrow[d,"\mu", violet]\arrow[r, violet]&\Pur{\MM}\arrow[d,equal, violet]\\
    \Pur{\MM[-1]}\arrow[d]\arrow[r,"\eta", violet]&\arrow[d] \Pur{f^*\big(\EE\big)}\arrow[r,"\kappa", violet]&\Pur{\FFna}\arrow[r, violet]\arrow[d]&\Pur{\MM}\arrow[d]\\
   \mathbb{L}_{f}[-1]\arrow[r]&   f^*\big(\mathbb{L}_{M}\big)\arrow[r]&\mathbb{L}_{N}\arrow[r]&\mathbb{L}_{f}
    \end{tikzcd}
\end{equation}
in this subsection, I will only mean the \Pur{purple part}
because the lower half is well understood. The precise formulation of this diagram in the language of stable $\infty$-categories is explained in §\ref{Asec:functoriality}.

Due to $\infty$-functoriality of cones, it is enough to work with the self-dual homotopy commutative diagram  (see Proposition \ref{prop:sympullback})\footnote{For pretriangualated categoroes this was independently remarked in \cite{LiuVW}.}
 \begin{equation}
   \label{eq:introstartingIxI}
\begin{tikzcd}
\arrow[d]\MM[-1]\arrow[r,"\eta"]&f^*(\EE)\arrow[d,"\ov{\eta}"]\\
  0\arrow[r]&\MM^\vee[3]
\end{tikzcd}\,.
\end{equation}
Interestingly, it does not seem possible, without further conditions, to extend it to \eqref{eq:introdiagsym} when working strictly within triangulated categories. This is not the case for a different part of the Pvp diagram as discussed in \cite[Appendix C]{Park}.

When it comes to proving wall-crossing, one needs to construct Pvp diagrams along multiple consecutive morphisms
$$
 \begin{tikzcd}
     N_2\arrow[r,"f_2"] \arrow[rr,bend left = 50, "f"]&N_1\arrow[r, "f_1"]&M
 \end{tikzcd}\,.
$$
 The final result should be the same as applying the construction along the composition $f$. This is what I call \textit{functoriality} of $\infty$-Pvp diagrams proved in Theorem \ref{thm:functsympull} along with converses. This result will be used in the upcoming work proving wall-crossing in full generality. I also expect further applications of these results in the future.

\tocless{\subsection{Proving wall-crossing in the presence of obstruction theories and why these do not always exist}}\label{sec:introproofWC}
The proof of \eqref{eq:introWC} from Problem (I) follows the steps in \cite{JoyceWC} based on \cite{mochizuki}. There are two main differences: i) the master spaces need to be endowed with CY4 obstruction theories and ii) the signs coming from comparing orientations must be carefully computed. To address i), I rely on the tools for manipulating $\infty$-Pvp diagrams discussed in the previous subsection. For ii), I list all the necessary orientation conventions in §\ref{sec:orconventions}. They are intended to bridge the gap between the conventions in \cite[Theorem 1.15]{CGJ} leading to the sign comparison under taking direct sums and the ones in \cite[Theorem 7.1]{OT} used for equivariant localization. In the process, I realized an inconsistency in \cite[§7]{OT}. Corrections in §\ref{sec:orconventions} lead to a cleaner equivariant localization formula \eqref{eq:vireqloc} which is explicitly independent of choices of resolutions. Using this equation, the computation in §\ref{sec:WCflags} convincingly recovers (the equivariant version of) Joyce's vertex algebra as predicted in \cite{GJT}. 

The main idea of the proof, in a few sentences, is to refine wall-crossing for objects in $\mA$ to an enhanced category $\mB_{A_r}$ which consists of objects in $\mA$ framed by flags. They can be represented by the $A_r$ quiver for some $r$ where the last vertex takes values in $\mA$ rather than in vector spaces. Projecting back to $\mA$ implies \eqref{eq:introJSWC} but can also give rise to refined formulae with insertions as will be the case in \cite{Bo26}. To prove the enhanced wall-crossing for flags, one constructs an \textit{enhanced master space} $\MS$. It approximates a $\PP^1$-bundle over the moduli of semistable objects in $\mB_{A_r}$. More concretely, one imposes an appropriate stability condition on the category represented by the following quiver.
\begin{center}
\includegraphics[scale=1]{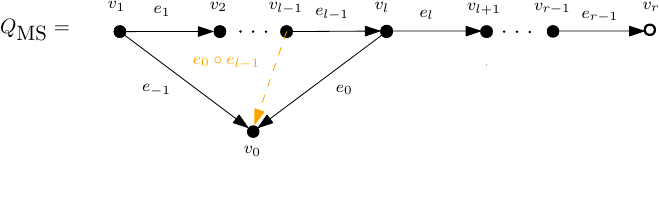}
\end{center}
 The vertex $\circ$ is again replaced by an object in $\mA$, and the \BuOr{dashed} arrow is a relation.

To do all this on the level of enumerative invariants, one needs a CY4 obstruction theory $\FF_{\MS}$ on MS. For a CY4 dg-quiver, the obstruction theories of the moduli of its representations are explicitly described by Lemma \ref{lem:cotangentofQ}. This allows me to translate $\FF_{\MS}$ into the data of a CY4 completion of $Q_{\MS}$. This completion is given by
\begin{equation}
\label{eq:introCY4QMS}
    \includegraphics{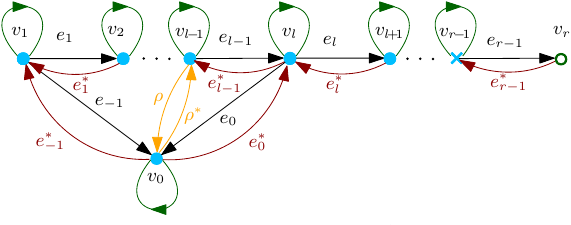}
\end{equation}
with superpotential
$$
\BuOr{\mH_{\MS}}= \BuOr{\rho^*}\circ e_0\circ e_{l-1}\,.
$$
Different colors are used to represent different degrees: black for degree 0, orange for degree \BuOr{$-1$}, red for degree \Ma{$-2$}, and green for degree \FG{$-3$}. The contribution of \FG{$\circ$} is the obstruction theory $\EE$ for $\mA$.

Of course, one needs to induce $\FF_{\MS}$ via the diagram \eqref{eq:introstartingIxI}. In Example \ref{ex:counterexamples}, such a direct construction is shown to be impossible for sheaves on a general CY fourfold. This verifies that an additional argument is needed. It was proposed by Nikolas Kuhn and Felix Thimm and will appear in a future joint work \cite{BKLT}. Nevertheless, two cases included in the next theorem can be addressed directly. 
\begin{theorem}[Corollary \ref{cor:WCconsequences}]
\label{thm:intro}
Equivariant wall-crossing from Problem (I) holds when $\mA$ is the category of 
\begin{enumerate}
\item compactly supported sheaves on a CY fourfold $X$ from Example \ref{ex:introspectral},
\item degree 0 representations of a CY4 dg-quiver.
\end{enumerate}
This also applies to stable pair wall-crossing on $X$ from 1), see §\ref{sec:pairWC} for more details.
\end{theorem}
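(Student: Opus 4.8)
The plan is to prove the statement in the form in which Problem (I) is naturally attacked: first establish the \emph{enhanced} wall-crossing formula in the category $\mathcal{B}_{A_r}$ of objects of $\mathcal{A}$ framed by flags (an $A_r$-quiver whose last vertex takes values in $\mathcal{A}$), and then project back to $\mathcal{A}$; since both the formula \eqref{eq:introWC} and the Joyce--Song pair wall-crossing \eqref{eq:introJSWC} are projections of this enhanced statement, it suffices to build and analyze the enhanced master space $\MS$. The one ingredient that is not automatic is a Calabi--Yau four obstruction theory $\FF_{\MS}$ on $\MS$ together with the self-dual square \eqref{eq:introstartingIxI} inducing it and, through $\infty$-functoriality of cones, the full Park virtual pullback diagram \eqref{eq:introdiagsym}. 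Once this is in hand, the argument follows \cite{JoyceWC} (built on \cite{mochizuki}) essentially verbatim: a $\GG_m$-localization on $\MS$ relates the two stability conditions through the wall contributions, with the vertex-algebra identification of \S\ref{sec:WCflags} and the corrected equivariant localization formula \eqref{eq:vireqloc} replacing their non-equivariant or K-theoretic counterparts. So both cases of the theorem reduce to supplying this obstruction-theoretic data and tracking signs.

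For case (2), degree $0$ representations of a CY4 dg-quiver $Q$, Lemma \ref{lem:cotangentofQ} expresses the cotangent complexes of the moduli of representations purely in terms of the quiver. The enhanced category $\mathcal{B}_{A_r}$ and the master-space stability condition are themselves realized by quivers, so $\MS$ is an open substack of the moduli of representations of the CY4-completed quiver $Q_{\MS}$ of \eqref{eq:introCY4QMS} with superpotential $\mathcal{H}_{\MS}=\rho^{*}\circ e_0\circ e_{l-1}$. Hence $\FF_{\MS}$ is read off directly from this combinatorial data, and the square \eqref{eq:introstartingIxI} -- and thence \eqref{eq:introdiagsym} via Proposition \ref{prop:sympullback} and the functoriality of \S\ref{Asec:functoriality} -- is constructed by hand. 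Orientations exist in full generality by Corollary \ref{cor:quiveror}, and the sign compatibilities needed for the $\GG_m$-localization (under direct sums, following \cite[Theorem 1.15]{CGJ}, and under equivariant localization, following \cite[Theorem 7.1]{OT} as corrected in \S\ref{sec:orconventions}) hold; the computation of \S\ref{sec:WCflags} then produces \eqref{eq:introWC}, and pinning down the framing vertex yields \eqref{eq:introJSWC}.

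For case (1), compactly supported sheaves on $X=\Tot(K_Y)\xrightarrow{r}Y$ with $\T$ acting non-trivially along the fibre and preserving the volume form (Example \ref{ex:introspectral}), I would transport the quiver argument through the spectral correspondence of \S\ref{sec:spectralWC}: $\T$-equivariant compactly supported sheaves on $X$ correspond to Higgs-type data on $Y$, and -- crucially for the obstruction-theoretic bookkeeping -- the $-2$-shifted symplectic structures, orientations, and virtual classes match under this equivalence after equivariant localization. The master space for sheaves on $X$ is thereby identified with one carrying an obstruction theory of the same explicit shape as in case (2), so \eqref{eq:introstartingIxI} is again available; here the hypothesis on $X$ is genuinely essential, since Example \ref{ex:counterexamples} shows the square cannot be built for sheaves on a general CY fourfold. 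One then re-runs the argument of the previous paragraph. Stable-pair wall-crossing on such $X$ is the same construction with $r=1$ and the framing vertex pinned to $L=r^{*}L_Y$: the point class $e^{(1,0)}_L$ is the fixed framing, and the localization and projection steps are unchanged, as detailed in \S\ref{sec:pairWC}.

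The hardest part will be exactly the construction of $\FF_{\MS}$, i.e.\ showing that the self-dual square \eqref{eq:introstartingIxI} lifts to the full diagram \eqref{eq:introdiagsym}: within triangulated categories this extension fails without extra input, which is precisely why the $\infty$-categorical framework of the appendix (and Theorem \ref{thm:functsympull}) is needed, and why only the quiver case -- where the data is combinatorial -- and the local/spectral case -- where the geometry is fibrewise affine and the homotopy-coherent choices can be made explicitly -- are within reach. Beyond that, I expect the main remaining difficulty to be bookkeeping rather than conceptual: carefully propagating orientation signs through the $\GG_m$-localization on $\MS$ and reconciling the direct-sum conventions of \cite[Theorem 1.15]{CGJ} with the equivariant-localization conventions of \cite[Theorem 7.1]{OT} as corrected in \S\ref{sec:orconventions}, which is error-prone precisely because several independent sign sources combine.
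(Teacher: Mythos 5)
Your proposal follows essentially the same route as the paper: reduce both cases to supplying the CY4 obstruction theory on the enhanced (flag/master-space) moduli — for quivers by attaching $\wt{Q}^\bullet$ to the CY4-completed quiver \eqref{eq:introCY4QMS} and reading everything off Lemma \ref{lem:cotangentofQ} (Proposition \ref{prop:construct}), for local fourfolds by the spectral correspondence construction of Proposition \ref{prop:localCY} — and then run the $\GG_m$-localization of §\ref{sec:WCflags} with the orientation conventions of §\ref{sec:orconventions}. The only slight imprecision is that in case (1) the paper does not literally identify the master space with a quiver one; it builds the self-dual square directly on the Higgs-pair side by splitting off the framing/flag terms from the $Y$-side obstruction theory, but this does not change the argument.
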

For the first result, I combine the CY4 dg-quiver in question with \eqref{eq:introCY4QMS} to construct a new one. Its stable representations are parametrized by MS with the appropriate obstruction theory $\FF_{\MS}$. For local CY fourfolds, I use spectral correspondence and the observation of \cite[(6.11)]{TT2} that describes the obstruction theory for pairs on $X$ as the $-2$-shifted cotangent bundle of the obstruction theory for pairs on $Y$. Since MS parametrizes generalized pairs, this reasoning applies as well.  In \cite{LiuVW}, spectral correspondence was already used to construct obstruction theories on $\MM^{\JS}_{L_1,L_2,\alpha}$ in the case of 2-dimensional sheaves on local CY threefolds. This also motivated the current approach.\footnote{In \cite{KLT}, the authors studied a related problem for CY threefolds. They developed an approach that relies on almost perfect obstruction theories, which have, unfortunately, not yet been developed for CY fourfolds.}
\tocless{\subsection{Local CY fourfolds and quivers}}\label{sec:introquivloc}
I have already discussed one application of Theorem \ref{thm:intro} in Theorem \ref{thm:introlocalindep}. Using the stable pair wall-crossing, one can also prove the DT/PT correspondence for local CY fourfolds on the level of equivariant virtual fundamental classes. In \cite{Bo26}, I will use formal deformations of vertex algebras to prove the correspondence for particular tautological insertions.

 Fix a compactly supported curve class $\beta\in H^*_{\textnormal{cs}}(X)$ and $m\in\ZZ$. Then the moduli spaces 1) DT$_{\beta,m}$, 2) PT$_{\beta,m}$ parametrize pairs $\mO_X\xrightarrow{s} F$ with one-dimensional $F$ such that 
 \begin{enumerate}
 \item $s$ is surjective,
 \item $\coker(s)$ is 0-dimensional, and $F$ is pure
 \end{enumerate}
 respectively. They admit virtual fundamental classes induced by the traceless obstruction theories of the complexes $I=\{\mO_X\to F\}$. The following is an immediate consequence of Theorem \ref{thm:intro}.
\begin{corollary}
\label{cor:introDTPT}
Let $\mA$ be the abelian category from Example \ref{ex:DTPT}, then the wall-crossing formula
$$
\sum_{m\in\ZZ}\big[\textnormal{DT}_{\beta,m}\big]^{\vir}q^m = \exp\bigg\{\sum_{n>0}\Big[\big\langle \mM_{np}\big\rangle,- \Big]q^n\bigg\}\sum_{m_0\in \ZZ}\big[\textnormal{PT}_{\beta,m_0}\big]^{\vir}q^{m_0}
$$
holds for compatible choices of orientations. Here $p$ is the Poincaré dual of a point, and $\big\langle \mM_{np}\big\rangle$ are the invariants counting 0-dimensional sheaves defined uniquely using \eqref{eq:introJSWC}.
\end{corollary}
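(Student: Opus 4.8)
The plan is to present both $\text{DT}_{\beta,m}$ and $\text{PT}_{\beta,m}$ as moduli of $\sigma$-stable Joyce--Song pairs $\mathcal{O}_X\xrightarrow{s}F$ — that is, triples with $V=\mathbb{C}$ and $L=\mathcal{O}_X$ — inside the abelian category $\mathcal{A}$ from Example \ref{ex:DTPT}, for two pair stability conditions sitting in adjacent chambers: a ``DT chamber'' in which stability forces $s$ to be surjective (so $I=\{\mathcal{O}_X\to F\}$ is the ideal sheaf of a one-dimensional subscheme) and a ``PT chamber'' in which it forces $F$ to be pure with zero-dimensional cokernel. The stability on one-dimensional sheaves is held fixed throughout — Gieseker or slope as in Example \ref{ex:introspectral} — and only the slope assigned to the section varies. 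The first step is to check that these identifications carry the traceless obstruction theory of $I$ on each side to the pair obstruction theory used by the wall-crossing machinery, so that $\big[\text{DT}_{\beta,m}\big]^{\vir}$ and $\big[\text{PT}_{\beta,m}\big]^{\vir}$ are exactly the equivariant virtual classes produced on the pair moduli and, after rigidification, define elements of the pair Lie algebra $W^{\mathcal{O}_X}_{\loc}$; here one uses that $X=\Tot(K_Y)$ with the $\T$-action of Example \ref{ex:introspectral} has proper $\T$-fixed loci, so the localized classes exist.

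Next I would invoke Theorem \ref{thm:intro}(1) together with its stated extension to stable pair wall-crossing on $X$, which supplies the formula of Problem (I), in particular \eqref{eq:introJSWC}, for this $\mathcal{A}$. Passing from the DT chamber to the PT chamber crosses a locally finite collection of walls; on each wall, because the stability on one-dimensional sheaves is unchanged, the only classes of the same phase as the pair that can enter an iterated bracket are multiples $np$ ($n>0$) of the point class. Collecting the terms $\widetilde{U}(\un{\alpha};\sigma,\sigma')\,\big[\langle\mathcal{M}^\sigma_{\alpha_1}\rangle,\cdots,\langle\mathcal{M}^\sigma_{\alpha_n}\rangle\big]$ over these walls and over all partitions of the extra charge into zero-dimensional pieces is the routine combinatorial step — the same reorganization as in the CY3 DT/PT story and in \cite{bojko2}: tracked by $q$, the sum over such partitions assembles into $\exp\big\{\sum_{n>0}[\langle\mathcal{M}_{np}\rangle,-]q^n\big\}$ acting on $\sum_{m_0}\big[\text{PT}_{\beta,m_0}\big]^{\vir}q^{m_0}$, the exponential being well-defined because the $\langle\mathcal{M}_{np}\rangle$ generate a pro-nilpotent Lie subalgebra acting on the $q$-adically completed module. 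The $\langle\mathcal{M}_{np}\rangle$ counting zero-dimensional sheaves are precisely those defined by \eqref{eq:introJSWC} with $L=\mathcal{O}_X$, so no separate appeal to Problem (II) is needed. The orientation signs are controlled by fixing compatible orientations as in \S\ref{sec:orconventions} — the content of the phrase ``for compatible choices of orientations'' — and the equivariant localization underlying all the classes uses the corrected conventions leading to \eqref{eq:vireqloc}.

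I expect the main obstacle to be not any new geometric input — the statement genuinely is a corollary of Theorem \ref{thm:intro} — but the bookkeeping: verifying that across the DT/PT chamber transition only zero-dimensional sheaves contribute as wall-crossing factors, and that the $\widetilde{U}$-coefficient combinatorics collapses exactly to the claimed exponential. A secondary point requiring care is conventions, namely confirming that the traceless pair obstruction theory on $X=\Tot(K_Y)$ is the one to which Theorem \ref{thm:intro} applies through the spectral correspondence and the $-2$-shifted cotangent description of \cite[(6.11)]{TT2}, and tracking the orientation signs through the localization formula \eqref{eq:vireqloc}.
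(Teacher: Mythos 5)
Your proposal is correct and follows essentially the same route as the paper: Corollary \ref{cor:introDTPT} is obtained there as a direct application of the stable pair wall-crossing part of Theorem \ref{thm:intro} (via Corollary \ref{cor:WCconsequences} and the verification of Assumption \ref{ass:pairWC} in §\ref{sec:localCY4}) to the Toda-type family of weak stability conditions on the pair category with $O=\mO_X$ interpolating between the DT and PT chambers of Example \ref{ex:DTPT}, where only zero-dimensional (point-class) sheaves can contribute at the wall and the universal coefficients assemble into the stated exponential. The only cosmetic difference is that in the paper's family all one-dimensional sheaves carry a single fixed phase while the phase of the point sheaves moves, rather than a fixed Gieseker/slope stability on one-dimensional sheaves with a varying slope on the section, but this does not affect the argument.
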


Choosing $Y=\CC^3$ in Example \ref{ex:introspectral} and $\beta =0$ in Corollary \ref{cor:introDTPT} leads to a formula computing $\big[\Hilb^m(X)\big]^{\vir}$ in terms of $\big\langle \mM_{np}\big\rangle$. However, it is more suitable to formulate this wall-crossing in terms of quivers. An ongoing work with multiple co-authors aims to refine the arguments in \cite{bojko2} to provide an alternative proof of Nekrasov's conjecture \cite{Nekrasov1, Nekrasov2} and, more generally, to address $\big[\CC^4/\Gamma\big]$ for finite subgroups $\Gamma\subset SU(4)$. Let 
$$
 \includegraphics[scale=1.2]{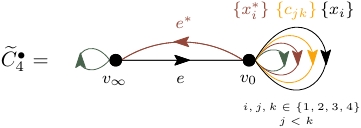}
$$
be the CY4 dg-quiver with superpotential 
$$
\BuOr{\mH} =\frac{1}{4} \sum_{\sig\in S_4}(-1)^{\sig}\BuOr{c_{\sig(1)\sig(2)}}\big[x_{\sig(3)},x_{\sig(4)}\big]\,.
$$
For a stability condition $\mu_>$ and the dimension vector $(d_{\infty},d_0) = (1,n)$, the $\mu_>$-semistable degree-0 representations of $\wt{C}_4^{\bullet}$ correspond to ideal sheaves of length $n$ on $\CC^4$. Corollary \ref{cor:hilbWC}  contains the wall-crossing formula
$$
\sum_{n>0}\big[\Hilb^n(\CC^4)\big]^{\vir}q^n= \exp\bigg\{\sum_{n>0}\Big[\big\langle \mM_{np}\big\rangle,- \Big]q^n\bigg\}e^{(1,0)}_{\mO_X}\,.
$$ 
\tocless{\subsection{Future work}}\label{sec:futurework}
As mentioned, the last formula and its appropriate versions for $\big[\CC^4/\Gamma\big]$ will be used in future work addressing the conjectures in \cite{CK1, Nekrasov1, Nekrasov2} and their adaptations to orbifolds appearing in \cite[Conjecture 1.6]{Schmier}. Additionally, vertex DT/PT correspondences can be naturally formulated in terms of CY4 quiver wall-crossing (or CY3 quiver wall-crossing for the 3-fold vertex). This perspective will be studied in future works.  

A generalization of Theorem \ref{thm:intro} will appear in a joint work \cite{BKLT}. The proof offered there will rely on the framework laid out here. It will address sheaf and stable pair wall-crossing on all CY fourfolds. In a separate work \cite{Bo26}, I will develop a new perspective on wall-crossing with cohomological insertions, which will be formulated in terms of formal deformations of vertex algebras. Combined, the two works will be used to prove DT/PT correspondences similar to the one conjectured in \cite{CK3} and more general correspondences for surface counting invariants as in \cite{GJL,BKP, BKP2}. One of the major applications of this I have in mind is a DT/PT correspondence for Fano 3-folds.  K-theoretic refinement of the above program will also be developed.
\section*{Acknowledgements}
I am immensely grateful to Hyeonjun Park for explaining his work to me and for suggesting to use stable $\infty$-categories. A special thanks goes to Nikolas Kuhn and Henry Liu, who helped me understand the correct expansion for equivariant wall-crossing. I would also like to thank Noah Arbesfeld, Younghan Bae, Emile Bouaziz, Ionut Ciocan-Fontanine, Ben Davison, Michael Eichmair,  Dominic Joyce, Bernhard Keller, Adeel Khan, Nikolas Kuhn, Anton Mellit, Denis Nesterov, Georg Oberdieck, Rahul Pandharipande, Maximilian Schimpf, and Andy Jiang for many discussions that helped shape this work. 

I started my work on this project in autumn 2021 when I was supported by ERC-2017-AdG-786580-MACI. During my stay at ETH, this project received funding from the European Research Council (ERC) under the European Union Horizon 2020 research and innovation program (grant agreement No 786580). A large part of this article was written while I stayed at Academia Sinica as a Research Scholar. I completed this work at the Shanghai Institute for Mathematics and Interdisciplinary Sciences.
\maketitle

\section{Some notation}
I will use the following notation: 
\begin{itemize}
\item All vector spaces, (derived) schemes, and stacks are over $\CC$.
    \item I will write $\Vect$ for the category of vector spaces.
    \item Standard symbols like $X$ will denote classical algebraic spaces.
    \item Symbols like $\mathcal{X}$ will be used for algebraic stacks.
    \item Bold symbols like $\boldsymbol{X}$ and $\boldsymbol{\mathcal{X}}$ will be reserved for derived stacks, their morphisms, and objects living on them.
    \item Classical restrictions of morphisms between derived stacks and objects on derived stacks are going to be denoted by the same symbol but not in bold.
    \item The notation $\T$ will be reserved for a complex torus and $\Ft$ for its Lie algebra. 
    \item For a ring $R$, I will write $R[\Ft]$ and $R\llbracket\Ft\rrbracket$ for the ring of $R$-valued polynomials, respectively, power-series on $\Ft$. The fraction field of $R$ will be denoted by $\kk$.
    \item The usual rank of a K-theory class or its Chern character is denoted by $\Rk(-)$, but I also use an artificially defined rank for a choice a stability condition $\sig$ which is denoted by $\rk_{\sig}(-)$.
    \item The notation $c_{z^{-1}}(-)$ denotes the total Chern class $\sum_{i\geq 0}c_{i}(-)z^{-i}$.
    \item The (deformations of) vertex algebras and their associated Lie algebras will always be automatically $\ZZ$-graded.
    \item For a (higher) stack $\mX$, I will use $\mD^b(\mX)$ to denote its bounded derived $\infty$-category which is a stable $\infty$-category. Its homotopy (triangulated) category will be labeled by $D^b(\mX)$.
    \item Let $\EE\xrightarrow{\phi} \LL_{\mX}$ be a morphism in $D^b(\mX)$ which is an obstruction theory. Often, I will only write $\EE$ when the rest of the data is understood. I will also call any lift of $\phi$ to a morphism in $\mD^b(\mX)$ an obstruction theory when working with stable $\infty$-categories. 
    \item If $f:\mX\to \mY$ is an open embedding of stacks and $\FF\xrightarrow{\psi}\LL_{\mY}$ is an obstruction theory, I will say that $f^*\FF\xrightarrow{f^*\psi}\LL_{\mX}$ is the pullback of $\FF$ along $f$.

\end{itemize}

\section{Counting objects in CY4 categories}
I will begin by recalling the necessary conditions for the construction of virtual fundamental classes using \cite{BJ, OT}. In the process, I will also summarize some tools like the virtual pullback from \cite{Park} and the equivariant localization from \cite{OT}. An inconsistency regarding working with orientations in \cite[§7]{OT} is corrected using the conventions carefully noted down in §\ref{sec:orconventions}.

When reading the first 3 subsections, one can substitute $\Coh(X)$ where $X$ is a Calabi--Yau fourfold for the Calabi--Yau four (CY4) abelian category $\mA$. In §\ref{sec:dgsetup}, a general case of $\mA$ is considered leading to the discussion of CY4 dg-quivers in §\ref{sec:dgquivers}. In this final subsection, I fix some terminology that will be used throughout the article, and I discuss an example.

\subsection{Virtual fundamental classes}
\label{sec:basics}
Let $\mA$ be a CY4 abelian category in the sense recalled later in §\ref{sec:dgsetup}. I will restrict myself to working with the subgroup of \textit{even K-theory classes}
\begin{equation}
\label{eq:Keven}
K^0_e(\mA) = \big\{\alpha\in K^0(\mA):\chi(\alpha,\alpha)\in 2\ZZ\big\}
\end{equation}
which due to \cite{OT2} does not lose out on any information when working over $\ZZ[2^{-1}]$. Let $M$ be an algebraic moduli space of some stable objects $E$ of class $\llbracket E\rrbracket =\al$.  when working with the \textit{truncated cotangent} complex $\tau^{\geq -1}(\LL_{M})$, I will sometimes omit specifying $\tau^{\geq -1}$ where it is not necessary. When given an obstruction theory
\begin{equation}
\label{eq:EEobtheory}
\begin{tikzcd}\EE\arrow[r]&\LL_{M}\end{tikzcd}\,,
\end{equation}
I will often abuse terminology by calling $\EE$ the obstruction theory when the rest of the data in \eqref{eq:EEobtheory} is understood.

The next definition describes when there exist virtual fundamental classes by the construction in \cite{OT}.
 \begin{definition}
 \label{def:viradmcl}
Let $M$ be a separated sheme. The obstruction theory $\EE\to \LL_{M}$ is said to be CY4 if it is perfect of tor-amplitude $[-2,0]$ and satisfies the following properties:
\begin{enumerate}[wide, align=left]
    \item[\textit{(Self-duality)}] There is an isomorphism
    \begin{equation}
    \label{eq:sigma}
    \begin{tikzcd} \mathbb{i}_q:\EE\arrow[r,"\sim"]&{\EE^\vee[2]}\end{tikzcd}\,, \qquad \mathbb{i}_q^\vee[2] = \mathbb{i}_q\,.
    \end{equation}
    \item[\textit{(Orientability)}] There exists a trivialization 
    $
    o: \un{\CC}\xrightarrow{\sim}\det(\EE)
    $
satisfying 
\begin{equation}
\label{eq:EEorient}
(o^*)^{-1}\circ o^{-1}= \det(\mathbb{i}_q):\begin{tikzcd}\det(\EE)\arrow[r,"\sim"]& \det(\EE)^*\end{tikzcd}\,.
\end{equation}
In this case, I will say that $M$ is orientable for given \eqref{eq:EEobtheory}.
    \item[\textit{(Isotropy of cones)}] Let $\mathfrak{C}(\EE)$ be the virtual normal cone recalled in §\ref{Sec:pullback} and 
    \begin{equation}
    \label{eq:quadratic}
    \begin{tikzcd}q:\mathfrak{C}(\EE)\arrow[r]&\un{\CC}
    \end{tikzcd}
    \end{equation}
    be the quadratic form induced by $\mathbb{i}_q$. This form should vanish under the restriction along the embedding 
    $$
    \begin{tikzcd}
    \mathfrak{C}_M\arrow[r,hookrightarrow]&\mathfrak{C}(\EE)
    \end{tikzcd}
   $$
   of the intrinsic normal cone.
\item[\textit{(Evenness)}] The rank of $\EE$ is even. 
\end{enumerate}
 \end{definition}
 For a torus $\T$ acting on $M$, all of the above conditions make sense $\T$-equivariantly. If they are satisfied, then \cite{OT} constructs an equivariant virtual cycle
$$
[M]^{\vir}_{\T}\in A^{\T}_*\big(M,\ZZ[2^{-1}]\big)\,.
$$
If $M$ is connected then changing the choice of orientation $o$ changes the sign of $[M]^{\vir}_{\T}$. When $M$ is projective and $\T$ trivial, the work \cite{OT2} shows that $[M]^{\vir}:=[M]^{\vir}_{\T}$ maps to the Borisov--Joyce class \cite{BJ} in $H_*\big(M,\ZZ[2^{-1}]\big)$. This implies that the resulting class still denoted by $[M]^{\vir}$ lies in the image of $H_*\big(M,\ZZ\big)\to H_*\big(M,\ZZ[2^{-1}]\big)$. The second conclusion in \cite{OT2} states that the class from \cite{BJ} vanishes in $H_*\big(M,\ZZ[2^{-1}]\big)$ if evenness does not hold. This is also the motivation behind working with \eqref{eq:Keven}.
\begin{remark}
 Oh--Thomas construct virtual fundamental classes only when $M$ is quasi-projective. In the present and all subsequent works on wall-crossing, I will usually only assume $M$ to be separated or proper. For this reason, I will be using the generalization provided by 
\cite{Park,Parknote}. In this more general situation, it still constructs the cycle $[M]^{\vir}_{\T}\in A^{\T}_*\big(M,\ZZ[2^{-1}]\big)$. 
\end{remark}
The two main examples of $M$ to keep in mind are as follows.
\begin{example}
 \label{ex:CYquiver}
     \leavevmode
\vspace{-4pt}
\begin{enumerate}
\item The moduli space $M$ parametrizes semistable sheaves on a Calabi--Yau fourfold $X$.
\item Fix a quiver $Q$ with relations that are governed by a CY4 dg-structure as in §\ref{sec:dgquivers}.  Then I will consider moduli spaces of its semistable representations. 
\end{enumerate} 
\end{example}
For the rest of the subsection, I will fix an obstruction theory $\EE$ and assume that the conditions in Definition \ref{def:viradmcl} hold. Additionally, I will require that $M$ satisfies the following.
\begin{enumerate}[wide, align=left]
    \item[\textit{(Equivariant resolution property)}]  Any $\T$-equivariant perfect complex of tor-amplitude $[a,b]$ on $M^{\T}$ admits a $\T$-equivariant locally free resolution in degrees $[a,b]$.
\end{enumerate}
If $E^\bullet$ is such a locally free resolution of $\EE$, I will recall a convention from  \cite[Proposition 4.2]{OT} that uses an orientation on
\begin{equation}
\label{eq:symcomplex}
    E^\bullet = \{\begin{tikzcd}
        T\arrow[r,"\alpha"]&E\arrow[r,"\alpha^*"]&T^*
    \end{tikzcd}\}
\end{equation}
to induce one on $E$ in the sense of Definition \ref{def:orientation}.i). Here, the musical isomorphism $E \xrightarrow{i_q} E^*$ is required to be compatible with the quasi-isomorphism $\mathbb{i}_q$. In this case, an orientation of $E$ is given by a trivialization     $
    o_E: \un{\CC}\xrightarrow{\sim}\det(E)
    $
satisfying 
$$
(o_E^*)^{-1}\circ o_E^{-1}= \det(i_q):\begin{tikzcd}\det(E)\arrow[r,"\sim"]& \det(E)^*\end{tikzcd}\,.
$$
It makes $E$ into an $SO(2n,\CC)$ bundle. In the special case that $E\cong V\oplus V^*$, it has a natural orientation $o_{V}:\un{\CC}\xrightarrow{\sim}\det(E)\cong\det(V)\det(V)^*$ described in Definition \ref{def:orientation}.
\begin{definition}[{\cite[Proposition 4.2]{OT}}]
\label{def:Einducedor}
    Let $\un{\CC}\xrightarrow{o^\bullet}\det(E^\bullet)$ be an orientation of $E^\bullet$ in the sense of \eqref{eq:EEorient}.   The induced orientation of $E$ is defined as the composition of the consecutive arrows
    $$
\begin{tikzcd}\un{\CC}\arrow[r,"o^\bullet"]&\det(E^\bullet)  \cong \det(T)\det(T)^* \det(E)^*\arrow[r,"o^{-1}_{\T}\otimes \id"]&[1cm]\det(E)^*\arrow[r,"\det(i_q)^{-1}"]&[1cm]\det(E)\end{tikzcd}\,.
    $$
\end{definition}
The only explicit localization computation done in this work will use $\T = \GG_m$. If $M$ has the $\T$-equivariant resolution property, consider a resolution 
\begin{align*}
\EE|_{M^{\T}}&\cong \begin{tikzcd}[ampersand replacement=\&]
   \big\{T_{\T}\arrow[r]\&E_{\T}\arrow[r]\&\big(T_{\T}\big)^{*}\big\}=:E^\bullet_{\T}
\end{tikzcd}
\end{align*}
of the form \eqref{eq:symcomplex} for $\T$-equivariant vector bundles $T_{\T},E_{\T}\to M^{\T}$.  It admits a splitting into the fixed part and the moving part
$$
E^\bullet_{\T} = E^\bullet_m\oplus E^\bullet_f\,.
$$
The moving part describes the \textit{virtual normal bundle} 
\begin{equation}
\label{eq:Nvir}
N^{\vir} = E^\bullet_f[-2]=\begin{tikzcd}\Big\{T^m\arrow[r]&E^m\arrow[r]&\big(T^m\big)^*\Big\}\end{tikzcd}
\end{equation}
while the fixed part $\EE^f := E^\bullet_f$ determines a CY4 obstruction theory on $M^{\T}$. Let $t$ be the weight one $\T$-equivariant trivial line bundle on $M^{\T}$. Because $N^{\vir}$ contains only non-zero weights, it can be decomposed as 
\begin{equation}
\label{eq:Nvirdecomp}
N^{\vir}  = t\cdot N^{\geq }\oplus t^{-1}\cdot N^{\leq }
\end{equation}
such that $N^{\geq}$ only contains non-negative weights and  $N^{\leq}\cong \big(N^{\geq}\big)^\vee[-2]$. The orientation of $N^\vir$ I will use here is determined by 
\begin{equation}
\label{eq:oNgeq}
 \begin{tikzcd}[column sep=large]
o\big(N^{\vir}\big):\un{\CC}\arrow[r,"(-1)^{\Rk(N^{\geq})}o_{N^{\geq }}"]&[1cm]\det\big(N^{\geq}\big)\det\big(N^{\leq}\big)\cong\det(N^\vir)\,.
\end{tikzcd}   
\end{equation}
Notice the extra sign $(-1)^{\Rk(N^{\geq})}o_{N^{\geq }}$ which appears here to relate this orientation with the one used in \cite[Theorem 7.1]{OT}.

If $M$ came with a fixed choice of orientation $\un{\CC}\xrightarrow{o}\det(\EE)$, then \eqref{eq:oNgeq} induces an orientation $o^f$ of $M^{\T}$ as the composition of the consecutive morphisms
\begin{equation}
\label{eq:EEonMGGmtrivialization}
\begin{tikzcd}
\un{\CC}\arrow[r,"o|_{M^{\T}}"]& [1cm]  \det\big(\EE|_{M^{\T}}\big)
\arrow[r,"\eqref{eq:detdistinguished}"]&[0.3cm]\det(N^{\vir})\det\big(\EE^f\big)\arrow[r,"o(N^{\vir})^{-1}\otimes \id"]&[1cm]
\det(\EE^f)\end{tikzcd}\,.
\end{equation}
The decompositin \eqref{eq:Nvirdecomp} gives a resolution
\begin{equation}
\label{eq:N+resol}
N^\geq = \begin{tikzcd} \Big\{T^{\geq}\arrow[r]&E^{\geq}\arrow[r]&\big(T^{\leq }\big)^*
\Big\}\end{tikzcd}\,.
\end{equation}
Working with it, my choice of orientation of $N^{\vir}$ differs from the one in \cite[(115)]{OT}\footnote{Strictly speaking, I am not using the orientation convention used from \cite{OT} because there is an inconsistency in it explained in Remark \ref{rem:orientation}. In truth, I am comparing $o\big(N^{\vir}\big)$ to a correction of the orientation in loc. cit.} by the sign $(-1)^{\Rk(T^\leq)}$ as shown in Lemma \ref{lem:2orientconv}.iv). This will have a positive effect on the virtual localization formula that in the form presented in \cite[(115)]{OT} seems to depend on the choice of a resolution. 

Using the orientation $o^f$, one constructs the virtual fundamental class $\big[M^{\T}\big]^{\vir}$ of the subscheme $M^{\T}\xhookrightarrow{\iota} M$. The equivariant virtual localization formula determines $[M]^{\vir}_{\T}$ in terms of $[M^{\T}]^{\vir}$ in the \textit{localized $\T$-equivariant homology} $$H^{\T}_*(M)_{\loc}:=H^{\T}_*(M)\otimes_{R}\kk\llparenthesis\Ft\rrparenthesis\,.$$
\begin{theorem}[{\cite[Theorem 7.1]{OT}}, {\cite[Proposition A.5]{Park}}]
\label{thm:eqvirloc}
    Let $t=e^{z}$ be the weight 1 line bundle for a $\T$-action on $M$ where $\T=\GG_m$. Then 
    \begin{equation}
    \label{eq:vireqloc}
        [M]^{\vir}_{\T} = \iota_*\frac{\big[M^{\T}\big]^\vir}{z^{\Rk(N^{\geq })}c_{z^{-1}}\big(N^{\geq}\big)} 
    \end{equation}
    in $H^{\T}_*(M)[z^{-1}]$. 
    
    For a higher dimensional torus $\T$, let $N^{\vir} = N^{> 0}\oplus N^{<0}$ be any splitting such that $N^{<}\cong \big(N^{>}\big)^\vee[-2]$ and the sets of $\T$-weights of the two summands are disjoint. If $o^{f}$ is the orientation \eqref{eq:EEonMGGmtrivialization} induced by $o_{N^{<}}$, then the induced class $\big[M^{\T}\big]^{\vir}$ satisfies
    \begin{equation}
    \label{eq:vireqlocgeneral}
        [M]^{\vir}_{\T} = \iota_*\frac{\big[M^{\T}\big]^\vir}{e_{\T}\big(N^{>}\big)} \in H^{\T}_*(M)_{\loc}
    \end{equation}
    where $e_{\T}(-)$ denotes the $\T$-equivariant Euler class with potential poles. 
    \end{theorem}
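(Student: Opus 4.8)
The plan is to follow the strategy of Oh--Thomas \cite[§7]{OT}, as extended to separated (not necessarily quasi-projective) schemes by Park \cite{Park,Parknote}, but with the orientation conventions of loc.\ cit.\ replaced by the corrected ones fixed in §\ref{sec:orconventions}; the whole point is that the extra sign $(-1)^{\Rk(N^{\geq})}$ in \eqref{eq:oNgeq} and the induced orientation \eqref{eq:EEonMGGmtrivialization} are chosen precisely so that no stray sign or resolution-dependence survives in \eqref{eq:vireqloc}. Recall that $[M]^{\vir}_{\T}$ is constructed as $\sqrt{e}_{\T}(E)\cap[\mathfrak{C}]$, where $E$ is the $SO(2n,\CC)$-bundle appearing in a resolution \eqref{eq:symcomplex} of $\EE$, $\mathfrak{C}\subset E$ is the isotropic cone obtained from the intrinsic normal cone (isotropy being the last condition in Definition \ref{def:viradmcl}), and $\sqrt{e}_{\T}(-)$ is the $\T$-equivariant, localized square-root Euler class of Edidin--Graham, whose sign is pinned down by the orientation $o$. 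So the theorem is, at bottom, an identity about isotropic cones in $\T$-equivariant quadratic bundles.

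First I would carry out the $\T$-equivariant deformation to the fixed locus. Using the $\T$-equivariant resolution property and the splitting $E^{\bullet}_{\T}=E^{\bullet}_m\oplus E^{\bullet}_f$, the quadratic form decomposes orthogonally as $E|_{M^{\T}}=E_f\oplus E_m$; since $\T$ preserves $q$ and $E_m$ carries only nonzero weights, the form identifies the weight-$w$ and weight-$(-w)$ pieces of $E_m$, so $E_m$ is hyperbolic with a maximal isotropic subbundle controlled by $N^{\geq}$ as in \eqref{eq:N+resol}, while $E_f$ is the $SO$-bundle of the fixed CY4 obstruction theory $\EE^f$ on $M^{\T}$. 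Isotropy of cones gives that the $\T$-fixed part $\mathfrak{C}^{f}$ of $\mathfrak{C}$ is isotropic in $E_f$, and that after $\T$-equivariant specialization $[\mathfrak{C}]$ is the pullback of $[\mathfrak{C}^{f}]$ along $E|_{M^{\T}}\to E_f$ — i.e.\ it spreads evenly in the moving directions. This cone-level statement is the geometric content of virtual localization, and it is exactly here that Park's bivariant/Chow-theoretic formulation \cite{Park,Parknote} is needed to drop quasi-projectivity.

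Next I would prove multiplicativity of the square-root Euler class along the orthogonal decomposition $E|_{M^{\T}}=E_f\oplus E_m$: with the orientations chosen as in §\ref{sec:orconventions} one has $\sqrt{e}_{\T}(E)=\sqrt{e}(E_f)\cdot\sqrt{e}_{\T}(E_m)$ with no correcting sign — this is the content of Lemma \ref{lem:2orientconv}, and it is where the $(-1)^{\Rk(T^{\leq})}$ discrepancy with \cite[(115)]{OT} is absorbed. Combining with the cone computation and recognizing $\sqrt{e}(E_f)\cap[\mathfrak{C}^{f}]=[M^{\T}]^{\vir}$ for the induced orientation $o^{f}$ of \eqref{eq:EEonMGGmtrivialization}, one obtains
$$[M]^{\vir}_{\T}=\sqrt{e}_{\T}(E)\cap[\mathfrak{C}]=\iota_{*}\Big(\sqrt{e}_{\T}(E_m)\cap\big(\sqrt{e}(E_f)\cap[\mathfrak{C}^{f}]\big)\Big)=\iota_{*}\big(\sqrt{e}_{\T}(N^{\vir})\cap[M^{\T}]^{\vir}\big).$$
Finally I would compute $\sqrt{e}_{\T}(N^{\vir})$: since $N^{\vir}=t\cdot N^{\geq}\oplus t^{-1}\cdot(N^{\geq})^{\vee}[-2]$ is hyperbolic with $t=e^{z}$, its square-root Euler class for the orientation \eqref{eq:oNgeq} equals $z^{\Rk(N^{\geq})}c_{z^{-1}}(N^{\geq})$ — the extra $(-1)^{\Rk(N^{\geq})}$ in \eqref{eq:oNgeq} is what converts $(-1)^{\Rk(N^{\geq})}z^{\Rk(N^{\geq})}c_{z^{-1}}(N^{\geq})$ into the clean expression. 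Inverting this in $H^{\T}_{*}(M)[z^{-1}]$ gives \eqref{eq:vireqloc}. For a higher-rank torus one chooses a weight-disjoint, mutually dual splitting $N^{\vir}=N^{>}\oplus N^{<}$, runs the identical argument (reducing to $\GG_m$ via a generic one-parameter subgroup, or directly), and reads off $\sqrt{e}_{\T}(N^{\vir})=e_{\T}(N^{>})$, yielding \eqref{eq:vireqlocgeneral}.

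The main obstacle is the sign bookkeeping. The square-root Euler class is defined only up to the choice of an $SO$-reduction, so the argument requires the orientations of $E$, of the fixed bundle $E_f$ on $M^{\T}$, of $N^{\vir}$, and of the maximal isotropic $N^{\geq}$ to be mutually compatible so that simultaneously: (a) multiplicativity $\sqrt{e}_{\T}(E)=\sqrt{e}(E_f)\,\sqrt{e}_{\T}(E_m)$ holds on the nose; (b) $\sqrt{e}(E_f)\cap[\mathfrak{C}^{f}]$ really is $[M^{\T}]^{\vir}$ for the orientation $o^{f}$ of \eqref{eq:EEonMGGmtrivialization}; and (c) $\sqrt{e}_{\T}(N^{\vir})$ comes out to be exactly $z^{\Rk(N^{\geq})}c_{z^{-1}}(N^{\geq})$. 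Forcing these three to line up is what dictates the particular choices in \eqref{eq:oNgeq}--\eqref{eq:EEonMGGmtrivialization}, and is precisely the reason the formula in \cite[(115)]{OT} appeared to depend on the chosen resolution; checking that the corrected conventions remove this dependence (Lemma \ref{lem:2orientconv}) is the technical heart of the proof. Everything else — existence of $\T$-equivariant locally free resolutions, isotropy and flatness of the fixed cone, and multiplicativity of $c_{z^{-1}}$ under direct sums — is routine.
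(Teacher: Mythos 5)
Your proposal is essentially a re-derivation of the Oh--Thomas localization theorem with the corrected conventions, whereas the paper's proof is deliberately much shorter: it takes \cite[Theorem 7.1]{OT} (in Park's generality \cite{Park}) as a black box and only performs a translation of conventions. Concretely, the paper (i) uses Lemma \ref{lem:2orientconv}.iv) to record that the fixed-locus class built from the orientation \eqref{eq:EEonMGGmtrivialization} differs from the Oh--Thomas one by $(-1)^{\Rk(T^{\leq})}$, (ii) rewrites $(-1)^{\Rk(T^{\leq})}\sqrt{e}\big(N^{\vir}\big)$ via the resolution \eqref{eq:N+resol} as $z^{\Rk(N^{\geq})}c_{z^{-1}}(N^{\geq})$, which gives \eqref{eq:vireqloc}, and (iii) handles a higher-rank torus by an explicit iterated splitting $N^{>}=\bigoplus_i t_i\cdot N_i^{\geq}$ ordered lexicographically in the weights, applying the rank-one computation factor by factor to get \eqref{eq:vireqlocgeneral}. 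Your route instead re-opens the internals of the cited theorem (specialization of the isotropic cone to the fixed locus, multiplicativity of the localized square-root Euler class); that is legitimate, but those two steps are precisely the content of \cite[\S 7]{OT} and \cite{Park}, and your sketch of the cone step (that $[\mathfrak{C}]$ becomes the pullback of $[\mathfrak{C}^f]$ along $E|_{M^{\T}}\to E_f$) oversimplifies what is actually a delicate comparison of intrinsic normal cones in the Graber--Pandharipande style; as written it is an assertion, not an argument, so in practice you would fall back on the citation exactly as the paper does. Two further points of divergence worth flagging: your higher-rank reduction "via a generic one-parameter subgroup" needs the additional checks that the chosen $\GG_m$ has the same fixed locus as $\T$ and that the orientation bookkeeping is unaffected, which the paper's iterated-splitting argument avoids; and your sign ledger places the correction as $(-1)^{\Rk(N^{\geq})}$ measured against $o_{N^{\geq}}$, while the paper's runs through $(-1)^{\Rk(T^{\leq})}$ measured against the Oh--Thomas orientation $o_{N^{\geq}_{\OT}}$ of Remark \ref{rem:orientation}.iv) -- these agree only after invoking Lemma \ref{lem:2orientconv}.iv), so that comparison should be made explicit rather than asserted. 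With those caveats, your approach would buy an independent, convention-clean proof of localization, at the cost of reproducing a substantial part of \cite{OT}; the paper's approach buys brevity and isolates exactly the new content, namely the orientation comparison and the resolution-independent form of the right-hand side.
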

\begin{proof}
    The actual statement in \cite{OT} uses a different orientation $o_{N^{\geq}_{\OT}}$ of $N^{\vir}$ discussed in Remark \ref{rem:orientation}.iv). Because this orientation differs from $o(N^{\vir})$ by $(-1)^{\Rk(T^{\leq})}$ as shown in Lemma \ref{lem:2orientconv}.iv), the resulting class $\big[M^{\T}\big]^{\vir}_{\OT}$ appearing in the original formula \cite[Theorem 7.1]{OT} satisfies 
    $$
    \big[M^{\T}\big]^{\vir} = (-1)^{\Rk(T^{\leq})}\big[M^{\T}\big]^{\vir}_{\OT}\,.
    $$
   Using $\sqrt{e}\big(N^{\vir}\big) = \frac{e(T^m)}{\sqrt{e}(E^m)}$, where the orientation of $E^m = t\cdot E^{\geq }\oplus t^{-1}\big(E^{\geq }\big)^*$ induced by $o_{N^{\geq}_{\OT}}$ is $o_{E^\geq}$, the virtual localization formula can be stated as
$$
[M]^{\vir}_{\T}=\iota_*\frac{\big[M^{\T}\big]^\vir_{\OT}}{\sqrt{e}\big(N^{\vir}\big)}\,.
$$
The formulation in \eqref{eq:vireqloc} is recovered by using
$$
(-1)^{\Rk(T^\leq)}\sqrt{e}\big(N^{\vir}\big) = \frac{e\big(t\cdot T^\geq\big)\cdot e\big(t\cdot (T^{\leq})^*\big)}{e\big(t\cdot E^\geq  \big)} = z^{\Rk(N^{\geq})}\frac{c_{z^{-1}}\big(T^{\geq}\big)\cdot c_{z^{-1}}\big((T^{\leq})^*\big)}{c_{z^{-1}}\big(E^\geq\big)}\,.
$$

When $\T$ is of dimension greater than 1, there are many ways to choose a splitting into positive and negative weights in general. To prove \eqref{eq:vireqlocgeneral}, it is sufficient to prove it for a fixed such choice. One could define positivity by using the alphabetical order on weights for a fixed order of the splitting $\T = \GG_{m,1}\times\GG_{m,2}\times\cdots\times \GG_{m,n}$. The decomposition of $N^{\vir}$ generalizing \eqref{eq:Nvirdecomp} is now a direct sum of 
$$N^{>}:=t_1\cdot N^{\geq}_1\oplus t_2\cdot N^{\geq}_2 \oplus \cdots\oplus t_n\cdot N^{\geq}_n $$ 
and $(N^{>})^\vee[-2]$. Here $N^{\geq }_i$ does not contain any weights of $\GG_{m,j}$ for $j<i$ and only non-negative weights of $\GG_{m,i}$. Let $N^{\geq}_i = \Big(T_i^{\geq }\to E_i^{\geq}\to \big(T^{\leq}_i\big)^*\Big)$ be resolutions as in \eqref{eq:N+resol}. Then the orientation $\prod_{i=1}^n (-1)^{\Rk(N^{\geq}_i)} o_{N^{\geq}_i}$ induces $\prod_{i=1}^n (-1)^{\Rk(T^{\leq}_i)} o_{E^{\geq}_i}$ of $E^m$. If $[M^{\T}]^{\vir}$ is computed using the induced orientation $o^f$, then the localization formula generalizes to 
$$
        [M]^{\vir}_{\T} = \iota_*\frac{\big[M^{\T}\big]^\vir}{\prod_{i=1}^nz_i^{\Rk(N^{\geq }_i)}c_{z_i^{-1}}\big(N^{\geq}_i\big)} 
$$
which can be expressed as \eqref{eq:vireqlocgeneral}.
\end{proof}
\subsection{Park's virtual pullback}
\label{Sec:pullback}
Everything in this subsection also works $\T$-equivariantly. While I only need obstruction theories for moduli problems represented by algebraic spaces, I also require some intermediate results for stacks. This subsection develops this viewpoint using the techniques appearing in the proof of \cite[Theorem B.6]{BKP}. These ideas were suggested to me by Hyeonjun Park, and I thank him wholeheartedly for his help.  

The argument is more functorial if one uses higher cone stacks of \cite[§3]{AraPst}. More naturally these are viewed as truncations of their derived analogs. Starting from a perfect complex $\mE$ on a stack $\mM$, I consider the derived stack $\bTot_{\mM}(\mE^\vee[1])$ over $\mM$. It has the functor of points description 
    \begin{equation}
    \label{eq:derivedTot}
\bTot_{\mM}\big(\mE^\vee[1]\big)(\Spec(A^\bullet)\xrightarrow{p} \mM)  = \Map_{\mD(A^\bullet)}\big( p^*\mE[-1],A^\bullet\big)\,,
\end{equation}
where $\mD(A^\bullet)$ is the stable $\infty$-category of quasi-coherent modules of $A^\bullet$ and $\Map_{\mD(A^\bullet)}(-,-)$ denotes the mapping space. Applying the truncation functor $t_0: \textnormal{DSta}_{\CC}\to \textnormal{HSta}_{\CC}$, one recovers
$
\mathfrak{C}_{\mM}(\mE) = t_0\Big(\bTot_{\mM}(\mE^\vee[1])\Big) 
$
which is the cone stack of Aranha--Pstragowski  \cite[Definition 3.1]{AraPst}.
\begin{remark}
\label{rem:explicitcone}
    When $\mE$ has a global resolution on $\mM$ given by 
$$
\begin{tikzcd}
   \ldots\arrow[r,"d^{-2}"]& E^{-1}\arrow[r, "d^{-1}"]& E^0\arrow[r,"d^0"]& E^1\arrow[r, "0"]& 0 
\end{tikzcd}\,,
$$
    where $E^0$ is in degree 0, there is an explicit description of $\mathfrak{C}_{\mM}(\mE)$ which follows from the proof of Theorem \cite[Thm. 3.10]{AraPst} (see also \cite[Appendix B]{BKP}). Denoting by
    $$
    E_{\bullet}= (\begin{tikzcd}
   \ldots\arrow[r]& 0\arrow[r]& E_{-1}\arrow[r,"d_{-1}"]& E_0\arrow[r, "d_0"]& E_1 \arrow[r,"d_1"]& \cdots )
\end{tikzcd}\,,
    $$
    the dual of $E^\bullet$, one constructs the quotient $[E_0/E_{-1}]$ over $\mM$. There is then an action of $[E_0/E_{-1}]$ on $C_{E'}=\mathfrak{C}_{\mM}\big(E'[1]\big)$ which is the usual cone associated to 
    $
    E' = \coker(d^{-2}).
    $
    Using higher-categorical quotients, one may write
    $$
    \mathfrak{C}_{\mM}(\mE) =\Big[C_{E'}\Big/[E_0/E_{-1}]\Big]\,.
    $$
    In the special case when $\LL_{\mM}$ is the (not necessarily) truncated cotangent complex of $\mM$, I use the notation 
    $$
       \mathfrak{C}_{\mM} =  \mathfrak{C}_{\mM}(\LL_{\mM})\,.
    $$
    This is the usual \textit{intrinsic normal cone} of $\mM$. 
\end{remark}
The quadratic form in \eqref{eq:quadratic} can be either constructed directly by using the description in Remark \ref{rem:explicitcone} if it exists or by using \eqref{eq:derivedTot} as in \cite[Remark 1.8]{Park}. In the latter case, let $\mE\cong \mE^\vee[2]$ be a quasi-isomorphism, then there exists an induced quadratic form 
$$
\boldsymbol{q}: \begin{tikzcd}\bTot_{\mM}(\mE)\arrow[r]&\un{\CC}\end{tikzcd}\,.
$$
The quadratic form on $\mathfrak{C}_{\mM}(\mE)$ is its classical restriction.

In §\ref{sec:basics}, I have specified the conditions on the obstruction theories of moduli spaces. Here, I extend them to moduli stacks in a natural way so that they will be compatible with Park's virtual pullback diagrams. 
 \begin{definition}[{\cite[App. B]{BKP}}]
 \label{def:viradm}
 For an Artin stack $\mM$ and its truncated cotangent complex $\LL_{\mM}$, I say that an obstruction theory\footnote{Here I still mean that $h^0(\EE)\rightarrow h^0(\LL_{M})$ is an isomorphism and $h^1(\EE)\to h^1(\LL_{M})$ is surjective.} $\EE\to \LL_{\mM}$
 is CY4 if it is perfect of tor-amplitude $[-3,1]$ and satisfies self-duality, orientability, isotropy of cones, and evenness from Definition \ref{def:viradmcl} but for $\mM$.
 \end{definition}
 Because of working with $\infty$-stable categories, we will need a refinement of the self-duality. Fortunately, such a refinement is always available because it comes from $-2$-shifted symplectic structures. 
 \begin{enumerate}[wide, align=left]
     \item[\textit{(Higher self-duality.)}] There is an equivalence in $\mD^b(\mM)$
     $$
     \begin{tikzcd}
    \mathbb{i}^{\wedge}_q: \EE\arrow[r]&\EE^\vee[2]&
    \end{tikzcd}
     $$
    lifting $\mathbb{i}_q$ . Here, the functor $(-)^\vee$ denotes the derived $\infty$-dual of Example \ref{ex:inftyfunc} ii).
 \end{enumerate}
Next, I would like to consider the situation that $\mM$ is given a CY4 obstruction theory and 
$
f:\mN\rightarrow \mM
$
is a \textit{quasi-smooth} map from an Artin stack $\mN$. This means that there is an obstruction theory 
$
\MM \stackrel{\tau}{\rightarrow} \LL_{f}
$
with  $\MM$ being perfect of tor-amplitude $[-1,1]$. \textit{Park's virtual pullback (Pvp) diagram} then has the following form:

   \begin{equation}
   \label{eq:diagsym}
    \begin{tikzcd}[column sep=large]   \MM[-1]\arrow[d,equal]\arrow[r,"{\lambda}"]&\arrow[d,"\ov{\kappa}"]\FFna^\vee[2]\arrow[r,"{\bar{\mu}}"]&\FF\arrow[d,"\mu"]\arrow[r,"\nu"]&\MM\arrow[d,equal]\\
    \MM[-1]\arrow[d]\arrow[r,"\eta"]&\arrow[d,"f^*\psi"] f^*\big(\EE\big)\arrow[r,"\kappa"]&\FFna\arrow[r]\arrow[d,"\phi"]&\MM\arrow[d,"\tau"]\\
   \mathbb{L}_{f}[-1]\arrow[r]&   f^*\big(\mathbb{L}_{\mM}\big)\arrow[r]&\mathbb{L}_{\mN}\arrow[r]&\mathbb{L}_{f}
    \end{tikzcd}\,,
\end{equation}
where the horizontal rows are distinguished triangles, the vertical arrows induce morphisms between triangles and $\phi$ is an obstruction theory. The pullback $f^*(-)$ is understood to be derived, as I will not specify it in notation. Due to vanishing of cohomologies, it immediately follows that $\FF\xrightarrow{\phi\circ\mu}\LL_{\mN}$ is an obstruction theory. Given this diagram in $\mD^b(\mN)$, I will call it the \textit{$\infty$-Pvp diagram}.

The diagram \eqref{eq:diagsym} replaces the compatibility diagram of Manolache \cite[Definition 4.5]{Manolache} when comparing virtual fundamental classes under the virtual pullback 
$$
f^!: A_*(\mM)\longrightarrow A_*(\mN)
$$
assuming that $\mM$ and $\mN$ are separated schemes. To do so, one needs to choose orientations of $\mN$ and $\mM$ in a compatible way. I continue using the orientation conventions set in §\ref{sec:orconventions} which include the isomorphism \eqref{eq:detdistinguished} and the natural choice of orientations from Definition \ref{def:orientation}.iv). 
\begin{definition}
\label{def:orpullback}
In the situation \eqref{eq:diagsym} with $m=\Rk(\MM)$, suppose that there is an orientation $\un{\CC}\xrightarrow{o_{\mM}}(\EE)$. Then the induced orientation $o_{\mN}$ of $\FF$ determined by the Pvp diagram is defined as the composition of the consecutive morphisms
\begin{center}
\begin{tikzpicture}[descr/.style={fill=white,inner sep=1.5pt}]
      \matrix (m) [
           matrix of math nodes,
           row sep=2.2em,
            column sep=3.5em,
            text height=1.5ex, text depth=0.25ex
        ]
   {  \un{\CC} & \det\big(\MM^\vee[2]\big)\det\big(\MM\big) & \\
   \det\big(\MM^\vee[2]\big)\det\big(\EE\big)\det\big(\MM\big) &  \det\big(\MM^\vee[2]\big)\det\big(\wt{\FF}\big) & \\
            \det\big(\FF\big)\,. &  & \\
        };
    \path[overlay,->, font=\scriptsize,>=latex]
        (m-1-1) edge node[midway, above] {$o_{\MM}$} (m-1-2) 
        (m-1-2) edge[out=345,in=165]  node[descr,yshift=0.3ex] {$\id\otimes o_{\mM}\otimes \id$}  (m-2-1)
        (m-2-1) edge node[midway, above] {$\id\otimes \epsilon_{\EE,\MM}$} (m-2-2)
        (m-2-2) edge[out=345,in=165]  node[descr,yshift=0.3ex] {$\epsilon_{\MM^\vee[2], \wt{\FF}}$} (m-3-1);
       \end{tikzpicture}
       \end{center}
\end{definition}
\begin{theorem}[{\cite[Theorem 0.1]{Park}}]
\label{thm:virtualpullpush}
Suppose that there is a Pvp diagram \eqref{eq:diagsym} for the quasi-smooth morphism $f:\mN\to \mM$ of separated schemes. If $\EE$ is CY4 with orientation $o_{\mM}$, then so is $\FF$ with the induced orientation $o_{\mN}$ determined by Definition \ref{def:orpullback}. The resulting virtual fundamental classes satisfy 
\begin{equation}
\label{eq:fvirpull}
f^![\mM]^{\vir} =[\mN]^{\vir}\,.
\end{equation}
In particular, if $f$ is smooth of dimension $d$ with Euler characteristic $\chi(f)$, then 
$$
f_*\Big([\mN]^{\vir}\cap c_d(T_f)\Big)=\chi(f)[\mM]^{\vir}\,.
$$
\end{theorem}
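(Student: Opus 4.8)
The plan is to reduce the assertion to the abstract comparison result of Manolache–Park via the Pvp diagram \eqref{eq:diagsym}, and then extract the smooth case as a special instance. First I would verify that the hypotheses of the construction in \cite[Theorem 0.1]{Park} (equivalently \cite{Manolache}) are met: namely that $f\colon \mN\to\mM$ is quasi-smooth with obstruction theory $\MM\xrightarrow{\tau}\LL_f$ of tor-amplitude $[-1,1]$, that $\EE$ is CY4 in the sense of Definition \ref{def:viradmcl}, and — crucially — that the three cohomological conditions for $\FF$ (self-duality, orientability, isotropy of cones, evenness) are \emph{inherited} along the diagram. Self-duality of $\FF$ is read off the top two rows of \eqref{eq:diagsym}: the outer triangle is $\MM[-1]\to \FFna^\vee[2]\to\FF\to\MM$, and dualizing-and-shifting by $[2]$ carries it to $\MM[-1]\to\FF^\vee[2]\to\FFna\to\MM$, which by uniqueness of cones and the self-duality of $\EE$ produces $\mathbbm{i}_q\colon\FF\xrightarrow{\sim}\FF^\vee[2]$; evenness of $\Rk(\FF)=\Rk(\EE)+\Rk(\MM)$ follows since $\Rk(\MM)$ is even for a quasi-smooth map between spaces with even virtual dimension and $\Rk(\EE)$ is even by hypothesis. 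Orientability is exactly the content of Definition \ref{def:orpullback}: the composite trivialization $o_{\mN}$ of $\det(\FF)$ built there out of $o_{\mM}$ and the canonical orientation of $\det(\MM^\vee[2])\det(\MM)$ is compatible with $\det(\mathbbm{i}_q)$, which I would check by unwinding the definitions of the isomorphisms $\epsilon_{-,-}$ and \eqref{eq:detdistinguished}. Isotropy of the cone $\mathfrak{C}(\FF)$ is the one genuinely geometric input; it is handled in \cite{Park} by observing that $\mathfrak{C}_{\mN}$ embeds into $\mathfrak{C}_{\mN/\mM}(\FFna)$ and the quadratic form restricted there is pulled back from the isotropic $\mathfrak{C}_{\mM}\hookrightarrow\mathfrak{C}(\EE)$, so I would simply cite that.

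With these points in place, \cite[Theorem 0.1]{Park} yields that $\FF$ is CY4 with orientation $o_{\mN}$ and that the virtual pullback satisfies $f^![\mM]^{\vir}=[\mN]^{\vir}$, which is \eqref{eq:fvirpull}. For the final displayed formula, specialize to the case that $f$ is smooth of relative dimension $d$. Then $\LL_f=\Omega_f[0]$ is a vector bundle placed in degree $0$, so one may take $\MM=\Omega_f$ and the obstruction-theory structure on $f$ is trivial; the virtual pullback $f^!$ coincides with the Gysin-type smooth pullback, and by the projection-formula behaviour of $f^!$ (or directly from the compatibility of virtual classes under smooth morphisms, \cite[Prop.~4.2]{Manolache}) one has $f_*\big(f^![\mM]^{\vir}\cap c_d(T_f)\big)=f_*\big([\mN]^{\vir}\cap c_d(T_f)\big)$. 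Now $c_d(T_f)$ is the top Chern class of the relative tangent bundle; pushing forward along the smooth proper-along-fibres map $f$ and using $f_*\big(c_d(T_f)\big)=\chi(f)\cdot[\mM]$ (integration over the fibre of the Euler class computes the fibrewise Euler characteristic) gives $f_*\big([\mN]^{\vir}\cap c_d(T_f)\big)=\chi(f)\,[\mM]^{\vir}$, which is the claimed identity.

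\textbf{Main obstacle.} I expect the genuine work to be entirely in the first paragraph: producing the morphism of distinguished triangles in \eqref{eq:diagsym} is straightforward in the stable $\infty$-categorical language developed later in the paper (functoriality of cones), but checking that the \emph{induced} data — in particular the self-duality isomorphism $\mathbbm{i}_q$ on $\FF$ and its compatibility with the orientation $o_{\mN}$ of Definition \ref{def:orpullback} — is honestly the one required by Definition \ref{def:viradmcl}, rather than differing by a sign, is the delicate point. This is precisely the kind of sign bookkeeping flagged in \S\ref{sec:orconventions}, and it is why the orientation conventions there (the maps $\epsilon_{-,-}$, \eqref{eq:detdistinguished}, Definition \ref{def:orientation}.iv)) have been set up so carefully. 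The isotropy-of-cones condition is a close second, but since it is geometric and local on $\mN$ it can be reduced to the already-established statement for $\EE$ on $\mM$ as in \cite[\S1]{Park}; I would not reprove it.
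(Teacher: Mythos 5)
The paper does not actually prove this statement: it is imported verbatim from Park's Theorem 0.1, the only paper-specific content being the orientation convention of Definition \ref{def:orpullback}, while the inheritance of the CY4 conditions along \eqref{eq:diagsym} is recorded separately in Lemma \ref{lem:conditionsFF}; your reduction to Park--Manolache together with the projection-formula computation $f_*\big(f^![\mM]^{\vir}\cap c_d(T_f)\big)=[\mM]^{\vir}\cap f_*c_d(T_f)=\chi(f)[\mM]^{\vir}$ for the smooth case is exactly this treatment. One small slip: the two upper rows of \eqref{eq:diagsym} give $\Rk(\FF)=\Rk(\EE)+2\Rk(\MM)$, not $\Rk(\EE)+\Rk(\MM)$, so evenness of $\FF$ is automatic and requires no parity argument for $\MM$.
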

Equation \eqref{eq:fvirpull} already implies that the orientations from Definition \ref{def:orpullback} behave functorialy when composing Pvp diagrams (at least when the classes are non-zero). To make sense of this statement and to give a direct argument for it, I will recall some conclusions from
 §\ref{sec:functorialitysympull}. There, I explain how to compose diagrams of the form \eqref{eq:diagsym} for the commutative diagram
$$
\begin{tikzcd}
     \mN_2\arrow[r,"f_2"] \arrow[rr,bend left = 50, "f"]&\mN_1\arrow[r, "f_1"]&\mM
 \end{tikzcd}
$$
of stacks. Consider a morphism of distinguished triangles
\begin{equation}
\label{eq:MM1napullback}
\begin{tikzcd}[row sep=large]
\MM_1[-1]\arrow[d,"{\tau_1[-1]}"]\arrow[r,"\eta_1"]&\arrow[d,"f_1^*(\psi)"] f^*_1\big(\EE\big)\arrow[r,"\kappa_1"]&\wt{\FF}_{1}\arrow[r]\arrow[d,"\phi_1"]&\MM\arrow[d,"\tau_1"]\\
   \mathbb{L}_{f_1}[-1]\arrow[r]&   f_1^*\big(\mathbb{L}_{\mM}\big)\arrow[r]&\mathbb{L}_{\mN_1}\arrow[r]&\mathbb{L}_{f_1}
    \end{tikzcd}
\end{equation}
and a commutative diagram 
\begin{equation}
\label{eq:MM2napullback}
\begin{tikzcd}[row sep=large]
\MM_2[-1]\arrow[d,"{\tau_2[-1]}"]\arrow[r,"\eta_2"]&\arrow[d,"f_2^*(\phi_1)"] f^*_2\big(\wt{\FF}_{1}\big)\\
\mathbb{L}_{f_2}[-1]\arrow[r]&   f_2^*\big(\mathbb{L}_{\mN_1}\big)\,,
\end{tikzcd}
\end{equation}
where the vertical morphisms in both diagrams are (pullbacks and shifts of) obstruction theories. Suppose that both diagrams are given an appropriate lift to stable $\infty$-categories (see Definition \ref{def:lifting}). One of the statements concluded in Theorem \ref{thm:functsympull} can be summarized as follows:

\begin{enumerate}
    \item[(*)]\textit{Starting from the lifts of \eqref{eq:MM1napullback} and \eqref{eq:MM2napullback} suppose that they can be completed to appropriate $\infty$-Pvp diagrams. For the induced homotopy commutative square}
\begin{equation}
\label{eq:MMtoEEfromfrunct}
\begin{tikzcd}
\MM[-1]\arrow[d,"{\tau[-1]}"]\arrow[r,"\eta"]&\arrow[d,"f^*(\psi)"] f^*\big(\EE\big)\\
\mathbb{L}_{f}[-1]\arrow[r]&   f^*\big(\mathbb{L}_{\mM}\big)\,,
\end{tikzcd}
\end{equation}
\textit{with $\MM$ being the cone of the natural map}
$$
\begin{tikzcd}
    \MM_2[-1]\arrow[r]&\MM_1
\end{tikzcd}\,,
$$
\textit{there exists a natural diagram \eqref{eq:diagsym} containing \eqref{eq:MMtoEEfromfrunct}. }
\end{enumerate}
Using the convention in Definition \ref{def:orpullback}, the following functoriality of the orientations is easy to show. 
\begin{lemma}
\label{lem:functoror}
    Given the situation \textnormal{($\star$)} above, the induced orientation by the $\infty$-Pvp diagram \eqref{eq:diagsym} is equal to the orientation on $\mN_2$ obtained by applying Definition \ref{def:orpullback} for $f_1$ and $f_2$ consecutively.
\end{lemma}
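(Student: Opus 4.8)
The plan is to unwind both sides of the claimed equality into explicit compositions of determinant-line morphisms and compare them term by term, relying on the associativity of the functorial constructions set up in §\ref{sec:functorialitysympull}. First I would fix all the data: the $\infty$-Pvp diagrams completing \eqref{eq:MM1napullback} and \eqref{eq:MM2napullback}, their cones, and the induced diagram $(\star)$ for the composite $f=f_1\circ f_2$. By the construction recalled before the statement, the relevant obstruction theory on $\mN_2$ built from the composite diagram has the form $\FF$ where $\MM$ is the cone of $\MM_2[-1]\to\MM_1$; Definition \ref{def:orpullback} then expresses $o_{\mN_2}^{\text{comp}}$ as a composition $\un{\CC}\xrightarrow{o_{\MM}}\det(\MM^\vee[2])\det(\MM)\to\cdots\to\det(\FF)$, where the middle steps insert $o_{\mM}$ via $\id\otimes o_{\mM}\otimes\id$ and then contract using the isomorphisms $\epsilon$ from the orientation conventions of §\ref{sec:orconventions}.

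On the other side, applying Definition \ref{def:orpullback} first to $f_1$ produces $o_{\mN_1}$ on $\wt{\FF}_1$ (in terms of $o_{\mM}$ and $o_{\MM_1}$), and then applying it to $f_2$ — now with $\EE$ replaced by $\wt{\FF}_1$ and $\MM$ replaced by $\MM_2$ — produces $o_{\mN_2}^{\text{iter}}$ on $\FF_2$ in terms of $o_{\mN_1}$ and $o_{\MM_2}$. The key structural input is that the long exact triangle relating $\MM_1$, $\MM_2$, and $\MM$, together with the compatibility of the three horizontal triangles in the stacked $\infty$-Pvp diagrams, makes $\det(\MM)\cong\det(\MM_1)\det(\MM_2)^{\pm}$ canonically, and analogously for the $\FF$'s and $\wt\FF$'s; these canonical identifications are exactly the $\epsilon$ and \eqref{eq:detdistinguished} isomorphisms. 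So both $o_{\mN_2}^{\text{comp}}$ and $o_{\mN_2}^{\text{iter}}$ are, after substituting these identifications, built out of the same atomic data: $o_{\mM}$, $o_{\MM_1}$, $o_{\MM_2}$, and the distinguished-triangle determinant isomorphisms. Matching them reduces to checking that the two bracketings of the tensor-contraction pattern agree, which is a finite diagram chase using the coherence of $\epsilon_{-,-}$ (associativity of the determinant of a triangle — e.g. the pentagon-type compatibility of \eqref{eq:detdistinguished} for a composition of two distinguished triangles, as in the octahedral axiom).

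The main obstacle I expect is bookkeeping of signs: the isomorphisms $\epsilon_{\EE,\MM}$ and the natural orientation convention of Definition \ref{def:orientation}.iv) introduce sign twists depending on ranks (the $(-1)^{\Rk(\cdot)}$ factors visible already in \eqref{eq:oNgeq}), and when one splits $\MM$ into $\MM_1$ and $\MM_2[-1]$ the shift by $[-1]$ flips the role of a summand and its dual, potentially producing a $(-1)^{\Rk(\MM_2)}$ discrepancy. I would handle this by invoking the additivity of ranks $\Rk(\MM)=\Rk(\MM_1)+\Rk(\MM_2)$ across the triangle and checking that the sign contributions from the two orderings of contractions cancel exactly — the cleanest route is to note that the functoriality statement $f^![\mM]^{\vir}=(\mN)^{\vir}$ of Theorem \ref{thm:virtualpullpush} applied to $f=f_1\circ f_2$ versus $f_2$ then $f_1$ already forces the two orientations to agree whenever the classes are nonzero, so it suffices to give the purely homological-algebra argument that the two determinant-line compositions coincide identically, independent of any positivity. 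Once the coherence of $\epsilon$ is invoked, both sides collapse to the same canonical morphism $\un{\CC}\to\det(\FF)$, proving the lemma.
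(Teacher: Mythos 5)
Your proposal is correct and is essentially the argument the paper intends: the paper offers no written proof, asserting the lemma follows easily from Definition \ref{def:orpullback} together with the compatibility of the stacked $\infty$-Pvp triangles from ($\star$), which is exactly your strategy of unwinding the composite and the iterated orientation into determinant-line compositions built from $o_{\mM}$, the canonical orientations of the hyperbolic pieces $\MM^\vee[2]\oplus\MM$, $\MM_1^\vee[2]\oplus\MM_1$, $\MM_2^\vee[2]\oplus\MM_2$, and the isomorphisms \eqref{eq:detdistinguished}, and then matching them via the Knudsen--Mumford coherence of $\epsilon_{-,-}$ with the rank-sign bookkeeping. Your passing appeal to Theorem \ref{thm:virtualpullpush} could only ever be heuristic (equality of virtual classes does not pin down orientations on components where the classes vanish, as the paper itself notes), but since you discard it in favour of the direct determinant-line comparison, the proposal stands as written.
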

\subsection{Orientation conventions}
\label{sec:orconventions}
When working with complex determinant line bundles, I will use the conventions of \cite[Chapter 1]{KM}. They provide a functor from the category of complexes of vector bundles with quasi-isomorphisms between them to the category of $\ZZ$-graded line bundles with isomorphisms. The theory of determinant line bundles in \cite{KM} was used both by \cite{Joycehall, JTU, GJT, bojko}, and by \cite{OT} to define and study orientations on moduli spaces of sheaves on CY fourfolds. However, the two groups have chosen different definitions of orientations that can be related. As both virtual equivariant localization of \cite{OT} and virtual pull-back of \cite{Park} use the definition from \cite{OT}, I will also do so here. However, I will formulate it slightly differently and then explain why it is the same as the one of \cite{OT}.

I state the conventions explicitly only when restricted to algebraic spaces, because that is the only setting where this will be needed. However, it could also be formulated for (higher) stacks using the appropriate notions of locally free sheaves, and I will assume this more general definition when formulating the abstract framework in §\ref{sec:ingredients} and §\ref{sec:assump}.

Firstly, I will fix the rules of working with $\ZZ$-graded line bundles here. For any two line bundles $L_1,L_2$, I will use the shorter notation $L_1\,L_2$ for their tensor product $L_1\otimes L_2$. 
\begin{definition}
\label{def:Z2line}
I work with $\ZZ$-graded line bundles on an algebraic space $M$ with $|L|:M\to \ZZ$ being its degree. The morphisms between them are degree-preserving isomorphisms. In particular, there are trivial line bundles $\un{\CC}_r$ of degree $r$, and a \textit{trivialization} of a line bundle $L$ with $|L|=r$ is an isomorphism $\un{\CC}_r\xrightarrow{\sim}L$. The subscript $r$ will be omitted unless necessary as it can be deduced from degree of $L$.

The following rules are used when manipulating $\ZZ$-graded line bundles. All explicit morphisms are described Zariski locally.
\begin{enumerate}[label=\roman*)]
    \item For any two $\ZZ$-graded line bundles $L_1$ and $L_2$, I will always use the isomorphism
\begin{equation}
\label{eq:detswap}
\sigma_{L_1,L_2}: \begin{tikzcd}L_1\,L_2 \arrow[r,"\sim"]&L_2\,L_1\end{tikzcd}\,,\quad \begin{tikzcd}
  u\otimes v\arrow[r,mapsto]&(-1)^{|L_1||L_2|} v\otimes u 
\end{tikzcd}
\end{equation}
to permute their order. For another line bundle $L$, I will write the action of the dual $L^*$ on $L$ from the right. In other words, I will use the isomorphism 
\begin{equation}
\label{eq:pairingL}
p_L: \begin{tikzcd}L\otimes L^*\arrow[r,"\sim"]&\un{\CC}\end{tikzcd}\,, \quad \begin{tikzcd}u\otimes \alpha\arrow[r,mapsto]& \alpha(u) \end{tikzcd}\,.
\end{equation} 
In particular, the pairing $L^*\otimes L\xrightarrow{\sim} \un{\CC}$ is given by the composition $p_L\circ \sigma_{L^*,L}$.
\item When defining the dual $f^*:L_2^*\to L_1^*$ of an isomorphism $f:L_1\to L_2$, one needs to keep acting with the elements of the dual line bundles from the right. Abstractly, this condition requires that the composition of arrows
\begin{equation}
\label{eq:dualdetiso}
\begin{tikzcd}
    \un{\CC}\arrow[r,"p_{L_1}^{-1}"]&L_1\otimes L_1^*\arrow[r,"f\otimes (f^*)^{-1}"]&[1cm]L_2\otimes L_2^*\arrow[r,"p_{L_2}"]&\un{\CC}
\end{tikzcd}
\end{equation}
is equal to $\id_{\un{\CC}}$.
\item The isomorphism between $L$ and its double dual used here is 
\begin{equation}
\label{eq:detddual}
\diamondsuit_L: \begin{tikzcd}
    (L^*)^*\arrow[r]&L
\end{tikzcd}\,,\quad \begin{tikzcd}\big(\alpha\mapsto \textnormal{u}(\alpha)\big)\arrow[r,mapsto]&u:\alpha(u) = \textnormal{u}(\alpha)\end{tikzcd}\,.
\end{equation}
This may at first seem unusual because the orders of the symbols are interchanged but it is compatible with \cite{OT}.
\item Moreover, for any two line bundles $L_1, L_2$, I will use the isomorphism 
\begin{equation}
\label{eq:deltaisom}
\begin{tikzcd}\delta_{L_1,L_2}: \big(L_1 L_2\big)^*\arrow[r]& L_2^*\,L_1^*\end{tikzcd}
\end{equation}
where 
$$
\begin{tikzcd}\delta^{-1}_{L_1,L_2}:L_2^*\,L_1^*\arrow[r]&  \big(L_1 L_2\big)^*\end{tikzcd}\,,\quad 
\begin{tikzcd}
    \beta\otimes \alpha \arrow[r,mapsto]&\big(u\otimes v\mapsto \alpha(u)\beta(v)\big)\,.
    \end{tikzcd}
$$
\end{enumerate}
\end{definition}
Next, I will recall the conventions for working with determinant line bundles of vector bundles.
\begin{definition}
\label{def:determinants}
If $V$ is a vector bundle of rank $\Rk(V): M\to \ZZ$ and $L = \det(V)$ is the determinant line bundle, then $|L| = \Rk(V)$. The operation $\det(-)$, can be made compatible with duals and direct sums. I will follow the conventions of \cite{KM} in doing so:
\begin{enumerate}[label=\roman*)]
\item Given a short exact sequence of vector bundles
\begin{equation}
\label{eq:detexactseq}
\begin{tikzcd}
0\arrow[r]&U\arrow[r,"i"]&W\arrow[r,"p"]&V\arrow[r]&0\,,
\end{tikzcd}
\end{equation}
one defines the isomorphism 
\begin{equation}
\label{eq:exactseqtodet}
\begin{tikzcd}
    \epsilon_{U,V}:\det(W)\arrow[r,"\sim"]& \det(U) \det(V)\end{tikzcd}\qquad\begin{tikzcd}
    i(u_{\wedge})\wedge v_{\wedge}\arrow[r,mapsto]&u_{\wedge}\otimes  p(v_{\wedge})
\end{tikzcd}
\end{equation}
where $u_{\wedge} = u_1\wedge \cdots\wedge  u_{\Rk(U)}$, $v_{\wedge} = v_1\wedge \cdots \wedge v_{\Rk(V)}$, and $i,p$ act separately on each factor $u_i$, respectively $v_j$. When $U=0$, this also describes the induced isomorphism $$\det(W)\xrightarrow{\det(p)}\det(V)\,.$$
\item For any vector-bundle $V$ with $r=\Rk(V)$ and its dual $V^*$ define
\begin{equation}
\label{eq:dualtodet}
\begin{tikzcd}
    d_V: \det (V^*)\arrow[r,"\sim"]&\det(V)^*
\end{tikzcd}\,,
\end{equation}
where 
$$
d_V(\alpha_1\wedge\cdots \wedge \alpha_r)(v_r\wedge\cdots \wedge v_1) = \det\Big[\alpha_j(v_i)\Big]_{i,j=1}^r\,.
$$
\item Start with a bounded complex $E^\bullet=(\cdots\to E^{r-1} \to E^r\to E^{r+1}\to \cdots)$ where each $E^i$ is locally free, then 
\begin{equation}
\label{eq:detEbullet}
\det(E^\bullet) :=\cdots\det(E^{r-1})^{*_{r-1}} \det(E^r)^{*_r}\det(E^{r+1})^{*_{r+1}} \cdots 
\end{equation}
where 
$$
L^{*_i} =\begin{cases}
    L &\textnormal{if}\quad i  \textnormal{ is even}\,,\\
    L^* &\textnormal{if}\quad i \textnormal{ is odd}\,.
\end{cases}
$$
\item The isomorphism 
$$
\det(T): \begin{tikzcd}\det\big(E^\bullet[1]\big)\arrow[r]&\det\big(E^\bullet\big)^*\end{tikzcd}
$$
is determined by applying \eqref{eq:deltaisom}, \eqref{eq:detddual}, and \eqref{eq:detswap} multiple times to the dual of \eqref{eq:detEbullet}.
\end{enumerate}
\end{definition}
There are many potential variations of the above conventions. The reason for the present choice are the following compatibilities between the different points. 
\vspace{20pt}
\begin{remark}
    \label{rem:compatibilitysigns}
        \leavevmode
\vspace{-4pt}
    \begin{enumerate}[label=\roman*)]
    \item One needs to be consistent and treat the trivial line bundles $\un{\CC}_r$ as any other graded line bundle. The best way to achieve this in explicit computations is to use $\un{\CC}_r \cong \det(V)$ where $V = \bigoplus_{i=1}^r \mO\cdot e_i$ so that there is a canonical section $e_1\wedge\cdots \wedge e_r$ that is set equal to 1 in $\un{\CC}_r$. Additionally, the dual $\un{\CC}^*_r$ is identified with $\un{\CC}_r$ via 
    $$
    \begin{tikzcd}
    \det(V)^*\arrow[r,"d^{-1}_V"] &\det(V^*)\arrow[r,"\det(\textnormal{can})"]&[1cm]\det(V)\,.
    \end{tikzcd}
    $$
   The isomorphism $V^*\xrightarrow{\textnormal{can}}V$ is induced by the standard scalar product. For example, the map \eqref{eq:dualdetiso} becomes 
   $$
   P_{\un{\CC}_r}: \begin{tikzcd}
       \un{\CC}_r\otimes \un{\CC}_r\arrow[r]&\un{\CC}_{2r}\,, \qquad e_1\wedge\cdots \wedge e_r\otimes e_r\wedge\cdots \wedge e_1\arrow[r,mapsto]&1\,.
   \end{tikzcd}
   $$
   This is why there will be no additional signs in Definition \ref{def:orientation} compared to \cite{OT}.
    \item For a $\ZZ$-graded line bundle $L$, the expected diagram 
    $$
    \begin{tikzcd}[column sep = large, row sep = small]
         L\otimes L^*\arrow[dr,"p_L"]&\\
        &\un{\CC}\\
\arrow[uu,"\diamondsuit_L\otimes \id_{L^*}"](L^*)^*\otimes L^*\arrow[ur,"p_{L^*}\circ\sigma_{(L^*)^*,L^*}"']&
    \end{tikzcd}
    $$
    commutes only up to a sign $(-1)^{|L|}$. This is also how the discussion following \cite[(57)]{OT} implicitly defines $\diamondsuit_L$.
        \item Consider a direct sum of vector bundles $W=U\oplus V$, then one can view it as a split exact sequence \eqref{eq:detexactseq} or one with $U$ and $V$ interchanged. This leads to two choices of isomorphism $\epsilon_{U,V}$ and $\epsilon_{V,U}$. In terms of
        $$\sigma_{U,V}:=\sigma_{\det(U),\det(V)}\,,$$
        they can be related by the commutative diagram
        $$
        \begin{tikzcd}[row sep=large, column sep = 0]
            &\arrow[dl,"\epsilon_{U,V}"']\det\big(U\oplus V\big)\arrow[dr,"\epsilon_{V,U}"]&\\
            \det(U)\det(V)\arrow[rr,"\sigma_{U,V}"]&&
            \det(V)\det(U)
        \end{tikzcd}\,.
        $$
        \item The isomorphisms \eqref{eq:deltaisom} and \eqref{eq:dualtodet} have been chosen such that the following diagram commutes.
\begin{equation}
\label{eq:orpentagon}
\begin{tikzpicture}[commutative diagrams/every diagram]
\node (P0) at (90:2.3cm) {$\det(V^*)\det(U^*)$};
\node (P1) at (90+72:2cm) {$\det(V)^*\det(U)^*$} ;
\node (P2) at (90+2*72:2cm) {\makebox[5ex][r]{$\big(\det(U)\det(V)\big)^*$}};
\node (P3) at (90+3*72:2cm) {\makebox[5ex][l]{$\det\big(U\oplus V\big)^*$}};
\node (P4) at (90+4*72:2cm) {$\det\big((U\oplus V)^{*}\big)$};
\path[commutative diagrams/.cd, every arrow, every label]
(P0) edge node[swap] {$d_V\otimes d_U$} (P1)
(P1) edge node[swap] {$\delta_{U,V}^{-1}$} (P2)
(P2) edge node {$\epsilon^*_{U,V}$} (P3)
(P4) edge node {$d_{U\oplus V}$} (P3)
(P0) edge node {$\epsilon^{-1}_{V^*,U^*}$} (P4);
\end{tikzpicture}
\end{equation} 
\item Definition \ref{def:determinants} works also when $E^\bullet$ is a bounded complex with each $E^i$ a perfect sheaf. More generally, if $\EE$ is a perfect complex with $\mH^i(\EE)$ also perfect, then \cite[p. 43]{KM} define 
\begin{equation}
\label{eq:detEE}
\det(\EE) = \cdots \det\big(\mH^{i-1}(\EE)\big)^{*_{i-1}}\det\big(\mH^i(\EE)\big)^{*_i}\det\big(\mH^{i+1}(\EE)\big)^{*_{i+1}}\cdots\,,
\end{equation}
which is compatible with \eqref{eq:detEbullet}.
\item Let
$$
\begin{tikzcd}
\EE_1\arrow[r]&\EE\arrow[r]&\EE_2\arrow[r]&\EE_1[1]
\end{tikzcd}
$$
be a distinguished triangle in $D^b(M)$ where each complex is perfect with perfect cohomologies. By \cite[Corollary 2]{KM}, there is an isomorphism
\begin{equation}
\label{eq:detdistinguished}
\epsilon_{\EE_1,\EE_2}: \begin{tikzcd}
    \det(\EE)\arrow[r]&\det(\EE_1)\det(\EE_2)
\end{tikzcd}
\end{equation}
generalizing \eqref{eq:detexactseq}. It is constructed by taking the associated long exact sequence of cohomologies
$$
\begin{tikzcd}
 \mH^\bullet_{\EE_1,\EE_2} = \Big( \cdots \arrow[r]& \mH^{i}(\EE_1)\arrow[r]&\mH^i(\EE)\arrow[r]&\mH^i(\EE_2)\arrow[r]&\cdots\Big)
\end{tikzcd}\,.
$$
The terms of the resulting complex are perfect and it is acyclic, so the previous point determines an isomorphism $\det\big(\mH^\bullet_{\EE_1,\EE_2}\big)\cong \un{\CC}$ which translates into $\epsilon_{\EE_1,\EE_2}$ after reorganizing the line bundles. 
    \end{enumerate}
\end{remark}
I will use the above operations with determinants to define orientations for $O(r,\CC)$ vector bundles and self-dual complexes. The formulation is new but equivalent to the one in \cite{OT}. Because of working only with classes $\alpha\in K^0_e(\mA)$ from \eqref{eq:Keven}, I restrict to the situation when
$$
 \textnormal{the rank }r\textnormal{ is even.}
$$
In some obvious instances, I will not specify the isomorphisms $d_V$ while using them. I will also write 
$$
p_V:=p_{\det(V)}:\begin{tikzcd}\det(V)\det(V)^*\arrow[r]&\un{\CC}
\end{tikzcd}\,.
$$
\begin{definition}
    \leavevmode
\vspace{-4pt}
    \label{def:orientation}
    \begin{enumerate}[label=\roman*)]
        \item If $E$ is an $O(r,\CC)$ vector bundle, with the pairing $q: E^{\otimes 2}\longrightarrow \CC$ inducing the isomorphism 
        $$
        i_q:\begin{tikzcd}
            E\arrow[r]& E^*\,,
        \end{tikzcd}
        $$
then an \textit{orientation of $E$} is an isomorphism $o: \un{\CC}\xrightarrow{\sim}\det(E)$\footnote{Here $\un{\CC}$ is understood as a degree $\Rk(V)$ trivial line bundle for the isomorphism to be a $\ZZ$-graded one.} satisfying
\begin{equation}
\label{eq:Eor}
d_E\circ \det(i_q) =  (o^{*})^{-1} \circ o^{-1}: \begin{tikzcd}
            \det(E)\arrow[r]& \det(E)^*\,.
        \end{tikzcd}
\end{equation}
        \item If $r=2n$ and $E$ admits a maximal isotropic subbundle $V$ giving the short exact sequence
\begin{equation}
\label{eq:maxisotrop}
\begin{tikzcd}
  0\arrow[r]&  V\arrow[r,"i"]&E\arrow[r,"p"]&V^*\arrow[r]&0\,,
\end{tikzcd}
\end{equation}
then the induced orientation $o_V$ of $E$ with respect to $V$ is defined as the composition of the consecutive morphisms
$$
\begin{tikzcd}[column sep = 
large]\un{\CC}\arrow[r,"(-i)^np^{-1}_{\det(V)}"]&[0.5cm]\det(V)\det(V)^*\arrow[r,"\id_{\det(V)}\otimes d^{-1}_V"]&[1.7cm]\det(V)\det(V^*)\arrow[r,"\epsilon^{-1}_{V,V^*}"]&\det(E)\end{tikzcd}\,.
$$
\item Let $\EE$ be a perfect complex complex of rank $r$ with perfect cohomologies and a quasi-isomorphism
$$
\mathbb{i}_q: \begin{tikzcd}\EE\arrow[r]& \EE^\vee[2]\end{tikzcd}\,.
$$
An orientation of $\EE$ is an isomorphism $o:\un{\CC}\xrightarrow{\sim}\det\big(\EE\big)$ satisfying
\begin{equation}
\label{eq:Ebulletor}
\det(\mathbb{i}_q)=(o^*)^{-1}\circ o^{-1}: \begin{tikzcd}\det(\EE)\arrow[r]&\det(\EE)^*\end{tikzcd}\,,
\end{equation}
where I identified the targets via the isomorphism
$
\det\big(\EE^\vee[2]\big)\cong \det\big(\EE\big)^*
$
obtained by applying \eqref{eq:detEE} and \eqref{eq:dualtodet}.

\item Let $\EE$ be a complex as in the previous point with $r=2n$, and let $\alpha: \VV\to \EE$ be a morphism of perfect complexes with perfect cohomologies in $D^b(M)$. If there is a distinguished triangle
$$
\begin{tikzcd}
\VV\arrow[r,"{\alpha}"]&\EE\arrow[r,"{\alpha^\vee[2]}"]&\VV^{\vee}[2]\arrow[r]&\VV[1]\,,
\end{tikzcd}
$$
there is an induced orientation $o_{\VV}$ of $\EE$ with respect to $\VV$ that follows from the isomorphism
$$
\epsilon_{\VV,\VV^\vee[2]}: \begin{tikzcd}\det\big(\EE\big)\arrow[r,"\sim"]&\det\big(\VV\big)\det\big(\VV^\vee[2]\big)\end{tikzcd}
$$
recalled in Remark \ref{rem:compatibilitysigns}.v). It is defined as the composition of the consecutive arrows
$$
\begin{tikzcd}[column sep=large]
\un{\CC}\arrow[r,"(-i)^np^{-1}_{\det(\VV)}"]&[0.7cm]\det\big(\VV\big)\det\big(\VV\big)^*\arrow[r,"\sim"]&\det\big(\VV\big)\det\big(\VV^\vee[2]\big)\,,
\end{tikzcd}
$$
and I will say that $\VV$ is positive with respect to $o_{\VV}$.
    \end{enumerate}
\end{definition}
The benefit of Definition \ref{def:orientation}.i) and iii) is that compared to \cite{OT} it does not need to state their factor $(-1)^{\frac{r(r-1)}{2}}$ explicitly. The rest of this subsection is dedicated to recalling the conventions in \cite{OT} and comparing mine to theirs.
\goodbreak
\begin{remark}
    \leavevmode
\vspace{-4pt}
\label{rem:orientation}
\begin{enumerate}[label=\roman*)]
    \item Recall that \cite[Definition 2.1]{OT} defines an orientation $o_{\OT}$ of an $O(r,\CC)$ bundle $E$ as an isomorphism $o_{\OT}:\un{\CC}\to \det(E)$ with the composition of
\begin{equation}
\label{eq:OTorientation}
\begin{tikzcd}
\un{\CC}\otimes \un{\CC}\arrow[r,"o_{\OT}\otimes o_{\OT}"]&[1.2cm]\det(E)\otimes \det(E)\arrow[r,"\id\otimes (d_E\circ \det(i_q))"]&[1.7cm]\det(E)\otimes \det(E)^*\arrow[r,"p_{\det(E)}"]&[0.7]\un{\CC}
\end{tikzcd}
\end{equation}
being the fiberwise multiplication on $\un{\CC}$ scaled by $(-1)^{\frac{r(r-1)}{2}}$.
\item  Given a self-dual complex $\EE$ of rank $r$ as in Definition \ref{def:orientation}.iii), its orientations were defined in \cite[(59)]{OT} by replacing $E$ with $\EE$ in i). 
\item 
When $r=2n$ and there is a short exact sequence \eqref{eq:maxisotrop}, then there is a natural Oh--Thomas orientation $o_{V,\OT}$ of $E$ that makes $V$ into a \textit{positive maximal isotropic subbundle} in the sense of \cite[Definition 2.2]{OT}. If $\{b_1,\ldots, b_n\}$ is a Zariski local basis of $V$ and $\{d_1,\ldots,d_n\}$ its dual basis, then $o_{V,\OT}$ acts by 
$$
\begin{tikzcd}
    1\arrow[r, mapsto]&(-i)^nb_1\wedge\cdots \wedge b_n\wedge d_n\wedge\cdots \wedge d_1\,.
\end{tikzcd}
$$
This shows that 
$$o_{V,\OT} = o_V$$
for the orientation $o_V$ of Definition \ref{def:orientation}.ii). Due to Remark \ref{rem:compatibilitysigns}.i), one can also conclude that 
\begin{equation}
\label{eq:VorvsVdualor}
o_{V^*} = (-1)^{\Rk(V)}o_{V}\,. 
\end{equation}
Etalé locally, there always exists a split exact sequence \eqref{eq:maxisotrop}, so the definition of orientations from \eqref{eq:Eor} coincides with the one in \cite{OT}. I will give a more categorical proof of this in Lemma \ref{lem:2orientconv} where it also applies to complexes $E^\bullet$. 
\item The situation of Definition \ref{def:orientation}.iv) is tackled in \cite{OT} only when describing the orientations of virtual normal bundles  \eqref{eq:Nvir} in the equivariant localization formula \cite[(115)]{OT}. Recall that there is a decomposition \eqref{eq:Nvirdecomp}
such that $N^{\leq}\cong \big(N^{\geq }\big)^\vee[-2]$, and $N^{\geq}$ can be described as in \eqref{eq:N+resol}. I used 
$$o\big(N^{\vir}\big) = (-1)^{\Rk(N^{\geq})}o_{N^{\geq }}$$
as the orientation for $N^{\vir}$. This differs from the convention in \cite[Theorem 7.1]{OT} that uses the orientation of $N^{\vir}$ inducing $o_{ E^{\geq}}$ of $E^m$ by Definition \ref{def:Einducedor}. Despite this, the authors of \cite{OT} claim that these two orientations are the same due to a misuse of this definition. By reversing Definition \ref{def:Einducedor} and using $\det(i_q)\circ o_{E^\geq} = o_{E^\geq }$ stated in Lemma \ref{lem:2orientconv}, their choice requires that 
$$
N^{\geq }_{\OT} =  \Big(T^{\geq }\oplus T^{\leq }\oplus  E^{\geq} \Big)
$$
is the positive isotropic that defines the orientation
\begin{equation}
\label{eq:orOT}
o_{N^{\geq}_{\OT}}:\begin{tikzcd}\un{\CC}\arrow[r,"\sim"]&\det\big(N^{\geq }_{\OT}\big)\det\big(N^{\geq }_{\OT}\big)^*\cong \det\big(N^{\vir}\big) \end{tikzcd}\,.
\end{equation}
The second isomorphism here follows from \eqref{eq:detEbullet}.  The relation between $o_{N^{\geq }}$ and $o_{N^{\geq }_{\OT}}$ is stated in Lemma \ref{lem:2orientconv}.iv).
\end{enumerate}
\end{remark}
The next lemma is noted down for book-keeping purposes to find the sign error in \cite{OT} that prevented the localization computation in §\ref{sec:WCflags}. It describes some basic properties of the orientations used in Definition \ref{def:orientation} and relates them to the orientations used in \cite{OT}.
\begin{lemma}
    \leavevmode
\vspace{-4pt}
\label{lem:2orientconv}
\begin{enumerate}[label=\roman*)]
    \item Let $E_1, E_2$ be two orthogonal bundles with orientations 
    $$
    \begin{tikzcd}
            \un{\CC}\arrow[r,"o_i"]&\det(E_i)
        \end{tikzcd}\qquad \textnormal{for}\quad i=1,2\,.
    $$
    The product $o_1\otimes o_2: \un{\CC}\to \det(E_1)\det(E_2)$ is an orientation of $E_1\oplus E_2$. The first isomorphism uses the identification $\un{\CC}\cong \un{\CC}\otimes \un{\CC}$ of graded line bundles inverse to multiplication.  The same applies to orientations of complexes from Definition \ref{def:orientation}.iii).
    \item Let $E$ be an $O(r,\CC)$ vector bundle with an orientation $o:\un{\CC}\xrightarrow{\sim}\det(E)$, then $(o^{*})^{-1} = \det(i_q)\circ o $ is an orientation of $E^*$. Suppose that $E$ is as in \ref{eq:maxisotrop}, then $(o^{*}_V)^{-1} = o_V$. The same result holds for the orientations of complexes from Definition \ref{def:orientation}.iii) and iv).
    \item Let $\EE$ be a complex of rank $r$ as in Definition \ref{def:orientation}.iii). A trivialization $\un{\CC}\xrightarrow{o}\det(\EE)$ is an orientation in the sense of Definition \ref{def:orientation}.iii) if and only if it is an orientation in the sense of \cite[Definition 2.1, (59)]{OT}.
    \item  The two orientations constructed in Remark \ref{rem:orientation}.iv) are related by 
$$
o_{N^\geq }= (-1)^{\Rk(E^{\geq})+\Rk(T^\leq )}o_{N^{\geq}_{\OT}} \,.
$$
\end{enumerate}
\end{lemma}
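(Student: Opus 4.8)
The four items reduce to carrying the sign conventions of Definitions~\ref{def:Z2line} and \ref{def:determinants} through the various identifications; the one structural input is that the identification $\un{\CC}^*_r\cong\un{\CC}_r$ of Remark~\ref{rem:compatibilitysigns}.i) secretly carries the reversal sign $(-1)^{r(r-1)/2}$, whereas the double-dual sign $(-1)^r$ of Remark~\ref{rem:compatibilitysigns}.ii) is trivial since $r$ is even. I would treat (iii) first, as it isolates the reversal sign. For (iii) one evaluates \eqref{eq:Eor} — and its complex version \eqref{eq:Ebulletor} — on the local generator $o(1)$ of $\det(E)$ (resp.\ $\det(\EE)$) and pairs the result against $o(1)$ once more. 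Writing $\beta\colon\det(E)\to\det(E)^*$ for the composite built from $i_q$ and $d_E$ (resp.\ from $\mathbb{i}_q$ via \eqref{eq:detEE} and \eqref{eq:dualtodet}), and reading $(o^*)^{-1}\circ o^{-1}$ as a pairing via Remark~\ref{rem:compatibilitysigns}.i), one finds that \eqref{eq:Eor} is equivalent to the single scalar identity $\big[\beta(o(1))\big](o(1))=(-1)^{r(r-1)/2}$: the sign appears because solving $o^*(\lambda)=1$ for $\lambda\in\det(E)^*$ forces $\lambda(o(1))=(-1)^{r(r-1)/2}$, by the reversal in $\un{\CC}^*_r\cong\un{\CC}_r$. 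On the other hand, feeding $1\otimes 1$ through the Oh--Thomas composition \eqref{eq:OTorientation} (taken for $E$, resp.\ for $\EE$ as in Remark~\ref{rem:orientation}.ii)) produces exactly $\big[\beta(o(1))\big](o(1))$, and \cite[Definition~2.1, (59)]{OT} requires this to equal $(-1)^{r(r-1)/2}$. Hence the two notions of orientation coincide, which in particular proves the last sentence of (iii).

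For (i): the musical isomorphism of $E_1\oplus E_2$ is block diagonal, so under $\epsilon_{E_1,E_2}$ and $\delta_{E_1,E_2}$ it becomes $\det(i_{q_1})\otimes\det(i_{q_2})$; rewriting \eqref{eq:Eor} for $o_1\otimes o_2$ and commuting $d_{E_1\oplus E_2}$ past the splitting isomorphisms with the pentagon of Remark~\ref{rem:compatibilitysigns}.iv) reduces the claim to \eqref{eq:Eor} for $o_1$ and for $o_2$ separately, the transposition signs from \eqref{eq:detswap} all vanishing by evenness; the self-dual-complex version is the same with \eqref{eq:detdistinguished} in place of $\epsilon$. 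For (ii): rearranging \eqref{eq:Eor} gives $d_E^{-1}\circ(o^*)^{-1}=\det(i_q)\circ o$ outright, and checking that this trivialization of $\det(E^*)$ solves the orientation equation for $E^*$ uses $i_{q^*}=i_q^{-1}$ under $\diamondsuit_E$ (the sign $(-1)^{\Rk(E)}$ of Remark~\ref{rem:compatibilitysigns}.ii) being trivial). The equality $(o_V^*)^{-1}=o_V$ then says precisely that the explicit $o_V$ of Definition~\ref{def:orientation}.ii) satisfies \eqref{eq:Eor}, which I would verify in a Zariski-local hyperbolic basis $\{b_1,\dots,b_n,d_1,\dots,d_n\}$ with $o_V(1)=(-i)^n b_1\wedge\cdots\wedge b_n\wedge d_n\wedge\cdots\wedge d_1$, using $(-i)^{2n}=(-1)^n$ together with the reversal of the $d$-block, the leftover sign being absorbed as in (iii); the same computation yields $o_{V,\OT}=o_V$ and $o_{V^*}=(-1)^{\Rk(V)}o_V$ of \eqref{eq:VorvsVdualor}, and the complex analogues of Definition~\ref{def:orientation}.iii)--iv) follow by replacing $\epsilon_{V,V^*}$ with $\epsilon_{\VV,\VV^\vee[2]}$.

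For (iv), which I expect to be the main obstacle, the task is to compare two explicit trivializations of $\det(N^{\vir})$. The orientation $o_{N^\geq}$ is Definition~\ref{def:orientation}.iv) applied to the summand $t\cdot N^\geq\subset N^{\vir}$ (using $N^\leq\cong(N^\geq)^\vee[-2]$ from \eqref{eq:Nvirdecomp}), so it carries the normalizing factor $(-i)^{\Rk(N^\geq)}$ with $\Rk(N^\geq)=\Rk(T^\geq)-\Rk(E^\geq)+\Rk(T^\leq)$; the orientation $o_{N^\geq_{\OT}}$ is built from the isotropic $N^\geq_{\OT}=T^\geq\oplus T^\leq\oplus E^\geq$ through $\det(N^{\vir})\cong\det(N^\geq_{\OT})\det(N^\geq_{\OT})^*$ (the graded pieces of $N^{\vir}=\{T^m\to E^m\to(T^m)^*\}$ from \eqref{eq:Nvir} reassemble $N^\geq_{\OT}\oplus(N^\geq_{\OT})^*$, so this isomorphism is \eqref{eq:detEbullet}), so it carries the factor $(-i)^{\Rk(T^\geq)+\Rk(T^\leq)+\Rk(E^\geq)}$. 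The ratio of the two $(-i)$-factors is $(-i)^{-2\Rk(E^\geq)}=(-1)^{\Rk(E^\geq)}$, and it remains to account for the reordering incurred when the line bundles behind $o_{N^\geq}$ — grouped as $\det(N^\geq)\det(N^\geq)^*$, the term $(T^\leq)^*$ of the complex $N^\geq$ entering as $\det(T^\leq)^*$ by \eqref{eq:detEbullet} and \eqref{eq:dualtodet} — are permuted into the order behind $o_{N^\geq_{\OT}}$ using \eqref{eq:detswap}, \eqref{eq:deltaisom} and \eqref{eq:dualtodet}: the $\det(T^\geq)$- and $\det(E^\geq)$-blocks recombine with no surplus sign (using $\det(i_q)\circ o_{E^\geq}=o_{E^\geq}$, a byproduct of (ii), and evenness), while the $\det(T^\leq)$-block, which appears dualized on one side and not the other, contributes $(-1)^{\Rk(T^\leq)}$. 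Combining the two contributions gives $o_{N^\geq}=(-1)^{\Rk(E^\geq)+\Rk(T^\leq)}o_{N^\geq_{\OT}}$. The delicate part throughout is staying consistent with the $\ZZ$-grading and reversal conventions while passing between the complex $N^{\vir}$, its graded pieces, and the two isotropic presentations — exactly where the sign slip in \cite[\S7]{OT} sits — so every step has to be written out rather than waved through.
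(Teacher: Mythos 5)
Your proposal is correct and proceeds essentially as the paper does: (i) reduces to the pentagon compatibility \eqref{eq:orpentagon}, (ii) follows by rearranging \eqref{eq:Eor} and a local hyperbolic-basis check of $(o^*_V)^{-1}=o_V$, and (iii) hinges on exactly the observation you make, that the graded pairing $p_{\un{\CC}_r}$ already carries the factor $(-1)^{r(r-1)/2}$ of \cite[Definition 2.1]{OT}, so the two notions of orientation agree. For (iv) the paper packages the same bookkeeping slightly more economically, decomposing $o_{N^{\geq}_{\OT}}$ and $o_{N^{\geq}}$ into tensor products of summand orientations and applying $o_{V^*}=(-1)^{\Rk(V)}o_V$ from \eqref{eq:VorvsVdualor} to the dualized $E^{\geq}$ and $T^{\leq}$ blocks, whereas you split the sign into the $(-i)$-normalization ratio plus reordering; both organizations yield $(-1)^{\Rk(E^{\geq})+\Rk(T^{\leq})}$.
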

\begin{proof}
    \begin{enumerate}[label=\roman*)]
        \item One sees right away that 
        $$
        \big((o_1\otimes o_2)^*\big)^{-1}\circ (o_1\otimes o_2)^{-1} = \sigma_{E_1^*,E_2^*}\circ\big((o_1^*)^{-1}\circ o_1^{-1}\big)\otimes \big((o_2^*)^{-1}\circ o_2^{-1}\big)\,.
        $$
      From \eqref{eq:Eor}, it follows that one needs to check that
        $$d_{E_1\oplus E_2}\circ \det(i_q) = \sigma_{E_1^*,E_2^*}\circ  \big(d_{E_1}\circ \det(i_{q_1})\big)\otimes  \big(d_{E_2}\circ \det(i_{q_1})\big)\,,$$
        where $i_q,i_{q_1}$, and $i_{q_2}$ are the musical isomorphisms of $E_1\oplus E_2$, $E_1$, and $E_2$ respectively. This follows immediately from applying \eqref{eq:orpentagon}. 
        \item This first result follows immediately from the defining equation \eqref{eq:Eor} and
        $$o\circ o^* = \big((o^*)^{-1}\circ o^{-1}\big)^{-1} = \det(i_q^{-1})\,.$$
        Then I use $(o^{*}_V)^{-1} = \det(i_q)\circ o_V = o_V$ where the last equality is easy to check. 
        \item Fix an isomorphism $\un{\CC}_r\xrightarrow{o}\det\big(\EE\big)$.The condition of Remark \ref{rem:orientation}.i) and ii) can be alternatively expressed as the composition of the consecutive arrows
$$
\begin{tikzcd}
\un{\CC}_{2r}\arrow[r,"p^{-1}_{\un{\CC}_r}"]&\un{\CC}_r\otimes \un{\CC}_r\arrow[r,"o\otimes o"]&\det\big(\EE\big)\otimes \det\big(\EE\big)\arrow[r,"\id\otimes \det(\mathbb{i}_q)"]&[0.8cm]\det\big(\EE\big)\otimes \det\big(\EE\big)^*\arrow[r,"p_{\det(\EE)}"]&[0.7]\un{\CC}_{2r}
\end{tikzcd}
$$
being equal to $\id_{\CC_{2r}}$. This follows from Remark \ref{rem:compatibilitysigns}.i) that implies that
$
p_{\un{\CC}_r}
$ is equal to $(-1)^{\frac{r(r-1)}{2}}$ times the fiberwise multiplication of the two copies of $\un{\CC}_r$. Comparing with the condition \eqref{eq:dualdetiso} defining $(o^{*})^{-1}$ which states that the composition of
$$
\begin{tikzcd}
    \un{\CC}_{2r}\arrow[r,"p^{-1}_{\un{\CC}_r}"]&\un{\CC}_r\otimes\un{\CC}_r\arrow[r,"o\otimes (o^*)^{-1}"]&[1cm]\det\big(\EE\big)\otimes \det\big(\EE\big)^*\arrow[r,"p_{\det(\EE)}"]&\un{\CC}_{2r}
\end{tikzcd}
$$
is equal to $\id_{\un{\CC}_{2r}}$, one sees that $(o^*)^{-1} = \det(\mathbb{i}_q)\circ o$.
\item  One can decompose \eqref{eq:orOT} into the orientations of summands as
$$
o_{N^{\geq }_{\OT}} = o_{T^\geq }\otimes o_{T^{\leq }}\otimes o_{E^{\geq}} \,.
$$
 Examining the construction of $o_{N^{\geq}}$ from Definition \ref{def:orientation}, one finds that it is equal to 
$$
o_{N^{\geq }}\otimes o_{(E^{\geq })^*}\otimes o_{(T^{\geq })^*}\,.
$$
Combined with \eqref{eq:VorvsVdualor}, this proves the statement.
\end{enumerate}
\end{proof}
\subsection{The dg-perspective}
\label{sec:dgsetup}
In this section, I will work with dg-categories as they are the appropriate framework to think in when it comes to moduli stacks of complexes of sheaves and shifted symplectic structures. They also offer a more fundamental perspective than triangulated categories. This subsection gives a review of CY4 dg-categories and introduces some notation and terminology. For those not interested in the subject, it may be worth just to assume that all the necessary stacks can be enriched to $-2$-shifted symplectic ones. Then, apart from the notation in Example \ref{ex:Rhoms}, this subsection can be skipped.

The two main examples of dg-categories I will consider are as follows:
\begin{example}
\label{ex:smoothproperdgs}
\begin{enumerate}
  \leavevmode
\vspace{-4pt}
    \item Let $X$ be a variety with the dg-category of complexes of quasi-coherent sheaves $\mD\textnormal{QCoh}(X)$ and $\mD_{\pe}(X)$ its full dg-subcategory consisting of perfect complexes (denoted by $L_{\textnormal{pe}}(X)$ in \cite[§3.5]{TV}). Then it is saturated -- therefore smooth and proper as a dg-category -- in the sense of \cite[Definition 2.4]{TV} when $X$ is smooth and proper.
    \item Take a dg-algebra $A^\bullet$ as can be constructed for example from a dg-quiver $Q^\bullet$ by forming the associated dg-path algebra $A^\bullet= \CC Q^\bullet$. It can be thought of as a dg-category with a single object. The dg-category $A^\bullet$ is proper if and only if $A^{\bullet}$ is a perfect complex. The examples considered here will be smooth but not proper. 
    \end{enumerate}
\end{example}
The authors of \cite{TV} introduce derived moduli stacks $\bM_{\mD}$ of objects in dg-categories $\mD$. However, they only parameterized certain modules of $\mD$ that can sometimes form a smaller dg-category. This is apparent from the next example discussed in more detail in my lecture notes \cite[§5.4]{bojkoLec}.
\begin{example}
\label{ex:notproperdg}
  \leavevmode
\vspace{-4pt}
\begin{enumerate}
    \item When $X$ in Example \ref{ex:smoothproperdgs}.i) is not proper but still smooth, then $$\bM_X := \bmM_{\mD_{\per}(X)}$$  parametrizes compactly supported perfect complexes.
    \item  For $A^\bullet$ as in Example \ref{ex:smoothproperdgs}.ii), the stack $\bM_{A^\bullet}$ parametrizes dg-modules of $A^\bullet$ with their underlying complexes being perfect.  This does not include $A^\bullet$ unless it is proper. 
\end{enumerate}
\end{example}
When $\mD$ is \textit{finite type} -- satisfied by all examples considered in the rest of this work, 
it is shown in \cite[Theorem 3.6]{TV} that $\bM_{\mD}$  is \textit{locally geometric derived stack locally of finite presentation}. This in particular implies that the cotangent complex $\LL_{\bM_{\mD}}$ exists and is perfect. Therefore, one can define shifted (closed) $p$-forms of $\bM_{\mD}$ and even shifted symplectic structures as in \cite{PTVV} and ask whether they exist. Calabi--Yau dg-categories give rise to shifted symplectic structures, but first, I will briefly summarize the references dealing with the former. 

Serre duality for Calabi--Yau fourfolds states for any two compactly supported perfect complexes that 
$$
\RHom(E^\bullet,F^\bullet)\simeq\RHom(F^\bullet,E^\bullet)^\vee[-4]\,.
$$
Kontsevich \cite{KontsevichCY} formalized it into a notion of \textit{Calabi--Yau triangulated categories} by requiring the above quasi-isomorphism to hold bi-functorially. When choosing a refinement for dg-categories, there are two options: 1)  \textit{left Calabi--Yau structures} as in \cite{Ginzburg, KV}, and \cite[Definition 1.2]{BD1} and 2) \textit{right Calabi--Yau structures} as in \cite{KSAinfty} and \cite[Definition 1.5]{BD1}. I will not recall the details and will instead refer to \cite{BD1} with a partial account also given in \cite[§6]{bojkoLec}.

Left Calabi--Yau structures give rise to all the other structures in this subsection. This is represented by the following diagram with arrows labeled by citations referring to each construction.
$$
\begin{tikzcd}
   &\arrow[ld,"{\textrm{\cite[Theorem 3.1]{BD1}}}"']\boxed{\begin{matrix}\textrm{left CY4} \\\textrm{structure}\\\textrm{of } \mD\end{matrix}}\arrow[rd,"{\textrm{\cite{BDII}}}"]&\\
   \arrow[d,"\cite{KSAinfty}"']\boxed{\begin{matrix}\textrm{right CY4}\\ \textrm{structure}\\\textrm{of } \mD_{\lpe}\end{matrix}}&&\boxed{\begin{matrix}-2\textrm{-shifted}\\ \textrm{symplectic structure}\\\textrm{on }\bmM_{\mD}\end{matrix}}\arrow[d,"{\textrm{\cite[Proposition 4.3]{OT}}}"]\\
\boxed{\begin{matrix} \textrm{CY4 structure}\\
   \textrm{on }H^0\big(\mD_{\lpe}\big)\end{matrix}}&&\boxed{\begin{matrix}\textrm{isotropy}\\\textnormal{of cones}\end{matrix}}
\,.\end{tikzcd}
$$
Here $\mD_{\lpe}$ denotes the full dg-subcategory of $\mD$ consisting of \textit{locally perfect} objects in the sense of \cite[§2.3]{BD1}. As an example, for $\mD = \mD_{\per}(X)$ with $X$ smooth, this subcategory contains all compactly supported perfect complexes which are precisely the objects one wants to study, in this case. The CY4 structure of the homotopy category $H^0\big(\mD_{\lpe}\big)$ is that of a triangulated category. The right vertical arrow represents the proof of the isotropy of cones for the induced obstruction theory on the moduli stacks of semistable objects. It was generalized to this setting in \cite[p. 135]{BKP}.

I will use the following terminology for abelian categories.
\begin{definition}
\label{def:CY4abelian}
\leavevmode
\vspace{-4pt}
\begin{enumerate}[label=\roman*)]
    \item Presently, I will say that a triangulated category $\mT$ is Calabi--Yau four, if $\mT = H^0\big(\mD_{\lpe}\big)$ for a left CY4 dg-category $\mD$.
    \item Let $\mA$ be a heart of $H^0\big(\mD_{\lpe}\big)$ for some left CY4 dg-category $\mD$, then I will say that $\mA$ is \textit{Calabi--Yau four}.
\end{enumerate}
\end{definition}
Consider the corresponding embeddings of the moduli stack $\mM_{\mA}$ of objects in $\mA$ into the derived stack and its classical truncation:
$$
\mM_{\mathcal{A}}\hookrightarrow \bM_{\mD}\,,\qquad  \mathcal{M}_{\mathcal{A}}\hookrightarrow\mathcal{M}_{\mD}= t_0(\bM_{D})\,.
$$ 
 I will always work in the setting when they are open. 
\begin{example}
\label{ex:bMstacks}
  \leavevmode
\vspace{-4pt}
\begin{enumerate}
    \item When $X$ is a smooth quasi-projective CY fourfold, the dg-category $\mD_{\per}(X)$ is shown to be left CY4 in \cite[Proposition 5.12]{BD1}. This implies by the above that $\bM_X$ is $-2$-shifted symplectic. The original proof of this last fact for projective $X$ appeared in \cite{PTVV}.
    \item This time, I will consider a dg-quiver $\wt{Q}^{\bullet}$ with superpotential $\mH$ in degree $-1$. I will recall this notion in §\ref{sec:dgquivers} and give some examples there. The resulting dg-path algebra $\wt{A}^\bullet$ is known to be smooth (see e.g.
\cite[Proposition 4.3.3]{Lam}).  It can be shown to be left Calabi--Yau four by applying \cite[Theorem 1.1]{WKY} because $\mH$ can be viewed as an element of the cyclic homology group $HC_1\big(\wt{A}^\bullet\big)$ determining a deformation parameter $\eta\in H_2\big(\wt{A}^\bullet\big)$.
 The moduli stack $\bM_{A^\bullet}$ from Example \ref{ex:notproperdg}.2) is then $-2$-shifted symplectic. Restricting to the dg-modules supported in degree zero, this recovers isotropy of cones for Example \ref{ex:CYquiver}.2).
\end{enumerate}
    \end{example}
 Given an algebra object $A$ among dg-categories and its left-module dg-category $C$ with the action $A\otimes C\to C$, Brav--Dyckerhoff \cite[§2.3]{BDII} introduce the bifunctor
\begin{equation}
\label{eq:Rhombif}
\RHom_A(-,-): C ^{\op}\otimes C\longrightarrow A\,.
\end{equation}
It was denoted by $\un{\Hom}_A(-,-)$ there. For the next two examples of this functor, recall that $\textnormal{Ind}\big(\Perf(\bmM_{\mD})\big)$ denotes the dg-categorical ind-completion of the dg-category of perfect complexes on $\bmM_{\mD}$.
\begin{example}
\label{ex:Rhoms}
  \leavevmode
\vspace{-4pt}
\begin{enumerate}[label=\roman*)]
\item Set $A=\textnormal{Ind}\big(\Perf(\bmM_{\mD})\big)$ and $C= \textnormal{Ind}\big(\Perf(\bmM_{\mD})\big)\otimes \mD$, then denote the resulting functor by
$$\RHom_{\bmM_{\mD}}(-,-) := \RHom_{\textnormal{Ind}\big(\Perf(\bmM_{\mD})\big)}(-,-)\,.$$
Assuming that $\mD$ is smooth, the universal object $\mE_{\mD}$ of $\bmM_{\mD}$ exists in $\textnormal{Ind}\big(\Perf(\bmM_{\mD})\big)\otimes \mD$ by \cite[Corollary 2.6, Example 3.7]{BDII}. The cotangent complex of $\bmM_{\mD}$ can be written as
$$
\LL_{\bmM_{\mD}} = \RHom_{\bmM_{\mD}}(\mE_{\mD},\mE_{\mD})^\vee[-1] 
$$
by \cite[(3.21)]{BDII}.
\item Set $A=\textnormal{Ind}\big(\Perf(\bmM_{\mD})\big)^{\otimes 2}$ and $C= \textnormal{Ind}\big(\Perf(\bmM_{\mD})\big) ^{\otimes 2}\otimes \mD$, then \eqref{eq:Rhombif} will be denoted by $\RHom_{\bmM_{\mD}\times\bmM_{\mD}}(-,-)$. I will use it to define the Ext complex 
\begin{equation}
\boldsymbol{\mExt}_{\mD} = \RHom_{\bmM_{\mD}\times\bmM_{\mD}}(\mE_{\mD},\mF_{\mD})\,.
\end{equation}
Here $\mE_{\mD}$ denotes the universal object for the first copy of $\bmM_{\mD}$ and $\mF_{\mD}$ for the second one. Both are considered as objects in $C$. Restricted to each $\CC$-point $([E],[F])\in\bmM_{\mD}\times \bmM_{\mD}$ for a pair of objects $E,F$ of $\mD$, there is a natural quasi-isomorphism of complexes
$$
\mExt_{\mD}|_{([E],[F])} \cong \Ext^\bullet\big(E,F\big)\,. 
$$
Additionally, the pull-back along the diagonal map $$\Delta: \bM_{\mathcal{D}}\rightarrow\bM_{\mathcal{D}}\times \bM_{\mathcal{D}}
$$
can be used to relate the following complexes.
\begin{equation}
\label{eq:DeltaExtLL}
\LL_{\bM_{\mathcal{D}}}\cong \Delta^*\big(\mExt_{\mD}\big)^\vee[-1]
\end{equation}
\end{enumerate}
\end{example}
 \begin{remark}
 The connected components of $\bM_{\mD}$ are in bijection with $\Ksst(\mathcal{D})$ -- the semi-topological K-theory of a dg-category as in \cite[§4.1]{B16}. Moreover, there is always a natural map from Grothendieck's group $K^0(\mD)\to \Ksst(\mD)$.
\end{remark}

\subsection{Dg-quivers and some notation}
\label{sec:dgquivers}
The conventions in this section are taken from \cite[§10.3]{vdBerg}. A more detailed account is given in  
\cite[Definition 4.1.6 and §4.4]{Lam}, though the conventions are slightly different. See also \cite[§4.2]{bojkoLec} for another short summary. Later, I was informed that Schmiermann also revisited this perspective in his PhD thesis \cite[§4.3]{Schmier}.

Calabi--Yau four dg-quivers are defined as follows. 
\begin{definition}
 \label{def:CY4quiver}  
 \leavevmode
\vspace{-4pt}
    \begin{enumerate}[label= \arabic*)]
        \item A \textit{graded quiver} $Q^\bullet = (\Ver,\Edg^\bullet)$ consists of a set of vertices $\Ver$ and a $\ZZ$-graded set of edges $\Edg^{\bullet}$ together with maps $t,h: \Edg^{\bullet}\to \Ver$ which map each edge to the vertex at its tail and head respectively. I will denote by $|-|: \Edg^{\bullet}\to \ZZ $ the degree map.  The graded path algebra $\CC Q^\bullet$ of a graded quiver is spanned by all paths in $Q^\bullet$ including the constant ones $l_{v}$, $|l_v = 0|$ at each vertex $v\in V$. The product $p\circ q$ of two paths $p$ and $q$ is given by the concatenation when $t(p) = h(q)$ and zero otherwise. A \textit{differential graded quiver} consists of a pair $(Q^{\bullet}, d)$ where $Q^{\bullet}$ is a graded quiver and $d:\CC Q^{\bullet}\to \CC Q^{\bullet +1}$ makes $\CC Q^{\bullet}$ into a differential graded algebra. 
        \item  Fix a $Q^\bullet$ satisfying $\Edg^i=0$ except when $i\in \{-1,0\}$. If $L^{-1}$ is the set of loops in degree $-1$, define $(\Edg^*)^\bullet = \Edg^\bullet\backslash L^{-1}$ at first as a set. To each $e^*\in (\Edg^*)^\bullet$ corresponding to an $e\in \Edg^\bullet$, assign the degree $|e^*|= -2-|e|$, and set $t(e^*) = h(e), h(e^*)=t(e)$. I will also set $e^*=e$ for all $e\in L^{-1}$.  Finally, one introduces $\ov{Q}^\bullet = (\Ver,\bEdg^\bullet)$ where $\bEdg^\bullet = \Edg^\bullet\sqcup (\Edg^*)^\bullet$.
\item Choose a linear combination of cyclic paths in $\ov{Q}^\bullet$ of degree $-1$. It is represented by a degree $-1$ element $\mH\in \CC \ov{Q}^{\bullet}_{\cyc}:=\CC \ov{Q}^\bullet/[\CC \ov{Q}^\bullet, \CC \ov{Q}^\bullet]$. 
        Let $\partial^{0}_f: \CC \ov{Q}^\bullet_{\cyc}\to \CC \ov{Q}^\bullet$ for $f\in \ov{E}^\bullet$ be the \textit{circular derivative} acting by 
        $$
        \partial^0_f(p) = \sum_{\begin{subarray}{c}r,q:r\circ f \circ q\\\textnormal{summand of }p
        \end{subarray}
        }(-1)^{|r|(|f|+|q|)} q\circ r
        $$
        where the sum ranges over all appearances of $f$ in $p$.
Then $\mH$ is assumed to satisfy the \textit{master equation} 
\begin{equation}
\label{eq:mastereq}
\{\mH,\mH\}:= \sum_{e} \frac{\partial^{\circ} \mH}{\partial e}\frac{\partial^{\circ} \mH}{\partial e^*} = 0
\end{equation}
in which case\,, it is called a \textit{superpotential}.
\item A \textit{Calabi--Yau four dg-quiver} $\widetilde{Q}^\bullet$ is determined by the data $(\ov{Q}^\bullet, \mH)$ from 2) and 3). Its set of edges $\wEdg^\bullet$ additionally contains a degree $-3$ loop $o_v$ attached to each $v\in V$.
The \textit{associated differential} $d: \CC \wt{Q}^{\bullet}\to \CC \wt{Q}^{\bullet+1}$ is defined by
    \begin{align*}
    \label{eq:difffromH}
    d(e) &= \frac{\partial^{\circ} \mH}{\partial e^*}\,,  \\
    d(e^*) &=(-1)^{|e|+1}\,\frac{\partial^{\circ} \mH}{\partial e}\,.
    \numberthis
    \end{align*}
    For each $v\in V$, one further sets
$$
d(o_v) = l_v\circ \sum_{e\in \Edg^{\bullet}} [e,e^*]\circ l_v\,.
$$
\item Consider the original graded quiver $Q^\bullet$ this time with the differential determined by the first line of $\eqref{eq:difffromH}$ and $L^{-1}=\emptyset$\footnote{It would be enough to assume that the cardinality of this set is even, but for simplicity, I avoid this.}. I will say that $\wt{Q}^\bullet$ is a $-2$-shifted cotangent bundle of $Q^\bullet$ when 
$$
\frac{\partial^{\circ} \mH}{\partial e} = 0\qquad \text{for all}\quad e\in \Edg^{-1}\,.
$$
    \end{enumerate}  
\end{definition}
When dealing with Calabi--Yau dg-quivers, I will from now on use different colors to specify the degrees of arrows:
\begin{enumerate}[leftmargin = 1.5em, label=\roman*)]
\item $\xrightarrow{f}$ when $|f| = 0$,
\item \BuOr{$\xrightarrow{f}$} when $|f| = \BuOr{-1}$,
\item \Ma{$\xrightarrow{f}$} when $|f| = \Ma{-2}$,
\item \FG{$\xrightarrow{e}$} when $|e| = \FG{-3}$.
\end{enumerate}
This coloring will also be used for other elements of $\CC \widetilde{Q}^\bullet$ to communicate their degree. To get used to it, I present an example here.
\begin{example}
\label{ex:CY4quiver}
Consider the quiver $\wt{Q}^\bullet$ given by
    \begin{center}
\includegraphics[scale=1]{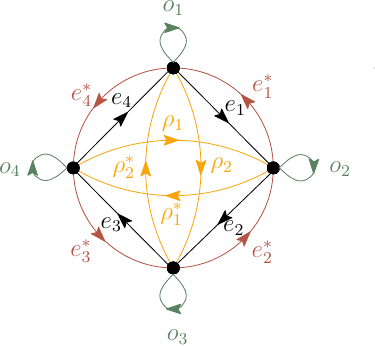}\,.
    \end{center}
Without the \FG{loops} it would be $\ov{Q}^\bullet$, and the edges not containing $(-)^*$ form the starting quiver $Q^\bullet$. The superpotential is given by
    \begin{equation}
    \label{eq:simplestCY4potential}
    \BuOr{\mH} = \BuOr{\rho^*_2}\circ e_2\circ e_1+\BuOr{\rho_1}\circ e_3\circ e_2+\BuOr{\rho_2}\circ e_4\circ e_3 - \BuOr{\rho^*_1}\circ e_1\circ e_4
    \end{equation}
from which one immediately sees that $\{\BuOr{\mH},\BuOr{\mH}\} = 0$. The differential can be computed from
$$
d(\BuOr{\rho_i}) = \frac{\partial^{\circ} \BuOr{\mH}}{\partial \BuOr{\rho_i^*}}\,,\quad d(\BuOr{\rho^*_i}) = \frac{\partial^{\circ} \BuOr{\mH}}{\partial \BuOr{\rho_i}}\,,\quad d(\Ma{e_i^*}) = -\frac{\partial^{\circ} \BuOr{\mH}}{\partial e_i}\,.
$$   
For example, one gets
$$
d(\BuOr{\rho_1}) = -e_1\circ e_4\,,\quad d(\BuOr{\rho^*_1}) = e_3\circ e_2 \,,\quad d(\Ma{e^*_1}) = e_4 \circ \BuOr{\rho^*_1} - \BuOr{\rho^*_2}\circ e_2\,. 
$$
\end{example}
       The above example is the simplest one I could find of a CY4 dg-quiver, that is not a $-2$-shifted cotangent bundle and for which the semistable degree 0 representations form non-trivial proper moduli spaces. The definition of dg-representations of a dg-quiver is going to be recalled now.
\begin{definition}
    Let $(Q^\bullet, d)$ be a dg-quiver. Its \textit{dg-representation} $M^\bullet$ is a complex of vector spaces with an action 
    $$
    \begin{tikzcd}
    \CC Q^\bullet \otimes M^\bullet \arrow[r]&M^\bullet 
    \end{tikzcd}
    $$
    which is a morphism of complexes. If  $M^\bullet$ is given by a single vector space in degree 0, I will say that it is a degree 0 representation of $(Q^{\bullet},d)$. In this case, the dimension vector $\un{d} = (d_v)_{v\in V}$ of $M$ is defined by 
    $$
    d_v= \dim_{\CC}\big(l_v\cdot M\big)\,.
    $$
    I will denote by $M$ also the induced representation of $H^0(Q^{\bullet})$ which contains the same amount of information.
\end{definition}
Another example of CY4 dg-quivers will be given in §\ref{sec:CY4dgC4}. Its finite-dimensional degree 0 representations will correspond to ideal sheaves on $\CC^4$.

       To explain why I have used the terminology of shifted cotangent bundles in Definition \ref{def:CY4quiver}, it is useful to understand the cotangent complexes on moduli stacks of representations of $\wt{Q}^\bullet$. The first step is to describe their derived refinements. Choose a dimension vector $\un{d}\in (\ZZ_{\geq 0})^{V}$ of $\wt{Q}^{\bullet}$.  Using $\SSym[S^\bullet]$ to denote the free commutative graded algebra generated by a graded vector space $S^\bullet$, set the notation
     \begin{align*}
     \label{eq:AdSdGLd}
     R^\bullet_{\un{d}} &= \SSym\big[S^\bullet_{\un{d}}\big]\qquad \textnormal{where} \quad S^\bullet_{\un{d}} = \bigoplus_{e\in \wEdg^\bullet} \Hom\big(\CC^{d_{t(e)}}, \CC^{d_{h(e)}}\big)^*\big[-|e|\big]\,,\\
     \GL(\un{d}) &= \prod_{v\in V}\GL(d_v)\,, \qquad \qquad  \,\mathfrak{gl}(\un{d}) = \textnormal{Lie}\big(\GL(\un{d})\big) \,.
     \numberthis
     \end{align*}
    Each summand in $S^{\bullet}_{\un{d}}$ could be thought of as a degree-shift of the space of coordinate functions on $\Hom\big(\CC^{d_{t(e)}}, \CC^{d_{h(e)}}\big)$. Denote by  $\{v_i\}$ the standard basis of $\CC^{v}$, so that $\{t(e)_i\}$ and $\{h(e)_j\}$ are the bases of $\CC^{d_{t(e)}}$ and $\CC^{d_{h(e)}}$ respectively. Then the standard coordinate functions on the matrices $\Hom\big(\CC^{d_{t(e)}}, \CC^{d_{h(e)}}\big)$ form the basis $\{m^e_{ji}\}$. Define the matrix $m^e := (m^e_{ji})$, and set the notation 
    \begin{equation}
    \label{eq:mpath}
    m^{e_{n}\circ \cdots \circ e_1} = m^{e_{n}} \cdots  m^{e_{1}}\,.
    \end{equation}
    Extending this notation by linearity, the differential on $R^\bullet_{\un{d}}$ is defined on the generators by
    \begin{equation}
    \label{eq:dmeji}
    d(m^e_{ji}) = m^{d_e}_{ji}\,.
    \end{equation}
     By the discussion in \cite[§4.2]{bojkoLec} (see also \cite[Theorem 2.8]{dgrepmoduli} for a more general statement), \cite[Theorem 2.1]{dgrepmoduli} implies that the derived moduli stack $\bmM_{\un{d}}$ of representations of $\wt{Q}^{\bullet}$ with the dimension vector $\un{d}$ is the stacky quotient
     \begin{equation}
     \label{eq:Mdderived}
     \bmM_{\un{d}} = \Big[\bSpec\big(R^\bullet_{\un{d}}\big)/\GL(\un{d})\Big]\,.
     \end{equation}
     The next lemma describes the induced obstruction theory on 
$$
\mM_{\un{d}}:= t_0\big(\bmM_{\un{d}}\big)\,.
$$
\begin{lemma}
\label{lem:cotangentofQ}
    Consider the complex of $H^0\big(R^{\bullet}_{\un{d}}\big)$-modules $
   \LL_{R^\bullet_{\un{d}}}\otimes^{\LL} _{R^\bullet_{\un{d}}}H^0\big(R^{\bullet}_{\un{d}}\big)\,.
    $
 It is represented by a complex with the underlying graded vector space
    \begin{equation}
    \label{eq:Dbullet}
D^\bullet := H^0\big(R^{\bullet}_{\un{d}}\big)\otimes_{\CC}S^{\bullet}_{\un{d}}\,.
    \end{equation}
    Fix an edge $|f|\in \wEdg^\bullet$ with $|f|<-1$, and consider the restriction of the differential 
    $$
\begin{tikzcd}[column sep= large]
H^0\big( R^\bullet_{\un{d}}\big)\otimes_{\CC}\Hom\big(\CC^{d_{t(f)}}, \CC^{d_{h(f)}}\big)^*\arrow[r,"\sum d_{f,g}"]& \bigoplus\limits_{g:|g| = |f|+1} H^0\big( R^\bullet_{\un{d}}\big)\otimes_{\CC}\Hom\big(\CC^{d_{t(g)}}, \CC^{d_{h(g)}}\big)^*\end{tikzcd}\,,
    $$
    where $d_{f,g}$ is the composition with the projection to each summand labeled by $g$ in the target. The maps $d_{f,g}$ are given as follows:
    \begin{itemize}
        \item Express $d(f)$ as $\sum_a p_a +R_g$, where each $p_a$ is a path containing exactly one copy of $g$ and there are no such paths as summands of $R_g$. 
        \item Each $p_a$ can be expressed as 
\begin{equation}
\label{eq:ptandg}
p_a = q_a \circ g \circ r_a
\end{equation}
where $q_a, r_a$ are degree 0 paths. 
\item Define the map 
 \begin{equation}
 \label{eq:dstarfg}
\begin{tikzcd}[column sep= large, row sep = tiny]
H^0\big( R^\bullet_{\un{d}}\big)\otimes_{\CC}\Hom\big(\CC^{d_{t(g)}}, \CC^{d_{h(g)}}\big)\arrow[r,"d^*_{f,g}"]& H^0\big( R^\bullet_{\un{d}}\big)\otimes_{\CC}\Hom\big(\CC^{d_{t(f)}}, \CC^{d_{h(f)}}\big)\\
M_g\arrow[r,mapsto]&\sum_{a}{[m^{q_a}]}\cdot M_g \cdot {[m^{r_a}]}\,,\end{tikzcd}
   \end{equation}
   where $[m^{p}]$ for a degree 0 path $p = f_n\circ \cdots \circ f_1$ is given by projecting \eqref{eq:mpath} to an $H^0(R^\bullet_{\un{d}})$-valued matrix. 
   \item The map $d_{f,g}$ is dual to $d^*_{f,g}$ as a morphism of $H^0\big(R^\bullet_{\un{d}}\big)$-modules. 
    \end{itemize}
Consider the cocone complex $E^{\bullet}$ of the natural morphism of complexes
    $$
    \begin{tikzcd}
    D^\bullet\arrow[r]& \mathfrak{gl}(\un{d})^*\otimes H^0\big(R^{\bullet}_{\un{d}}\big) \,.
    \end{tikzcd}
    $$
    Is has a natural $\GL(\un{d})$-action, thus it descends to the quotient 
    $$\mM_{\un{d}} = \Big[\Spec\Big(H^0\big( R^\bullet_{\un{d}}\big)\Big)\Big/ \GL(\un{d})\Big]\,.$$
The obstruction theory 
$$
\EE_{\un{d}} = \LL_{\bmM_{\un{d}}}|_{\mM_{\un{d}}}  
$$
is represented by $E^{\bullet}$ which satisfies $(E^{\bullet})^*[2] = E^{\bullet}$\,.
\end{lemma}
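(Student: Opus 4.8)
The plan is to obtain $\EE_{\un{d}}$ by descent from the affine atlas $\bSpec\big(R^\bullet_{\un{d}}\big)$ of \eqref{eq:Mdderived} and then to restrict to the classical truncation. Since $R^\bullet_{\un{d}}=\SSym\big[S^\bullet_{\un{d}}\big]$ is free as a graded-commutative algebra, with differential prescribed on generators by \eqref{eq:dmeji}, it is a cofibrant commutative differential graded algebra over $\CC$, so $\LL_{R^\bullet_{\un{d}}}$ is modelled by the module of Kähler differentials $\Omega^1_{R^\bullet_{\un{d}}/\CC}=R^\bullet_{\un{d}}\otimes_\CC S^\bullet_{\un{d}}$: a free $R^\bullet_{\un{d}}$-module carrying the universal derivation $\ddr$, whose internal differential is the $R^\bullet_{\un{d}}$-linear extension of the assignment sending the generator $\ddr\big(m^e_{ji}\big)$ to $\ddr\big(m^{d(e)}_{ji}\big)$, the right-hand side expanded by the graded Leibniz rule. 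As $\Omega^1_{R^\bullet_{\un{d}}/\CC}$ is free, derived base change along $R^\bullet_{\un{d}}\to H^0\big(R^\bullet_{\un{d}}\big)$ is an ordinary tensor product and returns precisely the graded module $D^\bullet$ of \eqref{eq:Dbullet}; this settles the first assertion.

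Reading off the induced differential of $D^\bullet$ on a generator $1\otimes m^f_{ji}$ gives $\big[\ddr\big(m^{d(f)}_{ji}\big)\big]$. For $|f|<-1$, writing $d(f)=\sum_a p_a+R_g$ and $p_a=q_a\circ g\circ r_a$ as in the statement, the Leibniz rule extracts from each $m^{p_a}_{ji}$ exactly the terms differentiating a matrix entry of $m^g$, which in matrix form assemble to $\sum_a[m^{q_a}]\,\ddr(m^g)\,[m^{r_a}]$; matching indices identifies the resulting component, as a morphism of $H^0\big(R^\bullet_{\un{d}}\big)$-modules between the relevant $\Hom$-spaces and their $\ZZ$-graded duals, with the transpose of $M_g\mapsto\sum_a[m^{q_a}]\,M_g\,[m^{r_a}]$, i.e.\ of $d^*_{f,g}$, so that $d_{f,g}=(d^*_{f,g})^*$. (The components out of the $|f|=-1$ part land in degree $0$ and are computed identically; they recover the relations $m^{d(f)}_{ji}$ cutting out $\mM_{\un{d}}$.)

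For the quotient presentation $\bmM_{\un{d}}=\big[\bSpec(R^\bullet_{\un{d}})/\GL(\un{d})\big]$, the pullback of $\LL_{\bmM_{\un{d}}}$ to the atlas is the cocone of the infinitesimal coaction $\LL_{R^\bullet_{\un{d}}}\to\mathfrak{gl}(\un{d})^*\otimes\mO$; restricting to $\mM_{\un{d}}$ along $R^\bullet_{\un{d}}\to H^0\big(R^\bullet_{\un{d}}\big)$ and inserting the previous two paragraphs identifies $\EE_{\un{d}}=\LL_{\bmM_{\un{d}}}|_{\mM_{\un{d}}}$ with the $\GL(\un{d})$-equivariant cocone of $D^\bullet\to\mathfrak{gl}(\un{d})^*\otimes H^0\big(R^\bullet_{\un{d}}\big)$, which descends to $E^\bullet$ on $\mM_{\un{d}}$; its tor-amplitude is $[-3,1]$ since $S^\bullet_{\un{d}}$ is concentrated in degrees $0,-1,-2,-3$ and the $\mathfrak{gl}(\un{d})^*$-term sits in degree $1$. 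For the self-duality $(E^\bullet)^*[2]\cong E^\bullet$ there is a cheap proof via Example \ref{ex:bMstacks}.2: $\bmM_{\un{d}}$ is $-2$-shifted symplectic, hence $\LL_{\bmM_{\un{d}}}\simeq\LL_{\bmM_{\un{d}}}^\vee[2]$, and one restricts to $\mM_{\un{d}}$. I would also make the isomorphism visible on $E^\bullet$ itself, since this is what is used later: the set $\wEdg^\bullet$ together with the constant paths $\{l_v\}$ is stable under $e\mapsto e^*$ with $|e|\mapsto-2-|e|$ (the degree $-3$ loops $o_v$ matching the $l_v$, hence $\mathfrak{gl}(\un{d})$ via its trace form), so matrix transposition and the trace pairing supply a nondegenerate pairing of degree $-2$ on the underlying graded module of $E^\bullet$; that it is a chain map follows, via $d_{f,g}=(d^*_{f,g})^*$, from the symmetry of the second circular derivatives $\partial^{\circ}_g\partial^{\circ}_{f^*}\mH$ under $(f,g)\mapsto(g^*,f^*)$, which is the cyclic invariance of $\mH$ together with the master equation \eqref{eq:mastereq}.

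The homotopical content above — the Kähler model, the base change, and the quotient triangle — is formal; the genuine obstacle is sign bookkeeping. One must arrange the Koszul signs coming from $\ddr$, the sign $(-1)^{|e|+1}$ in the associated differential of Definition \ref{def:CY4quiver}, the $\ZZ$-graded duality conventions of §\ref{sec:orconventions}, and the cyclic-symmetry signs of $\mH$ so that $d_{f,g}=(d^*_{f,g})^*$ holds on the nose and the pairing of the previous paragraph is an honest isomorphism $(E^\bullet)^*[2]\cong E^\bullet$ rather than merely one up to sign. The shifted-symplectic description guarantees that a coherent choice exists; matching it with the explicit complex $E^\bullet$ is where the effort concentrates.
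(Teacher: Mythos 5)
Your proposal follows essentially the same route as the paper's proof: cofibrancy of $R^\bullet_{\un{d}}$ giving the Kähler-differential model $\Omega^1_{R^\bullet_{\un{d}}}=R^\bullet_{\un{d}}\otimes_\CC S^\bullet_{\un{d}}$, freeness making the base change to $H^0\big(R^\bullet_{\un{d}}\big)$ an ordinary tensor product yielding $D^\bullet$, and the Leibniz-rule computation identifying $d_{f,g}$ as the dual of $d^*_{f,g}$. The extra material you add on descent along the $\GL(\un{d})$-quotient and on the self-duality via the $-2$-shifted symplectic structure of Example \ref{ex:bMstacks}.2 is exactly what the paper leaves implicit, so the argument is correct and matches.
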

\begin{proof}
    Because $R^{\bullet}_{\un{d}}$ is cofibrant, its cotangent complex is given by the dg-module of Kaehler differentials which has the form (see, e.g., \cite[Definition 20, Exercise 21]{bojkoLec})
    $$
    \Omega^1_{R^{\bullet}_{\un{d}}} = R^{\bullet}_{\un{d}}\otimes_{\CC}S^{\bullet}_{\un{d}}\,.
    $$
Its elements are linear combinations of $s\otimes \ddr f_{ji}$ for $f\in \wEdg^{\bullet}$ and $j,i$ basis elements as in \eqref{eq:dmeji}. The action of the differential on them is determined by
$$d\big(\ddr f_{ji}\big) = \ddr \big(d(f_{ji})\big)$$
together with the Leibnitz rule. 

Because $\Omega^1_{R^\bullet_{\un{d}}}$ is a free dg-module, its restriction to $H^0(R^\bullet_{\un{d}})$ is given by \eqref{eq:Dbullet} with the induced differential. Thus, the only elements one needs to consider are $s\otimes \ddr(f)$ where $|s| = 0$. Combined with the definition of the differential, this implies that $\sum_a p_a$ will lead to the only non-zero contribution to $d_{f,g}$ for a fixed choice of $f$ and $g$. Using the matrix $\ddr m^f := \big(\ddr m^f_{ji}\big)$, I know from \eqref{eq:ptandg} that 
$$
d_{f,g}\big(\ddr m^f\big) = \sum_a  [m^{q_a}] \cdot \ddr m^g \cdot [m^{r_a}]\in D^{\bullet}
$$
where $\cdot$ represents the multiplication of matrices.
As a morphism of $H^{0}\big(A^{\bullet}_{\un{d}}\big)$-modules, this is dual to $d^*_{f,g}$ described in \eqref{eq:dstarfg}.
\end{proof}
I can now use this to prove that $\mM_{\un{d}}$ is orientable.
\begin{corollary}
\label{cor:quiveror}
   The moduli stacks $\mM_{\un{d}}$ are orientable for the obstruction theories $\EE_{\un{d}}$ given in Lemma \ref{lem:cotangentofQ}.
\end{corollary}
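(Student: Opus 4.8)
The plan is to unravel the self-duality of $\EE_{\un{d}}$ recorded in Lemma~\ref{lem:cotangentofQ} until an orientation can be assembled edge by edge and then checked to be $\GL(\un{d})$-equivariant. Let $E^{\bullet}$ be the self-dual representative of $\EE_{\un{d}}=\LL_{\bmM_{\un{d}}}|_{\mM_{\un{d}}}$ from that lemma. Its underlying graded $H^{0}(R^{\bullet}_{\un{d}})$-module is $H^{0}(R^{\bullet}_{\un{d}})\otimes_{\CC}$ of a graded $\GL(\un{d})$-representation with $\mathfrak{gl}(\un{d})^{*}$ in degree $+1$ (the $\GL(\un{d})$-direction of the stacky quotient $[\Spec(H^{0}(R^{\bullet}_{\un{d}}))/\GL(\un{d})]$), the block $\bigoplus_{e\in\Edg^{0}}\Hom(\CC^{d_{t(e)}},\CC^{d_{h(e)}})^{*}$ in degree $0$, the block indexed by the degree $-1$ edges of $\wEdg^{\bullet}$ — namely $\Edg^{-1}$ together with $\{e^{*}:e\in\Edg^{-1}\setminus L^{-1}\}$ — in degree $-1$, the $(\Edg^{0})^{*}$-block in degree $-2$, and $\mathfrak{gl}(\un{d})^{*}$ (from the loops $o_{v}$) in degree $-3$. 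The identification $(E^{\bullet})^{*}[2]=E^{\bullet}$ pairs the two $\mathfrak{gl}(\un{d})^{*}$-terms through the trace form, pairs the $\Edg^{0}$-block against the $(\Edg^{0})^{*}$-block via $\Hom(A,B)^{*}\cong\Hom(B,A)$ on the dual edges $e\leftrightarrow e^{*}$, and restricts on the degree $-1$ term $E^{-1}$ to a non-degenerate symmetric form (symmetry being $\mathbb{i}_{q}^{\vee}[2]=\mathbb{i}_{q}$). First I would check that this exhibits $E^{\bullet}$ in the shape $\{T^{\bullet}\to E^{-1}\to(T^{\bullet})^{\vee}[2]\}$ of a symmetric complex \eqref{eq:symcomplex} with $T^{\bullet}=[\mathfrak{gl}(\un{d})^{*}\to\bigoplus_{e\in\Edg^{0}}\Hom(\CC^{d_{t(e)}},\CC^{d_{h(e)}})^{*}]$ (degrees $+1,0$) and $E^{-1}$ the $O(r,\CC)$-bundle with the symmetric form above; by the conventions of §\ref{sec:orconventions} matching orientations of a symmetric complex with orientations of its middle orthogonal bundle (Definition~\ref{def:Einducedor}, Definition~\ref{def:orientation}.ii and iv, Lemma~\ref{lem:2orientconv}, and \eqref{eq:detdistinguished}), it then suffices to orient $E^{-1}$.

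The orthogonal bundle $E^{-1}$ splits as
\[
E^{-1}\cong\bigoplus_{e\in\Edg^{-1}\setminus L^{-1}}\bigl(W_{e}\oplus W_{e}^{*}\bigr)\oplus\bigoplus_{e\in L^{-1}}\mathfrak{gl}(d_{v_{e}}),\qquad W_{e}:=\Hom(\CC^{d_{t(e)}},\CC^{d_{h(e)}})^{*},
\]
where the self-duality identifies each block $W_{e}\oplus W_{e}^{*}$ with a hyperbolic quadratic space (from the pairing $e\leftrightarrow e^{*}$) and each $\mathfrak{gl}(d_{v_{e}})$ with the trace form. On each hyperbolic block I take the canonical orientation $o_{W_{e}}$ of Definition~\ref{def:orientation}; on each $\mathfrak{gl}(d_{v})$ with trace form an orientation exists because over $\CC$ the discriminant of any non-degenerate quadratic space is a square (here it is $(-1)^{\binom{d_{v}}{2}}$, which has a square root in $\CC$). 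By Lemma~\ref{lem:2orientconv}.i the product of all these is an orientation $o$ of $E^{-1}$. Finally I would check that every determinant line in sight is $\GL(\un{d})$-equivariantly trivial: $\GL(\un{d})$ acts on $\det(W_{e})\otimes\det(W_{e})^{*}$ and on $\det(\mathfrak{gl}(d_{v}))=\det(\CC^{d_{v}})^{\otimes d_{v}}\otimes\det(\CC^{d_{v}})^{*\otimes d_{v}}$ through characters that cancel, and the canonical (resp.\ trace) elements defining $o_{W_{e}}$ (resp.\ the chosen orientation of $\mathfrak{gl}(d_{v})$) are invariant. Hence $o$ descends along $\mM_{\un{d}}=[\Spec(H^{0}(R^{\bullet}_{\un{d}}))/\GL(\un{d})]$, and transporting it back through the reduction of the first paragraph gives a trivialisation of $\det(\EE_{\un{d}})$ satisfying the compatibility \eqref{eq:Ebulletor} with $\mathbb{i}_{q}$.

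The main obstacle is the presence of the degree $-1$ loops $L^{-1}$, which already occur in $\wt{C}_{4}^{\bullet}$ as the six arrows $c_{ij}$ at the vertex $0$: unlike a hyperbolic block $W_{e}\oplus W_{e}^{*}$, the summand $\mathfrak{gl}(d_{v})$ with trace form carries no $\GL(d_{v})$-equivariant maximal isotropic, so it cannot be folded into a Lagrangian $T^{\bullet}$ and must be oriented separately as above; the point needing care is that the sign $(-1)^{\binom{d_{v}}{2}}$ entering $\det(i_{q})$ for the trace form is cancelled by the chosen square root in accordance with Definition~\ref{def:orientation}.i and Lemma~\ref{lem:2orientconv}. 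A secondary verification is that the strict self-duality $(E^{\bullet})^{*}[2]=E^{\bullet}$ of Lemma~\ref{lem:cotangentofQ} really does restrict to the hyperbolic pairings between the $e$- and $e^{*}$-blocks and to the trace form on the loop blocks; this is a routine unwinding of $d(e)=\partial^{\circ}\mH/\partial e^{*}$ and the duality $d_{f,g}\leftrightarrow d_{f,g}^{*}$ from Lemma~\ref{lem:cotangentofQ}, but it is what makes the reduction to orienting $E^{-1}$ legitimate.
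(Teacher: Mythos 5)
Your proposal is correct and follows essentially the same route as the paper's proof: reduce orientability of $\EE_{\un{d}}$ to orientability of the degree $-1$ orthogonal bundle, cancel the contributions of each dual pair $e,e^{*}$ with $e\in\Edg^{-1}\setminus L^{-1}$ via the conventions of Definition \ref{def:orientation}, and then orient the loop blocks $\End(\mV)\cong\mV^{*}\otimes\mV$ with their trace-type pairing. The only (cosmetic) difference is that the paper exhibits an explicit $\GL$-invariant nowhere-vanishing section $t$ of $\det(\mV^{*}\otimes\mV)$ and checks directly that it is an orientation, whereas you argue existence abstractly (square discriminant over $\CC$ plus triviality of the determinant character of the adjoint representation) and then descend by equivariance.
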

\begin{proof}
     It is enough to prove that the \BuOr{degree $-1$} part of $E^\bullet$ is an orientable vector bundle. 
    
    For each summand of $S^{\bullet}_{\un{d}}$ from \eqref{eq:AdSdGLd} labeled by $e\in \Edg^{-1}\backslash L^{-1}$, there is a dual term in $S^{\bullet}_{\un{d}}$ coming from $e^* \in (\Edg^*)^{-1}$ . The determinants of the corresponding terms in $E^\bullet$ cancel using the convention in Definition \ref{def:orientation}.ii). 

    I am left to show that the terms originating from some $e\in L^{-1}$ are orientable. Each contribution in $E^{-1}$ associated to $e$ takes the form $\End(\mV)\cong \mV^*\otimes \mV$ for the vector bundle $\mV$ at a vertex where the loop is. The pairing of this term with itself is given locally by
    $$  (A,B)\mapsto\Tr(A\cdot B^{\T})
    $$
where $A,B$ are matrices in $\End(\mV)$.     
    
   The determinant of $\mV^*\otimes \mV$ is trivializable because of the isomorphism
    $$
\det\big(V^*\otimes \mV\big)\cong \det(\mV^*)^n\det(\mV)^n
    $$
which does not, however, preserve degrees. Nevertheless, if $\{b_i\}^r_{i=1}$ is a Zariski local basis of $\mV$ and $\{d_i\}$ its dual basis, the element
$$
t:=d_1\otimes b_1\wedge d_1\otimes b_2\wedge\cdots \wedge d_r\otimes b_{r-1}\wedge d_r\otimes b_r
$$
glues to a global nowhere vanishing section of $\mV^*\otimes \mV$. Using the above description of the pairing, it is a straightforward computation to show that $t$ is an orientation of $\End(\mV)$. 
\end{proof}
\section{The formulation of wall-crossing via deformations of vertex algebras}
\label{sec:ingredients}
For the purpose of studying wall-crossing, Joyce introduced in \cite[§3]{Joycehall} and \cite[Def. 4.5]{JoyceWC} vertex algebras constructed on the homology of moduli stacks. After I recall the necessary assumptions on CY4 abelian categories $\mA$ to allow such a construction, I will refine Joyce's work to equivariant cohomology in §\ref{sec:equivariantVA} using the two approach discussed in Introduction.

Once the language of vertex algebras is set up, I state the general wall-crossing result that hinges on Assumptions \ref{ass:orientation}, \ref{ass:obsonflag}, and \ref{ass:welldef} being satisfied. Its application in the current form is limited due to Important Remark! \ref{impremark}, but it still holds for CY4 quivers and local CY fourfolds. 
\subsection{Assumptions on the abelian category}
\label{sec:assab}
The next definition sets some notation needed for constructing and manipulating vertex algebras a la \cite{Joycehall}. Note that all wall-crossing here will take place in abelian categories. The two natural examples that this applies to are $\Coh(X)$ for a CY fourfold $X$ and $\Rep\big(\wt{Q}^{\bullet}\big)$ -- the category of degree 0 representations of a CY4 dg-quiver $\wt{Q}^{\bullet}$. The classes $\al\in K^0(\mA)$ I will care about will always be represented by a non-zero object $E\in \mA$. The collection of such classes that additionally lie in $K^0_e(\mA)$ will be denoted by $C_e(\mA)\subset K^0_e(\mA)$.
\begin{definition}
\label{def:categoryA}
Let $\mA$ be a noetherian CY4 abelian category in the sense of Definition \ref{def:CY4abelian}. Suppose that there is an action of a torus $\T$ on $\mA$ compatible with the CY4 structure. Choose a quotient $K^0_e(\mA)\twoheadrightarrow \ov{K}(\mA)$ such that the Euler pairing on $\mA$ induces a morphisms $\chi:\ov{K}(\mA)\times\ov{K}(\mA)\to \ZZ$. The image of $C_e(\mA)$ in $\ov{K}(\mA)$ will be denoted by $\ov{C}(\mA)$.

By definition, there is a left CY4 dg-category that contains $\mA$ as a heart of $H^0(\mD)$. For its associated moduli stack $\mM_{\mD}$, I require that the natural embedding
 $$
 \begin{tikzcd}
i_{\mA}: \mM_{\mA}\arrow[r,hookrightarrow]& \mM_{\mD}
\end{tikzcd}
 $$
is open. This implies that $\mM_{\mA}$ is Artin. There is, moreover, an induced $\T$-action on $\mM_{\mA}$. 

The next few points collect some notation for structures of $\mM_{\mA}$.
\begin{enumerate}[label =\alph*)]
    \item There are maps
\begin{equation}
\label{eq:murho}
\begin{tikzcd}
\mu:\mathcal{M}_{\mA}\times \mathcal{M}_{\mA}\arrow[r] &\mathcal{M}_{\mA}\end{tikzcd}\,,\qquad\begin{tikzcd} \rho: B\GG_m\times\mathcal{M}_{\mA}\arrow[r]& \mathcal{M}_{\mA}\end{tikzcd}\end{equation}
    corresponding respectively to taking direct sums and rescaling automorphisms of objects. They are restrictions of the corresponding maps on $\mM_{\mD}$. The first map is $\T$-equivariant with respect to the diagonal action on the source. The action $\rho$ commutes with the action of $\T$.
    \item  I will denote the \textit{rigidification} (see Abramovich--Olsson--Vistoli \cite[App. A]{AOV}, Romagny \cite{Romagny}) of $\mM_{\mA}$ with respect to the action $\rho$ by
    $$
   \mM^{\rig}_{\mA} := \mathcal{M}_{\mathcal{A}}\fatslash B\GG_m\,.
    $$
    It admits the induced $\T$-action and the natural $\T$-equivariant $B\GG_m$-torsor $\Pi:\mM_{\mA}\to \mM^{\rig}_{\mA}$.
   \item Recall the complex $\mExt_{\mD}$ on $\mM_{\mD}\times \mM_{\mD}$ from Example \ref{ex:Rhoms}, and set $\mExt_{\mA} :=\mExt_{\mD}|_{\mM_{\mA}\times \mM_{\mA}}$. Consider the diagonal action of $\T$ on $\mM_{\mA}\times \mM_{\mA}$. Vertex algebras are then constructed using the $\T$-equivariant complex
   $$
\Theta_{\mA} := \mExt_{\mA}^\vee[-1]\,.
   $$
   I will make no distinction between $\Theta_{\mA}$ and its $\T$-equivariant K-theory class. When there is no chance of confusion, I will drop the subscript $\mA$.
    \item Fix a set $\msE(\mA)\subset \ov{K}(\mA)$ of \textit{emergent classes} that will satisfy assumptions specified later on. For each $\alpha\in \msE(\mA)$, I will denote by $\mM_{\alpha}$ the corresponding open and closed substack of $\mM_{\mA}$ consisting of objects of class $\alpha$. Given a perfect complex or a K-theory class on $\mM_{\mA}$, I will denote its restriction to $\mM_{\alpha}$ by appending a subscript $(-)_{\alpha}$. More generally, this will apply to products of $\mM_{\mA}$ where a restriction for each of the factors will be denoted by an additional subscript. 
 \end{enumerate}
\end{definition}
\begin{remark}
   \item From now on, I will mention $\T$-actions and equivariance only when it is strictly necessary. When applying results in this work, the reader may do so $\T$-equivariantly.
   Let $\mX$ be a moduli stack with a fixed CY4 obstruction theory $\EE$ without assuming \textit{eveness}.  I will use the notation $\vdim$ to denote the function $\Rk(\EE): \mX\to \ZZ$ constant on each connected component of $\mX$. When writing total Chern classes like $z^{\Rk(\Theta_{\mA})}c_{z^{-1}}(\Theta_{\mA})$, I will often write $z^{\Rk}$ for the leading term. 
   \item When writing total Chern classes like $z^{\Rk(\Theta_{\mA})}c_{z^{-1}}(\Theta_{\mA})$, I will often write $z^{\Rk}$ for the leading term. 
\end{remark}
The first assumption on $\mM_{\mA}$ that I will introduce concerns the existence of orientations. Restricting \eqref{eq:DeltaExtLL} along $i_{\mA}$, one obtains an obstruction theory
$$
\EE = \Delta^*\Theta_{\mA}\,.
$$
Recall that an orientation of $\mM_{\mA}$ is an isomorphism $ \un{\CC}\xrightarrow{o} \det\big(\EE\big)$ satisfying the condition of Definition \ref{def:orientation}.iii) with respect to the Serre duality induced by the CY4 structure. For each $\alpha\in \msE(\mA)$, I will denote the restriction of $o$ to $\mM_{\alpha}$ by $o_{\alpha}$. Using the notation 
\begin{equation}
\label{eq:LDdetlines}
L := \det\big(\Theta\big)\quad \textnormal{and}\quad D = \det\big(\EE\big)
\end{equation}
this means that $o_{\alpha}$ is a trivialization $\un{\CC}\xrightarrow{\sim} D_{\alpha}$. The determinant line bundles are naturally $\ZZ_2$-graded of degree 
$$
\big|L_{\alpha,\beta}\big| = -\chi(\alpha,\beta)\quad \textnormal{and}\quad \big|D_{\alpha}\big| = -\chi(\alpha,\alpha)\,.
$$

A compatibility of the orientations $o_{\alpha}$ under direct sums was observed in \cite{JTU, Joycehall}. However, the conventions used there would clash with those applied to equivariant localization in \cite{OT} and those I set in §\ref{sec:orconventions}. I correct this by following Definition \ref{def:orientation} and adding the extra sign that appeared in \eqref{eq:oNgeq} the meaning of which will be clear from the localization computation in §\ref{sec:WCflags}. For a pair of elements $\alpha,\beta\in \msE(\mA)$ such that $(\alpha+\beta)\in \msE(\mA)$, consider the equivalence
\begin{equation}
\label{eq:sumEE}
\mu^*\,\EE_{\alpha+\beta} = \EE_{\alpha}\boxplus \EE_{\be}\oplus \Theta_{\al,\beta}\oplus \sigma^*\Theta_{\beta,\al}
\end{equation}
where $\sig:\mM_{\al}\times\mM_{\be}\to \mM_{\be}\times \mM_{\al}$ switches the order of the factors. Taking determinants, it induces the isomorphism
$$
\begin{tikzcd}
\delta_{\alpha,\beta}:\mu^*D_{\alpha+\beta} \arrow[r]&\big(D_{\alpha}\boxtimes D_{\beta} \big) \otimes L_{\alpha,\beta}\otimes \sigma^*L_{\beta,\alpha}\,.
    \end{tikzcd}
$$
Due to the isomorphism $\sigma^*L_{\be,\al}\cong L^*_{\al,\be}$ that follows from
$$\sigma^*\Theta_{\beta,\alpha}= \Theta_{\alpha,\beta}^\vee[2]\,,$$ there is an induced orientation $o^{+}_{\alpha,\beta}$ of $\mu^*D_{\alpha+\beta}$ determined as the composition of the consecutive morphisms
$$
\begin{tikzcd}
\un{\CC}\arrow[r,"o_{\alpha}\boxtimes o_{\beta}"]&[0.3cm]D_{\alpha}\boxtimes D_{\beta}\arrow[r,"(-1)^{\chi(\al,\be)}o_{\Theta_{\alpha,\beta}}"]&[1.7cm]\big(D_{\alpha}\boxtimes D_{\beta}\big)\otimes L_{\alpha,\beta}\otimes \sigma^* L_{\beta,\alpha}\arrow[r,"\delta^{-1}_{\alpha,\beta}"]&\mu^*D_{\alpha + \beta}
\end{tikzcd}\,.
$$
By Lemma \ref{lem:2orientconv}.i), this is indeed an orientation. If $o_{\al+\beta}$ was already fixed, then $o^+_{\al,\be}$ can be compared to $\mu^*o_{\alpha+\beta}$ by a locally constant function $$\varepsilon_{\alpha,\beta}:\begin{tikzcd}\mM_{\alpha}\times \mM_{\beta}\arrow[r]&\{-1,+1\}\end{tikzcd}\,.$$
 
Exchanging $\alpha$ and $\beta$ in the construction of $o^+_{\alpha,\beta}$ introduces an extra sign
$$
(-1)^{\chi(\alpha,\beta)}o^+_{\alpha,\beta} = o^+_{\beta,\alpha}
$$
if one ignores which factor of $\mM_{\alpha}\times \mM_{\beta}$ appears first. 
This sign appears by Remark \ref{rem:compatibilitysigns}.i) because $o_{\sig^*\Theta_{\beta,\alpha}}$ is used instead of $o_{\Theta_{\alpha,\beta}}$. The consequence of this is the relation
\begin{equation}
\label{eq:symsign}
\varepsilon_{\alpha,\beta} =(-1)^{\chi(\alpha,\beta)}\varepsilon_{\beta,\alpha}\,.
\end{equation}
\begin{remark}
    Note that in \cite[§2.5]{JTU} and in \cite[Theorem 1.15]{CGJ}, the sign comparison is given by
\begin{equation}
\label{eq:oldsymsign}\varepsilon_{\alpha,\beta} = (-1)^{\chi(\alpha,\alpha)\chi(\beta,\beta)+\chi(\alpha,\beta)}\varepsilon_{\beta,\alpha}\,.
\end{equation}
The extra sign $(-1)^{\chi(\alpha,\alpha)\chi(\beta,\beta)}$ in the above two references is due to ignoring that the trivializations $\un{\CC}\xrightarrow{o_{\alpha}}D_{\alpha}$ originate from degree $\chi(\alpha,\alpha)$ trivial line bundles. Currently, interchanging the orientations in $o_{\alpha}\boxtimes o_{\beta}$ simultaneously swaps the two copies of $\un{\CC}$ and $D_{\al}$ with $D_{\beta}$. Thus, no additional signs appear. However, this does not play a role since I only work with $\alpha$ such that $\chi(\alpha,\alpha)\in 2\ZZ$. 
\end{remark}
The first assumption that $\mA$ needs to satisfy is orientability with further restriction on $\varepsilon_{\al,\beta}$.
\begin{assumption}
\label{ass:orientation}
I will assume that $\mM_{\alpha}$ are orientable for each $\alpha\in \msE(\mA)$.  Furthermore, there should be a uniform way of choosing orientations $o_{\alpha}$
    on each stack $\mathcal{M}_{\alpha}$ independent of the connected components of $\mM_{\alpha}$. Such a choice of $\{o_{\alpha}\}_{\alpha\in \msE(\mA)}$ should lead to $\varepsilon_{\alpha,\beta}$ that are constant for any $\alpha,\beta\in \msE(\mA)$. 
    \end{assumption}
   From the construction and the conventions in §\ref{sec:orconventions}, it is not difficult to conclude that 
      \begin{align*}
\label{eq:epsidentity}
          \varepsilon_{\alpha,0} &= \varepsilon_{0,\alpha}=1\,,\qquad
    \varepsilon_{\alpha,\beta}\varepsilon_{\alpha+\beta,\gamma}=\varepsilon_{\beta,\gamma}\varepsilon_{\alpha,\beta+\gamma}\,.
    \numberthis
\end{align*}
    \begin{example}
        \label{ex:sheavesandreps0}
    The two main examples of $\mA$ to keep in mind are $\Coh(X)$ and $\Rep\big(\wt{Q}^{\bullet}\big)$. In §\ref{sec:pairWC}, I also include different hearts $\mB$ of bounded t-structures as long as all semistable objects can be uniquely described as complexes $\{O\to F\}$ where $O,F$ lie in either of the above $\mA$ and $O$ is fixed. The openness condition for $i_{\mB}$ follows from \cite[Proposition 3.3.2]{APhearts}. Below, I will briefly discuss the two examples in more detail.
     \begin{enumerate}
         \item  For a CY fourfold $X$ the category $\Coh(X)$ embeds into $\mD_{\pe}(X)$ discussed in Example \ref{ex:smoothproperdgs}.1). If $X$ is not proper, then one needs to restrict the description to compactly supported sheaves by replacing $\Coh(X)$ with $\Coh_{\cs}(X)$. Its moduli stack is contained in $\mM_X$ from Example \ref{ex:notproperdg}.1). In both cases, orientations have been proved in \cite{CGJ,bojko} by reducing to gauge-theoretic orientations. Recently, a correction of the proof of the existence of gauge-theoretic orientations has appeared in \cite{JU}. This puts some restrictions on $H^3(X)$ in the compact case, which affects \cite{CGJ}. The proof in \cite{bojko} is not influenced, as it reduced the problem to the compact setting. However, some similar restriction will apply. I plan to return to this in the near future. 

         Independent of the existence of orientations, once they are chosen, the compatibility required in Assumption \ref{ass:orientation} follows from \cite[Theorem 1.15.(c)]{CGJ} and  \cite[Theorem 5.4]{bojko} as long as the natural map 
         $$
         \begin{tikzcd}
         K^0\big(\Coh_{\cs,e}(X)\big)\arrow[r]& K^0_{\cs,e}(X)
         \end{tikzcd}
         $$
         factors through the chosen quotient $\ov{K}(\Coh_{\cs}(X))$. Here, the additional subscript $e$ picks out the classes $\al$ with $\chi(\al,\al)$ even.
         \item Let $\wt{Q}^{\bullet}$ be a Calabi--Yau four dg-quiver as recalled in Definition \ref{def:CY4quiver}. The moduli stack of $\mA = \Rep\big(\wt{Q}^{\bullet}\big)$ embeds into $\mM_{\CC\wt{Q}^{\bullet}}$ from Example \ref{ex:notproperdg}.2). The entire stack $\mM_{\mA}$ is orientable by Corollary \ref{cor:quiveror}. Choosing $\ov{K}(\mA) =\ZZ^{\Ver}$, the Assumption \ref{ass:orientation} follows from $\mM_{\un{d}}$ being connected for each dimension vector $\un{d}$.  
     \end{enumerate} 
    \end{example}
\subsection{Additional notation}
I will introduce some notation here that will be necessary when defining equivariant homologies and deformations of vertex algebras on them. All the data given below is allowed to be $\T$-equivariant.

Let $\mX$ be a stack with a fixed CY4 obstruction theory $\EE$ without assuming \textit{eveness}.  I will use the notation 
$$
\vdim:=\Rk(\EE): \mX\to \ZZ$$
for the function constant on each connected component of $\mX$. Suppose that the connected components of $\mX$ are labeled by a set $K$, and denote them by $\mX_{\alpha}$ for $\alpha\in K$. Once I define equivariant homology $H^{\T}_*(\mX)$, I will write 
$$
H^{\T}_{*\pm \vdim}(\mX) = \bigoplus_{\alpha\in K} H^{\T}_{*\pm\vdim(\mX_{\alpha})}(\mX_{\alpha})\,.
$$
This notation will be also used in more general situations when a graded vector space is attached to each connected component.

Suppose that equivariant cohomology of $\mX$ is determined. Let $\Theta$ be a complex on $\mX$ and $z$ an additional formal variable. I will write $z^{\Rk(\Theta)}$ for the $\Rk(\Theta)$ power of $z$ on each connected component. Then the total Chern class is given by
\begin{equation}
\label{eq:totalChern}
z^{\Rk(\Theta)}c_{z^{-1}}(\Theta) = z^{\Rk(\Theta)}\sum_{i\geq 0}c_i(\Theta)z^{-i} \,.
\end{equation}
In this case, I will often write $z^{\Rk}$ for the leading term which will mean that $\Rk(-)$ is evaluated on the first K-theory class appearing to the right of it.

Suppose now that $M_*$ is a graded $R$-module (for example, $H_*(\mX)$). Recall that $R[\Ft]$ denotes the polynomial ring on $\Ft = \operatorname{Lie}(\T)$, and denote the ideal spanned by all homogeneous polynomials of degree $\geq n$ by $(\Ft)^n$. One must be careful with degrees because a homogeneous polynomial of degree $d$ on $\Ft$ will have homological degree $-2d$. Motivated by Example \ref{ex:trivialThom}, I introduce the notation 
\begin{equation}
\label{eq:Mtgr}
M_*\llbracket \Ft\rrbracket^{\gr} = \varprojlim_{n} M_*[\Ft]/(\Ft)^n
\end{equation}
where the limit is taken in graded $R$-modules and the transition morphisms are induced by the projections $R[\Ft]/(\Ft)^{n+1}\to R[\Ft]/(\Ft)^n$. Explicitly, this means that the limit is taken degree-wise:
\begin{equation}
\label{eq:Mtlimitgr}
(M_*\llbracket \Ft\rrbracket^{\gr})_i = \varprojlim_{n} (M_*[\Ft]/(\Ft)^n)_i\,.
\end{equation}
This construction gives rise to an $R[\Ft]$-module. If $\T$ is one-dimensional, I will write 
$$
M_*\llbracket u\rrbracket^{\gr}:= M_*\llbracket \Ft\rrbracket^{\gr} 
$$
where $e^u$ is a generator of characters of $\T$.

To understand the sort of elements that this limit contains, let $(m_k)_{k\geq 0}$ be a sequence of elements $m_k\in M_k$. If $u\in R[\Ft]$ has homological degree $-2$, then
\begin{equation}
\label{eq:examplepowerseries}
\sum_{k\geq 0}m_ku^k \in M_*\llbracket \Ft\rrbracket^{\gr}
\end{equation}
by definition. Most importantly, we see that one has to consider more general power-series than just polynomials. 

In what follows, I will localize only at $S$ -- the set of all homogeneous polynomials on $\Ft$. This way both
$$
\kk(\Ft)^{\gr} := S^{-1}R[\Ft]\qquad \textnormal{and}\qquad M_*\llparenthesis\Ft\rrparenthesis^{\gr}:=S^{-1}M_*\llbracket \Ft\rrbracket^{\gr}
$$
remain graded. Note that both are vector spaces over the fraction field $\kk$ of $R$, and $M_*\llparenthesis\Ft\rrparenthesis^{\gr}$ is a $\kk(\Ft)^{\gr}$-module. For a more general $R[\Ft]$-module $H_*$, I will use 
$$
(H_*)_{\loc} := S^{-1}H_*
$$
to denote the localization at $S$. 

Lastly, if a morphism $f:M_*\to N_*$ between graded $R$-modules is given, I will write
$$
\begin{tikzcd}
f\widehat{\otimes} \id: M_*\llbracket \Ft\rrbracket^{\gr}\arrow[r]&N_*\llbracket \Ft\rrbracket^{\gr}\,,
\end{tikzcd}\qquad \begin{tikzcd}
f\widehat{\otimes} \id: M_*\llparenthesis \Ft\rrparenthesis^{\gr}\arrow[r]&N_*\llparenthesis \Ft\rrparenthesis^{\gr}\,,
\end{tikzcd}
$$
for the morphisms induced by taking limits of
$$
\begin{tikzcd}
f\otimes \id_{R[\Ft]/(\Ft)^n}: M_*[\Ft]/(\Ft)^n\arrow[r]&N_*[\Ft]/(\Ft)^n
\end{tikzcd}
$$
and inverting  $S$.
\subsection{Deformations of vertex algebras}
\label{sec:equivariantVA}
I will now present two different perspectives on refining Joyce's construction of vertex algebras in \cite{Joycehall} to equivariant homology. The \textit{local approach} will be using substacks of $\T$-equivariant objects and their trivially equivariant homology, while the \textit{global approach} will work with the homology defined \cite{BB1} and summarized in Appendix \ref{app:B}. The two approaches are related by pushforward in equivariant homology, but they behave differently when interacting with invariants which I hope to revisit in future work. Examples of both equivariant homologies and their compatibility are left to §\ref{app:examples}.

In \cite{Bo26}, I will provide an axiomatization of the representation-theoretic structures obtained from both constructions. They are called \textit{$\T$-deformations of vertex algebras}, and I already explicitly formulated them in \cite[pp. 51-57]{BoHeidelberg} in a different context. It is known that deformations of vertex algebras induce Lie algebras in the usual way. 
\begin{enumerate}[wide, align=left]
\item[\textbf{Local approach}]
First, I will introduce the equivariant homology that I will work with in the local case, and I will describe the necessary operations on it.
\begin{definition}
   Using the $\T$-action on $\mM_{\mA}$, one first constructs the fixed point stack $\mM_{\mA}^{\T}$ with a natural $\T$-equivariant morphism $\iota^{\T}_{\mM_{\mA}}:\mM_{\mA}^{\T}\to \mM_{\mA}$ defined in \cite[Definition 2.3]{Romagny}. The fixed point stack is still Artin by \cite[Theorem 1]{Romagny2} because $\mM_{\mA}$ is. The underlying vector space of the vertex algebra is\footnote{From Example \ref{ex:trivialThom}, it will follow that this is the localized equivariant homology of $\mM^{\T}_{\mA}$ for the trivial $\T$-action.}
$$
V_{\loc,*-\vdim}:=H_*\big(\mM^{\T}_{\mA}\big)\llparenthesis\Ft\rrparenthesis^{\gr}\,.
$$
Here $\vdim$ is the restriction of $\Rk(\EE)$ along $\iota^{\T}_{\mM_{\mA}}$. 
\end{definition}
 The next few points discuss the necessary operations on this homology.
\begin{itemize}
    \item (\textit{Pullbacks and pushforwards}) For $\T$-equivariant maps of stacks, their $\T$-equivariant pullbacks and pushforwards are defined as pullbacks, respectively, pushforwards on the homology of fixed point stacks. This is elucidated in the next two examples. 

Since the fixed point functor $(-)^{\T}$ is functorial, one obtains the morphisms
$$
\begin{tikzcd}
\mu^{\T}:\big(\mathcal{M}_{\mA}\times \mathcal{M}_{\mA}\big)^{\T}\arrow[r] &\mathcal{M}^{\T}_{\mA}\end{tikzcd}\,,\qquad\begin{tikzcd} \rho^{\T}: B\GG_m\times\mathcal{M}^{\T}_{\mA}\arrow[r]& \mathcal{M}^{\T}_{\mA}\end{tikzcd}\,.
$$
The diagonal $T\to T\times T$ induces a morphism $\mM_{\mA}^{\T}\times \mM^{\T}_{\mA}\to \big(\mathcal{M}_{\mA}\times \mathcal{M}_{\mA}\big)^{\T}$. Composing it with $\mu^{\T}$, one gets
$$
\begin{tikzcd}
    \mu^{\T}_{\Delta}: \mM_{\mA}^{\T}\times \mM^{\T}_{\mA}\arrow[r] &\mathcal{M}^{\T}_{\mA}\,.
\end{tikzcd}
$$
I then define
$$\begin{tikzcd}\rho_*=\rho^{\T}_*\widehat{\otimes} \id:H_*\big(B\GG_m\times \mM_{\mA}^{\T}\big)\llparenthesis\Ft\rrparenthesis^{\gr}\arrow[r]&H_*\big(\mM_{\mA}^{\T}\big)\llparenthesis\Ft\rrparenthesis^{\gr}\end{tikzcd}$$
and
$$
\begin{tikzcd}
\mu_*:= \big(\mu^{\T}_{\Delta}\big)_*\widehat{\otimes} \id:H_*\big(\mM^{\T}_{\mA}\times\mM^{\T}_{\mA}\big)\llparenthesis\Ft\rrparenthesis^{\gr}\arrow[r]&H_*\big(\mM^{\T}_{\mA}\big)\otimes_{R}\kk\llparenthesis\Ft\rrparenthesis\,.
\end{tikzcd}
$$
\item (\textit{Cap product}) The cap product 
$$\begin{tikzcd}H^*\big(\mM^{\T}_{\mA}\big)\otimes_R \kk(\Ft)^{\gr}\times V_{\loc,*}\arrow[r,"\cap"]&V_{\loc,*}\end{tikzcd}$$
is determined by the usual cap product between $H^*\big(\mM^{\T}_{\mA}\big)$ and $H_*\big(\mM^{\T}_{\mA}\big)$ together with the action of $\kk(\Ft)^{\gr}$ on its module.
\item (\textit{Equivariant Künneth morphism}) I will define 
\begin{equation}
\label{eq:Kunnethlocal}
\begin{tikzcd}
\boxtimes^{\T}:=\boxtimes \otimes m : \Big(H_*\big(\mM^{\T}_{\mA}\big)\llparenthesis\Ft\rrparenthesis^{\gr}\Big)^{\otimes 2}\arrow[r]&H_*\big(\mM^{\T}_{\mA}\times \mM^{\T}_{\mA}\big)\llparenthesis\Ft\rrparenthesis^{\gr}
\end{tikzcd}
\end{equation}
constructed by first multiplying terms from $\kk(\Ft)^{\gr}$ and then applying the non-equivariant Künneth product to their coefficients in $H_*\big(\mM^{\T}_{\mA}\big)$.
\item (\textit{Equivariant total Chern classes})  For the diagonal action, let
$$\Theta^{\T}_{\mA}:= \big(\iota^{\T}_{\mM_{\mA}}\times\iota^{\T}_{\mM_{\mA}}\big)^*\Theta_{\mA}$$ be the $\T$-equivariant pull-back . It can be decomposed as 
$$
\Theta^{T}_{\mA} = \bigoplus_{u\in\chart(\Ft)}e^u\cdot \Theta^u_{\mA}\,,
$$
where $\chart(\Ft)$ is the set of $u\in R[\Ft]$ such that $e^{u}$ is an irreducible character of $\T$. This sum may be infinite, but it becomes finite when restricted to the affine schemes of an atlas of $\mM^{\T}_{\mA}\times \mM^{\T}_{\mA}$. For a homology class $w\in H_*\big(\mM^{\T}_{\mA}\times \mM^{\T}_{\mA}\big)\llparenthesis\Ft\rrparenthesis^{\gr}$, I will set
\begin{equation}
\label{eq:capwithtotalChernlocal}
\frac{w}
{z^{\Rk}c_{z^{-1}}\big(\Theta^{T}_{\mA}\big)}=w \cap z^{\Rk}c_{z^{-1}}\big(-\Theta^T_{\mA}\big) := w\prod_{u\in \chart(\Ft)}\cap(z+u)^{\Rk}c_{(z+u)^{-1}}\big(-\Theta^u_{\mA}\big)\,.
\end{equation}
in $H_*\big(\mM^{\T}_{\mA}\times \mM^{\T}_{\mA}\big)(z)\otimes_{R}\kk\llparenthesis \Ft\rrparenthesis$, where the product of operations becomes finite for each fixed $w$\footnote{This holds due to the homology of $\mM^{T}_{\mA}\times \mM^{T}_{\mA}$ being the limit of homologies of affine schemes of its atlas.}. Here, it is understood that each  factor
$$
(z+u)^{\Rk}c_{(z+u)^{-1}}\big(-\Theta^u_{\mA}\big)
$$
is expanded in $H^*\big(\mM^{\T}_{\mA}\times \mM^{\T}_{\mA}\big)(z)\llbracket \Ft\rrbracket^{\gr} $, which translates into expanding all negative powers of $(u+z)$ in $|z|>|u|$. The explicit expansion looks as follows:
\begin{equation}
\label{eq:explicitexpand}
(z + u)^{-k}= \frac{1}{z^k}\sum_{i\geq 0}{k+i-1\choose k-1 } \left(-\frac{u}{z}\right)^i\,.
\end{equation}

\end{itemize}
\begin{definition}
\label{Def:VAlocal}
    The $\T$-deformation of vertex algebras on 
    $V_{\loc,*}$ over $R\llbracket\Ft\rrbracket$ is determined by the data $\big(V_{\loc,*}, \ket{0}, T, Y\big)$ given as follows:
\begin{enumerate}
    \item Using the inclusion $0\colon*\to \mM^{\T}_{\mA}$ of the point corresponding to the zero object, set
    $$
    \ket{0}=0_*(1)\in H_0(\mM^{\T}_{\mA})\subset H_0(\mM^{\T}_{\mA})\llparenthesis\Ft\rrparenthesis^{\gr}\,.
    $$
    \item Setting  
    \begin{equation}
    \label{eq:pnofBGm}
    p^n:=[\PP^{n}]\in H_{2n}(\PP^{\infty})= H_{2n}\big(B\GG_m\big)\,,
    \end{equation}
    define the \textit{translation operator $T$} by
    $$
    e^{zT}(v) = \rho_\ast\left(\sum_{n\geq 0}p^nz^n\boxtimes v\right)\qquad\textnormal{for all}\quad v\in V_{\loc,*}\,.
    $$
    This defines a $\kk(\Ft)^{\gr}$-linear map $T: V_{\loc,*}\to V_{\loc,*+2}$ which is continuous with respect to the limit topology for \eqref{eq:Mtlimitgr}. Using the construction above, the $B\GG_m$-action on the first factor of $\mM^{\T}_{\mA}\times \mM^{\T}_{\mA}$ produces the operator $e^{zT}\otimes \id$ acting on $H_*\big(\mM^{\T}_{\mA}\times\mM^{\T}_{\mA}\big)\llparenthesis\Ft\rrparenthesis^{\gr}$.
    \item Fix $\varepsilon_{\alpha,\beta}\in \{-1,1\}$ for all $\al,\beta\in \ov{K}(\mA)$ such that it satisfies \eqref{eq:symsign} and \eqref{eq:epsidentity} . Then define an $R(\Ft)$-bilinear deformation of state-field correspondence by 
    \begin{equation}
    \label{eq:statefieldcor}
    Y(v,z)w = \varepsilon_{\alpha,\beta}\, \mu_\ast\bigg((e^{zT}\otimes \textnormal{id})\frac{v\boxtimes^{\T} w}{z^{\Rk}c_{z^{-1}}(\Theta^{\T}_{\mA})}\bigg)
    \end{equation}
   for any $v\in H_{*}(\mathcal{M}^{\T}_\alpha)\llparenthesis\Ft\rrparenthesis^{\gr}$ and $w\in H_{*}(\mathcal{M}^{\T}_{\beta})\llparenthesis\Ft\rrparenthesis^{\gr}$. 
\end{enumerate}
\end{definition}
 In \cite{Bo26}, it will be shown that the result is a $\T$-deformation of vertex algebras. The corresponding axioms are a refinement of the axioms for usual deformations of vertex algebras in \cite[§5]{HaiLi}. 
\item[\textbf{Global approach}]
Using the operations from §\ref{sec:equivariantOPs}, I extend the above construction to equivariant homology $H^{\T}_*(\mM_{\mA})$ defined in \cite{BB1} for algebraic stacks (see Appendix \ref{app:B} for a summary). In Example \ref{ex:trivialThom}, $H^{\T}_*(\mM_{\mA})$ is described completely explicitly in the case of representations of quivers.
\begin{definition}
\label{Def:VAglobal}
    The $\T$-deformation of vertex algebras on 
    $$V_{*}= H^{\T}_{*+\vdim}(\mM_{\mA})$$
    is determined by the data $\big(V_{*}, \ket{0}, T, Y\big)$ given as follows:
\begin{enumerate}
    \item Using the inclusion $0\colon*\to \mM_{\mA}$ of the point corresponding to the zero object, set
    $$
    \ket{0}=0_*(*)\in H^{\T}_0(\mM_\mA)\,.
    $$
    \item Define the translation operator $T$ by
    $$
    e^{zT}(v) = \rho_\ast\big(\sum_{n\geq 0}p^nz^n\boxtimes v\big)\qquad\textnormal{for all}\quad v\in H^{\T}_*(\mM_{\mA})\,.
    $$
    The operator $\T$ is well-defined over any ring R by \cite[(3.16)]{Joycehall}. I will use the same convention as in the local approach for writing $e^{zT}\otimes \id$ acting on $H^T_*\big(\mM_{\mA}\times\mM_{\mA}\big)$. 
    \item Define an $R[\Ft]$-bilinear deformation of state-field correspondences by 
    \begin{align*}
    Y(v,z)w &= \varepsilon_{\alpha,\beta}\, \mu_\ast\bigg((e^{zT}\otimes \textnormal{id})\frac{v\boxtimes^{\T} w}{z^{\Rk}c_{z^{-1}}(\Theta_{\mA})}\bigg)
    \end{align*}
   for any $v\in H^{\T}_{*}(\mathcal{M}_\alpha)$ and $w\in H^{\T}_{*}(\mathcal{M}_{\beta})$. The total Chern class was defined in \eqref{eq:totalChern}.
\end{enumerate}
Replacing $V_*$ by the newly defined
$$
V_{\loc,*} = H^{\T}_{*+\vdim}\big(\mM_{\mA}\big)_{\loc}\,,
$$
one still recovers a $\T$-deformed vertex algebra.
\end{definition}

 \subsection{Lie algebras for both approaches}
For both the local and global approach, the quotients
$$
L_* := V_{*+2}/T(V_*) \,,\qquad L_{\loc,*} := V_{\loc,*+2}/T(V_{\loc,*})
$$
carry natural Lie brackets by the construction in \cite{Borcherds}. They are the $R[\Ft]$, respectively, $R(\Ft)$-bilinear maps acting by
\begin{equation}
\label{eq:LiefromVA}
\big[\ov{v},\ov{w}\big] = \ov{[z^{-1}]\Big\{Y(v,z)w\Big\}}\,,
\end{equation}
where $\ov{(-)}$ denotes the projection to the quotient and $[z^{-1}]\{-\}$ is picking out the residue. 

Apart from being a Lie algebra, the quotient $L_*$ is related to $H^{\T}_*\big(\mM^{\rig}_{\mA}\big)$. Let $\EE^{\rig}$ be the obstruction theory on $\mM^{\rig}_{\mA}$ induced by \eqref{eq:rigidifyingEE}. Fix $\alpha\in \ov{K}(\mA)$ such that 
\begin{equation}
\label{eq:alphabetarigcon}
\exists\, \be\in \ov{K}(\mA): \qquad \chi(\beta,\alpha)\neq 0\,,\qquad \mM^{\T}_{\beta}\neq  \emptyset\,.
\end{equation}
Then by Proposition \ref{prop:rigidisom}, there is a natural isomorphism
\begin{equation}
\label{eq:Lalphaiso}
H^{\T}_{* }\big(\mM^{\rig}_{\al}\big) \cong H^{\T}_{* +2}\big(\mM_{\al}\big)/T H^{\T}_{* }\big(\mM_{\al}\big)\,.
\end{equation}
The same applies to the homology of $\big(\mM^{\T}_{\mA}\big)^{\rig}$ after restricting $\vdim$ associated to $\EE^{\rig}$ to $\mM^{\T}_{\mA}$, . To connect everything to wall-crossing for enumerative invariants, one considers the following examples of elements in $L_{\loc,*}$.
\begin{example}
\label{ex:mMvir}
\begin{enumerate}[wide, align=left]
Some of the invariants that will appear in wall-crossing will be defined as follows, depending on the chosen approach.
\item[\textbf{Local approach}]Let $M^{\sig}_{\al}$ be an algebraic moduli space of $\sigma$-stable objects in $\mA$ of class $\al$ such that $\big(M^{\sig}_{\al}\big)^{\T}$ is proper. 
Suppose that it comes with a fixed choice of a $\T$-equivariant open embedding
$$
\begin{tikzcd}
M^{\sig}_{\al}\arrow[r,"\iota"]&\mM^{\rig}_{\mA}\,.
\end{tikzcd}
$$
Then there is a CY4 obstruction theory on $M^{\sig}_{\al}$ obtained by pulling back the one on $\mM^{\rig}_{\mA}$. This produces the equivariant virtual cycle $\big[M^{\sig}_{\al}\big]^{\vir}_{\T}\in A^{\T}_*\big(M^{\sig}_{\al},R\big)$ where $R=\ZZ[2^{-1}]$. In general, the whole space $M^{\sig}_{\al}$ will not be proper, so one needs to work with fixed points to apply the equivariant cycle-class map. The image of $\big[M^{\sig}_{\al}\big]^{\vir}_{\T}$ in $A^{\T}_*\big(M^{\sig}_{\al}\big)_{\loc}$ induces a cycle in 
$
A_*\Big(\big(M^{\sig}_{\al}\big)^{\T}\big)\otimes_R \kk(\Ft)^{\gr}
$
under the isomorphism of these groups proved in \cite[Theorem 6.3.5]{Kreschequiv}. Applying the cycle-class map, this produces
\begin{equation} 
\label{eq:Msigallocvir}
\big[M^{\sig}_{\al}\big]^{\vir}_{\T,\loc}\in H_*\Big(\big(M^{\sig}_{\al}\big)^{\T}\Big)\otimes_{R} \kk(\Ft)^{\gr}\,.
\end{equation}

The map $\iota$ induces the bottom horizontal morphism in the diagram
$$
\begin{tikzcd}
&\big(\mM^{\T}_{\mA}\big)^{\rig}\arrow[d]\\
\big(M^{\sig}_{\al}\big)^{\T}\arrow[ur,dashed, "\iota^{\T}"]\arrow[r]& \big(\mM^{\rig}_{\mA}\big)^{\T}
\end{tikzcd}\,.
$$
The right vertical map is the natural one, and one assumes the existence of the diagonal map $\iota^{\T}$. If $\al$ additionally satisfies \eqref{eq:alphabetarigcon}, then $$\iota^{\T}_*\big[M^{\sig}_{\al}\big]^{\vir}_{\T,\loc}\in L_{\loc,*}$$
by the local version of the isomorphism \eqref{eq:Lalphaiso}. This will be the invariant appearing in wall-crossing. I will often abuse notation and drop $\iota^{\T}_*$ from the above. When it is clear that I am working in the local equivariant approach, I will also omit $\T$ and $\loc$ from the subscript.
\item[\textbf{Global approach}]
Pushforward of \eqref{eq:Msigallocvir} along the equivariant inclusion $\big(M^{\sig}_{\al}\big)^{\T}\hookrightarrow M^{\sig}_{\al}$ leads to
$\big[M^{\sig}_{\al}\big]^{\vir}_{\T}\in H^{\T}_*\big(M^{\sig}_{\al}\big)_{\loc}.$
It is used to define
\begin{equation}
\label{eq:Msigalphaglobal}
\iota_*\big[M^{\sig}_{\al}\big]^{\vir}_{\T}\in H^{\T}_{\vdim}\big(\mM^{\rig}_{\mA}\big)_{\loc}.
\end{equation}
When $\al$ satisfies \eqref{eq:alphabetarigcon}, this induces $\iota_*\big[M^{\sig}_{\al}\big]^{\vir}_{\T}\in L_{\loc,*}$ which will be usually denoted without $\iota_*$ and the subscript $\T$, when the $\T$-action is fixed. 
\end{enumerate}
\end{example}
\end{enumerate}
\begin{remark}
    \label{rem:noncompactnondegenera}
    In the case that $X$ is a non-compact CY fourfold, the condition \eqref{eq:alphabetarigcon} would be too restrictive in the setting of Example \ref{ex:sheavesandreps0} and Example \ref{ex:mMvir}. In the case of $\CC^4$, the pairing $\chi_{\cs}$ on $K^0_{\cs}(\CC^4)$ vanishes altogether. However, there is a more suitable pairing 
    $$
    \begin{tikzcd}
    \chi:K^0(X)\times K^0_{\cs}(X)\arrow[r]&\ZZ \,.
    \end{tikzcd}
    $$
    The arguments in the proof of Proposition \ref{prop:rigidisom} still apply to $\al\in K^0_{\cs}(X)$ such that 
    $$
    \exists \,\beta\in K^0(X): \qquad \chi(\be,\al)\neq 0\,,
    $$
    and $\beta$ can be represented by a $\T$-equivariant object $B$. Thus I will replace \eqref{eq:alphabetarigcon} by this condition in such situations. 
\end{remark}
\begin{example}
\label{ex:toric}
    Here is why it will be useful to work with both approaches simultaneously. Suppose we want to study sheaves on a toric variety $X$ of dimension $d$ with its $\T:=(\CC^*)^d$ action. The moduli stack of all sheaves $\mM_X$ carries the induced $\T$-action and $\mM^{\T}_X$ parametrizes \textit{equivariant sheaves with their equivariant structure}. Roughly, we can treat such situations explicitly by
    \begin{enumerate}
        \item restricting to the toric charts while relating the local approach vertex algebra to a product of vertex algebras on the charts,
        \item fixing a chart and mapping local constructions to global ones. Global approach on a chart can be made explicit in terms of quivers as in §\ref{sec:CY4dgC4}.
    \end{enumerate}
\end{example}
\subsection{Wall-crossing theorem}
In the previous subsections, I introduced and recalled the necessary language for the general statement of wall-crossing. However, the theorem will be contingent on several assumptions that will need to be checked before applying it. These assumptions are imposed on the stability conditions, the moduli spaces and stacks of (semi)stable objects, and the obstruction theories on them. The orientability assumption was already discussed in §\ref{sec:assab}. The rest of them will be presented in §\ref{sec:assump}. Potential $\T$-actions are always included even when not mentioned. Thus, equivariant virtual fundamental classes from Example \ref{ex:mMvir} will be simply denoted by $[M]^{\vir}$ omitting $(-)_{\T}$.

In the current work, I choose to work in the largest possible generality when it comes to stability conditions. Thus, I use Joyce's \textit{weak stability conditions} from \cite{JoIII} that impose almost no restrictions. Recall that such a stability condition $\sigma$ is determined by a map 
$$
\begin{tikzcd}
\phi: \ov{C}(\mA)\arrow[r]&S
\end{tikzcd}
$$
where $(S,\leq)$ is a totally ordered set. For it to be called a weak stability condition, it should satisfy 
$$
\phi(\al_1)\leq \phi(\al)\leq \phi(\al_2)\qquad\text{or}\qquad \phi(\al_1)\geq \phi(\al)\geq \phi(\al_2)
$$
whenever $\al=\al_1 + \al_2$ for $\al_1,\al_2\in \ov{C}(\mA)$. In this case, an object $E\in \mA$ is said to be   $\sigma$-semistable if for any short exact sequence 
$$
\begin{tikzcd}
    0\arrow[r]&E_1\arrow[r]&E\arrow[r]&E_2\arrow[r]&0
\end{tikzcd}
$$
in $\mA$, one has $\phi(E_1)\leq \phi(E_2)$. If the strict inequality always holds, then $E$ is said to be $\sig$-stable. I will denote by
$$\mM^{\sig}_{\al}\subset \mM_{\mA}$$
the substacks consisting of $\sig$-semistable objects of class $\al\in \ov{K}(\mA)$. I will always fix a connected set $W$ of such weak stability conditions for a fixed $S$ with $W$ being a finite-dimensional manifold. The last condition is not necessary but makes formulating assumptions later easier. 

Let $\underline{\alpha} = (\alpha_1,\ldots,\alpha_n)$ for $\alpha_i\in \ov{C}(\mA)$ satisfy $\sum_{i=1}^n \al_i= \al$. Then I will say that it is a partition of $\alpha\in \ov{C}(\mA)$ which I will denote by $\underline{\alpha} \vdash_{\mA} \alpha$. For two weak stability conditions $\sig,\sig'\in W$, Joyce defined universal wall-crossing coefficients $\wt{U}(\un{\al};\sig,\sig')$ in \cite{JoIV} and \cite[§3.2]{JoyceWC}. The wall-crossing formulae appearing already in \eqref{eq:introWC} are expressed in terms of these coefficients. Recall, from §\ref{sec:introWCeqhom} that the precise wall-crossing statement may at first depend on some $k\in K$ for a countable index set $K$. The precise data attached to each $k$ is described in Definition \ref{Def:quiverpairs}. The most general result answering Problem (I) and (II) from §\ref{sec:introWCeqhom} is as follows.
\begin{theorem}
\label{thm:familyWC}
    If Assumption \ref{ass:orientation}, \ref{ass:stab}, and \ref{ass:obsonflag}, hold, then for any $\alpha\in \msE(\mA)$  and $k\in K$, there are connected open subsets $W_{\al,k}\subset W$ for which
    $
    \big\langle\Masi\big\rangle^k\in L_{\loc,0} 
    $
   are defined in Definition \ref{def:Masik}. These classes satisfy the following properties.
\begin{enumerate}[label=\roman*)]
    \item If there are no strictly $\sigma$-semistables of class $\al$, then 
    $$ \big\langle\Masi\big\rangle^k = \Masvir\in L_{\loc,0}$$
    where the right-hand-side was defined in Example \ref{ex:mMvir} which applies to the algebraic spaces of stable objects $M^{\sig}_{\al}$ by Assumption \ref{ass:stab}.g).
\item If $\sigma,\sigma'$ both lie in $W_{\al,k}$ for $\alpha\in \msE(\mA), k\in K$, then the wall-crossing formula
\begin{equation}
\label{eq:MasiWC}
\langle \mathcal{M}^{\sigma'}_{\alpha}\big\rangle^k = \sum_{\begin{subarray}a 
\underline{\alpha}\vdash_{\mA}\alpha 
\end{subarray}}\widetilde{U}(\underline{\alpha};\sigma,\sigma') \Big[\Big[\cdots \Big[\big\langle\mM^{\sigma}_{\alpha_1}\rangle^k,\langle \mathcal{M}^{\sigma}_{\alpha_2}\big\rangle^k\Big],\ldots\Big],\big\langle\mM^{\sigma}_{\alpha_n}\big\rangle^k\Big]
\end{equation}
holds in $L_{\loc, 0}$.
\end{enumerate}    
If Assumption \ref{ass:welldef} holds, which requires that
$$
   \big\langle\Masi\big\rangle^{k_1} =    \big\langle\Masi\big\rangle^{k_2} \qquad \text{for all}\quad k_1,k_2\in K\,,
$$
then ii) applies to any $\sig,\sig'\in W$ and $\al\in \msE(\mA)$. In this case, I write $ \big\langle\Masi\big\rangle^{k}= \big\langle\Masi\big\rangle$ for all $k\in K$.  
\end{theorem}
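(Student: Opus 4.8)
\textbf{Proof proposal for Theorem \ref{thm:familyWC}.}

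The plan is to build the classes $\big\langle\Masi\big\rangle^k$ by an induction on the partially ordered set of classes $\al\in\msE(\mA)$, organised so that statements i) and ii) are proved simultaneously with the inductive definition. First I would fix $k\in K$ and set $W_{\al,k}\subset W$ to be a connected open subset, small enough that the combinatorics of $\sig$-semistability in classes $\beta$ with $\beta\leq\al$ is controlled on $W_{\al,k}$ (finitely many walls, the Harder--Narasimhan and Jordan--H\"older filtrations behave uniformly), and that the data indexed by $k$ (the auxiliary line bundle / framing category $\mB_{A_r}$ of Definition \ref{Def:quiverpairs}) is available there. When there are no strictly $\sig$-semistable objects of class $\al$, statement i) \emph{is} the definition: $\big\langle\Masi\big\rangle^k:=\Masvir$, which makes sense because Assumption \ref{ass:stab}.g) gives the algebraic space $M^{\sig}_{\al}$ with proper fixed locus and the pulled-back CY4 obstruction theory, so Example \ref{ex:mMvir} applies and yields an element of $L_{\loc,0}$. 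In the general case I would \emph{define} $\big\langle\Masi\big\rangle^k$ through the wall-crossing into Joyce--Song stable pairs, i.e. via \eqref{eq:introJSWC}: having the pair invariant $\big[N^{\JS}_{L,\al}\big]^{\vir}$ from Problem (I) on one side and the iterated brackets $\big[\big\langle\mM^\sigma_{\alpha_n}\big\rangle^k,\cdots\big[\big\langle\mM^\sigma_{\alpha_1}\big\rangle^k,e^{(1,0)}_L\big]\cdots\big]$ on the other, with all proper constituents $\alpha_i<\al$ already defined by induction, one solves for the single unknown term $\big\langle\Masi\big\rangle^k$ using that $\big[-,e^{(1,0)}_L\big]$ is injective (Lemma \ref{lem:pushJSandinj}); the projection along the map to sheaves gives the formula \eqref{eq:Masigdef}. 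Independence of this definition on the component of $\mM_\al$ is forced by Assumption \ref{ass:orientation} through the constancy of the signs $\varepsilon_{\al,\beta}$.

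Next I would prove ii) for $\sig,\sig'\in W_{\al,k}$. This is where the bulk of the argument lives and follows the master-space strategy of \cite{JoyceWC} adapted to the CY4 setting, using the enhanced category $\mB_{A_r}$ of flag-framed objects and the enhanced master space $\MS$ whose stability is read off the quiver $Q_{\MS}$. Concretely one constructs, for a path from $\sig$ to $\sig'$ crossing one wall at a time, the CY4 obstruction theory $\FF_{\MS}$ on $\MS$ from the CY4 completion \eqref{eq:introCY4QMS}; Assumption \ref{ass:obsonflag} is exactly what supplies this obstruction theory together with the Pvp diagram \eqref{eq:introstartingIxI}--\eqref{eq:introdiagsym} relating it to $\EE$. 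Then the equivariant localization formula \eqref{eq:vireqloc} applied to the $\PP^1$-type $\GG_m$-action on $\MS$, together with the vertex-algebra computation recovering Joyce's Lie bracket (as in §\ref{sec:WCflags}), yields one elementary wall-crossing identity; iterating over the finitely many walls on the path and summing with the universal coefficients $\wt U(\un\al;\sig,\sig')$ produces \eqref{eq:MasiWC}. Statement i) for $\sig'$ with no strictly semistables, when $\sig$ is nearby, drops out of \eqref{eq:MasiWC} as the special case where only the length-one partition survives, which shows the two clauses of the theorem are consistent.

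Finally, assuming Assumption \ref{ass:welldef} --- i.e.\ $\big\langle\Masi\big\rangle^{k_1}=\big\langle\Masi\big\rangle^{k_2}$ on $\msN_{k_1}\cap\msN_{k_2}$ --- one patches: the neighbourhoods $W_{\al,k}$ for varying $k$ cover a connected set, the classes agree on overlaps, hence there is a well-defined $\big\langle\Masi\big\rangle$ on all of $W$, and \eqref{eq:MasiWC} for arbitrary $\sig,\sig'\in W$ follows by concatenating the path through finitely many such charts and using associativity of the combinatorial coefficients $\wt U$ under composition of deformations (the cocycle-type identity for $\wt U$ from \cite{JoIV}, \cite[§3.2]{JoyceWC}). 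The main obstacle I anticipate is not the bookkeeping but step two: producing the CY4 obstruction theory $\FF_{\MS}$ on the enhanced master space together with a genuine Pvp diagram, and then running the sign-sensitive equivariant localization \eqref{eq:vireqloc} with the orientation conventions of §\ref{sec:orconventions} so that the output is \emph{exactly} Joyce's bracket with the correct signs $(-1)^{a\chi(\beta,\beta)}\epsilon_{\alpha,\beta}$ --- this is precisely the content of Assumption \ref{ass:obsonflag} and the reason it fails for general CY fourfold sheaves, so the theorem can only be unconditional in the two cases of the Claim.
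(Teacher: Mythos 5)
Your overall route for ii) -- the flag-enhanced master space, Assumption \ref{ass:obsonflag} supplying $\FF_{\MS}$, the sign-sensitive equivariant localization \eqref{eq:vireqloc}, and the patching over $k$ via Assumption \ref{ass:welldef} -- is the paper's strategy. The genuine gap is in your definition of $\big\langle\mM^{\sigma}_{\alpha}\big\rangle^k$ when strictly $\sigma$-semistables exist. You propose to define it by solving \eqref{eq:introJSWC} for the single unknown term, taking the pair class ``from Problem (I)'' on the other side and invoking injectivity of $\big[-,e^{(1,0)}_k\big]$. But \eqref{eq:JSWC} is itself a wall-crossing statement that the machinery of this theorem is supposed to establish; at the definitional stage you cannot assume it, and ``solving'' for the unknown is only possible if $\big[N^{\JS}_{k,\al}\big]^{\vir}$ minus the $n\geq 2$ bracket terms actually lies in the image of $\big[-,e^{(1,0)}_k\big]$ -- which is precisely the content of \eqref{eq:JSWC} and is not self-evident (the paper flags exactly this before Definition \ref{def:Masik}: it is ``not self-evident that they exist''). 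The paper's Definition \ref{def:Masik} avoids the circularity by taking the explicit pushforward relation \eqref{eq:Masigdef} as the definition, with $\Omega^{\sig,k}_{\alpha}$ built directly from the Joyce--Song pair moduli space via \eqref{eq:Omegasigal}; this exists unconditionally from Assumption \ref{ass:stab}.g) and \ref{ass:obsonflag}.a), and Lemma \ref{lem:pushJSandinj} is used only to show that \eqref{eq:Masigdef} is the projection of \eqref{eq:JSWC}, not to define anything. Your parenthetical ``the projection along the map to sheaves gives \eqref{eq:Masigdef}'' points at the correct fix, but as written your induction presupposes the formula it is meant to help prove.

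A second, smaller issue is that in ii) you pass directly from the master-space localization to \eqref{eq:MasiWC} ``by summing with $\widetilde U$''. In the paper the localization output (Theorem \ref{thm:FlagWC}) is an identity in the Lie algebra attached to $\mN_{\Flag_k}$, involving the classes $\big[N^{\sig^{s\lambda'}}_{\un{d},\al}\big]^{\vir}$; it must first be transported to $L_{\loc,*}$ by capping with $c_{\Rk}\big(T_{\pi^{\sig}_{\un{d},\al}}\big)$ and pushing forward, and the fact that this operation is a morphism of Lie algebras is a nontrivial input whose proof occupies §\ref{sec:Flagtosheaves}; only then does one get the two-term formula of Proposition \ref{prop:OmegaWC} for the $\Omega$-classes, which is converted into \eqref{eq:MasiWC} through the correspondence \eqref{eq:Masigdef} and Joyce's combinatorics, with finiteness of contributions coming from Assumption \ref{ass:stab}.b)--d) rather than from a generic ``finitely many walls'' statement. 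Finally, i) is not a special case of \eqref{eq:MasiWC}: it is the consistency of clauses 1) and 3) of Definition \ref{def:Masik}, which uses that $N^{\JS}_{k,\al}\to M^{\sig}_{\al}$ is a projective bundle (Example \ref{ex:JSpairs}) together with the virtual pullback Theorem \ref{thm:virtualpullpush}.
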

If $\mA = \Coh(X)$ one can consider a different heart $\mB$ of $D^b(X)$. In §\ref{sec:pairWC}, I allow situations in which $\mM^{\sig}_{\beta}$ for all $\beta\in \msE(\mB)$ and $\sig\in W$ parametrize objects in $\mB\subset D^b(X)$ that can be uniquely represented by a complex
\begin{equation}
\label{eq:VOtoF}
\begin{tikzcd}
\big\{V_O\otimes O\arrow[r,"s"]&F\big\}\,,
\end{tikzcd}
\end{equation}
where $V_O$ is a vector space, $O$ is a fixed sheaf, and $F$ may vary in $\Coh_{\cs}(X)$. Assumption \ref{ass:obsonflag} is then replaced by its slight modification in Assumption \ref{ass:pairWC}.c) which, to be formulated, requires Assumption \ref{ass:pairWC}.a) and b). Consequently, the results of Theorem \ref{thm:familyWC} still hold in this scenario.
\begin{theorem}
\label{thm:BWC}
Let $\mB$ be as above and suppose that Assumption \ref{ass:orientation}, \ref{ass:stab}, and \ref{ass:pairWC} hold. Then there exist $\big\langle \mM^{\sig}_{\beta}\big\rangle^k$ for any $\beta\in \msE(\mB),\sig\in W$ and some $k\in K$. The analogue of Theorem \ref{thm:familyWC}.i) and ii) is satisfied by these classes. If Assumption \ref{ass:welldef} holds for $\big\langle \mM^{\sig}_{\beta}\big\rangle^k$, then ii) holds for all $\sig,\sig'\in W$ and $\al\in \msE(\mA)$.
\end{theorem}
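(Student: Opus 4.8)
The plan is to run the argument of Theorem \ref{thm:familyWC} essentially verbatim, with Assumption \ref{ass:pairWC} playing in $\mB$ the role that Assumption \ref{ass:obsonflag} plays in $\mA$. First I would set up Joyce's equivariant vertex-- and Lie--algebra formalism on $\mM_{\mB}$: openness of $i_{\mB}\colon\mM_{\mB}\hookrightarrow\mM_{\mD}$ follows from \cite[Proposition 3.3.2]{APhearts} as recorded in Example \ref{ex:sheavesandreps0}, so $\mM_{\mB}$ is Artin; Assumption \ref{ass:orientation} supplies orientations $o_{\beta}$ and the constancy of the signs $\varepsilon_{\beta,\beta'}$ satisfying \eqref{eq:symsign} and \eqref{eq:epsidentity}. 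Hence $V_{\loc,*}=H_*(\mM^{\T}_{\mB})\otimes_R R(\Ft)$ carries the (local-approach) vertex algebra of Definition \ref{Def:VAlocal} together with the induced Lie bracket \eqref{eq:LiefromVA}, and for $\beta\notin\Ker(\chi)$ the virtual classes of stable objects give elements of $L_{\loc,0}$ exactly as in Example \ref{ex:mMvir}.

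Next, following \S\ref{sec:introproofWC}, I would enhance $\mB$ to the category of its objects framed by flags and, for a wall inside some $W_{\beta,k}$, construct the enhanced master space $\MS$ with a CY4 obstruction theory $\FF_{\MS}$. The new ingredient is that every semistable object of $\mB$ is uniquely a two--term complex $\{V_O\otimes O\to F\}$ with $O$ fixed, so the flagged objects are described by quiver--type data whose terminal vertex carries such a pair, and $\FF_{\MS}$ is induced from the CY4 obstruction theory of $\mM_{\mB}$ through the self--dual square \eqref{eq:introstartingIxI} completed to the $\infty$-Pvp diagram \eqref{eq:diagsym} --- the existence of the requisite morphism into $\MS$ being precisely what Assumption \ref{ass:pairWC}.c) provides in place of Assumption \ref{ass:obsonflag}. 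With $\FF_{\MS}$ in hand one defines $\big\langle\mM^{\sigma}_{\beta}\big\rangle^k$ by the inductive projection \eqref{eq:Masigdef} as in Definition \ref{def:Masik}, and then applies the equivariant virtual localization formula \eqref{eq:vireqloc} of Theorem \ref{thm:eqvirloc} to the natural $\GG_m$-action on $\MS$. Using the orientation conventions of \S\ref{sec:orconventions}, the functoriality of induced orientations (Lemma \ref{lem:functoror}), and the combinatorial reduction of \S\ref{sec:WCflags} that reproduces Joyce's vertex algebra, this yields property i) (when there are no strictly semistables, $\MS$ degenerates to an honest $\PP^1$-bundle) and the wall--crossing identity \eqref{eq:MasiWC} for $\big\langle\mM^{\sigma}_{\beta}\big\rangle^k$ on $W_{\beta,k}$.

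For the last clause, if Assumption \ref{ass:welldef} holds then the argument of \S\ref{sec:proof} reducing Problem (II) to Problem (I) --- comparing the Joyce--Song pair moduli for $L_2=L_1\otimes\mO(-D)$ via the full Pvp diagram \eqref{eq:introJSdiag} and the resulting identity \eqref{eq:introJScomparison}, then extending $-\cap e(\VV_1)$ to a morphism of vertex algebras and their Lie algebras --- carries over unchanged, so the neighbourhoods $W_{\beta,k}$ patch over $k\in K$ and ii) holds on all of $W$. The main obstacle, and the only step genuinely specific to $\mB$ rather than $\mA$, is verifying that the master--space obstruction theory $\FF_{\MS}$ is self--dual, orientable and CY4 in the flagged pair setting, i.e. that the presentation $\{V_O\otimes O\to F\}$ is compatible with the flag construction and with the $-2$-shifted symplectic structure so that the $\infty$-Pvp diagram of \S\ref{Asec:functoriality} really exists; everything else is a transcription of the proof of Theorem \ref{thm:familyWC}, which is why the statement above can be phrased as asserting the analogue of that result.
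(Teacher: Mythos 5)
Your proposal is correct and follows essentially the same route as the paper: Theorem \ref{thm:BWC} is obtained by rerunning the proof of Theorem \ref{thm:familyWC} (enhanced master space, CY4 obstruction theory via $\infty$-Pvp diagrams, equivariant $\GG_m$-localization and projection back along the flag/pair maps), with Assumption \ref{ass:pairWC}.a)--c) supplying for $\mB_O$ exactly what Assumption \ref{ass:obsonflag} supplies for $\mA$, so that the required obstruction theories on the flagged pair moduli and on MS are hypothesized rather than constructed. One small correction: for the final clause you do not need the quantum-Lefschetz argument of \S\ref{sec:proof} (that argument is used to \emph{prove} Assumption \ref{ass:welldef} in special cases); once Assumption \ref{ass:welldef} is assumed, the classes agree across $k$ and the wall-crossing formulae on the $W_{\beta,k}$ patch directly, exactly as in the last paragraph of Theorem \ref{thm:familyWC}.
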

\begin{importantremark}
\label{impremark}
    These theorems are not as general as the author originally believed. While they hold for CY4 quivers, they do not apply to $\Coh(X)$ for a general Calabi--Yau fourfold due to a gap in the proof of Assumption \ref{ass:obsonflag} explained in §\ref{sec:sheaves}. This assumption requires that CY4 obstruction theories on enhanced master spaces discussed in §\ref{sec:introproofWC} exist. Unfortunately, this appears to be too strict of a condition due to Example \ref{ex:counterexamples}. A collection of different $X$, for which Assumption \ref{ass:obsonflag} can still be shown to hold, is given in Example \ref{ex:fibrations}. It includes fibrations over smooth bases. In §\ref{sec:localCY4}, the above obstruction theories are additionally constructed in the case of any local CY fourfold.
\end{importantremark}
The major two consequences of the above remark are summarized in the next corollary. Note that its second statement implies that the part of assumptions in Theorem \ref{thm:BWC} dealing with obstruction theories can be removed for local CY fourfolds.
\begin{corollary}
\label{cor:WCconsequences}
   Let $\mA$ be 
\begin{enumerate}
  \item the category $\Coh_{\cs}(X)$ of compactly supported sheaves on a local Calabi--Yau fourfold $X = \Tot(K_Y)$ as in Example \ref{ex:introspectral}\,,
    \item the category $\Rep\big(\wt{Q}^{\bullet}\big)$ of degree 0 representations of a CY4 dg-quiver $\wt{Q}^{\bullet}$ (see §\ref{sec:dgquivers})\,.
\end{enumerate}
Suppose that Assumption \ref{ass:stab} on stability conditions $W$ and $\msE(\mA)$ is satisfied, then Theorem \ref{thm:familyWC}.i) and ii) apply to well-defined classes $\big\langle \mM^{\sig}_{\al}\big\rangle\in L_{\loc,0}$ independent of choices $k\in K$. 
For $X$ as in 1), the conclusion of Theorem \ref{thm:BWC} holds for $B,W,\msE(\mB)$ satisfying Assumption \ref{ass:stab} and \ref{ass:pairWC}.a) and b). I.e., equivariant wall-crossing holds for objects of the form \eqref{eq:VOtoF} in this situation.
\end{corollary}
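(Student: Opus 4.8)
The plan is to treat Corollary \ref{cor:WCconsequences} as a bookkeeping statement: in each of the two cases I will verify those hypotheses of Theorems \ref{thm:familyWC} and \ref{thm:BWC} that are not already assumed — Assumption \ref{ass:orientation}, Assumption \ref{ass:obsonflag} (and, for the heart $\mB$, the obstruction-theoretic part of Assumption \ref{ass:pairWC}), and Assumption \ref{ass:welldef} — and then invoke those theorems. Assumption \ref{ass:stab} is part of the hypothesis, so it carries no work.

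First, orientations. For $\mA = \Rep(\wt{Q}^\bullet)$ there is nothing left to do: orientability of every $\mM_{\un{d}}$ is Corollary \ref{cor:quiveror}, and connectedness of each $\mM_{\un{d}}$ forces the signs $\varepsilon_{\al,\be}$ to be constant, as already observed in Example \ref{ex:sheavesandreps0}.2. For $\mA = \Coh_{\cs}(X)$ with $X = \Tot(K_Y)$ I will cite the existence of orientations from \cite{CGJ,bojko} (with the correction of \cite{JU}) and deduce constancy of $\varepsilon_{\al,\be}$ from \cite[Theorem 1.15]{CGJ} and \cite[Theorem 5.4]{bojko}, exactly as in Example \ref{ex:sheavesandreps0}.1, using that the K-theory comparison map factors through the fixed quotient $\ov{K}(\mA)$.

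The heart of the matter is Assumption \ref{ass:obsonflag}: the existence of a CY4 obstruction theory $\FF_{\MS}$ on the enhanced master space $\MS$, fitting into the self-dual square \eqref{eq:introstartingIxI} and hence the full Pvp diagram \eqref{eq:diagsym}. For quivers I will realise $\MS$ itself as an open substack of the moduli of degree-$0$ stable representations of a new CY4 dg-quiver, obtained by gluing $\wt{Q}^\bullet$ to the CY4-completed master-space quiver \eqref{eq:introCY4QMS} with superpotential $\mH_{\MS} = \rho^\ast\circ e_0\circ e_{l-1}$ along the vertex carrying the $\mA$-valued object; Lemma \ref{lem:cotangentofQ} then supplies $\FF_{\MS}$ and the surrounding diagram directly from the dg-data. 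For $X = \Tot(K_Y)$ I will instead use the spectral correspondence of \S\ref{sec:spectralWC} to transport the (generalised) pair moduli problem from $X$ to $Y$, together with the observation \cite[(6.11)]{TT2} that the pair obstruction theory on $X$ is the $-2$-shifted cotangent bundle of the pair obstruction theory on $Y$; since $\MS$ parametrises generalised pairs, the same construction — carried out in \S\ref{sec:localCY4}, along the lines of \cite{LiuVW} — yields $\FF_{\MS}$ and the required $\infty$-Pvp diagram, and for the heart $\mB$ whose semistable objects have the shape \eqref{eq:VOtoF} it gives the obstruction-theoretic part of Assumption \ref{ass:pairWC}. I expect this to be the main obstacle: by Important remark!~\ref{impremark} and Example \ref{ex:counterexamples} it is precisely the step that fails for a general Calabi--Yau fourfold, and it is only the special structure here — a $-2$-shifted cotangent bundle in the local case, a dg-quiver with obstruction theory read off from Lemma \ref{lem:cotangentofQ} in the quiver case — that makes it go through.

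Finally, well-definedness and the conclusion. Once Assumptions \ref{ass:orientation}, \ref{ass:stab} and \ref{ass:obsonflag} are in place, Theorem \ref{thm:familyWC}.i)--ii) applies on the subsets $W_{\al,k}$, and in particular the Joyce--Song pair wall-crossing \eqref{eq:introJSWC} holds for every $\sig\in W$. I will then run the quantum-Lefschetz argument of \S\ref{sec:proof} — Theorem \ref{thm:independence!}, and in the lower-dimensional support case its spectral form Corollary \ref{cor:spectralindep} — using the injectivity of $[-,e^{(1,0)}_L]$ (Lemma \ref{lem:pushJSandinj}) to conclude $\big\langle\Masi\big\rangle^{k_1} = \big\langle\Masi\big\rangle^{k_2}$ for all $k_1,k_2\in K$, i.e.\ Assumption \ref{ass:welldef}; the same reasoning covers the quiver case. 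With Assumption \ref{ass:welldef} verified, the neighbourhoods $W_{\al,k}$ patch to all of $W$, the classes $\big\langle\Masi\big\rangle$ become independent of $k$, Theorem \ref{thm:familyWC}.ii) holds for all $\sig,\sig'\in W$, and Theorem \ref{thm:BWC} then gives the corresponding statement for the heart $\mB$ in the local case.
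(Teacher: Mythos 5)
Your overall architecture matches the paper's: orientations via Corollary \ref{cor:quiveror} / the sheaf-theoretic orientation results (Example \ref{ex:sheavesandreps0}), Assumption \ref{ass:obsonflag} via the glued CY4 dg-quiver $\wt{I}^{\bullet}_{\MS}\cup_{\times}\wt{Q}^{\bullet}$ in the quiver case (this is exactly Lemma \ref{lem:FFMSrig} and Proposition \ref{prop:construct}) and via spectral correspondence plus the $-2$-shifted cotangent description in the local case (Proposition \ref{prop:localCY}), and then invoking Theorems \ref{thm:familyWC} and \ref{thm:BWC}. Up to that point you are reproducing the paper's proof.

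The gap is in your treatment of Assumption \ref{ass:welldef}. For the quiver case you say ``the same reasoning covers the quiver case'', but Theorem \ref{thm:independence!} and Corollary \ref{cor:spectralindep} are formulated for sheaves framed by (pullbacks of) line bundles/divisors and are not set up for $\Rep(\wt{Q}^{\bullet})$; in fact no such argument is needed there, because the framing object $(1,0)$ of Example \ref{ex:sheavesandquivers}.ii) is canonical, so $K$ is effectively a single element and the classes $\big\langle\mM^{\sig}_{\al}\big\rangle^{k}$ are unique by construction. For the local Calabi--Yau fourfold, the quantum-Lefschetz route you cite only covers part of $\msE(\mA)$: Theorem \ref{thm:independence!} assumes positive-rank torsion-free sheaves on a projective $X$, and Corollary \ref{cor:spectralindep} assumes $\pi_{*}(\al)$ has positive rank (Gieseker/slope stability relative to $\pi^{*}H$), so compactly supported classes of lower-dimensional support (e.g.\ the $0$- and $1$-dimensional classes entering the DT/PT application) are not reached by these statements. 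The paper closes this in the proof of Proposition \ref{prop:localCY} by falling back on the Mochizuki--Joyce master-space argument of \cite[\S 9]{JoyceWC}, \cite[\S 7.3]{mochizuki} for the general case; your proposal needs either that supplement or an extension of the quantum-Lefschetz argument to those remaining classes for the independence-of-$k$ claim to be complete.
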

\begin{proof}
    In Example \ref{ex:sheavesandquivers}, I recall that Assumption \ref{ass:orientation} is satisfied for both 1) and 2). Assumption \ref{ass:obsonflag} is checked for 1) in Proposition \ref{prop:localCY} and for 2) in Proposition \ref{prop:construct}. The former proposition already includes the obstruction theories for pair wall-crossing of Theorem \ref{thm:BWC} and that $\big\langle \mM^{\sig}_{\al}\big\rangle\in L_{\loc,0}$ are well-defined for local CY fourfolds. For dg-quivers, these classes are constructed uniquely to begin with as follows from Example \ref{ex:sheavesandquivers}.ii).
\end{proof}

\section{Assumptions}
\label{sec:assump}
From now on, fix some data $\mA, \ov{K}(\mA), \msE(\mA)$ from Definition \ref{def:categoryA}. I have already stated Assumption \ref{ass:orientation} on existence of orientations. In Theorem \ref{thm:familyWC}, I have referenced three other assumptions-
\begin{enumerate}[label=\roman*)]
    \item Assumption \ref{ass:stab} that describes how $\mM^{\sig}_{\al}$ behave for $\al\in \msE(\mA)$ while changing $\sig\in W$,
    \item Assumption \ref{ass:obsonflag} which leads to the construction of CY4 obstruction theories on enhanced master spaces,
    \item Assumption \ref{ass:welldef} that requires that $\big\langle \mM^{\sig}_{\al}\big\rangle^k\in L_{\loc,0}$ defined in §\ref{sec:InvDef} are inedpendent of $k\in K$.
\end{enumerate}
These assumptions are formulated in detail in this section. Although they are related, they differ noticably from the assumptions presented in \cite[§5.1, §5.2]{JoyceWC}. For example, I no longer use Joyce's \textit{framing functors} from \cite[Assumption 5.1 (g)]{JoyceWC}. I replace them with \textit{ample framing objects} in Definition \ref{Def:quiverpairs} leading to Assumption \ref{ass:obsonflag} and generalized in Assumption \ref{ass:pairWC}. They are the natural generalization of sufficiently positive line bundle $L$ used in \cite{JoyceSong} to define Joyce--Song stable pairs. Ample framing objects give rise to the most natural framing functors that appear in the literature. Other constructions are usually introduced ad hoc and can be replaced by ample framing objects if additional clarity of the proofs is needed.
\subsection{Assumptions on stability conditions}
\label{sec:assstability}
Recall that the connected space of stability conditions $W$ was considered with a fixed smooth structure. Moreover, the maps 
 $$
 \begin{tikzcd}
 W\ni\sigma \arrow[r,mapsto]&\phi(\al)\in S
\end{tikzcd}
$$
should be continuous with respect to the order topology on $S$. For any $\al,\beta\in \msE(\mA)$, this ensures that 
\begin{itemize}
    \item the set $W_{\phi(\al)<\phi(\be)} = \{\sig \in W \colon \phi(\al)<\phi(\be) \}$ is open,
      \item the set $W_{\phi(\al)=\phi(\be)} = \{\sig \in W \colon \phi(\al)=\phi(\be) \}$ is closed.
\end{itemize}
In the cases one usually considers, the sets $W_{\phi(\al)=\phi(\beta)}$ are finite unions of real codimension 1 loci in $W$ or the entirety of $W$ (e.g. when $\beta =n\al$).  I will make a more general assumption below. 

To write down enumerative wall-crossing formulae, one needs to make sure that there are finitely many non-zero coefficients  $U(\un{\al};\sig_0,\sig_1)$ in the sum in Theorem \ref{thm:familyWC}. For a fixed $\al\in \msE(\mA)$, this requires the set of
\begin{itemize}
    \item[$\circledast$] partitions $\un{\be}\vdash_{\mA} \al$ of size $p\geq 1$ with further partitions $\un{\al}_j\vdash_{\mA} \beta_j$ for $j=1,\ldots, p$ such that there is a pair $\sig,\sig'\in W$ with $$\mM^{\sig}_{\al_{jk}}\neq \emptyset\,,\qquad U(\un{\al}_j;\sig,\sig')\neq 0$$
    for all parts $\al_{jk}$ of $\un{\al}_j$ and all $j=1,\ldots,p$. Furthermore at $\sig'$, the equality of phases
    $$
    \phi'(\be_1)=\phi'(\be_2)=\cdots =\phi'(\be_p)
    $$
    holds.
\end{itemize}
to be finite. Note that this implies that the set of $(\al_1,\al_2)\vdash_{\mA}\al$ such that for some $\sig\in W$ one has $\phi(\al_1)=\phi(\al) = \phi(\al_2)$ and $\mM^{\sig}_{\al_{1}}\neq \emptyset\neq \mM^{\sig}_{\al_{2}}$ is finite due to $U(\al_i;\sig,\sig) = 1$. Thus there are finitely many sets $W_{\phi(\al_1)=\phi(\al_2)}$ where objects in class $\al$ can be destabilized.

The assumptions below also guarantee the existence of virtual fundamental classes $[M^{\sig}_{\al}]^{\vir}\in L_{\loc,0}$ for all $\al\in \msE(\mA)$ and $\sig$ with no strictly semistable objects. \begin{assumption}
\label{ass:stab}
Let $W$ be the manifold of weak stability conditions fixed above. 
\begin{enumerate}[label =\alph*)]
\item If $(\al_1,\al_2) \vdash_{\mA}\al\in \msE(\mA)$ and $\phi(\al_1)=\phi(\al_2)$ for some $\sig\in W$, then $\al_1,\al_2\in \msE(\mA)$.
\item For each pair of $\sig,\sig'\in W$, there is a continuous path $\gamma_{(-)}:[0,1]\to W$ between them, such that the open subset 
$$
\gamma_{\al<\beta} = \big\{t\in [0,1]\colon \phi_t(\al)<\phi_t(\beta) \textnormal{ for }\gamma_t\big\}\subset \gamma_{[0,1]}
$$
is a finite union of connected components for each $\al,\beta\in \msE(\mA)$. 

I will say that $\gamma_{(-)}$ satisfies (P), if for each set of data $\circledast$ the set 
\begin{equation}
\label{eq:betajequalsal}
\gamma_{\be_j=\al} = \big\{s\in [0,1]\colon \phi_s(\be_j)=\phi_s(\al)\text{ for  }\gamma_s \text{ and all }j=1,\ldots,p\big\}
\end{equation}
is finite whenever $\be_1,\cdots, \be_p,\al$ are not pairwise collinear.
\item The set of $\circledast$ is finite for a fixed $\al\in \msE(\mA)$.
\item 
Suppose that (P) holds for a fixed path $\gamma_{(-)}$. Fix $\al\in \msE(\mA)$ and $t\in [0,1]$. Define the subset $B_{\al,t}\subset \msE(\mA)$ consisting of all $\beta$ such that 
\begin{equation}
\label{eq:Balt}
(\beta,\al-\be)\vdash_{\mA}\al\,,\qquad \phi_{t}(\be) = \phi_{t}(\al-\beta)\,,\quad\text{and}\quad \mM^{\gamma_t}_{\be}\neq \emptyset \neq \mM^{\gamma_t}_{\al-\be}\,.
\end{equation}
If $t'\neq t$ and $\gamma_{t'}\notin \gamma_{\be_j=\al}$ for all data $\circledast$, there exists a group homomorphism $\lambda^{t,t'}_{\al}: \ov{K}(\mA)\to \RR$ such that $\lambda^{t,t'}_{\al}(\al)=0$ and 
\begin{equation}
\label{eq:lambdaiffphi}
\lambda^{t,t'}_{\al}(\beta)<\lambda^{t,t'}_{\al}(\al-\beta)  \qquad \iff\qquad \phi_{t'}(\beta)<\phi_{t'}(\al-\beta)\qquad \text{for all}\quad \beta\in B_{\al,t}\,.
\end{equation}

If $\ga_{(-)}$ does not satisfy $(P)$, $\lambda^{t,t'}_\al$ should exist for all $t'\neq t$.
\item For each $\sig\in W$, there exists a \textit{rank function} $\rk_{\sig}: \msE(\mA)\to \NN$ satisfying $\rk_{\sig}(\al) = \rk_{\sig}(\al_1) + \rk_{\sig}(\al_2)$ whenever $(\al_1,\al_2)\vdash_{\mA}\al \in \msE(\mA)$ and $\phi(\al_1) = \phi(\al_2)$.
\item Any $\sigma\in W$ satisfies the Harder-Narasimhan property on $\mA$.
    \item For each $\alpha\in \msE(\mA)$ and $\sigma\in W$, the substacks $\mM^{\sig}_{\al}\subset \mM_{\mA}$ are open and finite type. If additionally there are no strictly $\sigma$-semistable objects of class $\alpha$, the rigidification $$M^\sig_{\al} := \big(\mM^\sig_{\al}\big)^{\rig}$$ is an algebraic space with proper fixed-point locus $\big(M^{\sig}_{\al}\big)^{\T}$ that has the $\T$-equivariant resolution property.
\end{enumerate}
\end{assumption}
\begin{example}
\label{ex:assstab}
To explain the motivation behind the above assumptions, I provide two examples. Note that Assumption \ref{ass:stab} is slightly different from \cite[Assumption 5.2 and 5.3]{JoyceWC}, especially Assumption \ref{ass:stab}.d). This is to accommodate example ii) below, which otherwise would not fit into the original definition. 
\begin{enumerate}[label=\roman*)]
\item Let $\mA=\Coh(X)$ and fix an ample divisor class $H\in \textnormal{NS}(X)\otimes \RR$. As one varies $H$, the associated slope and Gieseker stabilities change. The assumptions were addressed in this case in \cite[§7]{JoyceWC}. The main limitation is Assumption \ref{ass:stab}.g) which relies on boundedness of some families of pure sheaves. For this reason, the results in \cite{JoyceWC} are limited to sheaves of dimension 0, 1, and 4. 
\item Let $\{0\}=\ov{K}_{0}\subset\ov{K}_{1}\subset \cdots \subset \ov{K}_{l}  $ be a finite filtration of $\ov{K}(\mA)$ and set $\ov{G}_i = \ov{K}_i/\ov{K}_{i-1}$ for $i=1,\ldots, l$. A \textit{weak stability condition} is determined by its central charge which consists of group homomorphisms $Z_i: \ov{G}_i\to \CC$. The phase $\phi(\al) \in (0,1]$ for $\al\in \ov{K}_i\backslash \ov{K}_{i-1}$  is defined by the equality $$Z_i(\ov{\al})= m(\al)e^{i\pi\phi(\al)}$$
where $\ov{\al}$ is the projection of $\al$ to $\ov{G}_i$ and $m(\al)\in \RR$ is required to be greater than 0. 

Thus, each $\al\in \msE(\mA)$ determines a ray $\RR_{>0}\cdot e^{i\pi\phi(\al)}$. The purpose of introducing property (P) in Assumption \ref{ass:stab}.b) is for d) to be satisfied by weak stabilities. When $\phi(\al)=\phi(\be)$, the associated rays overlap and I could not find any natural way of constructing morphisms \eqref{eq:lambdaiffphi} in general. In \cite{Bo26}, I will address this issue when (P) is satisfied and apply it to stable pair wall-crossing. In §\ref{sec:localCY4}, I will only discuss the simplest case where (P) is not necessary. 
\end{enumerate}
\end{example}

\subsection{Assumptions on obstruction theories of enhanced master spaces}
\label{sec:assonFlagMS}
In \cite[Def. 5.5]{JoyceWC}, Joyce introduced certain abelian categories of representations of acyclic quivers with the vector spaces at sinks of the quiver replaced by sheaves. Fortunately, one does not need to work in this generality to prove wall-crossing. I only record the necessary quivers which lead to flag bundles over the moduli spaces of interest and enhanced master spaces. These types of spaces were originally used by Mochizuki in \cite{mochizuki} to prove his version of wall-crossing for sheaves on surfaces. However, phrasing them in the language of quivers makes everything more explicit and combinatorial. To describe the appropriate CY4 obstruction theories on enhanced master spaces, these quivers will later be completed to CY4 dg-quivers. In the case of wall-crossing for $\Rep\big(\wt{Q}^{\bullet}\big)$, this becomes the most useful formulation. 

Everything in the following definition can be stated $\T$-equivariantly in an obvious way, so I will specify it only when needed. 
\begin{definition}
\label{Def:quiverpairs}
  I continue working with $\mA$, $\mM_{\mA}$ from Definition \ref{def:categoryA} satisfying Assumption \ref{ass:orientation} and $\msE(\mA),W$ satisfying Assumption \ref{ass:stab}. Let  $\{\mA_k\}_{k\in K}$ for a countable set $K$ be a collection of exact subcategories of $\mA$ closed under direct summands. For each $\alpha\in \msE(\mA),\sig\in W$, there should exist a $k\in K$ such that the associated substack $\mM_{\mA_k}\subset \mM_{\mA}$ is open and  
\begin{equation}
\label{eq:Walk}
\mM^{\sigma}_{\alpha} \subset \mM_{\mA_k}\,.
\end{equation}
In addition, there should exist connected open subsets $W_{\al,k}\subset W$ for each $\al\in \msE(\mA), k\in K$ where \eqref{eq:Walk} holds. Their union should be the entire $W$. 

 For the above choice of $\{\mA_k\}_{k\in K}$, the following additional data should be specified:
  \begin{enumerate}[label=\alph*)]
      \item an exact fully faithful $\T$-equivariant embedding $\mA_k\hookrightarrow \mB_k$ into an exact category with a $\T$-action $\mB_k$ consisting of objects $P$ each fitting into a unique up to isomorphisms exact triple
      \begin{equation}
      \label{eq:exacttriple} 
      \begin{tikzcd}
          E\arrow[r]&P\arrow[r]&(1,0)_k\otimes V
      \end{tikzcd}
      \end{equation}
      where $(1,0)_k$ is a $\T$-equivariant object of $\mB_k$
      and $V$ is a vector space. Each morphism  $f:P_1\to 
 P_2$ fits into a unique commutative diagram 
 $$
      \begin{tikzcd}
\arrow[d,"f_{\mA}"]E_1\arrow[r]&\arrow[d,"f"]P_1\arrow[r]&\arrow[d,"f_{\Vect}"](1,0)_k\otimes V_1\\
            E_2\arrow[r]&P_2\arrow[r]&(1,0)_k\otimes V_2
      \end{tikzcd}
      $$
      after the exact triples for $P_1$ and $P_2$ are chosen. In this case, $f$ is uniquely determined by $f_{\mA}$ and $f_{\Vect}$.
   I will assume that $\mB_k$ additionally comes with a choice of an exact fully faithful $\T$-equivariant functor \footnote{This terminology implies that the class of exact triples in the image is the same as the image of the class of triples in $\mB_k$.}$\mB_k\hookrightarrow \ov{\mB_k}$ where $\ov{\mB_k}$ is a Calabi--Yau four abelian or triangulated category with a $\T$-action such that $(1,0)_k$ is a spherical object. The induced inclusions of stacks
\begin{equation}
\label{eq:MNovNk}
\mM_{\mA_k}\hookrightarrow \mM_{\mB_k}=:\mN_k\hookrightarrow\mM_{\ov{\mB_k}}=:\ov{\mN_k}
\end{equation}
are required to be open embeddings. Additionally, the data is required to satisfy
\begin{equation}
\Ext^{i}_{\ov{\mB_k}}\big((1,0)_k, E\big)=0 \qquad\textnormal{for all} \quad E\textnormal{ in }\mA_k\,,\quad i\neq 1\,.
\end{equation}
When, additionally $E\in\al$ for $\al\in \msE(\mA)$, set $V_k(E):=\Ext^{1}_{\ov{\mB_k}}\big((1,0)_k, E\big)$. Then
\begin{equation}
\label{eq:chialk}
\chi\big(\al(k)\big):=\dim\big(V_k(E)\big)>0
\end{equation}
is constant in such $E$, and there is a functorial injection 
$
\Hom_{\mA}(E,E)\hookrightarrow V^*_k(E)\otimes V_k(E)
$.
      \item an integer $r\geq 2$ and the quivers 
      \begin{center}
      \includegraphics{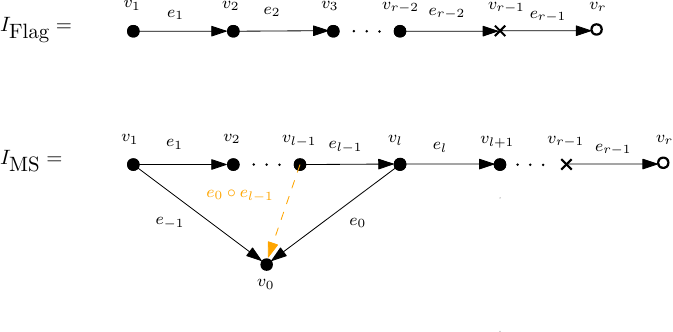}
      \end{center}
      with two types of distinguished vertices labeled by $\times$ and $\circ$. I will call the special vertex $\times$ the \textit{connecting} vertex as this will be its role later on. When working with $I_{\MS}$, I will additionally require that $r\geq 3$ and $l<r$. The quivers $\mathring{I}_{\MS}$ and $ \mathring{I}_{\Flag}$ result from erasing the vertex $\circ$ of $I_{\MS}$ and $I_{\Flag}$ and the arrow pointing to it. 
     
      For $I=I_{\MS}, I_{\Flag}$, I will simply write $\mathring{I} = (\mathring{\Ver}, \mathring{\Edg})$ to denote $\mathring{I}_{\MS}$, respectively $\mathring{I}_{\Flag}$. For a fixed choice of $I$, I will consider the exact categories $\mB_{I_k}$ whose objects are triples
      $(\un{V},\un{m},P)$ where 
      \begin{itemize}
      \item $(\un{V}, \un{m})$ is a representation of $\mathring{I}$ with the vector space at the vertex $v_{r-1}$ denoted by $V_{\times}$,
      \item $P$ is an object of $\mB_k$ determined by an exact sequence of the form
      $$
      \begin{tikzcd}
          E\arrow[r]&P\arrow[r]&(1,0)\otimes V_{\times}
      \end{tikzcd}\,.
      $$
      \end{itemize}
      The $\T$-action on $\mB_{I,k}$ is inherited from $\mB_k$, such that it acts trivially on $(\un{V},\un{m}$). The morphisms between the objects $(\un{V}_1,\un{m}_1,P_1)$ and $(\un{V}_2,\un{m}_2,P_2)$ are determined by pairs of morphisms $f_{\mathring{I}}\in \Hom_{\mathring{I}}(\un{V}_1,\un{V}_2)$ and $f\in\Hom_{\mB_k}(P_1,P_2)$ whose restrictions to morphisms $V_{\times, 1}\to V_{\times, 2}$ are equal. The stacks and the moduli spaces above are both endowed with the induced $\T$-actions.
  \end{enumerate}
Let $\sig\in W_{\al,k}$ for fixed $\al\in \msE(\mA), k\in K$ and choose $\phi\in S$. Define a new exact subcategory $\mA^{\sig}_{k,\phi}\subset \mA_k$ consisting of $\sig$-semistable objects $E$ with $\phi(E) = \phi$ together with the zero object. It is exact and closed under taking direct summands. This is also true for the subcategories $\mB^{\sig}_{I_k,\phi}\subset \mB_{I_k}$ consisting of objects $(\un{V}, \un{m}, P)$ with a short exact sequence $E\to P\to (1,0)\otimes V_{\times}$ such that $E\in \mA^{\sig}_{k,\phi}$. For the purpose of writing stability conditions on $\mB^{\sig}_{I_k,\phi}$, one replaces $\ov{K}(\mA)$  by $\ov{K}(\mB_{I_k}):= \ZZ^{\mathring{\Ver}}\times\ov{K}(\mA) $ the elements of which are denoted by $(\un{d},\al)$ with $\un{d}$ a dimension vector of $\mathring{I}$ and $\alpha\in \ov{K}(\mA)$.
  
Let $\lambda: \ov{K}(\mA)\to \RR$ be a group homomorphism together with a vector $\un{\mu}\in \RR^{\mathring{\Ver}_{\MS}}$ satisfying 
\begin{equation}
\label{eq:muicond}
1\gg \mu_1\gg\mu_2\gg\cdots \gg\mu_{r-1}>0\,,\qquad  0>\mu_0\gg -1
\end{equation}
where the precise conditions are specified in the proof 
in \cite[§10]{JoyceWC}. The stability condition $\sigma^{\lambda}_{\mu}$ on $\mB^{\sig}_{k,\phi}$ in terms of
\begin{equation}
\label{eq:barphi}
\phi^{\lambda}_{\mu}: \ov{K}(\mB_{I_k})\begin{tikzcd}[ampersand replacement=\&]\arrow[r]\&\ov{\RR}\end{tikzcd}\,,\qquad
\phi^{\lambda}_{\mu}(\un{d},\al) =\begin{cases}\frac{\lambda(\al) + \un{d}\cdot \un{\mu}}{\rk_{\sig}(\al)}&\text{if }\al\neq 0\\
\infty&\text{if }\al= 0\end{cases}
\,.
\end{equation}
Here, the extended real numbers $\ov{\RR}$ are considered with the standard total order, and $\chi\big(\un{d}\cdot \un{\mu}\big) =-1$ when $\un{d}\cdot \un{\mu}\leq 0$, while $\chi\big(\un{d}\cdot \un{\mu}\big) =1$ when $\un{d}\cdot \un{\mu}> 0$.

 The moduli stacks of objects in $\mB_{I_k}$ are denoted by $\mN_{I_k}$ or simply $\mN_{\Flag_k}$ and $\mN_{\MS_k}$ for the two different uivers. I will also use $\mB_{\Flag_k},\mB_{\MS_k}$ for the categories $\mB_{I_k}$.  Fixing a class $(\un{d},\alpha)\in \ov{K}(\mB_{I_k})$ with $\al\neq 0$, I will denote by $\mN_{\un{d},\al}\subset \mN_{I_k}$ the substacks of the objects of this class. When $\phi(\al) = \phi$, their substacks of $\sigma^{\lam}_{\mu}$-semistable objects in $\mB^{\sig}_{k,\phi}$ will be labeled by $\mN^{\sig^{\lam}_{\mu}}_{\un{d},\al}$. When there are no strictly $\sigma^{\lambda}_{\mu}$-semistable objects of class $(\un{d},\alpha)$, I will write
\begin{equation}
\label{eq:Nsigmalambda}
N^{\sigma^{\lambda}_{\mu}}_{\un{d},\al}\subset \mN^{\rig}_{\un{d},\al}\end{equation}
for the resulting moduli spaces. 
\end{definition}
\begin{example}
\label{ex:sheavesandquivers}
That the below two examples are instances of the data from Definition \ref{Def:quiverpairs} will be checked in §\ref{subsec:exfitsdef}.
    \begin{enumerate}[label=\roman*)]
        \item Let $\mA = \Coh(X)$ for a \textit{strict Calabi--Yau fourfold} $X$ -- one which satisfies $H^i(\mO_X) = 0$ for $i=1,2,3$. Choose a collection of ample divisors $\{D_k\}_{k\in \ZZ}$ with the associated subcategories $\mA_k$ of sheaves $E$ satisfying 
        $$
        H^i\big(E(D_k)\big) = 0 \quad \textnormal{whenever} \quad i>0\,.
        $$
        For each $k$, the corresponding $\mB_k$ is constructed as the category of pairs 
        $
       P= \big(V\otimes \mO_X(-D_k)\longrightarrow E\big)
         $
         where $E$ is in $\mA_k$ and $V$ is a vector space. The morphisms in $\mB_k$ are given by the commutative diagrams
        $$
    \begin{tikzcd}
        V\otimes \mO_X(-D_k)\arrow[d]\arrow[r]&E\arrow[d]\\
        V'\otimes \mO_X(-D_k)\arrow[r]&E'
    \end{tikzcd}\,,$$
    and exact triples are triples that are termwise exact.
    Setting
    $$(1,0) = \big( \mO_X(-D_k)\to 0\big)$$
determines uniquely the exact triple \eqref{eq:exacttriple} associated with $\big(\mO_{X}(-D_k)\to E\big)$.

The standard functor
\begin{equation}
\label{eq:functorC}
\begin{tikzcd}
C:\,\mB_k\arrow[r]&\ov{\mB_k}=D^b(X)\,,\qquad
P\arrow[r,mapsto]& P^{\bullet}
\end{tikzcd}
\end{equation}
mapping each pair to the corresponding complex in degrees $-1$ and 0 is used to define a map of stacks
$$
\Omega_C: \mN_k\longrightarrow \ov{\mN_k}:=\mM_X\,.
$$
\item When working with a Calabi--Yau dg-quiver $\wt{Q}^{\bullet}$, one uses the category $\mB_k$ constructed in the same way as in \cite[§5.5]{GJT}. Consider a new dg-quiver $\wt{Q}^{\bullet}_{\infty}$ which adds an extra vertex $\infty$ to $\wt{Q}^{\bullet}$. For each original vertex $v$ of $\wt{Q}^{\bullet}$, one draws an extra edge starting at $\infty$ and ending at $v$. To distinguish the dimension vectors of $\wt{Q}^{\bullet}$ from those of $\mathring{I}$ in Definition \ref{Def:quiverpairs}, I will label the former by $\alpha$. In the case of $\wt{Q}^{\bullet}_{\infty}$, the dimension vectors are denoted by $(d_{\infty},\al)$. The object $(1,0)$ is then set to be the unique representation of $\wt{Q}^{\bullet}_{\infty}$ with the dimension vector $(1,0)$. One sees from this definition that \eqref{eq:exacttriple} are indeed describing all the degree 0 representations of $\wt{Q}^{\bullet}_{\infty}$ which form the category $\mB_k$. 

To get the category $\ov{\mB}_k$, one additionally includes for each edge starting at $\infty$ a \Ma{degree $-2$ edge} going the opposite way. This produces a Calabi--Yau four quiver $\wt{Q}^{\bullet}_{\wt{\infty}}$ with the original superpotential as no further \BuOr{degree $-1$ cycles} have been added. Although the degree 0 representations will remain the same compared to $\wt{Q}^{\bullet}_{\infty}$ which implies $\mN_k=\ov{\mN}_k$ as classical stacks, the derived refinement of $\mN_k$ described in \eqref{eq:Mdderived} will be $-2$-shifted symplectic. This data satisfies all the conditions of Definition \ref{Def:quiverpairs}.a). As $(1,0)$ is uniquely chosen, I will drop the subscript $(-)_k$ in this case. 
    \end{enumerate}
\end{example}
\begin{remark}
\label{rem:section}
  \leavevmode
\vspace{-4pt}
\begin{enumerate}[label=\roman*)]
    \item The exact triple \eqref{eq:exacttriple} induces a distinguished triangle
$$
\begin{tikzcd}
E\arrow[r]&P\arrow[r]&(1,0)\otimes V\arrow[r,"s"]&P[1]
\end{tikzcd}
$$
in terms of $\ov{\mB_k}$ due to the exactness of $\mB_k\hookrightarrow \ov{\mB_k}$. The map $s$ will sometimes be called a \textit{section}. Since $P$ becomes the cocone of $s$, one can equivalently think of it as the complex represented by the triple $\{(1,0)[-1]\otimes V\xrightarrow{s}E$\}. In particular, there is always a \textit{universal triple} 
\begin{equation}
\label{eq:universaltriple}
\begin{tikzcd}
(1,0)[-1]\otimes \mV\arrow[r,"\mathfrak{s}"] &\mE
\end{tikzcd}
\end{equation}
on $\mN_k$, where $\mE$ is the universal object of $\mM_{\mA}$ pulled back to $\mN_k$ and $\mV$ is the universal vector bundle. I will always choose to put $\mE$ in degree $0$. This notation is motivated by Example \ref{ex:sheavesandquivers}.i),
\item The condition that $(1,0)_k$ is spherical puts restrictions on the geometry of $X$ in Example \ref{ex:sheavesandquivers}.i) as it implies that it must be a strict CY fourfold. To generalize this, one needs to work with fixed determinant obstruction theories, which will be included in the follow-up work \cite{BKLT}. 
\end{enumerate}
\end{remark}
Before I introduce the assumptions needed to construct obstruction theories on the spaces from \eqref{eq:Nsigmalambda}, I will fix the notation for the following three projections:
\begin{itemize}
    \item the natural projection   $$
\begin{tikzcd}\pi_{I}:\mN_{I_k}\arrow[r]&\mM_{\mA_k}\,,\end{tikzcd}$$
  \item in the case of $I=I_{\MS}$, the morphism $$\begin{tikzcd}\pi_{\MS/\Flag}:\mN_{I_k}\arrow[r]&\mN_{\Flag_k}\end{tikzcd}$$
  forgetting the vertex $v_0$ and the edges $e_{-1},e_0$,
  \item for $I = I_{\Flag}$, the map $$\pi_{\Flag/\JS}: \mN_{I_k}\to \mN_{k}$$ acting by composing the morphisms $m^{e_i}$ along the full sequence of edges of $I$:
\begin{equation}
\label{eq:FlagtoProj}
    \includegraphics[scale=0.75]{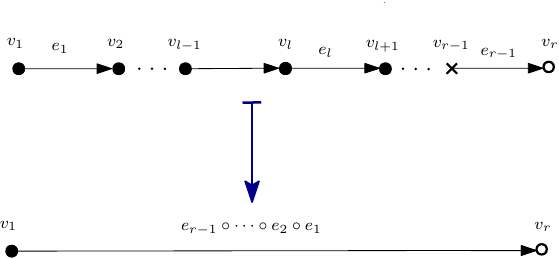}\,,
\end{equation}
where
$$
m^{e_{r-1}\circ \cdots \circ e_2\circ e_1} = m^{e_{r-1}}\circ \cdots \circ m^{e_2}\circ m^{e_1}\,.
$$
\end{itemize}
From now on, I will distinguish between the dimension vectors of $\mathring{I}_{\MS}$ and of $\mathring{I}_{\Flag}$ by writing $(d_0,\un{d})$ in the first case and simply $\un{d}$ in the second. Furthermore, when $\lambda$ and $(\mu_0,\un{\mu})$ are fixed and understood, I will simply write $N^{\sigma}_{(1,\un{d}),\alpha}$ and $N^{\sigma}_{\un{d},\al}$ for the spaces in \eqref{eq:Nsigmalambda}. A similar convention will be used for the stacks $\mN^{\sig}_{\un{d},\al}, \mN^{\sigma}_{(1,\un{d}),\alpha}$. The restrictions of the above projections to these moduli schemes and stacks will be labeled as follows:
\begin{itemize}
    \item When $I=I_{\Flag}$, the restriction of $\pi_I$ to $\mN_{\un{d},\al}$ will be denoted by 
    $$
    \begin{tikzcd}
        \pi_{\un{d},\al}:\mN_{\un{d},\al}\arrow[r]& \mM_{\alpha}\,.
    \end{tikzcd}
    $$
    For $I=I_{\MS},I_{\Flag}$, I will write
    $$
    \begin{tikzcd}
    \pi^{\sigma}_{(1,\un{d}),\alpha}: \mN^{\sigma}_{(1,\un{d}),\alpha}\arrow[r]&\mM_{\alpha}\end{tikzcd}\,,\qquad \begin{tikzcd}\pi^{\sigma}_{\un{d},\al}: \mN^{\sigma}_{\un{d},\al}\arrow[r]&\mM_{\alpha}
    \end{tikzcd}
    $$
    for the restrictions of $\pi_I$ to $\mN^{\sigma}_{(1,\un{d}),\alpha}$ and $\mN^{\sigma}_{\un{d},\al}$ respectively. I will use the same notation for the rigidification of 
 both morphisms. In the next two points, the rigidifications of such restrictions will also inherit their labels in a similar way.
    \item  Rigidifying  $\pi_{\MS/\Flag}$ one can restrict the result to $\mN^{\sig}_{(1,\un{d}),\al}$. Assuming that this restriction maps to $\mN^{\sig}_{\un{d},\al}$, I will write it as 
    $$
    \begin{tikzcd}
     \PP^{\sigma}_{\un{d},\al}:   \mN^{\sig}_{(1,\un{d}),\al}\arrow[r]& \mN^{\sigma}_{\un{d},\al}\,.
    \end{tikzcd}
    $$
    \item The restriction of $\pi_{\Flag/\JS}$ to $\mN^{\sigma}_{\un{d},\al}$ when $d_1=1$ will be denoted by 
    $$
    \begin{tikzcd}
\pi^{\sigma}_{\un{d}/1,\alpha}:\mN^{\sigma}_{\un{d},\al}\arrow[r]&\mN^{\rig}_{1,\alpha}\,.
    \end{tikzcd}
    $$
\end{itemize}
Both the obstruction theory $\EE$ of $\mM_{\mA}$ and the obstruction theory $\FF_k$ of $\mN_k$ induced by the open embedding into $\ov{\mN_k}$ satisfy self-duality and isotropy of cones from Definition \ref{def:viradmcl} by the discussion in §\ref{sec:dgsetup}. By the same argument as in \cite[(127)]{BKP} recalled in \eqref{eq:rigidifyingEE}, they induce the obstruction theories $\EE^{\rig}$ and  $\FF^{\JS}$ on $\mM^{\rig}_{\mA}$ and $\mN^{\rig}_k$ respectively .

The next observation is a generalization of Park's pair-sheaves correspondence from \cite[Proposition 4.7]{Park} to the general category $\mA$ as above. It is also stated in the language of stable infinity categories recalled in the Appendix \ref{appendix}. Thus it produces a lift of \eqref{eq:diagsym} in the sense of Definition \ref{def:lifting}. I have called this lift the \textit{$\infty$-Park's virtual pullback diagram ($\infty$-Pvp diagram)} in §\ref{Asec:inftyPvp}. Below, I will often not specify in notation the pullback along obvious maps when dealing with obstruction theories or universal objects. Thus $\EE$ may denote the pullback of the obstruction theory of $\mM_{\mA}$ to $\mN_k$ which will be clear from the context.
\begin{lemma}
\label{lem:pairshenafcorrespondence}
There is a natural self-dual homotopy commutative diagram 
\begin{equation}
\label{eq:LLrighom0}
\begin{tikzcd}
\arrow[d]\LL_{\pi^{\rig}_{1,\alpha}}[-1]\arrow[r]&\EE^{\rig}\arrow[d]\\
0\arrow[r]&\LL_{\pi^{\rig}_{1,\alpha}}^\vee[3]
\end{tikzcd}
\end{equation}
in $\mD^b\big(\mN^{\rig}_{1,\al}\big)$. Self-duality here means that it is preserved under applying $(-)^\vee[2]$. By Proposition \ref{prop:sympullback}, this produces an $\infty$-Pvp diagram for $\pi^{\rig}_{1,\alpha}$ with the resulting obstruction theory on $\mN^{\rig}_{1,\al}$ given by the restriction of $\FF^{\JS}$. 
\end{lemma}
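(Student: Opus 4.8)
The plan is to produce the homotopy commutative square \eqref{eq:LLrighom0} by first building the analogous square on the non-rigidified stack $\mN_k$ and then descending along the rigidification map $\mN_k\to\mN^{\rig}_k$. On $\mN_k$ one has the universal triple \eqref{eq:universaltriple}, i.e.\ a morphism $(1,0)[-1]\otimes\mV\xrightarrow{\mathfrak{s}}\mE$ in $\mD^b\big(\mN_k\times\ov{\mB_k}\big)$ (after the evident pullbacks). The relative cotangent complex $\LL_{\pi_{1,\al}}$ should be computable from the deformation theory of the section $\mathfrak s$ with $\mE$ fixed: infinitesimal deformations of a pair $P=\{(1,0)[-1]\otimes V\to E\}$ with $E$ held fixed are governed by $\RHom_{\ov{\mB_k}}\big((1,0)\otimes V, E\big)$, and using the spherical/vanishing hypothesis $\Ext^i_{\ov{\mB_k}}\big((1,0),E\big)=0$ for $i\neq 1$ together with $\chi(\al(k))=\dim V_k(E)$ one identifies this complex concentrated in a single degree. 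Concretely I would write, on $\mN_k$,
\begin{equation}
\LL_{\pi_{1,\al}}\simeq \RHom_{p}\big((1,0)\otimes\mV,\mE\big)^{\vee}[-1]\,,
\end{equation}
where $\RHom_p$ denotes the derived pushforward of the internal Hom along the projection $p$ forgetting $\ov{\mB_k}$; this is essentially the content of Park's pair-sheaves correspondence \cite[Proposition 4.7]{Park} transported to the general CY4 setting, and it is where the hypotheses of Definition \ref{Def:quiverpairs}.a) are used.

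Next I would assemble the square. The obstruction theory $\EE$ of $\mM_{\mA}$ pulled back to $\mN_k$ is $\Delta^*\Theta_{\mA}$, and by \eqref{eq:DeltaExtLL} it equals $\Delta^*\big(\mExt_{\mD}\big)^{\vee}[-1]=\RHom_{p}(\mE,\mE)^{\vee}[-1]$. The morphism $\mathfrak{s}$ induces, via $\RHom_p(-,\mE)$, a map $\RHom_p(\mE,\mE)\to\RHom_p\big((1,0)[-1]\otimes\mV,\mE\big)=\RHom_p\big((1,0)\otimes\mV,\mE\big)[1]$; dualizing and shifting gives the horizontal arrow $\LL_{\pi_{1,\al}}[-1]\to\EE$ in \eqref{eq:LLrighom0}. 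For self-duality, the key point is that the CY4 (left Calabi--Yau) structure on $\ov{\mB_k}$ identifies $\RHom_{\ov{\mB_k}}(A,B)\simeq\RHom_{\ov{\mB_k}}(B,A)^{\vee}[-4]$ bifunctorially; applied to the morphism $\mathfrak s$ this exhibits $\ov{\eta}$ in \eqref{eq:introstartingIxI} as $\eta^\vee[3]$ up to the appropriate shift, so that the square is fixed (up to coherent homotopy) by $(-)^{\vee}[2]$. I would carry this out at the level of the stable $\infty$-category $\mD^b(\mN_k\times\ov{\mB_k})$ so that the self-duality is a genuine equivalence of squares, not merely an isomorphism of objects; the functoriality of cones recalled in Appendix \ref{appendix} then lets me invoke Proposition \ref{prop:sympullback} to extend \eqref{eq:introstartingIxI} to a full $\infty$-Pvp diagram \eqref{eq:diagsym}, whose induced obstruction theory on $\mN_k$ is exactly $\FF_k$ (the one coming from the open embedding $\mN_k\hookrightarrow\ov{\mN_k}$, which has obstruction theory $\RHom_p(\mathcal P,\mathcal P)^{\vee}[-1]$ for $\mathcal P$ the universal object of $\ov{\mB_k}$, and the exact triple relating $\mathcal P$, $\mE$, $(1,0)\otimes\mV$ gives the comparison).

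Finally, descent to the rigidification. Since $\pi_{1,\al}$ is $B\GG_m$-equivariant for the scaling action $\rho$ and all the complexes involved ($\EE$, $\LL_{\pi_{1,\al}}$, and the universal triple) are pulled back from or compatible with this action, the square is $B\GG_m$-equivariant; applying the rigidification procedure of \cite[App. A]{AOV}, \cite{Romagny} — as in \cite[(127)]{BKP}, recalled in \eqref{eq:rigidifyingEE} — produces \eqref{eq:LLrighom0} on $\mN^{\rig}_{1,\al}$ with $\EE$ replaced by $\EE^{\rig}$ and $\FF_k$ by $\FF^{\JS}$. I expect the main obstacle to be the self-duality at the $\infty$-categorical level: one must produce not just an abstract equivalence between the square and its $(-)^\vee[2]$-dual, but a coherent such datum compatible with the CY4 structure of $\ov{\mB_k}$, and checking that the symmetry of the left Calabi--Yau pairing matches the symmetrization $\mathbb{i}_q^\vee[2]=\mathbb{i}_q$ required in Definition \ref{def:viradmcl} (including the sign/orientation bookkeeping of §\ref{sec:orconventions}) is the delicate part. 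Everything else — the identification of $\LL_{\pi_{1,\al}}$, the construction of the horizontal map, and the rigidification — is a matter of unwinding definitions and invoking the cited results.
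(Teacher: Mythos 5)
There is a genuine gap, in two places. First, your identification $\LL_{\pi_{1,\al}}\simeq \RHom_p\big((1,0)\otimes\mV,\mE\big)^\vee[-1]$ is not correct as stated: the projection to $\mM_{\al}$ forgets not only the section but also the framing vector space, so the relative cotangent complex carries an extra contribution from the automorphisms of $V$, i.e.\ a $\mV\otimes\mV^*$ term in the stacky direction. This is exactly why the paper routes the construction through the auxiliary projection $\pi_k:\mN_k\to\mM_{\Vect}\times\mM_{\mA_k}$, for which $\LL_{\pi_k}$ is the "section-only" complex, and then realizes $\LL_{\pi_{\JS}}$ as the cocone of $\LL_{\pi_k}\to\mV\otimes\mV^*$; the term $\mV\otimes\mV^*\otimes\HH$ with $\HH=H^*(S^3)$ in the $3\times 3$ diagram \eqref{eq:obsonNk} is precisely the bookkeeping for this. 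Without it, your horizontal arrow $\LL_{\pi_{1,\al}}[-1]\to\EE$ is a map out of the wrong object, and the square you would symmetrize is not the one whose $\infty$-Pvp completion recovers $\FF_k$ (restricting to $\FF^{\JS}$ after rigidification).

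Second, the point you yourself flag as "the delicate part" is in fact the entire content of the lemma at the $\infty$-categorical level, and you give no mechanism for it: invoking the bifunctorial CY4 duality only produces the self-dual square up to non-coherent identifications, whereas \eqref{eq:LLrighom0} requires a homotopy-commutative square in $\mD^b\big(\mN^{\rig}_{1,\al}\big)$ together with a homotopy fixed by $(-)^\vee[2]$, since Proposition \ref{prop:sympullback} takes this coherent datum as input. The paper resolves this by a connectivity argument: the relevant $(\Delta^1)^{\times 3}$ diagrams (built from $\mV\otimes\mV^*[-1]\to 0$ mapping to $\LL_{\pi_k}[-1]\to\EE\to\LL_{\pi_k}^\vee[3]$) and their duals can be glued into a $(\Delta^1)^{\times 4}$ diagram because $\Map_{\mD^b(\mN_k)}\big(\mO[-1],\mO[3]\big)$ is $3$-connected, which is what produces the self-dual square \eqref{eq:pairtosheaf} for $\pi_{\JS}$; one then restricts to $\mN_{1,\al}$ and rigidifies via Lemma \ref{lem:rigidifyingPark} (your appeal to \eqref{eq:rigidifyingEE} and $B\GG_m$-equivariance is the right idea for this last step). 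Until you supply an argument of this kind — or some other explicit construction of the coherent self-dual homotopy — the proof is incomplete at its central step.
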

\begin{proof}
    Let $\mM_{\Vect}$ be the moduli stack of complex vector spaces. Then there are the natural projections
\begin{equation}
\label{eq:piJS}
\pi_{\JS}:\mN_k\to\mM_{\mA_k}\,, \qquad \pi_{k}:\mN_k\to \mM_{\Vect}\times \mM_{\mA_k} \end{equation}
  induced by mapping the exact triple $E\to P\to (1,0)\otimes V$ to the object $E$ in $\mA_k$ and the vector space $V$. The complex $\FF_k$ fits into the following $3\times 3$-diagram in $\mD^b(\mN_k)$:
     \begin{equation}
   \label{eq:obsonNk}
    \begin{tikzcd}[column sep =  huge, row sep= large]
  \arrow[d]C({\delta_{k}})[-1]\arrow[r]&\arrow[d]\big(\wt{\FF}^{\JS}\big)^\vee[2]\arrow[r]&\arrow[d]\FF_k\\
  \arrow[d,"{\delta_k}"']\LL_{\pi_k}[-1]\arrow[r,"\psi_k"]&\arrow[d,"{\psi_k^\vee[2]}"]\EE\arrow[r]&\arrow[d]\wt{\FF}^{\JS}\\
  \mV\otimes \mV^*\otimes \HH\arrow[r,"{\delta_k^\vee[2]}"']&\LL_{\pi_k}^\vee[3]\arrow[r]&C\big({\delta_k}\big)^\vee[3]
    \end{tikzcd}\,,
   \end{equation}
   where $\HH = H^*(S^4)[3]$ and $C\big({\delta_k}\big)$ is the cone of $\delta_k$. 

   There is a $(\Delta^{1})^{\times 3}$ diagram 
$$
        \begin{tikzcd}[row sep=small, column sep=small]
&\B{\mV\otimes \mV^*[-1]}\arrow[dd, blue]\arrow[rr, blue]&&[0.65cm]\B{0}\arrow[dd, blue]\\
            \arrow[dd]\arrow[ur,blue] \LL_{\pi_k}[-1]\arrow[rr]&& \arrow[ur,blue]\EE\arrow[dd]&\\
            &\B{\mV\otimes \mV^*[-1]}\arrow[rr, blue]&&\B{0}
            \\
\arrow[ur,blue] \mV\otimes \mV^*\otimes \HH\arrow[rr]&&\arrow[ur,blue]\LL_{\pi_k}^\vee[3]&
        \end{tikzcd}
 $$
in $\mD^b(\mN_k)$ together with its dual. They can be combined into a $(\Delta^1)^{\times 4}$ diagram because all higher homotopies will have 0 as a target or a source. As $\LL_{\pi_{\JS}}$ is the cocone of $\LL_{\pi_k}\to \mV\otimes \mV^*$, the obstruction theory $\FF_k$ is obtained by applying Proposition \ref{prop:sympullback} to the self-dual $\Delta^1\times \Delta^1$ diagram
 \begin{equation}
 \label{eq:pairtosheaf}
 \begin{tikzcd}
     \arrow[d]\LL_{\pi_{\JS}}[-1]\arrow[r]&\EE\arrow[d]\\
     0\arrow[r]&\LL^\vee_{\pi_{\JS}}[3]\,.
 \end{tikzcd}
 \end{equation}
 To get \eqref{eq:LLrighom0}, one first restricts \eqref{eq:pairtosheaf} to $\mN_{1,\al}$ and then applies Lemma \ref{lem:rigidifyingPark} to get the rigidified version starting from $\EE^{\rig}$.
\end{proof}
\begin{example}
\label{ex:JSpairs}
    As already pointed out in \cite[Example 5.6]{JoyceWC}, there is a simple description of the stability condition in \eqref{eq:barphi} in the case that the quiver $I$ has the simple form
         \begin{center}
            \includegraphics[scale=1]{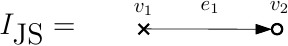}\,.
        \end{center}
For such $I$, it is enough to set $\lambda=0$ and to focus on the classes $(1,\alpha)$ for $\alpha\in \msE(\mA)$. In this case, I denote the stability \eqref{eq:barphi} by $\sigma_{\JS}$.

An object in class $(1,\alpha)$ is $\sigma_{\JS}$-semistable if and only if it is $\sigma_{\JS}$-stable, so there often exists a moduli space $$
 N^{\JS}_{k,\alpha} := N^{\sigma_{\JS}}_{1,\alpha}$$
with a virtual fundamental class $\big[ N^{\JS}_{k,\alpha}\big]^{\vir}$. The objects of $N^{\JS}_{k,\al}$ can be described as pairs of the form $P=\big((1,0)[-1]\xrightarrow{s}E\big)$ by Remark \ref{rem:section}. In this form, the $\sigma^{\JS}$-stability of $P$ is equivalent to the following three conditions:
     \begin{enumerate}
         \item $E$ is $\sigma$-semistable or zero,
         \item $s$ is non-zero if $E$ is,
         \item for every exact sequence $E'\to E\to E''$ in $\mA$ such that $s$ factor through $E'$, the strict inequality $\phi(E')<\phi(E'')$ holds.
     \end{enumerate}
When $\alpha$ and $\sigma$ are such that there are no strictly $\sigma$-semistable objects of class $\alpha$, then $\sigma_{\JS}$-stability only requires that the sections $s$ are non-zero. In particular, the projection 
\begin{equation}
\label{eq:piJSal}
\pi^{\JS}_{\alpha} := \pi^{\sigma}_{1,\alpha}:\begin{tikzcd} N^{\JS}_{k,\alpha}\arrow[r]&\mM^{\rig}_{\alpha}\end{tikzcd}\,,
\end{equation}
which is always smooth of dimension 
$
\chi\big(\alpha(k)\big) - 1\,,
$
becomes a projective bundle in this situation.
\end{example}
There is of course already a naïve obstruction theory $\wt{\FF}_I$ of $\mN_{I_k}$ which was constructed in \cite[Definition 5.5]{JoyceWC}.  From \cite[Definition 5.5]{JoyceWC}, it is known that the openness condition of Assumption \ref{ass:stab}.g) implies that the embeddings $\mN^{\sigma}_{\un{d},\al}\subset \mN_{\Flag_k}$ and $\mN^{\sigma}_{(1,\un{d}),\al}\subset \mN_{\MS_k}$ are open. Therefore $\wt{\FF}_{I}$ determines a naïve obstruction theory on the stacks of semistable objects. Moreover, it follows that $$\LL_{\pi^{\sig}_{\un{d},\al}}=\LL_{\pi_{\Flag}}|_{\mN^{\sigma}_{\un{d},\al}} \,,\qquad\LL_{\pi^{\sig}_{(1,\un{d}),\al}} = \LL_{\pi_{\MS}}|_{\mN^{\sigma}_{(1,\un{d}),\al}}$$
are vector bundles. I now recall the explicit form of $\wt{\FF}_I$. 

The universal vector space at each vertex $v_i$ of $\mathring{I}$ is denoted by $\mV_i$ with the notation $\mV_{\times}$ reserved for the one at the connecting vertex. Due to the definition of $\mB_{I_k}$, there is an isomorphism $\mV_{\times}\cong \mV$ on $\mN_{I_k}$ where $\mV$ is as in Remark \ref{rem:section}. For each edge $e$ of $\mathring{I}$ with the vertex at its tail $t(e)$ and the one at its head $h(e)$, I will write 
$$
\mathfrak{m}_e: \mV_{t(e)}\longrightarrow \mV_{h(e)}
$$
for the corresponding universal morphism. I will, furthermore, introduce the notation 
$$
\FG{\mV_r}:= \RHom_{\mN_{I_k}}\big((1,0)_k,\mE\big)[1]\,.
$$
The coloring is added to distinguish the \FG{\textit{r'th universal vector space}} from $\mV_i$ for $i<r$. The universal morphism 
$$
\begin{tikzcd}
\mathfrak{m}_{e_{r-1}}:\mV_{r-1}\arrow[r]&\FG{\mV_r}
\end{tikzcd}
$$
is induced by \eqref{eq:universaltriple}.

For $I=I_{\MS}$, one can write the relative obstruction theory along $\pi_{I}$ as
\begin{equation}
\label{eq:LNQk}
\begin{tikzcd}
\LL_{\pi_{I}} = \Big(\Cya{\bigoplus_{v_i\in \mathring{\Ver}}\mV_i^*\otimes \mV_i}\arrow[r,"\textnormal{ad}(\mathfrak{m})"]&[0.5cm]\mV^*_{\times}\otimes \FG{\mV_r}\oplus \bigoplus_{e\in \mathring{\Edg}}\mV_{t(e)}^*\otimes \mV_{h(e)}\arrow[r]& \BuOr{\mV_{l-1}^*\otimes \mV_0}\Big)^\vee
\end{tikzcd}
\end{equation}
with \Cya{degree $1$} terms colored in cyan, degree 0 terms in black, and \BuOr{degree $-1$} terms in orange. The first non-zero map is given by 
\begin{equation}
\label{eq:adm}
\textnormal{ad}(\Fm) = \big(\circ \Fm_e - \Fm_e\circ\big)_{e\textnormal{ of }I},
\end{equation}
while the second one is determined by Lemma \ref{lem:cotangentofQ}. The expression looks the same for $I =I_{\Flag}$, except that there is no \BuOr{degree $-1$} term. The classical obstruction theory on $\mN_{I_k}$ is determined by the natural distinguished triangle
\begin{equation}
\label{eq:wtFFI}
\begin{tikzcd}
\LL_{\pi_{I}}[-1]\arrow[r]&\EE\arrow[r]&\wt{\FF}_{I}
\end{tikzcd}
\end{equation}
in $\mD^b\big(\mN_{\un{d},\al}\big)$. 

The obstruction theory of Assumption \ref{ass:obsonflag} differs from $\wt{\FF}_{I}$ by an extra $\LL^{\vee}_{\pi_{I}}[2]$ term. One part of this assumption will be related to the additivity of the obstruction theories under the direct sum map 
$$
\mu_{\Flag_k}: \mN_{\Flag_k}\times \mN_{\Flag_k}\longrightarrow \mN_{\Flag_k}
$$
which is a generalization of $\mu$ from Definition \ref{def:categoryA}.a). For now, I just recall the simpler version applying to $\wt{F}_{\Flag}$. 
\begin{corollary}
\label{cor:sumobstructiontheory}
Writing $\mV_i$ for $i<r$ and \FG{$\mV_r$} for the universal vector spaces on the first factor of $\mN_{\Flag_k}\times \mN_{\Flag_k}$ and $\mW_j$ for $j<r$ together with \FG{$\mW_r$} for their counterparts on the second factor, I define
    \begin{equation}
\label{eq:ExtNQk}
\begin{tikzcd}
\Theta_{\mN_{\Flag_k}/\mM_{\mA}} = \Big(\Cya{\bigoplus_{i=1}^{r-1}\mV_i^*\otimes \mW_i} \arrow[r,"\textnormal{ad}(\mathfrak{m})"]& [0.5cm] \mV^*_{\times}\otimes \FG{\mW_r}\oplus\bigoplus_{i=1}^{r-2}\mV_i^*\otimes \mW_{i+1} \Big)^\vee
\end{tikzcd}
\end{equation}
as the dual of the cone of $\textnormal{ad}(\mathfrak{m})$ from \eqref{eq:adm}.  Precomposition with $\mathfrak{s}$ from Remark \ref{rem:section} induces
$$
\begin{tikzcd}
\Theta_{\mN_{\Flag_k}/\mM_{\mA}}[-1]\arrow[r]&\Theta_{\mA}
\end{tikzcd}
$$
the cone of which will be labeled by $\wt{\Theta}_{\Flag}$. Then, the following decomposition hols:
     \begin{equation}
\label{eq:rigidifiedsumobs}
\mu_{\Flag_k}^*\Big(\wt{\FF}_{\Flag}\Big) = \wt{\FF}_{\Flag}\boxplus \wt{\FF}_{\Flag}\oplus \wt{\Theta}_{\Flag}\oplus \sigma^*\wt{\Theta}_{\Flag}\,.
    \end{equation}
\end{corollary}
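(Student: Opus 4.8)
\textbf{Proof proposal for Corollary \ref{cor:sumobstructiontheory}.}
The plan is to derive \eqref{eq:rigidifiedsumobs} from the analogous additivity of the obstruction theory $\EE$ on $\mM_{\mA}$ stated in \eqref{eq:sumEE}, combined with the defining distinguished triangle \eqref{eq:wtFFI} for $\wt{\FF}_{\Flag}$ and the explicit description \eqref{eq:LNQk} of the relative cotangent complex $\LL_{\pi_{\Flag}}$. The key point is that everything in sight is built functorially out of the universal object $\mE$ on $\mM_{\mA}$ and the universal linear-algebra data $(\mV_i, \Fm_e)$ of the quiver $\mathring{I}_{\Flag}$, so pulling back along $\mu_{\Flag_k}$ amounts to replacing the universal object on the target by the direct sum of the two universal objects on the factors. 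First I would record the behavior of $\LL_{\pi_{\Flag}}$ under $\mu_{\Flag_k}$: since $\mu_{\Flag_k}$ covers $\mu$ on $\mM_{\mA}$ and acts on the quiver data by taking direct sums of vector spaces at each vertex (the diagonal action), the complex \eqref{eq:LNQk} — which is the dual of a two-term complex $\big(\bigoplus_i \mV_i^*\otimes\mV_i \to \mV_\times^*\otimes\FG{\mV_r}\oplus\bigoplus_e \mV_{t(e)}^*\otimes\mV_{h(e)}\big)$ — decomposes upon substituting $\mV_i \mapsto \mV_i\oplus\mW_i$ and $\FG{\mV_r}\mapsto \FG{\mV_r}\oplus\FG{\mW_r}$ into the ``diagonal'' part $\LL_{\pi_{\Flag}}\boxplus\LL_{\pi_{\Flag}}$ plus the two off-diagonal pieces, which are exactly $\Theta_{\mN_{\Flag_k}/\mM_{\mA}}[-1]$ and its transpose $\sigma^*\Theta_{\mN_{\Flag_k}/\mM_{\mA}}[-1]$ (here the absence of the degree $-1$ term in the $I_{\Flag}$ case is what makes these genuinely the complexes written in \eqref{eq:ExtNQk}).

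The second step is to assemble the pieces. Pulling back the triangle \eqref{eq:wtFFI} along $\mu_{\Flag_k}$ and using that $\mu_{\Flag_k}^*\pi_{\Flag}^* = \pi_{\Flag,1}^*\boxplus \pi_{\Flag,2}^*$ composed with $\mu^*$ on $\mM_{\mA}$, one gets a triangle $\mu_{\Flag_k}^*\LL_{\pi_{\Flag}}[-1]\to \mu^*\EE \to \mu_{\Flag_k}^*\wt\FF_{\Flag}$. Now substitute the decomposition \eqref{eq:sumEE} of $\mu^*\EE$ and the decomposition of $\mu_{\Flag_k}^*\LL_{\pi_{\Flag}}$ from the previous step. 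Matching up the ``diagonal'' summand $\EE\boxplus\EE$ against $(\LL_{\pi_{\Flag}}\boxplus\LL_{\pi_{\Flag}})[-1]$ via the boxed sum of two copies of \eqref{eq:wtFFI} produces the $\wt\FF_{\Flag}\boxplus\wt\FF_{\Flag}$ term; the remaining summands $\Theta_{\al,\be}\oplus\sigma^*\Theta_{\be,\al}$ of $\mu^*\EE$, together with the off-diagonal pieces $\Theta_{\mN_{\Flag_k}/\mM_{\mA}}[-1]\oplus\sigma^*\Theta_{\mN_{\Flag_k}/\mM_{\mA}}[-1]$ of $\mu_{\Flag_k}^*\LL_{\pi_{\Flag}}$, fit into a triangle whose cone is precisely $\wt\Theta_{\Flag}\oplus\sigma^*\wt\Theta_{\Flag}$, using the definition of $\wt\Theta_{\Flag}$ as the cone of $\Theta_{\mN_{\Flag_k}/\mM_{\mA}}[-1]\to\Theta_{\mA}$ and the symmetry-compatibility $\sigma^*\Theta_{\be,\al}=\Theta_{\al,\be}^\vee[2]$ already used in §\ref{sec:assab}. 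One should check that the connecting map respects this splitting, which follows because the off-diagonal components of $\LL_{\pi_{\Flag}}$ map into the off-diagonal components of $\mu^*\EE$ under $\mu_{\Flag_k}^*(\psi)$ — precomposition with the universal section $\Fm_{e_{r-1}}$ landing in $\RHom(\mE_1,\mE_2)$-type terms — exactly as encoded in the map ``$\Theta_{\mN_{\Flag_k}/\mM_{\mA}}[-1]\to\Theta_{\mA}$'' of the statement.

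The last step is a bookkeeping check that the direct-sum decomposition is compatible on the nose (not merely up to an undetermined automorphism), which is what one needs for the later sign analysis. For this I would invoke the $\infty$-categorical functoriality of cones recalled in the appendix: all the triangles above arise from a single $(\Delta^1)^{\times n}$ diagram of perfect complexes on $\mN_{\Flag_k}\times\mN_{\Flag_k}$ built from the universal data, and $\mu_{\Flag_k}$ induces a map of such diagrams, so the decomposition \eqref{eq:rigidifiedsumobs} is the image of the canonical decomposition at the level of diagrams. I expect the main obstacle to be precisely this coherence issue — namely, verifying that the splitting of $\mu_{\Flag_k}^*\LL_{\pi_{\Flag}}$ and of $\mu^*\EE$ can be chosen simultaneously compatibly with the morphism $\psi$ between them, so that the induced splitting of the cone is canonical. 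Everything else is a direct unwinding of \eqref{eq:LNQk}, \eqref{eq:sumEE}, and \eqref{eq:wtFFI}; the substance is entirely in organizing the off-diagonal terms and confirming that $\Theta_{\mN_{\Flag_k}/\mM_{\mA}}$ as written in \eqref{eq:ExtNQk} is exactly the off-diagonal part of $\mu_{\Flag_k}^*\LL_{\pi_{\Flag}}$ after the substitution $\mV_i\mapsto\mV_i\oplus\mW_i$.
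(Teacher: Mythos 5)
Your proposal is correct and follows exactly the route the paper intends: the corollary is stated without a separate proof, as an immediate consequence of the explicit form \eqref{eq:LNQk} of $\LL_{\pi_{\Flag}}$, the decomposition \eqref{eq:sumEE} of $\mu^*\EE$, and the defining triangle \eqref{eq:wtFFI} — which is precisely your unwinding, including the observation that the splittings are induced compatibly from the universal data. One bookkeeping slip to fix: after substituting $\mV_i\mapsto\mV_i\oplus\mW_i$, the off-diagonal summand of $\mu_{\Flag_k}^*\LL_{\pi_{\Flag}}$ is $\Theta_{\mN_{\Flag_k}/\mM_{\mA}}$ itself (it sits in the same degrees as $\LL_{\pi_{\Flag}}$, by \eqref{eq:ExtNQk}), not $\Theta_{\mN_{\Flag_k}/\mM_{\mA}}[-1]$; the shift $[-1]$ enters only through the first term of the triangle \eqref{eq:wtFFI}, and it is exactly this shift that makes the off-diagonal cone equal $\wt{\Theta}_{\Flag}=\Cone\big(\Theta_{\mN_{\Flag_k}/\mM_{\mA}}[-1]\to\Theta_{\mA}\big)$ as required for \eqref{eq:rigidifiedsumobs}, whereas with the shift placed as in your first paragraph the cone would come out as $\Cone\big(\Theta_{\mN_{\Flag_k}/\mM_{\mA}}[-2]\to\Theta_{\mA}\big)$.
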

\begin{remark}
\label{rem:d1d2}
When taking decomposing $(\un{d},\al)$ for a fixed length $r$ of the quiver $I_{\Flag}$ into summands $(\un{d}_1,\al_1)$ and $(\un{d}_2,\al_2)$, I will always relabel the components of $\un{d}_1$ and $\un{d}_2$ after removing the sequence of zeros before the first non-zero entry. In other words, I will shorten the length of both $\un{d}_1$ and $\un{d}_2$ such that $(d_1)_1\neq 0 \neq (d_2)_1$.
\end{remark}

    The next assumption leads to the appropriate CY4 obstruction theories on $N^{\sigma}_{\un{d},\alpha}$ and $N^{\sigma}_{(1,\un{d}),\alpha}$ used in proving wall-crossing. Unfortunately, this assumption is too restrictive in the case of sheaves when the existence of such obstruction theories is not guaranteed. The follow-up work \cite{BKLT} will address this issue by proving the assumption below on a yet larger space than $N^{\sigma}_{\un{d},\alpha}$ or $N^{\sigma}_{(\un{d},1),\alpha}$. 
    
    Note that the assumption itself only constructs obstruction theories on the unrigidified moduli stacks $
    \mN^{\sig}_{\un{d},\al}$ and $\mN^{\sig}_{(1,\un{d}),\al}
    $. This is to make the formulation of the analog of Corollary \ref{cor:sumobstructiontheory} simpler. By applying \eqref{eq:rigidifyingEE}, this induces obstruction theories on the rigidification.
    \begin{assumption}
\label{ass:obsonflag}
\begin{enumerate}[label=\alph*)]
Fix a dimension vector $(1,\un{d})$ such that 
\begin{equation}
\label{eq:ovdd0}
d_i \leq  d_{i+1}\leq d_{i}+1\qquad \text{for}\quad 1\leq i\leq r-2 \,,\qquad d_1 = 1\,.
\end{equation}
For any $\al\in E(\mA)$, $k\in K$,  $\sig\in W_{\al, k}$, and $(\mu_0,\un{\mu})$, $\lambda$ as in \eqref{eq:muicond}, the following should hold. 
\item When there are no strictly semistables for the stability condition \eqref{eq:barphi}, the fixed point loci $\Big(N^{\sigma^\lambda_{\mu}}_{\un{d},\alpha}\Big)^{\T}$ are proper and satisfy the $\T$-equivariant resolution property. 

Let $\tau:\T\to \T\times \GG_m$ be an inclusion such that $\pi_1\circ \tau = \id_{\T} $ for the projection $\pi_1: \T\times\GG_m\to \T$. Then $\Big(N^{\sigma^\lambda_{\mu}}_{(1,\un{d}),\alpha}\Big)^{\tau(\T)}$ are proper and satisfy the $\T\times \GG_m$-equivariant resolution property.
 \item For the morphism $\pi^{\sigma}_{\un{d}/1,\alpha}: \mN^{\sigma}_{\un{d},\al}\to \mN_{1,\alpha}$, there is a homotopy commutative diagram 
 \begin{equation}
 \label{eq:flagobsdiag}
 \begin{tikzcd}
     \arrow[d]\LL_{\pi^{\sigma}_{\un{d}/1,\alpha}}[-1]\arrow[r]&\FF_k\arrow[d]\\
     0\arrow[r]&\LL^\vee_{\pi^{\sigma}_{\un{d}/1,\alpha}}[3]
 \end{tikzcd}\,.
 \end{equation}
 By Theorem \ref{thm:functsympull} and by \eqref{eq:pairtosheaf}, this induces a homotopy commutative diagram 
\begin{equation}
\label{eq:Ndalobstruction}
 \begin{tikzcd}
     \arrow[d]\LL_{\pi^{\sigma}_{\un{d},\al}}[-1]\arrow[r]&\EE\arrow[d]\\
     0\arrow[r]&\LL^\vee_{\pi^{\sigma}_{\un{d},\al}}[3]
 \end{tikzcd}
 \end{equation}
 producing, by Proposition \ref{prop:sympullback}, a CY4 obstruction theory $\FF^{\sigma}_{\un{d},\al}$ of $\mN^{\sigma}_{\un{d},\al}$. The homotopies from \eqref{eq:flagobsdiag} should be chosen in a compatible way with respect to $\mu_{\Fl_k}$ in the following sense.
 
 Let $\alpha = \alpha_1+\alpha_2$ and $\un{d} = \un{d}_1+\un{d}_2$ where $\alpha_i\neq 0$ and $\un{d}_i$ satisfy \eqref{eq:ovdd0} for their respective lengths explained in Remark \ref{rem:d1d2}. Furthermore, fix $\lambda_i$ for $i=1,2$ and denote the resulting stability conditions from \eqref{eq:barphi} by $\sig_1$ and $\sigma_2$ respectively. If $$\mu_{\Flag_k}\Big(\mN^{\sig_1}_{\un{d}_1,\alpha_1}\times \mN^{\sig_2}_{\un{d}_2,\alpha_2}\Big)\subset \mN^{\sig}_{\un{d},\al}\,,$$ then there should exist a homotopy commutative diagram 
 \begin{equation}
    \label{eq:extendTheta}
    \begin{tikzcd}
   \arrow[d] \Theta_{\mN_{\Flag_k}/\mM_{\mA}}[-1]\arrow[r]&\Theta_{\mA}\arrow[d]\\
   0\arrow[r]&\sigma^* \Theta_{\mN_{\Flag_k}/\mM_{\mA}}^\vee[3]
    \end{tikzcd}
    \end{equation}
 in $\mD^b\Big(\mN^{\sig_1}_{\un{d}_1,\alpha_1}\times \mN^{\sig_2}_{\un{d}_2,\alpha_2}\Big)$ producing a complex $\Theta_{\Flag_k}$ by applying the idea of Proposition \ref{prop:sympullback}. This should then satisfy
 \begin{equation}
 \label{eq:additivityofFFflag}
\mu_{\Flag_k}^*\Big(\FF^{\sig}_{\un{d},\al}\Big) = \FF^{\sig_1}_{\un{d}_1,\alpha_1}\boxplus \FF^{\sig_2}_{\un{d}_2,\alpha_2}\oplus \Theta_{\Flag}\oplus \sigma^*\Theta_{\Flag}\,.
 \end{equation}
 I omit specifying the appropriate restriction of $\Theta_{\Flag}$.
 \item  For the projection $ \PP^{\sigma}_{\un{d},\al}:   \mN^{\sig}_{(1,\un{d}),\al}\to\mN^{\sigma}_{\un{d},\al}$, there should exist a homotopy commutative diagram 
 \begin{equation}
 \label{eq:MSobsdiag}
 \begin{tikzcd}
     \arrow[d]\LL_{\PP^{\sigma}_{\un{d},\al}}[-1]\arrow[r]&\FF^{\sig}_{\un{d},\al}\arrow[d]\\
0\arrow[r]&\LL^\vee_{\PP^{\sigma}_{\un{d},\al}}[3]
 \end{tikzcd}\,.
 \end{equation}
 The result of applying Proposition \ref{prop:sympullback} to it will be denoted by $\FF^{\sig}_{(1,\un{d}),\alpha}$.
    \end{enumerate}
\end{assumption}
Using Lemma \ref{lem:rigidifyingPark}.i) produces the rigidified version of all of the symmetrized $\infty$-pullback diagrams in the above assumption. Furthermore, this rigidification is compatible with applying Theorem \ref{thm:functsympull} because of Lemma \ref{lem:rigidifyingPark}.ii). In particular, there is the self-dual diagram 
\begin{equation}
 \label{eq:flagobsdiagrig}
 \begin{tikzcd}
     \arrow[d]\LL_{\pi^{\sigma}_{\un{d}/1,\alpha}}[-1]\arrow[r]&\FF^{\JS}\arrow[d]\\
     0\arrow[r]&\LL^\vee_{\pi^{\sigma}_{\un{d}/1,\alpha}}[3]
 \end{tikzcd}\,.
 \end{equation}
 on $\big(\mN^{\sig}_{\un{d},\al}\big)^{\rig}$ which together  with \eqref{eq:LLrighom0} induces
 \begin{equation}
 \label{eq:Ndalrigobstruction}
 \begin{tikzcd}
     \arrow[d]\LL_{\pi^{\sigma}_{\un{d},\al}}[-1]\arrow[r]&\EE^{\rig}\arrow[d]\\
     0\arrow[r]&\LL^\vee_{\pi^{\sigma}_{\un{d},\al}}[3]
 \end{tikzcd}\,.
 \end{equation}
 Using Proposition \ref{prop:construct} with the above diagram gives rise to the obstruction theory $\FF^{\rig}_{\un{d},\al}$ on $\big(\mN^{\sig}_{\un{d},\al}\big)^{\rig}$ which is the rigidification of $\FF^{\sig}_{\un{d},\al}$ in the sense of \eqref{eq:rigidifyingEE}.  Due to Assumption \ref{ass:obsonflag}.a) and Example \ref{ex:mMvir}, there are virtual fundamental classes 
 $$
\Big[N^{\sig}_{\un{d},\al}\Big]^{\vir}\in H^{\T}_*\Big(N^{\sig}_{\un{d},\al}\Big)_{\loc}
 $$
 in the absence of strictly semistables. The orientation used to construct them is taken from Definition \ref{def:orpullback} applied to \eqref{eq:Ndalrigobstruction}.

 A similar discussion applies to $\PP^\sig_{\un{d},\al}$ and \eqref{eq:MSobsdiag}, which gives rise to the self-dual diagram
 $$
 \begin{tikzcd}
     \arrow[d]\LL_{\pi^{\sigma}_{(1,\un{d}),\alpha}}[-1]\arrow[r]&\EE^{\rig}\arrow[d]\\ 0\arrow[r]&\LL^\vee_{\pi^{\sigma}_{(1,\un{d}),\alpha}}[3]\,.
 \end{tikzcd}
 $$
\subsection{Invariants counting semistable objects}
\label{sec:InvDef}
Here, I introduce the invariants counting semistable objects following the approach of \cite{JoyceWC} based on \cite{mochizuki}. Their construction will depend on the choice of $k\in K$, so it is of import to show that the result is independent of $k$,  especially in cases of independent interest like Gieseker or slope semistable sheaves. This was presented as Problem (II) in §\ref{sec:introWCeqhom}, while Remark \ref{rem:welldefinednotneeded} mentioned situations when addressing this problem is not necessary.

Fix an indexing set $K$ with subcategories $\{\mA_k\}_{k\in K}$ of $\mA$ as in Definition \ref{Def:quiverpairs}. The motivation behind the definition of the invariants $\langle \mM^\sig_\al\rangle$ stems from the Joyce--Song wall-crossing formula conjecture in \cite[(4.4)]{GJT}. It is formulated using the natural extension of the vertex algebra structure to the homology of $\mN_k$. In Definition \ref{Def:FlagVA} later, it will be generalized to include $\mN_{\Flag_k}$ as I prefer to avoid complicating the exposition of the present subsection. 
\begin{definition}[{\cite[Definition 2.6 and 2.7]{bojko2}}]
\label{def:NkVA}
The following set of data is defined for any $k\in K$.
\begin{itemize}
\item Let $\mu_k,\rho_k$ be the natural extensions of $\mu,\rho$ from \eqref{eq:murho}. 
\item I will use $\mV$ and $\mE$ to denote the universal vector space and the universal object of $\mA$ for the first factor of $\mN_k\times \mN_k$. The universal objects for the second copy of $\mN_k$ will be labeled by $\mW$ and $\mF$. Furthermore, set
\begin{align*}
\Theta_k =& -(\pi\times\pi)^*\Ext^\vee - 2\mV\otimes \mW^* \\&- \mV\otimes \RHom_{\mN_k}\big((1,0),\mF\big)^* - \mW^*\otimes \RHom_{\mN_k}\big((1,0),\mE\big)
\end{align*}
as a K-theory class on $\mN_k\times \mN_k$.
\item Using the expression $\chi\big(\al(k)\big)$ from \eqref{eq:chialk}, define the pairing 
$$
\begin{tikzcd}
\chi_k: \big(\ZZ\times \ov{K}(\mA)\big)^{\times 2}\arrow[r]&\ZZ
\end{tikzcd}
$$
by
$$
\chi_k\big((d,\al),(e,\be)\big) = \chi\big(\al,\be\big) -d\chi\big(\beta(k)\big) - e\chi\big(\al(k)\big) + 2de\,.
$$
\item Introduce the signs 
$$
\varepsilon^k_{(d,\alpha),(e,\beta)} = (-1)^{d\chi(\beta(k))}\varepsilon_{\al,\beta}\qquad \textnormal{for}\quad (d,\al),(e,\be)\in \ZZ\times \ov{K}(\mA)\,.
$$
Note that they satisfy the analog of \eqref{eq:epsidentity} and 
$$
\varepsilon^k_{(d,\al),(e,\beta)} = (-1)^{\chi_k((d,\al),(e,\beta))}\varepsilon_{(e,\beta),(d,\al)}
$$
because of \eqref{eq:symsign}.
\end{itemize}
The vertex algebra on
$$
W^k_* =  H_{*+\vdim}(\mN_k)
$$
and its localized version $W^k_{\loc,*}$ are constructed by following Definition \ref{Def:VAlocal} or \ref{Def:VAglobal} except that the state-field correspondence is determined by
\begin{align*}
    Y(v,z)w &= (-1)^{a\chi_k((d,\al),(e,\beta))}\varepsilon^k_{(d,\al),(e,\beta)} \,\mu_\ast\bigg((e^{zT}\otimes \textnormal{id})\frac{v\boxtimes w}{z^{-\chi_k((d,\al),(e,\be))}c_{z^{-1}}(\Theta_{k})}\bigg)
    \end{align*}
    for $v\in H_a(\mN_{d,\al})$ and $w\in H_*(\mN_{e,\beta})$. When working $\T$-equivariantly, one should use $\boxtimes^{\T}$ from \eqref{eq:Kunnethlocal} or \eqref{eq:equivKunneth} instead.
\end{definition}

When $\al\in \msE(\mA)$ and $\sig\in W_{\al,k}$, the wall-crossing formula for Joyce--Song pairs that was used in \cite{bojko2, bojko3} for CY fourfolds takes the form
\begin{equation}
\label{eq:JSWC}
\big[N^{\JS}_{k,\al}\big]^{\vir} = \sum_{\begin{subarray}{c}
        \un{\alpha}\,\vdash_{\mA} \alpha\,, \\
          \phi(\alpha_i) =\phi(\alpha) \end{subarray}}\frac{1}{n!}\Big[\big\langle \mM^\sigma_{\alpha_n}\big\rangle, \cdots \Big[\big\langle \mM^\sigma_{\alpha_1}\big\rangle,  e^{(1,0)}_k\Big]\cdots\Big]
\end{equation}
in $W^k_{*+2}/T(W^k_*)$. Here $e^{(1,0)}_k$ is the point class of $\{(1,0)_k\}\in \mN_k$ and one considers $\mM_{\mA_k}$ as a substack of $\mN_k$. For Behrend--Fantechi obstruction theories, the conditions necessary for this formula to hold were checked in \cite[Appendix A]{bojko3}.  

Ideally, one would define the invariants appearing on the right hand side of \eqref{eq:JSWC} by induction on $\rk_{\sig}(\al)$. However, it is not self-evident that they exist.
One can use the following lemma to state the problem differently.
\begin{lemma}
\label{lem:pushJSandinj}
\leavevmode
\vspace{-4pt}
   \begin{enumerate}[label=\roman*)]
       \item  The formula \eqref{eq:JSWC} implies
  \begin{equation}
        \label{eq:Masigdef}
        \blangle \mM^{\sig}_{\alpha}\brangle = \Omega^{\sig}_{\alpha} - \sum_{\begin{subarray}{c}
            \un{\alpha}\,\vdash_{\mA} \alpha \,, n\geq 2\,,\\
          \bph(\alpha_i) =\bph \end{subarray}}\frac{\chi(\al_1(k))}{n!}\Big[\blangle\mM^\sig_{\al_n}\big\rangle,\Big[\cdots\Big[\big\langle\Masit\big\rangle,\langle\Masio\big\rangle \Big]\cdots \Big]\Big]\,.
    \end{equation}
    where 
\begin{equation}
\label{eq:Omegasigal}\chi\big(\alpha(k)\big)\,\Omega^{\sig,k}_{\alpha}=\big(\pi^{\JS}_{\alpha}\big)_*\Big(\Big[N^{\JS}_{k,\alpha}\Big]^{\vir}\cap c_{\chi(\alpha(k))-1}\big(T_{\pi_{\mA_k}} \big)\Big)\,.
\end{equation}
     \item The map 
     \begin{equation}
     \label{eq:brackinj}
    \begin{tikzcd} \Big[-,e^{(1,0)}_k\Big]: H_{*}\Big(\mM^{\rig}_{\mA_k}\cap \mM^{\rig}_{\al}\Big)\arrow[r]&H_*\Big(\mN^{\rig}_k\Big)\end{tikzcd}
     \end{equation}
     is injective whenever $\al\notin \Ker(\chi)$.
   \end{enumerate}
\end{lemma}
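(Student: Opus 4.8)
\textbf{Proof plan for Lemma \ref{lem:pushJSandinj}.}

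\emph{Part i).} The plan is to start from the wall-crossing formula \eqref{eq:JSWC} and apply the pushforward $(\pi^{\JS}_\al)_*\big((-)\cap c_{\chi(\al(k))-1}(T_{\pi_{\mA_k}})\big)$ to both sides, where $\pi^{\JS}_\al\colon N^{\JS}_{k,\al}\to \mM^{\rig}_\al$ is the projective bundle of Example \ref{ex:JSpairs} (smooth of relative dimension $\chi(\al(k))-1$, in the absence of strictly semistables). The left-hand side becomes $\chi(\al(k))\,\Omega^{\sig,k}_\al$ by the very definition \eqref{eq:Omegasigal}. For the right-hand side, the key point is that the term indexed by the trivial partition $\un\al=(\al)$, namely $\big[\langle\mM^\sig_\al\rangle, e^{(1,0)}_k\big]$, is exactly the image of $\langle\mM^\sig_\al\rangle$ under the bracket $[-,e^{(1,0)}_k]$, and I would identify its image under the pushforward-with-Euler-class operation with $\chi(\al(k))\langle\mM^\sig_\al\rangle$; this uses the compatibility of the vertex-algebra bracket on $W^k_*$ with the projective-bundle structure of $N^{\JS}_{k,\al}$ over $\mM^{\rig}_\al$ — precisely the computation that the coefficient $\chi(\al_1(k))$ appears in \eqref{eq:Masigdef} and not $1$. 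For the partitions of length $n\ge 2$, one reorganizes the iterated bracket: pairing the innermost $[\langle\mM^\sig_{\al_1}\rangle, e^{(1,0)}_k]$ first and then pushing forward produces the factor $\chi(\al_1(k))/n!$ and the nested bracket $\big[\langle\mM^\sig_{\al_n}\rangle,[\cdots[\langle\mM^\sig_{\al_2}\rangle,\langle\mM^\sig_{\al_1}\rangle]\cdots]\big]$. Rearranging \eqref{eq:JSWC} to isolate the $n=1$ term then gives \eqref{eq:Masigdef} directly. I expect the bookkeeping of signs $\varepsilon^k$, the factors $(-1)^{a\chi_k}$, and the interplay between $\boxtimes$ and the cap product with $c_{z^{-1}}(\Theta_k)$ to be the fiddly part; the structural content is just "apply $\pi^{\JS}_*$ and read off coefficients."

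\emph{Part ii).} Here the plan is to compute the bracket $[-,e^{(1,0)}_k]$ explicitly using \eqref{eq:LiefromVA} and the state-field correspondence of Definition \ref{def:NkVA}. Since $e^{(1,0)}_k$ is the point class of the object $\{(1,0)_k\}\in\mN_k$ of class $(1,0)$, the relevant map $\mu_k$ restricted to $\mM^{\rig}_\al\times\{(1,0)_k\}$ is an isomorphism onto its image inside $\mN^{\rig}_k$ (adding the fixed framing object to a sheaf), so $\mu_{k,*}$ contributes no loss of information. The bracket $[\ov v, e^{(1,0)}_k]$ is then the residue $[z^{-1}]$ of $\mu_{k,*}\big((e^{zT}\otimes\id)(v\boxtimes e^{(1,0)}_k)/z^{-\chi_k}c_{z^{-1}}(\Theta_k)\big)$, and the leading coefficient $z^{-\chi_k((0,\al),(1,0))}$ of $c_{z^{-1}}(\Theta_k)$, restricted along $\mM^{\rig}_\al\times\{(1,0)_k\}$, is (up to sign) $z^{\chi(\al(k))}$ times a unit — note $\chi_k((0,\al),(1,0)) = -\chi(\al(k))$ — so after dividing and extracting the residue one gets $\pm\langle$ the identity on $H_*(\mM^{\rig}_\al)$ composed with $\mu_{k,*}\rangle$ plus lower-order terms $e^{zT}$-corrections which all lie in $TH_*(\mN^{\rig}_k)$ and hence vanish in the quotient, or are higher in $z$. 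The hypothesis $\al\notin\Ker(\chi)$ is used exactly to invoke the isomorphism \eqref{eq:Lalphaiso} (equivalently \cite[Proposition 3.24.b)]{Joycehall}) so that the quotient by $T$ does not collapse the relevant homology. Injectivity then follows because the top-degree-in-$z$ piece of $Y(v,z)e^{(1,0)}_k$ recovers $v$ up to a sign and the injective pushforward $\mu_{k,*}$.

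\textbf{Main obstacle.} I expect the principal difficulty in part i) to be the careful extraction of the coefficient $\chi(\al_1(k))$ (rather than $\chi(\al(k))$) from the pushforward of the iterated bracket: this requires knowing precisely how the cap product with $c_{\chi(\al(k))-1}(T_{\pi_{\mA_k}})$ interacts with the definition of the bracket on $W^k_*$ when one of the slots is built by iteratively attaching the framing class, i.e. tracking which universal vector bundle $\mV$ versus $\mV_i$ contributes the Chern-class factor at each stage. In part ii) the obstacle is milder but real: one must check that all the $e^{zT}$-tail corrections genuinely land in $T(W^k_*)$ so that they die in $W^k_{*+2}/T(W^k_*)$, which is where the residue/Lie-bracket formalism and the $B\GG_m$-action bookkeeping must be handled with care. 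Neither obstacle is conceptual; both are careful instances of the vertex-algebra calculus already set up in §\ref{sec:equivariantVA} and in \cite[§2]{bojko2}.
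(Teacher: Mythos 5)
Your plan for part i) follows the paper's own route (cap \eqref{eq:JSWC} with $c_{\chi(\al(k))-1}\big(T_{\pi^{\JS}_{\al}}\big)$ and push forward along $\pi^{\JS}_{\al}$), but the two points you set aside as "fiddly bookkeeping" are the entire content of the proof, and your plan presupposes both. First, the identity you "would identify", namely
\[
(\pi_{\JS})_*\Big(\big[\ov{v},e^{(1,0)}_k\big]\cap c_{\chi(\al(k))-1}\big(T_{\pi_{\mA_k}}\big)\Big)=(-1)^{a\chi(\al(k))}\chi\big(\al(k)\big)\,\ov{v}\,,
\]
is precisely \eqref{eq:bracketpushdown}; it is not a formal consequence of the projective-bundle structure but is established by the explicit manipulation \eqref{eq:bracketpushcomp} of the state-field correspondence (using $\mu^*(\mV^*\otimes\FG{\mV_k})=\mW^*\otimes\FG{\mV_k}$ on $\mM_{\al}\times\mN_{1,0}$, the derivative identity for $\rho^*c_{\chi(\al(k))-1}$, and \eqref{eq:tzconvolution}); without it you get neither the $n=1$ coefficient $\chi(\al(k))$ nor the inner coefficient $\chi(\al_1(k))$. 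Second, commuting $\cap\,c_{\chi(\al(k))-1}$ past the iterated bracket is \emph{not} an application of \cite[\S 2.5]{GJT}, which would require the top class $c_{\chi(\al(k))}$: pushing the cap product past the outermost $\mu_*$ produces the two terms $c_{\Rk-1}\big(\mW^*\otimes\FG{\mW_k}\big)\,c_{\Rk}\big(\mW^*\otimes\FG{\mV_k}\big)$ and $c_{\Rk}\big(\mW^*\otimes\FG{\mW_k}\big)\,c_{\Rk-1}\big(\mW^*\otimes\FG{\mV_k}\big)$, and one must show the second dies after $\pi_*$ (in the paper: $\mathscr{L}\cap c_1(\mW)=0$, push-pull, and $\pi_*e^{(1,0)}_k=\ket{0}$) before induction with \eqref{eq:bracketpushdown} converts only the innermost bracket and yields the factor $\chi(\al_1(k))/n!$. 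You correctly flag this as the obstacle, but no mechanism is offered.

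For part ii) your route is genuinely different from the paper's and, as written, it fails. The map \eqref{eq:brackinj} is the single residue mode taken modulo $T$, not the whole field. Since $\chi_k\big((0,\al),(1,0)\big)=-\chi(\al(k))$ and $\Theta_k$ restricts to the rank-$\chi(\al(k))$ bundle $\FG{\mV_k}$, the coefficient of $z^{-1}$ in $Y(v,z)e^{(1,0)}_k$ is a sum of terms $\mu_*\big((T^{\chi(\al(k))-1+i}v)\boxtimes e^{(1,0)}_k\cap(\cdots)\big)$: there is no summand $\mu_{k,*}\big(v\boxtimes e^{(1,0)}_k\big)$ unless $\chi(\al(k))=1$, there is no "top-degree-in-$z$ piece" left once the residue is taken, and the translated terms are not in $T(W^k_*)$ because $T$ applied to one factor does not commute with $\mu_*$; the injectivity of $\mu_{k,*}$ on homology of stacks is likewise asserted, not proved. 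The paper proves ii) instead by exhibiting the explicit left inverse $\ov{w}\mapsto \pi_*\big(\ov{w}\cap c_{\chi(\al(k))-1}(T_{\pi})\big)$, i.e.\ again the identity \eqref{eq:bracketpushdown} together with $\chi(\al(k))>0$. Proving that one identity is therefore the single computation you must supply: it repairs ii) and simultaneously provides the missing input for i).
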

\begin{proof}
Both i) and ii) follow from 
\begin{equation}
\label{eq:bracketpushdown}
 (\pi_{\JS})_*\Big(\Big[\ov{v},e^{(1,0)}_k\Big]\cap c_{\chi(\alpha(k))-1}\big(T_{\pi_{\mA_k}} \big)\Big) = (-1)^{a\chi(\al(k))}\chi\big(\al(k)\big)\ov{v}
\end{equation}
for any $\ov{v}\in H_a(\mM^{\rig}_{\mA_k}\cap \mM^{\rig}_{\al})$ whenever $\al\notin \Ker(\chi)$. To see this, choose a lift $v\in H_a(\mM_{\mA_k})$ of $\ov{v}$. Setting $\pi:= \pi_{\JS}$ for now and 
$$
\FG{\mV_k} :=-
\RHom_{\mN_k}\big((1,0)_k,\mE\big)\,, \qquad \FG{\mW_k} :=-
\RHom_{\mN_k}\big((1,0)_k,\mF\big)\,, $$ one computes the left hand side of \eqref{eq:bracketpushdown} to be equal to $(-1)^{a\chi(\al(k))}\ov{(-)}$ of
\begin{align*}
\label{eq:bracketpushcomp}
&\,[z^{-1}]\bigg\{\pi_*\mu_\ast\bigg((e^{zT}\otimes \textnormal{id})\frac{v\boxtimes e^{(1,0)}_k}{z^{\chi(\al(k))}c_{z^{-1}}\big(\mW^*\otimes \FG{\mV_k}\big)}\bigg)\cap c_{\chi(\al(k))-1}\big(\mV^*\otimes \FG{\mV_k}\big) \bigg\}\\
\stackrel{(1)}{=}\,&[z^{-1}]\bigg\{\pi_*\mu_\ast\bigg[(e^{zT}\otimes \textnormal{id})\bigg(\frac{v\boxtimes e^{(1,0)}_k }{z^{\chi(\al(k))}c_{z^{-1}}\big(\mW^*\otimes \FG{\mV_k}\big)}\bigg)\cap c_{\chi(\al(k))-1}\big(\mW^*\otimes \FG{\mV_k}\big)\bigg] \bigg\}\\
\stackrel{(2)}{=}\,&[z^{-1}]\bigg\{\pi_*\mu_\ast\bigg[(e^{zT}\otimes \textnormal{id})\bigg(\frac{v\boxtimes e^{(1,0)}_k}{z^{\chi(\al(k))}c_{z^{-1}}\big( \FG{\mV_k}\big)}\cap\frac{d}{dz}\Big(z^{\chi(\al(k))}c_{z^{-1}}\big(\FG{\mV_k}\big)\Big)\bigg)\bigg]  \bigg\}\\
\stackrel{(3)}{=}\,&[z^{-1}]\bigg\{e^{zT}\bigg(\frac{v\cap\chi(\al(k))z^{\chi(\al(k))-1}c_{z^{-1}}\big(\FG{\mV_k}\big)}{z^{\chi(\al(k))}c_{z^{-1}}\big( \FG{\mV_k}\big)}\bigg) \bigg\}\,.
\numberthis
\end{align*}
This is clearly equal to the right hand side of \eqref{eq:bracketpushdown}. Each step in the computations holds because
\begin{enumerate}[label=(\arabic*)]
    \item $\mu^*(\mV^*\otimes \FG{\mV_k}) = \mW^*\otimes \FG{\mV_k}$ when restricted to $\mM_{\al}\times \mN_{1,0}$.
    \item $\rho^* c_{\chi(\al(k))-1}\big(\mW^*\otimes \FG{\mV_k}\big) = \frac{d}{d\tau} \Big(\tau^{\chi(\al(k))}c_{\tau^{-1}}\big(\mW^*\otimes \FG{\mV_k}\big)\Big)$ where $\tau$ is the first Chern-class of the universal line bundle on $B\GG_m$. Further, one uses $
e^{zT} =\rho_*\Big(\sum_{j\geq 0}z^jp^j\boxtimes -\Big)$ and
\begin{equation}
\label{eq:tzconvolution}
\sum_{j\geq 0}z^jp^j\cap f(\tau) =\sum_{j\geq 0}p^jz^j  \cdot f(z)\end{equation}
for any formal power-series $f(\tau)$. The latter follows from $p^i\cap \tau^n = t^{i-n}$ for $i\geq n\geq 0$.
\item $\pi_*\mu_*(-\boxtimes e^{(1,0)}_k) = \id_{H_*(\mM_{\mA_k})}$ holds. Additionally, any power of $zT$ leads to 0 after projecting to the quotient, so the term containing $\frac{d}{dz}c_{z^{-1}}(\FG{\mV_k})$ only contains powers of $z$  less than -1.
\end{enumerate}
By \eqref{eq:bracketpushdown}, there is a left inverse of \eqref{eq:brackinj} as $\chi\big(\al(k)\big)> 0$. This proves i). 

To show ii), cap \eqref{eq:JSWC} with $c_{\chi(\al(k))-1}\big(\mV^*\otimes \FG{\mV_k}\big)=e(T_{\pi})$ and push it forward along $\pi$. I claim that this produces 
\eqref{eq:Masigdef}, which could be misconstrued as an application of \cite[§2.5]{GJT} that would require working with $c_{\chi(\al(k))}\big(\mV^*\otimes \FG{\mV_k}\big)$ instead. Because this is similar to the computation in \eqref{eq:bracketpushcomp}, I will only sketch the argument. For a K-theory class $\mK$, the notation $c_{\Rk-a}(\mK)$ will be used to represent the degree $\Rk(\mK)-a$ Chern class of $\mK$, where $\Rk(-)$ computes the rank of a K-theory class.

 Commuting $\mu_*$ appearing in the definition of the outermost bracket of each summand in \eqref{eq:JSWC} with $\cap c_{\chi(\al(k))-1}\big(\mV^*\otimes \FG{\mV_k}\big)$ produces
$$
\cap \,c_{\Rk-1}\big(\mW^*\otimes \FG{\mW_k}\big)\cdot c_{\Rk}\big(\mW^*\otimes \FG{\mV_k}\big) + \cap \,c_{\Rk}\big(\mW^*\otimes \FG{\mW_k}\big)\cdot c_{\Rk-1}\big(\mW^*\otimes \FG{\mV_k}\big)\,.
$$
I claim that the term interacting with the second summand vanishes after applying $\pi_*$. For this, choose lifts $\blangle \mM^{\sig}_{\al_i}\brangle\in H_*(\mM_{\mA_k})$, and represent 
\begin{equation}
\label{eq:pullbackofTpi}
\Big[\big\langle \mM^\sigma_{\alpha_{n-1}}\big\rangle, \cdots \Big[\big\langle \mM^\sigma_{\alpha_1}\big\rangle,  e^{(1,0)}_k\Big]\cdots\Big]\,.
\end{equation}
by a class $\mathscr{L}\in H_*(\mN_k)$ computed in terms of these lifts using \eqref{eq:LiefromVA} but without projecting to the quotient by $T$ in each step. This clearly satisfies $\mathscr{L}\cap c_1(\mW) = 0$, so $c_{\Rk-1}\big(\mW^*\otimes \FG{\mW_k}\big)$ can be replaced by $ c_{\Rk-1}\big(\FG{\mW_k}\big)$. This allows me to use the push-pull formula in (co)homology to rewrite the pushforward along $\pi$ of the term associated to the second summand in \eqref{eq:pullbackofTpi} as an expression containing 
$$
\pi_*\Big(\mL\cap c_{\Rk}\big(\mW_*\otimes \FG{\mW_k}\big)\Big)\,.
$$
To this, one may already apply \cite[§2.5]{GJT} (recalled in more detail in the proof of Theorem \ref{thm:independence!}), so it becomes 
$$\Big[\big\langle \mM^\sigma_{\alpha_{n-1}}\big\rangle, \cdots \Big[\big\langle \mM^\sigma_{\alpha_1}\big\rangle,  \pi_*e^{(1,0)}_k\Big]\cdots\Big] = 0$$
where the Lie bracket is on $L_*$. For the vanishing, I used that $\pi_*e^{(1,0)}_k = \ket{0}$. 

As such, I am left with the term coming from $\cap \,c_{\Rk-1}\big(\mW^*\otimes \FG{\mW_k}\big)\cdot c_{\Rk}\big(\mW^*\otimes \FG{\mV_k}\big)$ which can be rewritten by induction and the above vanishing as an iterated Lie bracket 
$$
\Big[\big\langle \mM^\sigma_{\alpha_{n}}\big\rangle, \cdots \Big[\big\langle \mM^\sigma_{\alpha_2}\big\rangle,  \pi_*\Big(\Big[\big\langle \mM^\sigma_{\alpha_1}\big\rangle,  e^{(1,0)}_k\Big]\cap c_{\Rk-1}\big(\mV^*\otimes \FG{\mV_k}\big)\Big)\Big]\cdots\Big]\,.
$$
The induction step uses a computation just like the one in \eqref{eq:bracketpushcomp} to deal with the factor $c_{\Rk}\big(\mW^*\otimes \FG{\mV_k}\big)$. Applying \eqref{eq:bracketpushdown}  and noting that $a$ is even, since it is the virtual dimension, completes the proof.
\end{proof}

This is why \cite{JoyceWC} uses \eqref{eq:Masigdef} to define the invariants as follows.

\begin{definition}
\label{def:Masik}
Fix $\sig\in W$ and $k\in K$. Let $\msE^{\sig}(\mA_k)\subset \msE(\mA)$ be the set of emergent classes $\alpha$ such that $\sig\in W_{\al,k}$. For a fixed $\phi\in S$ and $\alpha\in \msE^{\sig}(\mA_k)$ such that $\phi(\alpha) = \phi$, define 
    \begin{enumerate}
        \item   the classes
        $$
       \big\langle \Masi\big\rangle^k = \big[M^{\sigma}_{\alpha}\big]^{\vir}\in L_0\,,
        $$
        using Example \ref{ex:mMvir}
        whenever there are no strictly $\sigma$-semistable objects of class $\al$.
        \item the invariants $\Omega^{\sig,k}_{\alpha}\in L_0$ by 
        $$ \chi\big(\alpha(k)\big)\,\Omega^{\sig,k}_{\alpha}=(\pi^{\JS}_{\alpha})_*\Big(\Big[N^{\JS}_{k,\alpha}\Big]^{\vir}\cap c_{\chi(\alpha(k))-1}\big(T_{\pi^{\JS}_{\al}} \big)\Big)$$
         using Example \ref{ex:JSpairs}.
        \item the  classes $\blangle \mM^{\sig}_{\alpha}\brangle^k\in L_0$ for all $\al\in \msE^{\sig}(\mA_k)$ with $\phi(\al) =\phi$ by an induction on $\rk_{\sig}(\al)$ imposing that \eqref{eq:Masigdef} holds. If there are no strictly $\sig$-semistables of class $\al$, this is equivalent to 1) by Theorem \ref{thm:virtualpullpush} and 2). In general, the right-hand side of \eqref{eq:Masigdef} contains only $\blangle\mM^{\sig}_{\alpha_i}\brangle^k$ for $\al_i$ of strictly smaller rank due to Assumption \ref{ass:stab}.e). Thus, every term on the right-hand side is defined by the induction assumption.
    \end{enumerate}
\end{definition}
A few comments about the equivariant version are necessary. The object $(1,0)_k$ was already specified to be $\T$-equivariant. As it is simple, there is a $\ZZ$-choice of its equivariant structure which I always fix to be trivial. This fixes the classes $\Big[N^{\JS}_{k,\alpha}\Big]^{\vir}$ and thus $\Omega^{\sig,k}_{\alpha}$ in a way that the resulting wall-crossing formulae \eqref{eq:MasiWC} contain expressions with compatible equivariant weights. Without the extra requirement, these equations would not hold. Note also that even if we are working with the \textit{local approach}, the relative tangent bundle $T_{\pi^{\JS}_{\alpha}}$ is taken for the usual projection $\pi^{\JS}_{\al}$ from \eqref{eq:piJSal}. The relative tangent bundle for 
$$\big(\pi^{\JS}_{\al}\big)^{\T}: \begin{tikzcd} \big(N^{\JS}_{k,\alpha}\big)^{\T}\arrow[r]&\big(\mM^{\T}_{\alpha}\big)^{\rig}\end{tikzcd}$$
is different and not the correct one, so we restrict $T_{\pi^{\JS}_{\alpha}}$ to the $\T$-fixed locus of $N^{\JS}_{k,\alpha}$ where $\Big[N^{\JS}_{k,\alpha}\Big]^{\vir}_{\T,\loc}$ is defined by equivariant localization as in \eqref{eq:Msigallocvir}. The definition of $\Omega^{\sig,k}_{\al}$ then becomes
$$ \chi\big(\alpha(k)\big)\,\Omega^{\sig,k}_{\alpha}=(\pi^{\JS}_{\alpha})^{\T}_*\Big(\Big[N^{\JS}_{k,\alpha}\Big]^{\vir}_{\T, \loc}\cap c_{\Rk}\big(T_{\pi^{\JS}_{\al}} \big)\Big)\,.$$
This is still compatible with the classes in point 1) if the map $\iota_{\T}$ from Example \ref{ex:mMvir} determines the equivariant structure of fixed points compatible with $(1,0)_k$ having trivial weight in $\big(N^{\JS}_{k,\alpha}\big)^{\T}$.
\begin{remark}
\label{rem:invdef}
\leavevmode
\vspace{-4pt}
\begin{enumerate}[label=\roman*)]

  \item  There is a simple way to think about the above definition. In step 2), one acts as if $N^{\JS}_{k,\alpha}$ were a projective bundle over $(\mM^{\sig}_{\al})^{\rig}$, and defines the enumerative invariant by applying this perspective. In 3), one corrects this assumption by removing from $\Omega^{\sig,k}_\al$ all strictly $\sig$-semistable contributions. When there are no strictly semistable objects, then one gets a projective bundle by Example \ref{ex:JSpairs} and there are no corrections. In the first step of the induction, one needs to remove the loci where the Harder--Narasimhan associated graded of an object $E$ becomes the direct sum of $E_1$ and $E_2$ with classes $\alpha_1$ and $\alpha_2$ respectively. Assuming that the wall-crossing holds, the resulting contribution would be proportional to
    $$
\Big[\big\langle\Masit\big\rangle,\langle\Masio\big\rangle \Big]\,.
    $$
    Repeating this idea motivates \eqref{eq:Masigdef}.
    \end{enumerate}
\end{remark}
The last assumption I make requires that the above construction is independent of different choices of $k\in K$.
\begin{assumption}
    \label{ass:welldef}
    For any $\al\in E(\mA)$, $\sig\in W$, and $k_1,k_2\in K$ such that $\sig\in W_{\al,k_1}\cap W_{\al,k_2}$ the equality
    $$
\blangle\mM^{\sig}_{\alpha}\brangle^{k_1}=\blangle\mM^{\sig}_{\alpha}\brangle^{k_2}
    $$
    holds.
\end{assumption}
\section{Checking assumptions}
After showing that Example \ref{ex:sheavesandquivers}.i) provides a collection of data required by Definition \ref{Def:quiverpairs}, this section explains why Assumption \ref{ass:obsonflag} holds for Calabi--Yau dg-quivers. On the other hand, I point out where it goes wrong for sheaves on Calabi--Yau fourfolds. The last subsection addresses Assumption \ref{ass:welldef} in the case of semistable torsion-free sheaves by reducing it to the Joyce--Song pair wall-crossing formula \eqref{eq:JSWC}. This approach is entirely different from the one that appeared in \cite{JoyceWC} and \cite{mochizuki} for this purpose. It relies on a quantum Lefschetz-type argument for the Joyce--Song pairs and is meant to give more insight into why Assumption \ref{ass:welldef} should hold. 
\subsection{Example \ref{ex:sheavesandquivers}.i) fits into Definition \ref{Def:quiverpairs}}
\label{subsec:exfitsdef}
I check that the framework of Definition \ref{Def:quiverpairs} is suitable for sheaf wall-crossing although I will not prove it in full generality in this work. The language introduced in Definition \ref{Def:quiverpairs} will be used in the subsequent work extending the current results, so the next lemma paves the way for future developments. 
\begin{lemma}
\label{lem:sheavesandquivers}
The data from Example \ref{ex:sheavesandquivers}.i) satisfies the conditions of Definition \ref{Def:quiverpairs}.1).
\end{lemma}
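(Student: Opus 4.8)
The plan is to verify the conditions of Definition \ref{Def:quiverpairs} one at a time for the data recalled in Example \ref{ex:sheavesandquivers}.i), namely $\mA = \Coh(X)$ for a strict CY fourfold $X$, the subcategories $\mA_k$ cut out by the Castelnuovo--Mumford type vanishing $H^i(E(D_k))=0$ for $i>0$, the pair categories $\mB_k$ of morphisms $V\otimes\mO_X(-D_k)\to E$, and the target $\ov{\mB_k}=D^b(X)$. First I would address the opening paragraph of Definition \ref{Def:quiverpairs}: that $\{\mA_k\}$ is a countable family of exact subcategories of $\mA$ closed under direct summands (clear, since the vanishing condition is additive in $E$), that each $\mM^{\sig}_{\al}$ lies in some $\mM_{\mA_k}$ (a boundedness statement: the family of $\sig$-semistable sheaves of class $\al$ is bounded, hence twisting by a sufficiently ample $\mO_X(D_k)$ kills higher cohomology for all of them simultaneously --- this uses Serre vanishing together with Assumption \ref{ass:stab}.g) for boundedness), and that the loci $W_{\al,k}$ where this holds are open with union $W$ (openness follows since semistability is an open condition in $\sig$ and boundedness is preserved under small deformations; the union being all of $W$ follows from the previous point applied pointwise). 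The openness of $\mM_{\mA_k}\hookrightarrow\mM_{\mA}$ is the statement that the vanishing $H^{>0}(E(D_k))=0$ is an open condition on flat families, which is semicontinuity.

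Next I would check part a). The embedding $\mA_k\hookrightarrow\mB_k$ sends $E$ to $(0\to E)$; it is exact and fully faithful, and every $P=(V\otimes\mO_X(-D_k)\to E)$ sits in the unique exact triple $E\to P\to (1,0)\otimes V$ with $(1,0)=(\mO_X(-D_k)\to 0)$, with the claimed uniqueness of the morphism decomposition being immediate from the explicit description of morphisms in $\mB_k$. The functor $\mB_k\hookrightarrow\ov{\mB_k}=D^b(X)$ is the complex-in-degrees-$[-1,0]$ functor $C$ of \eqref{eq:functorC}, which is exact and fully faithful in the required sense; $D^b(X)$ is a CY4 triangulated category in the sense of Definition \ref{def:CY4abelian} because $\mD_{\pe}(X)$ is left CY4 by \cite[Proposition 5.12]{BD1} (here strictness of $X$ is used). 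That $(1,0)=\mO_X(-D_k)[1]$ is spherical in $D^b(X)$ is precisely where $H^i(\mO_X)=0$ for $i=1,2,3$ enters: $\Ext^\bullet_{D^b(X)}(\mO_X(-D_k),\mO_X(-D_k))=H^\bullet(\mO_X)$ which by strictness is $\CC$ in degrees $0,4$ and zero otherwise, so $\mO_X(-D_k)[1]$ has the Ext-algebra of a $4$-sphere. The openness of \eqref{eq:MNovNk} follows from \cite[Proposition 3.3.2]{APhearts} cited in Example \ref{ex:sheavesandreps0}, applied to the relevant hearts, together with the same semicontinuity used above. The vanishing $\Ext^i_{\ov{\mB_k}}((1,0)_k,E)=\Ext^i_{D^b(X)}(\mO_X(-D_k)[1],E)=H^{i-1}(E(D_k))=0$ for $E\in\mA_k$ and $i\neq 1$ is exactly the defining condition of $\mA_k$ (for $i-1>0$) together with the fact that a nonzero sheaf has no negative cohomology (for $i-1<0$, i.e. $i\le 0$); positivity of $\chi(\al(k))=\dim H^0(E(D_k))=\chi(E(D_k))>0$ for $D_k$ sufficiently ample is Serre vanishing plus Riemann--Roch, and the functorial injection $\Hom_{\mA}(E,E)\hookrightarrow V_k^*(E)\otimes V_k(E)$ comes from the natural map $\Hom(E,E)\to\Hom(H^0(E(D_k)),H^0(E(D_k)))$ which is injective once $\mO_X(D_k)$ is globally generated (so that a homomorphism of sheaves is determined by its action on global sections of the twist).

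For part b) I would simply record that the quivers $I_{\MS}$, $I_{\Flag}$ and their punctured versions $\mathring I$ are as drawn in Definition \ref{Def:quiverpairs}; the associated exact categories $\mB_{I_k}$ of triples $(\un V,\un m,P)$ are defined exactly as in loc.\ cit., with the $\T$-action (here trivial, since $X$ carries no torus action in this example, or the relevant induced one) acting as prescribed; these are routine bookkeeping definitions requiring no verification beyond noting that the compatibility of $V_\times$ with $\mV$ on $\mN_k$ holds by construction. Finally, the construction of $\mA^{\sig}_{k,\phi}$, $\mB^{\sig}_{I_k,\phi}$, the enlarged K-group $\ov K(\mB_{I_k})=\ZZ^{\mathring\Ver}\times\ov K(\mA)$, and the stability conditions $\sigma^\lambda_\mu$ via \eqref{eq:barphi} are purely formal once the categorical data above is in place, so there is nothing further to check.

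The main obstacle I expect is the global boundedness needed for $\mM^{\sig}_{\al}\subset\mM_{\mA_k}$ with $W_{\al,k}$ having union $W$: one needs that for a fixed class $\al$, the $\sig$-semistable sheaves stay in a bounded family as $\sig$ ranges over a neighbourhood, so that a single ample twist works uniformly --- this is exactly the kind of statement that, as noted in Example \ref{ex:assstab}.i) and Important Remark!\ \ref{impremark}, is delicate and restricts the sheaf case (it forces the dimension restrictions in \cite{JoyceWC}). For the present lemma, however, one only needs it as part of the hypotheses (Assumption \ref{ass:stab}.g) is assumed), so the verification reduces to combining that assumed boundedness with Serre vanishing; the genuinely $X$-specific inputs are just the strictness condition $H^{1,2,3}(\mO_X)=0$ (for sphericity of $(1,0)_k$, as flagged in Remark \ref{rem:section}.ii)) and left CY4-ness of $\mD_{\pe}(X)$ from \cite{BD1}.
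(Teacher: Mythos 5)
There is a genuine gap, and it sits exactly where the paper's proof puts all of its effort. The central content of Definition \ref{Def:quiverpairs}.a) in this example is that the induced map of stacks $\Omega_C:\mN_k\to \ov{\mN_k}=\mM_X$ is an \emph{open embedding}, and your appeal to \cite[Proposition 3.3.2]{APhearts} plus semicontinuity does not deliver this: that result concerns moduli of objects in a heart of a bounded t-structure whose semistable objects are uniquely represented complexes, whereas $\mB_k$ consists of pairs $V\otimes\mO_X(-D_k)\to E$ with $V$ an arbitrary vector space and is not (the image of) such a heart. The paper instead proves openness by an obstruction-theoretic argument: it identifies $\Omega_C^*\ov{\FF}=\RHom_{\mN_k}(\mP_k,\mP_k)$ for the universal two-term complex $\mP_k$, expands this via the fiber sequence $\mV\otimes\mO_X(-D_k)\to\mE\to\mP_k$ into the $3\times 3$ diagram \eqref{eq:obsonNk}, compares with Joyce's na\"ive obstruction theory $\wt{\FF}_k$ on the derived enrichment $\bmN_k$ using \cite[(8.38)]{JoyceWC}, and concludes from smoothness of $\pi_{\JS}$ that $\FF_k\to\wt{\FF}_k$ is an isomorphism on $h^1,h^0,h^{-1}$, so $\Omega_C$ is \'etale and hence, being injective on points, an open embedding. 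Without some substitute for this step your verification of a) is incomplete.

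A second, smaller gap: you assert that $C$ is ``exact and fully faithful in the required sense'' without argument. Full faithfulness is not formal here; the paper derives it from the vanishing $\Ext^i\big(F_1,V_2\otimes\mO_X(-D_k)\big)=V_2\otimes H^{4-i}\big(F_1(D_k)\big)^*=0$ for $i=0,1$ (Serre duality plus the defining positivity of $D_k$), which lets one fill in the diagram \eqref{eq:fillinginf} uniquely. Exactness in the sense of the footnote to Definition \ref{Def:quiverpairs}.a) also has a nontrivial converse direction — every distinguished triangle between objects of $C(\mB_k)$ is the image of an exact triple — which the paper proves by a direct-summand splitting argument; this too is missing from your proposal. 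Finally, a minor slip: the injectivity of $\Hom_{\mA}(E,E)\to V_k^*(E)\otimes V_k(E)$ needs $E(D_k)$ to be globally generated (uniformly over the relevant bounded family), not merely $\mO_X(D_k)$. The routine items you do verify (countability, exactness and closure under summands of $\mA_k$, boundedness giving $\mM^{\sig}_{\al}\subset\mM_{\mA_k}$, openness of $\mM_{\mA_k}$ by semicontinuity, sphericity of $(1,0)_k$ from strictness of $X$) are fine but are not where the difficulty of the lemma lies.
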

\begin{proof}
Firstly, I will show that the functor $C$ from \eqref{eq:functorC} is exact and fully faithful. Let
$$P^\bullet = [\mO_X(-D_k)\to E]$$
be the complex $C(P)$, then there exists a distinguished triangle 
$$
\begin{tikzcd}
   E\arrow[r]&P^\bullet \arrow[r]&V\otimes \mO_X(-D_k)[1]\arrow[r]&E[1] \,.
\end{tikzcd}
$$
For any $E_1\in \mA_k$ and a vector space $V_2$ one sees that
$$
\Ext^i\big(F_1,V_2\otimes \mO_X(-D_k)\big)= V_2\otimes H^{4-i}\big( F_1(D_k)\big)^* = 0 \quad \textnormal{for}\quad i=0,1\,.
$$
Starting from the diagram of full arrows
\begin{equation}
\label{eq:fillinginf}
\begin{tikzcd}
   E_1\arrow[d, dashed]\arrow[r]&P_1^\bullet\arrow[d,"f"] \arrow[r]&V_1\otimes \mO_X(-D_k)[1]\arrow[d,dashed]\arrow[r]&E_1[1]\arrow[d,dashed]\\
    E_2\arrow[r]&P_2^\bullet \arrow[r]&V_2\otimes \mO_X(-D_k)[1]\arrow[r]&E_2[1]
\end{tikzcd}
\end{equation}
with both rows being distinguished triangles, one can use the above vanishings to fill in uniquely the dashed arrows and obtain a commutative diagram. This proves fully faithfulness of $C$. The functor clearly maps exact triples to distinguished triangles, but to see that every distinguished triangle of the form 
\begin{equation}
\label{eq:Pbullettriangle}
\begin{tikzcd}
    P^{\bullet}_1\arrow[r]&P^{\bullet}_2\arrow[r]&P^{\bullet}_3\arrow[r]&P^{\bullet}_1[1]
\end{tikzcd}
\end{equation}
where $P^{\bullet}_i = C(P_i)$ can be represented in this way requires additional work. By the above, the first two morphisms in \eqref{eq:Pbullettriangle} can be uniquely represented by morphisms $P_1\xrightarrow{f_1}P_2\xrightarrow{f_2}P_3$ in $\mB_k$. By similar arguments as above, one can show that the cokernel of $f_1$ is a direct summand of $P_3$. If its complement $\Delta_3$ were nontrivial, the object $P^{\bullet }_1$ would split as $C(P'_1)\oplus C(\Delta_3)[-1]$ where $P'_1$ is the kernel of $f_2$. This contradicts the claim that $P^{\bullet}_1\in C(\mB_k)$.

Further, I need to prove that $\Omega_C: \mN_k\to \mM_X$ is an open embedding, which I will do by comparing their obstruction theories. Setting $\ov{\FF}$ to be the natural obstruction theory on $\mM_X$ (inherited from the derived stack $\bM_X$ explained in Example \ref{ex:bMstacks}), its pullback $\FF = \Omega_C^*\ov{\FF}$ is given by 
$$
\FF = \RHom_{\mN_k}\big(\mP_k,\mP_k\big)
$$
where $\mP_k = \big\{\mV\otimes\mO_X(-D_k)\to \mE\}$ is the universal two term complex on $X\times\mN_k$. Applying the $\infty$-bifunctor $\RHom_{\mN_k}(-,-)$ to the fiber sequence
$$
\begin{tikzcd}
\mV\otimes \mO_X(-D_k)\arrow[r]& \mE\arrow[r]& \mP_k
\end{tikzcd}
$$
in $\mD^b(X\times\mN_k)$ produces the homotopy commutative diagram
$$
\begin{tikzcd}
\RHom\big(\mV\otimes \mO_X(-D_k),\mP_k\big)&\arrow[l]\RHom\big(\mE,\mP_k\big)&\arrow[l]\RHom\big(\mP_k,\mP_k\big)\\
\arrow[u]\RHom\big(\mV\otimes \mO_X(-D_k),\mE\big)&\arrow[l]\arrow[u]\RHom\big(\mE,\mE\big)&\arrow[l]\arrow[u]\RHom\big(\mP_k,\mE\big)
\\
\arrow[u]\mV^*\otimes \mV\otimes H^\bullet(\mO_X)&\arrow[l]\arrow[u]\RHom\big(\mE,\mV\otimes\mO_X(-D_k)\big)&\arrow[l]\arrow[u]\RHom\big(\mP_k,\mV\otimes\mO_X(-D_k)\big)\\
\end{tikzcd}
$$
 in $\mD^b(\mN_k)$ where each row and column is a distinguished triangle. I omitted the subscripts $(-)_{\mN_k}$, here. Setting 
 $$
 \EE = \RHom_{\mN_k}\big(\mE,\mE)^\vee[-1]\,,
 $$
 which is the pullback of the obstruction theory complex on $\mM_{\mA_k}$, this produces the diagram \eqref{eq:obsonNk} after dualizing and shifting by $[-1]$. For this, I have used that
 $$
 \LL_{\pi_k}\simeq\RHom_{\mN_k}\big(\mV\otimes \mO_X(-D_k),\mE\big)^\vee\,.
 $$
In \cite[Definition 5.5]{JoyceWC}, Joyce constructed derived enrichments of $\mN_k$ which I will label by $\bmN_k$ here. The induced obstruction theory of $\mN_k$ associated with this derived stack is the naïve one denoted by $\wt{\FF}_k$. It was shown in \cite[(8.38)]{JoyceWC} that it fits into the cofiber sequence
$$
\begin{tikzcd}[column sep=huge]
    \LL_{\pi_k}[-1]\arrow[r,"{\begin{pmatrix}\psi_k\\ -\delta_k\end{pmatrix}}"]&\EE\oplus  \mV\otimes \mV^*[-1]\arrow[r]&\wt{\FF}_{k}\,.
\end{tikzcd}
$$
To obtain a diagram for $\FF_k$ from this one, one changes \eqref{eq:obsonNk} into 
$$
\begin{tikzcd}[column sep=huge, row sep=large]
&&\FF_k\arrow[d]\\
    \LL_{\pi_k}[-1]\arrow[d]\arrow[r,"{\begin{pmatrix}\psi_k\\ -\delta_k\end{pmatrix}}"]&\arrow[d]\EE\oplus  \mV\otimes \mV^*[-1]\arrow[r]&\arrow[d]\wt{\FF}_{k}\\
    0\arrow[r]&\LL_{\pi_{\JS}}^\vee[3]\arrow[r,equal]&\LL_{\pi_{\JS}}^\vee[3]
\end{tikzcd}
$$
where $\pi_{\JS}$ was defined in \eqref{eq:piJS}. The first two terms in the first row were left out. The vertical distinguished triangle on the right shows that the map $\FF_k\to \wt{\FF}_{k}$ induces an isomorphism on cohomologies $h^1,h^0$ and $h^{-1}$ by the smoothness of $\pi_{\JS}$. This proves that $\Omega_C$ is étale, and as it is a bijection on its image, it is an open embedding. 

\end{proof}
\begin{remark}
  \leavevmode
\vspace{-4pt}
\begin{enumerate}[label=\roman*)]
\item All of the arguments generalize to the $\T$-equivariant setting.  
\item For quasi-projective $X$, the above does not work because $H^{\bullet}(\mO_X)$ is not well behaved. However, proving that $\Omega_C:\mN_k\to \mM_X$ is an open embedding is not strictly necessary for the argument in the proof of wall-crossing. One simply needs appropriate obstruction theories on all $N^{\sig}_{\un{d},\al}$ and $N^{\sig}_{(1,\un{d}),\al}$ which can be sometimes constructed using the fixed-determinant obstruction theory for $D^b(X)$. This situation is explored in §\ref{sec:pairWC} and §\ref{sec:localCY4}.
\end{enumerate}
\end{remark}

\subsection{Checking Assumption \ref{ass:obsonflag} for quivers}
\label{sec:quivers}
In this subsection, I focus on the situation from Example \ref{ex:sheavesandquivers}.ii). 
Because the underlying classical moduli stacks $\mM_{\mA}$, $\mN_{k}$, and $\mN_I$ are just moduli stacks of representations of quivers with relations, one can use \cite[Proposition 4.3]{King} to prove their projectivity whenever there are no cycles whose composition of morphisms is non-zero after including relations. This applies to Example \ref{ex:CY4quiver}. When cycles are present, one can find a $\T$-action rescaling morphisms of these cycles such that the $\T$-fixed point loci are projective. An example corresponding to $\Hilb^n(\CC^4)$ is given in §\ref{sec:CY4dgC4}. This addresses Assumption \ref{ass:obsonflag}.a).

From now on, I will fix $\wt{Q}^\bullet$, and I will aim to construct the obstruction theories on $N^\sigma_{(1,\un{d}),\alpha}$.  Then I will show that they satisfy the conditions in Assumption \ref{ass:obsonflag}.b) and c). I will continue using the notation introduced in §\ref{sec:dgquivers} except that I will also use $\Cya{\bullet}$ to denote the degree \Cya{$1$} contributions in the cotangent complex from Lemma \ref{lem:cotangentofQ}. These terms correspond to the endomorphisms of vector spaces at the vertex.  Here, I will use Lemma \ref{lem:cotangentofQ} as a recipe rather than as a result, as it will prescribe the terms in the cotangent complex and the morphisms between them. Then the relative obstruction theory $\LL_{\pi_I}$ for the map $\pi_I:\mN_{I}\to \mM_{\mA}$ can be represented by the diagram
\begin{equation}
\label{eq:Lrela}
    \includegraphics{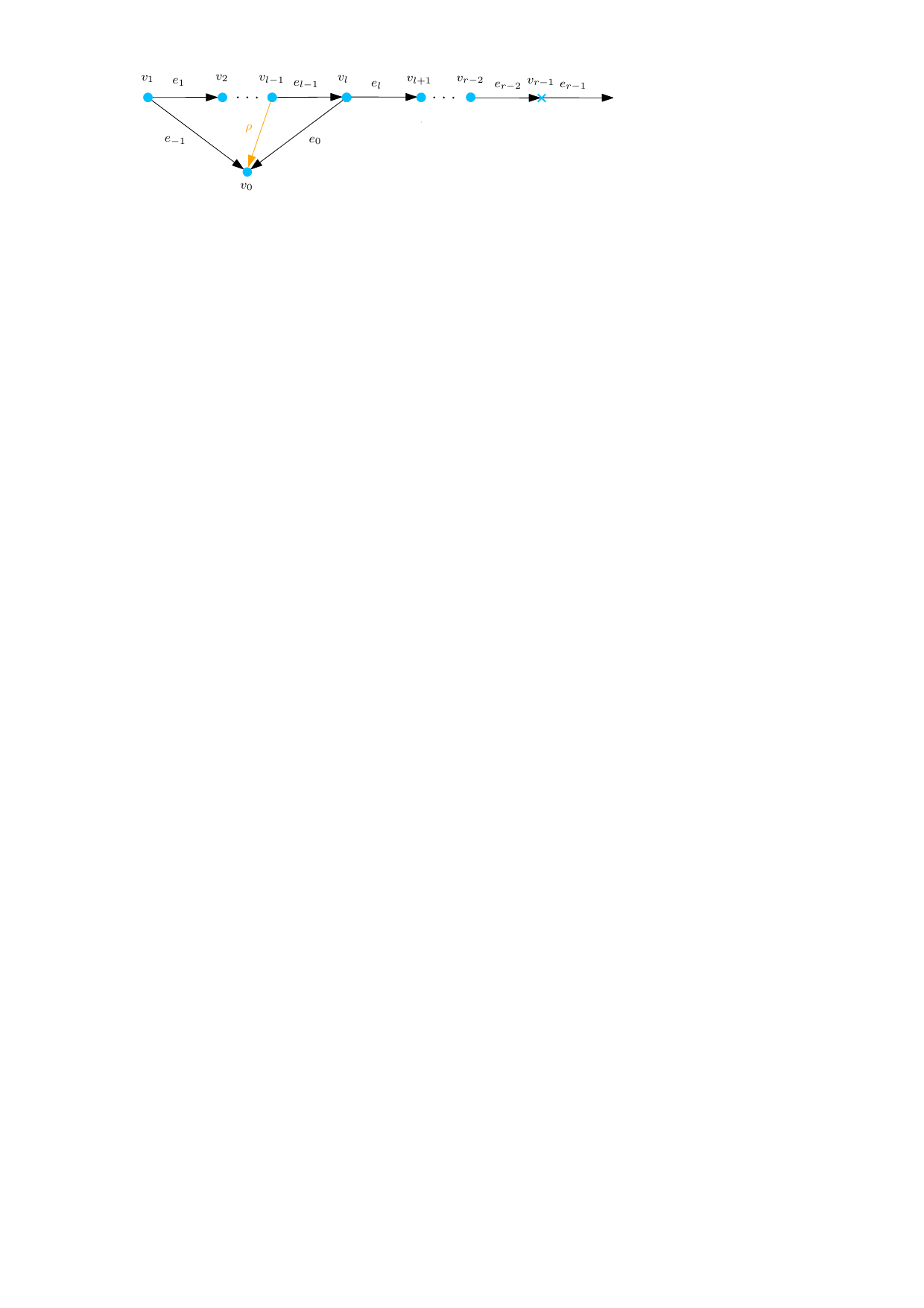}
\end{equation}
where $d(\BuOr{\rho}) = e_{l-1}\circ e_0$. 
Note that to get an actual dg-quiver, one would need to keep the vertex $v_{r}$, in which case, I will label the quiver by $I^{\bullet}_{\MS}$. The above pictorial notation will be used throughout the rest of this work.

Keep in mind that $$\begin{tikzcd}\Cya{\times}\arrow[r,"e_{r-1}"]&\,\end{tikzcd}$$ represents the relative obstruction theory $\LL_{\mN/\mM_{\mA}}$ in Example \ref{ex:sheavesandquivers}.ii) which means that it includes arrows from $\Cya{\times}$ to all original vertices of $\wt{Q}^\bullet$ (see Example \ref{ex:simplestCY4QMS}).

Using \FG{$\stackrel{v_r}{\circ}$} to represent the obstruction theory on $\mM_{\mA}$, a homotopy commutative diagram
\begin{equation}
\label{eq:NQtoMAsymmpull}
 \begin{tikzcd}
     \arrow[d]\LL_{\pi_{\MS}}[-1]\arrow[r]&\EE\arrow[d]\\
     0\arrow[r]&\LL^\vee_{\pi_{\MS}}[3]
 \end{tikzcd}
 \end{equation}
 on $\mN_{I_{\MS}}$ can be expressed as
\begin{equation}
\label{eq:constructQMSCY4}
      \includegraphics[scale=0.85]{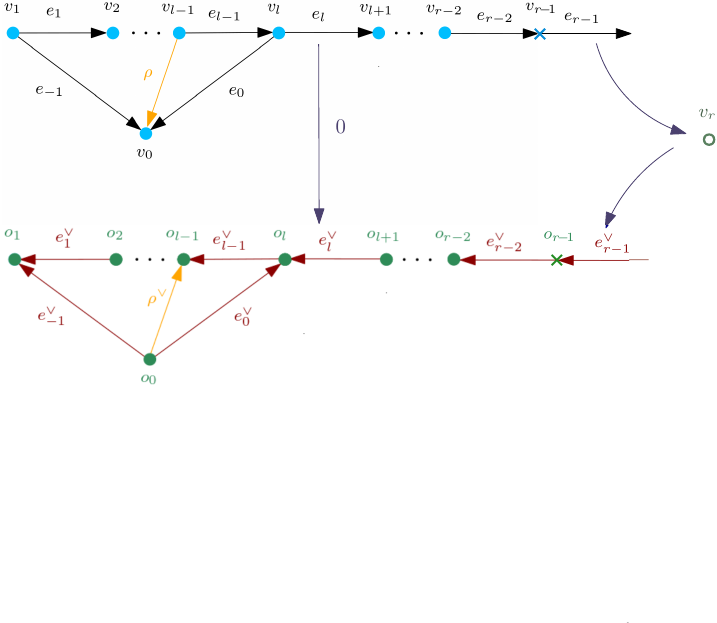} 
\end{equation}

\vspace{-100pt}
in this notation. Here, I used \FG{$\bullet$} and \FG{$\times$} to represent the \FG{loops} of degree \FG{$-3$} to avoid having to draw them. Altogether, the resulting obstruction theory $\FF_{\MS}$ on  $\mN_{I_{\MS}}$ can be represented by the CY4 dg-quiver $ \wt{I}^{\bullet}_{\MS}$
\begin{equation}
\label{eq:fullobsQMS}  
    \includegraphics{QMSCY4completed.pdf}
\end{equation}
where the superpotential that determines the differential outside of the original quiver $\wt{Q}^\bullet$ takes the form 
$$
\BuOr{\mH_{\MS}}= \BuOr{\rho^*}\circ e_0\circ e_{l-1}\,.
$$
This describes how to construct a new CY4 dg-quiver containing $\wt{Q}^{\bullet}$ and induces the required obstruction theory. I make it clearer through the next example.  
\begin{example}
\label{ex:simplestCY4QMS}
Consider the CY4 dg-quiver from Example \ref{ex:CY4quiver}, then the new CY4 dg-quiver explained above takes the form
\begin{equation}
\label{eq:QMStowtQ}
\includegraphics[scale = 0.8]{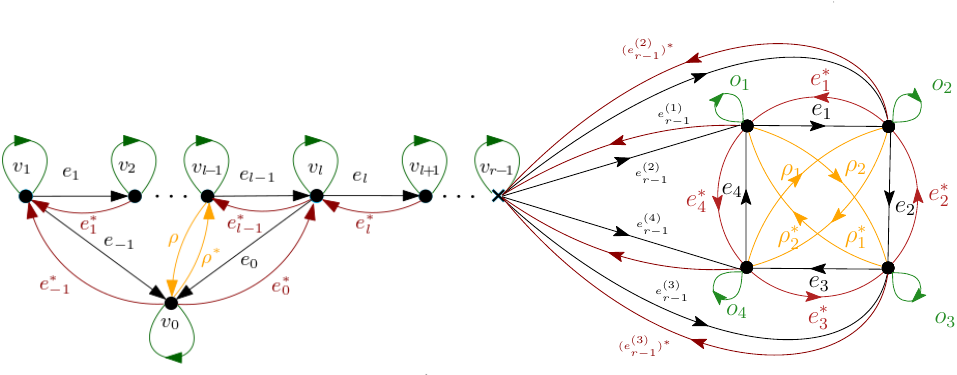}
\end{equation}
with superpotential
$$\BuOr{\mI_{\MS}}=\BuOr{\mH_{\MS}} + \BuOr{\mH}$$
for $\BuOr{\mH}$ from \eqref{eq:simplestCY4potential}. I will denote this dg-quiver by $\wt{I}^\bullet_{\MS}\cup_{\times} \wt{Q}^\bullet$. Note that there are $\Ver$ times the number of copies of $e_{r-1}$, each labeled $e^{(v)}_{r-1}$ for a vertex $v\in \Ver$ at which it ends. I also use this notation for their dual edges. 
\end{example}
The next lemma provides an explicit construction of \eqref{eq:NQtoMAsymmpull}.
\begin{lemma}
\label{lem:FFMSrig}
There is a natural homotopy commutative diagram
    \begin{equation}
 \label{eq:NrigtoMrigpullback}
 \begin{tikzcd}
     \arrow[d]\LL_{\pi_{\MS}}[-1]\arrow[r]&\EE\arrow[d]\\
     0\arrow[r]&\LL^\vee_{\pi_{\MS}}[3]
 \end{tikzcd}
 \end{equation}
 producing an obstruction theory $\FF_{\MS}$ on $\mN_{\MS}$ by Proposition \ref{prop:sympullback}.
\end{lemma}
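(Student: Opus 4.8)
The plan is to build the diagram \eqref{eq:NrigtoMrigpullback} directly from the quiver data, using Lemma \ref{lem:cotangentofQ} as the recipe that was already indicated in \eqref{eq:Lrela}--\eqref{eq:fullobsQMS}. First I would make precise the complexes appearing in the square. The obstruction theory $\EE$ of $\mM_{\mA}$ pulled back to $\mN_{\MS}$ is represented, by Lemma \ref{lem:cotangentofQ}, by the self-dual complex $E^\bullet$ with $(E^\bullet)^*[2]=E^\bullet$ attached to the CY4 dg-quiver $\wt Q^\bullet$ (the \FG{$\circ$} vertex), while $\LL_{\pi_{\MS}}$ is the relative cotangent complex in \eqref{eq:LNQk}, an explicit three-term complex in degrees $1,0,-1$ built from the extra vertices $v_0,\dots,v_{r-1}$, the connecting edges, and the relation $d(\BuOr\rho)=e_{l-1}\circ e_0$. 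The horizontal map $\LL_{\pi_{\MS}}[-1]\to\EE$ is the natural one coming from the distinguished triangle \eqref{eq:wtFFI}, concretely the composite $\mV^*_{r-1}\otimes\mV_{r-1}\to \mV^*_\times\otimes\FG{\mV_r}$ precomposed with $\mathfrak s$ landing inside $\EE=\RHom_{\mN_{\MS}}(\mE,\mE)^\vee[-1]$. The content of the lemma is that this map together with its dual shift fit into a homotopy-commutative square with $0$ in the lower-left corner, i.e. that the composite $\LL_{\pi_{\MS}}[-1]\to\EE\xrightarrow{\ \sim\ }\EE^\vee[2]\to \LL^\vee_{\pi_{\MS}}[3]$ is canonically null-homotopic, with a preferred null-homotopy.

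The key steps, in order, would be: (1) identify the target $\LL^\vee_{\pi_{\MS}}[3]$ with the dual complex of \eqref{eq:LNQk}, so that the whole square is literally a square of explicit complexes of vector bundles on $\mN_{\MS}$ built from the colored arrows of \eqref{eq:constructQMSCY4}; (2) observe that the required composite $\LL_{\pi_{\MS}}[-1]\to\LL^\vee_{\pi_{\MS}}[3]$ is, by construction, the map classified by the superpotential term $\BuOr{\mH_{\MS}}=\BuOr{\rho^*}\circ e_0\circ e_{l-1}$ — i.e. it is exactly the piece of the differential of $\wt I^\bullet_{\MS}$ that couples the relative directions to their duals; (3) exhibit the null-homotopy: because $\wt I^\bullet_{\MS}$ is a genuine CY4 dg-quiver (one checks the master equation $\{\BuOr{\mI_{\MS}},\BuOr{\mI_{\MS}}\}=0$ using that no new \BuOr{degree $-1$} cycles are created beyond $\BuOr{\mH_{\MS}}$ and $\BuOr{\mH}$, as in Example \ref{ex:simplestCY4QMS}), the total differential squares to zero, and the $d^2=0$ identity in the relevant degrees is precisely the homotopy making the square commute; (4) invoke Proposition \ref{prop:sympullback} to turn this self-dual homotopy-commutative square into the $\infty$-Pvp diagram and read off $\FF_{\MS}$, which by inspection is the obstruction theory represented by \eqref{eq:fullobsQMS}; (5) record self-duality $\FF_{\MS}\cong\FF_{\MS}^\vee[2]$ and perfectness in the right amplitude, which are immediate from the symmetry of $\wt I^\bullet_{\MS}$ under $e\leftrightarrow e^*$, $o_v$ fixed.

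The main obstacle I expect is step (3): producing the null-homotopy \emph{coherently}, not just up to homotopy. Working in the homotopy category one only gets that the composite vanishes; to feed Proposition \ref{prop:sympullback} one needs the lift to $\mD^b(\mN_{\MS})$ together with a chosen homotopy, and for later use (the additivity compatibility with $\mu_{\Flag_k}$ in Assumption \ref{ass:obsonflag}.c)) the choice must be natural. The clean way to get this is to avoid explicit chain-level homotopies altogether and instead use that all of these complexes and maps are the restrictions to $\mN_{\MS}=t_0(\bmM_{\un d})$ of the structures on the derived stack $\bmM_{\un d}=[\bSpec(R^\bullet_{\un d})/\GL(\un d)]$ attached to $\wt I^\bullet_{\MS}$, where the square is literally a square of $R^\bullet$-modules and the homotopy is the relevant component of the Koszul-type differential; $\infty$-functoriality then transports it down. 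A secondary, purely bookkeeping, obstacle is checking that the degrees and signs in \eqref{eq:constructQMSCY4} match the conventions of §\ref{sec:orconventions} so that the induced orientation from Definition \ref{def:orpullback} is the one used later; this is routine given Corollary \ref{cor:quiveror} and Lemma \ref{lem:2orientconv}, but it must be stated.
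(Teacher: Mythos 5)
You have correctly located where the real content lies — producing the null-homotopy of $\LL_{\pi_{\MS}}[-1]\to\EE\to\EE^\vee[2]\to\LL^\vee_{\pi_{\MS}}[3]$ \emph{coherently} and self-dually, not merely checking vanishing in the homotopy category — and you are right that derived geometry is the mechanism. But your proposed fix is where the gap sits. You want to realize the square as the restriction of "a square of $R^\bullet$-modules" on the derived stack of representations of the CY4-completed quiver $\wt{I}^{\bullet}_{\MS}\cup_{\times}\wt{Q}^{\bullet}$. That derived stack carries the full self-dual obstruction theory $\FF_{\MS}$ of \eqref{eq:fullobsQMS}, but it does \emph{not} carry the horizontal arrow of \eqref{eq:NrigtoMrigpullback}: the target $\EE$ is pulled back along $\pi_{\MS}:\mN_{\MS}\to\mM_{\mA}$, and this projection does not lift to a morphism from the $\wt{I}^{\bullet}_{\MS}\cup_{\times}\wt{Q}^{\bullet}$-representation stack to $\bmM$. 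Concretely, the inclusion $\CC\wt{Q}^{\bullet}\hookrightarrow\CC\big(\wt{I}^{\bullet}_{\MS}\cup_{\times}\wt{Q}^{\bullet}\big)$ is not a map of dg-algebras, because the differential of the degree $\FG{-3}$ loops $o_v$ at the $\wt{Q}^{\bullet}$-vertices acquires the extra commutator terms $[e^{(v)}_{r-1},(e^{(v)}_{r-1})^*]$ after completion. So the square is not "literally" sitting on that single derived stack, and your step (3) heuristic — that the composite is classified by $\BuOr{\mH_{\MS}}$ and the homotopy is "the $d^2=0$ identity" — never gets upgraded to actual data in $\mD^b(\mN_{\MS})$ feeding Proposition \ref{prop:sympullback}.

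The paper's proof repairs exactly this by introducing the half-completed dg-quiver $I^{\bullet}_{\MS}\cup_{\times}\wt{Q}^{\bullet}$ (keep the degree $0$ edges and $\BuOr{\rho}$ with $d(\BuOr{\rho})=e_0\circ e_{l-1}$, discard $\BuOr{\rho^*}$ and the \Ma{red}/\FG{green} edges not attached to $\wt{Q}^{\bullet}$). Its dg-path algebra receives dg-algebra maps from both $\CC\big(\wt{I}^{\bullet}_{\MS}\cup_{\times}\wt{Q}^{\bullet}\big)$ and $\CC\wt{Q}^{\bullet}$, giving a roof of derived stacks $\bmN_{\wt{I}_{\MS}}\leftarrow\bmN_{I_{\MS}}\to\bmM$ whose middle term has classical truncation $\mN_{\MS}$. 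The cotangent triangle of the right leg supplies the row $\LL_{\pi_{\MS}}[-1]\to\EE\to\wt{\FF}_{\MS}$ with all its coherence, the left leg supplies the dual column with $\LL^{\vee}_{\pi_{\MS}}[2]$, and Lemma \ref{lem:cotangentofQ} identifies the pieces, assembling the $\infty$-Pvp diagram from which \eqref{eq:NrigtoMrigpullback} is read off (and recovered via Proposition \ref{prop:sympullback}). So your steps (1)–(2), (4)–(5) match the paper in spirit, but the missing idea is precisely this intermediate quiver and correspondence; without it, or an equivalent explicit chain-level construction of the self-dual homotopy, the argument does not close.
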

\begin{proof}
Replacing $\wt{I}^{\bullet}_{\MS}$ by just $I^{\bullet}_{\MS}$, I will write $I^{\bullet}_{\MS}\cup_{\times} \wt{Q}^\bullet$ for the dg-quiver resulting from the construction explained in Example \ref{ex:simplestCY4QMS}. More explicitly, one obtains $I^{\bullet}_{\MS}\cup_{\times} \wt{Q}^\bullet$ from $\wt{I}^{\bullet}_{\MS}\cup_{\times} \wt{Q}^\bullet$ by removing $\BuOr{\rho^*}$ and all the \Ma{red edges} and \FG{green loops} starting or ending at a vertex not contained in $\wt{Q}^{\bullet}$. The new differential is 0 outside of the edges of $\wt{Q}^{\bullet}$ except for $d(\BuOr{\rho}) = e_0\circ e_{l-1}$. It is not difficult to check that forgetting the extra edges is compatible with the differential and thus induces the left morphism of dg-algebras in the diagram
$$
\begin{tikzcd}
\CC \big(\wt{I}^\bullet_{\MS}\cup_{\times} \wt{Q}^\bullet\big)\arrow[r]\arrow[r]&\CC \big(I^\bullet_{\MS}\cup_{\times} \wt{Q}^\bullet\big) &\arrow[l]\CC \wt{Q}^{\bullet}
\end{tikzcd}\,.
$$
The second morphism corresponds to adding edges in degree 0 and \BuOr{$-1$}. Together they lead to the following diagram of categories of degree 0 modules:
\begin{equation}
\label{eq:tildeIMSIMSmodu}
\begin{tikzcd}
\Rep\Big(\wt{I}^{\bullet}_{\MS}\cup_{\times}\wt{Q}^{\bullet}\Big)&\arrow[l]\Rep\Big(I^{\bullet}_{\MS}\cup_{\times}\wt{Q}^{\bullet}\Big)\arrow[r]&\Rep\Big(\wt{Q}^{\bullet}\Big)
\end{tikzcd}\,.
\end{equation}

Consider the following derived stacks of modules of degree 0:
\begin{itemize}
    \item the stack $\bmN_{\wt{I}_{\MS}}$ of degree-0 representations of $\wt{I}^\bullet_{\MS}\cup_{\times} \wt{Q}^\bullet$,
    \item the stack $\bmN_{I_{\MS}}$ of degree-0 representations of $I^{\bullet}_{\MS}\cup_{\times} \wt{Q}^\bullet$,
    \item the stack $\bmM$ of degree-0 representations of $\wt{Q}^{\bullet}$.
\end{itemize}
Then \eqref{eq:tildeIMSIMSmodu} determines the diagram of derived stacks
\begin{equation}
\label{eq:quiverlagcor}
\begin{tikzcd}
  & \arrow[dl] \bmN_{I_{\MS}}\arrow[dr]&\\
  \bmN_{\wt{I}_{\MS}}&&\bmM
\end{tikzcd}\,.
\end{equation}
Note that this is an example of a $-2$-shifted Lagrangian correspondence, but I will not pursue this perspective further here. Taking the distinguished triangle of cotangent complexes induced by the morphism on the right produces the bottom row of 
$$
    \begin{tikzcd}[column sep=large]   
   0\arrow[d]\arrow[r]&\LL^{\vee}_{\pi_{\MS}}[2]\arrow[d]\arrow[r,equal]&\LL^{\vee}_{\pi_{\MS}}[2]\arrow[d]\arrow[r]&0\arrow[d]\\
    \LL_{\pi_{\MS}}[-1]\arrow[d,equal]\arrow[r]&\arrow[d]\wt{\FF}_{\MS}^\vee[2]\arrow[r]&\FF_{\MS}\arrow[d]\arrow[r]&\LL_{\pi_{\MS}}\arrow[d,equal]\\
  \LL_{\pi_{\MS}}[-1]\arrow[r]&\EE\arrow[r]&\wt{\FF}_{\MS}\arrow[r]&\LL_{\pi_{\MS}}
    \end{tikzcd}\,.
$$
The left morphism of \eqref{eq:quiverlagcor} recovers the third column of the diagram. Using Lemma \ref{lem:cotangentofQ}, it is not difficult to see that this leads to a symmetrized $\infty$-pullback diagram along $\pi_{\MS}$. In particular, it implies \eqref{eq:NQtoMAsymmpull} and is recovered from it by applying Proposition \ref{prop:sympullback}. 
\end{proof}
The next Proposition shows that the obstruction theory $\FF_{\MS}$ is the appropriate one for wall-crossing. 
\begin{proposition}
\label{prop:construct}
Assumption \ref{ass:obsonflag}.b) and c) are satisfied by the restriction to $\mN^{\sig}_{(1,\un{d}),\al}$ of the obstruction theory $\FF_{\MS}$ constructed from \eqref{eq:NrigtoMrigpullback}. More generally, there is a  self-dual homotopy commutative diagram 
\begin{equation}
\label{eq:JStoFLpullbackrig}
\begin{tikzcd}    \arrow[d]\LL_{\pi_{\Flag/\JS}}[-1]\arrow[r]&\FF\arrow[d]\\
     0\arrow[r]&\LL^\vee_{\pi_{\Flag/\JS}}[3]
 \end{tikzcd}
\end{equation}
inducing an obstruction theory $\FF_{\Flag}$ of $\mN_{\Flag}$ from $\FF$ on $\mN$.
Another one
\begin{equation}
\label{eq:FltoMSpullbackrig}
\begin{tikzcd}    \arrow[d]\LL_{\pi_{\MS/\Flag}}[-1]\arrow[r]&\FF_{\Flag}\arrow[d]\\
     0\arrow[r]&\LL^\vee_{\pi_{\MS/\Flag}}[3]
 \end{tikzcd}
\end{equation}
recovers the obstruction theory $\FF_{\MS}$ . Furthermore, the complex $\FF_{\Flag}$ satisfies 
$$
\mu_{\Flag}^*\big(\FF_{\Flag}\big) = \FF_{\Flag}\boxplus \FF_{\Flag}\oplus \Theta_{\Flag}\oplus \sigma^*\Theta_{\Flag}
$$
where $\Theta_{\Flag}$ is determined by \eqref{eq:extendTheta}.
\end{proposition}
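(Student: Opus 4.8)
The strategy is to build everything out of the dg-quiver pictures and Lemma~\ref{lem:cotangentofQ}, exactly in the spirit of Lemma~\ref{lem:FFMSrig}, and then feed the resulting homotopy commutative squares into Proposition~\ref{prop:sympullback} and Theorem~\ref{thm:functsympull}. Concretely, I would proceed in four steps. \emph{Step 1: the Flag obstruction theory.} Just as $I^\bullet_{\MS}\cup_\times\wt Q^\bullet$ was used in Lemma~\ref{lem:FFMSrig}, I form the CY4 dg-quiver $\wt I^\bullet_{\Flag}\cup_\times\wt Q^\bullet$ (the analogue of \eqref{eq:fullobsQMS} but with the $v_0$-vertex, the edges $e_{-1},e_0$, the relation $\BuOr{\rho}$ and its dual $\BuOr{\rho^*}$ erased, so that there is no $\BuOr{-1}$-term and the superpotential loses $\BuOr{\mH_{\MS}}$). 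The erasing map of dg-path algebras and the inclusion of $\CC\wt Q^\bullet$ give a $-2$-shifted Lagrangian-type diagram $\bmN_{\wt I_{\Flag}}\leftarrow\bmN_{I_{\Flag}}\rightarrow\bmM$ exactly as in \eqref{eq:quiverlagcor}, and taking cotangent triangles along $\pi_{\Flag}$ produces the self-dual homotopy commutative square
\begin{equation*}
\begin{tikzcd}
\arrow[d]\LL_{\pi_{\Flag}}[-1]\arrow[r]&\EE\arrow[d]\\
0\arrow[r]&\LL^\vee_{\pi_{\Flag}}[3]
\end{tikzcd}\,,
\end{equation*}
whose image under Proposition~\ref{prop:sympullback} is an obstruction theory $\FF_{\Flag}$ on $\mN_{\Flag}$. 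The key computational input is that $\LL_{\pi_{\Flag}}$ is represented by the quiver picture \eqref{eq:Lrela} (without the $v_0$, $\BuOr{\rho}$ data), which is symmetric under $(-)^\vee[2]$ up to adding the dual red/green edges --- this is precisely the content of the ``$-2$-shifted cotangent bundle'' terminology of Definition~\ref{def:CY4quiver}.5).

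\emph{Step 2: recovering $\FF$ along $\pi_{\Flag/\JS}$ and $\FF_{\MS}$ along $\pi_{\MS/\Flag}$.} For \eqref{eq:JStoFLpullbackrig}, I note that $\pi_{\Flag/\JS}=\pi_{\Flag}\circ(\text{inclusion})$ factors through $\pi_{\JS}$ --- i.e. the composition $\mN_{\Flag}\xrightarrow{\pi_{\Flag/\JS}}\mN_k$ followed by $\mN_k\xrightarrow{\pi_{\JS}}\mM_{\mA}$ equals $\pi_{\Flag}$. Lemma~\ref{lem:pairshenafcorrespondence} (in its unrigidified form \eqref{eq:pairtosheaf}) already gives the $\infty$-Pvp diagram for $\pi_{\JS}$ with outcome $\FF_k$; applying the functoriality statement $(\star)$ / Theorem~\ref{thm:functsympull} to the composite $\pi_{\Flag}=\pi_{\JS}\circ\pi_{\Flag/\JS}$ and comparing with the diagram of Step~1 yields the self-dual square \eqref{eq:JStoFLpullbackrig} with outcome exactly $\FF_{\Flag}$. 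The same bookkeeping applied to $\pi_{\MS}=\pi_{\Flag}\circ\pi_{\MS/\Flag}$ (which at the quiver level is just re-attaching the $v_0$ vertex, the edges $e_{-1},e_0$, and the relation) recovers \eqref{eq:FltoMSpullbackrig} with outcome $\FF_{\MS}$, matching Lemma~\ref{lem:FFMSrig}; uniqueness of cones up to contractible choice makes the two constructions of $\FF_{\MS}$ agree. Assumption~\ref{ass:obsonflag}.b) and c) then follow by restricting \eqref{eq:JStoFLpullbackrig} and \eqref{eq:FltoMSpullbackrig} to $\mN^{\sig}_{\un d,\al}$ and $\mN^{\sig}_{(1,\un d),\al}$, which is legitimate since these are open substacks by the discussion after Example~\ref{ex:JSpairs}.

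\emph{Step 3: additivity under $\mu_{\Flag}$.} The direct-sum map $\mu_{\Flag}$ corresponds at the level of representations to block-diagonal inclusion, so $\mu_{\Flag}^*\LL_{\pi_{\Flag}}=\LL_{\pi_{\Flag}}\boxplus\LL_{\pi_{\Flag}}\oplus\Theta_{\mN_{\Flag}/\mM_{\mA}}\oplus\sigma^*\Theta_{\mN_{\Flag}/\mM_{\mA}}$ by the explicit form \eqref{eq:ExtNQk}, matching the $\al$-sheaf additivity \eqref{eq:sumEE}. Pulling back the square of Step~1 along $\mu_{\Flag}$ and using $\infty$-functoriality of cones, the precomposition-with-$\mathfrak s$ map furnishes the homotopy commutative diagram \eqref{eq:extendTheta}, and Proposition~\ref{prop:sympullback} applied fibrewise produces a complex $\Theta_{\Flag}$ with $\mu_{\Flag}^*\FF_{\Flag}=\FF_{\Flag}\boxplus\FF_{\Flag}\oplus\Theta_{\Flag}\oplus\sigma^*\Theta_{\Flag}$; one checks directly from Corollary~\ref{cor:sumobstructiontheory} and the self-duality of $\Theta_{\mN_{\Flag}/\mM_{\mA}}[-1]\to\Theta_{\mA}$ that this $\Theta_{\Flag}$ is exactly the one built from \eqref{eq:extendTheta}. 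For Assumption~\ref{ass:obsonflag}.c) one also needs the homotopies in \eqref{eq:flagobsdiag} to be compatible with $\mu_{\Flag_k}$; here this is automatic because all the squares come from genuine morphisms of dg-path algebras, which strictly commute with block-diagonal inclusion.

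\emph{Main obstacle.} I expect the genuinely delicate point to be \emph{Step~2}, and within it the claim that the functoriality machinery of Theorem~\ref{thm:functsympull} actually applies: one must exhibit the compatible $\infty$-lifts (Definition~\ref{def:lifting}) of the two squares for $\pi_{\JS}$ and $\pi_{\Flag/\JS}$, check that the relevant mapping spaces are sufficiently connected (the analogue of ``$\Map(\mO[-1],\mO[3])$ is $3$-connected'' used in Lemma~\ref{lem:pairshenafcorrespondence}) so that the $(\Delta^1)^{\times n}$-diagrams can be assembled, and verify that the cone of $\MM_2[-1]\to\MM_1$ in $(\star)$ is the correct relative cotangent complex $\LL_{\pi_{\Flag}}[-1]$ rather than merely quasi-isomorphic to it in an incompatible way. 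The quiver description makes the underlying complexes completely explicit, so this is ``only'' a matter of organizing the homotopy-coherence carefully; nonetheless it is where the proof has real content, and it is the step that fails for general $\Coh(X)$ (cf. Important remark!~\ref{impremark} and Example~\ref{ex:counterexamples}) --- it works here precisely because for a CY4 dg-quiver the obstruction theory is \emph{strictly} $-2$-shifted cotangent, so the required isotropic lift $\eta$ in \eqref{eq:introstartingIxI} exists on the nose.
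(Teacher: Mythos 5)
Your Steps 1 and 3 are essentially sound and close to the paper's argument: the paper likewise builds $\FF_{\Flag}$ from the CY4 completion $\wt{I}^{\bullet}_{\Flag}\cup_{\times}\wt{Q}^{\bullet}$ via a roof of derived stacks and Lemma \ref{lem:cotangentofQ}, and it obtains the additivity statement by observing that this combined quiver is again CY4, so \eqref{eq:sumEE} and \eqref{eq:DeltaExtLL} apply with $\Theta_{\Flag}=\mExt^{\vee}_{\mD}[-1]|_{\mN_{\Flag}\times\mN_{\Flag}}$.

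The gap is in your Step 2, which is where the real content of the proposition sits. You try to obtain the relative squares \eqref{eq:JStoFLpullbackrig} and \eqref{eq:FltoMSpullbackrig} by running Theorem \ref{thm:functsympull} in the direction i)$\Rightarrow$ii), i.e.\ by ``dividing'' the composite square for $\pi_{\Flag}$ (resp.\ $\pi_{\MS}$) by the one for $\pi_{\JS}$ (resp.\ $\pi_{\Flag}$). That implication is only available once you have exhibited the compatibility data of Definition \ref{def:compatible}: a lift of the diagram \eqref{eq:MM1andMMtoEE}, self-dual null-homotopies for both squares, and a self-dual $3$-simplex \eqref{eq:MM1compatibility} identifying the two null-homotopies of $\ov{\eta_1}^{\wedge}\circ\eta_1^{\wedge}$. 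Your appeal to connectivity of mapping spaces does not settle this: in Lemma \ref{lem:pairshenafcorrespondence} the relevant target was $\mO[3]$ (so $\Map(\mO[-1],\mO[3])$ is $3$-connected), whereas here the target is $\LL^{\vee}_{\pi_{\Flag/\JS}}[3]$, a genuinely nontrivial complex, so neither the null-homotopy nor the $2$-homotopy comparing your Step-1 square with the $\JS$ square is automatic or unique. The paper supplies exactly this missing data by a strict model: it introduces the interpolating dg-quivers $I^{\bullet}_{\Flag/\JS}\cup_{\times}\wt{Q}^{\bullet}$ and $I^{\bullet}_{\MS/\Flag}\cup_{\times}\wt{Q}^{\bullet}$ together with explicit morphisms of dg-path algebras (e.g.\ $e^{(v)}\mapsto e^{(v)}_{r-1}\circ e_{r-2}\circ\cdots\circ e_1$ and the formulas sending $\Ma{(e^{(v)}_{r-1})^*}$, $\Ma{e^*_i}$ to sums of compositions through the flag chain), yielding roofs of derived stacks whose cotangent triangles produce \eqref{eq:JStoFLpullback} and \eqref{eq:FltoMSpullback} directly; Theorem \ref{thm:functsympull} is then only used in the easy composition direction, to check that composing the three relative squares recovers the $\MS$ square of Lemma \ref{lem:FFMSrig}. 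Without constructing these interpolating models (or an equivalent source of the compatibility homotopies), your Step 2 asserts the conclusion rather than proving it, so as written the proposal is incomplete precisely at the step you yourself flag as the main obstacle.
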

\begin{proof}
Consider the unique CY4 quivers $\wt{I}^\bullet_{\Flag}$ and $\wt{I}^\bullet_{\JS}$ 
$$
\includegraphics[scale=0.8]{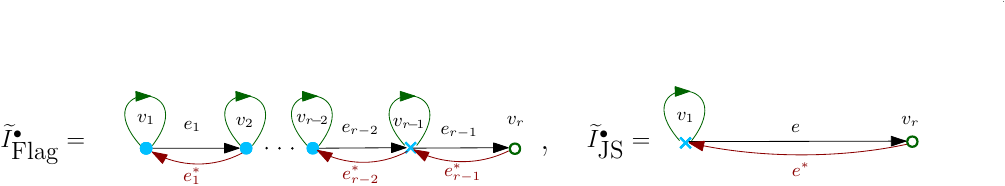}
$$
constructed from $I_{\Flag}$ and $I_{\JS}$ respectively. As represented in Example \ref{ex:simplestCY4QMS}, I attach the original quiver $\wt{Q}^\bullet$ to the vertex $\times$ in all four cases (see also the proof of Lemma \ref{lem:FFMSrig}). The new quivers constructed this way will be labeled by 
$$
\wt{I}^{\bullet}_{\Flag}\cup_{\times}\wt{Q}^{\bullet},\quad I_{\Flag}\cup_{\times}\wt{Q}^{\bullet}, \quad \wt{I}^{\bullet}_{\JS}\cup_{\times}\wt{Q}^{\bullet},\quad \text{and } I_{\JS}\cup_{\times}\wt{Q}^{\bullet}\,.
$$
Furthermore, there will be the quivers
\begin{center}
    \includegraphics[scale=0.9]{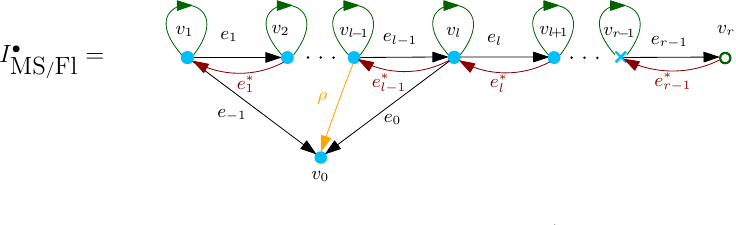}
    \includegraphics[scale=0.9]{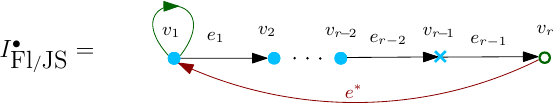}
\end{center}
interpolating between $\wt{I}^{\bullet}_{\MS}, \wt{I}^{\bullet}_{\Flag}$, and $\wt{I}^{\bullet}_{\JS}$. For $I^{\bullet}_{\Flag/\JS}$, the differential of the \FG{loop} at $v_1$ is equal to $-\Ma{e^*}\circ e$ where $e_=e_{r-1}\circ\cdots\circ e_1$. These dg-quivers also come with the combined $I^{\bullet}_{\MS/\Flag}\cup_{\times}\wt{Q}^{\bullet}$ and $I^{\bullet}_{\Flag/\JS}\cup_{\times}\wt{Q}^{\bullet}$, where the latter quiver takes the following form in the case of Example \ref{ex:CY4quiver}:
\begin{center}
    \includegraphics[scale = 0.85]{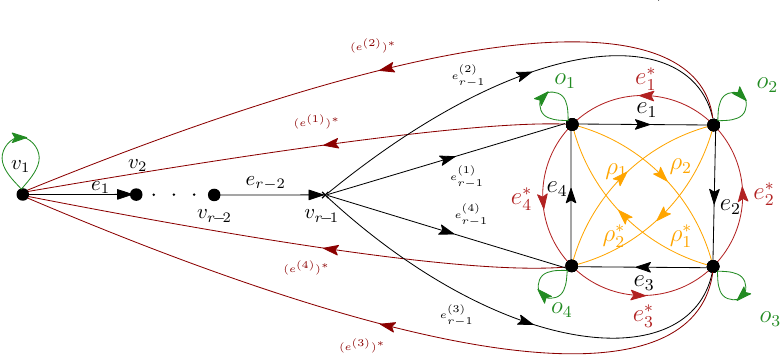}
\end{center}
Following the same notation convention as in \eqref{eq:quiverlagcor} and using $\mbN = \mbN_{I_{\JS}}$, there is then the following diagram of derived stacks refining it:
\begin{equation}
\label{eq:diagofroofs}
\begin{tikzcd}[column sep={4.5em,between origins},
  row sep={4.2em,between origins}]
&&&\arrow[ddll]\bmN_{I_{\MS}}\arrow[dr]&&\\
&&&&\arrow[dl]\bmN_{I_{\Flag}}\arrow[dr, blue, "\boldsymbol{\pi}_{\Flag/\JS}"]&\\
&\arrow[dl]\arrow[dr]\bmN_{I_{\MS/\Flag}}&&\arrow[dl, blue]\bmN_{I_{\Flag/\JS}}\arrow[dr, blue] &&\arrow[dl]\bmN\arrow[dr]& \\
\bmN_{\wt{I}_{\MS}}&&\bmN_{\wt{I}_{\Flag}}&&\bmN_{\wt{I}_{\JS}}&&\bmM
\end{tikzcd}
\end{equation}
Each morphism follows from a morphism of the dg-path algebras of the underlying quivers. The black arrows represent morphisms which just like the ones in \eqref{eq:quiverlagcor} originate from forgetting or adding edges. The \B{blue morphisms} are obtained by composing with degree 0 edges. As \B{$\boldsymbol{\pi}_{\Flag/\JS}$} is just the derived refinement of $\pi_{\Flag/\JS}$ from \eqref{eq:FlagtoProj}, I will only focus on the left-over roof diagram of \B{blue arrows}. The map 
$$
\begin{tikzcd}
\CC\big(\wt{I}^{\bullet}_{\JS}\cup_{\times} \wt{Q}^{\bullet}\big)\arrow[r]&\CC\big(I^{\bullet}_{\Flag/\JS}\cup_{\times} \wt{Q}^{\bullet}\big)
\end{tikzcd}
$$
is obvious, as one maps the edges $e^{(v)}$ to $e^{(v)}_{(r-1)}\circ e_{r-2}\circ \cdots \circ e_{1}$ for each $v\in \Ver$. The morphism 
$$
\begin{tikzcd}
\CC\big(\wt{I}^{\bullet}_{\Flag}\cup_{\times} \wt{Q}^{\bullet}\big)\arrow[r]&\CC\big(I^{\bullet}_{\Flag/\JS}\cup_{\times} \wt{Q}^{\bullet}\big)
\end{tikzcd}
$$
is determined by 
$$
\begin{tikzcd}
    \Ma{(e^{(v)}_{r-1})^*}\arrow[r,mapsto] &e_{r-2}\circ\cdots  \circ e_{1}\circ\Ma{(e^{(v)})^*} &\textnormal{for}\quad v\in\Ver\,,\\
   \Ma{e^*_{i}}\arrow[r,mapsto]&\begin{array}{c}\sum\limits_{v\in\Ver} e_{i-1}\circ \cdots \circ e_{1}\circ \Ma{(e^{(v)})^*}\\
   \circ e^{(v)}_{r-1}\circ e_{r-2}\circ \cdots \circ e_{i+1}\end{array}&\textnormal{for}\quad i=1,2,\ldots, r-2\,.
\end{tikzcd}
$$
Recall here that $\FF$ is the obstruction theory of $\mN$. Using Lemma \ref{lem:cotangentofQ}, one sees that this roof diagram induces a self-dual homotopy commutative diagram
\begin{equation}
\label{eq:JStoFLpullback}
\begin{tikzcd}    \arrow[d]\LL_{\pi_{\Flag/\JS}}[-1]\arrow[r]&\FF\arrow[d]\\
     0\arrow[r]&\LL^\vee_{\pi_{\Flag/\JS}}[3]
 \end{tikzcd}
\end{equation}
in the same way as \eqref{eq:quiverlagcor} gave \eqref{eq:NQtoMAsymmpull}. 

In fact, all the roofs whose leftmost and rightmost terms are in the same row should be $-2$-shifted Lagrangian correspondences, and they give rise to symmetrized $\infty$-pullback diagrams. Furthermore, the roof between $\bmN_{\wt{I}_{\MS}}$ and $\bmN_{\wt{I}_{\Flag}}$ and the roof between $\bmN_{\wt{I}_{\JS}}$ and $\bmM$ produce  
    \begin{equation}
\label{eq:FltoMSpullback}
\begin{tikzcd}    \arrow[d]\LL_{\pi_{\MS/\Flag}}[-1]\arrow[r]&\FF_{\Flag}\arrow[d]\\
     0\arrow[r]&\LL^\vee_{\pi_{\MS/\Flag}}[3]
 \end{tikzcd}
\end{equation}
and \eqref{eq:pairtosheaf} respectively. Due to the diagram \eqref{eq:diagofroofs}, one knows that composing \eqref{eq:pairtosheaf}, \eqref{eq:JStoFLpullback}, and \eqref{eq:FltoMSpullback} in the sense of Theorem \ref{thm:functsympull} gives the diagram \eqref{eq:NQtoMAsymmpull} constructed from the roof \eqref{eq:quiverlagcor} because it is equal to the largest roof in \eqref{eq:diagofroofs}.

I am now left to prove the last sentence of the lemma. Because $\wt{I}^{\bullet}_{\Flag}\cup_{\times}\wt{Q}^{\bullet}$ is again a CY4 quiver, I can use \eqref{eq:sumEE} together with \eqref{eq:DeltaExtLL}. Choose  $$\Theta_{\Flag} = \mExt_{\mD}^\vee[-1]|_{\mN_{\Flag}\times \mN_{\Flag}}$$
for $\mD$ the dg-category of  $\wt{I}^{\bullet}_{\Flag}\cup_{\times}\wt{Q}^{\bullet}$ dg-modules. Then, this complex can be described using \eqref{eq:extendTheta} due to Lemma \ref{lem:cotangentofQ}\footnote{More precisely its generalization to the $\mExt$-complex which can also be easily derived.}.
\end{proof}
\subsection{Absence of $\FF_{\Flag}$ for sheaves in general and examples}
\label{sec:sheaves}
I will now fix $\mA = \Coh(X)$ for a projective Calabi--Yau fourfold $X$ and the data from Example \ref{ex:sheavesandquivers}.i). The question of the existence of obstruction theories from Assumption \ref{ass:obsonflag} is more intricate in this case. Lemma \ref{lem:pairshenafcorrespondence} and \eqref{eq:pairtosheaf} imply that $\FF^{\sig}_{1,\al}$ and $\FF^{\rig}_{1,\al}$ can both be constructed from self-dual homotopy commutative diagrams. Thus, one has the required diagrams \eqref{eq:Ndalobstruction} and \eqref{eq:Ndalrigobstruction} for the projective bundle $\pi^{\JS}_{\al}: N^{\JS}_{k,\al}\to M^{\sig}_{\al}$ in the absence of strictly $\sigma$-semistables. The semistable objects for $I=I_{\Flag}$ and the stability \eqref{eq:barphi} form flag bundles over $M^{\sig}_{\al}$, so the moduli spaces $N^{\sig}_{\un{d},\al}$ can be described as iterated projective bundles over $M^{\sig}_{\al}$ in this case. One may hope, therefore, that there would be an analogue of \eqref{eq:pairtosheaf} and Lemma \ref{lem:pairshenafcorrespondence} for $Q_{\Flag}$. 

One way I was hoping to approach the construction of $\FF^{\sig}_{\un{d},\al}$ was by trying to mimic the situation in \eqref{eq:constructQMSCY4} which would have been too difficult to do directly because 
$$
\EE = \RHom_{\mM_{\mA}}(\mE,\mE)^\vee[-1]
$$
is too complicated to understand in full generality. Instead, one can already start from 
\begin{center}
\includegraphics[]{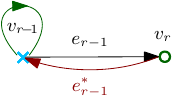}
\end{center}
and attach the rest of the obstruction theory at $\stackrel{v_{r-1}}{\FG{\times}}$ rather than at $\stackrel{v_{r}}{\FG{\circ}}$. Explicitly, this translates to constructing an $\infty$-Pvp diagram along the natural projection 
$$
\begin{tikzcd}
\pi_{\Flag/\mN_k}: \mN_{\Flag_k}\arrow[r]&\mN_{k}
\end{tikzcd}
$$
forgetting all the arrows to the left of $\stackrel{v_{r-1}}{\times}$. Such a diagram would be determined by a self-dual homotopy commutative diagram of the form
\begin{equation}
 \begin{tikzcd}
     \arrow[d]\LL_{\pi_{\Flag/\mN_k}}[-1]\arrow[r]&\FF_k\arrow[d]\\
     0\arrow[r]&\LL^\vee_{\pi_{\Flag/\mN_k}}[3]
 \end{tikzcd}\,.
 \end{equation}
Using \eqref{eq:obsonNk} describing $\FF_k$, one concludes that this is equivalent to requiring that both diagrams
$$
\begin{tikzcd}
     \arrow[d]\LL_{\pi_{\Flag/\mN_k}}[-1]\arrow[r]&\mV\otimes \mV^*\otimes\HH\arrow[d]\\
     0\arrow[r]&\LL^\vee_{\pi_{\Flag/\mN_k}}[3]
 \end{tikzcd}\qquad\qquad \begin{tikzcd}
     \arrow[d]\LL_{\pi_{\Flag/\mN_k}}[-1]\arrow[r]&\mV\otimes \mV^*\otimes\HH\arrow[d,"{\delta^\vee_k[2]}"]\\
     0\arrow[r]&\LL^\vee_{\pi_{k}}[3]
 \end{tikzcd}
 $$
 can be made homotopy commutative. The first diagram is satisfied immediately because one can write 
 $$\mV\otimes \mV^*\otimes\HH = \mV\otimes\mV^*[-1]\oplus \mV\otimes\mV^*[3]\,,$$
 and the maps factor through either of the factors on the right-hand side. I mistakenly believed that I had a proof of the same factorization for the morphisms in the second diagram which would have implies homotopy commutativity.. In fact, it would have been enough to prove that the composition \begin{equation}
 \label{eq:compvanishwrong}
 \begin{tikzcd}\mV\otimes \mV^*[-1]\arrow[r]&\mV\otimes \mV^*\otimes\HH\arrow[r]& \LL^{\vee}_{\pi_{\Flag/\mN_k}}[3]\end{tikzcd}\end{equation}
 is null-homotopic. Below, I will provide two counterexamples to this statement. Alternative constructions of $\FF^{\sig}_{\un{d},\al}$ that I have attempted always run into the same issue. This convinced me that $\FF^{\sig}_{\un{d},\al}$ does not exist in general due to the obstruction in \eqref{eq:compvanishwrong}. I would like to thank Sasha who provided the first example in \cite{MOsasha}.
 \begin{example}
 \label{ex:counterexamples}
 All the situations below satisfy $H^0\big((F(D_k)\big) \cong \CC$ for all sheaves $F$ considered and $D_k$ sufficiently positive as in Example \ref{ex:sheavesandquivers}.i). This would make it enough to work with $Q_{\JS}$, so Lemma \ref{lem:pairshenafcorrespondence} would provide the obstruction theory of the necessary form. However, suppose that $\alpha$ is the class of such $F$'s and the condition \eqref{eq:compvanishwrong} is satisfied for $n\alpha$. As there is a direct sum map
 $$
 \begin{tikzcd}
 \mu_k: \mN_k\times \mN_k\arrow[r]&\mN_k
 \end{tikzcd}
 $$
 inducing $\prod_{i=1}^n\mN_{1,\al}\to \mN_{n,n\al}$, one can restrict the composed morphism along the latter. The vanishing of \eqref{eq:compvanishwrong} on $\mN_{n,n\al}$ would imply it for $\mN_{1,\al}$. Therefore, the cases considered below are valid counter-examples.
     \begin{enumerate}
         \item Consider the moduli space $M_{p}$ parametrizing sky-scrapper sheaves (as $p\in K^0(X)$ here stands for the K-theory class of such sheaves). In this case, the divisor $D_k$ can be chosen to be empty. The Joyce--Song pair moduli space is given by
         $$N^{\JS}_{k,\al} = \Hilb^1(X) = X\,.$$
         The universal sheaf on $X\times X$ is the structure sheaf of the diagonal $\mE = \mO_{\Delta(X)}$. The first map in \eqref{eq:compvanishwrong} gives rise to the element 
         $$
         \tau\in H^4\big(H^\bullet(\mO_X)\otimes \mO_X\big) \cong  H^4(\mO_X)\otimes \CC\oplus \CC\otimes H^4(\mO_X)
         $$
         corresponding to $(1,0)$ under the isomorphisms $H^4(\mO_X)\cong H^0(\mO_X)^*\cong \CC$. Pushing the universal pair $\mO_{X\times X}\to \mO_{\Delta(X)}$ forward along the projection to the second factor gives 
         $$
         \begin{tikzcd}
             H^{\bullet}(\mO_X)\otimes \mO_X\arrow[r]&\mO_X\,.
         \end{tikzcd}
         $$
         Applying $H^4(-)$ to the above morphism gives the direct sum map $H^4(\mO_X)\oplus H^4(\mO_X)\to H^4(\mO_X)$.
         Acting with it on $\tau$ recovers the composition \eqref{eq:compvanishwrong} and shows that it is non-zero.  
         \item To show that the above considerations are not limited to 0-dimensional sheaves, I will consider line bundles here. I also reduce the dimension by working with elliptic curves $E$ where the analogous problem can be formulated. To get back to four dimensions, one can take products of elliptic curves. 

         Consider the Jacobian of $E$ which is the moduli space of degree 0 line bundles. It is naturally isomorphic to $E$ and the universal line bundle on $E\times E$ is given in terms of $E_2 = \{\pt\}\times E$ by $\mO_{E\times E}(\Delta - E_2)$. I used $\pt$ to denote a point of $E$ and $\Delta = \Delta(E)$ for simplicity. It is now sufficient to choose $D_k = E_2$ which leads to $N^{\JS}_{k,\al} = E$ with the universal pair \eqref{eq:universaltriple} after tensoring with $\mO(E_2)$ given by 
         \begin{equation}
         \label{eq:universalpairPicE}
         \begin{tikzcd}
         \mO_{E\times E}\arrow[r]&\mO_{E\times E}(\Delta)\,.
         \end{tikzcd}
         \end{equation}
         The first part of the argument describing $\tau\in H^1(\mO_E)\otimes \CC\oplus \CC\otimes H^1(\mO_E)$ is identical. Taking $H^1(-)$ of \eqref{eq:universalpairPicE} again induces the direct sum $H^1(\mO_E)\oplus H^1(\mO_E)\to H^1(\mO_E)$ showing that the analogue of \eqref{eq:compvanishwrong} is non-zero. 
         \item The last example continues along the same direction as I look at elliptic fibrations $\phi: X\to B$ where $X$ is CY4 and $B$ is smooth. Consider the moduli space $M_E$ of 1-dimensional sheaves with Chern character $(0,0,0,E,0)$ for the fiber class $E$. If the Picard rank of $B$ is 1, then the argument in \cite[Lemma 2.1]{CMT} implies that $M_E\cong X$. Suppose that $\phi$ admits a section with the image $H_{\phi}$, and choose $D_k=H_{\phi}$ leading to 
         $N^{\JS}_{D,\al} = X$. Taking the universal pair on $X\times X$ tensored by $\mO(H_\phi)$ on the first factor and projecting to the second factor gives 
         $$
         \begin{tikzcd}
         H^\bullet(\mO_X)\otimes \mO_X\arrow[r]&\mO_X
         \end{tikzcd}\,.
         $$
    In the same manner as in i), one concludes that \eqref{eq:compvanishwrong} does not vanish. 
     \end{enumerate}
 \end{example}
 The above counterexamples include dimension 0, dimension 1 and torsion-free sheaves on Calabi--Yau fourfolds. As such, they should provide compelling evidence that the obstruction theories $\FF_{\Flag}$ could exist only in special cases. The following example provides ones such situation.
 \begin{example}
 \label{ex:fibrations}
     Let $\phi: X\to B$ be a flat surjective morphism with $B$ a smooth base of strictly lower dimension than 4. Let $\mM^{\sig}_{\al}$ be a moduli stack of $\sigma$-semistable sheaves of class $\al$. Suppose that each such sheaf has the form $E = \phi^*E_B$, i.e., one can identify $\mM^{\sig}_{\al}$ with a moduli stack of sheaves on $B$. Let $L$ be a sufficiently ample line bundle on $X$ such that $R\phi_*(L)=T$ is a vector bundle in degree $0$.  In the definition of $\mN_{k}$ set $\mO_X(D_k)=L$. 
     
     Denote by $\phi_{\mM}: X\times \mM^{\sig}_{\al}\to B\times \mM^{\sig}_{\al}$ the action of $\phi$ on the first factor. The second morphism in \eqref{eq:compvanishwrong} takes the form
     $$
     \begin{tikzcd}
    \mV^*\otimes \mV \otimes H^{\bullet}(\mO_X)\arrow[r]&\RHom_{\mM^{\sig}_{\al}}\big(\mO_{X\times \mM^{\sig}_{\al}}, \phi^*_{\mM}(\mE_B)\otimes L\big)
    \end{tikzcd}
     $$
     for the universal sheaf $\mE_B$ on $B\times \mM^{\sig}_{\al}$. Using \cite{Kollar2}, one knows that $$R\phi_* \mO_X= \mO_B \oplus \bigoplus_{i>0}R^i\phi_*(\mO_X)[-i]\,,$$ which implies that the above can be factored as 
     $$
     \begin{tikzcd}
    \mV^*\otimes \mV\otimes H^{\bullet}(\mO_X)\arrow[r]& \mV^*\otimes \mV\otimes  H^{\bullet}(\mO_B)\arrow[r]&\RHom_{\mM^{\sig}_{\al}}\big(\mO_{B\times \mM^{\sig}_{\al}}, \mE_B\otimes T\big)
    \end{tikzcd}\,.
     $$
     Consequently, the composition \eqref{eq:compvanishwrong} is zero. 
 \end{example}
 This situation is realized for example when $\phi:X\to B$ is an elliptic fibration. In this case the moduli space of PT stable pairs $\mO_X\to F$ with $\ch(F)=(0,0,0,dE,0)$ can be identified with $\Hilb^d(B)$ via pullback along $\phi$. Here the flags would be constructed for $H^0\big(F(k)\big)$ as was already done in \cite{mochizuki}, so the discussion in Example \ref{ex:fibrations} still applies. More generally, this works for the surface counting theories of \cite{GJL, BKP} under some restrictions on geometry. This will be the focus of one of the future works.

 Another example utilizes the spectral correspondence when $X$ is a total space of a canonical bundle over a three-fold. I will postpone the precise discussion of this set up to §\ref{sec:spectralWC}, where I will also give the first complete wall-crossing formula between stable pair invariants on local Calabi--Yau fourfolds. 
 \subsection{Minor adaptation for pairs}
 \label{sec:pairWC}
In many situation, one wants to work with different hearts $\mB\subset D^b(X)$ other than $\mA=\Coh(X)$. Disappointingly, it seems difficult to find objects $(1,0)$ from Definition \ref{Def:quiverpairs} in such generality. Instead, a semi-stable object in $\mB$ is represented by an explicit complex if such a description is unique. Due to \cite{PT, todacurve} and \cite{GJL, BKP}, this is the case for all the situations that will be considered in the sequels. 
\begin{example}
\label{ex:DTPT}
For a fixed $X$, set $\Coh_{\leq d}(X)$ to be the full subcategory of sheaves supported in dimension $\leq d$ and $\Coh_{> d}(X)$ to be the full subcategory of sheaves with no torsion in dimension $\leq d$. Both \cite{Bayer} and \cite{todacurve} consider the 3-fold analogue of the heart
$$
\mB = \big\langle \Coh_{>1}(X)[1], \Coh_{\leq 1}(X)\big\rangle  
$$
and describe a family of stability conditions $t\mapsto \sigma_{t}$ for $t=[0,1]$ such that all $\sigma_{t}$-semistable objects $P^\bullet \in \mB$ with Chern character
$$
\ch(P^\bullet) = (-1,0,0,[C],n)\in H^*(X)\qquad \text{for}\quad [C] \in H^6(X)
$$
have the form $P^\bullet = \{\mO_X\xrightarrow{s} F\}$ for a one dimensional sheaf $F$. Here, I use \cite{todacurve} where weak stability conditions of Example \ref{ex:assstab}.ii) have been introduced and used for this purpose. When
\begin{itemize}
    \item $t=0$, the $\sig_1$-semistable objects of this form are those that satisfy $\coker(s)\in \Coh_{\leq 0}(X)$ and $F\in \Coh_{>0}(X)$. They are called PT-stable pairs. 
    \item $t=1$, the $\sig_0$-semistable objects of this form are equivalently all those for which $s$ is surjective, so $P^\bullet \cong P$ for an ideal sheaf $P$.
\end{itemize}
\end{example}
When working with similar pairs, I will assume here that $F$ is torsion -- a detail that will be removed in the sequel. This fixes their determinants, so that the moduli spaces are contained in $\mM_{\mB,L}$ -- the moduli stack of perfect complexes in $\mB$ with determinants given by some fixed line bundle $L$. When $X$ is not compact, such a stack may not be contained in $\mM_X$ from Example \ref{ex:notproperdg}.1). In this case, one can either take a smooth compactification of $X$ or work directly with the moduli stacks of semistable objects with fixed determinats as explained in Remark \ref{rem:afterpairs}.

For now, I will assume that $X$ is compact. I will remove this assumption in Remark \ref{rem:afterpairs}. Fix a heart $\mB$ in $D^b(X)$ with the data of Definition \ref{def:categoryA} satisfying Assumption \ref{ass:orientation}. Choose a set of stability conditions $W$ for which Assumption \ref{ass:stab} holds after replacing $\mM_{\mB}$ by the appropriate $\mM_{\mB,L}$ in g). I will denote by $\mM^{\sig}_{\beta,L}$ the moduli stacks of $\sigma$-semistable objects in $\mB$ of class $\be\in \ov{K}(\mB) =\ov{K}(\mA)$ with fixed determined $L$. The class of the torsion sheaves $F$ will be labelled by $\al\in \ov{K}(\mA)$. The next assumption and its non-compact version in Remark \ref{rem:afterpairs} are necessary for Theorem \ref{thm:BWC} to hold. 
\begin{assumption}
\label{ass:pairWC}
    Suppose that there is an object $O\in \Coh(X)$ and consider the abelian category $\mB_O$ of triples $(V_O,F,s)$ where $V_O\in \Vect$, $F\in\Coh_{\cs}(X)$ and $s: V_O\otimes O\to F$ is a morphism of sheaves. 
    Let $W^P$ be a set of stability conditions on $\mB_O$ with a bijection $(-)^P: W\to W^P$, and denote by $\mN^{\sig^P}_{d,\al}\subset \mN_O$ the moduli substack of $\sig^P$-semistable $(V_O,F,s)$ with $\dim(V_O) = d$ and $\llbracket F\rrbracket = \al\in \ov{K}\big(\mA\big)$. The above data is chosen such that
    \begin{enumerate}[label=\alph*)]
        \item for all $\beta\in \msE(\mB)\subset \ov{K}(\mB),\sig\in W$, there exists $d \in\{0,1\}$ and $\al\in\ov{K}\big(\mA\big)$ such that there are isomorphisms of stacks
        \begin{equation}
        \label{eq:pairtocomplexiso}
       \big(\mN^{\sig ^P}_{d,\al}\big)^{\rig}\cong \begin{cases}
         \mM^{\sig}_{\be,\det(O)}  &\text{if }d=1\,,\\
         \\
    \big(\mM^{\sig}_{\be}\big)^{\rig}  &\text{if }d=0 \,.
        \end{cases}
        \end{equation}
        The isomorphism is induced by mapping each $V_O\otimes O\xrightarrow{s}F$ to the corresponding complex in degrees $[-1,0]$ just as in \eqref{eq:functorC}.
        \item there exists a set of sufficiently positive ample divisor $\{D_k\}_{k\in K}$ in $X$ satisfying the analogue of Definition \ref{Def:quiverpairs} for $(1,0) = \mO_X(-D_k)[-1]$. Explicitly, this means the following:
        \begin{itemize}
            \item Let $\mB_{O,k}\subset \mB_O$ be the subcategory of objects $(V_O,F,s)$ satisfying 
            $$
            H^i\big(F(D_k)\big) = 0\qquad \text{for}\quad i>0\,,
            $$
           and $\mN_{O,k}$ its moduli stack. For each $\be\in\msE(\mB)$ and $k\in K$, there exists a connected open subset $W_{\beta,k}\subset W$ such that $\mN^{\sig^P}_{d,\al}\subset \mN_{O,k}$ for any $\sig\in W_{\beta,k}$ and $(d,\al)$ as in \eqref{eq:pairtocomplexiso}. The union of $W_{\beta,k}$ over all $k\in K$ for a fixed $\beta$ is equal to $W$. 
            \item Define the category $\mbB_{O,k}$ of objects $(V_O,F,s, V_{\times},f)$ where $(V_O,F,s)\in \mB_{O,k}$, $V_{\times}\in \Vect$, and 
            $$
            \begin{tikzcd}
            f: V_{\times}\otimes \mO_X(-D_k)\arrow[r]& F
            \end{tikzcd}
            $$
            is a morphism. The moduli stack of these objects is denoted by $\mbN_{O,k}$. 
\item For the quivers $I=I_{\JS}, I_{\Flag}$, and $I_{\MS}$ denote by $\mbB_{O,I_k}$ the categories whose objects are again pairs consisting of a representation of $\mathring{I}$ and an object in $\mbB_{O,k}$ such that the vector spaces at the connecting vertex $\times$ are identified. The associated moduli stacks will be denoted by $\mbN_{O,I_k}$. Fixing $\un{\mu}$ and $\lambda$ as in \eqref{eq:muicond}, reintroduce the stability condition $\sig^{P,\lambda}_{\mu}$ from \eqref{eq:barphi} constructed this time from $\sig^P$ on $\mB_{O}$.  The moduli stacks of $\sig^{P,\lam}_{\mu}$-semistable objects in $\mbB_{O,I_k}$ for a dimension vector $\un{d}$ of $\mathring{I}$ and $\beta\in \msE(\mB)$ as in \eqref{eq:pairtocomplexiso} will be denoted by $\mbN^{\sig^P}_{\un{d},\be}$.  These need to be proper whenever there are no strictly semistables.
        \end{itemize} 
        \item Just as in §\ref{sec:assonFlagMS}, there are natural projections  
            \begin{equation}
            \label{eq:OprojmN}
            \begin{tikzcd}[column sep=large]
\mbN_{O,\MS_k}\arrow[r,"\pi_{\MS/\Flag}"]&\mbN_{O,\Flag_k}\arrow[r,"\pi_{\Flag/ \JS}"]&\mbN_{O,\JS_k}\arrow[r,"\pi_{\JS}"]&\mN_{O}\
            \end{tikzcd}
            \end{equation}
            and their rigidifications.
Due to the isomorphism \eqref{eq:pairtocomplexiso} and the open embeddings $$\mM^{\sig}_{\be,\det(O)}\subset \mM_{X,\det(O)}\,,\qquad \mM^{\sig}_{\be}\subset \mM_X\,,$$ there are CY4 obstruction theories $\FF^{\sig^P}_{d,\al}$ on $\big(\mN^{\sig^P}_{d,\al}\big)^{\rig}$. One assumes that there exist symmetrized $\infty$-pullback diagrams of $\FF^{\sig^P}_{d,\al}$ along the compositions of $\pi^{\rig}_{\JS}$, $\pi^{\rig}_{\Flag/\JS}$ and $\pi^{\rig}_{\MS/\Flag}$ when restricted to the appropriate moduli substacks. This condition is the pair version of \eqref{eq:pairtosheaf} and Assumption \ref{ass:obsonflag}.b) and c). As such, one also requires that the obvious analogue of the additivity of obstruction theories in \eqref{eq:additivityofFFflag} holds.
    \end{enumerate}
\end{assumption}
\begin{remark}
\label{rem:afterpairs}
  \leavevmode
\vspace{-4pt}
\begin{enumerate}[label=\roman*)]
\item One needs to change the formulation slightly when $X$ is not compact. In this case, the moduli stack $\mN_O$ is constructed for some $O$ in $\Coh(X)$, but $F$ has to be in $\Coh_{\cs}(X)$. The classes $\al$ one considers live therefore in $\ov{K}_{\cs}:=\ov{K}\big(\Coh_{\cs}(X)\big)$ from Example \ref{ex:sheavesandquivers}. The category $\mB_O$ is now the starting object instead of $\mB\subset D^b\big(\Coh(X)\big)$. Thus, it is the set $W^P$ that is determined, and it needs to satisfy Assumption \ref{ass:stab} with respect to $\mB_O$. The admissible classes are now replaced by $(d,\al)$ for $d=0,1$ and $\al\in \msE_O$ where $\msE_O$ is some fixed subset of $\ov{K}_{\cs}$.

In this case, the most general approach is to choose a smooth compactification $\ov{X}$ with an inclusion $X\xhookrightarrow{\iota}\ov{X}$ and $\ov{O}\in \Coh(\ov{X})$ such that $\ov{O}|_X=O$. For each triple $(V_O,F,s)\in \mB_O$, there exists, by adjunction, the map $\ov{s}: V_O\otimes \ov{O}\to F$ and a corresponding 2-term complex $\ov{P}^\bullet\in D^b(\ov{X})$ in degrees $[-1,0]$. Let $\ov{\Omega}^{\rig}_C: \mN^{\rig}_O\to \mM_{\ov{X},\det(\ov{O})},\mM^{\rig}_{\ov{X}}$ be the induced map of stacks. Assumption \ref{ass:pairWC}.a) now becomes the requirement that the restriction of $\ov{\Omega}^{\rig}_C$ to $(\mN^{\sig^P}_{d,\al})^{\rig}$ for $\al\in \msE_O$ is
$$
\begin{cases}
   \textnormal{an open embedding into }\mM_{\ov{X},\det(\ov{O})}&\textnormal{if } d=1\,,\\
   &\\
    \textnormal{an open embedding into }\mM^{\rig}_{\ov{X}}&\textnormal{if } d=0\,.
\end{cases}
$$
This determines an obstruction theory on such $(\mN^{\sig^P}_{d,\al})^{\rig}$ by pullback. It is assumed to be CY4.

Assumption \ref{ass:pairWC}.b) and c) remains unchanged except that one may weaken the properness condition as in the next point. 
\item The Assumption \ref{ass:pairWC} and its version for a non-compact $X$ in i) both make sense in the presence of a $\T$-action if one adds the condition that $O$, $X\xhookrightarrow{\iota}\ov{X}$, and $\ov{O}$ are $\T$-equivariant. Additionally, it is only required that the $\T$-fixed point loci of $\mbN^{\sig^P}_{\un{d},\be}$ are proper.
\item 
    For Assumption \ref{ass:pairWC}.a) to be satisfied, one already needs to put restrictions on $O$. For example in \cite{bojko3}, I have assumed O to be locally-free, rigid, and simple (see \cite{Ricolfi} for the case of Calabi--Yau threefolds). 
    \end{enumerate}
\end{remark}
 Clearly, Assumption \ref{ass:pairWC}.c) is even more restrictive than the the 
 condition from \eqref{eq:compvanishwrong} for sheaves. Due to Example \ref{ex:counterexamples}, it will not hold in full generality, but I will consider one situation when it is satisfied as is.

In §\ref{sec:spectralWC}, I will work with Calabi--Yau fourfolds obtained as total spaces of canonical bundles. In this case, one can use spectral correspondence to prove Assumption \ref{ass:pairWC}.c) and thus also wall-crossing. In fact, this situation will also cover the Joyce--Song wall-crossing formula \eqref{eq:JSWC} for compactly supported sheaves. For this purpose, it is more natural to work directly with the stack $\mN_{k}$ from Example \ref{ex:sheavesandquivers}.i) for a fixed divisor $D_k$. The consequent small deviation from Assumption \ref{ass:pairWC} is detailed in the next example.
\begin{example}
    Consider the categories $\mB_k$ from Example \ref{ex:sheavesandquivers}.i). However, this time I will restrict myself to pairs $V\otimes \mO_X(-D_k)\to F$ where $F$ is $\sigma$-semistable for a fixed $\sigma\in W$ and $\phi(F)=\phi$ for a fixed value $\phi\in S$. This smaller category will be denoted by $\mB^{\phi}_k$ and its moduli stack by $\mN^{\phi}_k$. For simplicity, I assume here that for each $\al\in \msE(\mA)$, $\phi(\al) = \phi$, and $\sig\in W_{\al,k}$.
    
    For each $k$ choose another divisor $D^+_{k}$ such that $D^+_{k}-D_k$ is sufficiently positive and $H^i\big(F(D^+_k)\big)=0$ for all $F\in \mM_{\mA_k}$ and $i>0$. Then $\mbB_{k, D^+_k}$ is defined by replacing in the construction of $\mbB_{O, k}$ in Assumption \ref{ass:pairWC}.b) the object $O$ by $\mO_X(-D_k)$, using $\mO_X(-D^+_k)$ for the additional framing at $\times$, and by starting from $\mB^{\phi}_k$ instead of $\mB_{O,k}$. 
    In the same way, one can also define the categories $\mbB_{k,I^+_{k}}$ replacing $\mbB_{O,I_{k}}$ in the present situation. Their moduli stacks will be labeled $\mbN_{k,I^{+}_k}$. Due to the arguments in \cite[§12.6, §12.7]{JoyceSong}, there is a CY4 obstruction theory $\FF^{\rig}_k$ on $\big(\mN^{\phi}_k\big)^{\rig}$. Thus, one can still formulate Assumption \ref{ass:pairWC}.c) when restricted to the following family of stability conditions on $\mB^{\phi}_k$: 
    $$
   \textnormal{for } t\in [-1,1] \textnormal{ set  } \sigma_{t}(d,\al) = \begin{cases}
     t&\text{if}\quad d\neq 0\,,\\
     0&\text{if}\quad d=0\,.
    \end{cases}
    $$
Note that the $\sigma_{t}$ semistable objects in $\mB^{\phi}_k$ are given as follows:
\begin{itemize}
    \item for $t>0$ and the class $(1,\al)$, they are precisely the Joyce--Song stable pairs from Example \ref{ex:JSpairs},
    \item for $t=0$, all objects of $\mB^{\phi}_k$ are semistable,
    \item for $t<0$, the only semistable objects have class $(d,0)$ or $(0,\al)$.
\end{itemize}
The above, therefore, describes a family of stability conditions that leads to Joyce--Song wall-crossing. It was already discussed in \cite[Appendix A]{bojko3} where I explained briefly why Assumption \ref{ass:stab} is satisfied. The question of properness from Assumption \ref{ass:pairWC}.b) was also addressed there. This just leaves Assumption \ref{ass:pairWC}.c) to be checked. Having done so, one obtains the appropriate virtual fundamental classes of enhanced master spaces used in the proof of the wall-crossing in \eqref{eq:JSWC} for sheaves. One example of this is discussed in §\ref{sec:spectralWC}. 
\end{example}
\subsection{Well-defined invariants counting torsion-free sheaves}
\label{sec:proof}
In this subsection, I move away from Assumption \ref{ass:obsonflag} and focus on Problem (II) from the introduction which corresponds to proving Assumption \ref{ass:welldef}. For this purpose, I focus on torsion-free semistable sheaves on a projective CY fourfold $X$. 

I now fix the heart $\mA=\Coh(X)$ and its stability condition $\sigma$ such that all $\sigma$-semistable $E$ of positive rank are torsion-free. Moreover, I will require that the rank function from Assumption \ref{ass:stab}(e) is determined by the usual rank of sheaves. I also set $\msE(\mA)$ to be the set of all $\alpha$ with $\Rk(\al)>0$. This example includes slope and Gieseker stability. For an ample divisor $D$, apply the construction from Example \ref{ex:sheavesandquivers}.i) to produce the moduli stack $\mN_{D}$. If $D_1, D_2$ are two such divisors, the resulting moduli spaces $N^{\JS}_{D_1,\al}$ and $N^{\JS}_{D_2,\al}$ give rise to the invariants
\begin{equation}
\label{eq:D1D2invs}
\blangle \mM^{\sig}_{\al}\brangle^{D_1}, \blangle \mM^{\sig}_{\al}\brangle^{D_2} \in L_*
\end{equation}
as in Definition \ref{def:Masik}. In Theorem \ref{thm:independence!} below, I reduce the equality of these two invariants to \eqref{eq:JSWC}, which lifts the defining formula \eqref{eq:Masigdef} and will be proved for sheaves in an upcoming work \cite{BKLT}.

Recall that for each point  $I_D=\big[\mO_X(-D)\to F\big]$ of $N^{\JS}_{D, \al}$\footnote{As I am working with sheaves here and no longer need to consider the quivers from Definition \ref{Def:quiverpairs}, I will reserve the letter $I$ for pairs instead of $P$.}, the usual definition of a trace map  
$$
\begin{tikzcd}
\RHom(I_D,I_D)\arrow[r, shift left=0.2em, "\Tr"]&\arrow[l,shift left=0.2em,"\id"]\mO_X
\end{tikzcd}
$$
satisfies $\Tr\circ\, \id = \Rk(\al)-1$. 
To have a well-defined \textit{normalized trace map} $\tr = \Tr/\big(\Rk(\al)-1\big)$, I require $\Rk(\al)\neq 1$ in the theorem below. As usual, this gives the traceless $\RHom$ complex
$$
\RHom(I_D,I_D)_0
$$
which describes the obstruction theory of $N^{\JS}_{D, \al}$ at the point $I_D$. When $\Rk(\al) = 1$, there are no strictly semistables, so the equality of \eqref{eq:D1D2invs} holds immediately. 
 \begin{theorem}
 \label{thm:independence!}
     Fix $\sigma$ and $\al$ as above. For $D_1,D_2$ ample divisors, assume that $\Masi\subset \mM_{\mA_{D_i}}$ for $i=1,2$.
    In this case, the equality
     $$
     \Masig^{D_1} =\Masig^{D_2}
     $$
     holds for any Calabi--Yau fourfold.
     \end{theorem}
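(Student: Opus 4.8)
The plan is to prove the independence statement by induction on $\Rk(\al)$, deducing it from the Joyce--Song wall-crossing formula \eqref{eq:JSWC} exactly along the lines of §\ref{sec:introproofWC}. First I would reduce to the case $D_2=D_1+D$ with $D$ a smooth very ample divisor: a general member of $|D_2-D_1|$ is smooth by Bertini once one interpolates through $D_3:=D_1+D_2+NH$ for an ample $H$ and $N\gg0$ and compares $D_1,D_2$ each with $D_3$, the divisor $D_3$ being taken positive enough that all of the finitely many classes entering \eqref{eq:JSWC} and \eqref{eq:Masigdef} for $\al$ lie in $\mA_{D_3}$. If $\Rk(\al)=1$, or more generally if there are no strictly $\sig$-semistables of class $\al$, then by Definition \ref{def:Masik}.1 and Example \ref{ex:mMvir} one has $\Masig^{D_i}=\Masvir$ with the obstruction theory and orientation restricted from $\mM^{\rig}_{\mA}$, which is manifestly $D_i$-independent; this anchors the induction.

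\textbf{The closed embedding of pair spaces.} Multiplication by the section of $\mO_X(D)$ cutting out $D$ embeds $\mO_X(-D_2)\hookrightarrow\mO_X(-D_1)$, hence sends a $D_1$-framed Joyce--Song pair $\{\mO_X(-D_1)\xrightarrow{t}F\}$ to the $D_2$-framed pair $\{\mO_X(-D_2)\to F\}$. Since $F$ is torsion-free and $t\neq0$, this preserves Joyce--Song stability and defines a \emph{closed} embedding
\[
\iota\colon N^{\JS}_{D_1,\al}\hookrightarrow N^{\JS}_{D_2,\al},
\]
identifying the source with the zero locus of the tautological section of the vector bundle $\mathbb{V}:=\mV^\vee\otimes p_*\big(\mF(D_2)|_{D\times N^{\JS}_{D_2,\al}}\big)$ — a bundle by positivity of $D_2$, with $p$ the projection, $\mF$ the universal sheaf and $\mV$ the universal framing line — the section being the restriction to $D$ of the universal framing map; its rank is $\chi(\al(D_2))-\chi(\al(D_1))$. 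In particular $\iota$ is a regular closed immersion with $\mathbb{L}_{\iota}\simeq\mathbb{V}^\vee[1]$.

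\textbf{The self-dual $\infty$-Pvp diagram.} For $\Rk(\al)\geq2$ the CY4 obstruction theory of $N^{\JS}_{D_i,\al}$ at $I_{D_i}=[\mO_X(-D_i)\to F]$ is the traceless complex $\RHom(I_{D_i},I_{D_i})_0$ (the normalized trace exists as $\Rk(\al)\neq1$), self-dual via the $-2$-shifted symplectic structure on $\bM_X$ and oriented by restriction of the fixed orientation on the relevant fixed-determinant moduli stack, compatibly under the line-bundle twist identifying the two determinant components. Applying $\RHom(-,-)$ in both variables to the distinguished triangle $I_{D_2}\to I_{D_1}\to i_{D*}\mO_D(-D_1)[1]$ (induced by $\mO_X(-D_2)\to\mO_X(-D_1)\to\mO_D(-D_1)$ together with the common framing of $F$) and using CY4 Serre duality on $X$ and $i_D^{!}\mO_X\simeq\mO_D(D)[-1]$, I would produce the natural morphisms filling out the self-dual $\infty$-Pvp diagram \eqref{eq:introJSdiag} along $\iota$, with middle term $\mathbb{V}^\vee[1]$; assembling this requires the stable $\infty$-categorical machinery of §\ref{sec:functorialitysympull} (Proposition \ref{prop:sympullback} and the functoriality Theorem \ref{thm:functsympull}), and one checks via §\ref{sec:orconventions} and Definition \ref{def:orpullback} that the orientation it induces on $N^{\JS}_{D_1,\al}$ agrees with the restricted one. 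Theorem \ref{thm:virtualpullpush} (equivalently \cite{KKP, Manolache}) then yields \eqref{eq:introJScomparison}, namely $\iota_*\big[N^{\JS}_{D_1,\al}\big]^{\vir}=\big[N^{\JS}_{D_2,\al}\big]^{\vir}\cap e(\mathbb{V})$. This is the step I expect to be the main obstacle: within triangulated categories one cannot in general complete \eqref{eq:introstartingIxI} to \eqref{eq:introdiagsym}, so the $\infty$-functoriality and the orientation bookkeeping of §\ref{sec:orconventions} are genuinely needed here, and verifying that the Serre-dual structure on the above triangle of $\RHom$-complexes really organizes into a diagram of the shape \eqref{eq:introdiagsym} is the technical heart of the argument.

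\textbf{Passing to the invariants.} Following \cite[§3.1--§3.2]{bojko2}, I would extend $-\cap e(\mathbb{V})$, for the universal $\mathbb{V}$ on $\mN_{D_2}$, to a morphism between the vertex algebras of $\mN_{D_1}$ and $\mN_{D_2}$ and their Lie algebras; the key point is that $(\iota\times\iota)^*\Theta_{D_2}\ominus\Theta_{D_1}$ is, by the same triangle, built from the $\mathbb{V}$-bundles, so the total Chern classes in the state--field correspondence \eqref{eq:statefieldcor} conspire with $e(\mathbb{V})$ and the pushforward $\iota_*$ intertwines $\mu$, $\rho$ and the point classes, sending $e^{(1,0)}_{D_1}\mapsto e^{(1,0)}_{D_2}$ (as $\mathbb{V}$ has rank $0$ over the class $(1,0)$). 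Applying this morphism to \eqref{eq:JSWC} for $D_1$ and using the identity of the previous step to rewrite its left-hand side, one equates $\iota_*(\mathrm{RHS}_{D_1})$ with $(\mathrm{RHS}_{D_2})\cap e(\mathbb{V})$. Every iterated bracket indexed by $n\geq2$ involves only invariants $\big\langle\mM^{\sig}_{\beta}\big\rangle^{D_i}$ in classes of strictly smaller rank, which coincide for $D_1$ and $D_2$ by the inductive hypothesis, and these terms cancel. What remains, after projecting along $\pi^{\JS}$ so as to recover \eqref{eq:Masigdef} — the leftover $e(\mathbb{V})$ being absorbed into the Chern-class twist $c_{\chi(\al(D))-1}(T_{\pi})$ by a computation as in the proof of Lemma \ref{lem:pushJSandinj} — and invoking the injectivity of $\big[-,e^{(1,0)}_{D_2}\big]$ from Lemma \ref{lem:pushJSandinj}.ii (valid since $\Rk(\al)>0$ gives $\al\notin\Ker\chi$), is exactly $\Masig^{D_1}=\Masig^{D_2}$, which closes the induction.
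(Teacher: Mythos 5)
Your proposal follows essentially the same route as the paper's proof: reduce to a smooth-divisor twist, realize $N^{\JS}_{D_1,\al}$ as the zero locus of a section of the bundle $\VV$ inside the larger Joyce--Song pair space, build the self-dual Pvp diagram from the triangle $I_{D_2}\to I_{D_1}\to \mO_{D}(-D_1)[1]$ to obtain $\iota_*\big[N^{\JS}_{D_1,\al}\big]^{\vir}=\big[N^{\JS}_{D_2,\al}\big]^{\vir}\cap e(\VV)$, promote $\iota_*$ and $-\cap e(\VV)$ to morphisms of vertex/Lie algebras as in \cite[§3.1, §3.2]{bojko2} (over the substack where $\VV$ is actually a vector bundle, the paper's $\mN_{D,D_1}$), feed in \eqref{eq:JSWC} on both sides, and close by induction on $\Rk(\al)$ using injectivity of $\big[-,e^{(1,0)}\big]$ from Lemma \ref{lem:pushJSandinj}. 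The only real divergence is organizational: the paper compares both $D_1$ and $D_2$ with $D=D_1+D_2$ and constructs the comparison diagram \eqref{eq:JScomparisondiag} by explicit triangulated-category diagram chasing (the derived/$\infty$-categorical viewpoint appearing only in Remark \ref{rem:dagJScomp}), whereas you defer that construction to the stable $\infty$-categorical machinery, correctly identifying it as the technical heart of the argument.
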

     \begin{proof}
     The main idea is to compare the moduli spaces $N^{\JS}_{D_i, \al}$ for $i=1,2$ by embedding them into a bigger one, which is just $N^{\JS}_{D, \al}$ for $D=D_1+D_2$\footnote{The potential issue that $D$ is not sufficiently positive for the fixed $\sigma$ and $\al$ can be resolved by replacing $D_i$ by their powers.}. Since $\langle M^{\sig}_{\al}\rangle^{D_i}$ are defined in terms of $\big[N^{\JS}_{D_i, \al}\big]^{\vir}$ via \eqref{eq:Masigdef}. this already describes a relation between \eqref{eq:D1D2invs}.  To prove the equality, however, one needs \eqref{eq:JSWC}.
     
     Denoting by $\mF$ the universal sheaf on $X\times N^{\JS}_{D, \al}$,  it fits together with the universal pair $\mI_D$ into the distinguished triangle
 $$
 \begin{tikzcd}
 \mI_D\arrow[r,"\mathfrak{l}"]&\mO(-D)\arrow[r, "\mathfrak{f}_D"]&\mF\arrow[r,"\mathfrak{u}"]&\mI_D[1]
 \end{tikzcd}\,.
 $$ 
 The next lemma, describes $N^{\JS}_{D_i, \al}$ as closed subschemes of $N^{\JS}_{D,\al}$. I will always focus on $i=1$ from now on, but the same works for $i=2$. Without loss of generality, the divisor $D_2$ is assumed to be the smooth vanishing locus of a section $s_2\in H^0\big(\mO_X(D_2)\big)$. For a very ample $\mO_X(D_2)$, such a choice can be made by Bertini's theorem. Otherwise, one may replace $\mO_X(D_2)$ by its sufficiently large power. 
 \begin{lemma}
The map
 $$\big(\mO_X(-D_1)\xrightarrow{f_{D_1}} F\big)\mapsto \big(\mO_X(-D)\xrightarrow{f_{D_1}\circ s_1} F\big)$$
 induces a closed embedding
 \begin{equation}
\label{eq:D1embed}
N^{\JS}_{D_1, \al}\xlongrightarrow{\iota_1} N^{\JS}_{D, \al}\end{equation}
as it maps stable pairs to stable ones. Letting $p:X\times N^{\JS}_{D,\al}\to N^{\JS}_{D,\al}$ denote the projection to the second factor, the subscheme $N^{\JS}_{D_1, \al}$ can be expressed as the vanishing locus of the natural section  $
\mathfrak{v}_1: \mO\to \VV_1
$ where
\begin{equation}
\label{eq:VVbundle}
\VV_1 = Rp_{*}\Big(\mF(D)|_{D_2}\Big) \,.
 \end{equation}
is a vector-bundle on $N^{\JS}_{D, \al}$. Here, I used $|_{D_2}$ to denote the restriction to $D_2\times N^{\JS}_{D, \al}$. The map $\mathfrak{v}_1$ is the pushforward along $p$ of the composition of $\Ff_D$ with the restriction to $\mF(D)|_{D_2}$.
\end{lemma}
\begin{proof}
 To see that \eqref{eq:D1embed} is well-defined, consider a subsheaf $F'\subset F$ with its cokernel $Q$. Then we have the following double complex
 $$
 \begin{tikzcd}
     &\arrow[d]0&\arrow[d]0&\arrow[d]0\\
0\arrow[r]&\Hom\big(\mO_{D_2}(-D_1),F'\big)\arrow[d]\arrow[r]&\Hom\big(\mO_{D_2}(-D_1),F\big)\arrow[d]\arrow[r]&\Hom\big(\mO_{D_2}(-D_1),Q\big)\arrow[d]\\
   0\arrow[r]&\Hom\big(\mO_X(-D_1),F'\big)\arrow[d]\arrow[r]&\Hom\big(\mO_X(-D_1),F\big)\arrow[d]\arrow[r]&\Hom\big(\mO_X(-D_1),Q\big)\arrow[d]\\
    0\arrow[r]&\Hom\big(\mO_X(-D),F'\big)\arrow[r]&\Hom\big(\mO_X(-D),F\big)\arrow[r]&\Hom\big(\mO_X(-D),Q\big)\\
 \end{tikzcd}
 $$
 which has exact columns and rows. Due to $F'$ and $F$ being torsion-free, the second and third term in the second row also vanish. In particular,  the map $f_D = f_{D_1}\circ s_1$ is 0, if and only if $f_{D_1}$ is. This implies that condition 2 from Example \ref{ex:JSpairs} holds for $f_D$. Next, suppose that $f_D = f_{D_1}\circ s_1$ factors through $f'_D: \mO_X(-D)\to F'$, so that $f_D$ is both an image of $f'_D$ and $f_{D_1}$ under the maps in the above diagram. Since the total complex of the double complex is exact, this implies that there is a unique $f'_{D_1}\in \Hom(\mO_X(-D_1),F')$ through which the original morphism $f_{D_1}$ factors. Therefore, the second condition is satisfied for $\mO_X(-D)\to F$ if and only if it is satisfied for $\mO_X(-D_1)\to F$.

 Next, I need to show that the perfect complexes constructed in \eqref{eq:VVbundle} are vector bundles. For this, we take the long exact sequence obtained by acting with the $\Hom(-,F)$ functor on $0\to \mO_X(-D)\to \mO_X(-D_1)\to \mO_{D_2}(-D_1)\to 0$: 
\begin{equation}
\label{eq:shortexactvb}
\begin{tikzpicture}[descr/.style={fill=white,inner sep=1.5pt}]
        \matrix (m) [
            matrix of math nodes,
            row sep=1em,
            column sep=2.5em,
            text height=1.5ex, text depth=0.25ex
        ]
   {  &  0& \Hom\big(\mO_X(-D_1), F\big) & \Hom\big(\mO_X(-D), F\big)& \\& \Ext^1\big(\mO_{D_2}(-D_1), F\big) &0 &0& \\
             &  \Ext^2\big(\mO_{D_2}(-D_1), F\big) &0 &\cdots& \\
        };

        \path[overlay,->, font=\scriptsize,>=latex]
        (m-1-2) edge (m-1-3)
        (m-1-3) edge (m-1-4)
        (m-1-4) edge[out=355,in=175] (m-2-2)
        (m-2-2) edge (m-2-3)
        (m-2-3) edge (m-2-4)
        (m-2-4) edge[out=355,in=175]   (m-3-2)
        (m-3-2) edge (m-3-3)
        (m-3-3) edge (m-3-4);
\end{tikzpicture}
\end{equation}
I used that $F$ is torsion-free to show that the first term vanishes. As $D_2$  is very ample, one can choose a smooth projective representative and apply the Grothendieck--Serre duality along the inclusion $D_2\times N^{\JS}_{D, \al}\hookrightarrow X\times N^{\JS}_{D, \al} $ which shows that $R\mHom_{N^{\JS}_{D, \al}}\big(\mO_{D_2}(-D_1), \mF\big) = \VV_i[-1]$. This is then a vector bundle in degree 1.

To show that $N^{\JS}_{D_1, \al} = \Fv_1^{-1}(0)$, use \eqref{eq:shortexactvb} to conclude that the map $\mO_X\to F(D)$ factors uniquely through $F(D_1)$ if and only if the induced map $\mO_X\to F(D)|_{D_2}$ vanishes.
\end{proof}
 The following observation will lead to the proof of the theorem. 
\begin{proposition}
\label{lem:relateDD1}
    The virtual fundamental classes of $N^{\JS}_{D_1,\al}$ and $N^{\JS}_{D, \al}$ are related by 
    \begin{equation}
    \label{eq:NJSD1Dvir}
(\iota_{1})_*\Big(\big[N^{\JS}_{D_1, \al}\big]^\vir\Big) = \big[N^{\JS}_{D, \al}\big]^\vir\cap c_{\Rk}(\VV_1) \,.
    \end{equation}
\end{proposition}
Before, I prove this lemma, I will explain how it can be immediately used to reduce the proof of Assumption \ref{ass:welldef} to \eqref{eq:JSWC} which now takes the form  
\begin{equation}
\label{eq:DJSWC}
\big[N^{\JS}_{D,\al}\big]^{\vir} = \sum_{\begin{subarray}{c}
        \un{\alpha}\vdash_{\mA} \alpha\,, \\
          \phi(\alpha_i) =\phi(\alpha) \end{subarray}}\frac{1}{n!}\Big[\langle \mM^\sigma_{\alpha_n}\rangle_D, \cdots \Big[\langle \mM^\sigma_{\alpha_1}\rangle_D,  e^{(1,0)}_D\Big]\cdots\Big]\,.
\end{equation}     
with an identical formula for $D_1$. 
\begin{proof}[Proof of Theorem \ref{thm:independence!}]
My approach here is similar to what I used in \cite[§3.1, §3.2]{bojko2}. For this reason, I will be brief and concise here. Firstly, one can extend the vector bundle $\VV_1$ that appears to a vector bundle $\WW_1$ on $\mN_D\times \mN_D$. Continuing to use the notation introduced in Definition \ref{def:NkVA}, it is defined by 
$$
\WW_1 = \mV^*\otimes p_*\big(\mF(D)|_{D_2}\big)
$$
where I am now using $\mF$ to denote the universal sheaf on the second factor as I did in Definition \ref{def:NkVA}.
It satisfies $\Delta^*(\WW_1)|_{N^{\JS}_{D,\al}} = \VV_1$, so I will instead write $\VV_1=\Delta^*(\WW_1)$ from now on. I will then consider three different vertex algebras. One on the homology of $\mN_{D_1}$ and another two constructed from $H_{*}(\mN_{D,D_1})$. Here $\mN_{D,D_1}\subset \mN_{D}$ is the substack of pairs $V\otimes \mO_X(-D)\to F$ such that additionally $H^i\big(F(D_1)\big) = 0$ for $i>0$. The vertex algebras are
\begin{enumerate}[label= \arabic*)]
     \item the vertex algebra $W^{D_1}_{*} = H_{*+\vdim}(\mN_{D_1})$ constructed for $\mN_{D_1}$ in Definition \ref{def:NkVA},
     \item the vertex algebra $W^{D,D_1}_*$ which is constructed from $H_{*+\vdim}(\mN_{D,D_1})$ using $\Theta_D,\chi_D$, and $\varepsilon_D$ in the same way as one would for $W^D_*$,
     \item the vertex algebra $W^{\iota_1(D_1)}_*$ constructed on $H_*(\mN_{D,D_1})$ (shifted by an appropriate degree) in the same way as $W^{D,D_1}_*$ except that one uses 
     \begin{align*}
     \Theta_{\iota_1(D_1)}= -(\pi\times\pi)^*\Ext^\vee - 2\mV\otimes \mW^* + \mV\otimes p_*\big(\mF(D_1)\big)^* + \mW^*\otimes p_*\big(\mE(D_1)\big)
     \end{align*}
     on $\mN_{D,D_1}$ instead of $\Theta_D$, $\chi_{D_1}$ instead of $\chi_D$, and $\varepsilon^{D_1}$ instead of $\varepsilon^D$. Here I used the same notation convention for $\mV,\mW,\mE$, and $\mF$ as I did in Definition \ref{def:NkVA}. 
\end{enumerate}
The diagram \eqref{eq:shortexactvb} implies that there is a short exact sequence of vector bundles 
$$
\begin{tikzcd}
  0\arrow[r]&  p_*\big(\mF(D_1)\big)\arrow[r]& p_*\big(\mF(D)\big)\arrow[r]&p_*\big(\mF(D)|_{D_2}\big)\arrow[r]&0
\end{tikzcd}
$$
 on $\mN_{D,D_1}$, because it is still a stack of torsion-free sheaves such that higher cohomologies vanish for both $D_1$ and $D$. As such, one sees that
 \begin{equation}
 \label{eq:ThetaDD1}
 \Theta_{\iota_1(D_1)}=\Theta_D - \WW_1^* - \sig^*\WW_1\,.
 \end{equation}
I will use the simpler version of \cite[Definition 2.11, Theorem 2.12]{GJT} which I noted down in \cite[Definition 2.15]{bojko2}. Compared to \cite{bojko2}, my present convention for $\Theta_{(\cdots)}$ differs by a total sign which one needs to pay attention to. The formula \eqref{eq:ThetaDD1} implies that the most crucial condition of \cite[Definition 2.15]{bojko2} is satisfied. The other conditions of the definition are immediate so \cite[Proposition 2.16]{bojko2} using \cite[Theorem 2.12]{GJT} implies that the vertical arrow of
\begin{equation}
\label{eq:WWscomparison}
\begin{tikzcd}[column sep=huge, row sep = huge]
&W^{D,D_1}_*\arrow[d,"\cap c_{\Rk}(\VV_1)"]\\
W^{D_1}_*\arrow[r,"(\iota_1)_*"]&W^{\iota_1(D_1)}_*
\end{tikzcd}
\end{equation}
is a morphism of vertex algebras. The horizontal arrow $(\iota_1)_*$ is induced by extending \eqref{eq:D1embed} to a morphism of stacks $\mN_{D_1}\to \mN_{D,D_1}$ and then taking pushforward in homology. To finish the argument, note that there is no difference between formulating \eqref{eq:DJSWC} using $W^D_*$ and $W^{D,D_1}_*$. Due to \eqref{eq:NJSD1Dvir}, one obtaints
$$
(\iota_1)_*\Big(\textnormal{RHS of }\eqref{eq:DJSWC}\textnormal{ for }D_1\Big) =\Big(\textnormal{RHS of }\eqref{eq:DJSWC}\textnormal{ for }D\Big)\cap c_{\Rk}\big(\VV_1\big)\,.
$$
which becomes
\begin{align*}
&\sum_{\begin{subarray}{c}
        \un{\alpha}\vdash_{\mA} \alpha\,, \\
          \phi(\alpha_i) =\phi(\alpha) \end{subarray}}\frac{1}{n!}\Big[\langle \mM^\sigma_{\alpha_n}\rangle_D, \cdots \Big[\langle \mM^\sigma_{\alpha_1}\rangle_D,  e^{(1,0)}_D\Big]\cdots\Big]\\
          &=\sum_{\begin{subarray}{c}
        \un{\alpha}\vdash_{\mA} \alpha\,, \\
          \phi(\alpha_i) =\phi(\alpha) \end{subarray}}\frac{1}{n!}\Big[\langle \mM^\sigma_{\alpha_n}\rangle_{D_1}, \cdots \Big[\langle \mM^\sigma_{\alpha_1}\rangle_{D_1},  e^{(1,0)}_D\Big]\cdots\Big]
\end{align*}
in terms of the Lie bracket on $W^{\iota_1(D_1)}_{*+2}/T\big(W^{\iota_1(D_1)}_{*+2}\big)$. The same argument as was used to prove Lemma \ref{lem:pushJSandinj}.ii) implies that the above $\big[-,e^{(1,0)}_D\big]$ restricted to $\al\notin \Ker(\chi)$ is injective. By induction on $\Rk(\al)$, one concludes that
$$
 \langle \mM^{\sig}_\al\rangle^{D} = \langle \mM^{\sig}_\al\rangle^{D_1}\,.
$$
\end{proof}
The rest of the section will be concerned with constructing Park's diagram \eqref{eq:introJSdiag} leading to 
$$
\big[N^{\JS}_{D_1, \al}\big]^\vir=\iota^{!}_1\Big(\big[N^{\JS}_{D, \al}\big]^\vir\Big)
$$
which implies \eqref{eq:NJSD1Dvir}.
\begin{proof}
  Throughout the proof, I will denote the universal pairs by 
  $$
  \mI_{D_1} = \big(\mO(-D_1)\longrightarrow \mF\big)\,, \quad   \mI_{D} = \big(\mO(-D)\longrightarrow \mF \big)\,.
  $$
  \begin{importantremark}
      Unlike the rest of the work, the convention I use here states that pairs determine complexes in degrees $[0,1]$. This is due to this section being written first.
  \end{importantremark}
    To prove the first statement, I need to show that the obstruction theories 
    \begin{align*}
    \label{eq:FF1FF1+2}
    \FF_1 &= \RHom_{N^{\JS}_{D_1, \al}}\big(\mI_{D_1},\mI_{D_1}\big)^\vee_0[1]\,,\\
    \FF_{1+2} &= \RHom_{N^{\JS}_{D, \al}}\big(\mI_{D},\mI_{D}\big)^\vee_0[1]
    \numberthis
    \end{align*}
    fit into the commutative diagram of distinguished triangles 
    \begin{equation}
    \label{eq:JScomparisondiag}
    \begin{tikzcd}[column sep=large] \arrow[d,"\ov{\kappa}"]\widetilde{\FF}^\vee[2]\arrow[r,"{\ov{\mu}}"]&\FF_1\arrow[d,"\mu"]\arrow[r,Maroon,"\nu"]&\VV_1^\vee[1]\arrow[d,equal]\arrow[r]&\widetilde{\FF}^\vee[3]\arrow[d]\\
   \arrow[d,"\iota_1^*\psi"] \iota_1^*\big(\FF_{1+2}\big)\arrow[r,"{\kappa}"]&\widetilde{\FF}\arrow[r]\arrow[d,"\phi"]&\VV_1^\vee[1]\arrow[d,equal]\arrow[r]&\iota_1^*\big(\FF_{1+2}\big)[1]\arrow[d]\\   \iota_1^*\big(\mathbb{L}_{N^{\JS}_{D, \al}}\big)\arrow[r]&\mathbb{L}_{N^{\JS}_{D_1, \al}}\arrow[r]&\LL_{\iota_1}\arrow[r]&\iota_1^*\big(\mathbb{L}_{N^{\JS}_{D, \al}}\big)[1]
    \end{tikzcd}\,.
\end{equation}
There are multiple ways to show this. The most insightful one is summarized in Remark \ref{rem:dagJScomp}, but here I describe an alternative approach relying purely on diagram chasing in triangulated categories. This proof was written before I learnt to use stable $\infty$-categories so they are absent from it. First recall some basic results about completing a morphism of distinguished triangles 
\begin{equation}
\label{eq:mortrian}
\begin{tikzcd}
[execute at end picture={
\foreach \Nombre in  {A_1,A_2,B_1,B_2}
  {\coordinate (\Nombre) at (\Nombre.center);}
\fill[blue,opacity=0.3] 
(A_1) -- (A_2) -- (B_2) -- (B_1) -- cycle;}]
|[alias =A_1]|A_1\arrow[d]\arrow[r]&|[alias =A_2]|A_2\arrow[d]\arrow[r]&\arrow[d,"g"]A_3\arrow[r]&\arrow[d]A_1[1]\\
|[alias =B_1]|B_1\arrow[r]&|[alias =B_2]|B_2\arrow[r]&B_3\arrow[r]&B_1[1]
\end{tikzcd}
\end{equation}
to the \textit{$3\times3$ diagram}
$$
\begin{tikzcd}
[execute at end picture={
\foreach \Nombre in  {A_1,A_2,B_1,B_2}
  {\coordinate (\Nombre) at (\Nombre.center);}
\fill[blue,opacity=0.3] 
(A_1) -- (A_2) -- (B_2) -- (B_1) -- cycle;}]
|[alias =A_1]|A_1\arrow[d]\arrow[r]&|[alias =A_2]|A_2\arrow[d]\arrow[r]&\arrow[d,"g"]A_3\arrow[r]&\arrow[d]A_1[1]\\
|[alias =B_1]|B_1\arrow[d]\arrow[r]&|[alias =B_2]|B_2\arrow[d]\arrow[r]&\arrow[d]B_3\arrow[r]&\arrow[d]B_1[1]\\
C_1\arrow[d]\arrow[r]&C_2\arrow[d]\arrow[r]&\arrow[d]\arrow[d]C_3\arrow[r]&\arrow[d]C_1[1]\\
A_1[1]\arrow[r]&A_2[1]\arrow[r]&A_3[1]\arrow[r]\arrow[r,phantom, shift left=4.0ex, "{\{-,-\}}" marking]&A_1[2]
\end{tikzcd}
$$
where each column and row are distinguished triangles, each square except the one labelled by $\{-,-\}$ is commutative, and this last square is anti-commutative.

For a fixed commutative square
\begin{equation}
\label{eq:ABcomdiag}
\begin{tikzcd}
[execute at end picture={
\foreach \Nombre in  {A_1,A_2,B_1,B_2}
  {\coordinate (\Nombre) at (\Nombre.center);}
\fill[blue,opacity=0.3] 
(A_1) -- (A_2) -- (B_2) -- (B_1) -- cycle;}]
    |[alias =A_1]|A_1\arrow[d]\arrow[r]&|[alias =A_2]|A_2\arrow[d]\\
|[alias =B_1]|B_1\arrow[r]&|[alias =B_2]|B_2
\end{tikzcd}
\end{equation}
the morphism $g$ is called \textit{good} when a completion to a 3x3 diagram exists. Note that this is slightly different from the original definition given by Neeman \cite[Def. 1.9]{neeman}. Using his \cite[Thm. 2.3]{neeman}, the original definition implies the one that I am using, so I will not distinguish between them. 

The lemma below summarizes the cases when $g$ is good appearing in the rest of the proof.
\begin{lemma}
\label{lem:goodtrian}
Suppose that in \eqref{eq:mortrian}
\begin{enumerate}[label=\roman*)]
    \item the bottom row splits as 
    $$
    \begin{tikzcd}
        B_3[-1]\oplus B_2\arrow[r,shift left=0.2em]&\arrow[l,shift left=0.2em]B_2\arrow[r,"0"]&B_3\arrow[r, shift left =0.1 em ]&\arrow[l, shift left =0.1 em]B_3\oplus B_2[1] 
    \end{tikzcd}
    $$
    so $g$ is unique and given by the composition $A_3\to A_1[1]\to B_3\oplus B_2[1]\to B_3$
    \item the set of morphisms $\Hom\big(A_1[1], B_3\big)$ vanishes so $g$ is unique
\end{enumerate}
then $g$ is good.
\end{lemma} 
\begin{proof}
    By Theorem \cite[Prop. 1.1.11]{BBD}, there always exists a good morphism $g$ completing the commutative diagram \eqref{eq:ABcomdiag} to a morphism of distinguished triangles \eqref{eq:mortrian}.  In all of the situations above, there is a unique morphism $g$, so this is always the case. 
\end{proof}

To avoid complicating the next few expressions and diagrams with extra symbols, I will omit writing $\RHom_S$ where $S$ is clear from the context. For example, I will write $(\mI_{D},\mI_D)$, $(\mI_D,\mI_D)_0$ to denote $\RHom_{X\times N^{\JS}_{D, \al}}(\mI_D,\mI_D)$, $\RHom_{X\times N^{\JS}_{D, \al}}(\mI_D,\mI_D)_0$ or $\RHom_{N^{\JS}_{D, \al}}(\mI_D,\mI_D)$, $\RHom_{N^{\JS}_{D, \al}}(\mI_D,\mI_D)_0$ respectively. I will also not specify the pullbacks of $\RHom$ complexes along $\id_X\times \iota_1$ and $\iota_1$ as they are constructed in the same way using universal sheaves $\mF$. We start by constructing the following $3\times 3$ diagram of complexes on $X\times N^{\JS}_{D_1, \al}$:
\begin{equation}
\label{eq:3x3D2}
\begin{tikzcd}
[execute at end picture={
\foreach \Nombre in  {A_1,A_2,B_1,B_2}
  {\coordinate (\Nombre) at (\Nombre.center);}
\fill[blue,opacity=0.3] 
(A_1) -- (A_2) -- (B_2) -- (B_1) -- cycle;}]
 |[alias = A_1]| \big(\mO_{D_2}(-D_1),\mI_{D_1}\big)\arrow[r,Maroon,"\circ \mathfrak{r}_I"]\arrow[d]&|[alias = A_2]|\big(\mI_{D_1},\mI_{D_1}\big)\arrow[r]\arrow[d, shift left = 0.2em, "\tr"]& \big(\mI_D,\mI_{D_1}\big)\arrow[r]\arrow[d,"\tr_D"]&\big(\mO_{D_2}(-D_1),\mI_{D_1}\big)[1]\arrow[d]\\
|[alias = B_1]| (\mO_{D_2},\mO)\arrow[d]\arrow[r] &|[alias = B_2]|\arrow[u, shift left = 0.2em, "\id_\mI"]\mO\arrow[d,"0"]\arrow[r]&\mO(D_2)\arrow[d]\arrow[r]&\big(\mO_{D_2},\mO\big)[1]\arrow[d]\\
  \big(\mO_{D_2}(-D_1),\mF\big)
  \arrow[r,Maroon,"\mathfrak{r}_I^{0}"]\arrow[d,Maroon, shift left = 0.2em,"\mathfrak{u}\circ "]&\big(\mI_{D_1},\mI_{D_1}\big)_0[1]\arrow[r]\arrow[d, shift left = 0.2em, "i"]& \big(\mI_D,\mI_{D_1}\big)_0[1]\arrow[r]\arrow[d]&\big(\mO_{D_2}(-D_1),\mF\big)[1]\arrow[d]\\
   \big(\mO_{D_2}(-D_1),\mI_{D_1}\big)[1]\arrow[r, Maroon ,"{(\circ \mathfrak{r}_I)[1]}"] &\big(\mI_{D_1},\mI_{D_1}\big)[1]\arrow[r]\arrow[u, Maroon ,shift left = 0.2em,"p"]& \big(\mI_D,\mI_{D_1}\big)[1]\arrow[r]\arrow[r,phantom, shift left=4.0ex, "{\{-,-\}}" marking]&\big(\mO_{D_2}(-D_1),\mI_{D_1}\big)[2]
\end{tikzcd}
\end{equation}
where the doubled arrows going both ways are meant to represent split distinguished triangles. Note that the distinguished triangle in the first column can also be written using Grothendieck--Serre duality in \cite[Thm. 3.4.4]{hartshorneresdual} as 
$$
\begin{tikzcd}
\mI_{D_1}(D)|_{D_2}[-1]\arrow[r]&\mO_{D_2}(D_2)[-1]\arrow[r]&\mF(D)|_{D_2}[-1]\arrow[r]&\mI_{D_1}(D)|_{D_2}\,.
\end{tikzcd}
$$
The map $\tr_D$ in the third column is constructed such that after taking its cone $(\mI_D,\mI_{D_1})_0[1]$ the resulting triangle can be completed to a $3\times 3$ diagram.  To see that this is possible, I apply Lemma \ref{lem:goodtrian} i) to the first two columns on the left (instead of rows) and conclude that $\color{Maroon}\mathfrak{r}^0_I$ is just the composition of the other {\color{Maroon}red} arrows in the diagram.

The morphism $\Ma{\circ \mathfrak{r}_I}$ in the first row is the result of applying $(-,\mI_{D_1})$ to the distinguished triangle
$$
\begin{tikzcd}
\mI_D\arrow[r," s_{\mI}"]&\mI_{D_1}\arrow[r, Maroon, "\mathfrak{r}_I"]&\mO_{D_2}(-D_1)\arrow[r]&\mI_D[1]
\end{tikzcd}
$$
\sloppy and the second row is itself the natural distinguished triangle associated with $s_1\in H^0\big(\mO_X(D_2)\big)$. Taking the dual of the pushforward to $N^{\JS}_{D_1, \al}$ of the third row in \eqref{eq:3x3D2} is going to be the first row of the diagram \eqref{eq:JScomparisondiag} with 
\begin{equation}
\label{eq:nuFFtilde}
{\color{Maroon}\nu =(\mathfrak{r}^0_I)^\vee}\,,\qquad \wt{\FF} = \RHom_{N^{\JS}_{D_1, \al}}\big(\mI_D,\mI_{D_1}\big)_0[3]\end{equation}
after using the same notation for the pushed forward maps. 

In fact, I can already construct the diagram 
 \begin{equation}
 \label{eq:JScompareEE}
    \begin{tikzcd}[column sep=large] 
0\arrow[d]\arrow[r]&\VV_1[1]\arrow[d,"\xi"]\arrow[r, equals]&\VV_1[1]\arrow[d,Maroon,"{\nu^\vee[2]}"]\arrow[r]&0\arrow[d]\\
   \VV^\vee_1\arrow[d, equals]\arrow[r]& \arrow[d,"\ov{\kappa}"]\widetilde{\FF}^\vee[2]\arrow[r,"{\ov{\mu}}"]&\FF_1\arrow[d,"\mu"]\arrow[r,Maroon,"\nu"]&\VV_1^\vee[1]\arrow[d,equal]\\
      \VV^\vee_1\arrow[d]\arrow[r]& \arrow[d] \EE\arrow[r,"{\kappa}"]&\widetilde{\FF}\arrow[r]\arrow[d]&\VV_1^\vee[1]\arrow[d]\\
   0\arrow[r]&\VV_1[2]\arrow[r, equals]&\VV_1[2]\arrow[r]&0
    \end{tikzcd}\,.
\end{equation}
by using Park's \cite[Lemma C.2]{Park} which guarantees that $\EE^\vee[2]\cong \EE$ is compatible the self-duality of the diagram. The map $\xi$ exists because the top right square is commutative and it is the unique map such that the resulting diagram is commutative due to $\Hom_{N^{\JS}_{D_1, \al}}\big(\VV_1[1], \VV_1^\vee) = 0$. There are multiple ways to conclude the commutativity of the top right square which will become apparent from the rest of the proof below dedicated to showing that 
\begin{equation}
\label{eq:EEFF12}
\EE\cong\iota_1^*(\FF_{1+2})
\end{equation}
and all the other maps $\kappa,\mu,\xi$ are the natural ones.

The computation below shows that the map $\xi$ which I constructed using {\color{Maroon}$\mathfrak{r}^0_I$} from \eqref{eq:3x3D2} can be identified with the map constructed using the dual of \eqref{eq:3x3D2} after replacing the second $\mI_{D_1}$ by $\mI_D$ in each term. This requires giving an alternative definition of the map {\color{Maroon}$\mathfrak{r}^0_I$} relying on the diagram
\begin{equation}
\label{eq:trlessoctahedral}
\begin{tikzcd}[ column sep={5.5em,between origins},
  row sep={4.2em,between origins}, ampersand replacement=\&]
    \eqmathbox{\mO}\arrow[dr, shift left=0.2 em ,"\id_{I}"]\arrow[rr, bend left = 40]\&\& \eqmathbox{\big(\mI_{D_1},\mO_X(-D_1)\big)}\arrow[dr] \arrow[rr, bend left =40, "\circ\mathfrak{f}_{D_1}"]\&\&\eqmathbox{\big(\mI_{D_1},F\big)}\arrow[dr]\arrow[rr,bend left = 40]\&\&\eqmathbox{\big(\mI_{D_1},\mI_{D_1}\big)_0[1]}\\
\&\arrow[ul, shift left = 0.2em]\eqmathbox{\big(\mI_{D_1},\mI_{D_1}\big)}\arrow[ur,"\circ\mathfrak{l}"]\arrow[dr, shift left = 0.2em]\&\&\eqmathbox{\big(\mF,\mO(-D_1)\big)[1]}\arrow[ur,WildStrawberry]\arrow[dr]\&\&\eqmathbox{\big(\mI_{D_1},\mI_{D_1}\big)[1]}\arrow[ur]\&\\
\&\&\arrow[ul, shift left = 0.2 em]\eqmathbox{\big(\mI_{D_1},\mI_{D_1}\big)_0}\arrow[rr, bend right = 40]\arrow[ur, WildStrawberry]\&\&\eqmathbox{\mO}\arrow[ur]\&\&
\end{tikzcd}\,.
\end{equation}
obtained by applying the octahedral axiom to the left-most commutative triangle. The distinguished triangle containing the {\color{WildStrawberry} colored arrows} is used to construct the commutative diagram
$$
\begin{tikzcd}
0\arrow[r]&\big(\mF,\mO_{D_2}(-D_1)\big)[1]\arrow[r,Cyan, equals]&\big(\mF,\mO_{D_2}(-D_1)\big)[1]\arrow[r]&0\\
\arrow[u]\big(\mI_{D_1},\mF\big)[-1]\arrow[r, Purple]&\arrow[u,Cyan,"^0\widetilde{\mathfrak{r}}_I"]\big(\mI_{D_1}, \mI_{D_1}\big)_0\arrow[r,WildStrawberry]&\arrow[u, Cyan]\big(\mF,\mO(-D_1)\big)[1]\arrow[r, WildStrawberry]&\arrow[u]\big(\mI_{D_1},\mF\big)\\
\big(\mO_{D_2}(-D_1),\mF\big)[-1]\arrow[u,Maroon,"\circ \mathfrak{r}_I"]\arrow[r,Purple, equals]&\big(\mO_{D_2}(-D_1),\mF\big)[-1]\arrow[u,Purple,"\widetilde{\mathfrak{r}}^0_I"]\arrow[r]&0\arrow[u]\arrow[r]&\big(\mO_{D_2}(-D_1),\mF\big)\arrow[u]
\end{tikzcd}
$$
where the middle column composes to 0. One can equivalently construct $\Pur{\widetilde{\mathfrak{r}}^0_I}$ by using the dual distinguished triangle
$$
\begin{tikzcd}
\big(\mO(-D_1),\mF\big)[-1]\arrow[r]&\big(\mI_{D_1}, \mI_{D_1}\big)_0\arrow[r]&\big(\mF,\mI_{D_1}\big)[1]\arrow[r]&\big(\mI_{D_1},\mF\big)[1]
\end{tikzcd}
$$
which shows that $\Pur{^0\widetilde{\mathfrak{r}}_I} = (\Pur{\widetilde{\mathfrak{r}}^0_I})^\vee[-4]$\,. 

The idea is now to establish that $\Pur{\widetilde{\mathfrak{r}}^0_I} = \Pur{\mathfrak{r}^0_I}$ followed by comparing ${\color{Cyan} ^0\widetilde{\mathfrak{r}}_I}$ with ${\color{Blue}^0\mathfrak{r}_{I_D}}$ which is constructed in a comparable way just for $\big(\mI_{D},\mI_{D}\big)_0[1]$. This is then used to show \eqref{eq:EEFF12}.

To do the first step, we use the diagram 
\begin{center}
       \includegraphics[scale = 0.475]{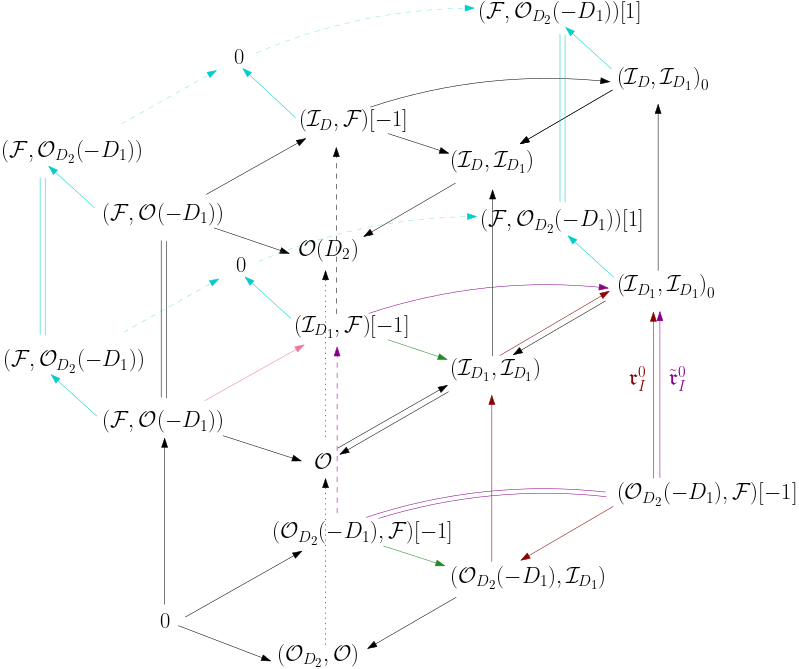}
\end{center}
 
with each row being a distinguished triangle. As \Ma{$\mathfrak{r}^0_I$} is equivalent to tracing the other \Ma{red path} in the diagram, one can use  commutativity of the bottom level and the {\color{Purple}purple equality} $\big(\mO_{D_2}(-D_1),\mF\big)[-1]\begingroup\color{Purple} =\endgroup \big(\mO_{D_2}(-D_1),\mF\big)[-1]$ to change the first arrow of the \Ma{red path} to the bottom {\color{Green}green arrow}. I then use the commutativity of the bottom half of the diagram to replace the path starting with the \Gre{green arrow} by the rest of the \Pur{purple path}. This shows that $\begingroup\color{Purple}\widetilde{\mathfrak{r}}^0_I\endgroup =  \begingroup\color{Maroon}\mathfrak{r}^0_I\endgroup$. The \Cya{cyan part} of the diagram describes the Serre dual construction. In particular, we now know that
$
\Cya{^0\widetilde{\mathfrak{r}}_I} = (\Pur{\widetilde{\mathfrak{r}}^0_I})^\vee[-4] = (\Ma{\mathfrak{r}^0_I})^\vee[-4]
$
so the definition of the morphism $\big(\mI_{D},\mI_{D_1}\big)_0\begingroup \color{Cyan}\longrightarrow \endgroup\big(\mF,\mO_{D_2}(-D_1)\big)[1]$ is independent of the two choices. 

The second step relies on the following diagram 
\begin{equation}
\label{eq:bigdiag2}
    \includegraphics[scale=0.32]{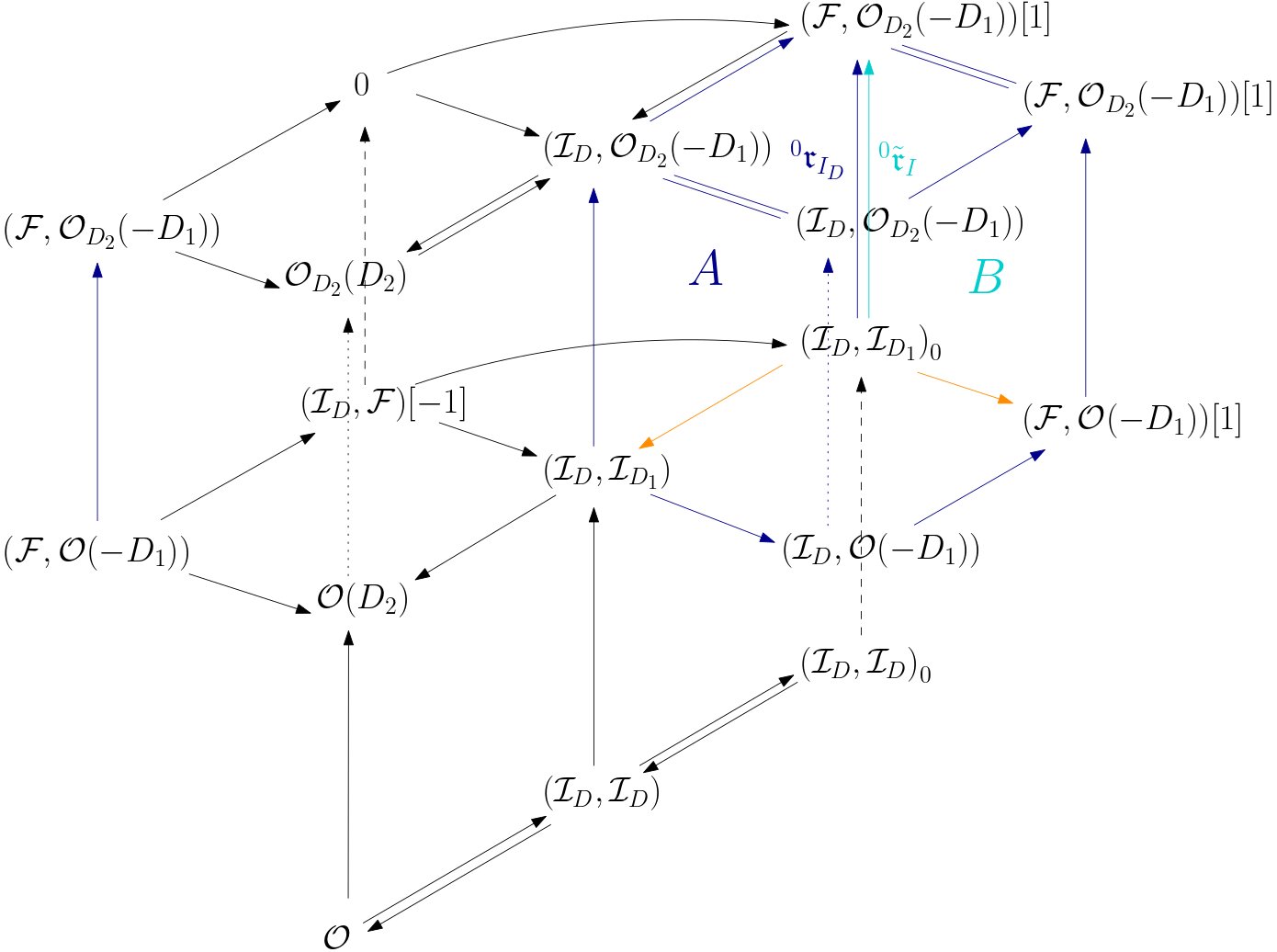}
\end{equation}
where each row is formed by distinguished triangles. 

The fact that the middle vertical plane exists as a commutative $3\times 3$ diagram of distinguished triangles is concluded using the commutativity of the right square in 
$$
\begin{tikzcd}
[execute at end picture={
\foreach \Nombre in  {A_1,A_2,B_1,B_2}
  {\coordinate (\Nombre) at (\Nombre.center);}
\fill[blue,opacity=0.3] 
(A_1) -- (A_2) -- (B_2) -- (B_1) -- cycle;}]
\big(\mI_D,\mI_D\big)[1]\arrow[r, Maroon, shift left=0.2em]&\arrow[l,shift left=0.2em]\big(\mI_D,\mI_D\big)_0[1]&|[alias=A_1]|\arrow[l,"0"]\mO&|[alias=A_2]|\arrow[l]\big(\mI_D,\mI_D\big)\\
\big(\mI_D,\mO_{D_2}(-D_1)\big)\arrow[u, Maroon]&\arrow[l, Maroon]\arrow[u,Maroon,"\mathfrak{r}^0_{I_D}"]\big(\mF,\mO_{D_2}(-D_1)\big)[1]&|[alias=B_1]|\arrow[l,"0"]\mO_{D_2}(D_2)[-1]\arrow[u]&|[alias=B_2]|\arrow[l]\big(\mI_D,\mO_{D_2}(-D_1)\big)[-1]\arrow[u]
\end{tikzcd}
$$
and Lemma \ref{lem:goodtrian}. The bottom triangle splits because the section $s_1$ vanishes on $D_2$. Note that for the commutativity of the resulting $3\times 3$ holds only for the arrows going left.

Pushing this diagram down from $X\times N^{\JS}_{D_1, \al}$ to $N^{\JS}_{D_1, \al}$, the cube consisting of \B{blue} and \FG{green arrows} is commutative as can be seen immediately for the \B{blue part}. To conclude commutativity also for the base of the cube, I use that the morphism $\big(\mI_D,\mI_{D_1}\big)_0 \begingroup\color{ForestGreen}\longrightarrow\endgroup\big(\mF,\mO(-D_1)\big)[1]$ is the unique one inducing a morphism of the vertical distinguished triangles in 
\begin{center}
    \includegraphics[scale = 0.273]{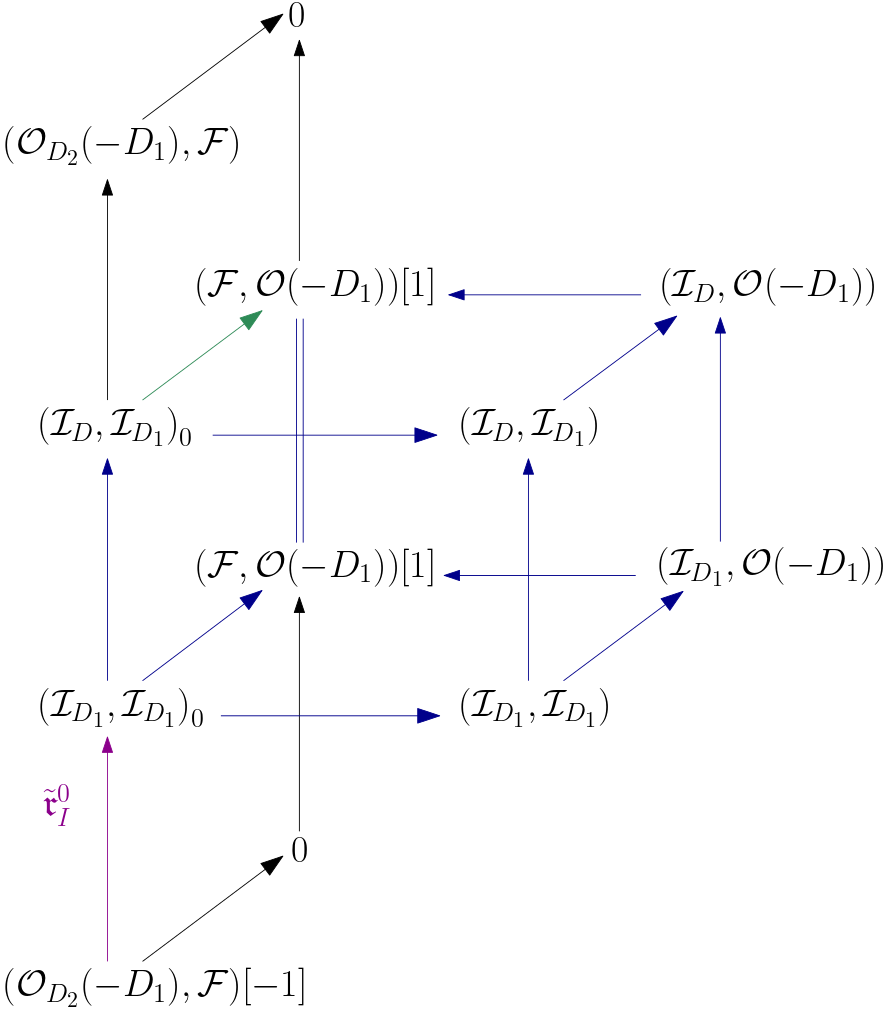}
\end{center}
because the set of morphisms from $\big(\mO_{D_2}(-D_1),\mF\big)$ (in degree 0) to $\big(\mF,\mO(-D_1)\big)[1]$ (in degree 3) being zero. I thus need to show that the roof of the cube is a commutative square. We do so by tracing along the \B{blue arrows} that span it and precomposing with $\big(\mI_{D_1},\mI_{D_1}\big)_0\begingroup \color{Blue} \longrightarrow \endgroup \big(\mI_{D},\mI_{D_1}\big)_0$. Using the commutativity of the full diagram consisting of the \B{blue arrows} which follows from \eqref{eq:trlessoctahedral}, we show that the composition is equal to the original map $\big(\mI_{D_1},\mI_{D_1}\big)_0\begingroup \color{Blue} \longrightarrow \endgroup \big(\mF,\mO(-D_1)\big)[1]$. As this is the only condition on the morphism $(\mI_D,\mI_{D_1})_0\begingroup\color{Green}\longrightarrow\endgroup \big(\mF,\mO(-D_1)\big)[1]$ to induce a morphism of distinguished triangles, we conclude by the previously mentioned uniqueness that the entire diagram is commutative. 

In conclusion, I showed that the two maps $\big(\mI_D,\mI_{D_1}\big)_0
\substack{\begingroup\color{Cyan} \longrightarrow\endgroup\\[-1em] \begingroup\color{Blue} \longrightarrow\endgroup }
\big(\mF,\mO_{D_2}(-D_1)\big)[1]$ induced by the commutativity of the squares ${\color{Blue}A}$ and ${\color{Cyan}B}$ are equal. Therefore, the cocone of ${\color{Cyan}^0\widetilde{\mathfrak{r}}_{I_D}}$ is $\big(\mI_D,\mI_{D}\big)_0$. Equivalently, I have shown that $\EE$ in \eqref{eq:JScompareEE} is given by $\iota^*_1\big(\FF_{1+2}\big)$. To obtain the complete diagram \eqref{eq:JScomparisondiag}, I am left to construct the map $\phi$ that induces a morphism of the lower 2 distinguished triangles. 

Using the map $s_{\mI}:\mI_{D}\to \mI_{D_1}$, the functoriality of Atiyah classes implies that the diagram
$$
\begin{tikzcd}
    \RHom_{N^{\JS}_{D_1, \al}}\big(\mI_{D_1},\mI_{D}\big)[3]\arrow[d,"\circ s_{\mI}"]\arrow[r,"s_{\mI}\circ"]& \RHom_{N^{\JS}_{D_1, \al}}\big(\mI_{D_1},\mI_{D_1}\big)[3]\arrow[d,"\At(\mI_{D_1})"]\\
      \RHom_{N^{\JS}_{D_1, \al}}\big(\mI_{D_1},\mI_{D}\big)[3]\arrow[r,"\At(\mI_{D})"]&\LL_{N^{\JS}_{D_1,\al}}\mI_{D_1}
\end{tikzcd}
$$
commutes. The diagram \eqref{eq:3x3D2}, the middle vertical plane of \eqref{eq:bigdiag2}, and the vanishing of the composition of $H^{\bullet}(\mO_X)\otimes \mO_{N^{\JS}_{D_1, \al}}\to    \RHom_{N^{\JS}_{D_1, \al}}\big(\mI_{D_1},\mI_{D_1}\big)\to \LL_{N^{\JS}_{D_1,\al}}$ and the composition of $H^{\bullet}(\mO_X)\otimes \mO_{N^{\JS}_{D_1, \al}}\to    \RHom_{N^{\JS}_{D_1, \al}}\big(\mI_{D},\mI_{D}\big)\to \LL_{N^{\JS}_{D_1,\al}}$ induce the following commutative diagram:
\begin{equation}
\label{eq:Atcommutativity}
\begin{tikzcd}[row sep= large, column sep=huge]
   \wt{\FF}^\vee[2]\arrow[d,"\ov{\kappa}"]\arrow[r,"\ov{\mu}"]& \FF_1\arrow[d,"\At_0(\mI_{D_1})"]\\
     \iota^*_1\big(\FF_{1+2}\big)\arrow[r,"\At_0(\mI_{D})"]&\LL_{N^{\JS}_{D_1,\al}}
\end{tikzcd}\,.
\end{equation}
Here, I used \eqref{eq:FF1FF1+2}, \eqref{eq:nuFFtilde}, and $\At_0(-)$ to denote the traceless part of the Atiyah class. The composition of 
$$
\begin{tikzcd}
\VV_1[1]\arrow[r, Maroon, "{\nu^\vee[2]}"]&\FF_1\arrow[r,"\At_0(\mI_{D_1})"]&[1.5cm]\LL_{N^{\JS}_{D_1,\al}}
\end{tikzcd}
$$
vanishes by \eqref{eq:JScompareEE} and \eqref{eq:Atcommutativity}, which induces the appropriate map $\wt{\FF}\xrightarrow{\phi}\LL_{N^{\JS}_{D_1,\al}}$. It is a unique such map because $\Hom\big(\VV_1[2],\LL_{N^{\JS}_{D_1,\al}}\big) = 0$. By commutativity of \eqref{eq:Atcommutativity} and the same vanishing argument, the left bottom square of \eqref{eq:JScomparisondiag} commutes. Commutativity of 
$$
\begin{tikzcd}
    \FF_1\arrow[d,"\At_0(\mI_{D_1})"']\arrow[r,"\nu", Maroon]&\VV_1^\vee[1]\arrow[d,equal]\\
    \LL_{N^{\JS}_{D_1,\al}}\arrow[r]&\LL_{\iota_1}
\end{tikzcd}
$$
and the vanishing of $\Hom\big(\VV_1[2],\VV^\vee_1[1]\big)$ imply that the bottom half of \eqref{eq:JScomparisondiag} is a morphism of distinguished triangles. This concludes the proof that \eqref{eq:JScomparisondiag} exists.

\end{proof}
 \begin{remark}
     \label{rem:dagJScomp}
     To give an interpretation of \eqref{eq:JScomparisondiag} in terms of derived schemes rather than just obstruction theories, one can replace the schemes $N^{\JS}_{D_1, \al}, N^{\JS}_{D, \al}$ by their $-2$-shifted symplectic derived enrichments $\bN^{\JS}_{D_1, \al}, \bN^{\JS}_{D, \al}$. Note that there is no map of \textit{derived schemes} 
     $$
   \bN^{\JS}_{D_1, \al}\xlongrightarrow{  \boldsymbol{\iota}_1} \bN^{\JS}_{D, \al}$$
   lifting \eqref{eq:D1embed} because this would imply that there is a distinguished triangle 
   $$
   \begin{tikzcd}
        \MM[-1]\arrow[r]&\iota_1^*(\FF_{1+2})\arrow[r]&\FF_1\arrow[r]&\MM
   \end{tikzcd}
   $$
   for some complex $\MM$. 
     
Instead, the construction \eqref{eq:VVbundle} applied to the universal sheaf on $\bN^{\JS}_{D, \al}$ leads to a derived vector bundle $\textbf{V}_1$ with the section $\boldsymbol{\mathfrak{v}}_1: \mO\to \textbf{V}_1$ and the \textit{derived vanishing locus}
     $$
    \boldsymbol{\mathfrak{v}}^{-1}_1(0) =: \widetilde{\bN}_{D_1,\al}\xlongrightarrow{\boldsymbol{\iota}_1}\bN^{\JS}_{D, \al}\,.
     $$
     The cotangent complexes fit into the distinguished triangle 
     $$
     \begin{tikzcd}
            \bs{\iota}_1^* \LL_{\bN^{\JS}_{D, \al}} \arrow[r]&\LL_{\widetilde{\bN}_{D_1,\al}}\arrow[r]&\textbf{V}^\vee_1[1]\arrow[r]&              \LL_{\bN^{\JS}_{D, \al}}|_{\boldsymbol{\mathfrak{v}}^{-1}_1(0)} [1]\,.
     \end{tikzcd}
     $$
     The relation to $\bN^{\JS}_{D_1, \al}$ is more subtle, because there is only a morphism
$$
 \widetilde{\bN}_{D_1,\al} \xlongrightarrow{\bs{\pi}} \bN^{\JS}_{D_1, \al}\,,
$$
which induces the distinguished triangle
\begin{equation}
\label{eq:etale}
\begin{tikzcd}
\bs{\pi}^*\LL_{\bN^{\JS}_{D_1, \al}}\arrow[r]&\LL_{\widetilde{\bN}_{D_1,\al}}\arrow[r]&\VV_1[2]\arrow[r]&\bs{\pi}^*\LL_{\bN^{\JS}_{D_1, \al}}[1]\,.
\end{tikzcd}
\end{equation}
Combining the distinguished triangles leads to the diagram
  \begin{equation}
    \label{eq:dagJScomp}
    \begin{tikzcd}[column sep=large]  \LL^\vee_{\widetilde{\bN}_{D_1,\al}}[2]\arrow[d]\arrow[r]&\bs{\pi}^*\LL_{\bN^{\JS}_{D_1, \al}}\arrow[d]\arrow[r]&\VV_1^\vee[1]\arrow[d,equal]\arrow[r]& \LL^\vee_{\widetilde{\bN}_{D_1,\al}}[3]\arrow[d]\\
 \bs{\iota}_1^* \LL_{\bN^{\JS}_{D, \al}} \arrow[r]&\LL_{\widetilde{\bN}_{D_1,\al}}\arrow[r]&\VV_1^\vee[1]\arrow[r]& \bs{\iota}_1^* \LL_{\bN^{\JS}_{D, \al}} [1]
    \end{tikzcd}\,,
    \end{equation}
    on $\widetilde{\bN}_{\nv,D_1}$ instead of $\bN^{\JS}_{D_1, \al}$. However, the truncation 
    $$
\pi = t_0(\bs{\pi}): t_0\big(\widetilde{\bN}_{ D_1,\al}\big)\longrightarrow N^{\JS}_{D_1, \al} 
    $$
    is a bijection and additionally étale by the distinguished diagram \eqref{eq:etale}. As such, it is an isomorphism, so after restricting \eqref{eq:dagJScomp} to $t_0\big(\widetilde{\bN}_{D_1,\al}) = N^{\JS}_{D_1, \al}$, one recovers \eqref{eq:JScomparisondiag}. That it satisfies the necessary self-duality and commutativity would follow from the claim that
    $$
    \begin{tikzcd}[column sep=small]
&\arrow[dl,"\bs{\pi}"]\widetilde{\bN}_{D_1,\al}\arrow[dr,"\boldsymbol{\iota}_1"]&\\
    \bN^{\JS}_{D_1, \al}&&\bN^{\JS}_{D, \al}
    \end{tikzcd}
    $$
    is a \textit{shifted Lagrangian correspondence}. This derived-geometric formulation of Pvp-digrams was observed by Park and stated independently in \cite{LagCor} by Schürg.
 \end{remark}
     \end{proof}

     \section{Applications}
     The second situation of Corollary \ref{cor:WCconsequences} concerning quivers was proved in §\ref{sec:quivers}. I will begin this section by applying it to a quiver that reproduces Hilbert schemes of points on $\CC^4$. The resulting wall-crossing formula is the equivariant version of the one used in \cite{bojko2}.

     The second subsection deals with the claim of Corollary \ref{cor:WCconsequences} that addresses local CY fourfolds. It describes the obstruction theories required by Assumption \ref{ass:obsonflag} and Assumption \ref{ass:pairWC}.c). I then discuss the situations where the rest of the assumptions apply, which, in particular, leads to the proof of Corollary \ref{cor:introDTPT}.    
\subsection{Wall-crossing into $\Hilb^n(\CC^4)$}
\label{sec:CY4dgC4}
Consider the CY4 dg-quiver
\begin{equation}
\label{eq:C4dgquiver}
 \includegraphics[scale=1.2]{C4quiver.pdf}
\end{equation}
with the superpotential 
$$
\BuOr{\mH} =\frac{1}{4} \sum_{\sig\in S_4}(-1)^{\sig}\BuOr{c_{\sig(1)\sig(2)}}\circ[x_{\sig(3)},x_{\sig(4)}]\,.
$$
Here I used $(-1)^{\sig}$ to denote the sign of the the permutation $\sig$ and $\BuOr{c_{ji}}:=-\BuOr{c_{ij}}$ for $i<j$. For any $\sig\in S_4$ and the edge $\BuOr{c_{\sig(1)\sig(2)}}$, I further set $\BuOr{c_{\sig(1)\sig(2)}^*} =(-1)^{\sig}\BuOr{c_{\sig(3)\sig(4)}}$ which is well-defined.

The action of the differential on degree \BuOr{$-1$} edges becomes
$$
d(\BuOr{c_{\sig(1)\sig(2)}}) = (-1)^{\sig}\frac{\partial^{\circ} \BuOr{\mH}}{\partial \BuOr{c_{\sig(3)\sig(4)}}} = [x_{\sig(1)},x_{\sig(2)}] \,.
$$
From this, one sees that the master equation \eqref{eq:mastereq} holds. 

I have used a different description as compared to Definition \ref{def:CY4quiver} because the degree \BuOr{$-1$} loops are not paired with themselves. The original definition can be recovered by introducing $$\BuOr{e_{\sig(1)\sig(2)}} = \BuOr{c_{\sig(1)\sig(2)}}+(-1)^\sig\BuOr{c_{\sig(3)\sig(4)}}$$
for any $\sig\in S_4$. 

The category $\mA$ of degree 0 representations of 
 $\wt{C}_4^{\bullet}$ is equivalent to the category of representations of the quiver $C_4:=H^0(\wt{C}_4^{\bullet})$. This quiver is given by forgetting the edges of degrees $<0$ in \eqref{eq:C4dgquiver} and imposing the relations
 $$
 [x_i,x_j] =0 \qquad\text{whenever}\quad 1\leq i<j\leq 4\,.
 $$ 
This is the quiver used in \cite{KRdraft} to prove Nekrasov's conjecture. Moreover, the obstruction theory described by Lemma \ref{lem:cotangentofQ} and \eqref{eq:C4dgquiver} coincides with the one appearing in this reference. 

Here, I will consider a simple family of stability conditions on the category $\mA$ that is determined by slope stabilities of $C_4$. It is given by 
$$
\begin{tikzcd}
{[-1,1]}\ni t\mapsto \mu_{t} = (t,0)
\end{tikzcd}
$$
acting by $\mu_{t}(d_{\infty},d_0) = d_{\infty}\cdot t$. This family has two stability chambers $\{t>0\}$, $\{t<0\}$, and one wall $\{t=0\}$. I will consider wall-crossing for representations 
$$
\begin{tikzcd}
&v=m_e(1)\arrow[d, phantom, "{\rotatebox[origin=c]{270}{$\in$}}"]\\
V_\infty\arrow[r,"m_e"]&V_0\arrow[loop,  in=-35,out=35, looseness=14, "X_i"]
\end{tikzcd}
$$
 of dimension vectors $(d_{\infty},d_0)$ with $d_{\infty}=0,1$ and $0\leq d_0<\infty$. Here, $X_i = m_{x_i}$ are endomorphisms of $V_0$ for $i\in [4]$, and when $d_{\infty}=1$, I identify $V_{\infty}=\CC$ and set $v:=m_e(1)$. With this notation, the semistable objects of class $(d_{\infty},d_0)$ are the following representations:
 \begin{table}[h]
 \centering
\begin{tabular}{c|c|c}
&$d_{\infty}=1$&$d_{\infty}=0$\\
\hline
$\{t>0\}$&$\Big\{p(X_i)(v)\colon p\in \CC[x_1,x_2,x_3,x_4]\Big\}=V_{0}$&all\\
$\{t=0\}$&all&all\\
$\{t<0\}$&$V_0=0$&all
\end{tabular}
\end{table}

It is well-known (see for example \cite[§2.2]{Szendroi} and \cite[Example 6.3.1]{Lam}) that  for $d_{\infty}=1$ and $\{t>0\}$, this description implies that 
\begin{equation}
\label{eq:Hilbident}
M^{\mu_{t}}_{(1,n)}\cong \Hilb^n(\CC^4)\qquad \text{and}\qquad \Big[M^{\mu_{t}}_{(1,n)}\Big]^{\vir}= \Big[\Hilb^n(\CC^4)\Big]^{\vir}\,.
\end{equation}
 Let $\mM_{mp}$ denote the moduli stack of 0-dimensional sheaves of characteristic $m$ on $\CC^4$. For $(d_{\infty},d_0) = (0,m)$ and $\{t>0\}$, the identification \begin{equation}\label{eq:Mnpident}\mM^{\mu_{t}}_{(0,m)}\cong \mM_{mp}
\end{equation}
 also holds. 

After an easy check of Assumption \ref{ass:stab} and Assumption \ref{ass:obsonflag}.a), Corollary \ref{cor:WCconsequences} applies to the family of stability conditions $\{\mu_t\}_{t\in [-1,1]}$. Explicitly, this means the following.
\begin{corollary}
\label{cor:hilbWC}
Using the identifications \eqref{eq:Hilbident} and \eqref{eq:Mnpident}, the formula
$$
\sum_{n>0}\big[\Hilb^n(\CC^4)\big]^{\vir}q^n= \exp\bigg\{\sum_{n>0}\Big[\langle \mM_{np}\rangle,- \Big]q^n\bigg\}e^{(1,0)}
$$
holds for the corresponding classes in $L_{\loc,0}$ constructed for $\mA$. Here $e^{(1,0)}$ is the point-class of the connected component $\mM_{(1,0)}$.
\end{corollary}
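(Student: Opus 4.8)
The plan is to invoke Corollary \ref{cor:WCconsequences}.2) for the CY4 dg-quiver $\wt{C}_4^{\bullet}$ with the one-parameter family $\{\mu_t\}_{t\in[-1,1]}$ and then read off the wall-crossing formula of Theorem \ref{thm:familyWC}.ii) across the single wall $\{t=0\}$, specializing to the classes that appear after the identifications \eqref{eq:Hilbident} and \eqref{eq:Mnpident}. First I would verify the hypotheses of Corollary \ref{cor:WCconsequences}.2): namely that $\mA=\Rep(\wt{C}_4^{\bullet})$ satisfies Assumption \ref{ass:orientation} (this is Example \ref{ex:sheavesandquivers}.ii), ultimately Corollary \ref{cor:quiveror}), and that $W=\{\mu_t\}$ together with $\msE(\mA)=\{(d_\infty,d_0): d_\infty\in\{0,1\}\}$ satisfies Assumption \ref{ass:stab}. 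For the latter the only nonobvious points are the properness of the fixed-point loci in \ref{ass:stab}.g) and the finiteness conditions \ref{ass:stab}.c),d); these are handled by the standard GIT description of quiver representation moduli \cite{King} combined with a scaling $\T$-action on the loops $X_i$ (and on $v$) making the $\T$-fixed loci projective, exactly as flagged in the text preceding Corollary \ref{cor:hilbWC}. The rank function $\rk_\sigma(d_\infty,d_0)=d_\infty$ works, and $\msE(\mA)$ is closed under the relevant partitions since a subobject of a class with $d_\infty\le 1$ still has $d_\infty\le 1$. Assumption \ref{ass:obsonflag}.a) (properness and the equivariant resolution property for the flag/master spaces) follows in the same way, and \ref{ass:obsonflag}.b),c) are Proposition \ref{prop:construct}, so Corollary \ref{cor:WCconsequences}.2) delivers well-defined classes $\langle\mM^{\mu_t}_{(d_\infty,d_0)}\rangle\in L_{\loc,0}$ for which Theorem \ref{thm:familyWC}.i),ii) hold.

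Next I would run wall-crossing along the path from $t=-1$ to $t=1$, which crosses the wall $\{t=0\}$ exactly once. On the class $(1,n)$: at $t<0$ the only semistable object with $d_\infty=1$ has $V_0=0$, so $\langle\mM^{\mu_{-}}_{(1,n)}\rangle = 0$ for $n>0$ while $\langle\mM^{\mu_{-}}_{(1,0)}\rangle = e^{(1,0)}$; at $t>0$, by \eqref{eq:Hilbident}, $\langle\mM^{\mu_{+}}_{(1,n)}\rangle=[\Hilb^n(\CC^4)]^{\vir}$ and there are no strictly semistables in class $(1,n)$ for generic $t>0$ (Theorem \ref{thm:familyWC}.i) applies). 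The wall-crossing formula \eqref{eq:MasiWC} expresses $\langle\mM^{\mu_+}_{(1,n)}\rangle$ as a sum over partitions $\underline{\alpha}\vdash_{\mA}(1,n)$ of iterated Lie brackets of $\langle\mM^{\mu_-}_{\alpha_i}\rangle$; since $\langle\mM^{\mu_-}_{(1,m)}\rangle=0$ for $m>0$, the only surviving partitions have exactly one part equal to $(1,0)$ (contributing $e^{(1,0)}$) and all other parts of the form $(0,n_i)$ with $n_i>0$, where $\langle\mM^{\mu_-}_{(0,n_i)}\rangle=\langle\mM^{\mu_-}_{(0,n_i)}\rangle=\langle\mM_{n_i p}\rangle$ (the $d_\infty=0$ semistable locus is stability-independent, and uses the identification \eqref{eq:Mnpident}). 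Assembling the generating series in $q$, the wall-crossing coefficients $\widetilde U$ for a single wall-crossing of this ``Joyce--Song pair'' type are precisely the ones that exponentiate: this is the standard combinatorial identity (as in \cite[§3.2]{JoyceWC}, and used in the non-equivariant case in \cite{bojko2}) that turns the sum over ordered partitions with repeated brackets into $\exp\{\sum_{n>0}[\langle\mM_{np}\rangle,-]q^n\}$ applied to $e^{(1,0)}$. I would spell out this last bookkeeping step by grouping the partition sum according to the multiset of the $(0,n_i)$ parts and matching coefficients.

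The main obstacle I anticipate is not the formula manipulation but the verification, in the equivariant-local setting, that everything is consistent: concretely, that the $\T$-fixed loci of $M^{\mu_t}_{(1,n)}$, of the $d_\infty=0$ stacks, and of all the intermediate flag/master spaces $N^\sigma_{(1,\underline d),\alpha}$, $N^\sigma_{\underline d,\alpha}$ are proper so that the equivariantly localized classes of Example \ref{ex:mMvir} and \eqref{eq:mastervir} are defined, and that the $\T$-action interacts correctly with the Joyce--Song projective bundle structure (Remark \ref{rem:invdef}.ii)). For $\Hilb^n(\CC^4)$ with the standard torus scaling the coordinates this is classical; for the auxiliary quiver moduli coming from $\wt{I}^\bullet_{\MS}\cup_\times\wt{C}_4^\bullet$ one uses that the added $\GG_m$ (rescaling the $\PP^1$-direction of the master space) commutes with the coordinate torus and that \cite[Prop. 4.3]{King}-type projectivity of fixed loci is preserved. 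Once this is in place, the proof is a direct specialization of Theorem \ref{thm:familyWC}.ii), and the only remaining content is the exponentiation identity, which I would present as a short lemma or cite from \cite{JoyceWC, bojko2}.
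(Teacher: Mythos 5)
Your proposal is correct and follows essentially the same route as the paper: verify Assumptions \ref{ass:stab} and \ref{ass:obsonflag}.a) for $\Rep(\wt{C}_4^{\bullet})$ (with King-type projectivity of $\T$-fixed loci and Proposition \ref{prop:construct} supplying the obstruction theories), apply Corollary \ref{cor:WCconsequences}.2) to the family $\{\mu_t\}$, and specialize the wall-crossing formula across the single wall $t=0$ using the semistability table and the identifications \eqref{eq:Hilbident}, \eqref{eq:Mnpident}, with the $1/n!$-type coefficients exponentiating into the stated generating-series formula. The paper's own treatment is just a terser version of exactly this argument, deferring the assumption checks to \S\ref{sec:localCY4} as you also note.
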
 
The proof of Assumption \ref{ass:stab} will be addressed in a larger generality in the next subsection. 
\subsection{Stable pair wall-crossing on local Calabi--Yau fourfolds}
\label{sec:localCY4}
Due to the limitations explained in §\ref{sec:sheaves}, one needs to impose restrictions on the geometry to construct the obstruction theories of Assumption \ref{ass:obsonflag}. In this subsection, I will work with a fixed 3-fold $Y$ with a $\GG_m$-action. The total space $X$ of its canonical bundle $K_Y$ admits the induced $\GG_m$-action such that the natural projection $\pi: X\to Y$ is $\GG_m$-equivariant. Thus one obtains a short exact sequence of $\GG_m$-equivariant vector bundles
$$
\begin{tikzcd}
0\arrow[r]&\pi^*K_Y\arrow[r]&T_X\arrow[r]&\pi^*T_Y\arrow[r]&0\,.\end{tikzcd}
$$
Taking determinants and duals, one constructs an isomorphism of $\GG_m$-equivariant line bundles $K_X\cong \pi^*K_Y ^* \otimes \pi^*K_Y\cong \mO_X$. Thus $X$ has a $\GG_m$-equivariant Calabi--Yau form and \cite{OT} define equivariant virtual fundamental cycles of perfect complexes on $X$.

I now briefly recall the spectral correspondence argument which was used for curves in \cite{BNR}. I will follow the formulation presented in \cite{TT1}. Thus, let $\pi: X\to Y$ be the above projection with $Y$ not necessarily compact. For each compactly supported sheaf $F$ on $X$, consider the pair $(F,\eta)$ where $\eta:F\to F\otimes \pi^*K_Y$ is the tautological section. Projecting it to $Y$ produces a Higgs pair $(\pi_*F,\phi)$ where
$$
\begin{tikzcd}
\phi:=\pi_*(\eta): \pi_*(F)\arrow[r]&\pi_*(F)\otimes K_Y\,.
\end{tikzcd}
$$
This induces an equivalence between the categories 
$$
\left\{
\begin{array}{c}
F\in\Coh(X) \textnormal{ compactly supported}
\end{array}\right\}\begin{tikzcd}\, \arrow[r, leftrightarrow]&\,\end{tikzcd}\left\{
\begin{array}{c}(E,\phi)\textnormal{ compactly supported}\\
\textnormal{Higgs pairs on }Y
\end{array}\right\}
$$
Here, a compactly supported Higgs pair $(E,\phi)$ consists of a compactly supported sheaf $E$ on $Y$ and a morphism $\phi: E\to E\otimes K_Y$.

The classical stack $\mM^{\sig}_{\al}$ of $\sig$-semistable sheaves of class $\al\in K^0_{\cs,e}(X)$ can be identified with an open substack of compactly supported Higgs pairs. This also holds on the level of derived refinements. To describe its obstruction theory, consider the map $\mathscr{P}: \mM^{\sig}_{\al}\to \mM_Y$ which forgets the morphism $\phi$. Set
\begin{equation}
\label{eq:GGEE}
\GG :=\mathscr{P}^* \RHom_{\mM_Y}(\mE,\mE)^\vee[-1]\,,\qquad \EE := \RHom_{\mM^{\sig}_{\al}}(\mF,\mF)^\vee[-1]
\end{equation}
for the universal objects $\mE$ on $Y\times\mM_Y$ and $\mF$ on $X\times\mM^{\sig}_{\al}$. Then, there exists the homotopy fiber sequence (in black)
\begin{equation}
\label{eq:spectralobs}
\begin{tikzcd}
&\B{\MM_O[-1]}\arrow[dl,blue]\arrow[r, blue,equal]&\B{\MM_O[-1]}\arrow[dl,blue]&\\
\GG\arrow[r]&\EE\arrow[dl,blue]\arrow[r]&{\GG^{\vee}[2]}\arrow[dl,blue]&\\
\B{\MM_O^\vee[3]}\arrow[r, blue,equal]&\B{\MM_O^\vee[3]}&&
\end{tikzcd}
\end{equation}
with connecting morphism $\GG^\vee[2]\to \GG[1]$, which is the dual of
$$
\begin{tikzcd}
{[-,\Phi]}: \mathscr{P}^* \RHom_{\mM_Y}(\mE,\mE)\arrow[r]&\mathscr{P}^* \RHom_{\mM_Y}(\mE,\mE\otimes K_Y)
\end{tikzcd}
$$
for the universal morphisms $\Phi:\mE\to \mE\otimes K_Y$. 

Let us now consider pairs of sheaves and the setup described in Assumption \ref{ass:pairWC} and modified for the non-compact setting in Remark \ref{rem:afterpairs}. In this case, the choice of the framing object is 
$$
O=\pi^*(L_Y)
$$
for a line bundle $L_Y$ on $Y$. Consider the obstruction theory on $\big(\mN^{\sig^P}_{1,\al}\big)^{\rig}$ for $\al\in \msE_O$ induced by its open embedding into $\mM_{\ov{X},\ov{O}}$ where $\ov{X} = \PP_{Y}\big(\mO_Y\oplus K_Y\big)$ and $\ov{O}$ is the pullback of $L_Y$ along the projection from $\ov{X}$. To describe this obstruction theory in the form of \eqref{eq:spectralobs}, consider the map
\begin{equation}
\label{eq:dequals1}
\begin{tikzcd}
\mathscr{P}_O: \mN^{\sig^P}_{1,\al}\arrow[r]&\mN_{1,\pi_*(\al)}
\end{tikzcd}
\end{equation}
where the latter stack parametrizes pairs of the form $L_Y\to E$ for $\llbracket E\rrbracket =\pi_*(\al)$. This map is induced by the adjunction between $\pi^*$ and $\pi_*$.

Continuing to use $\mF$, respectively $\mE$, for the universal sheaves on $X\times \big(\mN^{\sig^P}_{1,\al}\big)^{\rig}$ and $Y\times \big(\mN_{1,\pi_*(\al)}\big)^{\rig}$, the obstruction theory for the pairs is recovered by taking cones and cocones of the \B{blue arrows} meeting $\EE$ in \eqref{eq:spectralobs}  in the sense of Proposition \ref{prop:sympullback}. This is possible because their composition is naturally homotopic to 0. Here, it is understood that \eqref{eq:GGEE} is used with $\mathscr{P}^{\rig}_O$ replacing $\mathscr{P}$, and I set 
\B{$$\MM_O :=\RHom_{\big(\mN^{\sig^P}_{1,\al}\big)^{\rig}}\big(\pi^*(L_Y),\mF\big) =\big(\mathscr{P}^{\rig}_O\big)^{*}\RHom_{\mN^{\rig}_{1,\pi_*(\al)}}\big(L_Y,\mE\big)\,.$$}
 The null-homotopy of the composition follows because the morphism $\B{\MM_O[-1]\to }\FF$ is given by the composition of $\B{\MM_O[-1]\to }\GG\to \FF$, and the dual statement also holds. The resulting obstruction theory is given by
$$
\FF^{\sig^P}_{1,\al} = \RHom_{\mN^{\sig^P}_{1,\al}}\big(O\to \mF,O\to \mF\big)^\vee_0[-1]
$$
where $(-)_0$ denotes the traceless part.

To prove Corollary \ref{cor:WCconsequences}.2) and its generalization to stable pairs, it is necessary to show that the obstruction theories of Assumption \ref{ass:pairWC}.c) exist.
\begin{proposition}
\label{prop:localCY}
Continue working in the situation above with a fixed $O$, a set of weak stability conditions $W^P$ on $\mB_O$, and $\msE_O$ satisfying Assumption \ref{ass:stab}, \ref{ass:pairWC}.a), and b). If $D_k$ are pullbacks of divisors in $Y$, then Assumption \ref{ass:pairWC}.c) and Assumption \ref{ass:welldef} hold. 
\end{proposition}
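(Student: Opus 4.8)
The plan is to establish the diagrams required by Assumption \ref{ass:pairWC}.c) by running the spectral-correspondence argument of the present subsection in parallel with the quiver construction of \S\ref{sec:quivers}. The key observation to exploit is the formula of \cite[(6.11)]{TT2}: the obstruction theory for pairs on $X=\Tot(K_Y)$ is the $-2$-shifted cotangent bundle of the obstruction theory for pairs on $Y$. Concretely, under $\mathscr{P}_O$ from \eqref{eq:dequals1}, the complex $\FF^{\sig^P}_{1,\al}$ sits in the self-dual fiber sequence \eqref{eq:spectralobs} with middle term $\GG$ the pullback of the (three-fold) pair obstruction theory and connecting map $[-,\Phi]^\vee$. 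First I would observe that the three-fold side --- pairs $L_Y\to E$ on $Y$, together with their flag and master-space enhancements --- is governed by ordinary perfect obstruction theories over a smooth base of dimension $3$, so Example \ref{ex:fibrations} applies verbatim: the obstruction $\eqref{eq:compvanishwrong}$ there factors through $H^\bullet(\mO_Y)$ because $R\pi_*\mO_X = \mO_Y\oplus\bigoplus_{i>0}R^i\pi_*(\mO_X)[-i]$ (using $D_k$ pulled back from $Y$), hence the Joyce--Song, flag, and master-space diagrams of \cite{mochizuki}-type for $Y$ all exist. What I want to produce are the CY4 versions on $X$; I would get them by applying $(-)^\vee[2]$-duality to the three-fold diagrams, exactly as \eqref{eq:spectralobs} builds $\FF^{\sig^P}_{1,\al}$ out of $\GG$ by taking cones and cocones of the two blue arrows whose composition is canonically null-homotopic. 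Functoriality of $\infty$-Pvp diagrams (Theorem \ref{thm:functsympull}) then lets me compose these cocone-of-null-homotopy constructions along the tower $\pi^{\rig}_{\MS/\Flag},\pi^{\rig}_{\Flag/\JS},\pi^{\rig}_{\JS}$, which is precisely the content of Assumption \ref{ass:pairWC}.c), including the additivity \eqref{eq:additivityofFFflag} (which descends from the additivity of the three-fold pair obstruction theory under direct sums). This simultaneously yields the enhanced master spaces for the Joyce--Song family of \S\ref{sec:pairWC}, hence \eqref{eq:introJSWC}, i.e.\ \eqref{eq:JSWC}.

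For the second half, Assumption \ref{ass:welldef}: with \eqref{eq:JSWC} now available for $\sig$ (as part of Problem (I), exactly as the hypothesis of Theorem \ref{thm:independence!} demands in the projective case), I would invoke Theorem \ref{thm:independence!} --- or rather its proof --- after checking that its ingredients survive in the local setting. The proof of Theorem \ref{thm:independence!} uses only: (i) the closed embedding $\iota_1:N^{\JS}_{D_1,\al}\hookrightarrow N^{\JS}_{D,\al}$ as the vanishing locus of the section $\mathfrak v_1$ of $\VV_1=Rp_*(\mF(D)|_{D_2})$, which is a local statement about torsion-free sheaves and requires $D_2$ very ample on $Y$ (so that $D_2$ can be chosen smooth by Bertini on $Y$ and pulled back, with $\VV_1$ a vector bundle by Grothendieck--Serre duality along $D_2\times N\hookrightarrow X\times N$); (ii) the Park diagram \eqref{eq:introJSdiag}/\eqref{eq:JScomparisondiag} relating the traceless $\RHom$ obstruction theories, whose construction is pure diagram-chasing in $D^b(X\times N)$ and goes through unchanged for a quasi-projective CY fourfold once one has the universal sheaves and Grothendieck--Serre duality, both of which hold here; (iii) the vertex-algebra comparison \eqref{eq:WWscomparison} via \cite[Prop.\ 2.16]{bojko2}, \cite[Thm.\ 2.12]{GJT}, which only needs the K-theoretic identity $\Theta_{\iota_1(D_1)}=\Theta_D-\WW_1^*-\sigma^*\WW_1$ of \eqref{eq:ThetaDD1}, again a formal consequence of the short exact sequence of pushforwards on the stack of torsion-free sheaves with vanishing higher cohomology; (iv) injectivity of $[-,e^{(1,0)}_D]$ on classes with $\al\notin\Ker(\chi)$, which in the non-compact case means $\chi(-,\al)\neq 0$ as in Remark \ref{rem:noncompactnondegenera}. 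So I would restate Theorem \ref{thm:independence!} in the local form (heart $\Coh_{\cs}(X)$, compactly supported torsion-free sheaves, divisors pulled back from $Y$) and note that each step of its proof is either insensitive to properness of $X$ or uses only properness of the relevant fixed-point locus, which Assumption \ref{ass:stab}.g) / Assumption \ref{ass:pairWC}.b) supply.

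The main obstacle I anticipate is \textbf{not} the torsion-free comparison --- that is essentially Theorem \ref{thm:independence!} transplanted --- but rather making precise the claim that the three-fold flag and master-space obstruction theories of Mochizuki type can be \emph{lifted through the spectral correspondence} to the required self-dual $\infty$-Pvp diagrams on $X$, coherently along the whole tower. The subtlety is that \eqref{eq:spectralobs} and the identification $\FF^{\sig^P}_{1,\al}=(-)^\vee_0[-1]$ of the traceless $\RHom$ of the pair are stated only for $d=1$ and the single projection $\mathscr P_O$; one must check that the analogous fiber sequences (with middle term the pullback of the three-fold flag/MS obstruction theory and connecting map induced by $[-,\Phi]$) are compatible with $\pi_{\MS/\Flag}$ and $\pi_{\Flag/\JS}$, i.e.\ that the null-homotopies used in the cocone constructions match up with the ones one gets from Theorem \ref{thm:functsympull}. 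Concretely, one has to verify that the composition-of-roofs picture of Proposition \ref{prop:construct} --- which there is engineered by hand from dg-quivers --- is reproduced here by the tower $\bmN_{O,\MS_k}\to\bmN_{O,\Flag_k}\to\bmN_{O,\JS_k}\to\bmN_O$ together with the $K_Y$-twist. I expect this to be routine once the correct derived Lagrangian-correspondence formulation is fixed (cf.\ \cite{LagCor}, Remark \ref{rem:dagJScomp}), but it is where the real work lies; everything downstream, including the additivity \eqref{eq:additivityofFFflag} and the final equality $\blangle\mM^\sigma_\al\brangle^{k_1}=\blangle\mM^\sigma_\al\brangle^{k_2}$, then follows formally.
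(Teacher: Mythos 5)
For Assumption \ref{ass:pairWC}.c) your mechanism is the right one (the framing and flag data live over $Y$ because $O=\pi^*L_Y$ and $D_k$ are pulled back, so everything can be routed through the three-fold part $\GG$ of the self-dual sequence $\GG\to\EE\to\GG^\vee[2]$ from \eqref{eq:spectralobs}), but your route — build Mochizuki-type obstruction theories for pairs/flags/master spaces on $Y$ first and then ``lift through the spectral correspondence'' and glue by Theorem \ref{thm:functsympull} — leaves precisely the step you yourself flag (coherent, self-dual null-homotopies along the whole tower $\pi_{\MS/\Flag},\pi_{\Flag/\JS},\pi_{\JS}$) unproven, and that step is the content of the assumption, not a routine afterthought. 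The paper avoids it with a one-shot construction: let $\widehat{\pi}_{\un{d},\al}:\mbN^{\rig}_{O,(\un{d},1,\al)}\to\mbN_{O,k}$ be the composite of \eqref{eq:OprojmN} and set $\NN_{\un{d},\al}:=T_{\widehat{\pi}_{\un{d},\al}}$; since this relative tangent bundle and $\MM_O$ are built from pushforwards to $Y$, the maps $\NN_{\un{d},\al}[-1]\oplus\MM_O[-1]\to\EE$ factor through $\GG$ and dually through $\GG^\vee[2]$, so the composition with the dual arrow is null-homotopic and Proposition \ref{prop:sympullback} produces the CY4 obstruction theory directly on the semistable locus of the enhanced master space, with the intermediate diagrams and the additivity obtained the same way. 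Also, Example \ref{ex:fibrations} does not apply ``verbatim'' here: sheaves on $X=\Tot(K_Y)$ are not pullbacks from $Y$, and the relevant factorization is through the Higgs-pair/spectral description encoded in $\GG$, not through $R\pi_*\mO_X$.

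For Assumption \ref{ass:welldef} there is a genuine gap: transplanting Theorem \ref{thm:independence!} only treats classes $\al$ whose pushforward $\pi_*(\al)$ is torsion-free of positive rank — the closed embedding $\iota_1$ as a zero locus of $\VV_1$, the vanishing $\Hom(\mO_{D_2}(-D_1),F)=0$, and the normalized trace all use this — and that is exactly the content of Corollaries \ref{cor:JSWC} and \ref{cor:spectralindep}. The Proposition, however, asserts well-definedness for all classes in $\msE_O$, including the lower-dimensional ones (e.g.\ the $1$-dimensional classes needed for DT/PT), where your argument breaks down and you offer no substitute. The paper handles the general case by the Mochizuki--Joyce comparison (\cite[\S 9]{JoyceWC}, \cite[\S 7.3]{mochizuki}) combined with the spectral correspondence as in \cite{LiuVW}, with the quantum-Lefschetz argument reserved for the codimension-one case; as written, your proposal proves only that special case.
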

\begin{proof}
As before, fix a dimension vector $\un{d}$ of $\mathring{I}$ such that $d_1=1$. I will also assume that $d=1$ as in \eqref{eq:dequals1} and denote the corresponding moduli stack of objects in $\mbB_{O,I_k}$ by $\mbN^{\rig}_{O,(\un{d},1,\al)}$.  Let $\widehat{\pi}_{\un{d},\al}: \mbN^{\rig}_{O,(\un{d},1,\al)}\to \mbN_{O,k}$ be a projection induced by the composition of the arrows in \eqref{eq:OprojmN}. For the rest of the proof, I will omit specifying pullbacks of complexes when they are clear. Setting $\NN_{\un{d},\al}:=T_{\widehat{\pi}_{\un{d},\al}}$, one can construct a CY4 obstruction theory on the appropriate semistable locus of $\mbN^{\rig}_{O,(\un{d},1,\al)}$ from 
$$
\begin{tikzcd}
&\B{\NN_{\un{d},\al}[-1]\oplus\MM_O[-1]}\arrow[dl,blue]\arrow[r, blue,equal]&\B{\NN_{\un{d},\al}[-1]\oplus\MM_O[-1]}\arrow[dl,blue]&\\
\GG\arrow[r]&\EE\arrow[dl,blue]\arrow[r]&{\GG^{\vee}[2]}\arrow[dl,blue]&\\
\B{\NN^\vee_{\un{d},\al}[3]\oplus\MM_O^\vee[3]}\arrow[r, blue,equal]&\B{\NN^\vee_{\un{d},\al}[3]\oplus\MM_O^\vee[3]}&&
\end{tikzcd}
$$
by the same arguments as in the case of \eqref{eq:spectralobs} and $\FF^{\sig}_{1,\beta}$. This addresses Assumption \ref{ass:pairWC}.c). 

The proof of Assumption \ref{ass:welldef} in the case of compactly supported codimension 1 Gieseker and slope-stable sheaves is addressed in Corollary \ref{cor:JSWC} to showcase the quantum Lefschetz-type argument used in \ref{sec:proof}. The general statement follows by the arguments developed in \cite[§9]{JoyceWC} based on \cite[§7.3]{mochizuki}. In combination with spectral correspondence, the latter approach was also used in the case of CY threefolds and dimension 2 sheaves in \cite{LiuVW}.
\end{proof}
The next table summarizes the references that prove the different parts of Assumption \ref{ass:stab}, Assumption \ref{ass:pairWC}.a), and b). Wall-crossing in these situations follows from the above.
\setlength{\tabcolsep}{15pt}
\renewcommand{\arraystretch}{1.5}
\begin{center}
\begin{tabular}{c|c c}
&Assumption \ref{ass:pairWC}.a) & Assumption \ref{ass:pairWC}.b)\\
\hline
\begin{tabular}{c}DT/PT wall-crossing\\
(Example \ref{ex:DTPT})
\end{tabular}&\cite[Lemma 3.15]{todacurve}&\cite[Proposition 6.1.5]{KLT}\\
\begin{tabular}{c}JS wall-crossing\\
\eqref{eq:JSWC}
\end{tabular}&\cite[Appendix A]{bojko3} &\cite[Proposition 6.1.5]{KLT}
\end{tabular}
\end{center}
Note that all of these references are for lower-dimensional cases, but they can be adapted in a straightforward way to the present setting. The first consequence is Joyce--Song pair wall-crossing for 3-dimensional Gieseker/slope semistable sheaves. By the arguments of §\ref{sec:proof} it implies that the invariants defined by the procedure from Definition \ref{def:Masik} are independent of $k$.
\begin{corollary}
\label{cor:JSWC}
Let $\sig$ be the Gieseker or slope stability for $X$ defined relative to $\pi^*H$ where $H$ is an ample divisor class of $Y$. Then Joyce--Song pair wall-crossing \eqref{eq:JSWC} holds for any appropriate $D_l$  and $\al\in K^0_{\cs,e}(X)$ such that $\pi_*(\al)$ has positive rank. All classes $\al_i$ appearing in the wall-crossing formula also satisfy this condition. 
\end{corollary}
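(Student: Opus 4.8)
The plan is to deduce Corollary \ref{cor:JSWC} as a special case of the spectral-correspondence machinery set up earlier in this subsection, together with the references assembled in the preceding table. First I would fix $\sig$ to be Gieseker or slope stability on $X=\Tot(K_Y)$ defined relative to $\pi^*H$, and note that under the spectral correspondence $\sig$-semistable compactly supported sheaves of class $\al\in K^0_{\cs,e}(X)$ with $\pi_*(\al)$ of positive rank correspond to $\sig$-semistable compactly supported Higgs pairs on $Y$ whose underlying sheaf has positive rank; this is the condition isolated in Remark \ref{rem:noncompactnondegenera} guaranteeing $\al\notin\Ker(\chi)$ so that all the vertex- and Lie-algebra constructions of §\ref{sec:equivariantVA} and the isomorphism \eqref{eq:Lalphaiso} are available. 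I would then check that the data $O=\pi^*(L_Y)$, the family of stability conditions $\sig_t$ on $\mB^{\phi}_k$ described in the example at the end of §\ref{sec:pairWC} interpolating the Joyce--Song pair stability with the trivial one, and $\msE_O$ the set of such classes, satisfy Assumption \ref{ass:stab} and Assumption \ref{ass:pairWC}.a),b). For a) this is \cite[Appendix A]{bojko3} adapted to the present non-compact setting via Remark \ref{rem:afterpairs}.i) using the compactification $\ov{X}=\PP_Y(\mO_Y\oplus K_Y)$; for b) this is \cite[Proposition 6.1.5]{KLT}, again adapted; the remaining parts of Assumption \ref{ass:stab} (Harder--Narasimhan, rank functions, boundedness/properness of the fixed loci, the existence of the homomorphisms $\lambda^{t,t'}_\al$ for the simple two-chamber family) are routine for Gieseker/slope stability of sheaves of positive rank on a projective threefold and I would cite \cite[§7]{JoyceWC}.

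Next I would invoke Proposition \ref{prop:localCY}: since the divisors $D_l$ are pullbacks of divisors on $Y$, its proof produces the CY4 obstruction theories required by Assumption \ref{ass:pairWC}.c) on all the relevant enhanced master spaces $\mbN^{\rig}_{O,(\un d,1,\al)}$, built from the spectral homotopy fiber sequence \eqref{eq:spectralobs} by the cone/cocone procedure of Proposition \ref{prop:sympullback}, together with the additivity statement analogous to \eqref{eq:additivityofFFflag}. With Assumptions \ref{ass:orientation} (orientations: \cite{CGJ,bojko,JU} for the underlying CY fourfold, and the compatibility \eqref{eq:symsign}), \ref{ass:stab}, and \ref{ass:pairWC} all in force, Theorem \ref{thm:BWC} (equivalently the pair version of Theorem \ref{thm:familyWC}) applies. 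Specializing the quiver $I$ to the simple form $I_{\JS}$ of Example \ref{ex:JSpairs} and the stability family to the one whose $t>0$ chamber parametrizes Joyce--Song stable pairs and whose $t<0$ chamber forces the underlying sheaf to vanish, the wall-crossing formula of Theorem \ref{thm:familyWC}.ii) across $t=0$ becomes exactly \eqref{eq:JSWC} for $\sig$ and the chosen $D_l$. That every class $\al_i$ appearing on the right-hand side again has $\pi_*(\al_i)$ of positive rank follows because the phases are equal, $\phi(\al_i)=\phi(\al)$, so the supports and generic ranks of the Harder--Narasimhan/Jordan--Hölder constituents are constrained in the standard way for Gieseker/slope stability; I would spell this out using the rank function $\rk_\sig$ of Assumption \ref{ass:stab}.e), which here is the usual rank of the Higgs sheaf and is additive on such partitions.

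Finally, feeding \eqref{eq:JSWC} into the reduction of §\ref{sec:proof} (Theorem \ref{thm:independence!} and its proof, via Proposition \ref{lem:relateDD1} and the Park diagram \eqref{eq:JScomparisondiag}) gives the independence of $\blangle\mM^\sig_\al\brangle^{D_l}$ of the choice of $D_l$, i.e. Assumption \ref{ass:welldef} for these classes, which is the second assertion. I expect the main obstacle to be bookkeeping rather than conceptual: namely, carefully verifying that the cited lower-dimensional results \cite{bojko3, KLT, todacurve, JoyceWC, mochizuki} transfer verbatim to compactly supported Higgs pairs on $Y$ with the extra $\GG_m$-equivariance coming from the $K_Y$-direction, in particular that all the relevant $\GG_m$-fixed loci are proper and carry the equivariant resolution property (Assumption \ref{ass:obsonflag}.a) and its pair analogue in Assumption \ref{ass:pairWC}.b)), and that the null-homotopy of the composition $\MM_O[-1]\to\FF\to\MM_O^\vee[3]$ used to build $\FF^{\sig}_{1,\al}$ from \eqref{eq:spectralobs} is compatible with the master-space constructions. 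Once these compatibilities are recorded, the corollary is immediate from Theorem \ref{thm:BWC} and §\ref{sec:proof}.
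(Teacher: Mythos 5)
Your proposal is correct and follows essentially the same route as the paper: the corollary is obtained by checking Assumption \ref{ass:stab} and Assumption \ref{ass:pairWC}.a),b) via the cited lower-dimensional references (adapted to Higgs pairs on $Y$), supplying Assumption \ref{ass:pairWC}.c) through Proposition \ref{prop:localCY} and the spectral sequence \eqref{eq:spectralobs}, and then specializing the general pair wall-crossing of Theorem \ref{thm:BWC} (Corollary \ref{cor:WCconsequences}) to the $\sigma_t$-family crossing $t=0$, with the positive-rank claim for the $\al_i$ following from equality of phases under Gieseker/slope stability. One small correction: the final step you describe (independence of $D_l$ via §\ref{sec:proof}) is not the second assertion of this corollary — that assertion is the positive-rank condition on the $\al_i$, which you already handled — but rather the content of the separate Corollary \ref{cor:spectralindep}.
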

To obtain the diagram \eqref{eq:JScomparisondiag}, one does not even need to go through the full proof of Theorem \ref{thm:independence!}. This is due to \eqref{eq:spectralobs} and the vector bundle $\VV_1$ from \eqref{eq:VVbundle} only modifying \B{$\MM_O[-1]$} independently of \B{$\MM^\vee_O[3]$}. The divisors $D_k$ are still required to be pullbacks of ones in $Y$. By applying the rest of the proof of Theorem \ref{thm:independence!} I conclude the following.
\begin{corollary}
\label{cor:spectralindep}
Let $\sig$ and $\al\in K^0_{cs}(X)$ be as in Corollary \ref{cor:JSWC}, then 
$$
\big\langle\mM^{\sig}_{\al}\big\rangle^{k_1}=\big\langle\mM^{\sig}_{\al}\big\rangle^{k_2}\,.
$$
\end{corollary}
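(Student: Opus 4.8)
The plan is to mirror the proof of Theorem \ref{thm:independence!} but to exploit the extra rigidity provided by the spectral correspondence so that only the framing part of the obstruction theory gets altered. First I would reduce, exactly as in the proof of Theorem \ref{thm:independence!}, to the case $D_{k_2} = D_{k_1} \otimes \mO_X(-D)$ with $D = \pi^*(D_Y)$ the pullback of a smooth ample divisor $D_Y$ on $Y$ (so $D$ is cut out by a section $s_1$, after replacing $D_Y$ by a large power if necessary), and set $D_{k} = D_{k_1}+D$. The key point is that the divisors are pullbacks from $Y$, so the closed embedding $\iota_1\colon N^{\JS}_{D_{k_1},\al}\hookrightarrow N^{\JS}_{D_{k},\al}$ of Lemma (the lemma inside the proof of Theorem \ref{thm:independence!}) is still realized as the vanishing locus of a section of the vector bundle $\VV_1 = Rp_*\big(\mF(D_k)|_{D\times N^{\JS}_{D_k,\al}}\big)$, and by projection formula $p_*\big(\mF(D_k)|_{D\times -}\big)$ is pulled back from the analogous construction on $Y$ via $\mathscr{P}_O$.

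Second, I would assemble the Park virtual pullback diagram \eqref{eq:JScomparisondiag} for $\iota_1$. Rather than redoing the full diagram chase of §\ref{sec:proof}, I would invoke the spectral fiber sequence \eqref{eq:spectralobs}: the obstruction theories $\FF^{\sigma^P}_{1,\al}$ on $X$ differ from $\GG$ only by the contribution of $\MM_O[-1]$ and its Serre dual $\MM_O^\vee[3]$, and the modification caused by passing from $D_{k_1}$ to $D_k$ affects only $\MM_O$. Concretely, the short exact sequence of vector bundles $0\to p_*(\mF(D_{k_1}))\to p_*(\mF(D_k))\to p_*(\mF(D_k)|_D)\to 0$ on the locus where higher cohomology vanishes shows that $\MM_O$ for $D_{k_1}$ is the cocone of $\MM_O$ for $D_k$ along a map to $\VV_1^\vee[1]$, with no interaction with the $\MM_O^\vee[3]$ strand. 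Applying Proposition \ref{prop:sympullback} (and its functoriality, Theorem \ref{thm:functsympull}) to the self-dual diagram obtained this way produces \eqref{eq:JScomparisondiag} directly, and hence, by \cite{KKP, Manolache}, the comparison
\begin{equation*}
(\iota_1)_*\big[N^{\JS}_{D_{k_1},\al}\big]^{\vir} = \big[N^{\JS}_{D_{k},\al}\big]^{\vir}\cap c_{\Rk}(\VV_1)\,.
\end{equation*}

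Third, I would run the vertex-algebra argument from the proof of Theorem \ref{thm:independence!} verbatim: extend $\VV_1$ to a bundle $\WW_1$ on $\mN_{D_k}\times\mN_{D_k}$ (pulled back appropriately from $Y$), observe that $\Theta_{\iota_1(D_{k_1})} = \Theta_{D_k} - \WW_1^* - \sigma^*\WW_1$ by the same projection-formula short exact sequence, invoke \cite[Definition 2.15, Proposition 2.16]{bojko2} together with \cite[Theorem 2.12]{GJT} to get a morphism of vertex algebras $-\cap c_{\Rk}(\WW_1)$, and then combine this with the Joyce--Song wall-crossing formula \eqref{eq:JSWC} — which holds here by Corollary \ref{cor:JSWC} — to equate the two sides after pushforward. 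Injectivity of $[-,e^{(1,0)}_{D_k}]$ on classes with $\al\notin\Ker(\chi)$ (the analogue of Lemma \ref{lem:pushJSandinj}.ii), valid here in the sense of Remark \ref{rem:noncompactnondegenera}) together with induction on $\Rk_\sigma(\al)$ then yields $\big\langle\mM^{\sigma}_{\al}\big\rangle^{k_1} = \big\langle\mM^{\sigma}_{\al}\big\rangle^{k_2}$.

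The main obstacle I anticipate is the bookkeeping needed to verify that the spectral fiber sequence \eqref{eq:spectralobs} is compatible with the Park pullback diagram for $\iota_1$ in a \emph{self-dual} way — i.e.\ that the modification genuinely touches only the $\MM_O$ strand and is orthogonal to the $\MM_O^\vee[3]$ strand, so that no spurious cross-terms appear and the quadratic form on the virtual normal cone behaves correctly. This is exactly the simplification the statement is claiming (\textit{one does not even need to go through the full proof of Theorem \ref{thm:independence!}}), but making it rigorous requires checking that the $\Delta^1\times\Delta^1$ diagrams splice together in the stable $\infty$-category $\mD^b\big(\mN^{\sigma^P}_{1,\al}\big)$, for which the relevant $\mathrm{Map}$-spaces are sufficiently connected; here the pullback-from-$Y$ hypothesis on the $D_k$ is what forces the decoupling and should make the connectivity estimates go through.
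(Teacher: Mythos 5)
Your proposal is correct and follows essentially the same route as the paper: the paper likewise obtains the comparison diagram \eqref{eq:JScomparisondiag} by noting that, because the $D_k$ are pullbacks from $Y$, the spectral fiber sequence \eqref{eq:spectralobs} lets $\VV_1$ modify only the $\MM_O[-1]$ strand independently of $\MM_O^\vee[3]$, and then runs the remainder of the proof of Theorem \ref{thm:independence!} (the vertex-algebra argument combined with \eqref{eq:JSWC} from Corollary \ref{cor:JSWC}) verbatim. Your added care about the self-dual splicing of the diagrams is exactly the point the paper is implicitly using, so no gap remains.
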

I wrap up this section by addressing the proof of Corollary \ref{cor:introDTPT} in the next example. 
\begin{example}
    Consider the weak stability conditions $\{\sig_t\}_{t\in [0,1]}$ on $\mB$ from Example \ref{ex:DTPT}. In this case, one sets $O = \mO_X$, $\ov{K}_{\cs} = H^*_{\cs}(X)$ and $\msE_O = \{(\beta,n)\in H^{6}_{\cs}(X)\oplus H^{8}_{\cs}(X) : \beta \geq 0 \}$. Thus, one only allows 1-dimensional $F$ with $\ch(F) \in \msE_O$. The set of stability conditions $W^P$ is $[0,1]$. Each $\sig_t$ acts by assigning the following phases in $[-\frac{1}{2},1]$:
$$
\big(d,(\beta,n)\big)\mapsto \begin{cases}
    \frac{1}{2}&\textnormal{if }d>0\,,\\
      -\frac{1}{2}&\textnormal{if }d=0,\beta>0\,,\\
        1-t&\textnormal{if }d,\beta=0, n>0\,.\\
\end{cases}
$$
For $\big(1,(\beta,n)\big)$ with $\beta>0$ the associated group homomorphism $\lambda^{\frac{1}{2},t}$ from \eqref{eq:lambdaiffphi} for $t\neq \frac{1}{2}$ is defined by
$$
\lambda^{\frac{1}{2},t}\big(d,(\beta,n)\big) = \begin{cases}
0&\textnormal{if }d>0\textnormal{ or }\beta>0\,,\\
n(\frac{1}{2}-t) &\textnormal{if }d,\beta=0, n>0\,.
\end{cases}
$$
The rest of the assumptions follows from the table above. 
\end{example}
\label{sec:spectralWC}
\section{Proving wall-crossing}
Fix an abelian category $\mA$ with the data as in Definition \ref{def:categoryA}. Here, I will consider the general situation of Theorem \ref{thm:familyWC}. Therefore, the statements in this subsection work under the condition that Assumptions \ref{ass:orientation}, \ref{ass:stab}, and \ref{ass:obsonflag} hold. 
\subsection{Summary of the main steps}
\label{sec:summaryproof}
I will now explain the core ideas of Joyce's proof of wall-crossing from \cite[§10]{JoyceWC}, which relies on two key formulae. These need to be proved separately as is done in §\ref{sec:WCflags} and §\ref{sec:Flagtosheaves}. 

\begin{enumerate}
    \item \textbf{Divide wall-crossing into a finite number of steps}
    By Assumption \ref{ass:stab}.b), there exists a well-behaved path $\gamma_{(-)}$ between any two stability conditions $\sig,\sig'\in W$. Fix $\al\in \msE(\mA)$, for which the wall-crossing formula \eqref{eq:MasiWC} is to be proved. Because Assumption \ref{ass:welldef} is not enforced yet, one needs to choose $k\in K$ and replace $W$ by its subset $W_{\al,k}$ assuming that the latter is non-empty.
    
    Consider the finite set of partitions $\circledast$ from Assumption \ref{ass:stab}.c). Because $\gamma_{\beta_j=\al}$ from \eqref{eq:betajequalsal} is a finite union of closed intervals by Assumption \ref{ass:stab}.b), one can choose $\{t_a\}_{a \in A}$ for a finite set $A$ to be the set of all the boundary values of these intervals for all $\circledast$ where $\beta_j$ are not pairwise collinear. If condition $(P)$ from Assumption \ref{ass:stab}.b) holds, then the above intervals are isolated points. All non-trivial wall-crossing contributions will appear when passing through some $t_a$ for $a\in A$.
    \item \textbf{Prove wall-crossing enhanced by flags in the neighbourhood of each $t_a$.} This forms the core of the argument and is where wall-crossing truly happens. For any $a\in A$ and a small $\delta$ such that $(t_a-\delta,t_a+\delta)$ does not contain any other $t_b$, set
    \begin{equation}
    \label{eq:sigprclosetosig} 
    \sig = \sig_{t_a}\,,\qquad \sig'= \sig_{t}\qquad \text{for some}\quad  t\in (t_a-\delta,t_a)\cup (t_a,t_a+\delta)\,.\end{equation}
For the above chosen $\al\in \msE(\mA)$, fix the vector $\un{d}$ with $d_i=i$ for all $i\in \big\{1,2,\ldots,\chi\big(\al(k)\big)\big\}$, and the subset $B:= B_{\al,t_a}\subset \msE(\mA)$ defined in \eqref{eq:Balt}. Assumption \ref{ass:stab}.d) gives a group homomorphism $\lambda' =\lambda^{t_a,t}_{\al}: \ov{K}(\mA)\to \RR$ that can be used to compare phases $\phi'(\beta)$ and $\phi'(\al)$ for all $\beta\in B$ (see \eqref{eq:lambdaiffphi}). Consider the family of stability conditions
$$\sig^{s\lambda'}_{\mu}\qquad\text{for}\quad s\in [0,1]$$
which satisfies
\begin{equation}
\label{eq:Nsig'0}
N^{\sig^{\lambda'}}_{\un{d},\al} = N^{(\sig')^{0}}_{\un{d},\al}\,.
\end{equation}
 To conclude the above equality, one observes that $\sig^{\lambda'}_\mu$-semistable objects $(\un{V},\un{m},E)$ in $\mB^{\sig}_{\Flag_k,\phi(\al)}$ of class $(\un{d},\al)$ have $\sig'$-semistable $E$ due to $E$ being already $\sig$-semistable and $\mu_i\ll \lambda'(\beta)$ for all $\beta\in A$. The detailed proof follows by using \cite[Lemma 11.4]{JoyceWC} to reduce the situation \eqref{eq:sigprclosetosig} to the one set up in Definition \cite[Definition 10.1]{JoyceWC}. Then, \eqref{eq:Nsig'0} follows from \cite[Proposition 10.2 and 10.5]{JoyceWC}. Additionally, \cite[Proposition 10.3]{JoyceWC} shows that all morphisms $m^{e_i}:V_i\to V_{i+1}$ are injective for $i<r-1$ and $m^{e_{r-1}}:V_{\times}\to V_k(E)$ is an isomorphism for semistable objects $(\ov{V},\ov{f},E)$. This explains the choice of notation $(-)_{\Flag}$.
 
The main claim of this step is a wall-crossing formula satisfied by $\sig^{s\lambda'}_{\mu}$-semistable objects in $\mB^{\sig}_{\Flag_k,\phi(\al)}$ as one varies $s\in [0,1]$. This appears as the top horizontal line in the following graphic representing the full argument of the proof:
\begin{equation}
\label{eq:proofWCgraphic}
\includegraphics[scale=1.2]{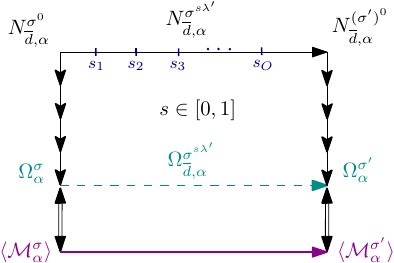}
\end{equation}
Here $s_o$ for $o\in \{1,\ldots,O\}$ are the points in $[0,1]$  where there are strictly  $\sig^{s\lambda'}_{\mu}$-semistable objects of class $(\un{d},\al)$. Crossing these walls leads to the formula \eqref{eq:gammaFlagWC}, which needs to be proved. That there are finitely many such $s_o$ was proved in \cite[Proposition 10.16]{JoyceWC}. There it is also shown that the decompositions of $(\un{d},\al)$ into classes of the same phase contain exactly two terms $(\un{d}^j_1,\al^j_1)$ and $(\un{d}^j_2,\al^j_2)$. The label $j$ distinguishes different such pairs and  takes values in finite sets $J_o$ for all $o$. Each $\un{d}^j_i$ satisfies \eqref{eq:ovdd0} with the last maximal coefficient equal to $\chi\big(\al ^j_i(k)\big)!$.

The computations in §\ref{sec:WCflags} shows that the wall-crossing formula takes the explicit form presented in Theorem \ref{thm:FlagWC} in terms of the natural vertex algebra on $H_*\big(\mN_{\Flag_k}\big)$ and its associated Lie algebra (see Definition \ref{Def:FlagVA}). 
\item \textbf{Conclude wall-crossing in $\mA$ in the neighborhood of each $t_a$}
Continuing with the above set up, one now projects down to formulae in $L_{\loc,*}$ along the vertical arrows $\to\!\to\!\to\!\to$ of the graphic. This will show that  \eqref{eq:MasiWC} holds for $\sig,\sig'$ from \eqref{eq:sigprclosetosig}.

Let $(\un{d},\al)$ be as in step 2, and recall the projection
$$\begin{tikzcd}\pi^{\sig}_{\un{d},\al}: N^{\sig^{s\lambda'}}_{\un{d},\al}\arrow[r]& \mM^{\rig}_{\al}\end{tikzcd}\,.$$ 
Starting from the classes
$$\Big[N^{\sig^{s\lam'}}_{\un{d},\al}\Big]^{\vir}\in H_*\Big(\mN^{\rig}_{\un{d},\al}\Big)$$
whenever there are no strictly semistables, one constructs 
$$
\chi(\al(k))!\cdot \Omega^{\sigma^{s\lambda'}}_{\un{d},\alpha}:=\big(\pi^{\sig}_{\un{d},\al}\big)_*\bigg(\Big[N^{\sig^{s\lam'}}_{\un{d},\al}\Big]^{\vir}\cap c_{\Rk}\big(T_{\pi^{\sig}_{\un{d},\al}}\big)\bigg)\,.
$$
When
\begin{itemize}
\item $s=0$, these classes satisfy $\Omega^{\sigma^{0\cdot\lambda'}}_{\un{d},\alpha}=\Omega^{\sigma,k}_{\alpha}$,
\item $s=1$, they satisfy $\Omega^{\sigma^{\lambda'}}_{\un{d},\alpha}=\Omega^{\sigma',k}_{\alpha}$
\end{itemize}
where the right hand side is defined by \eqref{eq:Omegasigal} in both cases. 

Since $T_{\pi^{\rig}_{\un{d},\al}}$ is a vector bundle that describes (half) of the difference between the obstruction theories of $\mM^{\rig}_{\al}$ and $\mN^{\rig}_{\un{d},\al}$, the results in \cite[§2.5]{GJT} imply that 
$$
\begin{tikzcd}
\big(\pi^{\rig}_{\Flag_k}\big)_*\Big(-\cap c_{\Rk}\big(T_{\pi^{\rig}_{\Flag_k}} \big)\Big): H_*\big(\mN^{\rig}_{\Flag_k}\big)\arrow[r]& H_*\big(\mM^{\rig}_{\mA}\big)
\end{tikzcd}
$$
induces a morphism of Lie algebras. The main conlusion of step 2 stated in Theorem \ref{thm:FlagWC} is transformed under the above morphism into Proposition \ref{prop:OmegaWC} below. Due to the proof of the results in \cite[§2.5]{GJT} not being publicly available yet and the unofficial version being complicated, I present my version of the proof in §\ref{sec:Flagtosheaves} paying special attention to the correctness of signs.
\begin{proposition}
\label{prop:OmegaWC}
Fix an $o\in \{1,\ldots,O\}$ and $s^-_o\in (s_{o-1},s_o), s^+_o\in (s_o,s_{o+1})$, then 
\begin{equation}
\label{eq:OmegaWC}
\Omega^{\sigma^{s^+_{o}\lambda}}_{\un{d},\alpha} - \Omega^{\sigma^{s^-_o\lambda}}_{\un{d},\alpha} = \mathlarger{\mathlarger{\sum}}_{j\in J_o}{\chi(\al(k))\choose \chi(\al^j_1(k))}^{-1}\Big[\Omega^{\sigma^{s_o\lambda}}_{\un{d}_1^j,\alpha^j_1},\Omega^{\sigma^{s_o\lambda}}_{\un{d}^j_2,\alpha^j_2}\Big]\,.
\end{equation}
holds in $L_{\loc,*}$.
\end{proposition}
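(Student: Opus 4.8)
The plan is to deduce Proposition \ref{prop:OmegaWC} as the image of the enhanced (flag) wall-crossing formula of Theorem \ref{thm:FlagWC} under the pushforward along $\pi^{\rig}_{\Flag_k}$ capped with the top Chern class of the relative tangent bundle. So the proof has two inputs that I will take as given from the preceding text: (i) the explicit flag wall-crossing formula valid in the Lie algebra associated to the vertex algebra on $H_*(\mN^{\rig}_{\Flag_k})$, expressing $\big[N^{\sigma^{s^+_o\lambda'}}_{\un{d},\al}\big]^{\vir} - \big[N^{\sigma^{s^-_o\lambda'}}_{\un{d},\al}\big]^{\vir}$ as a sum over $j\in J_o$ of Lie brackets $\big[\langle\mN^{\sigma^{s_o\lambda'}}_{\un{d}^j_1,\al^j_1}\rangle, \langle\mN^{\sigma^{s_o\lambda'}}_{\un{d}^j_2,\al^j_2}\rangle\big]$ with the appropriate combinatorial coefficients (this is the content of §\ref{sec:WCflags}, which uses the CY4 obstruction theories on enhanced master spaces from Assumption \ref{ass:obsonflag} and the localization formula \eqref{eq:vireqloc}); and (ii) the fact, from \cite[§2.5]{GJT} reproduced in §\ref{sec:Flagtosheaves}, that $\big(\pi^{\rig}_{\Flag_k}\big)_*\big(-\cap c_{\Rk}(T_{\pi^{\rig}_{\Flag_k}})\big)$ is a morphism of Lie algebras $H_*(\mN^{\rig}_{\Flag_k}) \to H_*(\mM^{\rig}_{\mA})$ (after passing to the $T$-quotients and localizing).

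First I would record that $T_{\pi^{\rig}_{\un{d},\al}}$ is a vector bundle of the expected rank describing half of the difference of the obstruction theories of $\mM^{\rig}_{\al}$ and $\mN^{\rig}_{\un{d},\al}$; this uses the additivity \eqref{eq:additivityofFFflag} and the description \eqref{eq:LNQk}–\eqref{eq:wtFFI} of the relative obstruction theory, exactly as in the discussion preceding Lemma \ref{lem:pushJSandinj}. This is what licenses the application of the push-forward formalism of \cite[§2.5]{GJT}. Then I would define $\Omega^{\sigma^{s\lambda'}}_{\un{d},\al}$ as in the statement by $\chi(\al(k))! \cdot \Omega^{\sigma^{s\lambda'}}_{\un{d},\al} = \big(\pi^{\sigma}_{\un{d},\al}\big)_*\big(\big[N^{\sigma^{s\lambda'}}_{\un{d},\al}\big]^{\vir} \cap c_{\Rk}(T_{\pi^{\sigma}_{\un{d},\al}})\big)$, note its endpoint identifications $\Omega^{\sigma^{0\cdot\lambda'}}_{\un{d},\al} = \Omega^{\sigma,k}_{\al}$ and $\Omega^{\sigma^{\lambda'}}_{\un{d},\al} = \Omega^{\sigma',k}_{\al}$, and then simply apply the Lie algebra morphism of (ii) to the flag wall-crossing formula (i) at the wall $s_o$. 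Because the morphism is $R(\Ft)$-linear and respects brackets, the left-hand side becomes $\Omega^{\sigma^{s^+_o\lambda'}}_{\un{d},\al} - \Omega^{\sigma^{s^-_o\lambda'}}_{\un{d},\al}$ and the right-hand side becomes the corresponding sum of brackets $\big[\Omega^{\sigma^{s_o\lambda'}}_{\un{d}^j_1,\al^j_1}, \Omega^{\sigma^{s_o\lambda'}}_{\un{d}^j_2,\al^j_2}\big]$, up to tracking how the combinatorial coefficients in the flag formula combine with the binomials $\chi(\al(k))!/(\chi(\al^j_1(k))!\,\chi(\al^j_2(k))!)$ coming from the three occurrences of the factorial normalizations; the bookkeeping should produce exactly the coefficient $\binom{\chi(\al(k))}{\chi(\al^j_1(k))}^{-1}$ claimed in \eqref{eq:OmegaWC}.

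The main obstacle I expect is not the structural deduction but the sign and coefficient bookkeeping in step (i)$\to$(ii): one must be careful that the orientations used to define $\big[N^{\sigma^{s\lambda'}}_{\un{d},\al}\big]^{\vir}$ via the $\infty$-Pvp diagram \eqref{eq:Ndalrigobstruction} are the ones compatible with Definition \ref{def:orpullback}, that the extra signs $(-1)^{a\chi(\beta,\beta)}\varepsilon_{\al,\beta}$ built into the state-field correspondence \eqref{eq:statefieldcor} (and their flag analogues in Definition \ref{Def:FlagVA}) are preserved under $\big(\pi^{\rig}_{\Flag_k}\big)_*$, and that the Künneth and $\rho$-pushforward operations commute with the pushforward in the way required for $\big(\pi^{\rig}_{\Flag_k}\big)_*(-\cap c_{\Rk})$ to be a strict Lie morphism rather than one up to sign. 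This is precisely why the author flags that a careful treatment of signs is given in §\ref{sec:Flagtosheaves}; my proof would reference that section for the sign-correctness and otherwise present the deduction as above. A secondary, purely combinatorial point is verifying that the last coefficient of $\un{d}^j_i$ is $\chi(\al^j_i(k))!$ (as established in \cite[Proposition 10.16]{JoyceWC}), since this is what makes the factorial normalizations match up; I would cite that proposition rather than reprove it.
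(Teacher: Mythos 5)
Your overall strategy is the one the paper follows: cap the flag wall-crossing formula of Theorem \ref{thm:FlagWC} with $c_{\Rk}\big(T_{\pi^{\rig}_{\un{d},\al}}\big)$, push forward along $\pi^{\rig}_{\un{d},\al}$, and divide by the factorial normalizations to obtain the binomial coefficient ${\chi(\al(k))\choose \chi(\al^j_1(k))}^{-1}$; your coefficient bookkeeping and the endpoint identifications $\Omega^{\sigma^{0\cdot\lambda'}}_{\un{d},\al}=\Omega^{\sigma,k}_{\al}$, $\Omega^{\sigma^{\lambda'}}_{\un{d},\al}=\Omega^{\sigma',k}_{\al}$ are exactly as in the text. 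The difference lies in how the key step is justified. You invoke \cite[\S 2.5]{GJT} as a black box asserting that $\big(\pi^{\rig}_{\Flag_k}\big)_*\big(-\cap c_{\Rk}(T_{\pi^{\rig}_{\Flag_k}})\big)$ is a morphism of Lie algebras, and defer the sign issues to \S\ref{sec:Flagtosheaves}. The paper explicitly declines that route (it remarks that the proof ``could also be directly handled by \cite[\S 2.5]{GJT}'' but that the proof of those results is not publicly available), and instead proves the needed compatibility by hand for the classes at issue: using \eqref{eq:additivityofFFflag} and \eqref{eq:extendTheta} to cancel the relative terms $\Theta_{\mN_{\Flag_k}/\mM_{\mA}}$ and its dual in $\Theta_{\Flag_k}$ against the pullback of $c_{\Rk}\big(T_{\pi_{\un{d},\al}}\big)$ (picking up the sign $(-1)^{\xi}$), using Lemma \ref{lem:epsilonQFlag} to convert $\varepsilon^{\un{d}_1,\al_1}_{\un{d}_2,\al_2}$ into $\varepsilon_{\al_1,\al_2}$, and checking that the $\rho_{12}$-twisted pushforward of \eqref{eq:ezTorigin} commutes with $(\pi_{\un{d}_i,\al_i})_*$ so that the resulting expression is literally the state-field correspondence for $\mM_{\mA}$ applied to the $\widehat{\Omega}$'s. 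As a standalone argument your proposal therefore has a soft spot: the Lie-morphism property in this particular flag-to-sheaf setting, with the orientation-dependent signs used here, is precisely the content that must be verified, and pointing to \S\ref{sec:Flagtosheaves} for it is circular since that section \emph{is} the paper's proof; relying on \cite{GJT} instead is legitimate in principle but rests on an unpublished argument, which is the very dependence the paper set out to remove.
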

	Applying this for all $o$ proves a wall-crossing formula between the classes $\Omega^{\sig,k}_{\al}$ and $\Omega^{\sig',k}_{\al}$. The relation \eqref{eq:Masigdef} establishes a one-to-one correspondence between the sets of invariants $\langle \mM^{\sig}_{\al} \rangle^k$ and $\Omega^{\sig}_{\al,k}$ as represented by the vertical arrows $\iff$ in the graphic \eqref{eq:proofWCgraphic}. Consequently, Proposition \ref{prop:OmegaWC} can be used to deduce a concrete relation between $\langle \mM^{\sig}_{\al}\rangle^k$ and $\langle \mM^{\sig'}_{\al}\rangle^k$. The combinatorics behind the conversion into \eqref{eq:MasiWC}  are the focus of \cite[§10.5]{JoyceWC}. The conclusion depends solely on the input provided by \eqref{eq:OmegaWC}.
    \item \textbf{Piece together the wall-crossing formulae from step 2} 
   Using that wall-crossing formulae \eqref{eq:MasiWC} compose and can be inverted, one assembles the segments addressed in step 3 into the complete interval $[0,1]$. This uses \cite[Lemma 11.5]{JoyceWC} which proves for any $a\in A$ that 
   \begin{itemize}
   \item wall-crossing \eqref{eq:MasiWC} for $\al\in \msE(\mA)$ holds between $\sig=\gamma_0$ and $\sig'=\gamma_{t}$ for $t\in (t_{a-1},t_{a+1})$ if and only if this is true for $t=t_a$. 
   \end{itemize}
  While the present version of the finiteness assumption is slightly weaker than the one in \cite[Assumption 5.3]{JoyceWC}, it is sufficient for the argument.
   
   In other words, one first inverts the conclusion of step 3 to include the segment \Pur{$(-)$} and then composes with the wall-crossing for the segment \Pur{$(+)$}. After finitely many steps, one recovers the interval $[0,1]$.
   $$\includegraphics[scale=1.25]{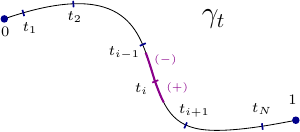}
   $$
    \end{enumerate}
\subsection{Orientations and vertex algebras for $\mN_{\Flag_k}$}
\label{sec:orVAFlag}
One of the major differences compared to \cite{JoyceWC} is the flexibility in choosing orientations on moduli spaces. In the computation of the wall-crossing formula in §\ref{sec:WCflags}, one therefore needs to pay attention to signs comparing orientations on different connected components of $\mN_{\Flag_k}$ under taking direct sums. This subsection describes the correct choices of signs fixed by Definition \ref{def:orpullback}. Later it will be shown that they are appearing in the wall-crossing formulae for $N^{\sig}_{\un{d},\al}$ that are stated in terms of a natural vertex algebra structure on $H_*\big(\mN_{\un{d},\al}\big)$. Towards the end of the subsection, I will describe this vertex algebra and how the new signs appear in its definition.

By \eqref{eq:Ndalobstruction}, I know that $\FF^{\sig}_{\un{d},\al}$ is constructed from $\EE$ by applying \eqref{eq:pairtosheaf} and then \eqref{eq:flagobsdiag}. Lemma \ref{lem:functoror} guarantees that the orientations induced by Definition \ref{def:orpullback} are the same. While Assumption \ref{ass:obsonflag} implies only existence of $\FF^{\sig}_{\un{d},\al}$ on $\mN^{\sig}_{\un{d},\al}$, below I will argue as if I was given $\FF_{\Flag}$ on the full $\mN_{\Flag,k}$. Everything still makes sense as long as one restricts back to $\mN^{\sig}_{\un{d},\al}$ for $\al\in \msE(\mA)$.

Set the notation
 $$
M_{\Flag}:=\det\big(\LL_{\pi_{\Flag}}\big)
$$
and use the isomorphism $\det\big(\LL_{\pi_{\Flag}}^\vee[2]\big)\cong M^*_{\Flag}$ without mentioning it. Recall that the induced orientation of $F_{\Flag} := \det(\FF_{\Flag})$ fixed in Definition \ref{def:orpullback} uses the natural isomorphism 
 \begin{equation}
 \label{eq:FtoDiso}
 \begin{tikzcd}
 F_{\Flag}\cong M_{\Flag}^*\cdot
M_{\Flag}\cdot D\arrow[r,"o^{-1}_{\LL_{\pi_{\Flag}}}\otimes \id"]&[1.5cm]D
\end{tikzcd}
 \end{equation}
 and the trivialization of the line bundle \eqref{eq:LDdetlines} on the right.
 
Applying $\det(-)$ to \eqref{eq:additivityofFFflag} induces the isomorphism 
$$
\delta_{\Flag}: \begin{tikzcd}
    \mu_{\Flag_k}^* \big(F_{\Flag}\big) \arrow[r]&(F_{\Flag}\boxtimes F_{\Flag})\cdot N_{\Flag}\cdot \sigma^* N_{\Flag}\,.
\end{tikzcd}
$$
where I used
$$
N_{\Flag} := \det\Big(\Theta_{\mN_{\Flag_k}}\Big)\,.
$$
From \eqref{eq:extendTheta}, one sees that $\sigma^*\Theta_{\Flag_k}\cong \Theta_{\Flag_k}^\vee[2]$, so the last two factors on the right can be given an orientation $(-1)^{\Rk(\Theta_{\Flag})}o_{\Theta_{\Flag}}$. 
\begin{definition}
\label{def:varepsflag}
  Choose $(\un{d}_i,\alpha_i)\in \ov{K}(\mB_{\Flag_k})$ for $i=1,2$ and let $(\un{d},\al)$ be their sum. Let $o_{\un{d}_i,\alpha_i}$ be the orientations \eqref{eq:FtoDiso} of $\mN_{\un{d}_i,\al_i}$ for $i=1.2$, then they induce an orientation of $\mN_{\un{d},\al}$ as the composition of the consecutive morphisms
    \begin{equation}
    \label{eq:inducedNQkor}
    \begin{tikzcd}
        \un{\CC}\arrow[r,"o_{\un{d}_1,\alpha_1}\boxtimes o_{\un{d}_2,\alpha_2}"]&[1.3cm] F_{\un{d}_1,\alpha_1}\boxtimes F_{\un{d}_2,\alpha_2}\arrow[r,"\id\otimes o_{\sig^*\Theta_{\Flag}}"]&[0.7cm] F_{\un{d}_1,\alpha_1}\boxtimes F_{\un{d}_2,\alpha_2}\cdot N_{\Flag}\cdot \sigma^*N_{\Flag}\arrow[r,"\delta_{\Flag}^{-1}"]&[-0.2cm]\mu_{\Flag_k}^*F_{\un{d},\al}\,.
    \end{tikzcd}
    \end{equation}
    By writing $N_{\Flag}$ and $\sigma^*N_{\Flag}$, I mean their appropriate restrictions to $\mN_{\un{d}_1,\alpha_1}\times\mN_{\un{d}_2,\alpha_2}$. The sign 
    $
    \varepsilon^{\un{d_1},\alpha_1}_{\un{d_2},\alpha_2}
    $ is defined as the difference between the orientations $\mu^*_{\Flag_k}(o_{\un{d},\al})$ and \eqref{eq:inducedNQkor}.
\end{definition}
The next lemma shows that the signs are well-defined due to Assumption \ref{ass:orientation} and that they can be expressed in terms of $\varepsilon_{\al_1,\al_2}$.
\begin{lemma}
\label{lem:epsilonQFlag}
Let 
$$
\xi\big((\un{d}_1,\alpha_1),(\un{d}_2,\alpha_2)\big):= \Rk\Big(\Theta_{\mN_{\Flag_k}/\mM}|_{\mN_{\un{d}_1,\alpha_1}\times\mN_{\un{d}_2,\alpha_2}}\Big)\,,
$$
which is constant if $\al_1,\al_2\in \msE(\mA)$. The diagram 
\begin{equation}
\label{eq:sumFtosumD}
\begin{tikzcd}[column sep = 3cm, row sep = large]
\mu_{\Flag_k}^*\big(F_{\un{d},\al}\big)\arrow[d,"\Big(\id\otimes o^{-1}_{\sig^*\Theta_{\Flag}}\Big)\circ \delta_{\Flag}"']\arrow[r,"\eqref{eq:FtoDiso}"]&\mu^*D_{\alpha}\arrow[d,"\Big(\id\otimes o^{-1}_{\sig^*\Theta}\Big)\circ \delta_{\alpha_1,\alpha_2}"]\\
F_{\un{d}_1,\alpha_1}\boxtimes F_{\un{d}_2,\alpha_2}\arrow[r,"\eqref{eq:FtoDiso}\otimes \eqref{eq:FtoDiso}"]&D_{\alpha_1}\boxtimes D_{\alpha_2}
\end{tikzcd}
\end{equation}
commutes up to the sign $(-1)^{\xi\big((\un{d}_1,\alpha_1),(\un{d}_2,\alpha_2)\big)}$. This implies that 
\begin{equation}
\label{eq:epscomp}
 \varepsilon^{\un{d_1},\alpha_1}_{\un{d_2},\alpha_2} = (-1)^{\xi\big((\un{d}_1,\alpha_1),(\un{d}_2,\alpha_2)\big)}\varepsilon_{\alpha_1,\alpha_2}\,.
\end{equation}
\end{lemma}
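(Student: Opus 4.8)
\textbf{Proof plan for Lemma \ref{lem:epsilonQFlag}.}

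The plan is to establish the commutativity-up-to-sign of the square \eqref{eq:sumFtosumD} by unwinding both composite arrows into the determinant-line isomorphisms set up in \S\ref{sec:orconventions}, and then to read off \eqref{eq:epscomp} by comparing the definition of $\varepsilon^{\un{d_1},\alpha_1}_{\un{d_2},\alpha_2}$ in Definition \ref{def:varepsflag} with the definition of $\varepsilon_{\alpha_1,\alpha_2}$ in \S\ref{sec:assab}. First I would recall that, by \eqref{eq:additivityofFFflag}, the complex $\FF_{\Flag}$ behaves under $\mu_{\Flag_k}^*$ exactly as $\EE$ behaves under $\mu^*$ in \eqref{eq:sumEE}, but with the extra direct summand $\LL^\vee_{\pi_{\Flag}}[2]$ (which is what distinguishes $\FF_{\Flag}$ from $\wt{\FF}_{\Flag}$) together with the relative piece $\Theta_{\mN_{\Flag_k}/\mM_{\mA}}$. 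Writing $\det(-)$ of the additivity equivalence, $\delta_{\Flag}$ factors compatibly through $\delta_{\alpha_1,\alpha_2}$ via the map \eqref{eq:FtoDiso}, with the mismatch being precisely the determinant of the permutation-and-duality isomorphisms needed to move the $M_{\Flag}^*\cdot M_{\Flag}$ factors past the $\Theta$-factors. The sign of such a rearrangement is governed by Remark \ref{rem:compatibilitysigns}, in particular \eqref{eq:detswap} and \eqref{eq:orpentagon}, and the total parity is $|\,M_{\Flag}\,|\cdot(\text{something})$; tracking degrees carefully, the only contribution that does not cancel is the one coming from commuting $\det(\LL_{\pi_{\Flag}})$-type factors on the first moduli stack past the $\Theta_{\mN_{\Flag_k}/\mM_{\mA}}$-type factors on the second, whose rank is exactly $\xi\big((\un{d}_1,\alpha_1),(\un{d}_2,\alpha_2)\big)$.

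Concretely, the steps I would carry out are: (i) expand $o_{\un{d}_i,\alpha_i}$ using \eqref{eq:FtoDiso}, so each side of \eqref{eq:sumFtosumD} becomes a composite of $o_{\LL_{\pi_{\Flag}}}$-type trivializations, the $\epsilon$-isomorphisms \eqref{eq:detdistinguished}, the swap $\sigma$, and the orientation $o_{\alpha_i}$; (ii) observe that $o_{\LL_{\pi_{\Flag}}}$ contributes identically on both routes (it is pulled back from the same relative cotangent complex), so it cancels out of the sign comparison, and the remaining discrepancy reduces to a comparison between the isomorphism $\det(\mu_{\Flag_k}^*\FF_{\Flag})\to\cdots$ and $\det(\mu^*\EE)\to\cdots$; (iii) use that, by \eqref{eq:additivityofFFflag} versus \eqref{eq:sumEE}, these differ only by the factors $M_{\Flag}^*\boxtimes M_{\Flag}$ (and $\sigma^*$ thereof) together with $\Theta_{\Flag}\oplus\sigma^*\Theta_{\Flag}$, and that moving the former past the $\Theta_{\mN_{\Flag_k}/\mM_{\mA}}$-part introduces the Koszul sign $(-1)^{|M_{\Flag}|\cdot\Rk(\Theta_{\mN_{\Flag_k}/\mM_{\mA}})}$. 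Since $|M_{\Flag}| = \Rk(\LL_{\pi_{\Flag}})$ and I want the sign to come out as $(-1)^{\xi}$ with $\xi = \Rk(\Theta_{\mN_{\Flag_k}/\mM_{\mA}}|_{\cdots})$, I would check that the remaining parities pair off using \eqref{eq:detswap} and the fact that each $\alpha_i\in\msE(\mA)$ has $\chi(\alpha_i,\alpha_i)$ even (so the degree-of-$D_{\alpha_i}$ contributions vanish mod $2$, exactly as in the Remark after \eqref{eq:symsign}). Then \eqref{eq:epscomp} follows by inserting this commuting square into Definition \ref{def:varepsflag}: the orientation \eqref{eq:inducedNQkor} of $\mN_{\un{d},\al}$ maps under the top arrow of \eqref{eq:sumFtosumD} to $(-1)^\xi$ times $o^+_{\alpha_1,\alpha_2}$ (the orientation of \S\ref{sec:assab}), hence its difference with $\mu^*_{\Flag_k}(o_{\un{d},\al})$ is $(-1)^\xi$ times the difference with $\mu^*(o_{\alpha_1+\alpha_2})$, i.e. $(-1)^\xi\varepsilon_{\alpha_1,\alpha_2}$.

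The main obstacle I anticipate is step (iii): keeping track of the total Koszul/Knudsen--Mumford sign when commuting the $M_{\Flag}$-factors, the $\Theta$-factors and the $D$-factors past one another, because there are several isomorphisms (\eqref{eq:detswap}, \eqref{eq:deltaisom}, \eqref{eq:detddual}, \eqref{eq:orpentagon}) each carrying its own sign, and a naive count could double-count or miss a contribution. The clean way around this is to avoid all of these by instead using the abstract functoriality of Definition \ref{def:orpullback} and Lemma \ref{lem:functoror}: since $\FF^{\sig}_{\un{d},\al}$ is obtained from $\EE$ by the Pvp diagrams \eqref{eq:pairtosheaf} and \eqref{eq:flagobsdiag}, and since the orientation induced by Definition \ref{def:orpullback} is by construction the one that makes the relevant positive-isotropic subbundle positive, the sign discrepancy between the induced orientation of a sum and the orientation of the induced complex on the sum is forced to equal $(-1)^{\Rk(\text{positive part})}$ by Remark \ref{rem:compatibilitysigns}.i) and Lemma \ref{lem:2orientconv}.iv) — and the positive part here is precisely $\Theta_{\mN_{\Flag_k}/\mM_{\mA}}$ restricted to the product, of rank $\xi$. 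I would therefore present the proof primarily along these lines (abstract functoriality of $o_{\un{d},\al}$ plus the additivity \eqref{eq:additivityofFFflag}), relegating the explicit determinant bookkeeping to a sentence or two confirming consistency with the conventions of \S\ref{sec:orconventions}.
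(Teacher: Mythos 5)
Your setup (expanding both routes of \eqref{eq:sumFtosumD} via \eqref{eq:FtoDiso}, noting that the $o_{\LL_{\pi_{\Flag}}}$-type contributions cancel, and feeding the commuting-up-to-sign square into Definition \ref{def:varepsflag} to get \eqref{eq:epscomp}) matches the paper's strategy, but the step that actually produces the sign is mislocated. You attribute the discrepancy to a Koszul sign from moving the $M_{\Flag}^*\cdot M_{\Flag}$-factors past the $\Theta_{\mN_{\Flag_k}/\mM}$-factors, of parity $|M_{\Flag}|\cdot\Rk\big(\Theta_{\mN_{\Flag_k}/\mM}\big)$. That is not $(-1)^{\xi}$ unless $|M_{\Flag}|=\Rk(\LL_{\pi_{\Flag}})$ happens to be odd, and in fact the $M_{\Flag}$-factors play no role: they are trivialized identically on both routes (your step (ii)) and drop out before any commuting is needed. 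In the paper's argument the two paths differ \emph{only} in how the product $N_{\Flag}\cdot\sigma^*N_{\Flag}$ is trivialized after applying the factorization $N_{\Flag}\cong L_{\al_1,\al_2}\det\big(\Theta_{\mN_{\Flag_k}/\mM}\big)\det\big(\sigma^*\Theta_{\mN_{\Flag_k}/\mM}^\vee[2]\big)$ from \eqref{eq:extendTheta}: the right-hand route pairs $\det\big(\Theta_{\mN_{\Flag_k}/\mM}\big)$ with $\det\big(\Theta_{\mN_{\Flag_k}/\mM}^\vee[2]\big)$ in that order (together with $o^{-1}_{\sig^*\Theta}$ on the $L_{\al_1,\al_2}\otimes\sigma^*L_{\al_2,\al_1}$ part), while the left-hand route, coming from $o^{-1}_{\sig^*\Theta_{\Flag}}$ in \eqref{eq:inducedNQkor}, contracts the same pair in the reversed order; by the swap convention \eqref{eq:detswap} (equivalently Remark \ref{rem:compatibilitysigns}) this reordering costs exactly $(-1)^{\Rk(\Theta_{\mN_{\Flag_k}/\mM})}=(-1)^{\xi}$, with no dependence on $|M_{\Flag}|$. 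Your proposal never isolates this reordering, and the parity bookkeeping you invoke (evenness of $\chi(\al_i,\al_i)$) does not repair the spurious $|M_{\Flag}|$ dependence.

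The fallback you offer — deducing the sign "abstractly" from Definition \ref{def:orpullback}, Lemma \ref{lem:functoror} and Lemma \ref{lem:2orientconv}.iv) — does not close the gap either, because those results address different situations. Lemma \ref{lem:functoror} is about functoriality of the induced orientations under composition of Pvp diagrams along consecutive morphisms $f_1,f_2$; here the comparison is between the two direct-sum decompositions \eqref{eq:additivityofFFflag} and \eqref{eq:sumEE} over the \emph{same} stack, which is not a composition of pullbacks. Lemma \ref{lem:2orientconv}.iv) is a specific comparison of two orientation conventions for the virtual normal bundle in the localization formula, built from the resolution \eqref{eq:N+resol}; there is no "positive maximal isotropic" datum in \eqref{eq:sumFtosumD} to which it applies, so the claim that the discrepancy is "forced to equal $(-1)^{\Rk(\text{positive part})}$" is an assertion, not an argument. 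To fix the proof, replace step (iii) and the abstract shortcut by the explicit comparison of the two trivializations of $N_{\Flag}\cdot\sigma^*N_{\Flag}$ described above; the rest of your outline then goes through.
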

\begin{proof}
    The proof of this is a direct application of the conventions in §\ref{sec:orconventions}. After expanding each of the isomorphisms in \eqref{eq:sumFtosumD} carefully, one notices that the only difference between the upper right path $\gamma_R$ from $\mu^*_{\Flag_k}F_{\un{d},\al}$ to $D_{\alpha_1}\boxtimes D_{\alpha_2}$ in \eqref{eq:sumFtosumD} and the lower left path $\gamma_L$ is in the choice of the trivialization of
    $$
    \begin{tikzcd}N_{\Flag}\cdot \sigma^*N_{\Flag}\arrow[r]&\un{\CC}\end{tikzcd}\,.
    $$
    \begin{enumerate}[label=\roman*)]
        \item For $\gamma_R$, it amounts to first using the isomorphism 
        \begin{equation}
        \label{eq:NQfactors}
       \begin{tikzcd}
           N_{\Flag}\arrow[r,"\sim"]& L_{\alpha_1,\alpha_2} \det\big(\Theta_{\mN_{\Flag_k}/\mM}\big)\det\big(\sigma^*\Theta_{\mN_{\Flag_k}/\mM}^\vee[2]\big)
       \end{tikzcd} 
        \end{equation}
        that follows from \eqref{eq:extendTheta}. Doing so for $\sigma^*N_{\Flag}$ and collecting the factors of the two terms, the trivialization is obtained by first applying 
        \begin{equation}
        \label{eq:detThetaMNQk}
\begin{tikzcd}
\det\big(\Theta_{\mN_{\Flag_k}/\mM}\big)\det\big(\Theta_{\mN_{\Flag_k}/\mM}^\vee[2]\big)\arrow[r,"{o^{-1}_{\Theta_{\mN_{\Flag,k}/\mM}}}"]&[1cm]\un{\CC}
\end{tikzcd}
        \end{equation}
        and $\sigma^*$ of it. The remaining two terms are trivialized via
        \begin{equation}
        \label{eq:Lal1al2}
          \begin{tikzcd}
        L_{\alpha_1,\alpha_2}\otimes \sigma^* L_{\alpha_2,\alpha_1}\arrow[r,"o^{-1}_{\sig^*\Theta}"]&[0.4cm]\un{\CC}\,.
        \end{tikzcd}   
        \end{equation}
        \item For $\gamma_L$, the trivialization is given directly by $o^{-1}_{\sig^*\Theta_{\Flag}}$ as follows from \eqref{eq:inducedNQkor}. To compare with the above point, I determine how this isomorphism acts on each factor after applying \eqref{eq:NQfactors}. Firstly, \eqref{eq:Lal1al2} remains the same. The only difference is that $\gamma_L$ uses
        $$
\begin{tikzcd}\det\big(\Theta_{\mN_{\Flag_k}/\mM}^\vee[2]\big)\det\big(\Theta_{\mN_{\Flag_k}/\mM}\big)\arrow[r,"o^{-1}_{\Theta^\vee_{\mN_{\Flag,k}/\mM}[2]}"]&[1cm]\un{\CC}\end{tikzcd}
        $$
        because the other pair of terms is cancelled the same way as in $\gamma_R$ by applying $\sigma^*$ to \eqref{eq:detThetaMNQk}. This introduces the sign $(-1)^{\xi\big((\un{d}_1,\alpha_1),(\un{d}_2,\alpha_2)\big)}$ when comparing $\gamma_R$ and $\gamma_L$. 
    \end{enumerate}
 The induced orientation $o_{\alpha_1,\alpha_2}$ of $D_{\alpha}$ is obtained from $o_{\alpha_1}\boxtimes o_{\alpha_2}$ by using the right vertical arrow. The signs $\varepsilon_{\alpha_1,\alpha_2}$ were fixed in Assumption \ref{ass:orientation} by comparing $o_{\alpha_1,\alpha_2}$ with $o_{\alpha_1+\alpha_2}$. Combined with the sign that makes \eqref{eq:sumFtosumD} commute, the comparison of $\varepsilon$'s is concluded, since the orientation of $F_{\Flag}$ is determined by the horizontal arrows.
\end{proof}
Before I describe the explicit vertex algebra structure on the shifted homology of $\mN_{\Flag_k}$, I set the notation
$$
\begin{tikzcd}
\rho_{\Flag_k}:B\GG_m\times \mN_{\Flag_k}\arrow[r]&\mN_{\Flag_k}
\end{tikzcd}
$$
for the $B\GG_m$-action rescaling automorphisms of each object.
\begin{definition}
\label{Def:FlagVA}
For each $(\un{d}_1,\al_1), (\un{d}_2,\al_2)\in \ov{K}\big(\mB_{\Flag_k}\big)$, consider the restriction 
$$
\Theta_{(\un{d}_1,\al_1),(\un{d}_2,\al_2)}:=\Theta_{\Flag_k}\big|_{\mN_{\un{d}_1,\al_1}\times \mN_{\un{d}_2,\al_2}}
$$
of the complex constructed in \eqref{eq:extendTheta}. Set
\begin{align*}
\chi_{\Flag_k}\big((\un{d}_1,\al_1), (\un{d}_2,\al_2)\big):=&-\Rk\Big(\Theta_{(\un{d}_1,\al_1),(\un{d}_2,\al_2)}\Big) \\
=&\chi(\al_1,\al_2) + \xi\big((\un{d}_1,\al_1),(\un{d}_2,\al_2)\big)+\xi\big((\un{d}_2,\al_2),(\un{d}_1,\al_1)\big)\,.
\end{align*} 
Continuing to use $(-)_{*+\vdim}$ for the shift of degrees by the rank of the obstruction theory, the underlying graded vector space of the vertex algebra will be
 $$V\iFlag_*:= H_{*+\vdim}\big(\mN_{\Flag_k}\big)\,.$$

The vacuum vector $\ket{0}$ and the translation operator are defined in terms of the 0-object and the action $\rho_{\Flag_k}$, respectively, in the same way as in Definition \ref{Def:VAlocal} or \ref{Def:VAglobal}. The state-field correspondence takes the form
\begin{align*}
    Y_{\Flag_k}(v,z)w &= (-1)^{a\chi_{\Flag_k}((\ov{e},\be),(\ov{e},\beta))}\varepsilon^{\un{d},\al}_{\ov{e},\be} \,(\mu_{\Flag_k})_\ast\bigg((e^{zT}\otimes \textnormal{id})\frac{v\boxtimes w}{z^{\Rk(\Theta_{\Flag_k})}c_{z^{-1}}(\Theta_{\Flag_k})}\bigg)
    \end{align*}
   for $v\in H_a(\mN_{\un{d},\al})$ and $w\in H_*(\mN_{\ov{e},\be})$.
\end{definition}
This can be defined $\T$-equivariantly as in Definition \ref{Def:VAlocal} or Definition \ref{Def:VAglobal} by using the equivariant Künneth morphism $\boxtimes^T$ from \eqref{eq:Kunnethlocal} or \eqref{eq:equivKunneth}, respectively. In particular, one gets the  Lie algebra 
$$
L\iFlag_*:= V\iFlag_{*+2}/T\big(V\iFlag_*\big)
$$
and its localized version 
$
L\iFlag_{\,\loc,*}.
$
The difference between the vertex algebra $V\iFlag_*$ and its associated Lie algebra is milder than in the case of $\mM_{\mA}$ because the $B\GG_m$ torsor $\Pi_{\un{d},\al}:\mN_{\un{d},\al}\to \mN^{\rig}_{\un{d},\al}$ is trivial whenever $d_v = 1$ for any $v\in \mathring{\Ver}$. To construct a natural section, fix $\un{d}$ such that $d_1=1$. Then, there is a universal object of $\mN^{\rig}_{\un{d},\al}$ constructed by letting the weight-zero expressions
$$
\mV^{\rig}_{i} = \mV^*_1\otimes\mV_i\qquad \textnormal{for}\quad 1\leq i\leq r-1\,,\qquad \mE^{\rig} = \mV_1^*\otimes \mE\,.
$$
descend along $\Pi_{\un{d},\al}$. The morphisms $\mathfrak{m}^e$ need to be tensored by $\id_{\mV^*_1}$. This universal object induces a section
\begin{equation}
\label{eq:srig}
s_{\un{d},\al}:\begin{tikzcd}\mN^{\rig}_{\un{d},\al}\arrow[r]&\mN_{\un{d},\al}\end{tikzcd}
\end{equation}
of $\Pi_{\un{d},\al}$.
\subsection{Wall-crossing for flags}
\label{sec:WCflags}
In this section, I will apply equivariant localization to the virtual fundamental classes $\Big[N^{\sig}_{(1,\un{d}),\al}\Big]^{\vir}$ when $I=I_{\MS}$. This will produce wall-crossing formulae in the category $\mB_{\Flag_k}$. After projecting to the moduli stack $\mM_{\mA}$, this ends up proving Theorem \ref{thm:familyWC}.  For usual perfect obstruction theories, this localization computation was carried out in \cite{JoyceWC}. In the present situation, one needs to pay attention to signs. For these reasons, I set all the conventions in §\ref{sec:orconventions}, Theorem \ref{thm:eqvirloc}, Definition \ref{def:orpullback}, §\ref{sec:assab}, and §\ref{sec:orVAFlag} in exactly the way that will be used in the computation below. Without correcting the Oh--Thomas localization formula in Theorem \ref{thm:eqvirloc}, vertex algebras would not have emerged from the wall-crossing. Lastly, this section considers only the case when $\T = \{1\}$. The equivariant refinement is discussed in §\ref{sec:equivariantWC}.

Unlike \cite[§10.6]{JoyceWC}, I do not use derived geometry as the computation relies only on the compatibilities of obstruction theories from Assumption \ref{ass:obsonflag}. As such, it will translate directly to the computation in the sequel where it is done on a further auxiliary space to prove the general case. 

Fix $\al\in \msE(\mA), k\in K$ and stability conditions $\sig,\sig'$ as in \eqref{eq:sigprclosetosig}. Let $r$ in the definition of $I_{\Flag}$ be equal to $\chi\big(\al(k)\big)+1$ and fix the dimension vector $\un{d}$ of $\mathring{I}_{\Flag}$ as in step 2) of §\ref{sec:summaryproof}. With it, I will associate the dimension vector $(1,\un{d})$ of $\mathring{I}_{\MS}$ in the case that this quiver is obtained by adding $v_0$ to the above choice of $\mathring{I}_{\Flag}$. Using $\lambda'$ as in §\ref{sec:summaryproof}, there is a virtual fundamental class $\Big[N^{\sig^{\lambda'}}_{(1,\un{d}),\al}\Big]^{\vir}
$ by Assumption \ref{ass:obsonflag}. For $o\in \{1,\ldots,O\}$ and $s_o\in (0,1)$ from \eqref{eq:proofWCgraphic}, consider the  $\GG_m$-action on the moduli space $N^{\sig^{s_o\lam'}}_{(1,\un{d}),\al}$ determined by rescaling the morphisms of representations at the edge $e_{-1}$:
\begin{equation}
\label{eq:QMSaction}
\includegraphics[scale=0.24]{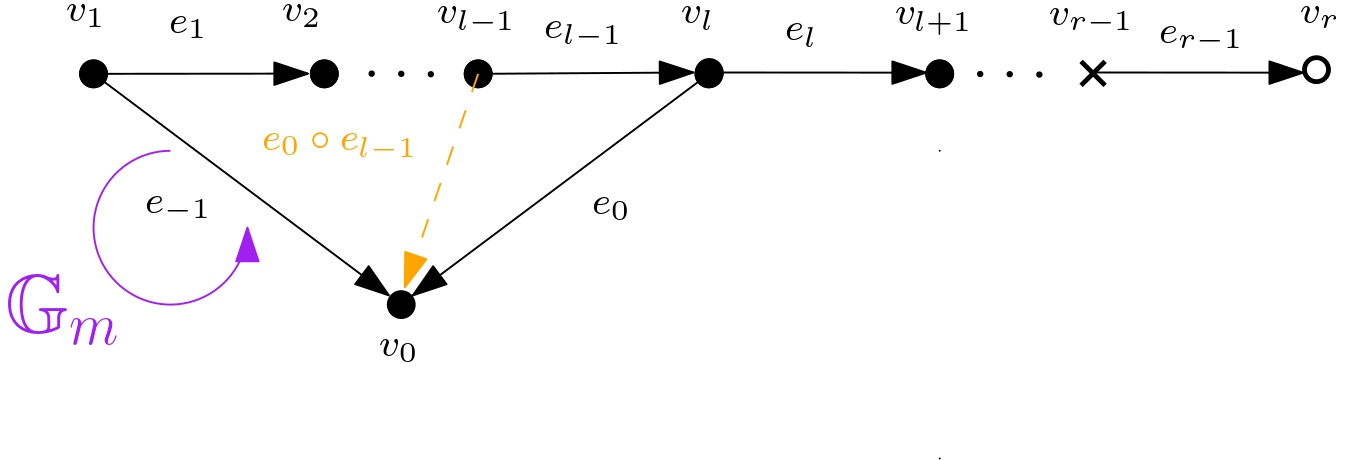}
\end{equation}
I will denote its weight 1 equivariant trivial line bundle by $w=e^z$ with the equivariant first Chern class $c_1(t) = z$.  

The next proposition describes the fixed point loci of this action, their virtual fundamental classes and the virtual normal bundles. The first step is already proved in \cite[Proposition 10.20]{JoyceWC}. I recall the arguments in a more pictorial way using quiver diagrams as this is needed for the explicit description of virtual normal bundles. The pictures of quivers appearing in the statement of the proposition below include the effects of the \Pur{$\GG_m$-action} on the vertices which are explained in the proof and should be ignored on the first read. Because the $\GG_m$-action commutes with the induced $\T$-action on $\mN_{\MS_k}$ from Definition \ref{Def:quiverpairs}, everything below works also $\T$-equivariantly. 
\begin{proposition}[{\cite[Proposition 10.20]{JoyceWC}}]
\label{prop:fixedpoints}
For the above data, let $(\un{d}^j_i,\al^j_i)\in \ZZ^{\chi(\al(k))}\times \msE(\mA)$ for $j\in J_o$ and $i=1,2$ be the  classes that $(\un{d},\al)$ splits into at $s=s_o$ as recalled in step 3) of §\ref{sec:summaryproof}.  The fixed points locus $\Big(N_{(1,\un{d}),\al}^{\sigma^{s_o\lambda'}}\Big)^{\GG_m}$ can be expressed as a disjoint union of the following three types of subsets:
\begin{enumerate}
\item The subscheme $$N_{(1,\un{d}),\alpha}^{\sigma^{s_o\lambda'}}\Big|_{e_{-1}=0}\subset N_{(1,\un{d}),\alpha}^{\sigma^{s_o\lambda'}}$$ consisting of objects $(\un{V},\un{m}, E)$ such that $m^{e_{-1}} = 0$.
\begin{equation}
\label{eq:e-1locus}
  \includegraphics[scale = 0.4]{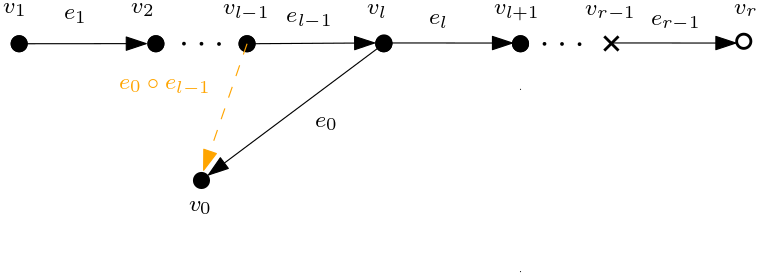}  
\end{equation}
It is canonically isomorphic to $N_{\un{d},\al}^{\sigma^{s^-_{o}\lambda'}}$ for $s^-_{o}\in (s_{o-1}, s_o)$, and its virtual fundamental class is identified up to a sign with 
\begin{equation}
\label{eq:FPvir1}
\Big[N_{\un{d},\al}^{\sigma^{s^-_{o}\lambda'}}\Big]^{\vir}\in H_*\Big(N_{\un{d},\al}^{\sigma^{s^-_{o}\lambda'}}\Big)
\end{equation}
under this isomorphism. Its virtual normal bundle is 
\begin{equation}
\label{eq:Nvir1}
N^{\vir}_{e_{-1}=0}=t \cdot \mV^*_1\otimes\mV_{0} \oplus t^{-1}\cdot \mV_0^*\otimes \mV_1[-2]\,.
\end{equation}
\item The subscheme $$N_{(1,\un{d}),\alpha}^{\sigma^{s_o\lambda'}}\Big|_{e_{0}=0}\subset N_{\alpha,(1,\un{d})}^{\sigma^{s_o\lambda'}}$$ consisting of objects $(\un{V},\un{m}, E)$ such that $m^{e_{0}} = 0$.
\begin{equation}
\label{eq:e0locus}
  \includegraphics[scale = 0.23]{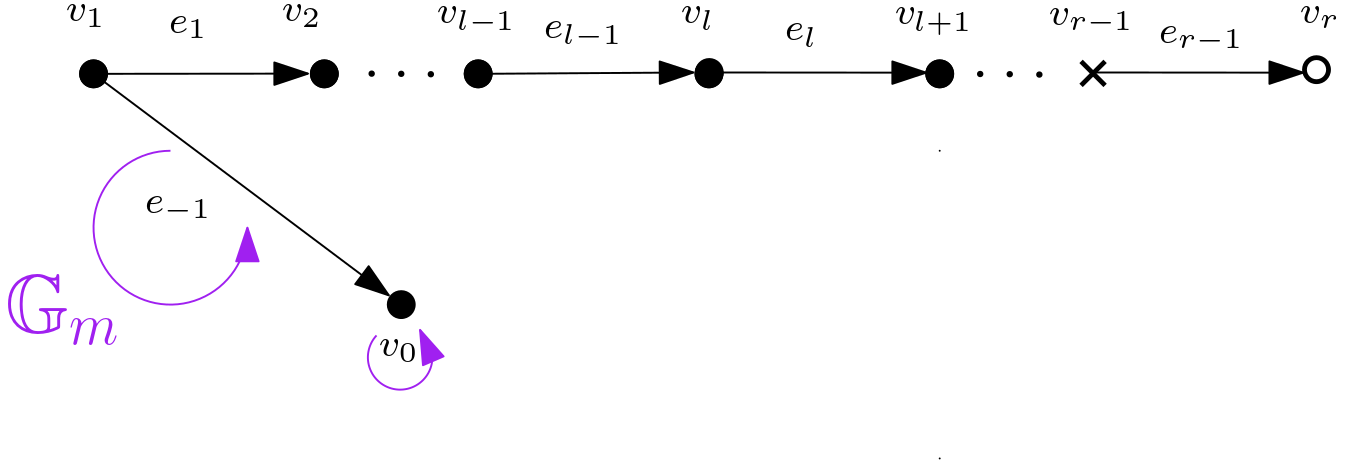}  
\end{equation}
It is canonically isomorphic to $N_{\un{d},\al}^{\sigma^{s^+_o\lambda'}}$ for $s^+_o\in (s_o,s_{o+1})$, and its virtual fundamental class is identified up to a sign with 
\begin{equation}
\label{eq:FPvir2}
\Big[N_{\un{d},\al}^{\sigma^{s^+_o\lambda'}}\Big]^{\vir}\in H_*\Big(N_{\un{d},\al}^{\sigma^{s^+_o\lambda'}}\Big)
\end{equation}
under this isomorphism. Its virtual normal bundle is 
\begin{equation}
\label{eq:Nvir2}
N^{\vir}_{e_{0}=0}=t^{-1}\cdot \big(\mV_{l}/\mV_{l-1}\big)^*\otimes \mV_0\oplus t\cdot \mV_0^*\otimes \big(\mV_l/\mV_{l-1}\big)[-2]\,.
\end{equation}
\item I fix $j\in J_o$ and omit it from the superscripts when specifying the splitting $(\un{d_i},\alpha_i)$ for $i=1,2$ discussed above to improve readability. The associated fixed point locus
$$N_{(1,\un{d}),\alpha}^{\sigma^{s_o\lambda'}}\Big|^{(\un{d}_1,\alpha_1)}_{(\un{d}_2,\alpha_2)}\subset N_{(1,\un{d}),\alpha}^{\sigma^{s_o\lambda'}}$$
contributes for eacf $j$ and it was described in \cite[Proposition 10.16]{JoyceWC}. This locus is represented by the following quiver:
\begin{equation}
    \label{eq:sumlocus}
    \includegraphics[scale = 1]{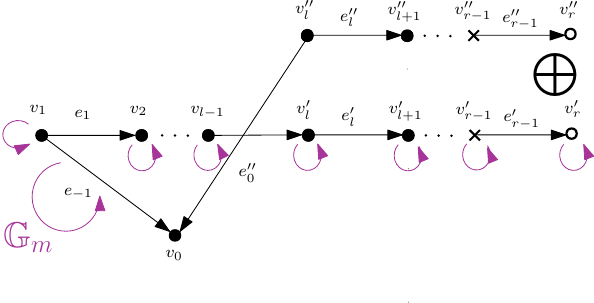}
\end{equation}
The map 
$$
\mu^{(\un{d}_1,\alpha_1)}_{(\un{d}_2,\alpha_2)}: \begin{tikzcd}
    \mN_{\un{d}_1,\alpha_1}^{\rig}\times \mN_{\un{d}_2,\alpha_2}^{\rig}\arrow[r]& \mN^{\rig}_{(1,\un{d}),\alpha}
\end{tikzcd}
$$
constructed in \eqref{eq:sumtoMSmap} induces an isomorphism 
\begin{equation}
\label{eq:Sigmatofixedsumlocus}
\mu^{(\un{d}_1,\alpha_1)}_{(\un{d}_2,\alpha_2)}: \begin{tikzcd}N_{\un{d}_1,\alpha_1}^{\sigma^{s_o\lambda'}}\times N_{\un{d}_2,\alpha_2}^{\sigma^{s_o\lambda'}}\arrow[r,"\sim"]&N_{(1,\un{d}),\alpha}^{\sigma^{s_o\lambda'}}\Big|^{(\un{d}_1,\alpha_1)}_{(\un{d}_2,\alpha_2)} \end{tikzcd}
\end{equation}
which identifies the virtual fundamental class of the latter with 
\begin{equation}
\label{eq:FPvir3}
\Big[N_{\un{d}_1,\alpha_1}^{\sigma^{s_o\lambda'}}\Big]^{\vir}\boxtimes \Big[N_{\un{d}_2,\alpha_2}^{\sigma^{s_o\lambda'}}\Big]^{\vir}\in H_*\Big(N_{\un{d}_1,\alpha_1}^{\sigma^{s_o\lambda'}}\times N_{\un{d}_2,\alpha_2}^{\sigma^{s_o\lambda'}}\Big)
\end{equation}
up to a sign. Pulling back the virtual normal bundle $\Big(N^{(\un{d}_1,\alpha_1)}_{(\un{d}_2,\alpha_2)}\Big)^{\vir}$ along $\mu^{(\un{d}_1,\alpha_1)}_{(\un{d}_2,\alpha_2)}$ yields
\begin{equation}
\label{eq:Nvirmiddle}
\Big(\mu^{(\un{d}_1,\alpha_1)}_{(\un{d}_2,\alpha_2)}\Big)^*\Big(N^{(\un{d}_1,\alpha_1)}_{(\un{d}_2,\alpha_2)}\Big)^{\vir} = t\cdot\Theta_{Q_{\Flag_k}}[-2]\oplus t^{-1}\cdot \sigma^*\Theta_{Q_{\Flag_k}}[-2]\,,
\end{equation}
where, by writing $\Theta_{\mN_{Q_{\Flag_k}}}$, I understand its restriction to $N_{\un{d}_1,\alpha_1}^{\sigma^{s_o\lambda'}}\times N_{\un{d}_2,\alpha_2}^{\sigma^{s_o\lambda'}}$.
\end{enumerate}    
\end{proposition}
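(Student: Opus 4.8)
The statement to prove is Proposition \ref{prop:fixedpoints}, describing the three components of the $\GG_m$-fixed locus of $N^{\sigma^{s_o\lambda'}}_{(1,\un{d}),\al}$ together with the restrictions of the virtual fundamental classes and the virtual normal bundles. The identification of the fixed locus as a scheme (the three types of subsets) is already provided by \cite[Proposition 10.16, 10.20]{JoyceWC}, so the new content is (i) the identification of the induced virtual fundamental classes on each component with the classes \eqref{eq:FPvir1}, \eqref{eq:FPvir2}, \eqref{eq:FPvir3}, and (ii) the explicit computation of the three virtual normal bundles \eqref{eq:Nvir1}, \eqref{eq:Nvir2}, \eqref{eq:Nvirmiddle}. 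The plan is to treat all of this as a consequence of the compatibilities of obstruction theories recorded in Assumption \ref{ass:obsonflag}, never passing through a derived enhancement, exactly as announced in the text.

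First I would set up the bookkeeping for the $\GG_m$-weights. The action \eqref{eq:QMSaction} rescales the morphism $m^{e_{-1}}$ at the edge $e_{-1}$ with weight $1$, hence acts with weight $1$ on the universal bundle $\mV_0$ at $v_0$ and with weight $0$ on all other universal bundles $\mV_i$, $\mE$, $\mV_r = \RHom_{\mN_{I_k}}((1,0)_k,\mE)[1]$. For each of the three fixed components I would compute the $\GG_m$-character decomposition of $\FF^{\sig}_{(1,\un{d}),\al}$ from Assumption \ref{ass:obsonflag}.d). Using the full obstruction theory, as it is built from $\EE$ on $\mM_{\mA}$ by \eqref{eq:pairtosheaf}, then \eqref{eq:flagobsdiag}, then \eqref{eq:MSobsdiag} (Lemma \ref{lem:functoror} guarantees the three constructions compose), one reads off $\LL_{\pi^{\sig}_{(1,\un{d}),\al}}$ explicitly from \eqref{eq:LNQk}: the only $\GG_m$-moving terms are those involving $\mV_0$, namely the contributions $\mV_1^*\otimes\mV_0$ (in degree $0$), $\mV_{l-1}^*\otimes\mV_0$ (in degree $-1$, the relation $d(\BuOr{\rho})=e_{l-1}\circ e_0$) and, after self-dualizing by $(-)^\vee[2]$ to get $\FF^{\sig}_{(1,\un{d}),\al}$, their Serre duals shifted into degrees $2$ and $3$. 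On the locus $\{m^{e_{-1}}=0\}$ the self-duality diagram splits off exactly $t\cdot\mV_1^*\otimes\mV_0 \oplus t^{-1}\cdot\mV_0^*\otimes\mV_1[-2]$ (the term $\mV_{l-1}^*\otimes\mV_0$ and its dual cancel against each other's contributions on this locus, because the defining condition for this component forces the relevant composite to vanish and the degree $-1$ and $3$ pieces pair off), yielding \eqref{eq:Nvir1}. Symmetrically, on $\{m^{e_0}=0\}$, since $e_0: V_0\to V_l$ is killed, the surviving moving piece is read off using $\mV_l/\mV_{l-1}$ in place of $\mV_1$, giving \eqref{eq:Nvir2}. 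For the third component, the decomposition of the representation forces $v_0$ to land entirely in the summand labeled $(\un{d}_2,\al_2)$ while the edge $e_{-1}$ connects to the $(\un{d}_1,\al_1)$ part; the moving part of the obstruction theory is then exactly the cross term between the two factors, which by \eqref{eq:additivityofFFflag} and \eqref{eq:extendTheta} is $t\cdot\Theta_{\Flag_k}[-2]\oplus t^{-1}\cdot\sigma^*\Theta_{\Flag_k}[-2]$, i.e.\ \eqref{eq:Nvirmiddle}; here the map $\mu^{(\un{d}_1,\al_1)}_{(\un{d}_2,\al_2)}$ of \eqref{eq:sumtoMSmap} is precisely the restriction of $\mu_{\Flag_k}$, so the pullback of the virtual normal bundle along it is immediately computed from \eqref{eq:additivityofFFflag}.

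For the virtual fundamental classes, the fixed part $\EE^f$ of the obstruction theory on each component is, by the same weight decomposition, exactly the CY4 obstruction theory of $N^{\sigma^{s^-_o\lambda'}}_{\un{d},\al}$ (resp.\ $N^{\sigma^{s^+_o\lambda'}}_{\un{d},\al}$, resp.\ $N^{\sigma^{s_o\lambda'}}_{\un{d}_1,\al_1}\times N^{\sigma^{s_o\lambda'}}_{\un{d}_2,\al_2}$) under the canonical scheme isomorphism; for the middle component this uses the additivity \eqref{eq:additivityofFFflag} once more. Hence by the construction of $[M^{\T}]^{\vir}$ recalled before Theorem \ref{thm:eqvirloc}, the induced virtual class on each fixed component agrees with \eqref{eq:FPvir1}, \eqref{eq:FPvir2}, \eqref{eq:FPvir3} up to the orientation sign, which is exactly the "up to a sign" in the statement; I would track that these signs are the ones prescribed by Definition \ref{def:orpullback} applied through \eqref{eq:oNgeq}, leaving their precise evaluation to the localization computation in the following subsection where they conspire into the vertex-algebra structure. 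The main obstacle I anticipate is the cancellation argument for the degree $-1$ term $\mV_{l-1}^*\otimes\mV_0$ (and its dual) on the $\{m^{e_{-1}}=0\}$ component: one must check carefully that the self-dual diagram \eqref{eq:MSobsdiag}, restricted to this locus, genuinely splits this pair off the normal bundle rather than contributing a nontrivial extension — this is the one place where the factorizations implicit in Assumption \ref{ass:obsonflag} need to be used with care, and is the analogue of the subtlety flagged for sheaves in \S\ref{sec:sheaves}, but here it holds because everything is governed by the explicit dg-quiver $\wt{I}^{\bullet}_{\MS}$ of \eqref{eq:fullobsQMS} via Lemma \ref{lem:FFMSrig} and Lemma \ref{lem:cotangentofQ}.
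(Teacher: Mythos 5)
There is a genuine gap, and it sits precisely at the point you flag as the "main obstacle": the $\GG_m$-weight bookkeeping on the universal bundles. The action \eqref{eq:QMSaction} rescales only the morphism $m^{e_{-1}}$; the induced weights on the universal bundles over a fixed component come from the compensating automorphisms and therefore \emph{differ from component to component}. Your blanket assignment (weight $1$ on $\mV_0$, weight $0$ elsewhere) is not correct: on the locus $m^{e_{-1}}=0$ no compensation is needed, so all universal bundles are weight $0$ and the only moving contributions are the deformation of $m^{e_{-1}}$ and its Serre dual, i.e.\ exactly \eqref{eq:Nvir1}; on the locus $m^{e_{0}}=0$ one compensates the rescaling of $m^{e_{-1}}$ by an automorphism of $V_0$, which is what puts a nontrivial weight on $\mV_0$ and makes the $e_0$- and $\rho$-terms (not the $e_{-1}$-term) moving; on the third locus the weight $1$ sits on one entire half of the split object, which is what turns the cross terms of \eqref{eq:additivityofFFflag} into $t\cdot\Theta_{\Flag}\oplus t^{-1}\cdot\sigma^*\Theta_{\Flag}$. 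As a consequence, your treatment of the first component misidentifies the moving part (it includes the relation term $\mV_{l-1}^*\otimes\mV_0$ and omits the $e_0$-term $\mV_l^*\otimes\mV_0$ and the loop $\mV_0^*\otimes\mV_0$), and the proposed mechanism --- a cancellation of the degree $-1$ piece against its degree $3$ dual --- would not close the argument. The actual mechanism is different: those extra terms are \emph{fixed} (weight $0$), and together they form the three-term complex $\big(\mV_0^*\otimes\mV_0\to\mV_l^*\otimes\mV_0\to\mV_{l-1}^*\otimes\mV_0\big)^\vee$, which is acyclic because stability forces $m^{e_{l-1}}$ to be injective and $m^{e_0}$ to induce an isomorphism from its cokernel; this is what identifies the fixed obstruction theory with $\FF^{\rig}_{\un{d},\al}$ and hence the virtual class with \eqref{eq:FPvir1} up to sign. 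The analogous step on the second component (where the discrepancy is the $e_{-1}$-term plus the $v_0$-loop, acyclic because $m^{e_{-1}}$ is an isomorphism) is skipped entirely in your "symmetrically".

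Two further points need repair in the third component. First, $\mu^{(\un{d}_1,\alpha_1)}_{(\un{d}_2,\alpha_2)}$ is not simply a restriction of $\mu_{\Flag_k}$: it is built as a composition through an auxiliary stack parametrizing objects of the shape \eqref{eq:sumlocus}, and it relates to $\mu_{\Flag}$ only after composing with the projection $\Pi_{\mathsf V}$ forgetting $v_0$ and with the sections $s_{\un{d}_i,\al_i}$ of the rigidification torsors; without this one cannot legitimately invoke \eqref{eq:additivityofFFflag} to compute either the fixed obstruction theory or the normal bundle. Second, even after that, the identification of the fixed part with $\FF^{\rig}_{\un{d}_1,\al_1}\boxplus\FF^{\rig}_{\un{d}_2,\al_2}$ requires explicitly cancelling the acyclic pieces coming from $e_{-1}$, $e''_0$, $\rho$ and the vertices $v_0$, $v''_l$, $v''_{l-1}$ (using that $m^{e_{-1}}$ and $m^{e''_0}$ are isomorphisms and that $\rho$ is matched with $e'_0$ via $m^{e_{l-1}}$), and accounting for the rigidification summands $\mO\oplus\mO[-4]$. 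So while your overall strategy (weight-decompose the explicit quiver obstruction theory of \eqref{eq:fullobsQMS} and use the additivity of $\FF_{\Flag}$) is the right one and matches the paper's, the component-by-component weight analysis and the acyclicity arguments replacing your "cancellation" must be redone before the claimed formulas \eqref{eq:Nvir1}, \eqref{eq:Nvir2}, \eqref{eq:Nvirmiddle} and the virtual-class identifications actually follow.
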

\begin{proof}
It was shown in \cite[Proposition 10.20]{JoyceWC}, that these are the only types of fixed point loci one obtains. The rest of the proof will describe how they appear and what their fixed obstruction theories and virtual normal bundles are. This must be done carefully because it will affect the signs in Theorem \ref{thm:FlagWC} below. 
\begin{enumerate}
    \item The first fixed-point locus is clear, as it sets the morphisms rescaled by $\GG_m$ to zero. In \cite[Proposition 10.20 (a)]{JoyceWC}, it is shown that $\sigma^{s_o\lambda'}$-semistability of $(\un{V},\un{f},E)$ implies injectivity of $m^{e_{l-1}}:V_{l-1}\to V_l$. On this fixed point locus, the morphism $m_0$ additionally factors through the isomorphism 
    $$
    \begin{tikzcd}
        \coker\big({m^{e_{l-1}}}\big)\arrow[r,"\sim"]&V_0\,.
    \end{tikzcd}
    $$
    Therefore, $m_0$ presents no additional information, and one can naturally identify $N_{(1,\un{d}),\alpha}^{\sigma^{s_o\lambda'}}\Big|_{e_{-1}=0}$ with a substack of $\mN^{\rig}_{\un{d},\al}$. Comparing stability conditions, \cite[Proposition 10.20 (a)]{JoyceWC} proves the identification with $N_{\un{d},\al}^{\sigma^{s^-_{o}\lambda'}}$.

    The obstruction theory on $\mN^{\rig}_{(1, \un{d}),\alpha}$ is explicitly described by applying the description in Lemma  \eqref{eq:fullobsQMS} to \eqref{eq:constructQMSCY4} where \FG{$\circ$} now more generally represents the obstruction theory $\EE$ on $\mM_{\mA}$. From this, it becomes clear that the moving part of the tangent complex  restricted to $N_{(1,\un{d}),\alpha}^{\sigma^{s_o\lambda'}}\Big|_{e_{-1}=0}$ is represented by 
    $$
    \begin{tikzcd}
        \arrow[dr,"e_{-1}"]&\\
        &\arrow[ul, Maroon, bend left, "e^*_{-1}"]
    \end{tikzcd}\,.
    $$
The weights of $\begin{tikzcd}\,
        \arrow[r,"e_{-1}"]&\,\end{tikzcd}$ and $\begin{tikzcd}\,&\arrow[l, Maroon ,"e^*_{-1}"']\,
        \end{tikzcd}$ are $t$
 and $t^{-1}$ respectively, which is equivalent to \eqref{eq:Nvir1}.
        
        The left-over fixed part of the obstruction theory has an extra
        $$
        \begin{tikzcd}[column sep=small]&[-10pt]\,\arrow[dl, BurntOrange,"\rho"']&\,&\,\arrow[dlll,"e_0"]\\
\stackrel{\Cya{\bullet}}{v_0} &&\end{tikzcd}
        $$
        and its shifted dual when compared to $\FF^{\rig}_{\un{d},\al}$. However, this term is simply 
        $$
\begin{tikzcd}
\Big(\Cya{\mV_0^*\otimes \mV_0}\arrow[r,"\circ m^{e_{0}}"]&\mV_l^*\otimes \mV_0\arrow[r,"\circ m^{e_{l-1}}"]&\BuOr{\mV_{l-1}^*\otimes \mV_0}\Big)^\vee
\end{tikzcd}
        $$
which is acyclic as a consequence of the choice of stability. I leave the question of comparing orientations for later, so the above only implies that the virtual fundamental classes are equal up to signs.
\item This situation is slightly more complicated. Consider an object $(\un{V}, \un{f}, E)$ corresponding to a point in $N_{(1,\un{d}),\alpha}^{\sigma^{s_o\lambda'}}\Big|_{e_{0}=0}$. Because this scheme parametrizes such objects up to isomorphisms, one may compensate the action of \Pur{$x\in \GG_m$} on $m^{e_{-1}}$ by scaling the identity of $V_{v_0}$ by \Pur{$x^{-1}$}:
$$
\begin{tikzcd}[column sep= large]
    \arrow[d,"\id_{V_1}"']V_{1}\arrow[r,"\Pur{x}\cdot m^{e_{-1}}"]&V_0\arrow[d,"\Pur{x^{-1}}\cdot \id_{V_0}"]\\
    V_1\arrow[r,"m^{e_{-1}}"']&V_0
\end{tikzcd}\,.
$$
Because $m^{e_0}=0$ this extends to an isomorphism of objects, so $(\un{V}, \un{f}, E)$ indeed represents a fixed point of the $\GG_m$-action. It is shown in \cite[Proposition 10.20 (b)]{JoyceWC} that the stability condition implies that $m^{e_{-1}}$ is an isomorphism. Therefore, the data of $m^{e_{-1}}$ and $V_0$ can be neglected, and $N_{(1,\un{d}),\alpha}^{\sigma^{s_o\lambda}}\Big|_{e_{0}=0}$ can be identified with an open substack of $\mN^{\rig}_{\un{d},\al}$. After comparing stability conditions, \cite[Proposition 10.20 (b)]{JoyceWC} proves that this substack is $N_{\un{d},\al}^{\sigma^{s^+_{o}\lambda'}}$.

From the description of the fixed point locus and from looking at \eqref{eq:fullobsQMS}, it follows that the moving part of the obstruction theory restricted to $N_{(1,\un{d}),\alpha}^{\sigma^{s_o\lambda'}}\Big|_{e_{0}=0}$ is represented by 
$$
\begin{tikzcd}[row sep=huge]&[-10pt]\,\arrow[dl, bend right, BurntOrange, "\rho"']&\,&\,\arrow[llld,"e_0"']\\
\,\arrow[urrr,Maroon, bend right, "e^{*}_0"']\arrow[ur, bend right, BurntOrange, "\rho^*" {yshift=-1ex}]&&&\end{tikzcd}
$$
The weights on its dual come from the action rescaling $\mV_0$, so they are $t$ for the terms corresponding to arrows ending at $v_0$ and $t^{-1}$ for the terms corresponding to arrows starting at $v_0$. Explicitly this adds up to
$$
t\cdot\Big(\begin{tikzcd}[column sep= large]\mV_l^*\otimes \mV_0\arrow[r,"\circ m^{e_{l-1}}"]&\BuOr{\mV_{l-1}^*\otimes \mV_0}\end{tikzcd}\Big)
$$
and its dual. To conclude \eqref{eq:Nvir2}, I simply recall that $m^{e_{l-1}}$ is injective. 

The difference between $\FF^{\rig}_{\un{d},\al}$ restricted to $N_{\un{d},\al}^{\sigma^{s_{o}\lambda'}}$ and the fixed part of $\eqref{eq:fullobsQMS}$ is given by 
$$
\begin{tikzcd}
\arrow[dr,"e_{-1}"]&\\
&\stackrel{\bullet}{v_0}
\end{tikzcd}
$$
and its shiftedí dual. The corresponding complex is clearly acyclic because $m^{e_{-1}}$ is an isomorphism.
\item
First of all, I will introduce a stack $\mN^{(\un{d}_1,\alpha_1)}_{(\un{d}_2,\alpha_2),1}$ that parametrizes objects of the form \eqref{eq:sumlocus}. They consist of a pair of objects $P',P''$ in $\mB_k$ of classes $(\alpha_i,d_{r-1,i})$ for $i=1,2$ respectively together with a representation of the quiver obtained from \eqref{eq:sumlocus} after removing both $\stackrel{v'_r}{\circ}$ and $\stackrel{v''_r}{\circ}$. As before, I require that the vector spaces in the definition of $P'$ and $P''$ in \eqref{eq:exacttriple} are identified with the vector spaces $V'_{r-1}$ at $v'_{r-1}$ and $V''_{r-1}$ at $v''_{r-1}$. Note that I use the convention explained in Remark \ref{rem:d1d2} to shorten the dimension vectors $\un{d}_i$. Lastly, I take the stack $\mN^{(\un{d}_1,\alpha_1)}_{(\un{d}_2,\alpha_2),1}$ to be the rigidified one, meaning that the objects it parametrizes carry a fixed isomorphism $V_1\cong \CC$. 

There is then a natural map 
$$
\Lambda^{(\un{d}_1,\alpha_1)}_{(\un{d}_2,\alpha_2)}: \begin{tikzcd}N_{\un{d}_1,\alpha_1}^{\sigma^{s_o\lambda'}}\times N_{\un{d}_2,\alpha_2}^{\sigma^{s_o\lambda'}}\arrow[r]&\mN^{(\un{d}_1,\alpha_1)}_{(\un{d}_2,\alpha_2),1} \end{tikzcd}
$$
which maps a pair of objects represented by the two quivers 
\begin{equation}
\label{eq:twoFlags2}
\includegraphics[scale=0.9]{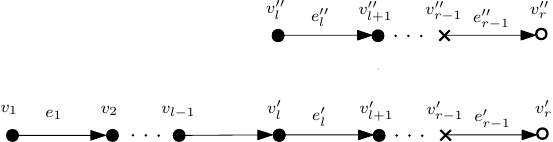}
\end{equation}
with classes $(\un{d}_i,\alpha_i)$ for $i=1,2$ (labelling from the bottom) to an object described by \eqref{eq:sumlocus} by setting $V_0 = \CC$ and $m^{e_{-1}} = \id_{\CC}=m^{e''_0}$ while keeping the rest unchanged. Due to $N_{\un{d}_1,\alpha_1}^{\sigma^{s_o\lambda'}}$ being rigidified, there are fixed isomorphisms $V''_l\cong \CC, V_1 \cong \CC$. This is an open embedding because the contributions of $e_{-1}$ and $e''_0$ for the target cancel with the ones of $v_0$ and $v''_{l}$ respectively. Next, there is a map 
$$
\widetilde{\mu}^{(\un{d}_1,\alpha_1)}_{(\un{d}_2,\alpha_2)}: \begin{tikzcd}\mN^{(\un{d}_1,\alpha_1)}_{(\un{d}_2,\alpha_2),1}\arrow[r]&\mN^{\rig}_{(1,\un{d}),\alpha}\end{tikzcd}
$$
that acts by preserving the vector spaces $V_j$ for $0\geq j\leq l-1$ and by taking pairwise sums of the rest:
$$
V_k = V'_k\oplus V''_k\qquad\textnormal{for}\quad k\geq l\,,\qquad E = E'\oplus E''\,.
$$
Following \cite[Proposition 10.20 (c)]{JoyceWC}, I claim that 
\begin{equation}
\label{eq:sumtoMSmap}
\mu^{\un{d}_1,\alpha_1}_{\un{d}_2,\alpha_2}:=\widetilde{\mu}^{(\un{d}_1,\alpha_1)}_{(\un{d}_2,\alpha_2)}\circ \Lambda^{(\un{d}_1,\alpha_1)}_{(\un{d}_2,\alpha_2)}
\end{equation}
is an isomorphism on its image which is equal to a union of some connected components of the fixed point locus. I will also denote the obvious extension of this map to $ \mN_{\un{d}_1,\alpha_1}^{\rig}\times \mN_{\un{d}_2,\alpha_2}^{\rig}$ in the same way by $\mu^{(\un{d}_1,\alpha_1)}_{(\un{d}_2,\alpha_2)}$.

To get this fixed point locus, one uses the same argument that introduced the \Pur{weight $-1$} $\GG_m$-action at $v_0$ in \eqref{eq:e0locus}. Going in the opposite direction, one gets the trivial $\GG_m$-action on $V_{0}$ and the \Pur{weight 1} action on $V_{1}$. Because of this, there needs to be a non-trivial action on $V_j$ for $j>1$ and on $E$. The weight decomposition of these terms will have only weight 0 and weight 1 summands due to stability. Thus \cite[Proposition 10.20 (c)]{JoyceWC} concludes that each fixed point of this type is of the form \eqref{eq:sumlocus} for $j\in J_o$ where the action is of \Pur{weight $1$} at the vertices of the longer horizontal sequence. It is trivial for the upper sequence. Moreover, it is shown there that $m^{e_0}$ can be decomposed as a sum of 
$$m^{e'_0}:\begin{tikzcd} V'_{l}\arrow[r,"0"]& V_0\end{tikzcd}\quad \textnormal{and}\quad m^{e''_0}: \begin{tikzcd} V''_{l}\arrow[r,"\sim"]& V_0\end{tikzcd}\,,$$
and that $m^{e_{-1}}$ and $m^{e'_{l-1}}: V_{l-1}\xrightarrow{\sim} V'_l$ are isomorphisms. This is then used to conclude that \eqref{eq:Sigmatofixedsumlocus} indeed identifies the two schemes.

Consider the projection 
\begin{equation}
\label{eq:PiV}
\Pi_{\mathsf{V}}:\begin{tikzcd}N_{(1,\un{d}),\alpha}^{\sigma^{s_{o}\lambda'}}\arrow[r]& \mN_{\un{d},\al}\end{tikzcd}
\end{equation}
that corresponds to forgetting the vertex $v_0$ in \eqref{eq:QMSaction} and all the arrows pointing towards it. This map is defined even though the latter stack is not rigidified because $d_1=d_0=1$. The restriction of $\FF_{(1,\un{d}),\alpha}$ to $N_{(1,\un{d}),\alpha}^{\sigma^{s_o\lambda'}}\Big|^{(\un{d}_1,\alpha_1)}_{(\un{d}_2,\alpha_2)}$ differs from the pull-back of $\FF_{\un{d},\al}$ along $\Pi_{\mathsf{V}}$ by the term corresponding to 
\begin{equation}
\label{eq:PiVobsdiff}
\includegraphics[scale=0.9]{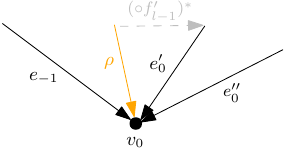}
\end{equation}
and its dual. Since $m^{e_{-1}},m_{e''_0}$ are isomorphisms, the terms $\begin{tikzcd}
    \,\arrow[r,"e_{-1}"]&\,
\end{tikzcd}$ and $\begin{tikzcd}
    \, &\arrow[l,"e''_{0}"']\,
\end{tikzcd}$ cancel with $\stackrel{v_0}{\bullet}$ and $\stackrel{v''_{l-1}}{\bullet}$. Finally, the contribution of $\begin{tikzcd}\,\arrow[r,"\rho",BurntOrange]& \,\end{tikzcd}$ is mapped to the one of $\begin{tikzcd}\,&\arrow[l,"e'_0"']\,\end{tikzcd}$ by the isomorphism $m^{e_{l-1}}\circ $, so that after including $\stackrel{v''_{l-1}}{\bullet}$ into the above diagram, the combined complex is acyclic. This implies that one needs to remove the term $\begin{tikzcd}
    \stackrel{v''_{l-1}}{\bullet}
\end{tikzcd}$ and its dual from 
$\Big(\mu^{(\un{d}_1,\al_1)}_{(\un{d}_2,\al_2)}\Big)^*\circ \Pi_{\mathsf{V}}^*\FF_{\un{d},\al}$ to get $\Big(\mu^{(\un{d}_1,\al_1)}_{(\un{d}_2,\al_2)}\Big)^*\FF_{(1,\un{d}),\al}$. Note that 
$$
\Pi_{\mathsf{V}}\circ \mu^{(\un{d}_1,\al_1)}_{(\un{d}_2,\al_2)} = \mu_{\Flag} \circ \big(s_{\un{d}_1,\al_1}\times s_{\un{d}_2,\al_2}\big)
$$
so 
\begin{align*}
\Big(\mu^{(\un{d}_1,\alpha_1)}_{(\un{d}_2,\alpha_2)}\Big)^*\FF_{(1,\un{d}),\alpha}&\cong \Big(\mu^{(\un{d}_1,\al_1)}_{(\un{d}_2,\al_2)}\Big)^*\circ \Pi_{\mathsf{V}}^*\FF_{\un{d},\al}+(\textnormal{remove }
\begin{tikzcd}[ampersand replacement = \&]
    \Cya{\bullet}\arrow[loop, ForestGreen, in=55,out=115, looseness=10,"v''_{l-1}"]
\end{tikzcd})\\
&\cong  \mu_{\Flag}^*\FF_{\un{d},\al}+(\textnormal{remove }
\begin{tikzcd}[ampersand replacement = \&]
    \Cya{\bullet}\arrow[loop, ForestGreen, in=55,out=115, looseness=10,"v''_{l-1}"]
\end{tikzcd})\\
&\cong \FF_{\un{d}_1,\al_1} \boxplus \FF_{\un{d}_2,\alpha_2}\oplus \Theta_{\Flag}\oplus \sigma^*\Theta_{\Flag}+(\textnormal{remove }
\begin{tikzcd}[ampersand replacement = \&]
    \Cya{\bullet}\arrow[loop, ForestGreen, in=55,out=115, looseness=10,"v''_{l-1}"]
\end{tikzcd})
\end{align*}
by \eqref{eq:additivityofFFflag}. Furthermore, another copy of $\mO$ and $\mO[-4]$ are removed by starting from $\FF^{\rig}_{(1,\un{d}),\al}$ instead. Because this only affects the diagonal terms, I will focus on them. For the classical obstruction theory \eqref{eq:wtFFI}, rigidifying and removing $\stackrel{v''_{l-1}}{\Cya{\bullet}}$ would produce 
\begin{equation}
\label{eq:rigidifiedsumFFna}
\begin{tikzcd}\Big(\wt{\FF}_{\un{d}_1,\al_1}\arrow[r]&\mO[-1]\Big) \boxplus \Big(
\wt{\FF}_{\un{d}_2,\alpha_2}\arrow[r]&\mO[-1]\Big) 
\end{tikzcd}
\end{equation}
which are the classical rigidified obstruction theories. Symmetrizing this observation, one gets 
$$
\Big(\mu^{(\un{d}_1,\alpha_1)}_{(\un{d}_2,\alpha_2)}\Big)^*\FF_{(1,\un{d}),\alpha}\cong \FF^{\rig}_{\un{d}_1,\al_1}\boxplus \FF^{\rig}_{\un{d}_2,\al_2}\oplus \Theta_{\Flag}\oplus \sigma^*\Theta_{\Flag}\,.
$$
 To understand the weights of each term, observe that \eqref{eq:sumlocus} implies that the objects on first line of \eqref{eq:twoFlags2} have weight 0, while the ones on the second line are weight $1$. Combined with the description of $\Theta_{\Fl}$ from \eqref{eq:extendTheta} using Corollary \ref{cor:sumobstructiontheory}, this implies that
$$\Big(\mu^{(\un{d}_1,\alpha_1)}_{(\un{d}_2,\alpha_2)}\Big)^*\FF^{\rig}_{(1,\un{d}),\alpha}\cong \FF^{\rig}_{\un{d}_1,\alpha_1}\boxplus \FF^{\rig}_{\un{d}_2,\alpha_2}\oplus t\cdot \Theta_{\Flag}\oplus t^{-1}\cdot\sigma^*\Theta_{\Flag}$$
 in $D^b_{\GG_m}\Big(N_{\un{d}_1,\alpha_1}^{\sigma^{s_o\lambda'}}\times N_{\un{d}_2,\alpha_2}^{\sigma^{s_o\lambda'}}\Big)$.
\end{enumerate}
\end{proof}
The wall-crossing formula will be a consequence of the virtual equivariant localization in Theorem \ref{thm:eqvirloc} applied to
$$\Big[N_{(1,\un{d}),\alpha}^{\sigma^{s_o\lambda'}}\Big]^{\vir}\in H^{\GG_m}_*\Big(N_{(1,\un{d}),\alpha}^{\sigma^{s_o\lambda'}}\Big)\,.$$
I choose to state the wall-crossing formula for objects in $\mB_{Q_{\Flag}}$ first and then deduce Theorem \ref{thm:familyWC} as a consequence of the more general result. This will allow a simpler derivation of alternative wall-crossing formulae in the future, in which case one can directly use Theorem \ref{thm:FlagWC} below. The computation is heavily based on \cite[Proposition 10.23 and 10.24]{JoyceWC}. Still, because of the additional attention one needs to pay to orientations, and due to the present result being stated for $\mB_{\Flag}$ rather than for $\mA$, I choose to present complete arguments within reason. I will denote by $\big[N^{\sigma^{\lambda}}_{\un{d},\al}\big]^{\vir}$ both the virtual fundamental class in $H_*\big(N^{\sigma^{\lambda}}_{\un{d},\al}\big)$ and its pushforward to $H_*\big(\mN^{\rig}_{\un{d},\al}\big)$ along the open embedding \eqref{eq:Nsigmalambda}. 
\begin{theorem}
\label{thm:FlagWC}
For $(\un{d},\al)$ and each $o\in O$ as in \eqref{eq:proofWCgraphic}, the formula
\begin{equation}
\label{eq:FlagWC}
\Big[N^{\sig^{s^+_o\lam'}}_{\un{d},\al}\Big]^{\vir}-\Big[N^{\sig^{s^-_o\lam'}}_{\un{d},\al}\Big]^{\vir} = \sum_{j\in J_o}\bigg[\Big[N^{\sig^{s_o\lam'}}_{\un{d}^j_1,\al^j_1}\Big]^{\vir},\Big[N^{\sig^{s_o\lam'}}_{\un{d}^j_2,\al^j_2}\Big]^{\vir}\bigg]
\end{equation}
holds in $L\iFlag_{*}$.
\end{theorem}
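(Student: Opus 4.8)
The plan is to derive \eqref{eq:FlagWC} by applying the equivariant virtual localization formula of Theorem \ref{thm:eqvirloc} (in the form \eqref{eq:vireqloc}, with $\T=\GG_m$) to the class $\big[N^{\sig^{s_o\lam'}}_{(1,\un{d}),\al}\big]^{\vir}\in H^{\GG_m}_*\big(N^{\sig^{s_o\lam'}}_{(1,\un{d}),\al}\big)$, where the $\GG_m$-action is \eqref{eq:QMSaction}. Since $N^{\sig^{s_o\lam'}}_{(1,\un{d}),\al}$ carries a proper fixed locus (Assumption \ref{ass:obsonflag}.a)) and a $\GG_m$-equivariant CY4 obstruction theory $\FF^{\rig}_{(1,\un{d}),\al}$, the left-hand side of the localization formula, namely $\iota_*$ of the non-equivariant class $\big[N^{\sig^{s_o\lam'}}_{(1,\un{d}),\al}\big]^{\vir}$, must be \emph{independent of $z$}: indeed $N^{\sig^{s_o\lam'}}_{(1,\un{d}),\al}$ itself is not proper in general, but after applying $\big(\pi_{(1,\un{d}),\al}\big)_*$ one lands in $L\iFlag_{\loc,*}$ where the relevant class, being the pushforward of a genuine (non-localized) class, lies in the sub-$R$-module $L\iFlag_*\subset L\iFlag_{\loc,*}$ and so has no poles or higher powers of $z$. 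Therefore the right-hand side of \eqref{eq:vireqloc}, which a priori lives in $L\iFlag_*(\!(z^{-1})\!)$, actually lies in $L\iFlag_*$, and I will extract \eqref{eq:FlagWC} as its $z^0$-coefficient (equivalently, as the residue after multiplying by $z$, using the structure of the Lie bracket on $L\iFlag_*$ via \eqref{eq:LiefromVA}).

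The key computational steps, carried out in the order below, use Proposition \ref{prop:fixedpoints} to enumerate the three families of fixed components and to identify their virtual classes and virtual normal bundles. First I substitute $N^{\vir}_{e_{-1}=0} = t\cdot \mV_1^*\otimes\mV_0 \oplus t^{-1}\cdot(\mV_1^*\otimes\mV_0)^\vee[-2]$ from \eqref{eq:Nvir1}; since here $\mV_1\cong\mV_0\cong\un{\CC}$ (as $d_0=d_1=1$), the factor $N^{\geq}$ is the trivial line bundle $t$ and the contribution of this component reduces to $\big[N^{\sig^{s^-_o\lam'}}_{\un{d},\al}\big]^{\vir}/(z\cdot 1)$ up to the sign from comparing orientations (which I must track carefully using Definition \ref{def:orpullback}, Lemma \ref{lem:functoror} and the acyclicity of the leftover fixed complex identified in the proof of Proposition \ref{prop:fixedpoints}.1), together with the extra sign $(-1)^{\Rk(N^{\geq})}$ built into \eqref{eq:oNgeq}). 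Second, I treat the component \eqref{eq:e0locus} analogously using \eqref{eq:Nvir2}; the roles of $z$ and $-z$ get swapped relative to the first component because the nonzero weight there is $t^{-1}$ on the relevant summand, and the residue extraction converts the resulting $1/z$ into $1/(-z)$, yielding $-\big[N^{\sig^{s^+_o\lam'}}_{\un{d},\al}\big]^{\vir}$ (after the same orientation bookkeeping). Third, for each $j\in J_o$ I use \eqref{eq:Sigmatofixedsumlocus} and \eqref{eq:Nvirmiddle}: the virtual normal bundle is $t\cdot\Theta_{\Flag}[-2]\oplus t^{-1}\cdot\sigma^*\Theta_{\Flag}[-2]$, so $N^{\geq} = t\cdot\Theta_{\Flag}[-2]$ up to the splitting convention, and the localization contribution becomes
$$
\big(\mu^{(\un{d}_1,\al_1)}_{(\un{d}_2,\al_2)}\big)_*\Bigg(\frac{\big[N^{\sig^{s_o\lam'}}_{\un{d}_1,\al_1}\big]^{\vir}\boxtimes\big[N^{\sig^{s_o\lam'}}_{\un{d}_2,\al_2}\big]^{\vir}}{z^{\Rk(\Theta_{\Flag})}c_{z^{-1}}(\Theta_{\Flag})}\Bigg)
$$
up to signs; matching this against the state–field correspondence $Y_{\Flag_k}(-,z)(-)$ of Definition \ref{Def:FlagVA} and taking the $[z^{-1}]$-coefficient gives precisely the Lie bracket $\big[\,[N^{\sig^{s_o\lam'}}_{\un{d}^j_1,\al^j_1}]^{\vir},[N^{\sig^{s_o\lam'}}_{\un{d}^j_2,\al^j_2}]^{\vir}\big]$ — here the signs $(-1)^{a\chi_{\Flag_k}}$ and $\epsilon^{\un{d}_1,\al_1}_{\un{d}_2,\al_2}$ appearing in $Y_{\Flag_k}$ are exactly the ones produced by the orientation comparison of Lemma \ref{lem:epsilonQFlag} combined with the $(-1)^{\Rk(N^{\geq})}$ in \eqref{eq:oNgeq}, which is the whole point of having fixed the conventions in \S\ref{sec:orconventions} and \S\ref{sec:orVAFlag} the way I did.

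Finally I collect the three contributions: the $z$-independence of the left-hand side forces the sum of residues to vanish, i.e.
$$
0 = \big[N^{\sig^{s^-_o\lam'}}_{\un{d},\al}\big]^{\vir} - \big[N^{\sig^{s^+_o\lam'}}_{\un{d},\al}\big]^{\vir} + \sum_{j\in J_o}\bigg[\big[N^{\sig^{s_o\lam'}}_{\un{d}^j_1,\al^j_1}\big]^{\vir},\big[N^{\sig^{s_o\lam'}}_{\un{d}^j_2,\al^j_2}\big]^{\vir}\bigg]
$$
in $L\iFlag_*$, which rearranges to \eqref{eq:FlagWC}. (One subtlety: because the $e_{-1}=0$ and $e_0=0$ components each have $N^{\geq}$ of rank $1$ and their residues are $\pm$ the respective classes with no extra Chern-class denominators, while the mixed components carry the full $c_{z^{-1}}(\Theta_{\Flag})$, the matching with $Y_{\Flag_k}$ is clean only after checking that the $e^{zT}$ factor contributes trivially at the residue level for the pure components — this follows since $T$ acts by zero on the Lie-algebra quotient.) The main obstacle I anticipate is \textbf{not} the geometry — Proposition \ref{prop:fixedpoints} does all the heavy lifting — but the \emph{sign bookkeeping}: verifying that the orientation induced on each fixed component via Definition \ref{def:orpullback} and \eqref{eq:EEonMGGmtrivialization}, when fed through the corrected localization formula \eqref{eq:vireqloc} (with its $(-1)^{\Rk(T^\leq)}$ correction of Lemma \ref{lem:2orientconv}.iv) and the $(-1)^{\Rk(N^{\geq})}$ of \eqref{eq:oNgeq}), reproduces \emph{exactly} the signs $(-1)^{a\chi_{\Flag_k}}\epsilon^{\un{d},\al}_{\ov{e},\be}$ hard-wired into $Y_{\Flag_k}$ — and in particular that the two ``pure'' fixed loci contribute with a relative sign of $-1$ and the mixed loci with sign $+1$, so that the vertex-algebra Lie bracket appears with the correct sign and no spurious factors. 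I expect to isolate this into a short lemma comparing the two orientation procedures on each of the three fixed-component types, mirroring the argument of Lemma \ref{lem:epsilonQFlag}.
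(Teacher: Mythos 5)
Your proposal follows essentially the same route as the paper's proof: apply the (sign-corrected) $\GG_m$-localization formula to $\big[N^{\sig^{s_o\lam'}}_{(1,\un{d}),\al}\big]^{\vir}$ on the enhanced master space with the action \eqref{eq:QMSaction}, use Proposition \ref{prop:fixedpoints} for the three types of fixed loci, push forward along $\Pi_{\mathsf{V}}$, set the total residue in $z$ to zero using properness of the fixed locus, and identify the mixed-locus contribution with the state--field correspondence of Definition \ref{Def:FlagVA}, with the orientation comparison of Lemma \ref{lem:epsilonQFlag} supplying the signs. The one step you gloss (beyond the deferred sign lemma, which the paper indeed carries out exactly as you anticipate) is that $\mu^{(\un{d}_1,\al_1)}_{(\un{d}_2,\al_2)}$ is $\GG_m$-equivariant only up to a 2-morphism, so the equivariant pushforward must be rewritten as $(\mu_{\Flag})_*\circ(e^{zT}\otimes\id)\circ(s_{\un{d}_1,\al_1}\times s_{\un{d}_2,\al_2})_*$ via \eqref{eq:ezTorigin} before it literally matches $Y_{\Flag_k}$; this is harmless for the final identity in $L\iFlag_*$, since the $e^{zT}$-correction terms lie in the image of $T$, but it is the ingredient that makes the matching precise rather than only up to $T$-exact terms.
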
 
\begin{proof}
After applying equivariant localization to
$$
\Big[N_{(1,\un{d}),\alpha}^{\sigma^{s_o\lambda'}}\Big]^{\vir}\in H^{\GG_m}_*\Big(N_{(1,\un{d}),\alpha}^{\sigma^{s_o\lambda'}}\Big)
$$
one needs to take the pushforward along
\begin{equation}
\label{eq:NVspacetoNstack}
\Pi_{\mathsf{V}}:\begin{tikzcd}
    N_{(1,\un{d}),\alpha}^{\sigma^{s_o\lambda'}}\arrow[r] & \mN^{\rig}_{\un{d},\al}\,.
\end{tikzcd}
\end{equation}
and extract the residue. The last operation uses that the $\GG_m$-action on $\mN^{\rig}_{\un{d},\al}$ is trivial, so 
$$
H^{\GG_m}_*\big(\mN^{\rig}_{\un{d},\al}\big)\otimes_{R[z]}R[z^{-1}] = H_*\big(\mN^{\rig}_{\un{d},\al}\big)[z,z^{-1}]\,.
$$
 Because $N_{(1,\un{d}),\alpha}^{\sigma^{s_o\lambda'}}$ has proper $\T$-fixed points by Assumption \ref{ass:obsonflag}, the residue in $z$ of the pushforward of its (localized) equivariant virtual fundamental class vanishes. Therefore, the residue of the sum of the contributions of the fixed point loci described in Proposition \ref{prop:fixedpoints} is zero. 
 
I will compute each summand separately, labelling them in the same way as in Proposition \ref{prop:fixedpoints}. In the process, I will always implicitly push forward along \eqref{eq:NVspacetoNstack} without mentioning it to avoid cluttering the computation with lengthy notation.
\begin{enumerate}
    \item The orientation from \eqref{eq:oNgeq} applied to \eqref{eq:Nvir1} makes $t^{-1}\cdot \mV^*_0\otimes \mV_1$ positive. Because this is visibly the same as the orientation induced by Definition \ref{def:orpullback} (see also \eqref{eq:FtoDiso}) on this piece of $\det\big(\FF^{\rig}_{Q_{\MS}}\big)$, I conclude that the correct sign of the induced virtual fundamental class \eqref{eq:FPvir1} is 
    $$
    \RX{+}\Big[N_{\un{d},\al}^{\sigma^{s^-_{o}\lambda'}}\Big]^{\vir}\,.
    $$
    Consequently, the term assigned to this fixed point locus becomes
    \begin{equation}
    \label{eq:virlocus1}
    [z^{-1}]\left\{ \frac{\Big[N_{\un{d},\al}^{\sigma^{s^-_{o}\lambda'}}\Big]^{\vir}}{z\big(1+z^{-1}\mV^*_1\otimes \mV_0\big)}\right\} = \Big[N_{\un{d},\al}^{\sigma^{s^-_{o}\lambda'}}\Big]^{\vir}\in H_*\big(\mN^{\rig}_{\un{d},\al}\big)\,.
    \end{equation}
    \item
    In this case, the orientation from \eqref{eq:oNgeq} makes $t^{-1}\cdot  \big(\mV_{l}/\mV_{l-1}\big)^*\otimes\mV_0$ positive. This time, this differs from the orientation induced by Definition \ref{def:orpullback}, for which $t\cdot \mV_0^*\otimes (\mV_{l}/\mV_{l-1}\big) $ is positive. This gives rise to the additional sign
    $$
    \RX{-}\Big[N_{\un{d},\al}^{\sigma^{s^+_{o}\lambda'}}\Big]^{\vir}
    $$
    and leads to the expression
\begin{equation}
    \label{eq:virlocus2}
    [z^{-1}]\left\{ \frac{-\Big[N_{\un{d},\al}^{\sigma^{s^+_{o}\lambda'}}\Big]^{\vir}}{z\big(1+z^{-1}\mV^*_0\otimes (\mV_l/\mV_{l-1})\big)}\right\} =- \Big[N_{\un{d},\al}^{\sigma^{s^+_{o}\lambda'}}\Big]^{\vir}\in H_*\big(\mN^{\rig}_{\un{d},\al}\big)\,.
\end{equation}
    \item In the third and last situation, the orientation on the virtual normal bundle is given by $o_{\sig^*\Theta_{Q_{\Flag}}}$. First note that $F_{\un{d},\al} \cong F_{(1,\un{d}),\alpha}$ because of the term \eqref{eq:PiVobsdiff} being acyclic once $\stackrel{v_{1}}{\bullet}$ is included. This implies that one needs to compare the orientations of $F_{\un{d},\al}$ and $F_{\un{d}_1,\alpha_1}\boxtimes F_{\un{d}_2,\alpha_2}$ in the same way as in Definition \ref{def:varepsflag}. Thus, one needs to modify \eqref{eq:FPvir3} by the sign 
    $$
\RX{\varepsilon^{\un{d}_1,\alpha_1}_{\un{d}_2,\alpha_2}\cdot }\Big[N_{\un{d}_1,\alpha_1}^{\sigma^{s_o\lambda'}}\Big]^{\vir}\boxtimes \Big[N_{\un{d}_2,\alpha_2}^{\sigma^{s_o\lambda'}}\Big]^{\vir}\,,
    $$
    where I again omit the superscripts $j$. The corresponding localization term is expressed as
    \begin{equation}
    \label{eq:residueFlag}
    [z^{-1}]\left\{ \varepsilon^{\un{d}_1,\alpha_1}_{\un{d}_2,\alpha_2}\Big(\Pi_{\mathsf{V}}\circ\mu^{(\un{d}_1,\alpha_1)}_{(\un{d}_2,\alpha_2)}\Big)^{\GG_m}_*\left(\frac{\Big[N_{\un{d}_1,\alpha_1}^{\sigma^{s_{o}\lambda'}}\Big]^{\vir}\boxtimes \Big[N_{\un{d}_2,\alpha_2}^{\sigma^{s_{o}\lambda'}}\Big]^{\vir}}{z^{\Rk\big(\Theta_{Q_{\Flag}}\big)}c_{z^{-1}}\Big(\Theta_{Q_{\Flag}}\Big)}\right)\right\}\,,
    \end{equation}
    where $(-)^{\GG_m}_*$  denotes the $\GG_m$-equivariant pushforward. To bring it into the form appearing in \eqref{eq:gammaFlagWC}, I follow the observation from \cite[(9.50)]{JoyceWC} and adapt it to $\mN_{\Flag_k}$. 
    
    First note that the map $\mu^{(\un{d}_1,\alpha_1)}_{(\un{d}_2,\alpha_2)}$ is $\GG_m$-equivariant only up to a 2-morphism $\mathfrak{a}$ in the sense of \cite[(2)]{Romagny}. For each \Pur{$x\in \GG_m$}, this 2-morphism is determined by a natural transformation that assigns to each object of the form \eqref{eq:sumlocus} with $m^{e_{-1}}$ rescaled by \Pur{$x$} a morphism to the same object without the scaling at the edge $e_{-1}$. This morphism acts as represented in \eqref{eq:sumlocus} by \Pur{$x\cdot \id$} on each vertex of the longer horizontal $A_r$ quiver. Taking the stacky quotient $ (-)/\GG_m$ of such a weakly $\GG_m$-equivariant morphism, the natural transformation $\mathfrak{a}$ contributes to the resulting morphism between the quotient stacks described in the proof of \cite[Proposition 2.6]{Romagny}. Carefully following the cited construction, one arrives at the following commutative diagram involving $\Big(\Pi_{\mathsf{V}}\circ \mu^{(\un{d}_1,\alpha_1)}_{(\un{d}_2,\alpha_2)}\Big)/\GG_m$:
    $$
    \begin{tikzcd}[column sep = 3cm, row sep= 1.3cm]
          \arrow[d,"\Big(\Pi_V\circ \mu^{(\un{d}_1,\alpha_1)}_{(\un{d}_2,\alpha_2)}\Big)/\GG_m"']  B\GG_m\times \mN^{\rig}_{\un{d}_1,\alpha_1}\times \mN^{\rig}_{\un{d}_2,\alpha_2}\arrow[r,"\id\times s_{\un{d}_1,\al_1}\times s_{\un{d}_2,\al_2}"]&B\GG_m\times \mN_{\un{d}_1,\alpha_1}\times \mN_{\un{d}_2,\alpha_2}\arrow[d,"\id\times \rho_{12}\times \id"]\\
     B\GG_m\times \mN^{\rig}_{\un{d},\al}&\arrow[l,"\id\times (\Pi_{\un{d},\al} \circ \mu_{\Flag})"] B\GG_m\times \mN_{\un{d}_1,\alpha_1}\times \mN_{\un{d}_2,\alpha_2}\,.
    \end{tikzcd}
    $$
    The morphism $\id\times \rho_{12}: B\GG_m\times \mN_{\un{d}_1,\alpha_1}\to B\GG_m\times\mN_{\un{d}_1,\alpha_1}$ is the identity on $B\GG_m$, but also lets $B\GG_m$ act on the second factor.  The factor $\rho_{12}$ is precisely the correction corresponding to the 2-morphism $\mathfrak{a}$.    

    Taking $H^*(-)$ of the above diagram, expresses the $\GG_m$-equivariant pull-back $\Big(\Pi_V\circ \mu^{(\un{d}_1,\alpha_1)}_{(\un{d}_2,\alpha_2)}\Big)^*_{\GG_m}$ in terms of the composition of the other three arrows. Because $H^*_{\GG_m}(-)$ is dual as an $R[z] = H^*(B\GG_m)$ module to $H_*^{\GG_m}(-)$ in all the above cases, this shows that
    \begin{align*}
    \label{eq:ezTorigin}
    \Big(\Pi_V\circ \mu^{(\un{d}_1,\alpha_1)}_{(\un{d}_2,\alpha_2)}\Big)^{\GG_m}_* &= \big(\Pi_{\un{d},\al} \circ \mu_{\Flag}\big)_*\circ  (\rho_{12}\otimes \id)_* \Big(\sum_{i\geq 0}z^ip^i\boxtimes \big(s_{\un{d}_1,\al_1}\times s_{\un{d}_2,\al_2}\big)_*\Big)\\
    &=\big(\Pi_{\un{d},\al} \circ \mu_{\Flag}\big)_*\circ (e^{zT}\otimes \id)  \circ \big(s_{\un{d}_1,\al_1}\times s_{\un{d}_2,\al_2}\big)_*\,.
    \numberthis
    \end{align*}
     Here $(\rho_{12})_* \Big(\sum_{i\geq 0}z^ip^i\boxtimes -\Big):H_*^{\GG_m}\big(\mN_{\un{d}_1,\alpha_1}\big)\to H_*^{\GG_m}\big(\mN_{\un{d}_1,\alpha_1}\big)$ can be directly seen to be dual to $(\id\times \rho_{12})^*: H^*\big(B\GG_m\times\mN_{\un{d}_1,\alpha_1}\big)\to H^*\big(B\GG_m\times\mN_{\un{d}_1,\alpha_1}\big)$. Using \eqref{eq:srig}, I introduce the notation 
$$
\big[N^{\sigma^{\lambda'}}_{\un{d},\al}\big]^{\svir} = (s_{\un{d},\al})_*\big[N^{\sigma^{\lambda'}}_{\un{d},\al}\big]^{\vir}\in H_*(\mN_{\un{d},\al})\,.
$$
Applying \eqref{eq:ezTorigin} to \eqref{eq:residueFlag} produces \begin{align*}
\label{eq:gammaFlagWC}\big(\Pi_{\un{d},\al}\big)_*  \big[z^{-1}\big]\left\{ \varepsilon^{\un{d}_1,\alpha_1}_{\un{d}_2,\alpha_2}\big(\mu_{\Flag}\big)_*\left(\big(e^{zT}\otimes \id\big)\frac{\Big[N_{\un{d}_1,\alpha_1}^{\sigma^{s_{o}\lambda'}}\Big]^{\svir}\boxtimes \Big[N_{\un{d}_2,\alpha_2}^{\sigma^{s_{o}\lambda'}}\Big]^{\svir}}{z^{\Rk\big(\Theta_{\Flag_k}\big)}c_{z^{-1}}\Big(\Theta_{\Flag_k}\Big)}\right)\right\}
\numberthis
\end{align*}
   in $H_*\big(\mN^{\rig}_{\un{d},\al}\big)$.
    \end{enumerate}
Setting the sum of all 3 terms to be zero proves \eqref{eq:FlagWC} after comparing with the vertex algebra structure from Definition \ref{Def:FlagVA}.
\end{proof}  
\subsection{The $\T$-equivariant case}
\label{sec:equivariantWC}
In this subsection, I refine the arguments in Theorem \ref{thm:FlagWC}. That this requires special attention was pointed out to me by Henry Liu. I will first focus on the local approach from §\ref{sec:equivariantVA}. It is simpler and, as far as I know, has not appeared anywhere else. 

From now on, I will fix the notation 
$$
\TT = \T\times \GG_m\,.
$$
I will be working with the rigidified stack $\big(\mN^{\T}_{\un{d}, \al}\big)^{\rig}$ which is identified with $\big(\mN^{\tau(\T)}_{\un{d}, \al}\big)^{\rig}$ due to Assumption \ref{ass:obsonflag}.a). Choosing the weight at the vertex $v_1$ to always be trivial, one obtains the diagram
\begin{equation}
\label{eq:PiVT}
\begin{tikzcd}
\Big(N_{(1,\un{d}),\alpha}^{\sigma^{s_o\lambda'}}\Big)^{\TT}\arrow[rd,"\Pi^{\TT}_{\V}"]\arrow[r,"\iota^{u}"] \arrow[rr, bend left=30, "\iota^{\TT}"]& \Big(N_{(1,\un{d}),\alpha}^{\sigma^{s_o\lambda'}}\Big)^{\tau(\T)}\arrow[r,"\iota^{\tau(\T)}"]\arrow[d,"\Pi^{\tau(\T)}_{\V}"]&N_{(1,\un{d}),\alpha}^{\sigma^{s_o\lambda'}}\arrow[d,"\Pi_{\V}"]\\
&\big(\mN^{\T}_{\un{d},\al}\big)^{\rig}\arrow[r]&\big(\mN_{\un{d},\al}\big)^{\rig}
\end{tikzcd}
\end{equation}
induced by $\Pi_{\V}$. All the above maps are $\TT$-equivariant for the trivial action of $\GG_m$ on the bottom row.

Assumption \ref{ass:obsonflag}.a) and c) constructs virtual fundamental classes
\begin{align*}
\label{eq:mastervir}
\Big[\big(N^{\sig}_{(1,\un{d}),\al}\big)^{\TT}\Big]^{\vir}\in H^{\TT}_*\Big(\big(N^{\sig}_{(1,\un{d}),\al}\big)^{\TT}\Big)\,,\\
\Big[\big(N^{\sig}_{(1,\un{d}),\al}\big)^{\tau(\T)}\Big]^{\vir}\in H^{\TT}_*\Big(\big(N^{\sig}_{(1,\un{d}),\al}\big)^{\tau(\T)}\Big)\,,
\numberthis
\end{align*}
I now fix an identification 
$
\TT/\tau(\T)\xrightarrow{\sim}\CC^*
$
and denote the resulting one-dimension torus by $\CC^*_{u}$. Here $u$ is a cohomological weight of $\T$ such that the above induces a map 
$$
\begin{tikzcd}
e^{z+u}: \TT\arrow[r]&\CC^*
\end{tikzcd}\,.
$$
Let $\NN_{(-)}$ be the virtual normal bundles of $\iota^{(-)}$ in the upper row of \eqref{eq:PiVT}, then we see that
$$
\NN_{\TT} = \NN_{u} +(\iota^{u})^*\NN_{\tau(\T)}\,.
$$
Using the isomorphism 
$$
H^{\TT}_*\Big(\big(N^{\sig}_{(1,\un{d}),\al}\big)^{\tau(\T)}\Big)\cong H^{\TT}_*\Big(N^{\sig}_{(1,\un{d}),\al}\Big)
$$
one can identify
\begin{align*}
\big[N^{\sig}_{(1,\un{d}),\al}\big]^{\vir}_{\TT,\loc}&=\iota^{u}_*\,\frac{\Big[\big(N^{\sig}_{(1,\un{d}),\al}\big)^{\TT}\Big]^{\vir}}{e_{\TT}(\NN_{\TT})}\\
&=\frac{1}{e_{\TT}(\NN_{\tau(\T)})}\cdot \iota^{u}_*\,\frac{\Big[\big(N^{\sig}_{(1,\un{d}),\al}\big)^{\TT}\Big]^{\vir}}{e_{\CC^*_{u}}(\NN_{u})}\\
&=\frac{\Big[\big(N^{\sig}_{(1,\un{d}),\al}\big)^{\tau(\T)}\Big]^{\vir}}{e_{\TT}(\NN_{\tau(\T)})}\,.
\end{align*}
In particular, applying $\big(\Pi^{\tau(\T)}_{\V}\big)_*$ we get an expression in
$$
H^{\TT}_*\Big(\big(\mN^{\T}_{\un{d},\al}\big)^{\rig}\Big) = H_*\Big(\big(\mN^{\T}_{\un{d},\al}\big)^{\rig}\Big)\llparenthesis \Ft\oplus \GG_a\rrparenthesis^{\gr}
$$
that does not have poles at $(z+u) = 0$ because $\NN_{\tau(\T)}$ does not contain this weight. 

Let us now fix $\tau = \id_{\T}\times 1$ setting $u=0$. We reverse the order of inclusions giving 
$$
\begin{tikzcd}
\Big(N_{(1,\un{d}),\alpha}^{\sigma}\Big)^{\TT}\arrow[r,"\iota^{\TT/\GG_m}"] \arrow[rr, bend left=30, "\iota^{\TT}"]& \Big(N_{(1,\un{d}),\alpha}^{\sigma}\Big)^{\GG_m}\arrow[r,"\iota^{\GG_m}"]&N_{(1,\un{d}),\alpha}^{\sigma}
\end{tikzcd}
$$
This induces the splitting
$$
\NN_{\TT} = \NN_{\GG_m} + \NN_{\TT/\GG_m}\,.
$$
The first term $\NN_{\GG_m}$ was computed already in Proposition \ref{prop:fixedpoints} but it now contains weights of the full torus $\TT$. The second term $\NN_{\TT/\GG_m}$ only contains $\T$-weights under the identification $\TT/\GG_m =\T$. Thus we may write
\begin{equation}
\label{eq:PTVMSvir}
\big(\Pi^{\TT}_{\V}\big)_*\big[N^{\sig}_{(1,\un{d}),\al}\big]^{\vir}_{\TT,\loc} = \big(\Pi^{\TT}_{\V}\big)_*\frac{\Big[\big(N^{\sig}_{(1,\un{d}),\al}\big)^{\TT}\Big]^{\vir}}{e_{\TT}(\NN_{\TT})}  
\end{equation}
as a sum of the expressions \eqref{eq:virlocus1}, \eqref{eq:virlocus2}, and \eqref{eq:residueFlag} where each $$\Big[N_{\un{d},\al}^{\sigma^{s\lambda'}}\Big]^{\vir}_{\T,\loc}\in H^{\T}_*\Big(\big(\mN^{\T}_{\un{d},\al}\big)^{\rig}\Big)_{\loc}$$ is now defined as in Example \ref{ex:mMvir} and the weights at $v_1$ are still trivial.

Because of the vanishing of the residues at each $(z+u)=0$, the expansion of \eqref{eq:PTVMSvir} using \eqref{eq:explicitexpand} has vanishing total residue at $z=0$. By the same arguments as in the proof of Theorem \ref{thm:FlagWC}, this leads to the $\T$-equivariant version of \eqref{eq:FlagWC}.
\subsection{Projecting flags to pairs and sheaves}
\label{sec:Flagtosheaves}
Here, I will detail the argument of step 3 from §\ref{sec:summaryproof} represented by the vertical arrows $\to\!\to\!\to\!\to$ in \eqref{eq:proofWCgraphic}. The section culminates in the proof of Proposition \ref{prop:OmegaWC} which implies the wall-crossing formula in $\mA$. I go through the argument only for $\T = \{1\}$ because the equivariant generalization is straigh-forward.

From \eqref{eq:Nsig'0}, recall that $N^{\sig^{\lambda'}}_{\un{d},\al} = N^{(\sig')^{0}}_{\un{d},\al}$ holds implying that $\Omega^{\sig^{\lambda'}}_{\un{d},\al} = \Omega^{\sig',k}_{\al}$. For both $\zeta = \sig,\sig'$, there is a commutative diagram
\begin{equation}
\label{eq:NrigstoMrig}
\begin{tikzcd}[column sep=large, row sep= large]
\arrow[dr,"\pi^{\zeta}_{\un{d},\al}"']N^{\zeta^0}_{\un{d},\al}\arrow[r,"\pi^{\zeta}_{\un{d}/1,\al}"]& N^{\zeta_{\JS}}_{1,\al}\arrow[d,"\pi^{\JS}_{\al}"]\\
    &\mM^{\rig}_{\al}
\end{tikzcd}\,.
\end{equation}
For the local equivariant approach, the bottom term becomes $(\mM^{\T}_{\al})^{\rig}$. The maps are still commutative because the weights at $v_1$ were always fixed to be trivial. The obstruction theories of the spaces in the upper row are related by the corresponding $\infty$-Pvp diagrams from Assumption \ref{ass:obsonflag}. They are compatible in the sense of Theorem \ref{thm:functsympull} explained in §\ref{Sec:pullback}.

Using that 
$$
\chi\big(\pi^{\zeta}_{\un{d}/1,\al}\big) = \Big(\chi\big(\al(k)\big)-1\Big)!\,,
$$
Theorem \ref{thm:virtualpullpush} implies that
\begin{align*}
\chi(\al(k))!\cdot \Omega^{\zeta^0}_{\un{d},\al} &= 
\big(\pi^{\zeta}_{\un{d},\alpha}\big)_*\Big(\big[N_{\un{d},\alpha}^{\zeta^{0}}\big]^{\vir}\cap c_{\Rk}\big(T_{\pi^{\zeta}_{\un{d},\alpha}}\big)\Big)\\
&= \big(\pi^{\JS}_{\al}\big)_*\bigg(\big(\pi^{\zeta}_{\un{d}/1,\alpha}\big)_*\Big(\big[N_{\un{d},\alpha}^{\zeta^{0}}\big]^{\vir}\cap c_{\Rk}\big(T_{\pi^{\zeta}_{\un{d}/1,\alpha}}\big)\Big)\cap c_{\Rk}(T_{\pi^{\JS}_{\al}})\bigg)\\  &= \big(\chi(\al(k))-1\big)!\cdot\big(\pi^{\JS}_{\al}\big)_*\Big(\big[N^{\zeta_{\JS}}_{1,\alpha}\big]^{\vir}\cap c_{\Rk}\big(T_{\pi^{\JS}_{\alpha}}\big)\Big)\\
&=\chi(\al(k))! \cdot \Omega^{\zeta,k}_{\al} \,.
\end{align*}
Thus, Proposition \ref{prop:OmegaWC} relates $\Omega^{\sig,k}_{\al}$ and $\Omega^{\sig',k}_{\al}$ as claimed in step 3 of §\ref{sec:summaryproof}. 

To get the wall-crossing formula \eqref{eq:OmegaWC} one needs to cap \eqref{eq:FlagWC} with $c_{\Rk}\Big(T_{\pi^{\rig}_{\un{d},\alpha}}\Big)$
and push it forward along $\pi^{\rig}_{\un{d},\alpha}$. The following computation does so explicitly while paying attention to the signs. This is where the sign comparison between flags and sheaves from Lemma \ref{lem:epsilonQFlag} comes into play. The proof could also be directly handled by \cite[§2.5]{GJT}.
\begin{proof}[Proof of Proposition \ref{prop:OmegaWC}]
    I will again compute each of the terms separately following the proof of Theorem \ref{thm:FlagWC}. The left-hand side of \eqref{eq:FlagWC} becomes 
$$
\Omega^{\sigma^{s^+_{o}\lambda'}}_{\un{d},\alpha} -\, \Omega^{\sigma^{s^-_o\lambda'}}_{\un{d},\alpha} \,.
$$ 
To compute the right-hand side, I will fix a single $j\in J$ and omit it from the notation here. Note that $T_{{\pi_{\un{d},\alpha}}}$ is weight 0 with respect to $\rho_{\Flag_k}$, and one may identify it with $\Pi_{\Flag_k}^*T_{{\pi^{\rig}_{\un{d},\alpha}}}$. From now on, I will lift the entire computation to $\mN_{\un{d},\al}$ and $\mM_{\al}$. Using \eqref{eq:additivityofFFflag}, one shows that 
    \begin{align*}
    \label{eq:rhomugamma}
    \big(\mu_{\Flag_k}\circ (\rho_{12}\times \id)\big)^*\Big(T_{{\pi_{\un{d},\alpha}}}\Big) = \Big(\pi_1^*T_{{\pi_{\un{d}_1,\alpha_1}}}+\pi_2^*T_{{\pi_{\un{d}_2,\alpha_2}}}+& \,\Ma{e^{-\tau}\cdot \Theta^\vee_{\mN_{\Flag_k}/\mM}} + \Ma{e^{\tau}\cdot \sigma^*\Theta^\vee_{\mN_{\Flag_k}/\mM}} \Big)\,.
    \numberthis
    \end{align*}
  Using \eqref{eq:tzconvolution}, one replaces $e^\tau$ and $e^{-\tau}$ in \eqref{eq:rhomugamma} by $u=e^z$ and $u^{-1} =e^{-z}$ respectively. One can also expand the denominator in \eqref{eq:gammaFlagWC} as
    $$
    u\cdot \Theta_{\mN_{\Flag}} = u\cdot \Theta_{\mA} + \Ma{u\cdot \Theta_{\mN_{\Flag_k}/\mM_{\mA}}} + \Ma{u\cdot\sigma^*\Theta^\vee_{\mN_{\Flag_k}/\mM_{\mA}}[2]}
    $$
    using \eqref{eq:extendTheta}. The \Ma{maroon terms} can be paired in the order they appear in each equation and cancelled. Because the first \Ma{terms} are dual to each other, one gets an additional sign \RX{$(-1)^{\xi((\un{d}_1,\alpha_1),(\un{d}_2,\alpha_2))}$}. Due to \eqref{eq:epscomp}, the expression in the large curly brackets in \eqref{eq:gammaFlagWC} capped with $c_{\Rk}\big(T_{{\pi_{\un{d},\alpha}}}\big)$ becomes
    \begin{equation}
    \label{eq:bigWCcomp}
\varepsilon_{\alpha_1,\alpha_2}\big(\mu_{\Flag}\big)_*
 \left((e^{zT}\otimes \id)\frac{ \Big(\Big[N_{\un{d}_1,\alpha_1}^{\sigma^{s_{o}\lambda'}}\Big]^{\svir}\cap c_{\Rk}\big(T_{\pi_{\un{d}_1,\alpha_1}}\big)\Big) \boxtimes \Big(\Big[N_{\un{d}_2,\alpha_2}^{\sigma^{s_{o}\lambda'}}\Big]^{\svir}\cap c_{\Rk}\big(T_{\pi_{\un{d}_2,\alpha_2}}\big)\Big)}{z^{-\chi(\al_1,\al_2)}c_{z^{-1}}\big(\Theta_{\alpha_1,\alpha_2}\big)}\right) \,.   
    \end{equation} 
For the last step, observe that the diagram
$$
\begin{tikzcd}[column sep=3cm, row sep= 1cm]
H_*\big(\mN_{\un{d}_1,\alpha_1}\big)\otimes H_*\big(\mN_{\un{d}_2,\alpha_2}\big)\arrow[d,"(\mu_{\Flag})_*\circ (e^{zT}\otimes \id)"']\arrow[r,"(\pi_{\un{d}_1,\al_1})_*\otimes (\pi_{\un{d}_2,\al_2})_*"]&H_*(\mM_{\al_1})\otimes H_*(\mM_{\al_2}) \arrow[d,"\mu_*\circ (e^{zT}\otimes \id)"]\\
H_*\big(\mN_{\un{d},\al}\big)\arrow[r,"(\pi_{\un{d},\al})_*"]&H_*(\mM_{\al})
\end{tikzcd}
$$
is commutative, which implies that the pushforward of \eqref{eq:bigWCcomp} along $\pi_{\un{d},\al}$ becomes
$$
\chi\big(\al_1(k)\big)!\,\chi\big(\al_2(k)\big)!\,\varepsilon_{\alpha_1,\alpha_2}\,\mu_*
 \left((e^{zT}\otimes \id )\frac{ \widehat{\Omega}^{\sigma^{s_o\lambda'}}_{\un{d}_1,\alpha_1} \boxtimes  \widehat{\Omega}^{\sigma^{s_o\lambda'}}_{\un{d}_2,\alpha_2}}{z^{-\chi(\al_1,\al_2)}c_{z^{-1}}\big(\Theta_{\alpha_1,\alpha_2}\big)}\right) \,,  
$$
    where $ \widehat{\Omega}^{\sigma^{s_o\lambda'}}_{\un{d}_i,\alpha_i}$ are lifts of $ \Omega^{\sigma^{s_o\lambda'}}_{\un{d}_i,\alpha_i}$ to $V_{\loc,*}$. The result of taking the residue at $z=0$ and projecting along $\Pi_{\mM_{\mA}}:\mM_{\mA}\to \mM^{\rig}_{\mA}$ can be expressed using Definition \ref{Def:VAlocal} or \ref{Def:VAglobal} and \eqref{eq:LiefromVA} as
    $$
\chi\big(\al_1(k)\big)!\,\chi\big(\al_2(k)\big)! \Big[\Omega^{\sigma^{s_o\lambda'}}_{\un{d}_1,\alpha_1}, \Omega^{\sigma^{s_o\lambda'}}_{\un{d}_2,\alpha_2}\Big]\,.
    $$
    The formula \eqref{eq:OmegaWC} follows immediately from this result.
\end{proof}
\appendix

\section{Stable $\infty$-Pvp diagrams}
\label{appendix}
\subsection{Stable $\infty$-categories}
\label{sec:stableinfty}
Provided one is familiar with higher topos theory, stable $\infty$-categories are easy to use, yet offer a lot of mileage in making constructions independent of choices. Additionally, due to cones of morphisms being $\infty$-functorial, computations become simpler than in corresponding triangulated categories. These two points are why this theory is used in the present work. The classical statements are recovered by taking homotopy categories of the stable $\infty$-categories.

In collecting some results from \cite[Chapter 1]{LurieHA} about stable $\infty$-categories, I will choose to focus on the example of complexes of sheaves on moduli spaces and stacks. Lurie's Higher Topos Theory \cite{LurieHT} is the only prerequisite for this section as I work with his definition of $\infty$-categories. Let $\mM$ be an Artin stack. Then consider its category $\mC^{+}(\mM)$ of injective quasi-coherent complexes 
$$
\cdots\longrightarrow 0\longrightarrow I^{-n}\longrightarrow I^{-n+1}\longrightarrow I^{-n+2}\longrightarrow  \cdots
$$
satisfying $I^{-k} = 0$ for $k\gg 0$.    The category $\mC^+(\mM)$ is enriched in the category of complexes of complex vector spaces $\mC\big(\Spec(\CC)\big)$. For any two $I^\bullet_1,I^\bullet_2\in \mC^+(\mM)$, one sets
$$
\Hom^\bullet_{\mM}(I^\bullet_1,I^\bullet_2) = \Tot^\bullet\big(\Hom_{\mM}(I^\bullet_1,I^\bullet_2))
$$
where $\Tot^\bullet(-)$ denotes the total complex constructed from the double complex $\Hom(I^\bullet_1,I^\bullet_2)$ with terms $\Hom(I^p_1,I^{q}_2)$ in double-degree $(-p,q)$. This means that the action of the differential on $f\in \Hom^{n}_{\mM}(I^{\bullet}_1, I^{\bullet}_2)$ is given by
$$
d(f) = d\circ f-(-1)^nf
$$
 This makes $\mC^+(\mM)$ into a dg-category. 

In \cite[Definition 1.1.2.4]{LurieHT}, $\infty$-categories are defined to be simplicial sets satisfying the additional properties of \textit{weak Kan complexes}. In \cite{LurieHA}, Lurie provides two equivalent constructions of $\infty$-categories from dg-categories. I choose to follow the one that will, in his language, produce the stable $\infty$-category $\mD^+(\mM)=N_{\dg}\big(\mC^+(\mM)\big)$. Throughout the appendix, I will use the notation 
$$
[n] = \{0,1,2,\ldots ,n\}
$$
for $n\in\ZZ_{\geq 0}$.
\begin{definition}[{\cite[Construction 1.3.1.6]{LurieHA}}]
\label{def:dgnerve}
The \textit{differential graded nerve} of a differential graded category $\mD$ is a simplicial set, such that the set of its $n$-simplices $N_{\dg}(\mD)_n$ consists of the collections of data $\Big(\{X_i\}^n_{i=0}, \{f_I\}_{I\subset [n]}\Big)$ such that
\begin{enumerate}
    \item $X_i$ are objects of $\mD$ for $0\leq i\leq n$,
    \item for $I =\{i_0<i_1<\cdots <i_{m}<i_{m+1}\}\subset [n]$, the element $f_I$ is a degree $-m$ morphism in $\Hom^{-m}_{\mD}\big(X_{i_0},X_{i_{m+1}}\big)$ satisfying 
    $$
    d f_I = (-1)^{m+1}\sum_{j=1}^{m} (-1)^{j}\big(f_{I\backslash\{i_j\}}-f_{I_{\geq j}}\circ f_{I_{\leq j}}\big)\,.
    $$
    Here, I used $I_{\leq j} = \{i_0,i_1,\ldots,i_{j-1}, i_{j}\}$ and $I_{\geq j} = \{i_j,i_{j+1},\ldots,i_m,i_{m+1}\}$.
\end{enumerate}
The morphisms $s_{\alpha}: N_{\dg}(\mD)_{n_2}\to N_{\dg}(\mD)_{n_1}$ for a non-decreasing $\alpha: [n_1]\to [n_2]$ are described explicitly in \cite[Construction 1.3.1.6]{LurieHA}. I will write $\mD^+(\mM) = N_{\dg}(\mC^+)$ and $\mD^b(\mM)$ for its full $\infty$-subcategory on the objects $I^\bullet$ in $\mC^+(\mM)$ satisfying $H^k(I^\bullet) =0$ for $k\gg 0$. When there is no difference between using $\mD^+(\mM)$ or $\mD^b(\mM)$, I will write $\mD^{\#}(\mM)$.
\end{definition}
\begin{example}
\label{ex:simplices}
    \begin{enumerate}[label=\roman*)]
        \item A 0, 1, or 2-simplex in the category $\mC^+(\mM)$ correspond respectively to a single complex $I^\bullet_0$, a morphism $f_{01}:I_0^\bullet\to I_1^\bullet$ of complexes, and a diagram of morphisms of complexes 
        \begin{equation}
        \label{eq:2simplex}
        \begin{tikzcd}[row sep =large]
        &I^{\bullet}_1\arrow[dr,"f_{12}"]&\\
I^{\bullet}_0\arrow[ur,"f_{01}"]\arrow[rr,""{name=A}]{}&&I^{\bullet}_2
\arrow[Rightarrow, from=A, to=1-2, "f_{123}", Blue]
        \end{tikzcd}
        \end{equation}
        with a homotopy $f_{012}$ between $f_{02}$ and $f_{12}\circ f_{01}$:
        $$
        d\circ f_{012} + f_{012}\circ d = f_{12}\circ f_{01}-f_{02}\,.
        $$
        \item Starting from the diagram 
        \begin{equation}
        \label{eq:IxIhomotopy}
        \begin{tikzcd}
\arrow[d,"g_0"]I_0^\bullet\arrow[r,"f_{01}"]&I_1^\bullet \arrow[d,"f_{12}"]\\
I_3^\bullet\arrow[r,"g_1"]&I_2^\bullet
        \end{tikzcd},
        \end{equation}
        one might construct a 2-simplex \eqref{eq:2simplex} by setting $f_{02} = g_1 \circ g_{0}$ and finding an appropriate homotopy $f_{012}$. Given such data, I will say that \eqref{eq:IxIhomotopy} is homotopy commutative. This is somewhat different from the usual definition of a $\Delta^1\times \Delta^1$ diagram for which the map $f_{02}$ is given independently and there is an extra homotopy from $f_{02}$ to $g_1\circ g_0$. I will ignore this minor deviation because it is clear how to go from one picture to the other,
        \item Though 4-simpleces are mentioned later on, the highest dimension of a simplex I will write down explicitly is 3. The data of a 3-simplex is determined by the two diagrams 
        \begin{equation}
        \label{eq:3simplex}
        \includegraphics{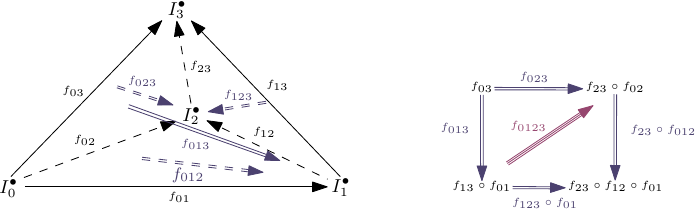}
        \end{equation}
        where the black arrows are morphisms between complexes, the \B{blue doubled arrows} are homotopies assigned to each face of the 3-simplex as in i), and the \Pur{purple tripled arrow} is $f_{0123}$ from Definition \ref{def:dgnerve}.2. Explicitly, this means that $f_{0123}\in \Hom^{-2}_{\mM}(I^{\bullet}_0,I^{\bullet}_3)$ satisfies 
        $$
        d\circ f_{0123} - f_{0123}\circ d = f_{023} + f_{23}\circ f_{012} - (f_{013} +f_{123}\circ f_{01})\,.
        $$
        In other words, the degree $-2$ morphism $f_{0123}$ is a higher homotopy that makes the diagram of homotopies on the right commute. 
        \item Consider a box diagram 
        \begin{equation}
        \label{eq:boxdiagram}
        \begin{tikzcd}
        &I_4^\bullet\arrow[dd]\arrow[rr]&&I_5^\bullet\arrow[dd, red]\\
            \arrow[dd]\arrow[ur] I^\bullet_0\arrow[rrrd, red]\arrow[rr, red]\arrow[rrru,red]&& \arrow[ur,red]I_1^\bullet \arrow[dd]\arrow[dr, red]&\\
            &I^\bullet_6\arrow[rr]&&I_7^{\bullet}
            \\
            \arrow[ur]I^\bullet_2\arrow[rr]&&\arrow[ur]I_3^\bullet&
        \end{tikzcd}
        \end{equation}
        where each face of the box is homotopy commutative. It
        contains a 2-skeleton of a 3-simplex that is spanned by the vertices $\{I_0^\bullet, I_1^\bullet,I^\bullet_5, I^\bullet_7\}$. Its morphisms are compositions of morphisms of the box and are denoted by \RX{red arrows} in the diagram. The homotopies of its faces are sums of homotopies of the faces of the box in an obvious way.  If there exists an $f_{0157}\in \Hom^{-2}_{\mM}(I_0^\bullet,I_7^\bullet)$ that turns the \RX{skeleton} into a 3-simplex, then I will say that the box is \textit{2-homotopy commutative}. Usually, one defines this notion using the 6 different 3-simpleces that a box can be decomposed into. I leave it to the reader to convince themselves that there is a correspondence between the two definitions.
    \end{enumerate}
\end{example}
After presenting Lurie's definition of stable $\infty$-categories, I will recall that $\mD^{\#}(\mM)$ is an example. 
\begin{definition}[{\cite[Definition 1.1.1.9, Proposition 1.1.3.4]{LurieHA}}]
\label{def:stableinfty}
    An $\infty$-category $\mD$ is \textit{stable} if 
    \begin{enumerate}[label=\alph*)]
        \item it containts a \textit{zero object}, i.e., an object that is simultaneously initial and final in $\mC$,
        \item it admits finite homotopy limits and finite homotopy colimits,
        \item a homotopy commutative square 
        $$
        \begin{tikzcd}
            \arrow[d]X_1\arrow[r]&X_2\arrow[d]\\
            X_3\arrow[r]&X_4
        \end{tikzcd}
        $$
        in $\mD$ is a homotopy pushout if and only if it is a homotopy pullback. 
    \end{enumerate}
\end{definition}
This definition shows that one can define stable $\infty$-categories as a special class of $\infty$-categories satisfying some natural conditions. This is contrary to the definition of triangulated categories, which require additional data in the form of an additive structure, a suspension functor, and a class of distinguished triangles. Nevertheless, I will recall in Proposition \ref{prop:stabletotriang} that the homotopy category $\Ho(\mD)$ consisting of the same objects as $\mD$ with morphisms 
$$
\Hom_{\Ho(\mD)}\big(X,Y\big) = \pi_0\big(\Map_{\mD}(X,Y)\big)\qquad \textnormal{for}\quad X,Y\textnormal{ in } \mD
$$
can be given a natural triangulated structure. 

In $\mD$, the role of distinguished triangles is played by \textit{fiber} and \textit{cofiber sequences} that are homotopy pullback, respectively pushout diagrams of the form
\begin{equation}
\label{eq:cofiber}
\begin{tikzcd}
    \arrow[d]X_1\arrow[r]&X_2\arrow[d]\\
    0\arrow[r]&X_3
\end{tikzcd}\,.
\end{equation}
The conditions in Definition \ref{def:stableinfty} imply that
\begin{enumerate}[label=\roman*)]
    \item for every morphism $X_1\to X_2$, there exists a cofiber sequence \eqref{eq:cofiber} where $X_3$ is the \textit{cone},
    \item for every morphism $X_2\to X_3$, there exists a fiber sequence \eqref{eq:cofiber} where $X_1$ it the \textit{cocone}.
\end{enumerate}
Further, the classes of fiber and cofiber sequences are identical. I will use the shortened notation $X_1\to X_2\to X_3$ to denote the (co)fiber sequence \eqref{eq:cofiber}.

The \textit{suspension functor} $\Sigma:\mD\to \mD$ is constructed by taking the cofiber sequences of morphisms $X\to 0$. These sequence are given up to equivalences by $X\to 0\to \Sigma(X)$. Because fiber sequences are the same as cofiber sequences, there is an inverse of $\Sigma$ called the \textit{loop functor} $\Omega$ that acts on $Y$ by taking a cocone of $0\to Y$. In particular, both $\Sigma$ and $\Omega$ are autoequivalences of $\mD$.

Let us return to the example $\mD^{\#}(\mM)$ and discuss why it is a stable $\infty$-categories. I will not write out the proof of this as it can be found in \cite[§1.3.2]{LurieHT}. Instead, I will describe the zero object, the (co)fiber sequences, and the suspension functor. This is done to later explain the connection with the usual structure of corresponding triangulated categories. 
\begin{proposition}[{\cite[Proposition 1.3.2.10, Corollary 1.3.2.18]{LurieHT}}]
\label{prop:mDstable}
    Both $\mD^+(\mM)$ and $\mD^b(\mM)$ are stable infinity categories. 
\end{proposition}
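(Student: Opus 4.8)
The statement to prove is that $\mD^+(\mM)$ and $\mD^b(\mM)$ are stable $\infty$-categories, i.e.\ they satisfy the three conditions of Definition \ref{def:stableinfty}. The plan is to verify the conditions directly for the $\infty$-category $\mD^+(\mM) = N_{\dg}\big(\mC^+(\mM)\big)$ obtained as the differential graded nerve of the dg-category $\mC^+(\mM)$ of bounded-below injective quasi-coherent complexes, and then deduce the bounded case by restriction. A cleaner route, which I would adopt, is to invoke the general machinery of \cite[§1.3.2]{LurieHA}: the dg-nerve of a pretriangulated dg-category is automatically a stable $\infty$-category. So the work reduces to checking that $\mC^+(\mM)$ is pretriangulated — it is closed under shifts and under forming mapping cones within the dg-category up to quasi-isomorphism — which in turn follows because $\mC^+(\mM)$ computes the (bounded-below) derived category of quasi-coherent sheaves on $\mM$ via injective resolutions, and the latter is a triangulated category whose shift and cone functors lift to the dg-enhancement.

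First I would record the zero object: the complex $0$ is both initial and final in $\mC^+(\mM)$, and this property is preserved under $N_{\dg}$, so condition (a) holds. Next, for condition (b), I would produce finite homotopy (co)limits. The key case is the cofiber of a morphism $f\colon I_0^\bullet \to I_1^\bullet$ of injective complexes: one forms the usual mapping cone $\Cone(f)$, which is again an object of $\mC^+(\mM)$ (injectivity and bounded-belowness are preserved), together with the natural maps $I_1^\bullet \to \Cone(f)$ and the nullhomotopy $I_0^\bullet \to \Cone(f)$ exhibiting the square
\begin{equation*}
\begin{tikzcd}
I_0^\bullet \arrow[d]\arrow[r,"f"] & I_1^\bullet \arrow[d]\\
0 \arrow[r] & \Cone(f)
\end{tikzcd}
\end{equation*}
as a pushout in $\mD^+(\mM)$. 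The 2-simplex datum of Example \ref{ex:simplices}.i) encoding this pushout is supplied by the canonical contracting homotopy on the cone of $I_0^\bullet \xrightarrow{\id} I_0^\bullet$. Dually, shifting backwards gives fibers; finite products and coproducts are given by termwise direct sums, which remain injective and bounded-below. Together these yield all finite homotopy limits and colimits.

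For condition (c) — that a homotopy commutative square is a pushout iff it is a pullback — I would use the standard fact that in the dg-nerve of a pretriangulated dg-category a square is a pushout precisely when the induced map from the cone of the top horizontal arrow to the cone of the bottom horizontal arrow is a quasi-isomorphism, and symmetrically for pullbacks via cocones; since cone and cocone differ by a shift, which is an equivalence, the two conditions coincide. The suspension functor $\Sigma$ is then $I^\bullet \mapsto I^\bullet[1]$, with inverse $\Omega = (-)[-1]$, both manifestly autoequivalences. Finally, for $\mD^b(\mM)$: the full subcategory on complexes with $H^k = 0$ for $k \gg 0$ is closed under shifts, cones, and finite sums (all of these operations only shift or combine the ranges of nonvanishing cohomology boundedly), contains $0$, and inherits the pushout $=$ pullback property, so it is a stable $\infty$-subcategory. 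The main obstacle is purely bookkeeping: writing down the explicit higher homotopies (the $f_I$ of Definition \ref{def:dgnerve}) that certify the relevant squares are genuine homotopy pushouts rather than merely homotopy commutative — this is routine but notation-heavy, and I would instead cite \cite[Proposition 1.3.2.10, Corollary 1.3.2.18]{LurieHA} to bypass it, exactly as the paper does in Proposition \ref{prop:mDstable}.
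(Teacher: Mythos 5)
Your proposal is correct and takes essentially the same route as the paper: both describe the same explicit data (acyclic complexes as zero objects, mapping-cone (co)fiber sequences using that $C^\bullet(\id)$ is null-homotopic, and the shift as suspension) and both defer the rigorous verification of the stability axioms to \cite[§1.3.2]{LurieHA}. The only cosmetic difference is that you phrase the reduction via pretriangulatedness of $\mC^+(\mM)$, while the paper reduces conditions b) and c) to the existence and coincidence of (co)fiber sequences via \cite[Proposition 1.1.3.4]{LurieHA}.
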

\begin{proof}
    As promised, I will only describe the data that will be important later on. In fact, the existence of (co)fiber sequences and their equivalence can replace ii) and iii) in Definition \ref{def:stableinfty} by \cite[Proposition 1.1.3.4]{LurieHT}.
    \begin{enumerate}[label=\alph*)]
        \item Zero objects are the acyclic complexes.
        \item Starting from the morphism $            E_1^\bullet\xrightarrow{f}E^\bullet_2$,
        one can construct its cofiber sequence as the diagram
        $$
        \begin{tikzcd}
        \arrow[d]E_1^\bullet\arrow[r,"f"]& E^\bullet_2\arrow[d] \\
           C^\bullet(\id_{E^\bullet_1})\arrow[r] &C^\bullet(f)
        \end{tikzcd}
        $$
        where $C^\bullet(-)$ denotes the cone of the corresponding morphism of complexes, and all arrows are the natural ones. We can also construct the fiber sequences of $f$ as  
          $$
        \begin{tikzcd}
       C^\bullet(f)[-1] \arrow[d]\arrow[r]& E_1^\bullet\arrow[d,"f"] \\
           C^\bullet(\id_{E^\bullet_2})[-1]\arrow[r] &E^\bullet_2
        \end{tikzcd}\,.
        $$
        In both cases, one uses that $C^\bullet(\id_{E^\bullet})$ is homotopy equivalent to $0$. 
        \item As a special case of the above fiber sequence, one has
                $$
        \begin{tikzcd}
        \arrow[d]E^\bullet\arrow[r,]& 0\arrow[d] \\
           C^\bullet(\id_{E^\bullet})\arrow[r] &E^\bullet[1]
        \end{tikzcd}
        $$
        which shows that the suspension functor is the usual shift functor.
    \end{enumerate}
    Because these constructions respect boundedness conditions on cohomologies of complexes, they apply to both $\mD^+(\mM)$ and $\mD^b(\mM)$.
\end{proof}
\subsection{Functoriality and uniqueness of cones}
\label{Asec:functoriality}
One benefit of working with stable $\infty$-categories is that cones and cocones can be constructed uniquely up to contractible choices and $\infty$-functorially. The uniqueness is a consequence of Lurie's \cite[Proposition 4.3.2.15]{LurieHT} that is stated for a class of $\infty$-functors between some $\infty$-categories $\mC$ and $\mD$.

The set of vertices of the simplicial set $\Fun(\mC,\mD)$ introduced in \cite[Notation 1.2.7.2]{LurieHT} is formed by the $\infty$-functors from $\mC$ to $\mD$, and it is itself an $\infty$-category. If $\mC$ is a small simplicial set and $\mD$ is stable, then $\Fun(\mC,\mD)$ is stable by \cite[Proposition 1.1.3.1]{LurieHA}. This is what eventually implies functoriality of cones, because we can express diagrams of objects and morphisms in $\mD$ as such functors, and natural transformations between these functors admit (co)cones that can be constructed object-wise by \cite[Proposition 7.1.7.2]{kerodon}.
\begin{example}
\label{ex:functomD}
    \begin{enumerate}[label=\roman*)]
        \item If $\mC =\Delta^0$ is just the zero-simplex, then $\Fun(\mC,\mD)=\mD$.
        \item The vertices of $\Fun(\Delta^1,\mD)$ are morphisms $X_1\to X_2$ in $\mD$. I will denote the full $\infty$-subcategory of equivalences $X_1\xrightarrow{\sim}X_2$ by $\Fun^{\sim}(\Delta^1,\mD)$.
        \item The 1-simplices of $\Fun(\Delta^1,\mD)$ are given by functors in $\Fun(\Delta^1\times \Delta^1,\mD)$ which are in turn represented by homotopy-commutative diagrams
        \begin{equation}
        \label{eq:IxI}
        \begin{tikzcd}
            \arrow[d,"g_1"]X_1\arrow[r,"f_1"]&X_2\arrow[d,"f_2"]\\
            Y_1\arrow[r,"g_2"]&Y_2
        \end{tikzcd}
        \end{equation}
        in $\mD$. 
        When $\mD$ is a stable $\infty$-category, I will write $\Fib(\mD)\subset \Fun(\Delta^1\times \Delta^1,\mD)$ for the full $\infty$-subcategory of (co)fiber sequences in $\mD$ with $Y_1\simeq 0$.
        \item I will work with the full $\infty$-subcategory $\Fun\big(\Delta^1,\Fib(\mD)\big)$ of $\Fun(\Delta^1\times\Delta^1\times\Delta^1,\mD)$ such that the last  pair $\Delta^1\times\Delta^1$ corresponds to (co)fiber sequences. In other words, the vertices of this simplex are diagrams
        \begin{equation}
        \label{eq:IxIxI}
        \begin{tikzcd}
X_1\arrow[d]\arrow[r]&X_2\arrow[d]\arrow[r]&\arrow[d]X_3\\
            Y_1\arrow[r]&Y_2\arrow[r]&Y_3
        \end{tikzcd}
        \end{equation}
       in $\mD$ where each horizontal sequence of arrows is a (co)fiber sequence. Note that this diagram is only 2-homotopy commutative by Example \ref{ex:simplices}.iv).
       \item The most important type of diagrams for this work are the \textit{$3\times 3$ diagrams}
       \begin{equation}
       \label{eq:IxIxIxI}
        \begin{tikzcd}
X_1\arrow[d]\arrow[r]&X_2\arrow[d]\arrow[r]&\arrow[d]X_3\\
            \arrow[d]Y_1\arrow[r]&\arrow[d]Y_2\arrow[r]&\arrow[d]Y_3\\
Z_1\arrow[r]&Z_2\arrow[r]&Z_3
        \end{tikzcd}\,.
        \end{equation}
        where each horizontal and vertical sequence is a (co)fiber sequence in the stable $\infty$-category $\mD$. The appropriate homotopies of the diagram are described by it being a full $\infty$-subcategory of $\Fun\big((\Delta^1\times\Delta^1)^{\times 2},\mD\big)$, which I denote by $\Diag^{3\times 3}(\mD)$. 
        \item Set $\mD^{\Delta^1} := \Fun(\Delta^1,\mD)$
         and  $\mD^{\Delta^1\times \Delta^1} := \Fun(\Delta^1\times \Delta^1,\mD)$. The above discussion of examples of diagrams also applies to these stable $\infty$-categories. 
    \end{enumerate}
   One can start from \eqref{eq:IxI} and take cones in $\Fun(\Delta^1,\mD)$ along the horizontal arrows to obtain \eqref{eq:IxIxI}. The resulting diagram will lie in $\Fun\big(\Delta^1,\Fib(\mD)\big)$, because (co)limits in the $\infty$-category of functors are constructed objects-wise by \cite[Proposition 7.1.7.2]{kerodon}. Taking cones in $\Fun(\Delta^1\times \Delta^1\times \Delta^1, \mD)$ along the vertical direction in the diagram \eqref{eq:IxIxI} produces \eqref{eq:IxIxIxI}. To make sure that the bottom row is also a (co)fiber sequence, one uses \cite[Chapter 7, Corollary 7.3.8.20]{kerodon}, which implies that taking homotopy (co)limits is independent of order. Therefore, $Z_1\to Z_2$ is the cone of \eqref{eq:IxI} along the vertical direction, and $Z_1\to Z_2\to Z_3$ is the associated cofiber sequence. 
\end{example}
Equipped with the above terminology, one can formulate the precise uniqueness and functoriality statements for cones. This also shows that the procedure for constructing diagrams \eqref{eq:IxIxIxI} out of \eqref{eq:IxI} only depends on choices that form contractible simplicial sets. Note that the proof is standard and well-known, but I explain it in one case, to make the statement itself easier to digest.
\begin{proposition}
\label{prop:contractiblesec}
    Let $\mD$ be a stable $\infty$-category, then the following maps of simplicial sets have a contractible, therefore non-empty, space of sections:
    \begin{enumerate}[label=\roman*)]
    \item the $\infty$-functor $\Fun^{\sim}(\Delta^1,\mD)\to \mD$ mapping each equivalence $X_1\xrightarrow{\sim}X_2$ to $X_1$,
        \item the $\infty$-functor $\Fib(\mD)\to \Fun(\Delta^1,\mD)$ mapping $X_1\to X_2\to X_3$ to $X_1\to X_2$,
           \item the $\infty$-functor $\Fib(\mD)\to \Fun(\Delta^1,\mD)$ mapping $X_1\to X_2\to X_3$ to $X_2\to X_3$,
        \item the $\infty$-functor $\Fun\big(\Delta^1, \Fib(\mD)\big)\to \Fun(\Delta^1\times \Delta^1,\mD)$ mapping \eqref{eq:IxIxI} to $\eqref{eq:IxI}$,
        \item the $\infty$-functor $\Diag^{3\times 3}(\mD)\to \Fun\big(\Delta^1, \Fib(\mD)\big)$ mapping \eqref{eq:IxIxIxI} to \eqref{eq:IxIxI} or any other choice of a $2\times 3$ sub-diagram. 
    \end{enumerate}
This applies also to $\mD^{\Delta^1}$ and $\mD^{\Delta^1\times \Delta^1}$ introduced in Example \ref{ex:functomD}.vi).
\end{proposition}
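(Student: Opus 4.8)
The plan is to prove all five statements by invoking Lurie's general result on relative limits and Kan fibrations, namely \cite[Proposition 4.3.2.15]{LurieHT}, which asserts that for a suitable class of $\infty$-functors ``evaluation at a full subdiagram'' is a trivial Kan fibration, hence has a contractible space of sections. Concretely, one recognizes each of the $\infty$-functors i)--v) as a restriction functor along an inclusion of simplicial sets $K_0 \hookrightarrow K$, composed with the identification of the target with a category of diagrams indexed by $K_0$ which happen to be left Kan extendable (or such that the relevant limit/colimit diagrams are exactly the ones being parametrized). Since $\mD$ is stable, the relevant (co)limits exist by Definition \ref{def:stableinfty}, so the hypotheses of \cite[Proposition 4.3.2.15]{LurieHT} are met. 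The conclusion is then that the restriction functor is a trivial fibration, and in particular its space of sections is contractible and non-empty.

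I would carry out the steps in the following order. First I would set up the general lemma: given a stable $\infty$-category $\mD$ and a ``good'' inclusion of finite simplicial sets $K_0 \hookrightarrow K$ (one for which every diagram $K_0 \to \mD$ admits a limit/colimit cone extending to $K$ that is detected pointwise), the evaluation $\infty$-functor $\Fun'(K,\mD) \to \Fun(K_0,\mD)$ onto the appropriate full subcategory is a trivial Kan fibration; this is exactly \cite[Proposition 4.3.2.15]{LurieHT} together with \cite[Corollary 7.3.8.20]{kerodon} for the independence-of-order statement already used in Example \ref{ex:functomD}. Then I would treat the five cases as instances: for i), $K = \Delta^1$, $K_0 = \{0\}$, and the subcategory is $\Fun^{\sim}(\Delta^1,\mD)$ of equivalences, for which extension means choosing an equivalence with prescribed source; for ii) and iii), $K = \Delta^1\times\Delta^1$ restricted to (co)fiber sequences, $K_0 = \Delta^1$, using that cofibers and fibers in a stable $\infty$-category are pointwise colimits/limits over the relevant shape; for iv), $K = \Delta^1\times\Delta^1\times\Delta^1$ restricted to $\Fun(\Delta^1,\Fib(\mD))$, $K_0 = \Delta^1\times\Delta^1$, taking cones fibrewise in $\Fun(\Delta^1,\mD)$, which is again stable by \cite[Proposition 1.1.3.1]{LurieHA}; for v), $K = (\Delta^1\times\Delta^1)^{\times 2}$ restricted to $\Diag^{3\times 3}(\mD)$, $K_0$ any $2\times 3$ subdiagram, using that a $3\times 3$ diagram with exact rows and columns is the pointwise cone along the third direction of an exact-rows diagram, and that the orthogonality of the two cone constructions makes the choice of which $2\times 3$ subdiagram one restricts to immaterial. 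Finally, since $\mD^{\Delta^1}$ and $\mD^{\Delta^1\times\Delta^1}$ are themselves stable by \cite[Proposition 1.1.3.1]{LurieHA}, every argument applies verbatim with $\mD$ replaced by these.

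The main obstacle I anticipate is purely bookkeeping rather than conceptual: one has to phrase each of the five diagram shapes so that it literally falls under the hypotheses of \cite[Proposition 4.3.2.15]{LurieHT}, which requires specifying, for each case, precisely which cone vertices are being freely generated and checking that the ``pointwise'' criterion for a limit/colimit diagram (e.g.\ a square being a pushout iff it is a pullback, which is exactly Definition \ref{def:stableinfty}c)) applies to the cofiber and fiber sequences in question. In case v) in particular one must be careful that the full subcategory $\Diag^{3\times 3}(\mD)$ is the one whose rows \emph{and} columns are exact; the content that the two iterated cone constructions commute is supplied by \cite[Corollary 7.3.8.20]{kerodon}, already cited in Example \ref{ex:functomD}, so no new work is needed there. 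I will write out case ii) in detail as a representative example and indicate that the remaining cases are identical in structure.

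\begin{proof}
All five assertions are instances of the following general fact: if $\mD$ is a stable $\infty$-category and $i\colon K_0\hookrightarrow K$ is an inclusion of finite simplicial sets such that a diagram $K\to \mD$ lies in the prescribed full subcategory precisely when certain subdiagrams are limit or colimit cones of a shape for which $\mD$ admits (co)limits, then the restriction functor along $i$, viewed as a functor onto $\Fun(K_0,\mD)$ (or the appropriate full subcategory thereof), is a trivial Kan fibration by \cite[Proposition 4.3.2.15]{LurieHT}. A trivial Kan fibration has a contractible, hence non-empty, space of sections. We verify the hypotheses case by case, spelling out the representative case ii) and leaving the remaining cases, which have the same structure, to the reader.

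For ii), take $K = \Delta^1\times\Delta^1$ and let $K_0\subset K$ be the edge $\Delta^1\times\{0\}$ together with the vertex $\{1\}\times\{1\}$; a functor $K\to \mD$ lies in $\Fib(\mD)$ exactly when the square is a pushout and the lower-left vertex is a zero object. Since $\mD$ is stable, it admits pushouts and has a zero object by Definition \ref{def:stableinfty}, and the condition of being a pushout square with a zero vertex is detected on the subdiagram $K_0$ in the sense required by \cite[Proposition 4.3.2.15]{LurieHT} (a pushout square in a stable $\infty$-category is equivalently a pullback square by Definition \ref{def:stableinfty}c), so the extension is characterized by a universal property over $K_0$). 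Hence the restriction $\Fib(\mD)\to \Fun(\Delta^1,\mD)$, $\bigl(X_1\to X_2\to X_3\bigr)\mapsto \bigl(X_1\to X_2\bigr)$, is a trivial Kan fibration, proving ii). Case iii) is identical with $K_0$ the other edge, using that fiber sequences coincide with cofiber sequences in $\mD$ by Proposition \ref{prop:mDstable}; case i) is the same argument with $K=\Delta^1$, $K_0=\{0\}$, and the full subcategory of equivalences $\Fun^{\sim}(\Delta^1,\mD)$, extension meaning the contractible choice of an equivalence out of a given object.

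For iv), take $K = \Delta^1\times\Delta^1\times\Delta^1$ with the last two coordinates carrying the (co)fiber structure, and $K_0 = \Delta^1\times\Delta^1$ the first square. The fibre category $\Fun(\Delta^1,\mD)$ is stable by \cite[Proposition 1.1.3.1]{LurieHA}, and a diagram \eqref{eq:IxIxI} lies in $\Fun\big(\Delta^1,\Fib(\mD)\big)$ exactly when the two horizontal rows are cofiber sequences, which by \cite[Proposition 7.1.7.2]{kerodon} is a pointwise condition. Applying \cite[Proposition 4.3.2.15]{LurieHT} to the inclusion $K_0\hookrightarrow K$ gives that the restriction $\Fun\big(\Delta^1,\Fib(\mD)\big)\to \Fun(\Delta^1\times\Delta^1,\mD)$, $\eqref{eq:IxIxI}\mapsto \eqref{eq:IxI}$, is a trivial Kan fibration. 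For v), take $K = (\Delta^1\times\Delta^1)^{\times 2}$ and $K_0$ any chosen $2\times 3$ subdiagram of the $3\times3$ shape; the full subcategory $\Diag^{3\times3}(\mD)$ is cut out by the condition that all rows and columns are cofiber sequences, and by \cite[Corollary 7.3.8.20]{kerodon} the iterated cone constructions along the two directions commute, so extension from any $2\times3$ subdiagram to the full $3\times3$ diagram is characterized pointwise. Thus \cite[Proposition 4.3.2.15]{LurieHT} again yields a trivial Kan fibration, proving v). Finally, since $\mD^{\Delta^1}$ and $\mD^{\Delta^1\times\Delta^1}$ are stable $\infty$-categories by \cite[Proposition 1.1.3.1]{LurieHA}, every argument above applies verbatim with $\mD$ replaced by $\mD^{\Delta^1}$ or $\mD^{\Delta^1\times\Delta^1}$.
\end{proof}
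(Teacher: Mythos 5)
Your overall strategy is exactly the paper's: reduce everything to \cite[Proposition 4.3.2.15]{LurieHT} and use that a trivial Kan fibration has a contractible, non-empty space of sections. However, your representative case ii) does not go through as written. You take $K_0$ to be the edge $\Delta^1\times\{0\}$ together with the vertex $\{1\}\times\{1\}$ and claim the membership condition for $\Fib(\mD)$ is ``detected on $K_0$'' in the sense of Lurie's proposition. Two problems: first, that vertex is the cofiber vertex, so restriction to your $K_0$ does not even agree with the functor in the statement, which remembers only $X_1\to X_2$; second, and more importantly, the condition that the lower-left vertex be a zero object is \emph{not} a Kan-extension condition relative to any subdiagram containing the top edge -- the left Kan extension of $X_1\to X_2$ along $\Delta^1\times\{0\}\hookrightarrow \Delta^1\times\Delta^1$ places (a copy of) $X_1$, not $0$, at the lower-left vertex, and including that vertex in $K_0$ without the connecting edge changes the comma categories so that the extension computes a coproduct rather than the pushout. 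So a one-step application of \cite[Proposition 4.3.2.15]{LurieHT} fails here.

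The missing step is the two-stage argument the paper uses: apply \cite[Proposition 4.3.2.15]{LurieHT} with $\mC^0=\Delta^1\times\{0\}\cup\{0\}\times\Delta^1$ (the span), for which ``being a pushout square'' \emph{is} precisely the left Kan extension condition, obtaining a trivial Kan fibration from pushout squares onto $\Fun(\mC^0,\mD)$; then pull this fibration back along the embedding of the full $\infty$-subcategory of span diagrams whose lower-left vertex is $0$, using that trivial Kan fibrations are stable under pullback. With that correction, case ii) (and symmetrically iii)) is fine, your treatment of case i) via Kan extension along $\{0\}\hookrightarrow\Delta^1$ is correct, and cases iv) and v) follow by the same pattern applied in the stable functor categories, as you indicate.
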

\begin{proof}
    I will use Lurie's terminology from \cite[Example 2.0.0.1, 2.0.0.2]{LurieHT} which defines morphisms between simplicial sets called \textit{trivial Kan fibrations}. If $\mC\to \mD$ is a trivial Kan fibration with non-empty fibers, then its space of sections is contractible. The rest is just interpreting {\cite[Proposition 4.3.2.15]{LurieHT}} correctly in the above settings. I will only discuss ii) here because the other cases follow analogously.

    Using the notation of \cite[Proposition 4.3.2.15]{LurieHT}, set 
    $$
    \mC = \Delta^1\times \Delta^1\,,\quad\mC^0 = \Delta^1\times\{0\}\cup \{0\}\times \Delta^1\subset \mC\,,\quad  \mD' = \{0\}\,.
    $$
Then for a fixed $\mD$, the functors $\Fun(\mC,\mD)$ are the diagrams \eqref{eq:IxI}, while $\Fun(\mC^0,\mD)$ are the diagrams
\begin{equation}
\label{eq:I+I}
\begin{tikzcd}
    \arrow[d]X_1\arrow[r]&X_2\\
    Y_1&
\end{tikzcd}\,.
\end{equation}
The assumptions of the cited proposition relate \eqref{eq:IxI} to \eqref{eq:I+I} as its homotopy pushout. Consider the restriction map $\Fun(\mC,\mD)\to \Fun(\mC^0,\mD)$, then \cite[Proposition 4.3.2.15]{LurieHT} states that its restriction to the full $\infty$-subcategory consisting of homotopy pushout diagrams is a trivial Kan fibration. 

 Consider the embedding $\textnormal{emb}:\Fun(\mC^0,\mD)_0\hookrightarrow \Fun(\mC^0,\mD)$ of the full $\infty$-subcategory consisting of diagrams \eqref{eq:I+I} with $Y_1=0$. The homotopy pull-back of a trivial Kan fibration along emb is again a trivial Kan fibration, so the statement follows.
\end{proof}

\begin{remark}
\label{rem:choosingresols}
    Fix an isomorphism class of an object $[X]$ in $\Ho(\mD)$, then one may wonder whether a construction using a lift of $X$ to $\mD$ depends on the choice made. In the case of $\mD=\mD^{\#}(\mM)$, this is equivalent to finding an injective resolution. Fixing one lift $X$, all other lifts $Y$ admit an equivalence $X\xrightarrow{\sim} Y$ in $\mD$.  Such objects $Y$ correspond to the vertices of a simplicial set which is the fiber over $X$ of the $\infty$-functor $\Fun^{\sim}(\Delta^1,\mD)\to \mD$ from Proposition \ref{prop:contractiblesec}.i). Therefore, it is a contractible choice. 
\end{remark}
Lastly, I will sketch why the homotopy category $\Ho(\mD)$ of a stable $\infty$-category is a triangulated category. Coming back to the original examples $\mD^{\#}(\mM)$, this will recover the equivalence of triangulated categories
\begin{equation}
\label{eq:mDtoD}
\Ho\big(\mD^{\#}(\mM)\big) \simeq D^{\#}(\mM)\qquad\textnormal{for}\quad \# = +,b\,.
\end{equation} 
\begin{proposition}[{\cite[Theorem 1.1.2.14]{LurieHT}}]
\label{prop:stabletotriang}
    Let $\mD$ be a stable $\infty$-category and $\Ho(\mD)$ its homotopy category. Then $\Ho(\mD)$ is a triangulated category with
    \begin{enumerate}
        \item the additive structure determined uniquely by $\mD$, such that $\Ho(\mD)$ inherits the zero object, 
        \item the suspension functor induced by the suspension $\infty$-functor of $\mD$,
        \item the class of distinguished triangles determined by sequences of {\color{red}red arrows} in diagrams of the form
        $$
        \begin{tikzcd}
\arrow[d]X_1\arrow[r,red]&\arrow[d,red]X_2\arrow[r]&0\arrow[d]\\
   0\arrow[r]&  X_3\arrow[r,red]&\Sigma(X_1)       
        \end{tikzcd}
        $$
        where each rectangle, including the exterior one, is a (co)fiber sequence.
    \end{enumerate}
  
\end{proposition}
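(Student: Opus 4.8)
The final statement (Proposition~\ref{prop:stabletotriang}) is attributed in the excerpt to \cite[Theorem 1.1.2.14]{LurieHT}, so the plan is to recall and adapt Lurie's proof rather than to reprove everything from scratch. The overall strategy is to build each of the three pieces of triangulated structure directly from the stable $\infty$-categorical structure of $\mD$ already established, then verify Verdier's axioms (TR1)--(TR4) hold on $\Ho(\mD)$.

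\textbf{Step 1: the additive structure.} First I would observe that a stable $\infty$-category is in particular pointed and admits finite products and coproducts, and that in the presence of a zero object the canonical map from the coproduct to the product $X \sqcup Y \to X \times Y$ is an equivalence. This forces $\Ho(\mD)$ to be a category with finite biproducts, hence (by the standard argument producing commutative-monoid enrichment from biproducts, together with the existence of the loop/suspension autoequivalences $\Omega, \Sigma$ which supply inverses) an additive category. One must check this additive structure is uniquely determined; here I would cite that hom-sets $\pi_0\Map_{\mD}(X,Y)$ are automatically abelian groups because $\Map_{\mD}(X,Y) \simeq \Omega\Map_{\mD}(X, \Sigma Y)$ is a loop space, in fact a double loop space, so $\pi_0$ is abelian and the addition is intrinsic.

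\textbf{Step 2: the suspension and the distinguished triangles.} The suspension $\infty$-functor $\Sigma : \mD \to \mD$ (cocone of $X \to 0$, as recalled after Definition~\ref{def:stableinfty}) descends to an autoequivalence of $\Ho(\mD)$ with inverse induced by $\Omega$; this is immediate since equivalences in $\mD$ become isomorphisms in $\Ho(\mD)$. For the triangles, I would declare a triangle $X_1 \to X_2 \to X_3 \to \Sigma X_1$ distinguished exactly when it is isomorphic in $\Ho(\mD)$ to one arising from a diagram
$$
\begin{tikzcd}
X_1\arrow[d]\arrow[r]&X_2\arrow[d]\arrow[r]&0\arrow[d]\\
0\arrow[r]&X_3\arrow[r]&\Sigma X_1
\end{tikzcd}
$$
in which both small squares, and hence the outer rectangle, are (co)fiber sequences (this is the ``pasting of pushouts'' lemma, which holds in any $\infty$-category with the relevant colimits). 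The key input is that in a stable $\infty$-category every morphism $f\colon X_1 \to X_2$ fits into such a diagram, with $X_3 = \cofib(f)$, which is exactly the content recalled in the excerpt; this gives (TR1).

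\textbf{Step 3: verifying Verdier's axioms.} (TR2) (rotation) follows from the observation that in the diagram above one can paste a further pushout square on the right to identify the cofiber of $X_2 \to X_3$ with $\Sigma X_1$, and the induced map with $-\Sigma f$ (the sign is the one subtle bookkeeping point, traced through the identification of $\cofib(0 \to X_3)$ with $X_3$ versus with $\Sigma\Omega X_3$). (TR3) (the morphism axiom) is where functoriality of cones does the real work: given the left square of a morphism of triangles, Example~\ref{ex:functomD} and Proposition~\ref{prop:contractiblesec} produce a $3\times 3$-type diagram lifting it, whose bottom-right entry supplies the required (not necessarily unique) third map. Finally (TR4) (the octahedral axiom) is proved by taking iterated cofibers: from $X \to Y \to Z$ one forms the $3\times 3$ diagram of Example~\ref{ex:functomD}.v) whose rows and columns are cofiber sequences, and reads off the octahedron from its entries, using again that cofibers in a stable $\infty$-category compute consistently regardless of the order of the colimits (the Kerodon result cited in Example~\ref{ex:functomD}). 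I expect \textbf{(TR4)} to be the main obstacle to write carefully: even though conceptually it is just ``take the cofiber of a map of cofiber sequences and read off the four triangles,'' matching the resulting data precisely to the classical octahedral diagram, with all connecting maps and signs correct, requires a genuinely careful unwinding of the $3\times 3$ diagram. Once all four axioms are checked, specializing to $\mD = \mD^{\#}(\mM)$ and recalling from the proof of Proposition~\ref{prop:mDstable} that the zero object is an acyclic complex, the suspension is the shift, and cofibers are mapping cones recovers the standard triangulated structure on $D^{\#}(\mM)$, giving the equivalence \eqref{eq:mDtoD}.
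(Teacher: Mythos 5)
Your proposal is correct, and on the only axiom the paper actually writes out — the octahedral axiom — it takes the same route: build the grid of homotopy pushout squares from $X_1\to X_2\to X_3$ (the $3\times 3$-type diagram of Example \ref{ex:functomD}, using that cofibers can be taken in either order) and read off the octahedron in $\Ho(\mD)$. The remaining items you sketch (biproducts and the loop-space argument for additivity, descent of $\Sigma$, TR1–TR3 via functoriality of cones as in Proposition \ref{prop:contractiblesec}) are exactly what the paper delegates to \cite[Theorem 1.1.2.14]{LurieHT}, so your write-up is simply a more complete version of the same argument.
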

\begin{proof}
I will only discuss the octahedral axiom here, to get the reader accustomed to working with stable $\infty$-categories. Starting from the morphisms $X_1\to X_2\to X_3$ in $\mD$, it is a good exercise in using the functorial properties of cones discussed in Example \ref{ex:functomD} to construct the homotopy commutative diagram 
    $$
    \begin{tikzcd}
        \arrow[d]X_1\arrow[r]&\arrow[d]X_2\arrow[r]&\arrow[d]X_3\arrow[r]&\arrow[d]0&\\
        0\arrow[r]&\arrow[d]X_2/X_1\arrow[r]&\arrow[d]X_3/X_1\arrow[r]&\arrow[d]\Sigma(X_1)\arrow[r]&\arrow[d]0\\
        &0\arrow[r]&X_3/X_2\arrow[r]&\Sigma(X_2)\arrow[r]&\Sigma(X_2/X_1)
    \end{tikzcd}
    $$
    such that each rectangle is a homotopy pushout. Using the second point of the proposition, one recovers the octahedral axiom in $\Ho(\mD)$. It states that starting from the commutative diagram
$$
\begin{tikzcd}[column sep={5.5em,between origins},
  row sep={4.2em,between origins}, ampersand replacement=\&]
    \eqmathbox{X_1}\arrow[dr]\arrow[rr, bend left = 35]\&\& \eqmathbox{X_3}\\
\&\eqmathbox{X_2}\arrow[ur]\&
\end{tikzcd}\,,
    $$
  in $\Ho(\mD)$, one can construct
$$
\begin{tikzcd}[column sep={5.5em,between origins},
  row sep={4.2em,between origins}, ampersand replacement=\&]
    \eqmathbox{X_1}\arrow[dr]\arrow[rr, bend left = 38]\&\& \eqmathbox{X_3}\arrow[dr] \arrow[rr, bend left =38, ]\&\&\eqmathbox{X_3/X_2}\arrow[dr]\arrow[rr,bend left = 38]\&\&\eqmathbox{(X_2/X_1)[1]}\\
\&\eqmathbox{X_2}\arrow[ur]\arrow[dr]\&\&\eqmathbox{X_3/X_1}\arrow[ur]\arrow[dr]\&\&\eqmathbox{X_2[1]}\arrow[ur]\&\\
\&\&\eqmathbox{X_2/X_1}\arrow[rr, bend right = 35]\arrow[ur, dashed]\&\&\eqmathbox{X_1[1]}\arrow[ur]\&\&
\end{tikzcd}\,.
$$
Here, each curved line consists of distinguished triangles.
\end{proof}
When $\mD =\mD^{\#}(\mM)$, he morphisms satisfy 
$$
\Hom_{D^{\#}(\mM)}\big(I^\bullet_1,I^\bullet_2\big) \cong H^0\big(\Hom^\bullet(I^\bullet_1,I^\bullet_2)\big)
$$
in a natural way. Using the description of the shift functor and the class of distinguished triangles in Proposition \ref{prop:stabletotriang} combined with the data in the proof of Proposition \ref{prop:mDstable}, this reproduces one of the possible constructions of $D^{\#}(\mM)$ as a triangulated category. Here I used that $\mM$ has enough injectives in its category of quasi-coherent sheaves -- see \cite[Proposition 96.15.2]{stacks-project}. Therefore, the proof of \eqref{eq:mDtoD} follows.
\begin{remark}
 Starting from a commutative diagram 
 \begin{equation}
 \label{eq:DIxI}
 \begin{tikzcd}
     \arrow[d]X_1\arrow[r]&X_2\arrow[d]\\
     Y_1\arrow[r]&Y_2
 \end{tikzcd}
 \end{equation}
 in $D^{\#}(\mM)$. one can extend it to 
  $$
 \begin{tikzcd}
     \arrow[d]X_1\arrow[r]&X_2\arrow[d]\arrow[r]&\arrow[d, red]X_2/X_1\arrow[r]&\arrow[d]X_1[1]\\
     Y_1\arrow[r]&Y_2\arrow[r]&Y_2/Y_1\arrow[r]&Y_1[1]
 \end{tikzcd}
 $$
 where the {\color{red}red  arrow} may not be unique. If one wants to do a computations in $D^{\#}(\mM)$ that requires one to show that two choices of this morphism are the same, one may instead try to find a lift of \eqref{eq:DIxI} to $\mD^{\#}(\mM)$ which produces by Proposition \ref{prop:contractiblesec} a unique up to contractible choices diagram \eqref{eq:IxIxI}. Completing the computation in $\mD^{\#}(\mM)$, one can recover the required statement in $D^{\#}(\mM)$ by \eqref{eq:mDtoD}.
\end{remark}
\subsection{Derived functors}
Exact functors of stable $\infty$-categories are refinements of the exact functors of triangulated categories as can be seen from Proposition \ref{prop:stabletotriang} and the following definition.
\begin{definition}
An $\infty$-functor $F:\mD_1\to \mD_2$ between stable $\infty$-categories is \textit{exact} if it preserves finite homotopy limits and finite homotopy colimits. By \cite[Proposition 1.1.4.1]{LurieHA}, this is equivalent to $F$ preserving either of them or even just the fiber sequences or cofiber sequences. 
\end{definition}
The notion of left and right exactness of functors between abelian categories has its analog in stable $\infty$-categories. It depends on the choice of a t-structure, which is defined in terms of the triangulated homotopy category.
\begin{definition}
\sloppy    A pair $(\mD^{\leq 0},\mD^{\geq 0})$ of full $\infty$-subcategories of $\mD$ is a \textit{t-structure} if $\big(\Ho(\mD^{\leq 0}),\Ho(\mD^{\geq 0})\big)$ is a t-structure of $\Ho(\mD)$ as a triangulated category. Setting $\mD^{\leq n} = \Omega^{n}(\mD^{\leq 0})$, one says that the t-structure is right complete if $\mD$ can be expressed as the homotopy limit of the infinite sequence of morphisms
$$
\cdots\longrightarrow\mD^{\geq n}\longrightarrow\mD^{\geq n+1}\longrightarrow\mD^{\geq n+2}\longrightarrow\cdots \,.
$$

    Choose t-structures $(\mD_i^{\leq 0},\mD_i^{\geq 0})$ of stable $\infty$-categories $\mD_i$ for $i=1,2$. An exact functor $F:\mD_1\to \mD_2$ is said to be 
    \begin{enumerate}[label=\roman*)]
        \item \textit{left t-exact} with respect to the chosen t-structures if $F(\mD^{\geq 0}_1)\subset \mD^{\geq 0}_2$,
        \item \textit{right t-exact} with respect to the chosen t-structures if $F(\mD^{\leq 0}_1)\subset \mD^{\leq 0}_2$.
    \end{enumerate}
\end{definition}
The heart $\mD^{\heartsuit} = \mD^{\leq 0}\cap \mD^{\geq 0}$ of the t-structure $(\mD^{\leq 0},\mD^{\geq 0})$ in $\mD$ is an abelian category as it coincides with the heart of the associated t-structure on $\Ho(\mD)$ by \cite[Remark 1.2.1.12]{LurieHA}.  Left and right t-exact $\infty$-functors can be obtained, respectively, from left and right exact functors between hearts of t-structures.

Consider again $\mD^{\#}(\mM)$. There is the natural t-structure $\big(\mD^{\#,\leq 0}(\mM),\mD^{\#,\geq 0}(\mM)\big)$ defined as follows:
\begin{enumerate}
    \item $\mD^{\#,\leq 0}(\mM)$ consists of complexes $I^\bullet$ in $\mD^{\#}(\mM)$ satisfying $H^i(I^\bullet)=0$ for $i>0$,
    \item $\mD^{\#,\geq 0}(\mM)$ consists of complexes $I^\bullet$ in $\mD^{\#}(\mM)$ satisfying $H^i(I^\bullet)=0$ for $i<0$.
\end{enumerate}
Its heart is precisely the category of quasi-coherent sheaves $\QCoh(X)$. Fixing a t-structure $(\mD^{\leq 0},\mD^{\geq 0})$ of another stable $\infty$-category $\mD$, one has the following result.
\begin{proposition}[{\cite[Theorem 1.3.3.2]{LurieHA}}]
\label{prop:exactfun}
    Consider the full $\infty$-subcategory $$\mE\subset \Fun\big(\mD^+(\mM), \mD\big)$$ consisting of left t-exact functors that map injective sheaves in $\QCoh(X)$ to objects in $\mD^{\heartsuit}$ and the category $\mE^{\heartsuit}$ of left exact functors between the abelian categories $\QCoh(X)$ and $\mD^{\heartsuit}$. If $(\mD^{\leq 0},\mD^{\geq 0})$ is right complete, then the map
    $$
    F\mapsto \tau^{\leq 0}\circ F|_{\QCoh(X)}\,,
    $$
    where $\tau^{\leq 0}: \Ho(\mD)\mapsto \Ho\big(\mD^{\leq 0}\big)$ is the standard truncation functor, induces an equivalence $\xi:\mE\xrightarrow{\sim} \mE^{\heartsuit}$. 
\end{proposition}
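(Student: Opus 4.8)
The plan is to deduce the statement from the universal property of derived $\infty$-categories established in \cite[Theorem 1.3.3.2]{LurieHA}. The first step is to identify $\mD^+(\mM) = N_{\dg}(\mC^+(\mM))$ with Lurie's bounded-below derived $\infty$-category $\mathcal{D}^+(\QCoh(X))$. This rests on two inputs: that $\QCoh(X)$ is a Grothendieck abelian category with enough injective objects, by \cite[Proposition 96.15.2]{stacks-project}, so that $\mathcal{D}^+(\QCoh(X))$ is defined; and that the dg-nerve of the dg-category of bounded-below complexes of injectives is a model for it, which is the content of the comparison results in Section 1.3.2 and Section 1.3.5 of \cite{LurieHA}. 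Under this identification the natural t-structure on $\mD^+(\mM)$ recorded after Proposition \ref{prop:stabletotriang} goes over to the standard one, with heart canonically $\QCoh(X)$, and the inclusion of injective sheaves into $\mD^+(\mM)$ becomes the usual one.

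Next I would transcribe \cite[Theorem 1.3.3.2]{LurieHA} into the present setting: for an abelian category $\mathcal{A}$ with enough injectives and any stable $\infty$-category $\mD$ carrying a right complete t-structure, the assignment $F \mapsto \tau^{\leq 0}\circ F|_{\mathcal{A}}$ is an equivalence from the full $\infty$-subcategory of $\Fun(\mathcal{D}^+(\mathcal{A}),\mD)$ spanned by the left t-exact functors carrying injective objects of $\mathcal{A}$ into $\mD^{\heartsuit}$ onto the nerve of the ordinary category of left exact functors $\mathcal{A}\to \mD^{\heartsuit}$. Specializing $\mathcal{A} = \QCoh(X)$ and invoking the identification of the first step turns this into exactly the claimed equivalence $\xi\colon \mE \xrightarrow{\sim} \mE^{\heartsuit}$: right completeness of $(\mD^{\leq 0},\mD^{\geq 0})$ is imposed precisely so Lurie's theorem applies, and the membership condition defining $\mE$ matches "sends injective sheaves to $\mD^{\heartsuit}$" verbatim.

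To keep the argument in the concrete style of the appendix I would also exhibit the quasi-inverse directly. Given a left exact $f\colon \QCoh(X)\to \mD^{\heartsuit}$, one defines $\mathbf{R}f\colon \mD^+(\mM)\to \mD$ by choosing for each object a quasi-isomorphic bounded-below complex of injectives — a contractible choice by Remark \ref{rem:choosingresols} — applying $f$ termwise, and realizing the resulting bounded-below complex in $\mD^{\heartsuit}$ as an object of $\mD$; $\infty$-functoriality of this assignment is exactly what Proposition \ref{prop:contractiblesec} provides. One then checks that $\mathbf{R}f$ is left t-exact (injective resolutions compute cohomology and left exactness kills negative cohomology), that it sends an injective $I$ to $f(I)\in\mD^{\heartsuit}$, and that $\tau^{\leq 0}\mathbf{R}f|_{\QCoh(X)}\simeq f$. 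For the other composite one produces, for $F\in\mE$, the canonical comparison $\mathbf{R}(\tau^{\leq 0}F|_{\QCoh(X)})\to F$ and shows it is an equivalence: it is so on injectives by hypothesis, both functors are exact, and every object of $\mD^+(\mM)$ is assembled from its injective-resolution complex via finite (co)limits together with the canonical tower $\tau^{\geq -n}$, so right completeness of the target lets one pass to the limit.

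The genuinely delicate part is this last reconstruction step — that a left t-exact functor carrying injectives to the heart is recovered from its restriction to $\QCoh(X)$. Concretely it amounts to showing that $F$ sends the tower $X \to \cdots \to \tau^{\geq -n}X \to \tau^{\geq -(n-1)}X\to\cdots$ to a diagram with limit $F(X)$, which is where right completeness of $(\mD^{\leq 0},\mD^{\geq 0})$ enters essentially, and it is also the reason the bounded-below setting (injective rather than projective resolutions) is the natural one. Since all of this is carried out in full in Section 1.3.3 of \cite{LurieHA}, in the paper itself I would simply record the identification of $\mD^+(\mM)$ with $\mathcal{D}^+(\QCoh(X))$, note that $\QCoh(X)$ has enough injectives, and invoke \cite[Theorem 1.3.3.2]{LurieHA}.
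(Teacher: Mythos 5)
Your proposal is correct and follows essentially the same route as the paper: both defer the full argument to \cite[Theorem 1.3.3.2]{LurieHA} and sketch the quasi-inverse, building $\xi^{-1}(G)$ on bounded-below complexes of injectives by induction over truncations via the fiber sequences the exact functor must preserve, and passing to homotopy limits for the unbounded-above part, which is exactly where right completeness of $(\mD^{\leq 0},\mD^{\geq 0})$ is used. Your extra care in identifying $\mD^+(\mM)=N_{\dg}(\mC^+(\mM))$ with Lurie's derived $\infty$-category of $\QCoh$ (using enough injectives) is a reasonable elaboration of what the paper leaves implicit, not a different argument.
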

\begin{proof}
   While the full proof in \cite[§1.3.3]{LurieHA} is lengthy, the main idea is simple. One needs to construct an inverse of $\xi$ which should map any left exact functor $G: \QCoh(X)\to \mD^{\heartsuit}$ to a functor acting on complexes 
$$
\cdots\longrightarrow 0\longrightarrow I^{-n}\longrightarrow I^{-n+1}\longrightarrow I^{-n+2}\longrightarrow  \cdots
$$
where $I^k$ are all injective. The value of this functor can be defined inductively. Consider the truncated complex
$$
I^\bullet_m = (\cdots\longrightarrow 0\longrightarrow  I^{-n}\longrightarrow I^{-n+1}\longrightarrow \cdots \longrightarrow I^{m-1}\longrightarrow I^{m} \longrightarrow 0\longrightarrow \cdots)  
$$
for $m> -n$. Then there is a fiber sequence $I^\bullet_{m}\to I^\bullet_{m-1}\to I^{m}[-m+1]$ which needs to be preserved by $\xi^{-1}(G)$ because this functor should be exact. By induction, one can reconstruct $\xi^{-1}(G)(I^\bullet_{m})$ from the fiber sequence 
$$
\begin{tikzcd}
\xi^{-1}(G)(I^\bullet_{m})\arrow[r]& \xi^{-1}(G)(I^\bullet_{m-1})\arrow[r]& \xi^{-1}(G)(I^{m})[-m]\,.
\end{tikzcd}
$$
To include $I^\bullet$ that are not bounded from above, one takes homotopy limits along the natural maps $\cdots \to \xi^{-1}(G)(I^\bullet_{m+2})\to \xi^{-1}(G)(I^\bullet_{m+1})\to \xi^{-1}(G)(I^\bullet_m)\to \cdots$ induced by the projections. 
\end{proof}
Suppose that $f: \mN\to \mM$ is a quasi-compact quasi-separated morphism of Artin stacks, then $f_*: \QCoh(\mM)\to \QCoh(\mM)$ is a left-exact functor of abelian categories. As such, it induces the left t-exact $\infty$-functor 
$$
\begin{tikzcd}
Rf_*: \mD^{+}(\mN)\longrightarrow \mD^+(\mM)
\end{tikzcd}
$$
by applying Proposition \ref{prop:exactfun}. 

I will be working with two more $\infty$-functors between stable derived categories. Both of them are constructed separately here because there are not enough projectives in $\QCoh(\mM)$ in general. 
\begin{example}
\label{ex:inftyfunc}
    \begin{enumerate}[label=\roman*)]
    \item \textit{(Derived $\infty$-pullback)}
    
    
In \cite[Chapter I.3]{GR1}, the authors constructed $\mD^{\#}(\mM)$ in an alternative way and related it to the present definition in \cite[Proposition 2.4.3 of Chapter I.3]{Gait}. Their construction also produces the functor $Lf^*$ immediately from the definition. Since $\Ho(Lf^*)$ is left adjoint to $\Ho(Rf_*)$ by \cite[Proposition 5.2.2.9]{LurieHT}, and the latter is the correct derived pushforward $D^{\#}(\mM)\to D^{\#}(\mN)$, one can see that $\Ho(Lf^*)$ is also the usual derived pullback.
        \item \textit{(Derived $\infty$-dual)} For an Artin stack $\mM$, I will need the $\infty$-categorical version of the derived dual $(-)^\vee:D^b(\mM)\to D^b(\mM)$. For this, fix an injective resolution $O^\bullet$ of $\mO_{\mM}$ which is a contractible choice in $\mD^b(\mM)$ by Remark \ref{rem:choosingresols}. For any $I^\bullet$ in $\mD^{b}(\mM)$, set $\mHom^\bullet_{\mM}(I^\bullet,\mO_{\mM})$ to be the total complex of the double complex $\mHom_{\mM}(I^\bullet,O^\bullet)$. This defines a dg-functor $\mC^b(\mM)\to \mC(\mM)^b$ where the latter is the dg-category of all quasi-coherent complexes on $\mM$ with bounded cohomology. Setting $\mD(\mM)^b = N_{\dg}\big(\mC(\mM)^b\big)$ in terms of the dg-nerve recalled in Definition \ref{def:dgnerve}, the above induces an $\infty$-functor $\mHom^\bullet_{\mM}(-,\mO_{\mM}):\mD^b(\mM)\to \mD(\mM)^b$. There is, moreover, the functor $\mD(\mM)^b\to \mD^b(\mM)$ corresponding to taking injective resolutions \footnote{The existence and uniqueness can be shown similarly to Proposition \ref{prop:exactfun}.}. The derived $\infty$-dual 
        \begin{equation}
        \label{eq:inftyvee}
        (-)^\vee: \mD^b(\mM)\to \mD^b(\mM)
        \end{equation}
        is obtained by composing with this functor. By recalling the description of (co)fiber sequences from the proof of Proposition \ref{prop:mDstable}, one sees that $\mHom^{\bullet}_{\mM}(-,\mO_{\mM})$ is coexact. Thus, I have constructed a coexact $\infty$-functor \eqref{eq:inftyvee} that induces the classical derived dual on $D^b(\mM)$.
    \end{enumerate}
\end{example}
Having set up the necessary language for using stable $\infty$-categories, I will lift problems from $D^{\#}(\mM)$ to $\mD^{\#}(\mM)$ where one can often solve them. The following explains this procedure more rigorously.
\begin{definition}
\label{def:lifting}
Given a diagram of morphisms and distinguished triangles in $D^{\#}(\mM)$, I will say that a diagram in $\mD^b(\mM)$ is the \textit{lift} of the original diagram if
\begin{enumerate}
    \item any commutative part of the diagram in $D^{\#}(\mM)$ is replaced by a homotopy commutative one in $\mD^{\#}(\mM)$ (including higher homotopies),
    \item any distinguished triangle is replaced by a (co)fiber sequence,
    \item quasi-isomorphisms are replaced by homotopy equivalences,
    \item derived duals of complexes and morphisms are replaced by derived $\infty$-duals,
    \item shifts of complexes are replaced by repeated applications of loop or suspension functors on the lifts of complexes,
    \item projecting back to $D^{\#}(\mM)$ recovers the original diagram.
\end{enumerate}
I will often use the same notation for the object in $D^b(\mM)$ and its lift. 
\end{definition}
\subsection{The construction of symmetrized $\infty$-pullback diagrams}
\label{Asec:inftyPvp}
From now on, I will focus on constructing the lift of \eqref{eq:diagsym} to $\mD^b(\mN)$, which I will call the \textit{ $\infty$-Pvp} diagram. The idea is to start from a small amount of data in $\mD^b(\mN)$ to construct this diagram. 
\begin{proposition}
\label{prop:sympullback}
  Let $f:\mN\to \mM$ be a quasi-smooth morphism of stacks with a perfect obstruction theory $\MM\xrightarrow{\tau}\LL_f$ and let $\EE\xrightarrow{\psi} \LL_{\mM}$ be a CY4 obstruction theory with higher self-duality. Suppose further that the bottom morphism of the horizontal distinguished triangles in \eqref{eq:diagsym} exists and that $\eta$ admits a lift to $\mD^b(\mN)$ denoted by \footnote{Here $f^*(\EE)$ can be taken to be the derived $\infty$-pullback of a lift of $\EE$.}
   $$
\eta^{\wedge}: \MM[-1]\longrightarrow f^*(\EE)\,.
   $$
Using the notation $\ov{\eta}^{\wedge}:f^*(\EE)\cong f^*(\EE)^\vee[2]\longrightarrow \MM^\vee[3]$ to denote the derived  $\infty$-dual of $\eta^{\wedge}$ in $\mD^b(\mN)$, assume that
   \begin{equation}
   \label{eq:startingIxI}
\begin{tikzcd}
\arrow[d]\MM[-1]\arrow[r,"\eta^{\wedge}"]&f^*(\EE)\arrow[d,"\ov{\eta}^{\wedge}"]\\
  0\arrow[r]&\MM^\vee[3]
\end{tikzcd}
\end{equation}
   is homotopy commutative in $\mD^b(\mN)$ and the homotopy is invariant under $(-)^\vee[2]$. Then there exists a unique up to contractible choices complex $\FF\in \mD^b(\mN)$ which completes the diagram \eqref{eq:diagsym} and $\phi\circ \mu:\FF\to \LL_{\mN}$ is a CY4 obstruction theory with higher self-duality.

 Additionally, if the commutative diagram
\begin{equation}
\label{eq:relativesquareininfty}
\begin{tikzcd}
\arrow[d,"\tau"]\MM[-1]\arrow[r,"\eta"]&f^*(\EE)\arrow[d]\\
  \LL_{f}[-1]\arrow[r]&f^*(\LL_{\mM})
\end{tikzcd}
\end{equation}
can be lifted to $\mD^b(\mN)$, then there exists a lift of \eqref{eq:diagsym} unique up to contractible choices. 
\end{proposition}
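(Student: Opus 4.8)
The plan is to build the $\infty$-Pvp diagram \eqref{eq:diagsym} in two stages, exploiting the functoriality of cones in a stable $\infty$-category (Proposition~\ref{prop:contractiblesec}) so that every choice made is contractible, hence the final diagram is unique up to contractible choices. First I would work entirely in $\mD^b(\mN)$ and package the purple part of \eqref{eq:diagsym}. Starting from the homotopy commutative square \eqref{eq:startingIxI}, I would take the cofiber of the horizontal map $\eta^{\wedge}\colon \MM[-1]\to f^*(\EE)$ inside $\Fun(\Delta^1,\mD^b(\mN))$; by functoriality this produces a $(\Delta^1\times\Delta^1\times\Delta^1)$ diagram as in Example~\ref{ex:functomD}, whose bottom row is a cofiber sequence. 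Concretely, call $\wt\FF := \Cone(\eta^{\wedge})$, so we obtain a cofiber sequence $f^*(\EE)\xrightarrow{\kappa}\wt\FF\to\MM$ (rotating $\MM[-1]\to f^*(\EE)\to\wt\FF$), and the square \eqref{eq:startingIxI} tells us that the composite $f^*(\EE)\xrightarrow{\ov\eta^{\wedge}}\MM^\vee[3]$ factors through $\wt\FF$ up to a coherent homotopy. Dualizing via the coexact $\infty$-functor $(-)^\vee$ of Example~\ref{ex:inftyfunc}(ii) and using the $(-)^\vee[2]$-invariance of the homotopy in \eqref{eq:startingIxI}, I get a self-dual $3\times3$ diagram: the cofiber sequence $\MM[-1]\to f^*(\EE)\to\wt\FF$ and its dual-shift $\wt\FF^\vee[2]\to f^*(\EE)\to\MM$ assemble, and $\FF$ is produced as the total cofiber, equivalently the cone of $\ov\mu\colon\wt\FF^\vee[2]\to\FF$ can be identified by a standard octahedral manipulation (Proposition~\ref{prop:stabletotriang}) with $\MM$. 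This realizes the whole purple $3\times3$ block of \eqref{eq:diagsym}; uniqueness up to contractible choices is immediate from Proposition~\ref{prop:contractiblesec}(iv)--(v) applied to the map $\Diag^{3\times3}(\mD^b(\mN))\to\Fun(\Delta^1,\Fib(\mD^b(\mN)))$.

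The second stage glues in the lower half of \eqref{eq:diagsym}, i.e.\ the rows involving $\LL_f[-1]\to f^*(\LL_\mM)\to\LL_\mN\to\LL_f$. Here the extra input is the lift of \eqref{eq:relativesquareininfty} to $\mD^b(\mN)$, namely a homotopy commutative square connecting $\MM[-1]\to f^*(\EE)$ (top) to $\LL_f[-1]\to f^*(\LL_\mM)$ (bottom) via $\tau[-1]$ and $f^*(\psi)$. Viewing this square as an object of $\Fun(\Delta^1,\mD^b(\mN))$ and taking horizontal cofibers termwise, I get a map of cofiber sequences: from $\MM[-1]\to f^*(\EE)\to\wt\FF$ to $\LL_f[-1]\to f^*(\LL_\mM)\to\LL_\nN$, where $\LL_\nN$ appears as $\Cone$ of the bottom map because $f$ quasi-smooth gives the standard cotangent triangle $\LL_f[-1]\to f^*(\LL_\mM)\to\LL_\mN$ (so the cone of the bottom row is genuinely $\LL_\mN$, not just quasi-isomorphic to it once we fix an injective model). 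This yields the morphism $\phi\colon\wt\FF\to\LL_\mN$ and, precomposing with $\ov\mu$ built in stage one, the obstruction theory $\FF\xrightarrow{\phi\circ\mu}\LL_\mN$. The vanishing-of-cohomology argument already stated in the paragraph after \eqref{eq:diagsym} shows $\phi\circ\mu$ is indeed an obstruction theory, and its higher self-duality is inherited from that of $\EE$ through the self-dual structure of the $3\times3$ block; that $\FF$ is CY4 (perfect of tor-amplitude $[-2,0]$, self-dual, orientable, isotropic cones, even rank) follows exactly as in Theorem~\ref{thm:virtualpullpush} since $\FF$ is built from the CY4 complex $\EE$ by the same recipe Park uses, only now carried out $\infty$-functorially.

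The main obstacle I anticipate is bookkeeping the coherence of the self-duality: one must check that the homotopy witnessing \eqref{eq:startingIxI} being commutative, which is assumed $(-)^\vee[2]$-invariant, actually propagates to a genuinely self-dual $3\times3$ diagram, i.e.\ that the two ways of producing $\FF$ (as cone of $\ov\mu$ and as $\MM$-indexed total cofiber, related through $(-)^\vee[2]$) agree up to a contractible space of identifications. In a triangulated setting this is precisely the point that the paper flags as impossible without extra hypotheses; the resolution is that in $\mD^b(\nN)$ the relevant mapping space $\Map(\MM[-1],\MM^\vee[3])$ is highly connected (its homotopy groups in the relevant range vanish, as used in Lemma~\ref{lem:pairshenafcorrespondence} and Lemma~\ref{lem:goodtrian}), so the choices of higher homotopies form a contractible simplicial set and Proposition~\ref{prop:contractiblesec} applies. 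I would spend most of the proof making this connectivity/contractibility bookkeeping precise — invoking $\Fun(\Delta^1,\Fib(\mD))$ and $\Diag^{3\times3}(\mD)$ and the relevant trivial Kan fibrations — while the purely homological identifications ($\Cone(\ov\mu)\simeq\MM$, the cotangent triangle, tor-amplitude estimates) are routine and I would not grind through them.
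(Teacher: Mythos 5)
Your proposal follows essentially the same route as the paper: starting from the self-dual square \eqref{eq:startingIxI}, one takes functorial (co)cones in the functor categories of Example \ref{ex:functomD} to assemble the self-dual $3\times 3$ diagram \eqref{eq:Parkhomotopylift}, with $\FF$ as the remaining corner, higher self-duality coming from coexactness of $(-)^\vee$ together with the $(-)^\vee[2]$-invariant homotopy, and uniqueness up to contractible choices from Proposition \ref{prop:contractiblesec}; the CY4 verification you defer to Park's recipe is carried out in the paper as Lemma \ref{lem:conditionsFF} (needed since $\mN,\mM$ are stacks, so Theorem \ref{thm:virtualpullpush} does not literally apply). Only minor bookkeeping differs: $\FF$ is the horizontal cofiber followed by a vertical fiber (your ``total cofiber'' is off by a shift), and the lift of \eqref{eq:relativesquareininfty} is only needed for the second, ``additionally'' statement rather than for producing $\phi\circ\mu$ in the first.
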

\begin{proof}
  I will only provide the proof of the first statement, because the second one is simpler. Replacing the diagram in \eqref{eq:IxI} with \eqref{eq:startingIxI}, one can follow the construction at the end of Example \ref{ex:functomD} step-by-step. Except that this time, one takes cofiber sequences when going from \eqref{eq:IxIxI} to \eqref{eq:IxIxIxI}. The resulting analog of \eqref{eq:IxIxIxI} is given by 
\begin{equation}
\label{eq:Parkhomotopylift}
        \begin{tikzcd}
\MM[-1]\arrow[d,equal]\arrow[r,"\lambda^{\wedge}"]&\wt{\FF}^\vee[2]\arrow[d,"\ov{\kappa}^{\wedge}"]\arrow[r,"\ov{\mu}^{\wedge}"]&\arrow[d,"\mu^{\wedge}"]\FF\\
            \arrow[d]\MM[-1]\arrow[r,"\eta^{\wedge}"]&\arrow[d,"\ov{\eta}^{\wedge}"]f^*(\EE)\arrow[r,"\kappa^{\wedge}"]&\arrow[d, "\ov{\lambda}^{\wedge}"]\wt{\FF}\\
0\arrow[r]&\MM^\vee[3]\arrow[r,equal]&\MM^\vee[3]
        \end{tikzcd}
\end{equation}        
in $\mD^b(\mN)$. Because  \eqref{eq:startingIxI} is preserved under $(-)^\vee[2]$, the uniqueness of cones in $\mD^b(\mN)$ and $(-)^\vee$ being a co-exact functor by Example \ref{ex:inftyfunc} imply that the full $3\times 3$-diagram is self-dual. This constructs higher self-duality of $\FF$ in $\mD^b(\mN)$. Projecting to $D^b(\mN)$, one obtains \eqref{eq:diagsym}. 

The virtual admissibility of $\FF\xrightarrow{\phi\circ \mu} \LL_{\mN}$ will follow from Lemma \ref{lem:conditionsFF} below.
\end{proof}
This is just a generalization of \cite[Lemma 2.3]{Park} based on the argument following \cite[(127)]{BKP}.
\begin{lemma}
\label{lem:conditionsFF}
    The morphism $\phi\circ \mu: \FF\to \LL_{\mN}$ from Proposition \ref{prop:sympullback} is an orientable obstruction theory satisfying the isotropy of cones condition. Moreover, if $\EE$ is even, then so is $\FF$. 
\end{lemma}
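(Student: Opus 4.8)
The plan is to verify the three assertions of Lemma~\ref{lem:conditionsFF} --- that $\phi\circ\mu$ is an obstruction theory, that $\mN$ is orientable for it, that the isotropy of cones holds, and that evenness is inherited --- by reading off the relevant information from the $\infty$-Pvp diagram \eqref{eq:Parkhomotopylift} and its projection \eqref{eq:diagsym}, following the blueprint of \cite[Lemma 2.3]{Park} and the argument after \cite[(127)]{BKP}.

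First I would check that $\FF\xrightarrow{\phi\circ\mu}\LL_{\mN}$ is an obstruction theory. This is already essentially observed right after \eqref{eq:diagsym}: from the middle row of \eqref{eq:diagsym}, $\FFna\xrightarrow{\phi}\LL_{\mN}$ is an obstruction theory, and from the top row $\mu:\FF\to\FFna$ is a morphism of distinguished triangles all of whose other terms ($\MM[-1]$, $\wt\FF^\vee[2]$, $\MM$) have the right connectivity, so $h^0(\FF)\to h^0(\LL_{\mN})$ is an isomorphism and $h^1(\FF)\to h^1(\LL_{\mN})$ is surjective; a short diagram chase on cohomology sheaves makes this precise. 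One should also note tor-amplitude: $\FF$ sits in a triangle with $\wt\FF^\vee[2]$ (which has tor-amplitude $[-3,1]$ since $\FFna$ does) and $\MM$ (tor-amplitude $[-1,1]$), hence $\FF$ has tor-amplitude $[-3,1]$ as required by Definition~\ref{def:viradm}.

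Next, orientability. The diagram \eqref{eq:Parkhomotopylift} gives a distinguished triangle $\wt\FF^\vee[2]\xrightarrow{\ov\mu}\FF\xrightarrow{\nu}\MM$, and applying $\det(-)$ together with \eqref{eq:detdistinguished} yields $\det(\FF)\cong \det(\wt\FF^\vee[2])\,\det(\MM)\cong \det(\wt\FF)^*\det(\MM)$; iterating with the middle row gives $\det(\FF)\cong M_f^*\,M_f\,f^*\det(\EE)$ where $M_f=\det(\MM)$, and $f^*\det(\EE)$ carries the pulled-back orientation $f^*o_{\mM}$ while $M_f^*\,M_f$ has its canonical trivialization. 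Composing these as in Definition~\ref{def:orpullback} produces the orientation $o_{\mN}$; one then has to verify it satisfies \eqref{eq:Ebulletor} with respect to the self-duality $\mathbb{i}_q^\wedge$ of $\FF$ coming from the self-duality of \eqref{eq:Parkhomotopylift}. This is a compatibility check using the conventions of \S\ref{sec:orconventions}, in particular Lemma~\ref{lem:2orientconv}.i)--ii) and Remark~\ref{rem:compatibilitysigns}.iv) (the pentagon \eqref{eq:orpentagon}); it is the same computation as in \cite[Lemma 2.3]{Park}, which is why I expect the orientability statement --- keeping the signs consistent with \S\ref{sec:orconventions} rather than with \cite{OT} --- to be the main obstacle, though it is entirely mechanical. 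Evenness is then immediate: $\Rk(\FF) = \Rk(\wt\FF) + \Rk(\MM)$ and $\Rk(\wt\FF) = \Rk(\EE) - \Rk(\MM)$ from the triangles, so $\Rk(\FF) = \Rk(\EE)$, which is even whenever $\EE$ is.

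Finally, isotropy of cones. Here I would invoke the fact, recalled in \S\ref{sec:dgsetup} (the right vertical arrow in the diagram of structures there, generalized in \cite[p.~135]{BKP}), that the $-2$-shifted symplectic enhancement underlying $\FF$ forces the quadratic form on $\mathfrak{C}(\FF)$ to vanish on the intrinsic normal cone $\mathfrak{C}_{\mN}$; alternatively, since the Pvp diagram \eqref{eq:diagsym} expresses $\FF$ via the quasi-smooth $f$ and a CY4 obstruction theory $\EE$ which already has isotropic cones, isotropy for $\FF$ follows from the behaviour of quadratic forms under the cone construction of Remark~\ref{rem:explicitcone}, exactly as in \cite[Lemma 2.3]{Park}. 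Assembling these four points gives the lemma, and hence completes the proof of Proposition~\ref{prop:sympullback}.
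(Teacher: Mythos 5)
Your proposal follows essentially the same route as the paper: a cohomology chase on the triangle relating $\FF$ and $\FFna$ to get the obstruction-theory property, orientability and evenness read off from the determinants/ranks in \eqref{eq:diagsym}, and isotropy reduced to that of $\EE$ as in \cite[Lemma 2.3]{Park}. Two points deserve correction. First, your rank bookkeeping: from the triangles one gets $\Rk(\wt{\FF})=\Rk(\EE)+\Rk(\MM)$ and $\Rk(\FF)=\Rk(\wt{\FF})+\Rk(\MM)=\Rk(\EE)+2\Rk(\MM)$, not $\Rk(\FF)=\Rk(\EE)$ (you dropped the sign of $\MM[-1]$); the parity conclusion, and hence evenness, is unaffected, but the equality as stated is false and in fact contradicts the expected shift of virtual dimension by twice the relative dimension of $f$.

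Second, your first suggested argument for isotropy --- invoking ``the $-2$-shifted symplectic enhancement underlying $\FF$'' --- is not available in this generality: the hypotheses of Proposition \ref{prop:sympullback} only provide the homotopy-commutative square \eqref{eq:startingIxI}, not a shifted symplectic derived enhancement of $\mN$ (the paper deliberately avoids assuming an underlying Lagrangian correspondence, and Remark \ref{rem:dagJScomp} illustrates that such derived lifts need not exist). The correct argument, which the paper spells out and which your ``alternatively'' clause gestures at, is to form the commutative diagram comparing $\bTot_{\mN}\big(\FF^\vee[1]\big)$ with $\pi^*\bTot_{\mM}\big(\EE^\vee[1]\big)$, where commutativity of the relevant square follows from the self-duality of the square formed by $\ov{\kappa},\ov{\mu},\mu,\kappa$, so that $q_{\FF}$ is pulled back from $q_{\EE}$; applying $t_0(-)$ gives a map of cone stacks under which $\mathfrak{C}_{\mN}$ maps compatibly to $\pi^*\mathfrak{C}_{\mM}$, and vanishing of $q_{\EE}$ on $\mathfrak{C}_{\mM}$ then forces vanishing of $q_{\FF}$ on $\mathfrak{C}_{\mN}$. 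If you replace the symplectic-enhancement claim by this pullback-of-quadratic-forms argument, your proof agrees with the paper's.
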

\begin{proof}
    I will begin by proving that $\phi\circ \mu$ defines an obstruction theory. Taking the long exact sequence of cohomologies associated to 
    $$
    \begin{tikzcd}
  \MM^{\vee}[2]\arrow[r]& \FF\arrow[r,"\mu"]&\wt{\FF}\arrow[r]&\MM^{\vee}[3]\,,
  \end{tikzcd}
    $$
  produces isomorphisms
  $$
  h^{1}(\FF) \xrightarrow{\sim} h^{1}(\FFna)\quad \textnormal{and} \quad  h^{0}(\FF) \xrightarrow{\sim} h^{0}(\FFna)
  $$
  and the surjection $h^{-1}(\FF) \twoheadrightarrow h^{-1}(\FFna)$.
As $\FFna\to \LL_{\mN}$ is an obstruction theory, so is $\FF$. 

The property that $\FF$ is orientable and even if $\EE$ is follows immediately from the diagram \eqref{eq:diagsym} leaving me to check the isotropy property. To show it, I begin with the diagram
$$
\begin{tikzcd}
 \arrow[d] \bTot_{\mN}\big(\LL_{\mN}^\vee[1]\big) \arrow[r]& \bTot_{\mN}\big(\wt{\FF}^\vee[1]\big)\arrow[d]\arrow[r]& \bTot_{\mN}\big(\FF^\vee[1]\big)\arrow[d,"q_{\FF}"]\\
   \pi^*\bTot_{\mM}\big(\LL_{\mM}^\vee[1]\big)\arrow[r] &\pi^*\bTot_{\mM}\big(\EE^\vee[1]\big)\arrow[r,"q_{\EE}"]&\CC
\end{tikzcd}\,.
$$
The right square is commutative because of 
$$
\begin{tikzcd}
\arrow[d,"\ov{\kappa}"]\FFna^\vee[2]\arrow[r,"{\bar{\mu}}"]&\FF\arrow[d,"\mu"]\\
f^*\big(\EE\big)\arrow[r,"\kappa"]&\FFna
\end{tikzcd}
$$
being self-dual. Applying the functor $t_0(-)$, on is left with the commutative diagram
$$
\begin{tikzcd}[column sep=large]
 \arrow[d] \mathfrak{C}_{\mN}\arrow[r]& \mathfrak{C}_{\mN}\big(\FFna\big)\arrow[d]\arrow[r]& \mathfrak{C}_{\mN}\big(\FF\big)\arrow[d,"q_{\FF}"]\\
   \pi^*\mathfrak{C}_{\mM}\arrow[r] &\pi^*\mathfrak{C}_{\mM}\big(\EE\big)\arrow[r,"q_{\EE}"]&\CC
\end{tikzcd}
$$
which shows that $\FF$ satisfies the isotropy condition since $\EE$ does.
\end{proof}
\subsection{Functoriality of the symmetrized pull-back}
\label{sec:functorialitysympull}
The diagram \eqref{eq:relativesquareininfty} in $\mD^b(\mN)$ can be obtained by working with derived stacks. Take $\boldsymbol{f}: \bmN\to \bmM$ to be a derived enrichment of $f: \mN\to \mM$ such that
\begin{equation}
\label{eq: derivedpi}
\begin{tikzcd}
\mN\arrow[d]\arrow[r,"f"]& \mM\arrow[d]\\
   \bmN\arrow[r, "\boldsymbol{f}"] &\bmM  
\end{tikzcd}
\end{equation}
is a commutative diagram of derived stacks, and $\boldsymbol{f}$ is quasi-smooth. By \cite[Prop. 3.2.12]{LurieDAG}, there exists a morphism of fiber sequences
$$
\begin{tikzcd}
\arrow[d]\boldsymbol{f}^*\LL_{\bmM}|_{\mN}\arrow[r]& \arrow[d]\LL_{\bmN}|_{\mN}\arrow[r]& \arrow[d]\LL_{\boldsymbol{f}}|_{\mN}\\
f^*\LL_{\mM}\arrow[r]&\LL_{\mN}\arrow[r]&\LL_f
\end{tikzcd}
$$
in $\mD^b(\bmN)$. Note that I am now using $\LL_{(-)}$ to denote the untruncated cotangent complex complex. Setting $\EE=\LL_{\bmM}|_{\mM}$, $\wt{\FF} =\LL_{\bmN}|_{\mN} $, and $\MM=\LL_{\boldsymbol{f}}|_{\mN}$, one obtaines the usual virtual pullback diagram containing \eqref{eq:relativesquareininfty}. This was used in \cite{JoyceWC} when proving wall-crossing in lower dimensions.

One may consider a situation with the following commutative triple of morphisms of derived stacks:
\begin{equation}
\label{eq:ffactor}
 \begin{tikzcd}
     \boldsymbol{\mN_2}\arrow[r,"\boldsymbol{f_2}"] \arrow[rr,bend left = 50, "\boldsymbol{f}"]&\boldsymbol{\mN_1}\arrow[r, "\boldsymbol{f_1}"]&\bmM
 \end{tikzcd}\,.
 \end{equation} 
It is the derived refinement of
$$
 \begin{tikzcd}
     \mN_2\arrow[r,"f_2"] \arrow[rr,bend left = 50, "f"]&\mN_1\arrow[r, "f_1"]&\mM
 \end{tikzcd}\,.
$$
The diagram of derived stacks \eqref{eq:ffactor} induces the following homotopy commutative diagram in $\mD^b(\boldsymbol{\mN_2})$
   \begin{equation}
   \label{eq:naivefunctorialityinmD}
    \begin{tikzcd}
        \arrow[d]\boldsymbol{f}^*\LL_{\boldsymbol{\mM}}\arrow[r]&\arrow[d]\boldsymbol{f_2}^*\LL_{\boldsymbol{\mN_1}}\arrow[r]&\LL_{\boldsymbol{\mN_2}}\arrow[d]\\
        0\arrow[r]&\arrow[d]\boldsymbol{f_2}^*\LL_{\boldsymbol{f_1}}\arrow[r]&\arrow[d]\LL_{\boldsymbol{f}}\\
        &0\arrow[r]&\LL_{\boldsymbol{f_2}}
    \end{tikzcd}\,.
    \end{equation}
    where each rectangle is a homotopy Cartesian diagram. Set $\EE=\LL_{\bmM}|_{\mM}$, $\wt{\FF}_{1} =\LL_{\bmN_1}|_{\mN_1} $, $\wt{\FF}=\LL_{\bmN_2}|_{\mN_2}$, $\MM_1=\LL_{\boldsymbol{f_1}}|_{\mN_1}$, $\MM_2=\LL_{\boldsymbol{f_2}}|_{\mN_2}$, and $\MM=\LL_{\boldsymbol{f}}|_{\mN_2}$. Applying the arguments of the proof of Proposition \ref{prop:stabletotriang} to the restriction of  \eqref{eq:naivefunctorialityinmD} to $\mN_2$, I recover the octahedral diagram
\begin{equation}
\label{eq:functoriality}
\begin{tikzcd}[column sep={5.5em,between origins},
  row sep={4.2em,between origins}, ampersand replacement=\&]
    \eqmathbox{f_2^*(\MM_1)[-1]}\arrow[dr]\arrow[rr, bend left = 35]\&\& \eqmathbox{f^*(\EE)}\arrow[dr] \arrow[rr, bend left =35, ]\&\&\eqmathbox{\wt{\FF}}\arrow[dr]\arrow[rr,bend left = 35]\&\&\eqmathbox{\MM_2}\\
\&\eqmathbox{\MM[-1]}\arrow[ur]\arrow[dr]\&\&\eqmathbox{f^*_2(\wt{\FF}_{1})}\arrow[ur]\arrow[dr]\&\&\eqmathbox{\MM}\arrow[ur]\&\\
\&\&\eqmathbox{\MM_2[-1]}\arrow[rr, bend right = 35]\arrow[ur]\&\&\eqmathbox{f_2^*(\MM_1)}\arrow[ur]\&\&
\end{tikzcd}
\end{equation}    
in $D^b(\mN_2)$. By the same reasoning, there is a morphism from \eqref{eq:functoriality} down to 
\begin{equation}
\label{eq:functoriality2}
\begin{tikzcd}[column sep={5.5em,between origins},
  row sep={4.2em,between origins}, ampersand replacement=\&]
    \eqmathbox{ f_2^*(\LL_{f_1})[-1]}\arrow[dr]\arrow[rr, bend left = 35]\&\& \eqmathbox{f^*(\LL_{\mM_1})}\arrow[dr] \arrow[rr, bend left =35, ]\&\&\eqmathbox{\LL_{\mN_2}}\arrow[dr]\arrow[rr,bend left = 35]\&\&\eqmathbox{\LL_{f_2}}\\
\&\eqmathbox{\LL_f[-1]}\arrow[ur]\arrow[dr]\&\&\eqmathbox{f^*_2(\LL_{\mM_2})}\arrow[ur]\arrow[dr]\&\&\eqmathbox{\LL_f}\arrow[ur]\&\\
\&\&\eqmathbox{\LL_{f_2}[-1]}\arrow[rr, bend right = 35]\arrow[ur]\&\&\eqmathbox{f_2^*(\LL_{f_1})}\arrow[ur]\&\&
\end{tikzcd}\,.
\end{equation}
Joyce used this (without spelling out the details) in his proof of wall-crossing in \cite{JoyceWC}. It amounts to functoriality for the lower halves of the diagrams \eqref{eq:diagsym}. Alternatively, this holds if one is given sufficient starting data in stable $\infty$-categories as in \eqref{eq:MM1andMMtoEE}. 

It is natural to ask for a similar statement for full Pvp diagrams. This problem is more complex as one needs to go at least to 3-simpleces (see \eqref{eq:3simplex})  to formulate it correctly.
 \begin{definition}
 \label{def:compatible}
 Let 
  $$\begin{tikzcd}
\mN_2\arrow[r,"f_2"]\arrow[rr,bend left = 50, "f"]&\mN_1\arrow[r,"f_1"]&\mM  
  \end{tikzcd}$$ be a diagram of quasi-smooth morphisms between stacks such that there is a homotopy comutative diagram 
\begin{equation}
\label{eq:MM1andMMtoEE}
\begin{tikzcd}[column sep={5.5em,between origins},
  row sep={4.2em,between origins}, ampersand replacement=\&]
   \arrow[dd] \eqmathbox{f_2^*(\MM_1)[-1]}\arrow[dr]\arrow[rr, bend left = 35,"\eta_1^\wedge"]\&\& \arrow[dd,"\psi^{\wedge}"]\eqmathbox{f ^*(\EE)}\\
\&\arrow[dd]\eqmathbox{\MM[-1]}\arrow[ur,"\eta^\wedge"]\&\\[-1em] \eqmathbox{f_2^*(\LL_{f_1})[-1]}\arrow[dr]\arrow[rr, bend left = 35]\&\&\eqmathbox{f ^*(\LL_{\mM})}\\
\&\eqmathbox{\LL_f[-1]}\arrow[ur]\&
\end{tikzcd}
\end{equation}
with vertical arrows being lifts of obstruction theories.
Let 
\begin{equation}
\label{eq:Mhom0}
\begin{tikzcd}\arrow[d]\MM[-1]\arrow[r,"\eta ^\wedge"]&f^*(\EE)\arrow[d,"\ov{\eta}^\wedge"]\\
0\arrow[r]&\MM^\vee[3]
\end{tikzcd}
\end{equation}
be a self-dual homotopy commutative diagram. Then it induces together with the upper floor of \eqref{eq:MM1andMMtoEE} the self-dual homotopy commutative diagram 
$$
\begin{tikzcd}\arrow[d]\MM_1[-1]\arrow[r,"\eta_1 ^\wedge"]&f_1^*(\EE)\arrow[d,"\ov{\eta_1}^\wedge"]\\
0\arrow[ur, Rightarrow, blue, "h_1"']\arrow[r]&\MM_1^\vee[3]
\end{tikzcd}\,.
$$
Here \B{$h_1$} makes $\ov{\eta_1}^\wedge\circ \eta_1 ^\wedge$ null-homotopic. For another choice of such a self-dual homotopy \B{$g_{1}$}, I will say that it is \textit{compatible with \eqref{eq:Mhom0}} if a self-dual 3-simplex with the 2-skeleton
\begin{equation}
\label{eq:MM1compatibility}
\includegraphics{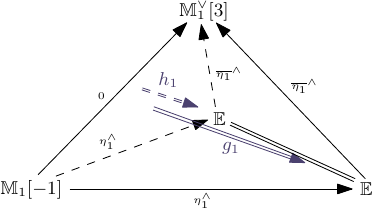}
\end{equation}
is specified.  I.e., there is a self-dual 2-homotopy between \B{$g_1$} and \B{$h_1$} in the sense of Example \ref{ex:simplices}.iii). 
 \end{definition}
\begin{theorem}
\label{thm:functsympull}
 Continue working in the situation represented by \eqref{eq:MM1andMMtoEE}, and assume that the obstruction theory $\EE\to \LL_{\mM}$ satisfies the conditions of Proposition \ref{prop:sympullback}.  All three statements below hold, if either of the them is true:
\begin{enumerate}[label =\roman*)]
\item The homotopy commutative, self-dual diagrams
\begin{equation}
\begin{tikzcd}\arrow[d]\MM_1[-1]\arrow[r,"\eta_1 ^\wedge"]&f_1^*(\EE)\arrow[d,"\ov{\eta_1}^\wedge"]\\
0\arrow[r]&\MM_1^\vee[3]
\end{tikzcd}
\label{eq:M1hom0}
\end{equation}
and \eqref{eq:Mhom0} are provided. They are compatible in the sense of Definition \ref{def:compatible}. 
\item There is a given homotopy commutative and self-dual diagram \eqref{eq:M1hom0}. Using it, one can construct a self-dual object
\begin{equation}
\label{eq:symmetricpullbackMM1}
 \begin{tikzcd}[column sep=large]   \MM_1[-1]\arrow[d,equal]\arrow[r]&\arrow[d](\wt{\FF}_{1})^\vee[2]\arrow[r]&\FF_1\arrow[d]\\
    \MM_1[-1]\arrow[d]\arrow[r,"\eta_1^ \wedge"]&\arrow[d,"\ov{\eta_1}^\wedge"] f_1^*\big(\EE_1\big)\arrow[r]&\wt{\FF}_{1}\arrow[d]\\
    0\arrow[r]&\MM_1^\vee[3]\arrow[r]&\MM_1^\vee[3]
    \end{tikzcd}
\end{equation}
in $\Diag^{3\times 3}(\mD^b(\mN_1))$ (see Example \ref{ex:functomD}.v)).
The resulting diagram 
\begin{equation}
\label{eq:M2toM10}
\begin{tikzcd}\arrow[d]\MM_2[-1]\arrow[r]&f_2 ^*(\wt{\FF}_{1})\arrow[d]\\
0\arrow[r]&\MM_1^\vee[3]
\end{tikzcd}
\end{equation}
is homotopy commutative, which gives rise to
\begin{equation}
\label{eq:Miihom0}
\begin{tikzcd}\arrow[d]\MM_2[-1]\arrow[r]&f_2 ^*(\FF_{1})\arrow[d]\\
0\arrow[r]&\MM_2^\vee[3]
\end{tikzcd}\,.
\end{equation}
This diagram is also homotopy commutative and self-dual.
\item The homotopy commutative and self-dual diagram \eqref{eq:Mhom0} is given. As such, it induces $\FF\xrightarrow{\theta}\MM_2$ in $\mD^b(\mN_2)$. There is a fixed homotopy for 
\begin{equation}
\label{eq:FFtoMM}
\begin{tikzcd}    \arrow[d]\MM^\vee[2]\arrow[r]&0\arrow[d]\\
    \FF \arrow[r]&\MM
\end{tikzcd}
\end{equation}
inducing the homotopy \B{$h_2$} in 
\begin{equation}
\label{eq:FFtoMM2}
\begin{tikzcd}
\arrow[d,"\overline{\theta}"']\MM_2^\vee[2]\arrow[r]&0\arrow[d]\\
    \FF\arrow[ur, Leftarrow, blue, "h_2"'] \arrow[r, "\theta"']&\MM_2\,.
\end{tikzcd}
\end{equation}
There is another such homotopy \B{$g_2$} that fits into a self-dual 3-simplex with the 2-skeleton
\begin{equation}
\label{eq:MM2comatibleMM}
\includegraphics{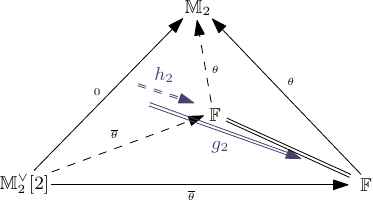}\,.
\end{equation}
\end{enumerate}
\vspace{-4pt}
\vspace{8pt}

Assume that i), ii) or iii) holds. Applying Proposition \ref{prop:sympullback} to the diagram \eqref{eq:M1hom0} constructs a lift of a CY4 obstruction theory $\FF_1\to \LL_{\mN_1}$ with higher self-duality. Doing the same with \eqref{eq:Miihom0} constructs such an obstruction theory $\FF\to\LL_{\mN_2}$. The result of using Proposition \ref{prop:sympullback} directly on \eqref{eq:Mhom0} determines the same $\FF\to \LL_{\mN_2}$ in $\mD^b(\mN_2)$ up a contractible choice of homotopy equivalences. 
\end{theorem}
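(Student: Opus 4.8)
The plan is to reduce everything to the functoriality of cones in the stable $\infty$-category $\mD^b(\mN_2)$ established in Proposition~\ref{prop:contractiblesec}, exactly as was done for the lower halves of Pvp diagrams in \eqref{eq:functoriality}--\eqref{eq:functoriality2}. First I would show that the three hypotheses i), ii), iii) are equivalent. For i)$\Rightarrow$ii): given the compatible self-dual homotopy diagrams \eqref{eq:M1hom0} and \eqref{eq:Mhom0}, apply Proposition~\ref{prop:sympullback} to \eqref{eq:M1hom0} to obtain \eqref{eq:symmetricpullbackMM1} in $\Diag^{3\times 3}(\mD^b(\mN_1))$; pulling back along $f_2$ and pasting with the upper floor of \eqref{eq:MM1andMMtoEE} produces \eqref{eq:M2toM10}, and the compatibility 3-simplex \eqref{eq:MM1compatibility} is precisely the higher-homotopy datum needed to see that the composite $\MM_2[-1]\to f_2^*(\wt\FF_1)\to \MM_1^\vee[3]$ is self-dually null-homotopic, hence descends to the self-dual square \eqref{eq:Miihom0}. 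The implications ii)$\Rightarrow$iii) and iii)$\Rightarrow$i) run along the same lines: each time one has a self-dual null-homotopy on one corner of a $3\times3$ diagram, the uniqueness of cones transports it to the adjacent corner, and the ``compatibility'' 3-simplices \eqref{eq:MM1compatibility}, \eqref{eq:MM2comatibleMM} are what guarantee the transported homotopy is the expected one rather than merely \emph{a} homotopy.

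Once the hypotheses are interchangeable, the two constructions of $\FF$ are compared as follows. Construction A: apply Proposition~\ref{prop:sympullback} to \eqref{eq:Miihom0}, whose input is built from \eqref{eq:M1hom0} via \eqref{eq:symmetricpullbackMM1} and then $f_2$-pullback. Construction B: apply Proposition~\ref{prop:sympullback} directly to \eqref{eq:Mhom0}. In both cases the output $\FF$ is the total cone of a morphism of cofiber sequences built by iterating the cone functor on $\Fun(\Delta^1,\Fib(\mD^b(\mN_2)))$. The key point is that the two iterated-cone recipes agree because homotopy colimits commute with one another (the ``independence of order'' cited from \cite[Corollary 7.3.8.20]{kerodon} at the end of Example~\ref{ex:functomD}): forming $\FF$ as ``cone of the $\MM_2$-pullback of the $\MM_1$-symmetrized diagram'' is the same homotopy colimit as ``cone of the $\MM$-symmetrized diagram,'' once one records that $\MM=\Cone(\MM_2[-1]\to\MM_1)$ compatibly with the maps into $f^*(\EE)$ — which is exactly the content of \eqref{eq:MM1andMMtoEE}. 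Proposition~\ref{prop:contractiblesec}.v) then says the space of such $3\times 3$ completions is contractible, so the two $\FF$'s, together with their maps $\phi\circ\mu\colon \FF\to\LL_{\mN_2}$, are identified up to a contractible choice of homotopy equivalences. That $\phi\circ\mu$ is in each case a CY4 obstruction theory with higher self-duality is already supplied by Lemma~\ref{lem:conditionsFF} and the self-duality half of Proposition~\ref{prop:sympullback}, so no new work is needed there.

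I expect the main obstacle to be bookkeeping the self-duality coherences at the level of 3-simplices: it is not enough that the relevant squares commute up to homotopy, one needs the chosen homotopies to be fixed by $(-)^\vee[2]$ and the chosen 2-homotopies (the $f_{0123}$-type data of Example~\ref{ex:simplices}.iii)) to be self-dual as well. Concretely, the subtlety is that $(-)^\vee$ is only a \emph{coexact} $\infty$-functor (Example~\ref{ex:inftyfunc}.ii)), so one must check that dualizing the iterated-cone construction of Construction A lands on the iterated-\emph{cocone} presentation that Construction B dualizes to, and that the comparison equivalence can be chosen compatibly with both. This is where the hypotheses being phrased with \emph{self-dual} 3-simplices \eqref{eq:MM1compatibility} and \eqref{eq:MM2comatibleMM} pays off: they give precisely the coherence datum making the dualization square commute, so that the ambiguity in $\FF$ is genuinely a torsor over a contractible space and not over something with nontrivial $\pi_0$. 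The remaining steps — verifying that the pulled-back obstruction theory maps assemble into \eqref{eq:M2toM10}, checking the cohomological vanishing that makes $\phi\circ\mu$ an obstruction theory, and invoking Lemma~\ref{lem:functoror} for the orientation statement that will be used downstream — are routine given the machinery already set up in \S\ref{Sec:pullback} and this appendix.
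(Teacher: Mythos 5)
Your proposal is correct and follows essentially the same route as the paper: both arguments hinge on assembling the data of \eqref{eq:MM1andMMtoEE} together with the compatibility $3$-simplices into a (self-dual) higher-dimensional cube diagram, taking iterated (co)cones, and invoking the permutability of homotopy (co)limits (\cite[Corollary 7.3.8.20]{kerodon}) plus the contractibility statements of Proposition \ref{prop:contractiblesec} to identify the two constructions of $\FF$. The only difference is presentational — you pull back the already-symmetrized $3\times3$ diagram \eqref{eq:symmetricpullbackMM1} along $f_2$, while the paper builds the $(\Delta^1)^{\times 4}$ diagram from \eqref{eq:startingdiag} directly — and your deferral of ii)$\Rightarrow$iii) and iii)$\Rightarrow$i) to "the same lines" matches the level of detail the paper itself gives for iii).
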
 
\begin{proof}
In this proof, I will ommit specifying the pull-backs of complexes and their maps. In both i) and ii), I can construct the diagram \eqref{eq:symmetricpullbackMM1}. The assumptions of i) allow me to further construct in $\mD^b(\mN_2)$ the diagram
\begin{equation}
\label{eq:firststep}
 \begin{tikzcd}[row sep = 8pt]
&&&\B{\MM_2^\vee[3]}\arrow[dd,blue]& \\
    \MM_1[-1]\arrow[dd,equal]\arrow[rr]&&\arrow[dd](\FF_{\textnormal{na},1})^\vee[2]\arrow[ur,blue,]\arrow[rr]&&\FF_1\arrow[dd]\\
&&&\B{\MM^\vee[3]}\arrow[dddl, blue, bend left = 18]&
 \\
    \MM_1[-1]\arrow[dd]\arrow[dr, bend right = 18 ,blue]\arrow[rr, blue]&&\arrow[dd, blue] \EE\arrow[ur,blue, "\ov{\eta}^\wedge"]\arrow[rr]&&\FF_{\textnormal{na},1}\arrow[dd]\\
  &\B{\MM[-1]}\arrow[ur, blue, "\eta^\wedge"]\arrow[rr, blue]&&\B{\MM_2[-1]}\arrow[ur, blue]&
    \\
    0\arrow[rr]&&\MM_1^\vee[3]\arrow[rr]&&\MM_1^\vee[3]
    \end{tikzcd}
\end{equation}
where any composition of two \B{blue arrows} meeting at $\EE_1$ is homotopic to zero. The homotopy for the diagram 
\begin{equation}
\label{eq:MM1toMM}
\begin{tikzcd}\arrow[d]\MM_1[-1]\arrow[r]&\EE\arrow[d]\\
0\arrow[r]&\MM^\vee[3]
\end{tikzcd}
\end{equation}
is taken to be the one determined by \eqref{eq:Mhom0} and the roof of \eqref{eq:MM1andMMtoEE}. This produces the 2-homotopy commutative box diagram 
 \begin{equation}
 \label{eq:startingdiag}
        \begin{tikzcd}[row sep=small, column sep=small]
           &\MM_1[-1]\arrow[dd]\arrow[rr]&&\EE\arrow[dd]
            \\
\arrow[ur] \arrow[dd]\MM_1[-1]\arrow[rr]&&\arrow[ur]\MM[-1]\arrow[dd]&\\
&0\arrow[rr]&&\MM^\vee[3]\\
           \arrow[ur] 0\arrow[rr]&& \arrow[ur]0 &
        \end{tikzcd}\,.
        \end{equation}
There are natural maps from the vertices of this diagram down to the vertices of a box diagram \eqref{eq:boxdiagram} that has 0's everywhere except for $I^\bullet_5 = I^\bullet_7 = \MM_1^\vee[3]$.  I claim that this produces an object of $\Fun\big((\Delta^1)^{\times 4}, \mD^b(\mN_2)\big)$. 

The only 2-simplices that can have non-zero homotopies come from \eqref{eq:MM1andMMtoEE}, \eqref{eq:Mhom0}, \eqref{eq:M1hom0}, \eqref{eq:MM1toMM}, and its dual. Furthermore, there are only two boxes with non-zero 2-homotopies determined by \eqref{eq:MM1compatibility} in the sense of Example \ref{ex:simplices}.iii).

Taking co-cones along the morphisms down from \eqref{eq:startingdiag} produces the 2-homotopy commutative diagram 
\begin{equation}
\label{eq:seconddiag}
        \begin{tikzcd}[row sep=small, column sep=small] &\arrow[dd]\MM_1[-1]\arrow[rr]&&\arrow[dd]\wt{\FF}_{1}^\vee[2]
            \\
\arrow[ur] \arrow[dd]\MM_1[-1]\arrow[rr]&&\arrow[ur] \arrow[dd]\MM[-1]&\\
&0\arrow[rr]&&\MM_2^\vee[3]\\
           \arrow[ur] 0\arrow[rr]&& \arrow[ur]0&           
        \end{tikzcd}\,.
        \end{equation}
which already contains the dual of \eqref{eq:M2toM10}. Taking cones of this diagram along the horizontal direction constructs \eqref{eq:Miihom0}. The lifts of CY4 obstruction theories $\FF\to \LL_{\mN_2}$ resulting both from \eqref{eq:Mhom0} and \eqref{eq:Miihom0} can be seen to be equivalent. This follows from the$(\Delta^1)^{\times 4}$ diagram discussed under \eqref{eq:startingdiag} because the order of taking (co)cones is permutable in stable $\infty$-categories by \cite[Chapter 7, Corollary 7.3.8.20]{kerodon}.

In ii), let me first explain where $\MM_2[-1]\to f^*_2(\FF_1)$ comes from. From the assumptions, I construct the diagram
$$
        \begin{tikzcd}[row sep=small, column sep=small]
&\wt{\FF}_{1}\arrow[dd]\arrow[rr]&&\MM_1\arrow[dd]\\
            \arrow[dd]\arrow[ur] \MM_2[-1]\arrow[rr]&& \arrow[ur]\MM_1 \arrow[dd]&\\
            &\MM_1^\vee[3]\arrow[rr]&&0
            \\
\arrow[ur]0\arrow[rr]&&\arrow[ur]0&
        \end{tikzcd}\,.
 $$
 It is 2-homotopy commutative because the 7th vertex is 0. Taking cocones along the horizontal and then along the vertical direction, one obtains the \B{horizontal blue sequence} with \B{morphisms} to the original diagram in
\begin{equation}
\label{eq:secondstep}
 \begin{tikzcd}[row sep = 8pt]
&&&&&\B{\MM_2^\vee[3]}\arrow[dd,blue] \\
    \MM_1[-1]\arrow[dd,equal]\arrow[rr]\arrow[dr, bend right = 18 ,blue]&&\arrow[dd](\FF_{\textnormal{na},1})^\vee[2]\arrow[rr]&&\FF_1\arrow[dd]\arrow[ur,blue]\\
&\B{\MM[-1]}\arrow[ur, blue]\arrow[rr, blue]&&\B{\MM_2[-1]}\arrow[ur, blue]&&\B{\MM^\vee[3]}\arrow[ddddl, blue, bend left = 25]
 \\
    \MM_1[-1]\arrow[ddd]\arrow[rr]&&\arrow[ddd,] f_1^*(\EE_1)\arrow[rr]&&\FF_{\textnormal{na},1}\arrow[ddd]\arrow[ur,blue]\\
&&&&\\[10pt]   
&&&&
    \\
    0\arrow[rr]&&\MM_1^\vee[3]\arrow[rr]&&\MM_1^\vee[3]
    \end{tikzcd}\,.
\end{equation}
The dual argument produces the \B{vertical blue sequence}, so \eqref{eq:M2toM10} can now be stated. 

To go from ii) to i) one reverses the arguments for getting ii) out of i). I use the 2-homotopy commutative diagram
$$
        \begin{tikzcd}[row sep=small, column sep=small]
 &\arrow[dd]\FF_1\arrow[rr]&&\arrow[dd]\MM_1
            \\
\arrow[ur]\arrow[dd]\MM_2[-1]\arrow[rr]&&\arrow[ur]\MM_1\arrow[dd]&\\
&\MM^\vee_2[3]\arrow[rr]&&0\\
            \arrow[ur] 0\arrow[rr]&& \arrow[ur]0 &           
        \end{tikzcd}\,.
 $$
Taking cocones, it induces \eqref{eq:seconddiag}. This can be completed to a $(\Delta^1)^{\times 4}$ diagram by taking the map from a box diagram \eqref{eq:boxdiagram} with all $I_i^\bullet=0$ except $I_5^\bullet =  \MM^\vee_1[2] = I^\bullet_7$. Taking cones produces \eqref{eq:startingdiag} and moreover the $(\Delta^1)^{\times 4}$ diagram following \eqref{eq:startingdiag}. The only difference is that now the two 2-homotopies equivalent to \eqref{eq:MM1compatibility} are not self-dual, and are not equal. They are however dual and equal to each other up to a 3-homotopy, so we can strictify them to the data of i).

In iii), the map  $\FF\xrightarrow{\theta} \MM_2$ is a composition of the natural $\FF\to \MM$ from \eqref{eq:Parkhomotopylift} and $\MM\to \MM_2$. I leave it to the reader to undertake similar diagram-chasing as above to recover from \eqref{eq:MM2comatibleMM} the data of ii).  To show the converse, it is not difficult to derive \eqref{eq:MM2comatibleMM} from \eqref{eq:MM1compatibility} in i). 

\end{proof}
I will now discuss one simple situation when the above results can be applied. Consider a morphism $f:\mN\to \mM$ of stacks with a $\infty$-Pvp diagram lifting \eqref{eq:diagsym}. I would like to produce a compatible $\infty$-Pvp diagram for the rigidified morphism 
$$
\begin{tikzcd}f^{\rig}: \mN^{\rig}\arrow[r]&\mM^{\rig}
\end{tikzcd}\,.
$$
Here, I assumed that $\mN$ and $\mM$ admit a $B\GG_m$-action and $f$ is equivariant. The precise compatibility condition is spelt out in the next Lemma, which states that such a diagram exists. 
\begin{lemma}
\label{lem:rigidifyingPark}
\begin{enumerate}[label=\roman*)]
    \item For the morphisms $f:\mN\to \mM$ consider the commutative diagram of stacks
\begin{equation}
\label{eq:NMrig}
\begin{tikzcd}
    \mN\arrow[d,"\Pi_{\mN}"']\arrow[r,"f"]&\mM\arrow[d,"\Pi_{\mM}"]\\
    \mN^{\rig}\arrow[r,"f^{\rig}"']&\mM^{\rig}
\end{tikzcd}
\end{equation}
where $\Pi_{(-)}$ denotes projections to appropriate rigidifications. Given an $\infty$-Pvp diagram for $f$, there are unique up to contractible choices $\infty$-Pvp diagrams for all four morphisms such that they are compatible in the sense of Theorem \ref{thm:functsympull} for all consecutive pairs of morphisms.
\item For a commutative diagram
$$
\begin{tikzcd}
\arrow[d,"\Pi_{\mN_2}"]\mN_2\arrow[r,"f_2"]&\arrow[d, "\Pi_{\mN_1}"]\mN_1\arrow[r,"f_1"]&\arrow[d,"\Pi_{\mM}"]\mM\\
\mN^{\rig}_2\arrow[r,"f^{\rig}_2"']&\mN^{\rig}_1\arrow[r,"f^{\rig}_1"']&\mM^{\rig}
\end{tikzcd}\,,
$$
assume that there are $\infty$-Pvp diagrams along $f_2$ and $f_1$ inducing one along $f = f_1\circ f_2$ by Theorem \ref{thm:functsympull}. Then the $\infty$-Pvp diagram for $f^{\rig}$ obtained by applying Theorem \ref{thm:functsympull}.ii) to the induced $\infty$-Pvp diagrams for $f^{\rig}_1$ and $f^{\rig}_2$ is equivalent to the diagram constructed in i) for $f$. 
\end{enumerate}
 
\end{lemma}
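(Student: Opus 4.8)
\textbf{Proof proposal for Lemma \ref{lem:rigidifyingPark}.ii).}

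The plan is to exploit the uniqueness up to contractible choices that underlies both Theorem \ref{thm:functsympull} and part i) of this lemma. First I would lay out the two routes to an $\infty$-Pvp diagram for $f^{\rig} = f^{\rig}_1 \circ f^{\rig}_2$. Route A is the direct one: apply part i) of the present lemma to the composed morphism $f = f_1 \circ f_2$, using the $\infty$-Pvp diagram for $f$ that Theorem \ref{thm:functsympull} produces from the diagrams along $f_1$ and $f_2$. Route B is the two-step one: apply part i) separately to $f_1$ and to $f_2$ to obtain $\infty$-Pvp diagrams for $f^{\rig}_1$ and $f^{\rig}_2$, and then feed these into Theorem \ref{thm:functsympull}.ii) to get a diagram for $f^{\rig}$. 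What must be shown is that Route A and Route B yield equivalent diagrams in $\mD^b(\mN^{\rig}_2)$.

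The key observation is that all of these constructions are, by Proposition \ref{prop:contractiblesec} and Proposition \ref{prop:sympullback}, obtained by taking iterated cones in stable $\infty$-categories starting from a common piece of initial data, and that the order in which these cones are taken is immaterial by \cite[Chapter 7, Corollary 7.3.8.20]{kerodon}. Concretely, I would assemble the rigidification squares for $\mN_2, \mN_1, \mM$ (each of the form \eqref{eq:NMrig}) together with the factorization octahedron \eqref{eq:functoriality} into a single higher-dimensional diagram --- morally a $(\Delta^1)^{\times N}$-shaped object in $\mD^b(\mN_2)$ --- whose various faces encode: the $\infty$-Pvp diagrams along $f_1$, $f_2$, $f$; the rigidification comparisons for each of the three stacks (these come from the cofiber sequence $\LL_{B\GG_m} \to \LL_{\mN} \to \LL_{\mN^{\rig}}$ used in \eqref{eq:rigidifyingEE}); and the compatibility 2- and 3-homotopies supplied by part i) and by Theorem \ref{thm:functsympull}. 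Both Route A and Route B then appear as results of truncating this master diagram by taking cones along two different orders of the elementary directions. Since the homotopy colimits involved are computed objectwise in functor $\infty$-categories (\cite[Proposition 7.1.7.2]{kerodon}) and commute past one another, the two truncations agree up to the contractible space of choices, which is exactly the assertion.

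The main obstacle I anticipate is purely bookkeeping: one must verify that the compatibility data produced by Theorem \ref{thm:functsympull} when applied to the already-rigidified diagrams (Route B) matches, under the rigidification comparison of part i), the compatibility data one gets by rigidifying the composite diagram (Route A). This amounts to checking that the self-dual 2- and 3-homotopies of \eqref{eq:MM1compatibility} and \eqref{eq:MM2comatibleMM} are transported correctly by the functor induced on obstruction theories by $\Pi_{\mN_2}$, and this in turn follows from Lemma \ref{lem:functoror} together with the fact that $\Pi_{\mN_2}$ is a $B\GG_m$-gerbe so that the relevant pullback $\infty$-functor is fully faithful on the relevant truncations. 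Once this matching is in place, the equivalence of the two diagrams is formal. I would present the argument by first stating the master diagram, then invoking permutability of cones, and finally dispatching the compatibility check by reference to part i) and Lemma \ref{lem:functoror}, keeping explicit diagram-chasing to a minimum.
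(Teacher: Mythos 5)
Your overall strategy---reduce everything to uniqueness up to contractible choices plus permutability of iterated cones---is in the right spirit, but it is heavier than what the paper does and, more importantly, the one step you yourself flag as the "main obstacle" is dispatched with tools that do not do the job. The paper's proof of ii) is a short relay: since $\Pi_{\mM}\circ f_1\circ f_2$, $f^{\rig}_1\circ \Pi_{\mN_1}\circ f_2$ and $f^{\rig}_1\circ f^{\rig}_2\circ \Pi_{\mN_2}$ are literally the same morphism, one equates the $\infty$-Pvp diagram along the first composite with the one along the second using the compatibility from part i) applied to the pair $(\Pi_{\mM},f_1)$ together with Theorem \ref{thm:functsympull}, and then equates the second with the third using part i) applied to $(\Pi_{\mN_1},f_2)$; no global master diagram is needed, and the coherence issues are absorbed into the contractibility statements (3-connectivity of the relevant mapping spaces) already established in the proof of i).

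The genuine gap is in how you propose to verify that the compatibility 2- and 3-homotopies of \eqref{eq:MM1compatibility} and \eqref{eq:MM2comatibleMM} match under rigidification. Lemma \ref{lem:functoror} is a statement about the \emph{orientations} induced by Pvp diagrams (a comparison of trivializations of determinant lines); it says nothing about transporting homotopies between lifts of obstruction theories in $\mD^b(\mN_2)$, so it cannot be the mechanism here. Likewise, pullback along the $B\GG_m$-gerbe $\Pi_{\mN_2}$ is fully faithful only on the weight-zero part of $\mD^b(\mN^{\rig}_2)$, and the rigidified obstruction theories are precisely \emph{not} pullbacks: by \eqref{eq:rigidifyingEE} they differ from $\Pi^*$ of the unrigidified ones by the $\mO[3]$ and $\mO[-1]$ pieces, so "fully faithful on the relevant truncations" cannot be invoked to identify the homotopy data. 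Without a correct replacement for this step your master-diagram argument does not close; the replacement the paper uses is exactly the uniqueness-up-to-contractible-choices built into part i) (via the 3-connectivity of $\Map(\mO[3],\mO[-1])$ and its analogues) together with the compatibility clauses of Theorem \ref{thm:functsympull}, applied once for each of the two swaps in the relay above.
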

\begin{proof}
i) The morphism $\Pi_{\mM}$ induces the right square of in
\begin{equation}
   \label{eq:epsO}
\begin{tikzcd}\arrow[d]\wt{\EE}^{\rig}\arrow[r]&\arrow[d]\EE\arrow[r,"\delta"]&\arrow[d, equal] \mO_{\mM}[-1]\\
(\Pi_{\mM})^*\big(\LL_{\mM^{\rig}}\big)\arrow[r]&\LL_{\mM}\arrow[r]&\mO_{\mM}[-1]
  \end{tikzcd}\,.
\end{equation}
The full diagram is obtained by taking fibers. A similar diagram for the obstruction theory $\FF$ on $\mN$ is induced by $\Pi_{\mN}$. Since $$\Map_{\mD^b(-)}\big(\mO_{(-)}[3],\mO_{(-)}[-1] \big)$$ is 3-connected, one can construct the top right homotopy commutative square of 
 \begin{equation}
 \label{eq:rigidifyingEE}
\begin{tikzcd}
\mO_{\mM}[3]\arrow[d]\arrow[r,equal]&\mO_{\mM}[3]\arrow[d,"\ov{\delta}"]\arrow[r]&0\arrow[d]
\\
\arrow[d]\wt{\EE}^{\rig}\arrow[r]&\arrow[d]\EE\arrow[r,"\delta"]&\arrow[d, equal] \mO_{\mM}[-1]\\
\EE^{\rig}\arrow[r]&(\wt{\EE}^{\rig})^\vee[2]\arrow[r]&\mO_{\mM}[-1]
\end{tikzcd}
 \end{equation}
 uniquely up to contractible choices. The resulting obstruction theory $\EE^{\rig}$ of $\mM^{\rig}$ was often called the \textit{rigidification} of $\EE$ in the main text. This produces the situation of Theorem \ref{thm:functsympull}.ii) for the consecutive morphisms $\Pi_{\mM}$ and $f$ in \eqref{eq:NMrig} because I assume the existence of
$$
\begin{tikzcd}
    \arrow[d]\MM[-1]\arrow[r]&\EE\arrow[d]\\
    0\arrow[ur, Rightarrow,"h", Blue]\arrow[r]&\MM^\vee[3]
\end{tikzcd}\,,
$$
which plays the role of \eqref{eq:Miihom0}.
By this theorem, there is an $\infty$-Pvp diagram for the composition $f^{\rig}\circ \Pi_{\mN} =\Pi_{\mM}\circ f$. Thus, I am left to construct the data of Theorem \ref{thm:functsympull}.iii)  for the consecutive morphisms $f^{\rig}$ and $\pi_{\mN}$. The diagram \eqref{eq:FFtoMM} is determined by the above leading to \eqref{eq:FFtoMM2} for $\MM_2=\mO_{\mN}[-1]$. The same argument as in \eqref{eq:rigidifyingEE} produces \B{$g_2$}. Using that $\Map_{\mD^b(\mN_k)}\big(\MM_2^{\vee}[2],\MM_2\big)$ is 3-connected, the 3-simplex \eqref{eq:MM2comatibleMM} is given uniquely up to contractible choices. Theorem \ref{thm:functsympull} constructs \eqref{eq:M1hom0} which induces an $\infty$-Pvp diagram for $f^{\rig}$. Its compatibility with $\Pi_{\mN}$ is also a consequence of Theorem \ref{thm:functsympull}.

ii) Using the construction in i), I need to show that the $\infty$-Pvp diagram for $\Pi_{\mM}\circ f_1\circ f_2$ (constructed by Theorem \ref{thm:functsympull}) is equal to the one along $f^{\rig}_1\circ f^{\rig}_2\circ \Pi_{\mN_2}$. This follows first by the equivalence of the former to the $\infty$-Pvp diagram for $f^{\rig}_1\circ \Pi_{\mN_1}\circ f_2$, which is in turn equivalent to the latter. 
\end{proof}
\section{Equivariant homology of stacks}
\label{app:B}
In \cite{BB1}, equivariant homology theories for stacks will be discussed in detail. There, we propose to construct them using (non-equivariant) bivariant theories. We motivate this approach by studying examples presented in §\ref{app:examples} which allows us to formulate an overarching perspective. The 6-functor formalism discussed in \cite{KhanEH} provides such a bivariant theory leading to equivariant singular homology. In this section, I will summarize the axioms of bivariant theories and how equivariant homology is produced by taking limits. A more refined approach in terms of pro-systems is presented in \cite{BB1}. I will then recall Khan's construction and explain a simple example that already covers the quiver moduli spaces from §\ref{sec:dgquivers}. 

\subsection{Bivariant theories}
\label{app:bivarianttheories}
This subsection is a shortened version of the corresponding one in \cite{BB1}. There, we additionally provide motivation for this approach. Bivariant theories used in the present form were introduced in \cite[§2.2]{FMP}. I recall their definition, except that I allow  all morphisms here instead of restricting to some confined class of them. The same applies for independent squares.

We work with the category $\Art$ of Artin stacks and Artin morphisms between them as defined, for example, in \cite{TV,  LurieDAG, KhanEH}. From now on, all morphisms and diagrams are assumed to be in $\Art$, and I fix a ring $R$.
\begin{definition}
\label{def:bivariant}
    A bivariant theory for $\Art$ is a way to assign to each morphism  $\mX\xrightarrow{f}\mY$ in $\Art$ a graded $R$-module
    $$
    \BB_*\big(\mX\xrightarrow{f}\mY\big)
    $$
   together with the following graded $R$-morphisms:
    \begin{itemize}
\item (\textit{Composition Product}) For any pair of maps $\mX\xrightarrow{f}\mY\xrightarrow{g}\mZ$, there is a morphism
$$
\begin{tikzcd}
m: \BB_*\big(\mX\xrightarrow{f}\mY\big)\otimes  \BB_*\big(\mY\xrightarrow{g}\mZ\big)\arrow[r]& \BB_*\big(\mX\xrightarrow{g\circ f}\mZ\big)\,.
\end{tikzcd}
$$
Sometimes, I will simply write $m(a,b) = a\cdot b$ for composable elements.
\item (\textit{Base-Change Pullback}) 
Given a base change diagram \[\begin{tikzcd}
  \mX' \ar{r}{}\arrow[d,"{f'}"'] & \mX\ar{d}{f} \\
  \mY' \arrow[r,"g"'] & \mY
\end{tikzcd}\,,\]a natural map
$$
\begin{tikzcd}
 g^*:\BB_*\big(\mX\xrightarrow{f}\mY\big)\arrow[r]& \BB_*\big(\mX'\xrightarrow{f'}\mY'\big)
 \end{tikzcd}
$$
is given.
\item (\textit{Relative Pushforward}) For a commutative diagram 
$$
\begin{tikzcd}[column sep=small]
\mX'\arrow[dr,"{f'}"']\arrow[rr,"g"]&&\mX\arrow[dl,"f"]\\
   & \mY&
\end{tikzcd}\,,
$$
one has a natural morphism
$$
\begin{tikzcd}
  g_*: \BB_*\big(\mX'\xrightarrow{f'}\mY\big) \arrow[r]&\BB_*\big(\mX\xrightarrow{f}\mY\big)
\end{tikzcd}\,.
$$
\end{itemize}
These need to satisfy compatibilities noted down in \cite[§2.2]{FMP}. We only recall them in name here:
\begin{itemize}
    \item (\textit{Associativity of Products})
      \item (\textit{Functoriality of Pullback})
     \item (\textit{Functoriality of Pushforward})
       \item (\textit{Commutativity Between Product and Pushforward})
       \item (\textit{Base Change Transformation - for pushforward and pullback})
    \item (\textit{Projection Formula})
    \item (\textit{Skew-Commutativity})
\end{itemize}
\end{definition}
Just as in \cite[§2.3]{FMP}, one can conclude that 
$$
H^*(\mX) := \BB_{-*}\big(\mX\xrightarrow{\id_{\mX}}\mX\big)\,,\qquad H_*(\mX) := \BB_*\big(\mX\to\pt\big)
$$
give rise to compatible cohomology and homology theories. 

We introduce two new assumptions that are required to hold only when
$\QQ\subset R$.
\begin{itemize}
\item (\textit{Rational Künneth Isomorphism})
Consider the diagram
\begin{equation}
\label{eq:rationalkunnethdiag}
\begin{tikzcd}
\arrow[d,"\pi_2"']\mX\times \mX'\arrow[r,"\pi_1"]&\mX\arrow[d,"p"]\arrow[r,"f"]&\mY\\
\mX'\arrow[r,"q"]&\pt&
\end{tikzcd}
\end{equation}
where $\pi_i$ denotes the projection to the  $i$'th factor.
Then the composition of
\begin{equation}
\label{eq:Kunnethmorph}
\begin{tikzcd}
 H_*(\mX')\otimes \BB_*(\mX\to \mY)\arrow[r,"p^*\otimes\id"]&\BB_*(\mX\times\mX'\to\mX)\otimes \BB_*(\mX\to \mY) \arrow[r,"m"]&\BB_*(\mX\times\mX'\to \mY)
\end{tikzcd}
\end{equation}
denoted by $\boxtimes$ is an isomorphism.
\item (\textit{Rational triviality})
If $f:\mX\to\mY$ is a $B\ZZ_n$-torsor, then for any $g:\mY\to \mZ$ the pushforward
 $$
\begin{tikzcd}
f_*:\BB_*\big(\mX\xrightarrow{g\circ f}\mZ\big)\arrow[r]&\BB_*\big(\mY\xrightarrow{g}\mZ\big)
\end{tikzcd}
$$
is an isomorphism.
\end{itemize}
These assumption are needed when constructing Lie algebras from deformed vertex algebras on equivariant homology.

\subsection{Equivariant homology as a limit}
To additionally introduce equivariance, we use the Borel construction of Totaro \cite{Totaro} and Edidin-Graham \cite{EdGrEH} via approximations. However, as we are working with stacks, we need to take limits (or work with pro-systems as in \cite{BB1}) because it does not suffice to work with a fixed approximation. 

Let $\T$ be an algebraic torus acting on $\mX$ in $\Art$.  For finite-dimensional $\T$-representations, consider their open subsets $E_i\T$ where the action is free. Combine them into the ind-system $\{E_i\T\}_{i\in I}$ with transition maps $\kappa_{i,j}:E_i\T\hookrightarrow E_j\T$ being $\T$-equivariant embeddings. In this case, we will write $B_i\T:=E_i\T/\T$, and we note that each $\kappa_{i,j}$ induces the map $\iota_{i,j}:B_i\T\rightarrow B_j\T$ the colimit of which is homotopically equivalent to $B\T$. We write $\iota_i : B_i\T\to B\T$ for the resulting natural morphism.

Setting 
$$
[\mX/\T]_i:=\mX\times_\T E_i\T\,,\qquad \tau_{i,j} = \id_{\mX}\times_\T\kappa_{i,j}\,,
$$
we obtain the commutative diagram
\begin{equation}
\label{eq:XiGfiberprod}
\begin{tikzcd}[column sep=large]
   \arrow[d][\mX/\T]_i\arrow[r,"\tau_{i,j}"]&{[\mX/\T]}_j\arrow[d]\arrow[r,"\tau_j"]&{[\mX/\T]}\arrow[d]\\
   B_i\T\arrow[r,"\iota_{i,j}"]&B_{j}\T\arrow[r,"\iota_j"]&B\T
\end{tikzcd}
\end{equation}
where all squares are Cartesian. Here, the vertical arrows were constructed using the projection $\mX\to \pt$.

\begin{definition}
\label{def:eqhom}
 For a fixed bivariant theory $\BB_*$, define the \textit{$\T$-equivariant homology} $H^{\T}_*(\mX)$ of $\mX$ as the graded limit
$$H^\T_*(\mX):= \varprojlim_{i\in I} \BB_*\big([\mX/\T]_i\to B_i\T\big)$$
where transition morphisms are the base change pullbacks $\iota_{i,j}^*$ for the fiber product \eqref{eq:XiGfiberprod}.

The \textit{$\T$-equivariant cohomology} is defined as 
\begin{equation}
\label{eq:defcoh}
H^*_\T(\mX):=\varprojlim_{i\in I} H^*\big([\mX/\T]_i\big)\,.
\end{equation}
Both $H^\T_*(\mX)$ and $H^*_\T(\mX)$ are considered as graded topological $R$-modules for the graded limit topology. They are also both topological  $H^*_\T(\pt) = \varprojlim H^*(B_i\T)$ modules because $\BB_*\big([\mX/\T]_i\to B_i\T\big)$ and $H^*\big([X/\T]_i\big)$ are  $H^*(B_i\T)$ modules by the composition product. 
\end{definition}
\begin{remark}
A graded limit of graded objects will be, in general, different from the one obtained after forgetting grading, because it is taken in each degree separately. Consider for example the pro-system $\big\{R[u]/u^n\big\}$ with transition maps the quotients $R[u]/u^{n+1}\to R[u]/u^{n}$. Setting $\deg(u)=2$, the graded limit of this system is $R[u]$. If the grading is ignored, the limit becomes $R\llbracket u\rrbracket$.
\end{remark}
A more suitable definition for the purpose of introducing and constructing deformations of vertex algebras is given in \cite{BB1} in terms of pro-systems. As representation theoretic aspects are not the focus of the present work, I will not use this perspective here. 
\subsection{Operations on equivariant homology}
\label{sec:equivariantOPs}
In \cite{BB1}, we define all the necessary operations level-wise on the pro-system. This induces the corresponding operations on $H^{\T}_*(\mX)$. All of these operations are immediate consequences of bivariance.
\begin{itemize}
\item (\textit{Pullbacks and pushforwards})
Let $\mX,\mY$ be stacks with a $\T$-action. Then for any $\T$-equivariant map $f:\mX\to \mY$ one defines the $\T$-equivariant pullback $f^*:H_{\T}^*(\mY)\to H_{\T}^*(\mX)$ as the limit of pullbacks along $[f/\T]_i:[\mX/\T]_i\to [\mY/\T]_i$. 

We also construct the pushforward
$
 \begin{tikzcd}
f_*:H_*^{\T}(\mX)\arrow[r]&H_*^{\T}(\mY)
\end{tikzcd}
$
determined level-wise by 
$$
\begin{tikzcd}
([f/\T]_i)_*:\BB_*\big([\mX/\T]_i\to B_i\T)\arrow[r]& \BB_*\big([\mY/\T]_i\to B_i\T)\,.
\end{tikzcd}
$$
Compatibility with transition morphisms follows from the base change property as shown in \cite{BB1}.
\item (\textit{Cap product})
The cap product
\begin{equation}
\label{eq:capproduct}
\begin{tikzcd}
H^*_{\T}(\mX)\widehat{\otimes}_{H^*_\T(\pt)}H^\T_*(\mX)\arrow[r,"\cap"]&H^\T_*(\mX)\,.
\end{tikzcd}
\end{equation}
is itself obtained as a limit. Levelwise, this map acts on $\alpha_i\in H^*\big([\mX/\T]_i\big)$, $a_i\in \BB_*\big([\mX/\T]_i\to B_i\T\big)$ by
$$
\begin{tikzcd}
\alpha\otimes a_i\arrow[r,mapsto]&\alpha_i\cdot a_i\in \BB_*\big([\mX/\T]_i\to B_i\T\big)
\end{tikzcd}
$$
Compatibility with respect to transition morphisms is an immediate consequence of the commutativity of products and pullbacks. Since the above map is $H^*(B_i\T)$-bilinear, we obtain the morphism \eqref{eq:capproduct} after taking limits.
\item (\textit{Equivariant Künneth morphism})
Let $\mX$ and $\mY$ be $\T$-stacks and consider the diagonal action on $\mX\times \mY$. This induces the Cartesian diagram
\begin{equation}
\label{eq:equivKunneth}
\begin{tikzcd}[column sep=large, row sep =large]
{\big[(\mX\times \mY)/\T\big]}\arrow[d,]\arrow[r,]&{[\mY/\T]}\arrow[d, "y"]\\
{[\mX/\T]}\arrow[r,"x"']&B\T
\end{tikzcd}\,.
\end{equation}
We construct a morphism 
$$
\begin{tikzcd}
H^\T_*(\mX)\widehat{\otimes}_{H^*_\T(\pt)} H^\T_*(\mY)\arrow[r,"\boxtimes_{\T}"]&H^{\T}_*(\mX\times \mY)\,.
\end{tikzcd}
$$
Level-wise, we consider the approximation 
$$
\begin{tikzcd}[column sep=large, row sep =large]
{\big[(\mX\times \mY)/\T\big]_i}\arrow[d,]\arrow[r,]&{[\mY/\T]_i}\arrow[d, "y_i"]\\
{[\mX/\T]_i}\arrow[r,"x_i"']&B_i\T
\end{tikzcd}\,.
$$
Taking $a_i\in \BB_{|a|}\big([\mX/\T]_i\to B_i\T\big)$ and $b_i\in \BB_{|b|}\big([\mY/\T]_i\to B_i\T\big)$ the map acts by 
$$
\begin{tikzcd}
a_i\otimes b_i\arrow[r,mapsto]&y_i^*(a_i)\cdot b_i = (-1) ^{|a||b|}x_i^*(b_i)\cdot a_i
\end{tikzcd}
$$
where the last equality used the skew-commutativity assumption. The map is $H^*(B_i\T)$-bilinear 
and compatible with transition morphisms.
\item (\textit{Reduction to subgroups}) 
For a subtorus $\S\subset \T$, restricting the $\T$-action on $E_i\T$ to $\S$ makes $\{E_i\T\}_{i\in I}$ into a cofinal subsystem of $\{E_k\S\}_{k\in K}$. Then there are Cartesian diagrams
$$
\begin{tikzcd}[column sep=large, row sep =large]
{\mX\times_{\S}E_i\T}\arrow[d]\arrow[r]&E_i\T/\S\arrow[d]\\
{[\mX/\T]_i}\arrow[r]&B_i\T
\end{tikzcd}
$$
where both vertical arrows are induced by the inclusion $\S\hookrightarrow \T$. The reduction morphism
\begin{equation}
\label{eq:redST}
\begin{tikzcd}
H^{\T}_*(\mX)\arrow[r,"\red^{\S\subset \T}_{\mX}"]&[1cm]H^{\S}_*(\mX)
\end{tikzcd}
\end{equation}
is defined as the limit of pullbacks along $E_i\T/\S\to B_i\T$. 
\end{itemize}

\subsection{Bivariant theory from 6-functor formalism}
Here, I will recall the definition of relative chains from \cite{KhanEH}. Taking their homology gives rise to a bivariant theory as shown in \cite{BB1}.

For a (higher) Artin stack $\mX$ denote by $\Sh(\mX)$ the stable $\infty$-category of sheaves of $R$-modules on $\mX$. The corresponding 6-functor formalism for $\Art$ is discussed in length in \cite{KhanEH, KhanVFC, KhanLV}, so I will only mention the constituent functors. For any morphism $f:\mX\to\mY$ of Artin stacks, there is the adjoint pair consisting of the pullback and the direct image functor:
\begin{equation}
\label{eq:pushpulladj}
        \begin{tikzcd}
            \Sh(\mY)\arrow[r, shift left=1ex, "Lf^*"{name=G}] & \Sh(\mX)\arrow[l, shift left=.5ex, "Rf_*"{name=F}]
            \arrow[phantom, from=F, to=G, , "\scriptscriptstyle\boldsymbol{\bot}"]
        \end{tikzcd}\,.
 \end{equation}
 If $f$ is additionally a morphism in $\Art$, one also has the adjoint pair
 \begin{equation}
 \label{eq:shriekpushpulladj}
        \begin{tikzcd}
            \Sh(\mX)\arrow[r, shift left=1ex, "f_!"{name=G}] & \Sh(\mY)\arrow[l, shift left=.5ex, "f^!"{name=F}]
            \arrow[phantom, from=F, to=G, , "\scriptscriptstyle\boldsymbol{\bot}"]
        \end{tikzcd}
 \end{equation}
 where $Lf_{!}$ is the direct image with compact support. I will also denote by $(-\otimes^{\LL}-)$ the derived tensor product in $\Sh(\mX)$ for which the constant sheaf $\un{R}_{\mX}$ is the unit.
 \begin{definition}[\cite{KhanEH}]
 \label{def:relhom}
Let $f:\mX\to \mY$ be a morphism in $\Art$. The \textit{relative chains over $\mY$} with coefficients in $R$ are
        $$
        C^{/\mY}_{\bullet}(\mX):= Lf_! f^!(\un{R}_{\mY})
        $$
        where we view the expression on the right-hand side as a chain complex by reversing the grading. Using this, the bivariant theory is defined by 
        \begin{equation}
        \label{eq:HoverS}
        \BB_*(\mX\to\mY, R):=H_{*}\Big(C_{\bullet}^{/\mY}(\mX)\Big)\,.
        \end{equation}
\end{definition}
The composition product, base-change pullback, and relative pushforward have already been constructed on the level of chains in \cite{KhanEH}. They are a direct consequence of the 6-functor yoga. In \cite{BB1}, we check that the conditions in §\ref{app:bivarianttheories} are indeed satisfied.
 \begin{proposition}[\cite{BB1}]
    Definition \ref{def:relhom} constructs a bivariant theory with rational Künneth isomorphisms and satisfying rational triviality. 
\end{proposition}
\subsection{Properties of the equivariant homology}
The resulting equivariant homology and cohomology theories are ideal for wall-crossing and the construction of $\T$-deformations of vertex algebras  due to the following additional properties.
\begin{enumerate}[label=\roman*)]
\item (\textit{Non-equivariant limit}) By \cite[Proposition 2.8]{KhanEH}, both $H^{\T}_*(\mX)$ and $H_{\T}^*(\mX)$ become the usual Betti homology and cohomology of stacks as in \cite[Definition 4.2]{gross}, \cite[Example 2.4.3 (c)]{Joycehall} when $\T=\{1\}$. This means that one first applies the topological realization functor mapping a stack $\mX$ to the topological space $\mX^{\ttop}$. Then one acts with $H_*(-)$ or $H^*(-)$.
\item (\textit{Equivariant Chern character})
Let $\PPerf_r$ be the stack of perfect complexes of $\CC$-vector spaces of rank $r$. By i), there is an isomorphism
$$H^*(\PPerf_r)\cong H^*\big((\PPerf_r)^{\ttop}\big)\,.$$
Because $(\PPerf_r)^{\ttop} = BU$ by \cite[§4.2]{B16}, its cohomology can be expressed as 
$$
H^*(\PPerf_r) =  R[ \ch_1,\ch_2,\cdots ]\,,
$$
where $\ch_i$ is the $i$'th Chern character of the universal rank $r$ K-theory class on $BU$.

Let $\mE$ be a $\T$-equivariant rank $r$ perfect complex on $\mX$, then it descends to $[\mE/\T]$ on $[\mX/\T]$ with the corresponding natural morphism $p_{[\mE/\T]}:[\mX/\T]\to \PPerf_r$. One defines the $i$'th equivariant Chern character of $\mE$ to be 
$$
\ch^{\T}_i(\mE):=p_{[\mE/\T]}^*(\ch_i)\in H_{\T}^*(\mX)\,.
$$
\item (\textit{Completeness of cohomology}) By \cite[Lemma 3.3]{KhanEH}, there is a natural isomorphism
$$
H^*\big([\mX/\T]\big)\cong H^*_{\T}(\mX)\,.
$$
\item (\textit{Homotopy invariance}) Let $E\xrightarrow{\pi}\mX$ be a $\T$-equivariant vector bundle on $\mX$, then $\pi_*: H^{\T}_*(E)\to H^{\T}_*(\mX)$ is an isomorphism. To see this, note that every approximation
$$
\begin{tikzcd}[column sep = large]
{[E/\T]}_i\arrow[r,"{[\pi/\T]}_i"]& {[\mX/\T]}_i
\end{tikzcd}
$$
is an equivariant vector bundle, so the statement follows from \cite[Theorem 2.7 (ii)]{KhanEH}.
\item (\textit{Cycle-class map})
Let $X$ be a proper algebraic space with a $\T$-action. By \cite[Corollary 3.4]{KhanEH}, its equivariant homology can be computed as 
$$
H^{\T}_*(X)  \cong H_*\Big(C^{/B_i\T}_{\bullet}\big([X/T]_i\big)\Big)
$$
for a fixed $i\in I$ and the natural projection $q_i:[X/\T]_i\to B_i\T$. The same corollary also shows that the equivariant Borel--Moore homology $H^{\BM,\T}_{*}(X)$ is recovered as
$$
H^{\BM,\T}_{*}(X)\cong H_*\Big(R(q_i)_*q_i^{!}\big(\un{R}_{B_i\T}\big)\Big) \cong H^{\BM}_{*+2\dim_{\CC}(B_i\T)}\big([X/\T]_i\big)\,.
$$
Due to $X$ being proper, the natural transformation $L(q_i)_!\to R(q_i)_*$ is an isomorphism. This induces the identification
$H^{\BM,\T}_{*}(X)\cong H^{\T}_{*}(X)
$. Moreover, there is a natural morphism $A_{*}^{\T}(X)\to H^{\BM,\T}_{2*}\big(X\big)$  by \cite[§2.8]{EdGrEH}, which leads to the equivariant cycle-class map
$$\begin{tikzcd}A_{*}^{\T}(X)\arrow[r]&H^{\T}_{2 *}(X)\end{tikzcd}$$ 
when combined with the above discussion.
\end{enumerate}
Using this, I constructed $V_{*}$ and $V_{\loc,*}$ for the global approach in §\ref{sec:equivariantVA}. One can also define the invariants \eqref{eq:Msigalphaglobal} in $H^{\T}_*\big(\mM^{\rig}_{\mA}\big)_{\loc}$. To identify this equivariant homology with $L_{\loc,*}$ and define a Lie algebra on it, I prove the following result for the natural projection
$$
\begin{tikzcd}
\Pi_{\alpha}:\mM^{\rig}_{\alpha}\arrow[r]&\mM_{\alpha}
\end{tikzcd}
$$
from Definition \ref{def:categoryA}.b).
\begin{lemma}[\cite{BB1}]
    Let 
    \begin{equation}
    \label{eq:BZZnbasechange}
     \begin{tikzcd}
         \arrow[d]B\GG_m\times \mY_{\alpha}\arrow[r,"\pi_2"]&\arrow[d,"r"]\mY_{\alpha}\\
\mM_{\alpha}\arrow[r,"\Pi_{\alpha}"]&\mM^{\rig}_{\alpha}
     \end{tikzcd}
    \end{equation}
    be a $\T$-equivariant Cartesian diagram in $\Art$ such that $r$ is a $B\ZZ_n$-torsor, and $\pi_2$ is a trivial $B\GG_m$-torsor with trivial $\T$-action on $B\GG_m$. If $\QQ\subset R$, then we have the natural isomorphism
    $$
    H^{\T}_*\big(\mM_{\alpha}\big)/T\cong H^{\T}_*\big(\mM^{\rig}_{\alpha}\big)\,.
    $$
\end{lemma}
The non-equivariant diagram \eqref{eq:BZZnbasechange} was constructed in \cite[Proposition 3.24 (b)]{Joycehall} when $\alpha\notin \Ker(\chi)$. This argument can be modified to work equivariantly.
\begin{proposition}
\label{prop:rigidisom}
    Let $\alpha\in \ov{K}(\mA)$ be such that for some $\beta\in \ov{K}(\mA)$, one has
$$
\chi(\beta,\alpha)\neq 0\,,\qquad \mM^{\T}_{\beta}\neq  \emptyset\,.
$$
Then there exists a $\T$-equivariant Cartesian diagram \eqref{eq:BZZnbasechange}. In particular, there is a natural isomorphism  $
    H^{\T}_*\big(\mM_{\alpha}\big)/T\cong H^{\T}_*\big(\mM^{\rig}_{\alpha}\big)
    $ whenever additionally $\QQ\subset R$. 
\end{proposition}
\begin{proof}
Choose an equivariant object $B\in \mM^{\T}_{\beta}$ and construct
$$\Ext(B,\alpha) := \RHom_{\{B\}\times \mM_{\alpha}}(B,\mE_{\alpha})$$
on $\mM_{\alpha}$. It is clearly $\T$-equivariant of $B\GG_m$-weight $1$. As its rank is $\chi(\beta,\alpha)$, the line bundle $\mL:= \det\big(\Ext(B,\alpha)\big)$ has $B\GG_m$-weight $n:=\chi(\beta,\alpha)$. Let $\mY_{\alpha}$ be the total space of $\mL$ without the zero section. Following the reasoning in the proofs of \cite[Proposition 3.24, Proposition 2.29]{Joycehall}, the composition of 
$$
\begin{tikzcd}
\mY_{\alpha}\arrow[r]&\mM_{\alpha}\arrow[r]&\mM^{\rig}_{\alpha}
\end{tikzcd}
$$
is a $\T$-equivariant $B\ZZ_n$-bundle. Moreover, setting this composition equal to $r$, one obtains \eqref{eq:BZZnbasechange}.
\end{proof}
\subsection{Examples}
\label{app:examples}
This subsection considers the simplest situation when the action of $\T$ is trivial. Due to stackiness, this already introduces non-trivial behavior because the $\T$-action mixes with the local isotropy groups. This was explained to me by Anton Mellit \cite{Mellit} who came up with Example \ref{ex:equivariantpushforw} below. 
\begin{example}
\label{ex:trivialThom}
\begin{enumerate}
\item Consider the situation when $\T =\CC^*$ as it is easy to extrapolate to any finite-dimensional torus. We can choose the cofinite system of opens
$$
E_{n}\T = \CC^{n+1}\backslash\{0\}\subset \CC^{n+1}
$$
with $B_n\T = \PP^n$. 

  Let $\mX$ be a stack with trivial $\T$-action. Then $\mX\times \PP^n =[\mX/\T]_n \to B_n\T= \PP^n $ is just the projection to the second factor. The Künneth isomorphism for $\QQ\subset R$ induces
    $$
   \BB_*(\mX\times \PP^n\to \PP^n) \cong H_*(\mX)\otimes H^*(\PP^n)\cong H_*(\mX)\otimes R[u]/u^{n+1}
    $$
    where $u = c_1\big(\mO_{\PP^n}(1)\big)$.
Taking limits and using the notation \eqref{eq:Mtgr}, one obtains
\begin{equation}
\label{eq:triveqhom}
H^{\T}_*\big(\mX\big)\cong \varprojlim_n  H_*(\mX)\otimes R[u]/u^{n+1}=H_*(\mX)\llbracket u\rrbracket^{\gr}.
\end{equation}
For a more general torus, the same argument shows that $H^{\T}_*\big(\mX\big)\cong H_*(\mX)\llbracket \Ft \rrbracket^{\gr}.$
    \item Let $Q^{\bullet}$ be a dg-quiver (not necessarily CY4) as in Definition \ref{def:CY4quiver}. A $\T$-action on the stack $\mM_{\un{d}}$ often originates from rescaling morphisms at degree 0 edges. Thus there is a $\T$-equivariant map from $\mM_{\un{d}}$ to $\mM^0_{\un{d}}$ -- the moduli stack for the quiver $Q$ which forgets all edges of $Q^{\bullet}$ in degrees less than 0. Sometimes, this map induces an isomorphism of equivariant homologies. More importantly, it always induces a map of ($\T$-deformed) vertex algebras after choosing the appropriate complexes $\Theta_{\mA}$ -- see \cite[Remark 3.8]{BoVir}.  Thus, one may reduce considerations to a usual quiver $Q$.
    
   Recall the group $\GL(\un{d})$ from \eqref{eq:AdSdGLd} and take its classifying stack $B\GL(\un{d})$. There is then a projection
    $$
    \mathscr{C}\textit{ont}: \begin{tikzcd}\mM_{\un{d}}\arrow[r]&B\GL(\un{d})\end{tikzcd}
    $$
contracting morphisms at all edges to 0. For the trivial $\T$-action on $B\GL(\un{d})$, this map is a projection from a $\T$-equivariant vector bundle over $B\GL(\un{d})$. By homotopy invariance, the pushforward $\mathscr{C}\textit{ont}_{\,*}$ induces an isomorphism of equivariant homologies. Using \eqref{eq:triveqhom}, shows that 
$$
H^{\T}_*\big(\mM_{\un{d}}\big):= H_*\big(B\GL(\un{d})\big)\llbracket \Ft\rrbracket^{\gr}\,. 
$$
\end{enumerate}
Example i) additionally confirms that the definition of equivariant homology for the local approach in §\ref{sec:equivariantVA} is compatible with Definition \ref{def:eqhom} when working with the fixed point stack. 
\end{example}
To understand, why it is paramount to consider power series of the form \eqref{eq:examplepowerseries}, I describe a particular case of equivariant pushforward acting on a point class.
\begin{example}
\label{ex:equivariantpushforw}
    Consider $B\GG_m$ with the trivial $\T=\CC^*$-action leading to
    $$
    H^{\T}_*(B\GG_m) = R[p]\llbracket u \rrbracket^{\gr}
    $$
    with $p^n$ defined in \eqref{eq:pnofBGm}. Let $\phi: \pt\to B\GG_m$ be the $\T$-equivariant morphism inducing 
    $$
\begin{tikzcd}
{[\phi/\T]}:B\T\arrow[r]&B\GG_m\times B\T
\end{tikzcd}
    $$
  given by the diagonal morphism after identifying $\T=\GG_m$. Its pullback 
  $$
  \begin{tikzcd}
  \phi^*:R[\tau,u] = H^*_{\T}(B\GG_m) \arrow[r]& R[u]= H^*_{\T}(\pt)
  \end{tikzcd}
  $$
  acts by mapping the generator $\tau$ to $u$.

Let $1\in R[u] = H^{\T}_{*}(\pt)$ be the point class. I would like to compute $\phi_*(1)\in R[p]\llbracket u \rrbracket^{\gr}$. In this simple case, it is determined by its action $H^*_{\T}(B\GG_m)\to H^*_{\T}(\pt)$ which is given by $\phi^*$. Using the above description of $\phi^*$, it follows by a simple computation that 
$$
\phi_*(1) = \sum_{n\geq 0}p^nu^n\in R[p]\llbracket u \rrbracket^{\gr}\,.
$$
\end{example}
In conclusion, the absence of such power
series from any equivariant theory would imply that equivariant pushforwards are in general not defined. Note that this example is not pathological, as it already appeared in \eqref{eq:ezTorigin} when computing $e^{zT}$ from the $\GG_m$-localization on the master space. It can be moreover generalized to study the classes $\big[\Hilb^n(\CC^4)\big]^{\vir}$ from Corollary \ref{cor:hilbWC}. I will pursue this direction in my future work. 
\bibliography{mybib.bib} 
\bibliographystyle{mybstfile}
\end{document}